\numberwithin{equation}{section}
\numberwithin{table}{section} 
\numberwithin{figure}{section}
\newtheorem{theorem}{Theorem}[section]
\newtheorem{subtheorem}{Theorem}[theorem] 
\newtheorem{subsubtheorem}{Theorem}[subtheorem]
\newtheorem{lem}[theorem]{Lemma}
\newtheorem{lemma}[theorem]{Lemma}
\newtheorem{conjecture}[theorem]{Conjecture}
\newtheorem{proposition}[theorem]{Lemma}
\newtheorem{corollary}[theorem]{Corollary}
\newtheorem{fact}[theorem]{Fact}
\newtheorem{remark}[theorem]{Remark}
\newtheorem{definition}[theorem]{Definition}
\newtheorem{setting}[theorem]{Setting}
\theoremstyle{remark}
\newtheorem{claim}[subtheorem]{Claim}
\newtheorem{subremark}[subtheorem]{Remark}
\newtheorem{subclaim}[subsubtheorem]{Subclaim}
\newcommand{\By}[2]{\overset{\mbox{\tiny{#1}}}{#2}}
\newcommand{\ByRef}[2]{   \By{\eqref{#1}}{#2} }
\newcommand{\eqBy}[1]{    \By{#1}{=} }
\newcommand{\lBy}[1]{     \By{#1}{<} }
\newcommand{\leBy}[1]{    \By{#1}{\le} }
\newcommand{\geBy}[1]{    \By{#1}{\ge} }
\newcommand{\eqByRef}[1]{ \ByRef{#1}{=} }
\newcommand{\gByRef}[1]{  \ByRef{#1}{>} }
\newcommand{\leByRef}[1]{ \ByRef{#1}{\le} }
\newcommand{\geByRef}[1]{ \ByRef{#1}{\ge} }
\let\sm\setminus
\let\subset\subseteq 
\let\supset\supseteq 
\let\epsilon\varepsilon
\let\sharp\#
\def\dcup{\dot\cup} 
\renewcommand{\leq}{\leqslant}
\renewcommand{\le}{\leqslant}
\renewcommand{\geq}{\geqslant}
\renewcommand{\ge}{\geqslant}
\let\oldmarginpar\marginpar
\renewcommand\marginpar[1]{\-\oldmarginpar[\raggedleft\footnotesize #1]%
{\raggedright\footnotesize #1}}
\newcommand{\HIDDENPROOF}[1]{
%
}
\newcommand{\HIDDENTEXT}[1]{
}
\title{The Approximate Loebl--Koml\'os--S\'os
Conjecture} \author{Jan
Hladk\'y\thanks{\emph{Corresponding author.} Centre for Discrete Mathematics and its
Applications (DIMAP) and Mathematics Institute, University of
Warwick, Coventry, CV4~7AL, UK, and Computer Science Institute of Charles University, Malostransk\'e
n\'am.~25, 118~00, Praha~1. Email:
\texttt{honzahladky@gmail.com}}
\quad J\'anos Koml\'os\thanks{Department of Mathematics, Rutgers University, 110 Frelinghuysen Rd., Piscataway, NJ~08854-8019, USA} \quad Diana Piguet\thanks{School of Mathematics, University of Birmingham, Edgbaston, Birmingham, 
    B15~2TT,
    UK.
    Email: \texttt{D.Piguet@bham.ac.uk}}\\ Mikl\'os Simonovits\thanks{R\'enyi
    Institute, Budapest, Hungary. Email:
\texttt{miki@renyi.hu}} \quad Maya Stein\thanks{Centro de Modelamiento Matem\'atico,
Universidad de Chile, Blanco Encalada, 2120, Santiago, Chile,
Email: \texttt{mstein@dim.uchile.cl}} \quad Endre Szemer\'edi\thanks{Department
of Computer Science, Rutgers University, 110 Frelinghuysen Rd., Piscataway, NJ~08854-8019, USA}}
\newcommand{\HAPPY}[1]{}
\newcommand{\PARAMETERPASSING}[2]{{\triangleright\mathrm{#1}\ref{#2}}}
\newcommand{\PARAMETERPASSINGR}[3]{{\triangleright\mathrm{#1}\ref{#2}-\ref{#3}}}
\newcommand{\M}{\mathcal M}\newcommand{\C}{\mathcal C}\newcommand{\A}{\mathcal
A}\newcommand{\V}{\mathcal V}
\newcommand{\eps}{\epsilon}
\def\bigboxplus{\mbox{\Large $\boxplus$}}
\def\probability{\mathbf{P}}
\def\expectation{\mathbf{E}}
\def\mindeg{\mathrm{deg^{min}}}
\def\maxdeg{\mathrm{deg^{max}}}
\def\density{\mathrm{d}}
\def\neighbor{\mathrm{N}}
\def\neighbour{\mathrm{N}}
\def\Vodd{V_\mathrm{odd}}
\def\Veven{V_\mathrm{even}}
\def\dist{\mathrm{dist}}
\def\children{\mathrm{Ch}}
\def\parent{\mathrm{Par}}
\def\shadow{\mathbf{shadow}}
\newcommand{\treeclass}[1]{\mathbf{trees}({#1})}
\def\seed{\mathrm{Seed}}
\newcommand{\LKSgraphs}[3]{\mathbf{LKS}({#1},{#2},{#3})}
\newcommand{\LKSmingraphs}[3]{\mathbf{LKSmin}({#1},{#2},{#3})}
\newcommand{\LKSsmallgraphs}[3]{\mathbf{LKSsmall}({#1},{#2},{#3})}
\newcommand{\smallvertices}[3]{\mathbb{S}_{{#1},{#2}}({#3})}
\newcommand{\largevertices}[3]{\mathbb{L}_{{#1},{#2}}({#3})}
\newcommand{\JUSTIFY}[1]{\mbox{\tiny{(#1)}}\quad}
\def\Gcapt{G_\nabla}
\def\GD{G_{\mathcal{D}}}
\def\Gblack{G_{\mathrm{reg}}}
\def\Gexp{G_{\mathrm{exp}}}
\def\BGblack{\mathbf{G}_{\mathrm{reg}}}
\def\smallatoms{\mathfrak{A}}
\def\clusters{\mathbf{V}}
\def\class{\nabla}
\def\HugeVertices{\bm{\Psi}}
\def\DenseSpots{\mathcal{D}}
\def\YA{\mathbb{YA}}
\def\YB{\mathbb{YB}}
\def\WantiC{V_{\not\leadsto\HugeVertices}}
\def\ghost{\mathbf{ghost}}
\def\NUP{\mathrm{N}^{\uparrow}}
\def\NDOWN{\mathrm{N}^{\downarrow}}
\def\Vgood{V_\mathrm{good}}
\def\exceptVertSplit{\bar V}
\def\exceptSemSplit{\bar \V}
\def\exceptClustSplit{\bar \clusters}
\def\gC{\mathcal{C}}
\def\gP{\mathtt{P}}
\def\gPatoms{\mathtt{P}_{\smallatoms}}
\def\shrubA{\mathcal S_{A}}
\def\shrubB{\mathcal S_{B}}
\def\Duplicate{\mathsf{Duplicate}}
\def\XA{\mathbb{XA}}
\def\XB{\mathbb{XB}}
\def\XC{\mathbb{XC}}
\def\BS{\mathbf S} 
\def\BL{\mathbf L}
\def\BSN{\mathbf S^0} 
\def\BSI{\mathbf S^{\mathrm{I}}} 
\def\BSR{\mathbf S^\mathrm{R}}
\def\SEPARATOR{\mathbf Q} 
\def\SR{S^{\mathrm R}} 
\def\SN{S^{0}}
\def\Mgood{\M_{\mathrm{good}}}
\def\NAtom{{\mathcal N_{\smallatoms}}}
\newcommand{\colouringp}[1]{\mathfrak{P}_{#1}}
\newcommand{\colouringpI}[1]{^{\restriction{#1}}}
\def\colouringpartition{(\colouringp{0},\colouringp{1},\colouringp{2})}
\newcommand{\proporce}[1]{\mathfrak{p}_{#1}}
\def\shadowsplit{\mathbb{F}}
\def\largeintoatoms{V_{\leadsto\smallatoms}}
\def\clustersintoatoms{\clusters_{\leadsto\smallatoms}}
\def\clustersize{\mathfrak{c}}
\def\LargeTen{\mathcal L^*}
\def\epsilonD{\pi}
\def\alphaD{\widehat{\alpha}}
\renewcommand{\today}{}
\date{}
\begin{document}
\pagenumbering{roman}
\maketitle
\begin{abstract}
We prove the following version of the
Loebl--Koml\'os--S\'os Conjecture: For every~$\alpha>0$
there exists a number $k_0$ such that for every~$k>k_0$
 every $n$-vertex graph $G$ with at least~$(\frac12+\alpha)n$ vertices
of degree at least~$(1+\alpha)k$ contains each tree $T$ of order $k$ as a
subgraph. 


The method to prove our result follows a strategy common to
approaches which employ the Szemer\'edi Regularity Lemma: we
decompose the graph~$G$, find a suitable combinatorial structure inside the decomposition, and then embed the tree~$T$ into~$G$ using this structure. However, the decomposition given by the Regularity Lemma is not of help when~$G$ is sparse. To surmount this shortcoming we use a more general decomposition technique: each graph can be decomposed into vertices of huge degree, regular pairs (in the sense of the Regularity Lemma), and two other objects each exhibiting certain expansion properties.
\end{abstract}

\bigskip\noindent
{\bf Mathematics Subject Classification: } 05C35 (primary), 05C05 (secondary).\\
{\bf Keywords: }extremal graph theory; Loebl--Koml\'os--S\'os Conjecture; tree; Regularity Lemma; sparse graph; graph decomposition.

\newpage

\rhead{\today}

\newpage

\tableofcontents
\newpage
\pagenumbering{arabic}
\setcounter{page}{1}

\section{Introduction}\label{sec:intro}
\subsection{Statement of the problem}\label{ssec:veryIntro}
We provide an approximate solution of the Loebl--Koml\'os--S\'os Conjecture. This is a problem in extremal graph theory which fits the classical form \emph{Does a certain density condition imposed on a graph guarantee a certain subgraph?} Classical results of this type include Dirac's Theorem which determines the minimum degree threshold for containment of a Hamilton cycle, or Mantel's Theorem which determines the average degree threshold for containment of a triangle. Indeed, most of these extremal problems are formulated in terms of the minimum or average degree of the host graph.

We investigate density conditions which guarantee that a host graph contains
\emph{each} tree of order~$k$. The greedy tree-embedding strategy shows that
minimum degree more of than $k-2$ is a sufficient condition. Further, this bound
is best possible as any $(k-2)$-regular graph avoids the $k$-vertex star. However, Erd\H os and S\'os conjectured that the minimum degree condition can be relaxed to an average degree one still giving the same conclusion.
\begin{conjecture}[Erd\H os--S\'os Conjecture 1963]\label{conj:ES}\index{general}{Erdos-Sos Conjecture@Erd\H os-S\'os Conjecture}
Let $G$ be a graph of average degree greater than  $k-2$. Then $G$ contains each
tree of order $k$ as a subgraph.
\end{conjecture}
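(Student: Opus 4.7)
The plan is to mirror the LKS strategy sketched in the abstract. The first move is to pass from the average-degree hypothesis to something more usable. By the standard greedy deletion lemma, any graph of average degree $d$ contains a subgraph of minimum degree $\lceil d/2\rceil$; applied to $G$ this yields a subgraph with minimum degree roughly $k/2$, which is already in the LKS regime. A sharper reduction argues that a graph of average degree $k-2$ must contain $\Omega(n)$ vertices of degree at least $(1-o(1))k$, bringing us directly into the hypothesis of the main theorem advertised in the abstract. Either reduction moves Conjecture~\ref{conj:ES}, in an approximate form, into territory covered by the LKS method.

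Next, apply the decomposition of the trimmed graph into vertices of huge degree, a Szemer\'edi-regular bulk, and two expanding components, as described in the abstract. In the reduced object one looks for a suitable combinatorial template---a long matching, a star-matching, or a bipartite dense spot of total weight at least $(1+\alpha)k$---that can host~$T$. Splitting $T$ at a centroid produces two subtrees each of order at most $k/2$, which are then embedded into the two sides of the template: internal high-degree vertices of $T$ are routed through huge-degree vertices of $G$, bulky subtrees are placed inside regular pairs via the standard bipartite tree-embedding lemma, and long paths or large stars are absorbed by the expanding components.

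The principal obstacle is exactly the one flagged in the abstract: in sparse host graphs the Regularity Lemma alone is vacuous, so one must use the full three-part decomposition and carefully orchestrate the transitions between its components while the embedding of $T$ proceeds. A secondary difficulty, specific to Erd\H os--S\'os, is that the average-degree hypothesis is strictly weaker than LKS's; there is no a priori guarantee that a positive fraction of vertices has large degree, so the preliminary reduction must not waste more than an $\alpha$-fraction of the degree budget. Controlling this loss---while still producing a subgraph dense enough to feed into the LKS machinery and rich enough in huge-degree vertices to accommodate trees with a high-degree backbone (stars, brooms, spiders)---is where I would expect most of the work to concentrate, and it is the step most likely to require new ideas beyond a direct reduction to LKS.
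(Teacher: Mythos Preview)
The statement you are attempting to prove is a \emph{conjecture}; the paper does not prove it and offers no proof to compare against. It is stated only as background, with the remark that a solution for large $k$ has been announced (but not published) by Ajtai, Koml\'os, Simonovits, and Szemer\'edi. So there is no ``paper's own proof'' here.

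Turning to the content of your proposal: the central move---reducing Erd\H os--S\'os to the approximate LKS theorem---does not work, and the gap is exactly the one you yourself flag in your last paragraph but then dismiss. Average degree greater than $k-2$ does \emph{not} force $\Omega(n)$ vertices of degree at least $(1+\alpha)k$, nor even of degree at least $k-1$. A $(k-2)$-regular graph (or the almost-extremal example in Figure~\ref{fig:ExtremalGraphES2}) has the required average degree but contains no vertex of degree $\ge k-1$ whatsoever; it is as far from the LKS hypothesis as possible. Your first reduction, passing to a subgraph of minimum degree about $k/2$, is correct as stated but does not land in LKS territory either: \emph{all} vertices having degree $\ge k/2$ is not the same as \emph{half} the vertices having degree $\ge (1+\alpha)k$, and the approximate LKS theorem needs the latter. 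The paper itself points out (end of Section~\ref{ssec:DecompositionOfLKSGraphs}) that the extremal configurations for the two conjectures are genuinely different---in particular, the Erd\H os--S\'os extremal picture can have a substantial huge-degree component, which is precisely the situation where the LKS proof gets extra slack that it would not have here.

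In short, Erd\H os--S\'os is not a corollary of LKS (approximate or exact), and the announced proof by Ajtai--Koml\'os--Simonovits--Szemer\'edi, while using a related sparse decomposition, requires a separate structural analysis. Your sketch does not supply that.
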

A solution of the Erd\H os--S\'os Conjecture for all $k$ bigger than an absolute constant was announced by Ajtai, Koml\'os, Simonovits, and Szemer\'edi in the early 1990's.
In a similar spirit, Loebl, Koml\'os, and S\'os conjectured that a \emph{median degree} of  $k-1$ or more is sufficient for containment of any tree of order $k$. By median degree we mean the degree of a vertex in the middle of the ordered degree sequence. 
\begin{conjecture}[Loebl--Koml\'os--S\'os Conjecture 1995~\cite{EFLS95}]\label{conj:LKS}
Suppose that $G$ is an $n$-vertex graph with at least $n/2$ vertices of degree more than $k-2$. Then $G$ contains each tree of order $k$.
\end{conjecture}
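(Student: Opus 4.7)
\medskip

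\noindent\textbf{Proof proposal (for the approximate version stated in the abstract).}
My plan is to follow the three-step strategy common to extremal embedding results: \emph{(i)} decompose the host graph $G$ into structured pieces, \emph{(ii)} locate inside the decomposition a combinatorial configuration flexible enough to host an arbitrary $k$-vertex tree, and \emph{(iii)} embed $T$ into this configuration. The natural first attempt is to apply Szemer\'edi's Regularity Lemma to $G$, work with the reduced cluster graph, use the degree hypothesis to extract a matching of dense regular pairs of total weight a little more than $k/2$, and then unfold $T$ across this matching using its natural two-coloring. This succeeds whenever $G$ is \emph{dense}, i.e.\ $n=O(k)$, because the cluster graph then inherits a quantitative version of the LKS degree condition.

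The crux of the problem is the sparse regime $n\gg k$, where the Regularity Lemma is essentially useless: almost all edges of $G$ may lie inside $\varepsilon$-irregular pairs of density $o(1)$, and the cluster graph may record none of the degree hypothesis. Following the abstract, my remedy is a \emph{sparse decomposition} of $G$ into four qualitatively different parts: \emph{(a)} a bounded collection of vertices of huge degree, much larger than $k$, which will serve as universal hubs; \emph{(b)} the edges of a bounded-size family of $\varepsilon$-regular pairs of density bounded away from zero, obtained by a regularization of $G$ after suitable trimming; \emph{(c)} an expander-type subgraph collecting edges that survive the trimming but that the regularization discards, yet whose neighborhoods still expand; and \emph{(d)} a negligible set of edges to be deleted. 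The decomposition must be engineered so that every vertex $v$ with $\deg_G(v)\ge(1+\alpha)k$ still sends a quantitative share of its edges into at least one of (a), (b), (c).

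With this decomposition in hand, step (ii) is a case analysis over several tree-friendly configurations carrying $k$ vertices: a matching of regular pairs glued together by hubs from (a); a regular pair incident to a huge-degree vertex; an expander subgraph of order $\Omega(k)$; or combinations thereof. Driven by the interplay between the $(1/2+\alpha)n$ side of the hypothesis and the $(1+\alpha)k$ degree bound, I would show that at least one such configuration must appear. The typical output I aim for is a sufficiently long \emph{ladder of regular pairs}, possibly interspersed with hub-edges and expander stretches, of total weight at least $(1+\alpha/2)k/2$ on each side.

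Step (iii) is then a tree-embedding lemma of now-standard flavor. Decompose $T$ into a small backbone with attached \emph{shrubs} of size $\le\delta k$, use its natural bipartition to direct each shrub to the correct side of a regular pair in the skeleton, embed greedily inside the regular pairs, and use the hubs and expanders to absorb the leftover. The main obstacle, and where I expect to spend most of the effort, is step (i) together with the case analysis in step (ii): producing a robust sparse decomposition and tracking how the LKS degree hypothesis redistributes across its four components is exactly the place where the purely Regularity-Lemma based approach fails, and it is what makes the sparse regime substantially harder than the dense one.
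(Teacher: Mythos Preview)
Your high-level plan matches the paper's strategy for Theorem~\ref{thm:main} (note that Conjecture~\ref{conj:LKS} itself is not proved there). Your decomposition, however, is missing a piece: besides the huge-degree vertices, the regular pairs, and the nowhere-dense expander $\Gexp$, the paper's sparse decomposition (Definition~\ref{bclassdef}) requires a fourth nontrivial object, the \emph{avoiding set} $\smallatoms$. These are vertices covered by dense spots but impossible to organise into clusters; they satisfy a peculiar property (Definition~\ref{def:avoiding}): for any forbidden set $U$ of size $O(k)$, almost every $v\in\smallatoms$ lies in some dense spot that $U$ barely touches. This is the second of the ``two other objects each exhibiting certain expansion properties'' that the abstract promises, it has its own embedding technique (Section~\ref{sssec:whyavoiding}), and without it you cannot capture all but $o(kn)$ edges of $G$.

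Your step~(ii) is also too optimistic. A single ``ladder of regular pairs interspersed with hubs and expander stretches'' is not what one finds in general; the paper needs ten structurally different configurations $(\diamond1)$--$(\diamond10)$ (Section~\ref{ssec:TypesConf}), each with a bespoke embedding lemma. More seriously, obtaining even the rough structure (Lemma~\ref{prop:LKSstruct}) behind these configurations requires an \emph{augmenting-matching} step (Lemma~\ref{lem:Separate}, discussed in Section~\ref{ssec:whyaugment}): the cluster graph produced by the sparse decomposition is not canonical and may carry no useful matching at all, so one must re-regularize pieces of $G$ via the dense spots to manufacture an auxiliary semiregular matching. This step has no analogue in the dense case and is one of the main technical innovations; your outline does not anticipate it.
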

We discuss in detail Conjectures~\ref{conj:ES} and~\ref{conj:LKS} in
Section~\ref{ssec:LKS}. Here, we just state the main result of the paper, an
approximate solution of the Loebl--Koml\'os--S\'os Conjecture.
\begin{theorem}[Main result]\label{thm:main}
For every $\alpha>0$ there exists  $k_0$ such that for any
$k>k_0$ we have the following. Each $n$-vertex graph $G$ with at least
$(\frac12+\alpha)n$ vertices of degree at least $(1+\alpha)k$ contains each tree $T$ of
order $k$.
\end{theorem}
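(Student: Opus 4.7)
The plan is to proceed in three phases: decompose the host graph $G$, extract a combinatorial skeleton from the decomposition, and embed $T$ into $G$ along this skeleton. The decomposition is the heart of the matter, because Szemer\'edi's Regularity Lemma alone is inadequate when $G$ is sparse (average degree $O(k)$ while $n\gg k$); in that regime almost every pair of clusters has density zero and the reduced graph is essentially empty.

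First, split $V(G)$ into the \emph{small} vertices, of degree less than $(1+\alpha)k$, and the \emph{large} vertices $L$, of degree at least $(1+\alpha)k$, so that $|L|\ge(\tfrac12+\alpha)n$. Inside $L$, isolate a set $\Psi$ of \emph{huge-degree} vertices (degree much larger than $k$), which can absorb parts of $T$ very cheaply. To the remainder, apply a generalised decomposition: after a cleaning step removing atypical local structure, the edges of $G-\Psi$ split into (i) edges inside $\eps$-regular pairs of positive density, (ii) edges in a dense-but-localised expanding piece, and (iii) edges in a globally expanding, possibly sparse piece. This refinement of the Szemer\'edi partition into four ingredients, including $\Psi$, is the technical backbone that the paper develops.

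Second, inside the decomposition I would look for a combinatorial object playing the role of a large matching in the reduced graph of the classical dense setting: roughly, $\Theta(k)$ almost-disjoint region-pairs $(A_i,B_i)$ through which any bipartite tree on $k$ vertices can be threaded, with one colour class in the $A_i$ and the other in the $B_i$. The asymmetry between $L$ and $V\setminus L$ is exploited here, since only $L$ has the degree to host tree vertices with many children, so the skeleton must consistently route child-rich tree vertices into $L$-heavy regions of $G$. The argument splits by which of the four ingredients ($\Psi$, regular pairs, each of the two expanding pieces) dominates, and each case must independently produce the required threading capacity. To embed $T$ I would then cut it into a bounded collection of subtrees of size at most $\alpha k/C$ via a classical separator argument and embed them one at a time, attaching each to the current image at a single boundary vertex; the two-colouring of $T$ places consecutive levels on alternating sides of the chosen region-pair, subtrees with high-degree or stalk-rich roots are routed to $\Psi$ or to an expander region where room is guaranteed, load-balancing bookkeeping prevents any cluster from being overfilled, and the $\alpha$ slack in the hypothesis supplies the buffer needed at every step.

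The main obstacle is the decomposition together with the coordinated use of its four ingredients during embedding. In the classical dense case a single tool (a matching in the reduced graph plus regularity) suffices; here the embedding must switch seamlessly between routing through a regular pair, an expander, and $\Psi$ while maintaining degree budgets on the tree side. Proving that, uniformly over all $G$ obeying the hypothesis, at least one of the four structures provides the $\Theta(k)$ worth of absorbing capacity demanded by the worst-case tree $T$ is the crux of the paper.
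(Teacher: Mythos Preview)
Your plan is essentially the paper's approach: sparse decomposition into huge-degree vertices $\Psi$, regular pairs, a nowhere-dense expander $G_{\mathrm{exp}}$, and a fourth ingredient; then a matching-type skeleton, a fine partition of $T$ into tiny subtrees, and a case-driven embedding that switches among the ingredients. Two refinements you should be aware of: the fourth ingredient is not a second expander but the \emph{avoiding set} $\mathfrak{A}$ (vertices each of which, given any prescribed small forbidden set $U$, lies in some dense spot nearly disjoint from $U$ --- a property tailored precisely to sequential tree embedding rather than to expansion); and the matching cannot in general be read off the cluster graph $\mathbf{G}_{\mathrm{reg}}$ alone, since $\mathbf{G}_{\mathrm{reg}}$ may be empty even when a suitable structure exists --- the paper must \emph{augment} the matching by re-regularising through the dense spots (Section~\ref{sec:augmenting} and Lemma~\ref{prop:LKSstruct}), and only afterwards does the case analysis (into ten configurations, not four) go through.
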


\subsection{Regularity lemma and dense graph theory}\label{ssec:iRL}
The Szemer\'edi Regularity Lemma has been a major tool in extremal graph theory
for three decades. It provides an approximate representation of a graph with
a so-called \emph{cluster graph}. This cluster graph representation is the key for graph-containment problems. The usual strategy here is that instead of solving the original problem one focuses on a modified simpler problem in the cluster graph.

The applicability of the Szemer\'edi Regularity Lemma is, however, limited to \emph{dense graphs}, i.e., graphs that contain a substantial proportion of all possible edges. Luckily enough many graphs arising in extremal graph theory are dense, as for example those coming from Dirac's and Mantel's Theorem above. But, while the proofs of these two results are elementary many of their extensions rely on the Regularity Lemma.

While the theory of dense graphs is well understood due to the Szemer\'edi Regularity Lemma, no such tool is available for sparse graphs. A regularity type representation of general (possibly sparse) graphs is one of the most important goals of contemporary discrete mathematics. By such a representation we mean an approximation of the input graph by a structure of bounded complexity carrying enough of the important information about the graph.

A central tool in the proof of Theorem~\ref{thm:main} is a structural
decomposition of the graph $G_\PARAMETERPASSING{T}{thm:main}$. This
decomposition --- which we call \emph{sparse decomposition} --- applies to any
graph whose average degree is bigger than an absolute constant. The sparse decomposition provides a partition of any graph into vertices of huge degrees and into a bounded degree part. The bounded degree part is further decomposed into dense regular pairs, an edge set with certain expander-like properties, and a vertex set which is expanding in a different way (we shall give a more precise description in Section~\ref{ssec:OverviewOfProof}). 
This kind of decomposition was first used by Ajtai, Koml\'os, Simonovits, and Szemer\'edi in their yet unpublished work on the Erd\H os--S\'os Conjecture.

In the case of dense graphs the sparse decomposition produces a Szemer\'edi regularity partition, and thus the decomposition lemma (Lemma~\ref{lem:decompositionIntoBlackandExpanding}) extends the Szemer\'edi Regularity Lemma. 
But, the interesting setting for
 the  Decomposition Lemma are sparse graphs. Being sparse, these graphs may be expected to contain less interesting substructures than dense graphs, and so, it comes as no surprise that the output of Lemma~\ref{lem:decompositionIntoBlackandExpanding} in this setting is  less useful than a Szemer\'edi regularity partition for dense graphs. If we think of  graph containment problems, the applicability of Lemma~\ref{lem:decompositionIntoBlackandExpanding} seems to be limited to simple structures as trees.

\subsection{Loebl--Koml\'os--S\'os Conjecture and Erd\H os--S\'os Conjecture}\label{ssec:LKS}
Let us first introduce some notation. We say that $H$ \emph{embeds} in a graph $G$ and write $H\subset G$ if $H$ is a (not necessarily induced) subgraph of $G$. The associated map $\phi:V(H)\rightarrow V(G)$ is called an \index{general}{embedding}\emph{embedding of $H$ in $G$}.
More generally, for a graph class $\mathcal H$ we
write $\mathcal H\subset G$ if $H\subset G$ for
every $H\in\mathcal H$. 
Let \index{mathsymbols}{*Trees@$\treeclass{k}$}$\treeclass{k}$ be the
class of all trees of order $k$.

\medskip

Conjecture~\ref{conj:LKS} is dominated by two parameters: one quantifies the number of vertices of `large' degree, and the other tells us how large this degree should actually be. Strengthening either of these bounds sufficiently,  the conjecture becomes trivial. 
\footnote{
Indeed, if we replace $n/2$ with $n$, then any tree of order $k$ can be embedded greedily.
Also, if we replace $k-2$ with $4k-4$, then $G$, being a graph of average degree at least $2k-2$, has a subgraph $G'$ of minimum degree at least~$k-1$. Again we can greedily embed any tree of order $k$.
}

On the other hand, one may ask whether lower bounds would suffice.   For the bound
  $k-2$, this is not the case, since stars of order $k$ require a vertex of degree at least $k-1$ in the host graph. As for the bound $n/2$, the following example shows that this number cannot be decreased much.

First, assume that $n$ is even, and that $n=k$. Let $G^*$ be obtained from the complete graph on~$n$ vertices by deleting all edges inside a set of  $\frac
n2+1$ vertices. It is easy to
check that $G^*$ does not contain the
path\footnote{In general, $G^*$ does not contain any
tree $T\in\treeclass{k}$ with independence number less than $\frac k2+1$.}
 $P_{k}\in\treeclass{k}$. Now, taking the union of 
several disjoint copies  of $G^*$ we obtain examples for other values of $n$. (And adding a small complete component we can get to \emph{any} value of $n$.) See
Figure~\ref{fig:ExtremalGraph} for an illustration.
\begin{figure}[t]
\centering 
\includegraphics[scale=0.7]{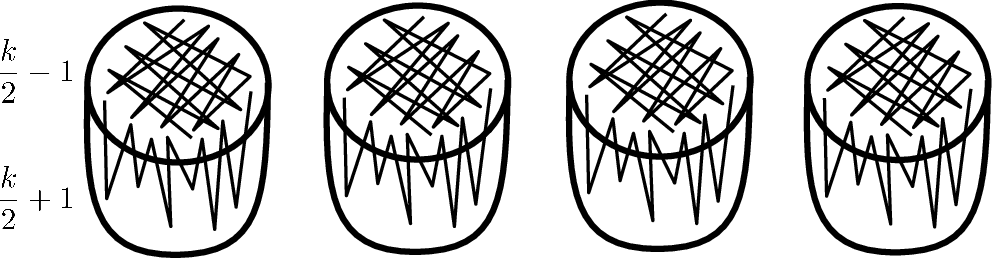}
\caption[Extremal graph for the Loebl--Koml\'os--S\'os Conjecture]{An extremal graph for the Loebl--Koml\'os--S\'os Conjecture.}
\label{fig:ExtremalGraph}
\end{figure}

However, we do not know of any example attaining the exact bound
 $ n/2$. Thus it might be possible to lower the bound $n/2$ from Conjecture~\ref{conj:LKS} to the one attained in our example above:
\begin{conjecture}\label{conj:LKSstronger}
Let $k\in \mathbb N$ and let $G$ be a graph on $n$ vertices, with more than $\frac n2-\lfloor \frac{n}{k}\rfloor - (n\mod k)$ vertices of degree at least $k-1$. Then $\treeclass{k}\subset G$.
\end{conjecture}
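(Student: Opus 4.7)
The plan is to combine the main theorem (Theorem~\ref{thm:main}) with a stability analysis that pinpoints the near-extremal structure. Since Theorem~\ref{thm:main} disposes of graphs with at least $(\tfrac12+\alpha)n$ vertices of degree at least $(1+\alpha)k$ for any fixed $\alpha>0$ (provided $k \geq k_0(\alpha)$), the residual case is the tight regime in which the count of vertices of degree $\ge k-1$ lies just above $\tfrac{n}{2}-\lfloor n/k\rfloor-(n\bmod k)$, with essentially no room to spare and with these vertices having degree close to $k-1$ rather than a constant factor larger. This is where the full strength of the conjecture must be extracted.

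The first substantive step would be to establish a stability statement of the form: \emph{either $\treeclass{k}\subset G$, or $V(G)$ admits, up to a negligible exceptional set, a partition into components of order at most $k$, each structurally close to the extremal graph $G^*$ of Figure~\ref{fig:ExtremalGraph}}. I would try to derive this by revisiting the sparse decomposition of Section~\ref{ssec:iRL}: every branch of the embedding procedure that fails should force vertex clusters to coalesce into near-cliques of size $\le k$ with a single large independent set, because any genuine crossing edge, expansion, or regular pair connecting different components would yield enough room to route a tree across. A central technical point is to convert the qualitative ``enough room'' statement of Theorem~\ref{thm:main} into a quantitative one controlled by the number of high-degree vertices.

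Once the stability description is in hand, the conclusion would follow from a counting argument. Each $G^*$-like component of order $k$ contributes at most $\tfrac{k}{2}-1$ vertices of degree $\ge k-1$, and a component of order $k'<k$ contributes at most $\max\{0,\tfrac{k'}{2}-1\}$. Summing over the at most $\lfloor n/k\rfloor$ components of order $k$ and the residual component(s) absorbing the $n\bmod k$ leftover vertices yields a total not exceeding $\tfrac{n}{2}-\lfloor n/k\rfloor-(n\bmod k)$. Hence any strict excess above this count must be witnessed by a deviation from the extremal structure---an extra inter-component edge, an additional high-degree vertex inside an independent part, or an oversized component---and each such deviation is directly exploited to embed any target tree $T\in\treeclass{k}$ by splicing together paths through the enhanced connection.

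The main obstacle is the stability step. The proof of Theorem~\ref{thm:main} already proceeds by an elaborate case analysis on the decomposition, and making each case \emph{quantitatively} tight---so that approximate failure yields approximate extremality---will likely require a substantial reworking rather than a black-box appeal. A secondary difficulty is that the extremal graph $G^*$ fails only for trees of small independence number (notably $P_k$): for such trees the sparse decomposition affords the least slack, and one may need dedicated arguments---perhaps a matching- or parity-based routing inside almost-$G^*$ components---to complete the embedding. Finally, because the conjecture claims a bound valid for \emph{all} $k$, one must also handle small~$k$, where the Regularity Lemma is ineffective and direct combinatorial arguments are required.
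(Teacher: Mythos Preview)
The statement you are attempting to prove is Conjecture~\ref{conj:LKSstronger}, which the paper explicitly leaves open. There is no proof in the paper to compare against: the authors introduce this conjecture immediately after observing that they ``do not know of any example attaining the exact bound $n/2$,'' and they offer it as a possible strengthening of Conjecture~\ref{conj:LKS}, not as a result. The paper's main theorem, Theorem~\ref{thm:main}, is only an approximate version of the weaker Conjecture~\ref{conj:LKS}, with two layers of slack (the $\alpha n$ extra high-degree vertices and the $\alpha k$ extra degree), and even that requires the full machinery of the paper plus the hypothesis $k>k_0(\alpha)$.

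Your proposal is not a proof but a research programme, and you acknowledge as much when you write that the stability step ``will likely require a substantial reworking rather than a black-box appeal'' and that small $k$ must be handled separately. These are genuine, unresolved obstacles. In particular, the stability-plus-exact-analysis route you sketch is the standard template (and is indeed how Theorem~\ref{thm:denseLKS} was obtained from Theorem~\ref{thm:PiguetStein} in the dense case), but carrying it out here would mean proving the exact Loebl--Koml\'os--S\'os Conjecture and more, for all $k$---a result well beyond what the paper establishes. Your counting argument in the third paragraph also needs care: the extremal construction in Figure~\ref{fig:ExtremalGraph} has components with $\tfrac{k}{2}-1$ vertices of degree $\ge k-1$ only when $n=k$; the general bound in the conjecture arises from taking disjoint copies plus a residual, and the arithmetic you give does not quite match the conjectured threshold.
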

It might even be that
if $n/k$ is far from integrality, a slightly lower bound on the number of vertices of large degree still works (see~\cite{HladkyMSC,HlaPig:LKSdenseExact}).

\medskip

Several partial results concerning 
Conjecture~\ref{conj:LKS} have been obtained; let us briefly summarize  the major ones. Two main directions can be distinguished among those results that prove the conjecture for special classes of graphs: either one places restrictions on the host graph, or on the class of trees to be embedded. Of the latter type is the result by
Bazgan, Li, and Wo{\'z}niak~\cite{BLW00}, who  proved the
conjecture for paths. Also, Piguet and Stein~\cite{PS2} proved that 
Conjecture~\ref{conj:LKS} is true for trees of
diameter at most 5, which improved earlier results of Barr and Johansson~\cite{Barr} and Sun~\cite{Sun07}.

Restrictions on the host graph have led to the following results.
Soffer~\cite{Sof00} showed that Conjecture~\ref{conj:LKS} is true if the
host graph has girth at least 7. Dobson~\cite{Dob02} proved the
conjecture for host graphs whose complement does not contain a
$K_{2,3}$. This has been extended by Matsumoto and Sakamoto~\cite{MaSa} who replace the $K_{2,3}$ with a slightly larger graph.

\medskip

A different approach is to solve the conjecture for special values of $k$. One such case, known as the Loebl conjecture, or also as the ($n/2$--$n/2$--$n/2$)-Conjecture,  is the case $k=n/2$. Ajtai, Koml\'os, and Szemer\'edi~\cite{AKS95} solved an approximate version of this conjecture, and later
 Zhao~\cite{Z07+} used a refinement of this approach to prove the sharp version of the conjecture for large
graphs.

\medskip

An approximate version of Conjecture~\ref{conj:LKS} for dense graphs, that is, for $k$ linear in $n$, was proved by Piguet
and Stein~\cite{PS07+}. Let us take this opportunity to introduce a useful notation.
Write
\index{mathsymbols}{*LKSgraphs@$\LKSgraphs{n}{k}{\eta}$}$\LKSgraphs{n}{k}{\alpha}$
for the class of all $n$-vertex graphs with at least
$(\frac12+\alpha)n$ vertices of degrees at least
$(1+\alpha)k$. With this notation Conjecture~\ref{conj:LKS} states that every graph in $\LKSgraphs{n}{k}{0}$ contains every tree from $\treeclass{k+1}$.

\begin{theorem}[Piguet--Stein~\cite{PS07+}]\label{thm:PiguetStein}
For any $q>0$ and $\alpha>0$ there exists a number $n_0$ such that for any $n>n_0$ and
$k>qn$ the following holds. If  $G\in\LKSgraphs{n}{k}{\alpha}$ then
$\treeclass{k+1}\subset G$.
\end{theorem}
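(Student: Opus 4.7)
The plan is to use the Szemerédi Regularity Lemma, which is the natural tool for the dense regime $k\geq qn$. Fix constants $\epsilon,d\ll \alpha,q$ and apply the Regularity Lemma to $G$ to obtain a partition $V(G)=V_0\cup V_1\cup\dots\cup V_t$ with $|V_0|\leq\epsilon n$ and clusters $V_i$ of common size $L$, such that all but an $\epsilon$-fraction of the pairs $(V_i,V_j)$ are $\epsilon$-regular. Form the cluster graph $\mathbf{G}$ on $\{V_1,\dots,V_t\}$ by keeping only those pairs that are $\epsilon$-regular with density at least $d$, and let $G'\subseteq G$ consist of the edges contained in such pairs.

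The first step is to transfer the LKS-type condition from $G$ to $\mathbf{G}$. A standard counting argument, discarding $V_0$ together with vertices incident to many irregular or sparse pairs (at most $(\epsilon+d+1/t)n$ vertices in total), shows that at least $(\tfrac12+\tfrac{3\alpha}{4})n$ vertices of $G'$ have $G'$-degree at least $(1+\tfrac{3\alpha}{4})k$. Averaging over clusters, this implies that a set $\mathcal H$ of at least $(\tfrac12+\tfrac{\alpha}{2})t$ ``heavy'' clusters $V_i$ satisfies $\sum_{V_j\in N_{\mathbf{G}}(V_i)}|V_j|\geq (1+\tfrac{\alpha}{2})k$.

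Next, I would locate the combinatorial skeleton for the embedding inside $\mathbf{G}$. The key object, reminiscent of the finite LKS structure, is either (a) an edge $V_AV_B$ of $\mathbf{G}$ with both endpoints in $\mathcal H$ together with an auxiliary set $\mathcal W\subseteq N_{\mathbf{G}}(V_A)\cup N_{\mathbf{G}}(V_B)$ such that $|V_A|+\sum_{V_j\in\mathcal W\cap N(V_B)}|V_j|$ and $|V_B|+\sum_{V_j\in\mathcal W\cap N(V_A)}|V_j|$ both exceed $(1+\tfrac{\alpha}{3})k/2$; this exists because more than half of all clusters are heavy, so two heavy clusters must be adjacent and together cover enough volume. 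After identifying this structure I would additionally ``clean'' the pair $(V_A,V_B)$ to make it super-regular, discarding a negligible fraction of each cluster.

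Finally, I would embed $T\in\treeclass{k+1}$ using this skeleton. Root $T$ at a centroid $r$ and take the natural bipartition $T=T_{\mathrm{odd}}\cup T_{\mathrm{even}}$; by a symmetry argument the minority side has size at least $\alpha k$, say. Process $T$ by BFS from $r$, mapping $r$ to $V_A$ and alternately extending the partial embedding into $V_B$ and $V_A$ via the regularity of $(V_A,V_B)$: at each step a vertex $v\in V(T)$ already placed in, say, $V_A$ has a candidate set of at least $(d-\epsilon)|V_B|$ unused image vertices into which we embed the children of $v$. When $V_A$ or $V_B$ saturates, reroute the current branch of $T$ into an unused heavy partner $V_j\in\mathcal W$ adjacent to the current side; the capacity guarantee from the previous paragraph ensures this can always be done until $T$ is fully embedded. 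The main obstacle, and where the bulk of technical work sits, is maintaining the ``space'' accounting so that every regular pair used remains sufficiently regular throughout the embedding, and handling trees concentrated on one side of the bipartition, which forces us to exploit $\mathcal W$ rather than the single pair $(V_A,V_B)$.
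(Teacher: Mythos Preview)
This theorem is cited from~\cite{PS07+} and is not proved in the present paper; however, the paper does describe the Piguet--Stein argument in enough detail (see Section~\ref{ssec:EmbedOverview10}) to see that your proposal misses its central idea.

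The genuine gap is the absence of any \emph{matching} structure in the cluster graph. In the Piguet--Stein proof the key structural object is not just an edge $V_AV_B$ between two heavy clusters together with a loose collection $\mathcal W$ of neighbours, but rather a matching $M$ in the cluster graph and a set $\mathbf L$ of ``large'' clusters such that either $M$ covers $N(\{A,B\})$, or a related half-covered variant holds (conditions~\textbf{(H1)}/\textbf{(H2)} in Section~\ref{ssec:EmbedOverview10}). This matching is found via a Gallai--Edmonds type argument. Your rerouting step cannot work without it: each cluster has size $L\approx n/t\ll k$, so almost the entire tree must leave $V_A\cup V_B$; but a subtree routed to a single cluster $V_j\in\mathcal W$ still needs a partner cluster for its other colour class. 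You never say where the odd levels go once you leave $V_B$. The matching $M$ is precisely what supplies those partners, and the balance accounting in the embedding (filling matching edges as evenly as possible) is the bulk of the work.

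A second, related issue is the tree processing. A raw BFS from a centroid gives no control over how much of each colour class lands in each regular pair. The actual proof first cuts $T$ into $O(1/\tau)$ cut-vertices and small shrubs (the analogue of the $\ell$-fine partition of Section~\ref{sec:cut}), embeds the cut-vertices in the super-regular pair $(V_A,V_B)$, and then distributes the shrubs across the edges of $M$. Your sketch conflates these two phases and leaves the capacity argument (``maintaining the space accounting'') as exactly the point where the proof lives.
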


This result was proved using the regularity method. Adding stability arguments, 
 Hladk\'y and Piguet~\cite{HlaPig:LKSdenseExact}, and independently Cooley~\cite{Cooley08} proved Conjecture~\ref{conj:LKS} for large dense graphs.
 
\begin{theorem}[Hladk\'y--Piguet~\cite{HlaPig:LKSdenseExact}, Cooley~\cite{Cooley08}]\label{thm:denseLKS}
For any $q>0$ there exists a number $n_0=n_0(q)$ such that for any $n>n_0$ and $k>qn$ the following
holds. If $G\in\LKSgraphs{n}{k}{0}$ then $\treeclass{k+1}\subset G$.
\end{theorem}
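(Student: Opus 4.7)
The plan is to bootstrap from the approximate dense version, Theorem~\ref{thm:PiguetStein}, by means of a stability argument. The approximate result handles every graph $G$ with a noticeable slack $\alpha>0$ in the hypothesis; what remains to handle in the exact version ($\alpha=0$) are the graphs that sit right at the threshold. For those, either one can artificially create slack or one must be near-extremal, and the structure of the near-extremal configurations is tight enough to exploit directly.

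First I would identify the relevant extremal configurations. The construction $G^*$ given after Conjecture~\ref{conj:LKS}, together with its disjoint-union variants, barely fails the Loebl--Koml\'os--S\'os hypothesis: the set $A$ of high-degree vertices has size exactly $\tfrac{n}{k}\cdot(\tfrac{k}{2}-1)$, which is just below $n/2$. Let $\mathcal{E}_\gamma$ denote the class of $n$-vertex graphs that are $\gamma n^2$-close in edit distance to a disjoint union of scaled copies of $G^*$ (plus possibly a small complete component). The argument splits according to whether $G\in\mathcal{E}_\gamma$ or not, for a suitably chosen $\gamma=\gamma(q)>0$.

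The non-extremal case is where the approximate theorem is invoked. Given $G\in\LKSgraphs{n}{k}{0}\setminus\mathcal{E}_\gamma$, I would argue that $G$ must contain a subgraph $G'$ on $n'\ge(1-o(1))n$ vertices which is in fact in $\LKSgraphs{n'}{k}{\alpha}$ for some $\alpha=\alpha(\gamma)>0$. The heuristic reason is that being far from $\mathcal{E}_\gamma$ forces one of the following: either a linear surplus of vertices of degree $\ge k$, or a linear surplus in the degrees themselves beyond $k$, or substantial connectivity between the two sides of any near-bipartite partition. In each sub-case, deleting a small set of atypical vertices produces $G'$ with uniform $\alpha$-slack, and then Theorem~\ref{thm:PiguetStein} embeds $T\in\treeclass{k+1}$ into $G'\subset G$. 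Formalizing this step uses a Szemer\'edi regularity partition of $G$ and a stability statement for the matching/path cover of the cluster graph that the proof of Theorem~\ref{thm:PiguetStein} already produces.

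The near-extremal case, $G\in\mathcal{E}_\gamma$, is the genuine obstacle and the place where $\alpha=0$ must really be paid for. Here $G$ looks like a union of blown-up copies of $G^*$ plus some small perturbation. I would process one component at a time and split on the structure of $T$: if $T\in\treeclass{k+1}$ has a color class of size at most $k/2$ in its bipartition (as most trees do), one embeds the smaller class into the clique-like side $A$ of the component and the larger class into the independent side $B$, using that $A$--$B$ is complete and that the $(k+1)$-st vertex gives exactly the extra room needed to overcome the one-vertex deficit of $G^*$. The remaining trees --- those with both color classes close to $k/2$ (in particular $P_{k+1}$ when $k$ is even, and trees whose independence number is barely above $k/2$) --- must be embedded by a separate ad hoc argument that uses the small perturbations distinguishing $G$ from a true disjoint union of copies of $G^*$, or cross-component edges when several components are present. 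Handling this last family of trees, matching each of them to the available structural slack in $G$, is the main technical difficulty of the proof; it is precisely what the approximate theorem leaves open and what the stability-plus-extremal analysis of Hladk\'y--Piguet and of Cooley ultimately resolves.
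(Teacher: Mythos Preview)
The paper does not prove Theorem~\ref{thm:denseLKS}; it is quoted as an external result of Hladk\'y--Piguet and of Cooley, with only the one-line remark that it was obtained by ``adding stability arguments'' to the approximate result Theorem~\ref{thm:PiguetStein}. So there is no proof in the paper to compare your proposal against.

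That said, your sketch is in line with what the paper says about the approach of~\cite{HlaPig:LKSdenseExact,Cooley08}: a stability dichotomy on top of the approximate theorem, with a separate analysis of near-extremal graphs. As a high-level plan this is correct and is indeed how those papers proceed. What you have written, however, is a plan rather than a proof: the non-extremal step (``being far from $\mathcal{E}_\gamma$ forces slack'') and the near-extremal step (embedding the balanced trees into perturbed copies of $G^*$) are exactly the places where the real work lies, and each requires a substantial technical argument that you have only named, not carried out. In particular, the claim that a non-extremal $G\in\LKSgraphs{n}{k}{0}$ contains a large subgraph in $\LKSgraphs{n'}{k}{\alpha}$ is not automatic and is itself a stability theorem that has to be proved; and the ``ad hoc'' embedding of near-balanced trees into near-extremal graphs is the hardest case in both cited papers.
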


\medskip

Let us now turn our attention to the  Erd\H os--S\'os Conjecture. It is particularly important to compare the structure of the respective extremal graph with the extremal graphs for the Loebl--K\'omlos--S\'os Conjecture.
The Erd\H os--S\'os Conjecture~\ref{conj:ES} is best possible whenever $n(k-2)$ is even. Indeed, in that case it suffices to consider a $(k-2)$-regular graph. This is a graph with average degree exactly $k-2$ which does not contain the star of order $k$. Even when the star (which in a sense is a pathological tree) is excluded from the considerations, we can --- at least when $k-1$ divides $n$ --- consider a disjoint union of $\frac{n}{k-1}$ cliques $K_{k-1}$. This graph contains \emph{no} tree from $\treeclass{k}$.

There is another important graph with many edges which does not contain for example the path $P_{k}$, depicted in Figure~\ref{fig:ExtremalGraphES2}. This graph has $\frac12(k-2)n-O(k^2)$ edges when $k$ is even and $\frac12(k-3)n-O(k^2)$ edges otherwise, and therefore gets close to the conjectured bound when $k\ll n$.
\begin{figure}[t]
\centering 
\includegraphics[scale=0.7]{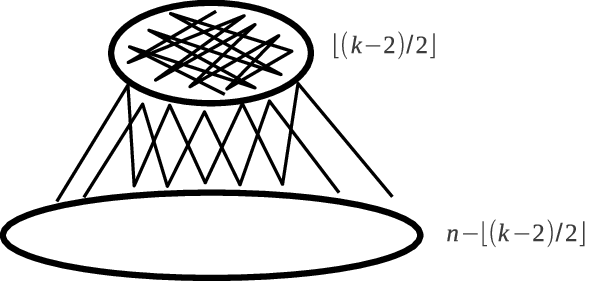}
\caption[Almost extremal graph for the Erd\H os--S\'os Conjecture]{An almost extremal graph for the Erd\H os--S\'os Conjecture.}
\label{fig:ExtremalGraphES2}
\end{figure}
Apart from the already mentioned announced breakthrough by Ajtai, Koml\'os, Simonovits, and Szemer\'edi, work on this conjecture includes~\cite{bradob,Haxell:TreeEmbeddings,MaSa,sacwoz,woz}.

\bigskip

Both Conjectures~\ref{conj:LKS} and Conjecture~\ref{conj:ES} have an important application in Ramsey theory. Each of them implies that the Ramsey number of two trees $T_{k+1}\in\treeclass{k+1}$, $T_{\ell+1}\in\treeclass{\ell+1}$ is bounded by $R(T_{k+1}, T_{\ell+1})\le k+\ell+1$. Actually more is implied: Any $2$-edge-colouring of $K_{k+\ell +1}$ contains either \emph{all} trees in $\treeclass{k+1}$ in red, or \emph{all} trees in $\treeclass{\ell+1}$ in blue.

The bound $R(T_{k+1}, T_{\ell+1})\le k+\ell+1$ is almost tight only for certain types of trees:
 Harary~\cite{Harary:RecentRamsey} showed $R(S_k,S_\ell)=k+\ell-2-\epsilon$ for stars $S_k\in\treeclass{k}$, $S_\ell\in\treeclass{\ell}$, where $\epsilon\in\{0,1\}$ depends on the
parity of $k$ and $\ell$. On the other hand,  Gerencs\'er and Gy\'arf\'as~\cite{GerencserGyarfas} showed $R(P_k,P_\ell)=\max\{k,\ell\}+\left\lfloor\frac{\min\{k,\ell\}}2\right\rfloor-1$ for paths
$P_k\in\treeclass{k}$, $P_\ell\in\treeclass{\ell}$.
 Haxell, \L uczak, and Tingley confirmed asymptotically~\cite{HLT02} that the discrepancy of the Ramsey bounds for trees depends on their balancedness, at least when the maximum degrees of the trees considered are moderately bounded.

\subsection{Related tree containment problems}

\paragraph{Trees in random graphs.}
To complete the picture of research involving tree containment problems we mention two rich and vivid (and also closely connected) areas: trees in random graphs, and trees in expanding graphs. The former area is centered around the following question: \emph{What is the probability threshold $p=p(n)$ for the Erd\H{o}s--R\'enyi random graph $G_{n,p}$ to contain asymptotically almost surely (a.a.s.) each tree/all trees from a given class $\mathcal F_n$ of trees?} Note that there is a difference between containing ``each tree'' and ``all trees'' as the error probabilities for missing individual trees might sum up.

Most research focused on containment of spanning trees, or almost spanning trees. The only well-understood case is when $\mathcal F_n=\{P_{k_n}\}$ is a path. The threshold $p=\frac{(1+o(1))\ln n}{n}$ for appearance of a spanning path (i.e., $k_n=n$) was determined by Koml\'os and Szemer\'edi~\cite{KomSze:HamiltonRandom}, and independently by Bollob\'as~\cite{Boll:HamiltonRandom}. Note that this threshold is the same as the threshold for a weaker property for connectedness. We should also mention a previous result of P\'osa~\cite{Posa:HamiltonRandom} which determined the order of magnitude of the threshold, $p=\Theta(\frac{\ln n}n)$. The heart of P\'osa's proof, the celebrated rotation-extension technique, is an argument about expanding graphs, and indeed many other results about trees in random graphs exploit the expansion properties of $G_{n,p}$ in the first place.

The threshold for the appearance of almost spanning paths in $G_{n,p}$ was determined by Fernandez de la Vega~\cite{Fern:LongRandom} and independently by Ajtai, Koml\'os, and Szemer\'edi~\cite{AKS:LongRandom}. Their results say that a path of length $(1-\epsilon)n$ appears a.a.s.\ in $G_{n,\frac{C}{n}}$ for $C=C(\epsilon)$ sufficiently large. This behavior extends to bounded degree trees. Indeed, Alon, Krivelevich, and Sudakov~\cite{AlKrSu:NearlySpanningTrees} proved that $G_{n,\frac{C}{n}}$ (for a suitable $C=C(\epsilon,\Delta)$) a.a.s.\ contains all trees of order $(1-\epsilon)n$ with maximum degree at most $\Delta$ (the constant $C$ was later improved in~\cite{BCPS:AlmostSpanningRandom}).

Let us now turn to spanning trees in random graphs. It is known~\cite{AlKrSu:NearlySpanningTrees}  that a.a.s.\  $G_{n,\frac{C\ln n}{n}}$ contains a single spanning tree $T$ with bounded maximum degree and linearly many leaves. This result can be reduced to the main result of~\cite{AlKrSu:NearlySpanningTrees} regarding almost spanning trees quite easily. The constant $C$ can be taken $C=1+o(1)$, as was shown recently by Hefetz, Krivelevich, and Szab\'o~\cite{HeKrSz:SpanningRandom}; obviously this is best possible. The same result also applies to trees that contain a path of linear length whose vertices all have degree two. A breakthrough in the area was achieved by Krivelevich~\cite{Kri:SpanningRandom} who gave an upper bound on the threshold $p=p(n,\Delta)$ for embedding a single spanning tree of a given maximum degree $\Delta$. This bound is essentially tight for $\Delta=n^c$, $c\in(0,1)$. Even though the argument in~\cite{Kri:SpanningRandom} is not difficult, it relies on a deep result of 
Johansson, Kahn and Vu~\cite{JoKaVu:Factors} about factors in random graphs.

\paragraph{Trees in expanders.}
By an expander graph we mean a graph with a large Cheeger constant, i.e., a graph which satisfies a certain isoperimetric property. As indicated above, random graphs are very good expanders, and this is the main motivation for studying tree containment problems in expanders. Another motivation comes from studying the universality phenomenon. Here the goal is to construct sparse graphs which contain all trees from a given class, and expanders are natural candidates for this. The study of sparse tree-universal graphs is a remarkable area by itself which brings challenges both in probabilistic and explicit constructions. For example, Bhatt, Chung, Leighton, and Rosenberg~\cite{BCLR:Universal} give an explicit construction of a graph with only $O_{\Delta}(n)$ edges which contains all $n$-vertex trees with maximum degree at most $\Delta$. More recently, 
Johannsen, Krivelevich, and 
Samotij~\cite{JoKriSa:Expanders} showed a number of universality results for spanning trees of maximum degree $\Delta=\Delta(n)$ both for random graphs, and for expanders.
 For example, they show universality for this class of each graph with a large Cheeger constant that satisfies a certain connectivity condition.

Friedman and Pippenger~\cite{FP87} extended P\'osa's rotation-extension technique  from paths to trees by  and found many applications (e.g.~\cite{HaKo:SizeRamsey,Haxell:TreeEmbeddings,BCPS:AlmostSpanningRandom}). Sudakov and Vondr\'ak~\cite{SudVoTree} use tree-indexed random walks to embed trees in $K_{s,t}$-free graphs (this property implies expansion); a similar approach is employed  by  Benjamini and Schramm~\cite{BeSch:TreeCheeger} in the setting of infinite graphs.

In our proof of Theorem~\ref{thm:main}, embedding trees in expanders play a crucial role, too. However, our notion of expansion is very different from those studied previously. (Actually, we introduce two, very different, notions  in Definitions~\ref{def:densespot} and~\ref{def:avoiding}.)

\paragraph{Minimum degree conditions for spanning trees.} Recall that the tight min-degree condition for containment of a general spanning tree $T$ in an $n$-vertex graph $G$ is the trivial one, $\mindeg(G)\ge n-1$. However, the only tree which requires this bound is the star. This indicates that this threshold can be lowered substantially if we have a control of $\maxdeg(T)$. Szemer\'edi and his collaborators~\cite{KSS:SpanningTrees,CsLeNaSz:BoundedDegree} showed that this is indeed the case, and obtained tight min-degree bounds for certain ranges of $\maxdeg(T)$. For example, if $\maxdeg(T)\le n^{o(1)}$, then $\mindeg(G)\ge (\frac12+o(1))n$ is a sufficient condition. (Note that $G$ may become disconnected close to this bound.)

\subsection{Overview of the proof of Theorem~\ref{thm:main}}\label{ssec:OverviewOfProof}
The structure of the proof of Theorem~\ref{thm:main} resembles the proof of the
dense case, Theorem~\ref{thm:PiguetStein}.
We obtain an approximate representation --- called \emph{sparse decomposition} --- of the
graph~$G_\PARAMETERPASSING{T}{thm:main}$.
Then we find a suitable combinatorial structure inside the sparse decomposition.
Finally, we embed the tree~$T_\PARAMETERPASSING{T}{thm:main}$ into~$G_\PARAMETERPASSING{T}{thm:main}$ using this structure. 

First, let us give a short outline of how the manuscript is structured. 
We use  
Sections~\ref{sec:preliminaries}--\ref{sec:embed} to introduce all tools necessary for the proof of Theorem~\ref{thm:main}, which is given  in a relatively short form in
Section~\ref{sec:proof}. Section~\ref{sec:conclremarks} discusses algorithmic aspects of our proof.

The preparation for the proof of the main theorem during Sections~\ref{sec:preliminaries}--\ref{sec:embed} starts with
introducing some  general preliminaries in Section~\ref{sec:preliminaries}. Then, in
Section~\ref{sec:cut}, the tree
$T_\PARAMETERPASSING{T}{thm:main}$ is pre-processed by being cut
into tiny subtrees, with few connecting vertices. 

Sections~\ref{sec:class}--\ref{sec:configurations} deal with the
graph $G_\PARAMETERPASSING{T}{thm:main}$. 
First, Section~\ref{sec:class} introduces the notion of the sparse
decomposition  which captures an approximate representation of $G_\PARAMETERPASSING{T}{thm:main}$. (Such a sparse decomposition exists for all graphs, and in a sense is comparable with the Szemer\'edi regularity partition.)
Then, in Sections~\ref{sec:augmenting} and~\ref{sec:LKSStructure} we gather more
structural information, specifically using the properties of graphs from
$\LKSgraphs{n}{k}{\alpha}$. This finally leads to several possible
``configurations'', as we call them, presented in
Section~\ref{sec:configurations}. These configurations give a quite precise
description of $G_\PARAMETERPASSING{T}{thm:main}$ that can be used for tree embedding.

Finally, in Section~\ref{sec:embed} we
introduce techniques for embedding small trees in a graph, based on the configurations we found in Section~\ref{sec:configurations}. In addition to the standard
filling-up-a-regular-pair technique usually employed in conjunction with the regularity
method, we employ several other techniques adapted to the diverse other parts of our sparse decomposition.

A scheme of the proof is given in Figure~\ref{fig:proofstructure}. 
\begin{figure}[ht!]
\centering 
\includegraphics[scale=0.85]{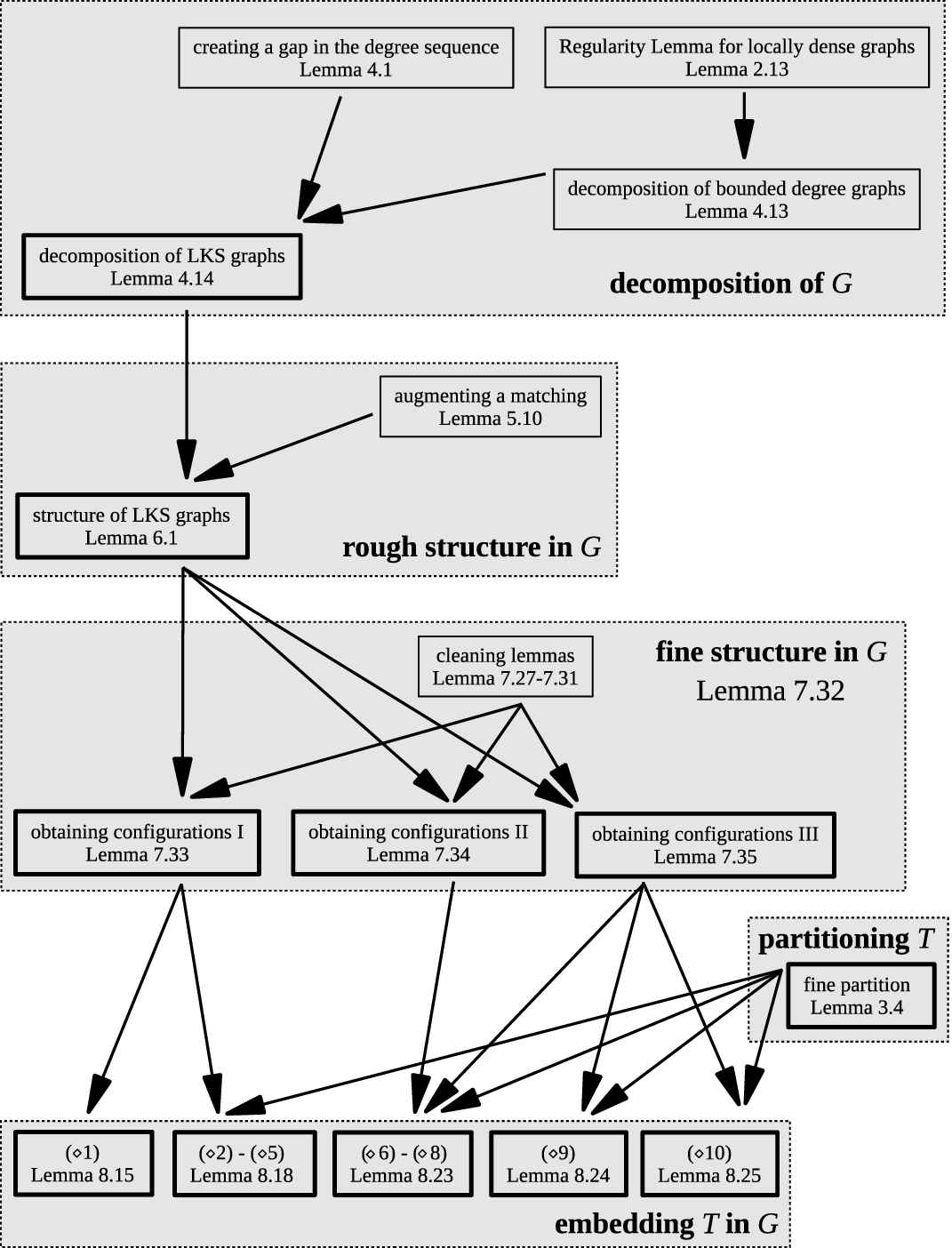}
\caption[Structure of proof of Theorem~\ref{thm:main}]{Structure of the proof of Theorem~\ref{thm:main}.}
\label{fig:proofstructure}
\end{figure}

\medskip

Let us describe now the key ingredients of the proof in more detail. The input graph $G_\PARAMETERPASSING{T}{thm:main}\in\LKSgraphs{n}{k}{\alpha}$ has $\Theta(kn)$ edges.\footnote{Indeed, an easy counting argument gives that $e(G_\PARAMETERPASSING{T}{thm:main})\ge kn/4$. On the other hand, we can assume that $e(G_\PARAMETERPASSING{T}{thm:main})< kn$, as otherwise $G_\PARAMETERPASSING{T}{thm:main}$ contains a subgraph with minimum degree at least $k$, and the assertion of Theorem~\ref{thm:main} follows.} Recall that the Szemer\'edi Regularity Lemma gives an approximation of dense graphs in which $o(n^2)$ edges are neglected. In analogy, the sparse decomposition captures all but at most $o(kn)$ edges. The vertices of $G_\PARAMETERPASSING{T}{thm:main}$ are partitioned into vertices of degrees $\gg k$ and vertices of degree $O(k)$. Further, the graph induced by the latter set is split into regular pairs (in the sense of the Szemer\'edi Regularity Lemma) with clusters of sizes $\Theta(k)$, and into two additional 
parts which exhibit certain expansion properties (the expansion properties of these two parts are different). The vertices of huge degrees, the regular pairs, and the two expanding parts form the sparse decomposition of $G_\PARAMETERPASSING{T}{thm:main}$. It is well-known that regular pairs are suitable for embedding small trees. In Section~\ref{sec:embed} we work out techniques for embedding small trees in each of the three remaining parts of the sparse decomposition.

 Tree-embedding results in the dense setting (e.g.\
 Theorem~\ref{thm:PiguetStein}) rely on finding a matching structure in the
 cluster graph. Indeed, this allows one to distribute different parts of the
 tree in the matching edges. In analogy, in Lemma~\ref{prop:LKSstruct} we find a
 structure which combines all four components of the sparse decomposition,
 and which we call the  \emph{rough structure}. Not only all parts of the sparse decomposition are contained in the rough structure, but also, on top of these, an additional
object, which we call a {\emph semiregular matching}.
 This is found with the help of Lemma~\ref{lem:Separate}, a step which 
we call ``augmenting a matching''.  The necessity of this step is discussed in detail in
Section~\ref{ssec:whyaugment}.

 However, the rough structure is not immediately suitable for embedding 
 $T_\PARAMETERPASSING{T}{thm:main}$, and we shall further refine it in
 Section~\ref{ssec:obtainingConf} to one of ten \emph{configurations},
 denoted by $\mathbf{(\diamond1)}$--$\mathbf{(\diamond10)}$.
Obtaining these configurations from the rough structure is based on pigeonhole-type arguments such as: if
there are many edges between two sets, and few ``kinds'' of edges, then many of
the edges are of the same kind. The different kinds of edges come from the sparse decomposition (and allow for different kinds of embedding techniques). Just ``homogenizing'' the situation by
restricting to one particular kind is not enough, we also need to
employ certain ``cleaning lemmas'' --- Lemmas~\ref{lem:envelope}--\ref{lem:clean-Match}.
A simplest such lemma would be that a graph with many edges contains a subgraph
with a large minimum degree; the latter property evidently being more directly
applicable for a sequential embedding of a tree. The actual cleaning lemmas we
use are complex extensions of this simple idea.

Finally, in Section~\ref{sec:embed}, we show how to embed the tree $T_\PARAMETERPASSING{T}{thm:main}$. This is done by first establishing some
elementary embedding lemmas for small subtrees in
Section~\ref{ssec:EmbeddingShrubs}, and then combine these in Section~\ref{sec:MainEmbedding}
for each of the
cases $\mathbf{(\diamond1)}$--$\mathbf{(\diamond10)}$ to yield an embedding of
the entire tree $T_\PARAMETERPASSING{T}{thm:main}$.

\section{Notation and preliminaries}\label{sec:preliminaries}
In this section we recall some standard terminology and introduce some further specific notation. We also state some basic results from graph theory.

\subsection{Notation}
The set $\{1,2,\ldots, n\}$ of the first $n$ positive integers is
denoted by \index{mathsymbols}{*@$[n]$}$[n]$. Suppose that we have a nonempty set $A$, and $\mathcal X$ and $\mathcal Y$ each partition $A$. Then~\index{mathsymbols}{*@$\boxplus$}$\boxplus$ denotes the coarsest common refinement of $\mathcal X$ and $\mathcal Y$, i.e., $$\mathcal X\boxplus\mathcal Y:=\{X\cap Y\::\: X\in \mathcal X, Y\in\mathcal Y\}\setminus \{\emptyset\}\;.$$

We frequently employ indexing by many indices. We write
superscript indices in parentheses (such as $a^{(3)}$), as
opposed to notation of powers (such as $a^3$).
We use sometimes subscript to refer to
parameters appearing in a fact/lemma/theorem. For example
$\alpha_\PARAMETERPASSING{T}{thm:main}$ refers to the parameter $\alpha$ from Theorem~\ref{thm:main}.
We omit rounding symbols when this does not affect the
correctness of the arguments.

We use lower case greek letters
to denote small positive constants. The exception is the letter~$\phi$ which is
reserved for embedding of a tree $T$ in a graph $G$, $\phi:V(T)\rightarrow
V(G)$. The capital greek letters are used for large constants.

\subsection{Basic graph theory notation}
All graphs considered in this paper are finite, undirected, without multiple edges, and without self-loops. We write \index{mathsymbols}{*VG@$V(G)$}$V(G)$ and \index{mathsymbols}{*EG@$E(G)$}$E(G)$ for the vertex set and edge set of a graph $G$, respectively. Further, \index{mathsymbols}{*VG@$v(G)$}$v(G)=|V(G)|$ is the order of $G$, and \index{mathsymbols}{*EG@$e(G)$}$e(G)=|E(G)|$ is its number of edges. If $X,Y\subset V(G)$ are two, not necessarily disjoint, sets of vertices we write \index{mathsymbols}{*EX@$e(X)$}$e(X)$ for the number of edges induced by $X$, and \index{mathsymbols}{*EXY@$e(X,Y)$}$e(X,Y)$ for the number of ordered pairs $(x,y)\in X\times Y$ such that $xy\in E(G)$. In particular, note that $2e(X)=e(X,X)$.

\index{mathsymbols}{*DEG@$\deg$}\index{mathsymbols}{*DEGmin@$\mindeg$}\index{mathsymbols}{*DEGmax@$\maxdeg$}
For a graph $G$, a vertex $v\in V(G)$ and a set $U\subset V(G)$, we write
$\deg(v)$ and $\deg(v,U)$ for the degree of $v$, and for the number of
neighbours of $v$ in $U$, respectively. We write $\mindeg(G)$ for the minimum
degree of $G$, $\mindeg(U):=\min\{\deg(u)\::\: u\in U\}$, and
$\mindeg(V_1,V_2)=\min\{\deg(u,V_2)\::\:u\in V_1\}$ for two sets $V_1,V_2\subset
V(G)$. Similar notation is used for the maximum degree, denoted by $\maxdeg(G)$.
The neighbourhood of a vertex $v$ is denoted by
\index{mathsymbols}{*N@$\neighbor(v)$}$\neighbor(v)$. We set $\neighbor(U):=\bigcup_{u\in
U}\neighbor(u)$. The symbol $-$ is used
for two graph operations: if $U\subset V(G)$ is a vertex
set then $G-U$ is the subgraph of $G$ induced by the set
$V(G)\setminus U$. If $H\subset G$ is a subgraph of $G$ then the graph
$G-H$ is defined on the vertex set $V(G)$ and corresponds
to deletion of edges of $H$ from $G$.

A subgraph $H\subset G$ of a graph $G$ is called \emph{spanning}\index{general}{spanning subgraph} if $V(H)=V(G)$.

The \index{general}{null graph}\emph{null graph} is the unique graph on zero vertices, while any graph with zero edges is called \index{general}{empty graph}\emph{empty}.

A family $\mathcal A$ of pairwise disjoint subsets of $V(G)$ is an \index{general}{ensemble}\index{mathsymbols}{*ENSEMBLE@$\ell$-ensemble}\emph{$\ell$-ensemble in $G$} if  $|A|\ge \ell$ for each $A\in\mathcal A$. We say that $\mathcal A$ is \emph{inside $X$} (or \emph{outside $Y$}) if $A\subset X$ (or $A\cap Y=\emptyset$) for each $A\in\mathcal A$.

If $T$ is a tree and $r\in V(T)$, then the pair $(T,r)$ is a \index{general}{rooted
tree}\emph{rooted tree} with root $r$. We then write 
\index{mathsymbols}{*Veven@$\Veven(T,r)$}\index{mathsymbols}{*Vodd@$\Vodd(T,r)$}$\Vodd(T,r)\subset V(T)$ for the set of
vertices of $T$ of odd distance from $r$. Analogously we define $\Veven(T,r)$.
Note that $r\in\Veven(T,r)\subset V(T)$. The distance between two vertices $v_1$ and $v_2$ in a tree is denoted by
\index{mathsymbols}{*DIST@$\dist(v_1,v_2)$}$\dist(v_1,v_2)$.

We next give two simple facts about the number of leaves in a tree. These have already appeared in~\cite{Z07+} and in~\cite{HlaPig:LKSdenseExact} (and most likely in some more classic texts as well). Nevertheless, for completeness we shall include their proofs here.

\begin{fact}\label{fact:treeshavemanyleaves}
Let $T$ be a tree with color-classes $X$ and $Y$, and
$v(T)\ge 2$. Then the set $X$ contains at least
$|X|-|Y|+1$ leaves of $T$. 
\end{fact}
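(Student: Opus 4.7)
My plan is to prove the inequality by a direct degree-counting argument on the color-class $X$. Since $T$ is a tree, hence connected and bipartite with parts $X$ and $Y$, every edge of $T$ has exactly one endpoint in $X$. Therefore
\[
\sum_{x\in X}\deg(x)\;=\;e(T)\;=\;v(T)-1\;=\;|X|+|Y|-1.
\]
This is the key identity; the rest of the proof will just be extracting the leaf count from it.

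Let $L$ denote the number of leaves of $T$ contained in $X$. I would split the sum above according to whether a vertex of $X$ is a leaf or not. A leaf contributes exactly $1$ to the sum, while any non-leaf vertex contributes at least $2$ (since in a tree with $v(T)\ge 2$, every non-leaf vertex has degree at least $2$). This yields the estimate
\[
L+2(|X|-L)\;\le\;\sum_{x\in X}\deg(x)\;=\;|X|+|Y|-1,
\]
and rearranging gives $L\ge |X|-|Y|+1$, which is exactly the claim.

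The argument is essentially a one-line computation, so there is no real obstacle; one only needs to check that the decomposition into ``leaves'' and ``non-leaves'' makes sense, namely that non-leaf vertices really have degree at least $2$. This holds because $v(T)\ge 2$ and $T$ is connected, so no vertex is isolated and a vertex of degree $1$ is precisely a leaf. The degenerate cases (for example $|X|=1$ or $|X|\ge|Y|+1$ forcing many leaves) are all handled uniformly by the same inequality, and no case distinction is needed.
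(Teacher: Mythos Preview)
Your proof is correct, but it takes a different route from the paper's own argument. The paper roots $T$ at some $r\in Y$ and observes that each internal vertex of $X$ has a child, necessarily in $Y\setminus\{r\}$; since distinct vertices have distinct children, this gives an injection from the internal vertices of $X$ into $Y\setminus\{r\}$, so there are at most $|Y|-1$ of them and hence at least $|X|-|Y|+1$ leaves in $X$. Your argument instead uses a global handshake count on the $X$-side of the bipartition: $\sum_{x\in X}\deg(x)=e(T)=|X|+|Y|-1$, and bounding non-leaf degrees from below by $2$ immediately yields the inequality. Both are one-paragraph arguments; yours is arguably the more direct of the two since it avoids choosing a root and setting up the parent/child injection, while the paper's version fits more naturally with the rooted-tree language used throughout the rest of the paper.
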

\begin{proof} Root $T$
at an arbitrary vertex $r\in Y$.
Let $I$ be the set of internal vertices of $T$ that belong to $X$. Each $v\in I$ has at least one immediate successor in the tree order induced by $r$.
These successors are distinct for distinct $v\in I$ and all lie in $Y\setminus\{r\}$. Thus
$|I|\le |Y|-1$. The claim follows.
\end{proof}

\begin{fact}\label{fact:treeshavemanyleaves3vertices}
Let $T$ be a tree with $\ell$ vertices of degree at
least three. Then $T$ has at least $\ell+2$ leaves.
\end{fact}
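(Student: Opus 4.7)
The plan is a straightforward degree-counting argument based on the handshake lemma for trees. Let $L$ denote the number of leaves of $T$, $D$ the number of vertices of degree exactly two, and recall $\ell$ is the number of vertices of degree at least three. Since every vertex of $T$ falls into exactly one of these three categories (the statement being trivial for $v(T)\le 2$, which we dispose of first), we have $v(T)=L+D+\ell$.

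Next I would invoke the standard identity $\sum_{v\in V(T)}\deg(v)=2e(T)=2(v(T)-1)$, valid because $T$ is a tree. Splitting the degree sum according to the three categories and using the lower bounds $1$, $2$, and $3$ on the degrees in each class respectively, I obtain
\[
2(L+D+\ell)-2 \;=\; \sum_{v\in V(T)}\deg(v) \;\ge\; L+2D+3\ell.
\]
Rearranging this inequality immediately yields $L\ge \ell+2$, which is the desired conclusion.

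There is essentially no obstacle here: the only minor care needed is to handle small degenerate cases (the single-vertex tree, or the single-edge tree) where the notion of ``leaf'' is slightly ambiguous, but in both cases $\ell=0$ and the bound $L\ge 2$ either holds trivially or is vacuous under the usual convention. No further case analysis is required.
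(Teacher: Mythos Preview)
Your argument is correct and is essentially identical to the paper's proof: both partition $V(T)$ into leaves, degree-two vertices, and vertices of degree at least three, then compare $2v(T)-2=\sum_v\deg(v)$ against the lower bound $|D_1|+2|D_2|+3|D_3|$ to obtain $|D_1|\ge |D_3|+2$. Your explicit mention of the degenerate small cases is a minor addition the paper omits.
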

\begin{proof}
Let $D_1$ be the set of leaves, $D_2$ the set of vertices
of degree two and $D_3$ be the set of vertices of degree
of at least three. Then
$$2(|D_1|+|D_2|+|D_3|)-2=2v(T)-2=2e(T)=\sum_{v\in
V(T)}\deg(v)\ge |D_1|+2|D_2|+3|D_3|\;,$$
and the statement follows.
\end{proof}

For the next lemma, note that for us, the minimum degree of the null graph is $\infty$.
\begin{lemma}\label{lem:subgraphswithlargeminimumdegree}
For all $\ell, n\in\mathbb N$, every $n$-vertex graph $G$ contains a (possibly empty) subgraph $G'$ such that
$\mindeg(G')\ge \ell$ and $e(G')\ge e(G)-(\ell-1) n$.
\end{lemma}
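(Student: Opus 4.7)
The plan is to prove the lemma by a simple greedy vertex-deletion procedure. I would define a sequence $G = G_0 \supset G_1 \supset \dots \supset G_t$ of subgraphs as follows: as long as $G_i$ contains a vertex $v_i$ with $\deg_{G_i}(v_i) \leq \ell - 1$, I let $G_{i+1} := G_i - \{v_i\}$; otherwise I stop and set $G' := G_i$.

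The process must terminate, since each step removes one vertex and $G$ has only $n$ vertices; thus $t \leq n$. At termination, either $G'$ is the null graph (in which case $\mindeg(G') = \infty \geq \ell$ by the convention stated just before the lemma), or every vertex of $G'$ has degree at least $\ell$ in $G'$. In either case $\mindeg(G') \geq \ell$, as required.

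For the edge count, observe that passing from $G_i$ to $G_{i+1}$ destroys exactly $\deg_{G_i}(v_i) \leq \ell - 1$ edges. Summing over the at most $n$ steps gives $e(G) - e(G') \leq (\ell - 1) t \leq (\ell - 1) n$, so $e(G') \geq e(G) - (\ell-1)n$.

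There is no serious obstacle here; the only tiny point worth checking is the edge-case of the null graph and the convention $\mindeg(\text{null}) = \infty$, which is precisely why the statement is allowed to output the null graph for sparse inputs (e.g.\ when $e(G) \leq (\ell-1)n$, the conclusion becomes vacuous in terms of a nontrivial lower bound on $e(G')$).
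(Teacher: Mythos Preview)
Your proof is correct and follows exactly the same approach as the paper's: sequentially delete vertices of degree less than $\ell$, noting that each deletion costs at most $\ell-1$ edges and that there are at most $n$ deletions. Your write-up is simply more explicit (in particular about termination and the null-graph convention) than the paper's two-line version.
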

\begin{proof}
We construct the graph $G'$ by sequentially removing vertices of degree less
than $\ell$ from the graph $G$. In each step we remove at most $\ell-1$
edges. Thus the statement follows.
\end{proof}

\bigskip

We finish this section with stating the Gallai--Edmonds matching theorem.
A graph $H$ is called \index{general}{factor critical}{\em
factor-critical} if  $H-v$ has a
perfect matching for each $v\in V(H)$.
The following statement is a fundamental result in matching
theory. See~\cite{LP86}, for example.
\begin{theorem}[Gallai--Edmonds matching theorem]
\label{thm:GallaiEdmonds} Let $H$ be a graph. Then there exist a set
$Q\subset V(H)$ and a matching $M$ of size $|Q|$ in $H$ such that
\begin{enumerate}[1)]
\item every component of $H-Q$ is factor-critical, and
\item $M$
matches every vertex in $Q$ to a different component of $H-Q$.
\end{enumerate}
\end{theorem}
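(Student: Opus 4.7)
The plan is to invoke the classical Gallai--Edmonds decomposition theorem and then run an induction on $|V(H)|$ to handle the part of $H$ that Gallai--Edmonds does not directly control.

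First, I would set up the canonical triple: let $D \subseteq V(H)$ be the set of vertices missed by at least one maximum matching of $H$, put $A := N_H(D) \setminus D$, and $C := V(H) \setminus (D \cup A)$. The key structural facts to establish are: (i) every component of $H[D]$ is factor-critical; (ii) $H[C]$ admits a perfect matching; (iii) no edges of $H$ connect $D$ and $C$; and (iv) every maximum matching $M^*$ of $H$ matches every vertex of $A$ to a vertex in a distinct component of $H[D]$. The proofs of (i)--(iv) rely on Berge's theorem (a matching is maximum iff it admits no augmenting path) together with a careful analysis of the symmetric difference $M_1 \triangle M_2$ of two maximum matchings, whose non-trivial connected components are necessarily even cycles or alternating paths with endpoints missed by exactly one of $M_1, M_2$.

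To conclude, I would proceed by induction on $|V(H)|$. In the case $D \cup A \neq \emptyset$ we have $|C| < |V(H)|$, so the induction hypothesis applies to $H[C]$ and yields $(Q', M')$. I then set $Q := A \cup Q'$ and $M := M_A \cup M'$, where $M_A$ consists of $|A|$ edges of a fixed maximum matching $M^*$ of $H$ linking $A$ to distinct components of $H[D]$ (such edges exist by (iv)). Property (iii) ensures that $H - Q$ decomposes as a disjoint union of $H[D]$ and $H[C] - Q'$, with components factor-critical by (i) and by induction, respectively; the distinctness of the components hit by $A$ versus those hit by $Q'$ is automatic because they live in $D$ versus $C$.

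The main obstacle is the base case $D = A = \emptyset$, which occurs precisely when $H$ has a perfect matching. Here Gallai--Edmonds yields no non-trivial information, and the argument becomes delicate: pick an edge $vu$ of a perfect matching of $H$, apply the induction hypothesis to $H - v$ (in which a Gallai--Edmonds $D$ is now non-empty, since the partner $u$ becomes missed by some maximum matching of $H - v$, so the previous case applies and the recursion terminates), and extend by adding $v$ to $Q$ and $vu$ to $M$. One must ensure that $u$ lies in a component of $H - Q$ not already matched by $M'$; this is arranged either by choosing $vu$ appropriately from among the perfect-matching edges of $H$, or by locally swapping along an alternating path in $M'$. The full details are classical and are spelled out in~\cite{LP86}.
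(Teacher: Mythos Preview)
The paper does not prove this theorem at all: it is quoted as a classical result in matching theory, with a reference to~\cite{LP86} and nothing more. So there is no ``paper's own proof'' to compare against; your proposal goes well beyond what the paper does.

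As for the sketch itself, the overall architecture (classical Gallai--Edmonds triple $(D,A,C)$, recurse on $H[C]$, glue) is sound, and you correctly identify the perfect-matching case as the one place where something nontrivial remains. The hand-wave there is the step ``ensure that $u$ lies in a component of $H-Q$ not already matched by $M'$.'' Neither of your suggested fixes is quite an argument: the alternating-path swap needs justification that such a path reaches an unmatched component, and ``choosing $vu$ appropriately'' presupposes you can control what the inductive call returns. A clean way to close this is to observe that, with $Q:=Q'\cup\{v\}$, Tutte's condition for $H$ forces the number of (odd, factor-critical) components of $H-Q$ to equal $|Q|$ exactly, and then Hall's theorem applied to the bipartite incidence between $Q$ and these components (using Tutte's condition on subsets $Q\setminus S$) yields a system of distinct representatives, i.e.\ a new matching $M$ of the required form. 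This replaces $M'$ rather than extending it, but that is all the statement asks for. Since you defer the details to~\cite{LP86} anyway, which is precisely what the paper does, your proposal is in fact strictly more informative than the paper on this point.
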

The set $Q$ in Theorem~\ref{thm:GallaiEdmonds} is often referred to as a \index{general}{separator}{\em separator}.

\subsection{LKS-minimal graphs}\index{mathsymbols}{*LKSmingraphs@$\LKSmingraphs{n}{k}{\eta}$}
Given a graph $G$, denote by
\index{mathsymbols}{*S@$\smallvertices{\eta}{k}{G}$}$\smallvertices{\eta}{k}{G}$ the set of those
vertices of $G$ that have degree less than $(1+\eta)k$ and by
\index{mathsymbols}{*L@$\largevertices{\eta}{k}{G}$}$\largevertices{\eta}{k}{G}$ the set of those
vertices of $G$ that have degree at least $(1+\eta)k$.\footnote{``$\mathbb S$'' stands for ``small'', and ``$\mathbb L$'' for ``large''.} Thus the sizes of the sets $\smallvertices{\eta}{k}{G}$ and $\largevertices{\eta}{k}{G}$ are what specifies the membership to $\LKSgraphs{n}{k}{\eta}$ (which we had defined as the class of all $n$-vertex graphs with at least
$(\frac12+\eta)n$ vertices of degrees at least
$(1+\eta)k$).

Define
 $\LKSmingraphs{n}{k}{\eta}$ as the set 
of all graphs $G\in \LKSgraphs{n}{k}{\eta}$ that are  edge-minimal with respect to the membership in $\LKSgraphs{n}{k}{\eta}$.
In order to prove Theorem~\ref{thm:main} it suffices to restrict our attention to graphs from $\LKSmingraphs{n}{k}{\eta}$, and this is why we introduce the class. 
Let us collect some properties of graphs in
$\LKSmingraphs{n}{k}{\eta}$ which follow directly from the definition.
\begin{fact}\label{fact:propertiesOfLKSminimalGraphs}
For any graph $G\in\LKSmingraphs{n}{k}{\eta}$ the following is true.
\begin{enumerate}
  \item $\smallvertices{\eta}{k}{G}$ is an independent set.\label{Sisindep}
  \item All the neighbours of every vertex $v\in V(G)$ with $\deg(v)>\lceil(1+\eta)k\rceil$ have degree exactly $\lceil(1+\eta)k\rceil$. \label{neighbinL}
  \item $|\largevertices{\eta}{k}{G}|\le\lceil
  (1/2+\eta)n\rceil+1$.\label{fewlargevs}
\end{enumerate}
\end{fact}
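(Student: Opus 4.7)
The plan is to prove all three items by the same contradiction template: if the claim fails, exhibit an edge $uv$ whose deletion preserves membership in $\LKSgraphs{n}{k}{\eta}$, contradicting the edge-minimality of $G$. The key bookkeeping observation is that membership in $\LKSgraphs{n}{k}{\eta}$ depends only on the \emph{size} of $\largevertices{\eta}{k}{G}$, so I only need to track how many vertices potentially leave $\largevertices{\eta}{k}{G}$ after the deletion.

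For part~\ref{Sisindep}, assume for contradiction that there is an edge $uv$ with $u,v\in\smallvertices{\eta}{k}{G}$. Deleting $uv$ decreases only the degrees of $u$ and $v$, and both remain of degree strictly less than $(1+\eta)k$. Hence $\largevertices{\eta}{k}{G-uv}=\largevertices{\eta}{k}{G}$, so $G-uv\in\LKSgraphs{n}{k}{\eta}$, contradicting the edge-minimality of~$G$.

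For part~\ref{neighbinL}, let $v$ satisfy $\deg(v)>\lceil(1+\eta)k\rceil$ and let $u\in\neighbor(v)$. Remove $uv$. Since $\deg_{G-uv}(v)\geq\lceil(1+\eta)k\rceil$, the vertex $v$ stays in $\largevertices{\eta}{k}{G-uv}$. Now the only way edge-minimality is not already violated is that $u$ must leave $\largevertices{\eta}{k}{G}$, which forces $\deg_G(u)=\lceil(1+\eta)k\rceil$: if instead $\deg_G(u)>\lceil(1+\eta)k\rceil$, then $u$ survives in $\largevertices{\eta}{k}{G-uv}$; and if $\deg_G(u)<\lceil(1+\eta)k\rceil$, then $u\in\smallvertices{\eta}{k}{G}$ both before and after deletion. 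In either forbidden case $\largevertices{\eta}{k}{G-uv}=\largevertices{\eta}{k}{G}$, contradicting edge-minimality.

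For part~\ref{fewlargevs}, suppose $|\largevertices{\eta}{k}{G}|\geq \lceil(1/2+\eta)n\rceil+2$. Pick any edge $uv$ incident to some vertex of $\largevertices{\eta}{k}{G}$ (at least one such edge exists because vertices in $\largevertices{\eta}{k}{G}$ have positive degree). Deleting $uv$ can remove from $\largevertices{\eta}{k}{G}$ at most the two endpoints of $uv$, and that only if each was at the threshold degree $\lceil(1+\eta)k\rceil$. Therefore
\[
|\largevertices{\eta}{k}{G-uv}|\;\geq\;\lceil(1/2+\eta)n\rceil+2-2\;=\;\lceil(1/2+\eta)n\rceil\;\geq\;(1/2+\eta)n,
\]
so $G-uv\in\LKSgraphs{n}{k}{\eta}$, a contradiction.

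The argument is essentially routine and I do not anticipate any real obstacle; the only point demanding mild care is in part~\ref{fewlargevs}, where one must verify that the threshold ``$2$'' in $\lceil(1/2+\eta)n\rceil+2$ matches the worst case of both endpoints of the deleted edge being expelled from $\largevertices{\eta}{k}{G}$, and that existence of an edge to delete is guaranteed (which follows because $|\largevertices{\eta}{k}{G}|\geq 2$ and every vertex in $\largevertices{\eta}{k}{G}$ has degree $\geq\lceil(1+\eta)k\rceil\geq 1$ for the ranges of parameters relevant to the paper).
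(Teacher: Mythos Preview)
Your argument is correct and is precisely the routine edge-deletion verification the paper has in mind; the paper itself states these properties as following ``directly from the definition'' and gives no explicit proof, so there is nothing substantive to compare.
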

Observe that every edge in a graph $G\in\LKSmingraphs{n}{k}{\eta}$ is incident
to at least one vertex of degree exactly $\lceil(1+\eta)k\rceil$. This gives
the following inequality.
\begin{equation}\label{eq:LKSminimalNotManyEdges}
e(G)\le\lceil(1+\eta)k\rceil
\left|\largevertices{\eta}{k}{G}\right|\leBy{F\ref{fact:propertiesOfLKSminimalGraphs}(\ref{fewlargevs}.)}
\lceil(1+\eta)k\rceil\left(\left\lceil
\left(\frac12+\eta\right)n\right\rceil+1\right)<kn\;.
\end{equation}
(The last inequality is valid under the additional mild
assumption that, say, $\eta<\frac1{20}$ and $n>k>20$. This can be assumed throughout the paper.)

\begin{definition}\label{def:LKSsmall}
Let \index{mathsymbols}{*LKSsmallgraphs@$\LKSsmallgraphs{n}{k}{\eta}$}$\LKSsmallgraphs{n}{k}{\eta}$ be the class of those graphs $G\in\LKSgraphs{n}{k}{\eta}$ for which we have the following three properties:
\begin{enumerate}
   \item All the neighbours of every vertex $v\in V(G)$ with $\deg(v)>\lceil(1+2\eta)k\rceil$ have degrees at most $\lceil(1+2\eta)k\rceil$.\label{def:LKSsmallA}
   \item All the neighbours of every vertex of $\smallvertices{\eta}{k}{G}$
    have degree exactly $\lceil(1+\eta)k\rceil$. \label{def:LKSsmallB}
   \item We have $e(G)\le kn$.\label{def:LKSsmallC}
\end{enumerate}
\end{definition}
Observe that the graphs from $\LKSsmallgraphs{n}{k}{\eta}$ also satisfy~\ref{Sisindep}., and a quantitatively somewhat weaker version of~\ref{neighbinL}.\ of Fact~\ref{fact:propertiesOfLKSminimalGraphs}. This suggests that in some sense $\LKSsmallgraphs{n}{k}{\eta}$ is a good approximation of $\LKSmingraphs{n}{k}{\eta}$.

As said, we will prove Theorem~\ref{thm:main} only for graphs from $\LKSmingraphs{n}{k}{\eta}$. However, it turns out that the structure of $\LKSmingraphs{n}{k}{\eta}$ is too rigid. In particular, $\LKSmingraphs{n}{k}{\eta}$ is not closed under discarding a small amount of edges during our cleaning procedures. This is why the class $\LKSsmallgraphs{n}{k}{\eta}$ comes into play: starting with a graph in $\LKSmingraphs{n}{k}{\eta}$ we perform some initial cleaning and obtain a graph that lies in $\LKSsmallgraphs{n}{k}{\eta/2}$. We then heavily use its structural properties from Definition~\ref{def:LKSsmall} throughout the proof.

\subsection{Regular pairs}

In this section we introduce the notion of regular pairs which is central for
Szemer\'edi's Regularity Lemma and its extension which we discuss in Section~\ref{sec:RegL}. We also list some simple properties of regular pairs.

Given a graph $H$ and a pair $(U,W)$ of disjoint
sets $U,W\subset V(H)$ the
\index{general}{density}\index{mathsymbols}{*D@$\density(U,W)$}\emph{density of the pair $(U,W)$} is defined as
$$\density(U,W):=\frac{e(U,W)}{|U||W|}\;.$$
Similarly, for a bipartite graph $G$ with colour
classes $U$, $W$ we talk about its \index{general}{bipartite
density}\emph{bipartite density}\index{mathsymbols}{*D@$\density(G)$} $\density(G)=\frac{e(G)}{|U||W|}$.
For a given $\varepsilon>0$, a pair $(U,W)$ of disjoint
sets $U,W\subset V(H)$ 
is called an \index{general}{regular pair}\emph{$\epsilon$-regular
pair} if $|\density(U,W)-\density(U',W')|<\epsilon$ for every
$U'\subset U$, $W'\subset W$ with $|U'|\ge \epsilon |U|$, $|W'|\ge
\epsilon |W|$. If the pair $(U,W)$ is not $\epsilon$-regular,
then we call it \index{general}{irregular}\emph{$\epsilon$-irregular}. A stronger notion than
regularity is that of super-regularity which we recall now. A pair $(A,B)$ is
\index{general}{super-regular pair}\emph{$(\epsilon,\gamma)$-super-regular} if it is
$\epsilon$-regular, and we have $\mindeg(A,B)\ge\gamma |B|$, and
$\mindeg(B,A)\ge \gamma |A|$. Note that then $(A,B)$ has bipartite density at least $\gamma$.

We list two useful and well-known properties of
regular pairs.
\begin{fact}\label{fact:BigSubpairsInRegularPairs}
Suppose that $(U,W)$ is an $\varepsilon$-regular pair of density
$d$. Let $U'\subset W, W'\subset W$ be sets of vertices with $|U'|\ge
\alpha|U|$, $|W'|\ge \alpha|W|$, where $\alpha>\epsilon$.
Then the pair $(U',W')$ is a $2\varepsilon/\alpha$-regular pair of density at least
$d-\varepsilon$.
\end{fact}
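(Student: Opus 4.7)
\medskip

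My plan is to prove both assertions (density lower bound and regularity of the sub-pair) by direct appeal to the definition of $\epsilon$-regularity applied to suitably large subsets, since the hypotheses $|U'|\ge\alpha|U|>\epsilon|U|$ and $|W'|\ge\alpha|W|>\epsilon|W|$ already qualify $U'$ and $W'$ as ``large'' witnesses.

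First I would handle the density. Because $|U'|\ge\alpha|U|>\epsilon|U|$ and $|W'|\ge\alpha|W|>\epsilon|W|$, the definition of $\epsilon$-regularity of $(U,W)$ applied to the pair $(U',W')$ itself yields
\[
\bigl|\density(U',W')-\density(U,W)\bigr|<\epsilon,
\]
hence $\density(U',W')>d-\epsilon$, giving the density statement.

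Next I would verify the $2\epsilon/\alpha$-regularity of $(U',W')$. Pick arbitrary $U''\subset U'$ and $W''\subset W'$ with $|U''|\ge(2\epsilon/\alpha)|U'|$ and $|W''|\ge(2\epsilon/\alpha)|W'|$. Then
\[
|U''|\ge\frac{2\epsilon}{\alpha}\cdot\alpha|U|=2\epsilon|U|>\epsilon|U|,
\]
and similarly $|W''|>\epsilon|W|$. So by $\epsilon$-regularity of $(U,W)$ applied to $(U'',W'')$, and also (from the first step) to $(U',W')$, both densities $\density(U'',W'')$ and $\density(U',W')$ lie within $\epsilon$ of $\density(U,W)$. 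The triangle inequality then gives
\[
\bigl|\density(U'',W'')-\density(U',W')\bigr|<2\epsilon\le 2\epsilon/\alpha,
\]
where the last inequality uses $\alpha\le 1$ (which is automatic, since $U'\subset U$ forces $|U'|\le|U|$). This establishes $2\epsilon/\alpha$-regularity and completes the proof.

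I do not anticipate any serious obstacle here: the statement is a standard inheritance property and the argument is purely a definition-chase. The only minor point to be careful about is the implicit assumption $\alpha\le 1$, which follows from $U'\subset U$ (presumably the displayed ``$U'\subset W$'' in the statement is a typo for $U'\subset U$).
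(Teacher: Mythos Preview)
Your argument is correct and is exactly the standard definition-chase for this inheritance property; the paper itself states Fact~\ref{fact:BigSubpairsInRegularPairs} as a well-known fact without proof, so there is nothing to compare against. Your observation about the typo $U'\subset W$ (which should read $U'\subset U$) is also correct.
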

\begin{fact}\label{fact:manyTypicalVertices}
Suppose that $(U,W)$ is an $\varepsilon$-regular pair of density
$d$. Then all but at most $\epsilon|U|$ vertices $v\in U$ satisfy
$\deg(v,W)\ge (d-\epsilon)|W|$.
\end{fact}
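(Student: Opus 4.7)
The plan is a standard contradiction argument using the definition of $\epsilon$-regularity directly. Let $U' \subset U$ be the set of ``atypical'' vertices, namely those $v \in U$ with $\deg(v, W) < (d-\epsilon)|W|$. I want to show $|U'| \le \epsilon|U|$, so I assume the opposite, $|U'| > \epsilon|U|$, and derive a contradiction.

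First, I would bound the density of the pair $(U', W)$ from above by summing over $U'$: since each vertex in $U'$ sends fewer than $(d-\epsilon)|W|$ edges to $W$, one gets
\[
\density(U', W) \;=\; \frac{e(U', W)}{|U'|\,|W|} \;<\; \frac{|U'|\cdot(d-\epsilon)|W|}{|U'|\,|W|} \;=\; d-\epsilon.
\]
In particular, $\density(U,W) - \density(U',W) > \epsilon$. Now I would invoke the definition of $\epsilon$-regularity with $W' := W$ (for which the required size condition $|W'| \ge \epsilon|W|$ holds trivially) and with $U'$ as above (which qualifies because by assumption $|U'| > \epsilon|U| \ge \epsilon|U|$). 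Regularity then yields $|\density(U',W) - \density(U,W)| < \epsilon$, contradicting the previous inequality.

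There is essentially no obstacle here; the only thing to be careful about is the degenerate case $|W| = 0$, which makes the hypothesis vacuous, and the fact that $W' = W$ satisfies $|W'| \ge \epsilon|W|$ regardless of $\epsilon \le 1$ (the case $\epsilon > 1$ makes the conclusion trivial since $\epsilon|U| \ge |U|$). Thus the whole argument is a one-line application of the regularity condition to the witness set of atypical low-degree vertices.
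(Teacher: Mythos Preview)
Your proof is correct and is exactly the standard argument for this well-known fact; the paper itself omits the proof, listing the statement among ``well-known properties of regular pairs''.
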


The following fact states a simple relation between the
density of a (not necessarily regular) pair and the densities of its
subpairs.

\begin{fact}\label{fact:CanADensePairConsistOnlyOfSparseSubpairs?}
Let $H=(U,W;E)$ be a bipartite graph of
$\density(U,W)\ge \alpha$. Suppose that the sets $U$ and
$W$ are partitioned into sets $\{U_i\}_{i\in I}$ and
$\{W_j\}_{j\in J}$, respectively. Then at most
$\beta e(H)/\alpha$ edges of $H$ belong to a
pair $(U_i,W_j)$ with $\density(U_i,W_j)\le\beta$.
\end{fact}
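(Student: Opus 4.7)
The plan is a direct double-counting argument. Let $\mathcal{B}\subset I\times J$ denote the set of index pairs $(i,j)$ for which the subpair $(U_i,W_j)$ is sparse, that is $\density(U_i,W_j)\le\beta$. I would first bound the number of edges contained in sparse subpairs by summing the trivial estimate $e(U_i,W_j)\le\beta|U_i||W_j|$ over $(i,j)\in\mathcal{B}$, which gives
$$\sum_{(i,j)\in\mathcal{B}} e(U_i,W_j) \;\le\; \beta\sum_{(i,j)\in\mathcal{B}} |U_i||W_j| \;\le\; \beta\sum_{i\in I,\,j\in J} |U_i||W_j| \;=\; \beta|U||W|,$$
where in the last step I use that $\{U_i\}_i$ and $\{W_j\}_j$ partition $U$ and $W$, so the products $|U_i||W_j|$ sum to $|U||W|$.

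The second step is to convert the factor $|U||W|$ into a factor of $e(H)$ using the assumed density bound $\density(U,W)\ge\alpha$. By definition of density, $e(H)=\density(U,W)\cdot|U||W|\ge\alpha|U||W|$, so $|U||W|\le e(H)/\alpha$. Substituting into the previous inequality yields the claimed bound $\beta e(H)/\alpha$ on the total number of edges lying in sparse subpairs.

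There is no real obstacle here; the statement is essentially the contrapositive of the observation that edges can only accumulate where either the local density is high or the area $|U_i||W_j|$ is large, and the latter is controlled by the total area which in turn is controlled by the global density. No regularity assumption on $(U,W)$ is needed, and the partition structure is used only through additivity of the products $|U_i||W_j|$.
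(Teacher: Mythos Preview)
Your proof is correct and essentially identical to the paper's argument: both bound $\sum_{(i,j)\text{ sparse}} e(U_i,W_j)\le \beta\sum|U_i||W_j|\le\beta|U||W|$ and then use $\alpha|U||W|\le e(H)$. The paper merely presents the same chain per-pair (writing $e(U_i,W_j)\le\frac{\beta}{\alpha}\cdot\frac{|U_i||W_j|}{|U||W|}\cdot e(U,W)$) before summing, which is a cosmetic difference.
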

\begin{proof}
Trivially, we have
\begin{equation}\label{eq:volumE}
\sum_{i\in I, j\in J}\frac{|U_i||W_j|}{|U||W|}=1\;. 
\end{equation}

Consider a pair $(U_i,W_j)$ of
$\density(U_i,W_j)\le\beta$. Then $$e(U_i,W_j)\le \beta
|U_i||W_j|=\frac{\beta}{\alpha}\frac{|U_i||W_j|}{|U||W|}\alpha|U||W|\le
\frac{\beta}{\alpha}\frac{|U_i||W_j|}{|U||W|}e(U,W)\;.$$
Summing over all such pairs $(U_i,W_j)$ and using~\eqref{eq:volumE} yields the
statement.
\end{proof}

The next lemma asserts that if we have many  $\epsilon$-regular pairs $(R,Q_i)$, then most vertices in $R$ have approximately the total degree into the set
$\bigcup_i Q_i$ that we would expect.
\begin{lemma}\label{lem:degreeIntoManyPairs}
Let $Q_1,\ldots,Q_\ell$ and $R$ be disjoint vertex sets. Suppose
further that  for each
$i\in[\ell]$, the pair $(R,Q_i)$ is $\epsilon$-regular. Then we have
\begin{enumerate}[(a)]
\item \label{mindestensdengrad}
$ \deg (v,\bigcup_i Q_i)\geq \frac{e(R,\bigcup_i Q_i)}{|R|}-\epsilon\left|\bigcup_i
Q_i\right|$ for all but at most $\epsilon|R|$ vertices $v\in R$, and
\item \label{hoechstensdengrad} 
$ \deg (v,\bigcup_i Q_i)\le
\frac{e\left(R,\bigcup_i Q_i\right)}{|R|}+\epsilon\left|\bigcup_i
Q_i\right|$ for all but at most $\epsilon|R|$ vertices $v\in R$.
\end{enumerate}
\end{lemma}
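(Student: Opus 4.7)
The plan is to prove parts~(\ref{mindestensdengrad}) and~(\ref{hoechstensdengrad}) by essentially symmetric arguments. The naive approach of applying Fact~\ref{fact:manyTypicalVertices} separately to each pair $(R, Q_i)$ fails, as it only bounds the set of vertices with abnormal degree into a \emph{single} $Q_i$, and summing over $i$ would give up to $\ell\epsilon|R|$ exceptional vertices instead of the required $\epsilon|R|$. The key idea is to apply the $\epsilon$-regularity condition to an aggregate \emph{bad} set, all at once.

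Set $Q := \bigcup_{i=1}^\ell Q_i$ and write $d_i := \density(R, Q_i)$. For part~(\ref{mindestensdengrad}), let
\[
B := \Bigl\{ v \in R \::\: \deg(v, Q) < \tfrac{e(R,Q)}{|R|} - \epsilon|Q| \Bigr\},
\]
and suppose for contradiction that $|B| > \epsilon|R|$. Since $|B| \ge \epsilon|R|$ and trivially $|Q_i| \ge \epsilon|Q_i|$, the $\epsilon$-regularity of $(R, Q_i)$ gives $\density(B, Q_i) > d_i - \epsilon$ for every~$i$. Summing the resulting edge counts yields
\[
e(B,Q) = \sum_{i=1}^\ell e(B, Q_i) > |B| \sum_{i=1}^\ell (d_i - \epsilon)|Q_i| = |B|\Bigl( \tfrac{e(R,Q)}{|R|} - \epsilon|Q|\Bigr),
\]
where we used $\sum_i d_i |Q_i| = \sum_i e(R, Q_i)/|R| = e(R,Q)/|R|$. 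On the other hand, by the definition of $B$ we have $e(B, Q) < |B|\bigl(e(R,Q)/|R| - \epsilon|Q|\bigr)$, a contradiction.

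Part~(\ref{hoechstensdengrad}) follows by the same argument applied to the set of vertices with $\deg(v,Q) > e(R,Q)/|R| + \epsilon|Q|$, reversing all strict inequalities. The main (mild) subtlety is the accounting step $\sum_i d_i|Q_i| = e(R,Q)/|R|$, which makes precisely the right cancellation happen so that regularity applied to the single set $B$ already contradicts its definition; there is no genuine obstacle beyond this observation.
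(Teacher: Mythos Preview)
Your proof is correct and takes essentially the same approach as the paper: assume the bad set $B$ has size exceeding $\epsilon|R|$ and exploit regularity with $B$ as the witnessing subset of $R$. The only cosmetic difference is that the paper, instead of summing $\density(B,Q_i)>d_i-\epsilon$ over all $i$, averages the aggregate degree deficit to locate a single index $i$ with $\density(B,Q_i)<\density(R,Q_i)-\epsilon$; the two presentations are contrapositive versions of one another.
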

\begin{proof}
We prove \eqref{mindestensdengrad}, the other item is analogous.
Suppose for contradiction that~\eqref{mindestensdengrad} does not hold.
Without loss of generality, assume that there is a set $X\subset
R$, $|X|>\epsilon |R|$ such that $\frac{e(R,\bigcup
Q_i)}{|R|}-\epsilon|\bigcup Q_i|> \deg(v,\bigcup Q_i)$ for
each $v\in X$. By averaging, there is an index $i\in [\ell]$
such that $\frac{|X|}{|R|}e(R,Q_i)-\epsilon|X||Q_i|>
e(X,Q_i)$, or equivalently, $$\density(R,Q_i)-\epsilon>
\density(X,Q_i)\;.$$ This is a contradiction to the $\epsilon$-regularity of the pair $(R,Q_i)$.
\end{proof}

We use Lemma~\ref{lem:degreeIntoManyPairs} to obtain the following.

\begin{corollary}\label{lem:degreeIntoManyPairs2}
Let $Q_1,\ldots,Q_\ell$ and $R$ be disjoint vertex sets, each of size at most $q$, such that  for each
$i\in[\ell]$, the pair $(R,Q_i)$ is $\epsilon$-regular. Assume that more than $\eps |R|$ vertices of $R$ have degree at least $x$ into $\bigcup Q_i$, but each $v\in R$ has neighbours in at most $z$ of the sets $Q_i$. Then $\deg (v, \bigcup_i Q_i)\geq x- 2\epsilon zq$ for all but at most $\eps |R|$ vertices of $R$.
\end{corollary}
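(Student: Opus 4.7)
Plan: The corollary should follow from Lemma~\ref{lem:degreeIntoManyPairs}(a). Applying that lemma directly yields that all but at most $\epsilon|R|$ vertices $v\in R$ satisfy
\[
\deg\Bigl(v,\bigcup_i Q_i\Bigr) \;\ge\; \frac{e(R,\bigcup_i Q_i)}{|R|} - \epsilon\Bigl|\bigcup_i Q_i\Bigr|.
\]
My goal is then to show that this lower bound is at least $x-2\epsilon zq$. I would split this into two subgoals: the inequality $\frac{e(R,\bigcup_i Q_i)}{|R|}\ge x-\epsilon zq$, and an effective replacement of the factor $|\bigcup_i Q_i|$ by $zq$ in the remaining $\epsilon$-error term.

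For the first subgoal, I would set $V^+:=\{v\in R:\deg(v,\bigcup_i Q_i)\ge x\}$. By hypothesis, $|V^+|>\epsilon|R|$, and so the $\epsilon$-regularity of each pair $(R,Q_i)$ forces $\density(V^+,Q_i)\le \density(R,Q_i)+\epsilon$. Summing the corresponding edge counts $e(V^+,Q_i)$ over $i$ and dividing by $|V^+|$ produces an inequality of the form $x\le \frac{e(R,\bigcup_i Q_i)}{|R|}+\epsilon\cdot E$, where the ``effective'' total size $E$ counts only clusters that matter for vertices of $V^+$. Because each $v\in V^+$ has neighbours in at most $z$ of the sets $Q_i$ and each $|Q_i|\le q$, one concludes $E\le zq$. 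The same $z$-condition then also allows one to replace the $|\bigcup_i Q_i|$ factor in the conclusion of Lemma~\ref{lem:degreeIntoManyPairs}(a) itself by $zq$: revisiting the averaging step in its proof, only the (at most $z$) clusters relevant to each putative bad vertex $v$ contribute, and the resulting per-vertex error is $\epsilon zq$. Combining the two sharpenings gives $\deg(v,\bigcup_i Q_i)\ge x-2\epsilon zq$ for all but $\epsilon|R|$ vertices of $R$.

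The main obstacle is faithfully using the ``at most $z$ clusters per vertex'' hypothesis, which is a \emph{local} condition (different vertices may be supported on different $z$-subsets of the $Q_i$'s) within what is otherwise a \emph{global} averaging argument. The resolution is that the regularity contradiction underlying Lemma~\ref{lem:degreeIntoManyPairs}(a) proceeds pair by pair: it suffices to exhibit a single index $i$ witnessing a density discrepancy, and such an $i$ can always be located within the $\le z$ relevant clusters of any single problematic vertex, so the local bound $zq$ genuinely suffices in place of the global size $|\bigcup_i Q_i|$.
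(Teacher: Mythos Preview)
Your approach has a genuine gap in both subgoals. When you sum the inequalities $\density(V^+,Q_i)\le\density(R,Q_i)+\epsilon$ over all $i$ and divide by $|V^+|$, you obtain
\[
x\;\le\;\frac{e(R,\bigcup_i Q_i)}{|R|}+\epsilon\sum_i|Q_i|\;=\;\frac{e(R,\bigcup_i Q_i)}{|R|}+\epsilon\Bigl|\bigcup_i Q_i\Bigr|,
\]
and the error term here is $\epsilon\,|\bigcup_i Q_i|$, not $\epsilon zq$. Your claim that the ``effective total size $E$'' is at most $zq$ conflates a local condition (each \emph{single} vertex touches at most $z$ of the $Q_i$) with a global sum (the whole set $V^+$, whose members may be supported on entirely different $z$-subsets, can collectively touch all $\ell$ clusters). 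The same confusion recurs in your step~3: the proof of Lemma~\ref{lem:degreeIntoManyPairs}(a) works with a \emph{set} $X\subset R$ of size $>\epsilon|R|$ and finds a single index $i$ where $\density(X,Q_i)$ deviates from $\density(R,Q_i)$; this index is obtained by averaging over all $\ell$ clusters and need not lie in the support of any particular vertex of $X$. There is no per-vertex version of that averaging step, so your proposed resolution in the final paragraph does not go through.

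The paper avoids this local-versus-global obstacle by a different route. It first uses Lemma~\ref{lem:degreeIntoManyPairs}(b) to locate a \emph{single} vertex $v\in R$ with $\deg(v,\bigcup_i Q_i)\ge x$ that is at the same time typical for its own support $I_v$, so that
\[
x\;\le\;\deg\Bigl(v,\bigcup_{i\in I_v}Q_i\Bigr)\;\le\;\frac{e(R,\bigcup_{i\in I_v}Q_i)}{|R|}+\epsilon\Bigl|\bigcup_{i\in I_v}Q_i\Bigr|\;\le\;\frac{e(R,\bigcup_{i\in I_v}Q_i)}{|R|}+\epsilon zq.
\]
It then applies Lemma~\ref{lem:degreeIntoManyPairs}(a) to the \emph{fixed} subfamily $\{Q_i:i\in I_v\}$, which has at most $z$ members of total size at most $zq$, yielding the bound $x-2\epsilon zq$ for all but $\epsilon|R|$ vertices. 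The idea you are missing is precisely this: pin down one vertex and one support set $I_v$, and work entirely inside that subfamily rather than trying to sharpen the error term globally.
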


\begin{proof}
For each $w\in R$, let $I_w\subseteq [\ell]$ be the set of those indices $i$ for which there is at least one edge from $w$ to $Q_i$.
Now, by  Lemma~\ref{lem:degreeIntoManyPairs}\eqref{hoechstensdengrad} there is a vertex
$v\in R$ whose degree into $\bigcup_{i\in [\ell]} Q_i$ is at least $x$ and whose degree into $\bigcup_{i\in I_v} Q_i$ is at most $\frac{e\left(R,\bigcup_{i\in I_v} Q_i\right)}{|R|}+\epsilon\left|\bigcup_{i\in I_v}
Q_i\right|$.
So, $$x\leq \deg (v, \bigcup_{i\in [\ell]} Q_i)= \deg (v, \bigcup_{i\in I_v}  Q_i) \leq  \frac{e\left(R,\bigcup_{i\in I_v} Q_i\right)}{|R|}+\epsilon|\bigcup_{i\in I_v}
Q_i| \leq
 \frac{e\left(R,\bigcup_{i\in I_v} Q_i\right)}{|R|}+\epsilon zq.$$
Thus by Lemma~\ref{lem:degreeIntoManyPairs}\eqref{mindestensdengrad} all but at most $\eps |R|$ vertices of $R$ have degree at least $x- 2\epsilon zq$ into $\bigcup_i Q_i$.
\end{proof}

\subsection{Regularizing locally dense graphs}\label{sec:RegL}

The Regularity Lemma~\cite{Sze78} has proved to be
a powerful tool for attacking graph embedding
problems; see~\cite{KuhnOsthusSurv} 
for a survey. We first state the
lemma in its original form.

\begin{lemma}[Regularity lemma]\label{lem:RL}
For all $\epsilon>0$ and $\ell\in\mathbb N$ there exist $n_0,M\in\mathbb N$
such that for every $n\ge n_0$ the following holds. Let $G$ be an $n$-vertex graph whose
vertex set is pre-partitioned into sets
$V_1,\ldots,V_{\ell'}$, $\ell'\le \ell$. Then there exists
a partition $U_0,U_1,\ldots,U_p$ of $V(G)$, $\ell<p<M$, with the following properties.
\begin{enumerate}[1)]
\item For every $i,j\in [p]$ we have $|U_i|=|U_j|$, and  $|U_0|<\epsilon n$.
\item For every
$i\in [p]$ and every $j\in [\ell']$ either $U_i\cap
V_j=\emptyset$ or $U_i\subset
V_j$. 
\item All but at most
$\epsilon p^2$ pairs $(U_i,U_j)$, $i,j\in [p]$, $i\neq j$, are $\epsilon$-regular.
\end{enumerate}
\end{lemma}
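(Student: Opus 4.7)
The plan is to follow Szemer\'edi's classical energy-increment argument. For any partition $\mathcal P=\{W_1,\ldots,W_m\}$ of $V(G)$, define the \emph{index}
$$\mathrm{ind}(\mathcal P)\;:=\;\sum_{i,j}\frac{|W_i||W_j|}{n^2}\,\density(W_i,W_j)^2,$$
which is bounded above by $\tfrac12$. A short Cauchy--Schwarz (``defect'') computation shows that refining any partition can only increase its index. The core analytic step is the irregular-pair lemma: if $(U,W)$ is $\epsilon$-irregular, as witnessed by subsets $U'\subset U$, $W'\subset W$ of relative size at least $\epsilon$ with $|\density(U',W')-\density(U,W)|\ge\epsilon$, then the partition $\{U',U\setminus U'\}\times\{W',W\setminus W'\}$ of the pair increases the contribution to the index by at least $\epsilon^4\cdot|U||W|/n^2$.

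With these two ingredients in hand, I would iterate as follows. Start from the pre-partition $V_1,\ldots,V_{\ell'}$, which trivially satisfies condition~2). If the current partition $\mathcal P$ has more than $\epsilon p^2$ irregular pairs, simultaneously refine each such pair using the witnessing subsets; summing the per-pair gains from the irregular-pair lemma yields a common refinement $\mathcal P'$ with $\mathrm{ind}(\mathcal P')\ge \mathrm{ind}(\mathcal P)+\epsilon^5$. Crucially, since we always refine, condition~2) is preserved. Because the index cannot exceed $\tfrac12$, this loop halts after at most $\lceil 1/(2\epsilon^5)\rceil$ rounds, giving an absolute (tower-type in $1/\epsilon$) upper bound on the number of parts.

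To convert the resulting partition into an \emph{equitable} partition satisfying condition~1), I would pick a common cluster size $s=\lfloor \epsilon n/(2p)\rfloor$, greedily chop each $W_i$ into pieces of size exactly $s$, and dump the residues (of total size less than $\epsilon n$) into an exceptional class $U_0$. Using Fact~\ref{fact:BigSubpairsInRegularPairs}, the vast majority of the original regular pairs survive this chopping as $(2\epsilon/\text{relative size})$-regular pairs, which, after a standard rescaling of $\epsilon$ at the outset, meets the required regularity. Choosing $\ell$ and $n_0$ large enough relative to $1/\epsilon$ and the tower bound ensures $\ell<p<M$.

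The main technical obstacle is the irregular-pair lemma: proving that any single irregular pair, when refined by the witnessing subsets, contributes a genuine $\Omega(\epsilon^4)$ increment to the relevant weighted density-squared sum requires a careful application of the defect Cauchy--Schwarz inequality and some bookkeeping to handle the four sub-blocks at once. Everything else (preservation of the pre-partition, bounding the iteration, final equipartitioning) is essentially a matter of setting parameters correctly.
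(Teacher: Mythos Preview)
Your proposal is the standard energy-increment (index-pumping) proof, and it is correct in outline. Note, however, that the paper does not actually prove Lemma~\ref{lem:RL}: it is simply the classical Szemer\'edi Regularity Lemma, quoted from~\cite{Sze78} without proof. The paper only alludes to the method (``the notion of the index \ldots\ and of the Index-pumping Lemma'') when it adapts the argument to the locally-dense setting in Lemma~\ref{lem:sparseRL}. So your write-up matches the intended background argument, but there is nothing in the paper to compare it against beyond that brief allusion.
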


We shall use Lemma~\ref{lem:RL}  for auxiliary
purposes only as it is helpful only in the setting of dense
graphs (i.e., graphs which have $n$ vertices and
$\Omega(n^2)$ edges). This is not necessarily the
case in Theorem~\ref{thm:main}. For this reason, we  give a version of the 
Regularity Lemma --- Lemma~\ref{lem:sparseRL} below --- which 
allows us to regularize even sparse graphs.

More precisely, suppose that we have an $n$-vertex graph $H$ whose edges lie in bipartite graphs $H[W_i,W_j]$, where $\{W_1,\ldots,W_\ell\}$ is an ensemble of sets of size $\Theta(k)$. Although $\ell$ may be unbounded, for a fixed $i\in[\ell]$ there are only a bounded number, say $m$, of indices $j\in[\ell]$ such that  $H[W_i,W_j]$ is non-empty. See Figure~\ref{fig:locallydensegraph} for an example.
\begin{figure}[t]
\centering 
\includegraphics[scale=0.8]{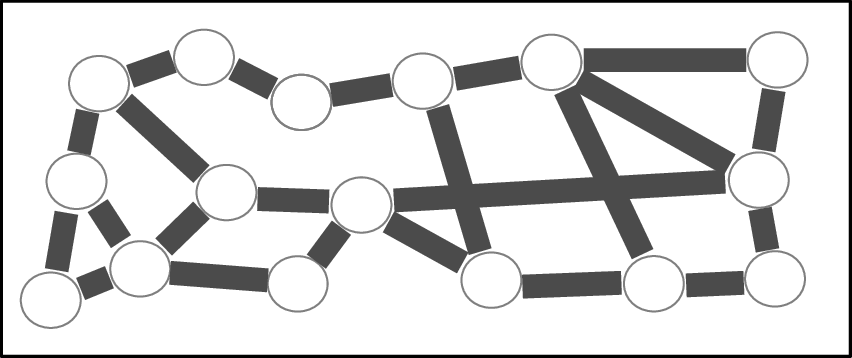}
\caption[Locally dense graph]{A locally dense graph as in Lemma~\ref{lem:sparseRL}. The sets $W_1,\ldots,W_\ell$ are depicted with grey circles. Even though there is a large number of them, each $W_i$ is linked to only boundedly many other $W_j$'s (at most four, in this example). Lemma~\ref{lem:sparseRL} allows us to regularize all the bipartite graphs using the same system of partitions of the sets $W_i$.}\label{fig:locallydensegraph}
\end{figure}
Lemma~\ref{lem:sparseRL} then allows us to regularize (in the sense of the Regularity Lemma~\ref{lem:RL}) all the bipartite graphs $G[W_i,W_j]$ using the same partition $\{W_i^{(0)}\dcup W_i^{(1)}\dcup\ldots\dcup W_i^{(p_i)}=W_i\}_{i=1}^\ell$. Note that when $|W_i|=\Theta(k)$ for all $i\in[\ell]$ then $H$ has at most
$$\Theta(k^2)\cdot m\cdot \ell\le \Theta(k^2)\cdot m\cdot \frac n{\Theta(k)}=\Theta(kn)$$
edges. Thus, when $k\ll n$, this is a regularization of a sparse graph. This ``sparse Regularity Lemma'' is very different to that of Kohayakawa and R\"odl (see e.g.~\cite{Kohayakawa97Szemeredi}). Indeed, the Kohayakawa--R\"odl Regularity Lemma only deals with graphs which have no local condensation of edges, such as subgraphs of random graphs.\footnote{There is a recent refinement of the Kohayakawa--R\"odl Regularity Lemma, due to Scott~\cite{Scott:RL}. Scott's Regularity Lemma gets around the no-condensation condition, which proves helpful in some situations, e.g.~\cite{AllKeeSudVer:TuranBipartite}; still the main features remain.} Consequently, the resulting regular pairs are of density $o(1)$. In contrast, Lemma~\ref{lem:sparseRL} provides us with regular pairs of density $\Theta(1)$, but, on the other hand, is useful only for graphs which are locally dense.

\begin{lemma}[Regularity Lemma for locally dense graphs]\label{lem:sparseRL}
For all $m,z\in \mathbb{N}$ and $\epsilon>0$  there
exists $q_\mathrm{MAXCL}\in\mathbb{N}$ such that the following is true. Suppose $H$ and $F$ are two graphs, $V(F)=[\ell]$ for some $\ell\in\mathbb N$, and 
$\maxdeg(F)\le m$. Suppose that $\mathcal Z=\{Z_1,\ldots,Z_z\}$ is a partition of $V(H)$. Let $\{W_1,\ldots, W_\ell\}$ 
be a $q_\mathrm{MAXCL}$-ensemble in $H$, such that for all
$i,j\in[\ell]$ we have
\begin{equation}
\label{lem:sparseRL(item)samesize} 
2|W_i|\ge |W_j|\;.
\end{equation}
Then for each $i\in [\ell]$ there exists a partition
$W_i^{(0)},W_i^{(1)},\ldots,W_i^{(p_i)}$ of the set
$W_i$  such that for all
$i,j\in[\ell]$ we have
\begin{enumerate}[(a)]
\item $1/\epsilon\le p_i\le q_\mathrm{MAXCL}$,
\item $|W_i^{(i')}|=|W_j^{(j')}|$ for each $i'\in [p_i]$,
$j'\in[p_j]$, 
\item for each $i'\in [p_i]$ there exists $x\in[z]$ such that $W_i^{(i')}\subset Z_x$,
\item $\sum_i|W_i^{(0)}|<\epsilon \sum_i|W_i|$, and\label{RLgarbage}
\item\label{item:RL:d} at most $\epsilon\left|\mathcal{Y}\right|$ pairs
$\left(W_i^{(i')},W_j^{(j')}\right)\in \mathcal{Y}$ form an
$\epsilon$-irregular pair in $H$, where
$$\mathcal{Y}:=\left\{\left(W_i^{(i')},W_j^{(j')}\right)\::\:ij\in
E(F), i'\in [p_i], j'\in [p_j]\right\}\;.$$
\end{enumerate}
\end{lemma}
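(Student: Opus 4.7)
The plan is to adapt the proof of the classical Szemerédi Regularity Lemma (Lemma~\ref{lem:RL}) to this locally dense setting, using a normalized index functional that stays bounded independently of the potentially unbounded parameter $\ell$.

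First, I would introduce the index
\[
\mathrm{ind}(\mathcal{P}) := \frac{1}{|E(F)|}\sum_{ij\in E(F)}\sum_{i'\in [p_i],\,j'\in [p_j]} \density\bigl(W_i^{(i')},W_j^{(j')}\bigr)^2 \frac{|W_i^{(i')}|\,|W_j^{(j')}|}{|W_i|\,|W_j|}
\]
of a system $\mathcal{P}=(\{W_i^{(i')}\}_{i'\in[p_i]})_{i\in[\ell]}$ of partitions of the $W_i$'s. This quantity lies in $[0,1]$ regardless of $\ell$. The classical defect Cauchy--Schwarz inequality yields the key fact: if the pair $(W_i^{(i')},W_j^{(j')})$ is $\epsilon$-irregular and both sides are refined using a corresponding pair of witness subsets, then the contribution of $(i,j,i',j')$ to the inner sum increases by at least $\epsilon^4\,|W_i^{(i')}|\,|W_j^{(j')}|/(|W_i|\,|W_j|)$.

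Next, I would run an iterative refinement procedure. Start with each $W_i$ partitioned by its intersections with the blocks of $\mathcal{Z}$, so that property~(c) holds trivially and $p_i \le z$. At each step, maintain the equitability $|W_i^{(i')}|=w$ for a common value $w$ (re-equalizing after each refinement, using hypothesis~\eqref{lem:sparseRL(item)samesize} which asserts the $|W_i|$ are pairwise within a factor of $2$). If more than $\epsilon|\mathcal{Y}|$ of the pairs in $\mathcal{Y}$ are $\epsilon$-irregular, extract witnesses for each such pair and simultaneously refine every $W_i^{(i')}$ by the algebra generated by the witnesses arriving via its at most $m$ neighbours in $F$. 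Under equitability the masses $|W_i^{(i')}|\,|W_j^{(j')}|/(|W_i|\,|W_j|)$ are all approximately $1/(p_ip_j)$, so the total normalized index gain is at least $\epsilon^5$. Since $\mathrm{ind}\le 1$, the iteration halts after at most $1/\epsilon^5$ steps. Because $\maxdeg(F)\le m$, each $W_i^{(i')}$ is cut into at most $2^{mP(t)}$ pieces per iteration (where $P(t)$ is the current maximum number of parts), giving a tower-type but uniform bound $P(t)\le q_\mathrm{MAXCL}=q_\mathrm{MAXCL}(m,z,\epsilon)$.

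The lower bound $p_i\ge 1/\epsilon$ and the size equality~(b) are enforced during the final equalization: choose the common block size $w$ between $|W_i|/q_\mathrm{MAXCL}$ and $\epsilon|W_i|/(2z)$, push the leftovers (less than $w$ vertices per $W_i^{(i')}$) into $W_i^{(0)}$, which is bounded by $\epsilon|W_i|$ as required in~\eqref{RLgarbage}, and invoke Fact~\ref{fact:BigSubpairsInRegularPairs} to keep the previously $\epsilon$-regular pairs regular (with a slight deterioration absorbed in the choice of $\epsilon$). The main technical point, and the place where hypothesis~\eqref{lem:sparseRL(item)samesize} is indispensable, is maintaining equitability of the partition sizes \emph{across all $\ell$ sets $W_i$ simultaneously}: without the comparable-sizes assumption, no single value of $w$ could both keep every $p_i$ bounded and be no larger than the smallest $|W_i^{(i')}|$, and the index argument would collapse.
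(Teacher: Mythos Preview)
Your approach is correct and is the ``direct'' adaptation of Szemer\'edi's original proof: a single global normalized index, simultaneous refinement by all witnesses, and the bound $\maxdeg(F)\le m$ to control how many pieces each cell is cut into per round. The paper takes a somewhat different route: it first uses Vizing's theorem to decompose $E(F)$ into $m+1$ matchings $M_1,\dots,M_{m+1}$, introduces a separate averaged index $\mathrm{ind}_i$ for each matching, and then processes the matchings one at a time. Because each $M_i$ is a matching, the refinements performed in step~$i$ touch every $W_x$ at most once, so there is no interference and one is literally running $|M_i|$ independent copies of the bipartite index-pumping step in parallel; monotonicity of the index under further refinement guarantees the other $\mathrm{ind}_j$ do not drop. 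Your version buys a single bounded potential and a clean $O(\epsilon^{-5})$ termination bound; the paper's version buys a more modular reduction to the standard bipartite lemma without having to argue that gains from different $F$-edges accumulate additively. Both yield a tower-type $q_{\mathrm{MAXCL}}$ depending only on $m,z,\epsilon$, and both use hypothesis~\eqref{lem:sparseRL(item)samesize} in exactly the place you identify, namely to maintain a common cell size across all $W_i$ during equitisation.
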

We use Lemma~\ref{lem:sparseRL} in Lemma~\ref{lem:decompositionIntoBlackandExpanding}. Lemma~\ref{lem:decompositionIntoBlackandExpanding} is in turn the main tool in the proof of our main structural decomposition of the graph $G_\PARAMETERPASSING{T}{thm:main}$, Lemma~\ref{lem:LKSsparseClass}. In the proof of Lemma~\ref{lem:decompositionIntoBlackandExpanding} we decompose $G_\PARAMETERPASSING{T}{thm:main}$ into several parts with very different properties, and one of these parts is a locally dense graph which can be then regularized by Lemma~\ref{lem:decompositionIntoBlackandExpanding}.
A similar Regularity Lemma is used
in~\cite{AKSS07+}. 
\bigskip 

The proof of Lemma~\ref{lem:sparseRL} is similar to the proof of the standard
Regularity Lemma~\ref{lem:RL}, as given for example in~\cite{Sze78}. We assume the reader's
familiarity with the notion of the index (a.k.a.\ the mean square density), and of the Index-pumping Lemma from
there.


We give a proof of Lemma~\ref{lem:sparseRL} below, but before, let us describe how a more naive
approach fails. For each edge $ij\in E(F)$ consider a regularization of the
bipartite graph $H[W_i,W_j]$,  let
$\{U^{(i')}_{i,j}\}_{i'\in[q_{i,j}]}$ be the partition of $W_i$ into clusters, and let
$\{U^{(j')}_{j,i}\}_{j'\in[q_{j,i}]}$ be the partition of $W_j$ into
clusters such that almost all pairs $(U^{(i')}_{i,j},U^{(j')}_{j,i})\subseteq (W_i,W_j)$ form an
$\epsilon'$-regular pair (for some $\epsilon'$ of our taste). We would now be
done if the partition $\{U^{(i')}_{i,j}\}_{i'\in[q_{i,j}]}$ of $W_i$ was
independent of the choice of the edge $ij$. This however need not be the case.
The natural next step would therefore be to consider the common refinement
$$\underset{j:ij\in E(F)}{\bigboxplus} \big\{U^{(i')_{i,j}}\big\}_{i'\in
[q_{ij}]}$$
of all the obtained partitions of $W_i$. The pairs obtained in this way lack
however any regularity properties as they are too small. Indeed, it is a notorious drawback of the
Regularity Lemma that the number of clusters in the partition is enormous as a
function of the regularity parameter. In our setting, this means that
$q_{i,j}\gg\frac1{\epsilon'}$. Thus a typical cluster $U^{(i'_1)}_{i,j_1}$
occupies on average only a $\frac1{q_{i,j_1}}$-fraction of the cluster
$U^{(i'_2)}_{i,j_2}$, and thus already the set
$U^{(i'_1)}_{i,j_1}\cap U^{(i'_2)}_{i,j_2}\subset U^{(i'_2)}_{i,j_2}$ is not
substantial (in the sense of the regularity). The same issue arises when regularizing multicolored graphs
(cf.~\cite[Theorem~1.18]{KS96}). The solution is to impel
the regularizations to happen in a synchronized way.

\begin{proof}[Proof of Lemma~\ref{lem:sparseRL}] 
For the sake of brevity, and since this step is standard,
we omit respecting the prepartition~$\mathcal{Z}$ in this proof. 

We first recall the proof of the original Regularity Lemma~\ref{lem:RL} which we
then modify. Actually, it better suits our situation to illustrate this on a
procedure which regularizes a given bipartite graph $G=(A,B;E)$. We start with 
arbitrary bounded partitions $\mathcal W_A$ and $\mathcal W_B$ of $A$ and $B$.
Sequentially, we look whether there is a witness of irregularity of $\mathcal
W_A$ and $\mathcal W_B$. If there is, then the partition $\mathcal W_A$ and
$\mathcal W_B$ can be refined so that the index increases. The facts that one
can control the increase of the complexity of the partitions, and that the index
increases substantially are the keys for guaranteeing that  the iteration
terminates in a bounded number of steps. 
\medskip

Let us now see how we can adapt this proof to our setting.
By Vizing's Theorem we can cover the
edges of $F$ by disjoint matchings
$M_1,\ldots,M_{m+1}$. For each
$i\in[m+1]$ we shall introduce a variable $\mathrm{ind}_i$. The variable
$\mathrm{ind}_i$ is the average index of the bipartite graphs which
correspond to the edges of $M_i$ and the current partitions of the sets $W_x$. In each step
$i\in[m+1]$, we refine simultanously
partitions in all bipartite graphs $G[W_x,W_y]$ ($xy\in
M_i$) which possess witnesses of irregularity. More precisely, assume that in a certain step each set $W_z$ is partitioned into sets $\mathcal{W}_z$. We then define
\begin{align*}
\mathrm{ind}_i&=\frac1{|M_i|}\sum_{xy\in M_i}
\mathrm{ind}(\mathcal{W}_x,\mathcal{W}_y) \;,&\mbox{if $M_i\not=\emptyset$, and}\\
\mathrm{ind}_i&=1 \;,&\mbox{otherwise.}
\end{align*}
where $\mathrm{ind}$ is the usual index. The
Index-pumping Lemma asserts that when refining the partition of $G[W_x,W_y]$ the value
$\mathrm{ind}(\mathcal{W}_x,\mathcal{W}_y)$ increases
substantially. The fact that $M_i$ is a matching allows
us to perform these simultaneous refinements without
interference. It is well-known that none of
$\mathrm{ind}_j$ ($j<i$) did decrease during pumping $\mathrm{ind}_i$ up. Thus after a bounded number of steps there are no witnesses of irregularity in the graphs $G[W_x,W_y]$ ($xy\in E(H)$) with respect to the partitions $\mathcal{W}_x,\mathcal{W}_y$.
This suffices to give the statement.
\end{proof}

Usually after applying the Regularity Lemma to some graph $G$, one bounds the
number of  edges which correspond to irregular pairs, to regular, but sparse
pairs, or are incident with the exceptional sets $U_0$. We shall do the same
for the setting of Lemma~\ref{lem:sparseRL}.

\begin{lemma}\label{notmuchlost}
In the situation of Lemma~\ref{lem:sparseRL}, suppose that  
$\maxdeg(H)\leq \Omega k$ and $e(H)\le kn$, and that each edge $xy\in E(H)$ is
captured by some edge $ij\in E(F)$, i.e., $x\in W_i$, $y\in W_j$. Moreover
suppose that
\begin{equation}\label{Fdensi}
\text{$\density(W_i,W_j)\geq\gamma$ if $ij\in E(F)$.}
\end{equation}
Then all but at most $(\frac{4\epsilon}{\gamma}+\epsilon\Omega+\gamma)nk$  edges of $H$ belong to regular pairs $(W^{(i)}_{i'},W^{(j)}_{j'})$, $i,j\neq 0$,  of density at least
$\gamma^2$.
\end{lemma}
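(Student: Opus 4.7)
My plan is to classify the edges of $H$ that are \emph{not} in a regular pair $(W_i^{(i')}, W_j^{(j')})$ with $i',j' \neq 0$ of density at least $\gamma^2$ into three groups, and bound the contribution of each separately. Write $\mathcal{E}_0$ for the edges incident with $\bigcup_i W_i^{(0)}$; write $\mathcal{E}_1$ for the edges lying in an $\epsilon$-irregular pair $(W_i^{(i')}, W_j^{(j')})$ with $ij\in E(F)$ and $i',j'\neq 0$; and write $\mathcal{E}_2$ for the edges lying in an $\epsilon$-regular pair $(W_i^{(i')}, W_j^{(j')})$ with $ij\in E(F)$, $i',j'\neq 0$, and density less than $\gamma^2$. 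Since every edge of $H$ is captured by some $ij \in E(F)$, and the $W_i$ are disjoint, every bad edge lies in at least one of $\mathcal{E}_0, \mathcal{E}_1, \mathcal{E}_2$, so it suffices to show $|\mathcal{E}_0|+|\mathcal{E}_1|+|\mathcal{E}_2| \le (\frac{4\epsilon}{\gamma}+\epsilon\Omega+\gamma)nk$.

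For $\mathcal{E}_0$: by Lemma~\ref{lem:sparseRL}(\ref{RLgarbage}) we have $\sum_i |W_i^{(0)}| < \epsilon \sum_i |W_i| \le \epsilon n$, and each such vertex has degree at most $\Omega k$ in $H$, giving $|\mathcal{E}_0| \le \epsilon\Omega\, nk$.

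For $\mathcal{E}_2$: fix $ij \in E(F)$. The bipartite graph $H[W_i,W_j]$ has density at least $\gamma$ by \eqref{Fdensi}, and its parts are partitioned by $\{W_i^{(i')}\}_{i'\in[p_i]}$ and $\{W_j^{(j')}\}_{j'\in[p_j]}$. Applying Fact~\ref{fact:CanADensePairConsistOnlyOfSparseSubpairs?} with $\alpha = \gamma$ and $\beta = \gamma^2$, the number of edges of $H[W_i,W_j]$ in subpairs of density at most $\gamma^2$ is at most $\gamma\, e(W_i,W_j)$. Summing over $ij \in E(F)$ and using that each edge of $H$ is captured by a unique $ij$ (since the $W_i$ are disjoint) gives $|\mathcal{E}_2| \le \gamma\, e(H) \le \gamma nk$.

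For $\mathcal{E}_1$: by Lemma~\ref{lem:sparseRL}(\ref{item:RL:d}) there are at most $\epsilon|\mathcal{Y}|$ irregular pairs in $\mathcal{Y}$, and by condition~(b) all sub-clusters have a common size $c$, so each irregular subpair contributes at most $c^2$ edges. Hence
\[
|\mathcal{E}_1| \;\le\; \epsilon c^2 \sum_{ij \in E(F)} p_i p_j \;=\; \epsilon \sum_{ij \in E(F)} |W_i|\,|W_j| \;\le\; \frac{\epsilon}{\gamma} \sum_{ij \in E(F)} e(W_i,W_j) \;\le\; \frac{\epsilon}{\gamma}\, nk,
\]
where we used \eqref{Fdensi} to convert the size product into an edge count, and $e(H)\le kn$ at the end. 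Summing the three estimates finishes the proof (with room to spare compared to the stated $\frac{4\epsilon}{\gamma}$).

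No step looks genuinely difficult; the only point worth double-checking is that the edges in $\mathcal{E}_1$ need to be reweighted through the density lower bound \eqref{Fdensi} in order to turn a "count of pairs" bound into a "count of edges" bound, since on its face Lemma~\ref{lem:sparseRL}(\ref{item:RL:d}) only controls how many pairs are irregular, not how dense those pairs are. The density hypothesis on $F$-edges is exactly what lets us trade the factor $|W_i||W_j|$ for $e(W_i,W_j)/\gamma$ and then telescope over $E(F)$.
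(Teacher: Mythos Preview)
Your proof is correct and follows essentially the same three-part decomposition as the paper (edges touching garbage clusters, edges in irregular subpairs, edges in sparse regular subpairs). One minor point: in your $\mathcal{E}_1$ estimate the step $\epsilon c^2\sum p_ip_j = \epsilon\sum |W_i||W_j|$ should be ``$\le$'' rather than ``$=$'' (since $cp_i\le|W_i|$ due to the garbage cluster), and your direct use of~\eqref{Fdensi} to bound $\sum_{ij\in E(F)}|W_i||W_j|\le \gamma^{-1}\sum_{ij}e(W_i,W_j)\le \gamma^{-1}kn$ is actually cleaner than the paper's route (which bounds $e(F)$ via the minimum cluster size and the size-ratio condition), yielding $\epsilon/\gamma$ rather than $4\epsilon/\gamma$.
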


\begin{proof}Set $w:=\min\{|W_i|:i\in V(F)\}$.
By~\eqref{Fdensi}, each edge of $F$ represents at
least $\gamma w^2$ edges of $H$. Since $e(H)\le kn$ it follows that $e(F)\le
kn/(\gamma w^2)$. Thus,
by the assumption~\eqref{lem:sparseRL(item)samesize}, $\sum_{AB\in E(F)}|A||B|\le e(F)(2w)^2\le \frac{4kn}{\gamma}$.
Using~\eqref{item:RL:d} of Lemma~\ref{lem:sparseRL} we get that the number of
edges of $H$ contained in $\epsilon$-irregular pairs from $\mathcal
Y$ is at most
\begin{equation}\label{eq:boundTotalIrregII}
\frac{4\epsilon nk}{\gamma}\;.
\end{equation}

Write $E_1$ for the set of edges of $H$ which are
incident with a vertex in $\bigcup_{i\in[\ell]} W_i^{(0)}$. Then
by~\eqref{RLgarbage} of Lemma~\ref{lem:sparseRL}, and since
$\maxdeg(H)\leq\Omega k$,
\begin{equation}\label{eq_incidentToGarbageCluster}
|E_1|\le \epsilon\Omega nk\;.
 \end{equation}
 
Let $E_2$ be the set of those edges of $H$ which belong to $\epsilon$-regular pairs $(W^{(i')}_{i},W^{(j')}_{j})$ with
$ij\in E(F), i'\in[p_i],j'\in[p_j]$ of density at most
$\gamma^2$. We claim that
\begin{equation}\label{eq_edgesinsparseregular}
|E_2|\le\gamma kn\;.
 \end{equation}
Indeed, because of~\eqref{Fdensi} and by
Fact~\ref{fact:CanADensePairConsistOnlyOfSparseSubpairs?}
(with
$\alpha_\PARAMETERPASSING{F}{fact:CanADensePairConsistOnlyOfSparseSubpairs?}:=\gamma$
and
$\beta_\PARAMETERPASSING{F}{fact:CanADensePairConsistOnlyOfSparseSubpairs?}:=\gamma^2$),
for each $ij\in E(F)$ there are at most $\gamma e_H(W_i,W_j)$ edges contained in the
bipartite graphs $H[W^{(i')}_{i},W^{(j')}_{j}]$, 
$i'\in[p_i],j'\in[p_j]$, with $\density_H(W^{(i')}_{i},W^{(j')}_{j})\le \gamma^2$. Since $\sum_{ij\in E(F)}e_H(W_i,W_j)\le kn$, the validity of~\eqref{eq_edgesinsparseregular} follows. Combining \eqref{eq:boundTotalIrregII},~\eqref{eq_incidentToGarbageCluster}, and~\eqref{eq_edgesinsparseregular} we finish the proof.
\end{proof}

\section{Cutting trees: $\ell$-fine partitions}\label{sec:cut}
The purpose of this section is to introduce some notation
related to trees. The notion of an $\ell$-fine partition
of a tree shall be of particular interest. Roughly speaking, an $\ell$-fine
partition of a tree $T\in\treeclass{k}$ is a partition of the $T$ into a
small number of cut-vertices and subtrees of order at most $\ell$ with some
additional properties. This notion is essential for our proof of
Theorem~\ref{thm:main} as we use a certain sequential procedure to embed $T_\PARAMETERPASSING{T}{thm:main}$
into the host graph $G_\PARAMETERPASSING{T}{thm:main}$, embedding a subtree after subtree.

\smallskip

Let $T$ be a tree rooted at $r$, inducing the partial order $\preceq$\index{mathsymbols}{*$\preceq$@$\preceq$} on $V(T)$ (with $r$ as the minimal element).
 If $a\preceq b$ and $ab\in E(T)$ then we say $b$ is a \index{general}{child}{\em child of} $a$ and  $a$ is the \index{general}{parent}{\em parent of} $b$.
\index{mathsymbols}{*Ch@$\children(v)$}$\children(a)$ denotes the set of children of $a$,
and the parent of a vertex $b\not=r$ is denoted
\index{mathsymbols}{*Par@$\parent(v)$}$\parent(b)$. For a set $U\subset V(T)$ write
\index{mathsymbols}{*Par@$\parent(U)$}$\parent(U):=\bigcup_{u\in U\setminus
\{r\}}\parent(u)\setminus U$ and \index{mathsymbols}{*Ch@$\children(U)$}$\children(U):=\bigcup_{u\in
U}\children(u)\setminus U$.

We say that a
tree $T'\subset T$ is \index{general}{induced tree}{\em induced} by
a vertex $x\in V(T)$ if $V(T')$ is the up-closure of $x$ in $V(T)$, i.e., $V(T')=\{v\in V(T)\: :\: x \preceq
v\}$. We then write \index{mathsymbols}{*T@$T(r,\uparrow x)$}$T'=T(r,\uparrow x)$, or $T'=T(\uparrow x)$, if the root
is obvious from the context and call $T'$ an \index{general}{end subtree}{\em end
subtree}. Subtrees of $T$ that are not end subtrees are called  \index{general}{internal
subtree}{\em internal subtrees}.

Let $T$ be a tree
rooted at $r$ and let $T'\subset T$ be a subtree with $r\not \in V(T')$. The
\index{general}{seed}{\em seed of
$T'$} is the $\preceq$-maximal  vertex $x\in V(T)\setminus V(T')$ such that
$x\preceq v$ for all $v\in V(T')$. We write \index{mathsymbols}{*Seed@$\seed$} $\seed(T')=x$. A~\emph{fruit}\index{general}{fruit} in a rooted tree $(T,r)$ is any vertex $u\in V(T)$ whose distance from $r$ is even and at least four.

\smallskip

We can now state the most important definition of this section.
\begin{definition}[\bf $\ell$-fine partition]\label{ellfine}
Let $T\in \treeclass{k}$ be a tree rooted at $r$. An
\index{general}{fine partition}{\em $\ell$-fine partition of $T$} is a
quadruple $(W_A,W_B, \shrubA,
\shrubB)$, where $W_A,W_B\subseteq V(T)$ and $\shrubA$, $\shrubB$ are families of subtrees of $T$ such that
\begin{enumerate}[(a)]
\item  the three sets $W_A$, $W_B$ and $\{V(T^*)\}_{T^*\in\shrubA\cup
\shrubB}$ partition $V(T)$,\label{decompose}
\item $r\in W_A\cup W_B$,\label{root}
\item $\max\{|W_A|,|W_B|\}\leq 336k/{\ell}$,\label{few}
\item for $w_1,w_2\in W_A\cup W_B$ the distance $\dist(w_1,w_2)$ is odd if and only if one of them lies in $W_A$ and the other one in $W_B$,\label{parity}
\item $v(T^*)\leq \ell$ for every tree $T^*\in \shrubA\cup
\shrubB$,\label{small}
\item $V(T^*)\cap \neighbor(W_B)=\emptyset$ for every $T^*\in
\shrubA$ and $V(T^*)\cap \neighbor(W_A)=\emptyset$ for every $T^*\in
\shrubB$,\label{nice}
\item each tree of $\shrubA\cup\shrubB$ has its seed in
$W_A\cup W_B$,\label{cut:precede}
\item  $|V(T^*)\cap \neighbor(W_A\cup W_B)|\le 2$ for each $T^*\in\shrubA\cup \shrubB$,\label{2seeds}
\item if $V(T^*)\cap \neighbor(W_A\cup
W_B)$ contains two distinct vertices $y_1$, $y_2$ for some  $T^*\in\shrubA\cup
\shrubB$, then $\dist(y_1,y_2)\ge 4$,\label{short}
\item if $T_1,T_2\in\shrubA\cup\shrubB$ are two internal subtrees of $T$ such that $v_1\in T_1$ precedes $v_2\in T_2$ 
then $\dist_T(v_1,v_2)>2$,\label{ellfine:separatedinternalshrubs}
\item $\shrubB$ does not contain any internal tree of $T$, and\label{Bend}
\item $\sum_{\substack{T^*\in
\shrubA\\T^*\textrm{ end tree of $T$}}}v(T^*)\ge\sum_{T^*\in \shrubB}v(T^*)\;\mbox{.}$
\label{Bsmall}
\end{enumerate}
\end{definition}

\begin{remark}\label{rem:ellfineW}
It  is easy to see that any $\ell$-fine partition $(W_A,W_B, \shrubA,\shrubB)$  of a tree $(T,r)$ is determined once we know the set $W=W_A\cup W_B$, except possibly for being able to swap $W_A$ with $W_B$ and $\shrubA$ with $\shrubB$. Indeed, the division of $W$ into two sets $W'$ and $W''$ follows the bipartition of~$T$,  and conditions~\eqref{Bend} and~\eqref{Bsmall} determine which of $W'$, $W''$ is $W_A$ unless  $T-W$ contains no internal trees and~\eqref{Bsmall} would hold either way. 
During the proof of Lemma~\ref{lem:TreePartition} below we shall therefore sometimes just say one of the conditions~\eqref{decompose}--\eqref{Bsmall} holds for the set $W$, and not explicitly mention the tuple $(W_A,W_B, \shrubA,\shrubB)$.
\end{remark}

\begin{remark}\label{rem:fruits}
Suppose that $(W_A,W_B, \shrubA,\shrubB)$ is an $\ell$-fine partition of a tree $(T,r)$, and suppose that $T^*\in\shrubA\cup\shrubB$ is such that
$|V(T^*)\cap \neighbor(W_A\cup W_B)|=2$. Let us root $T^*$ at the neighbour $r_1$ of its seed, and
let $r_2$ be the other vertex of $V(T^*)\cap \neighbor(W_A\cup W_B)$. Then
\eqref{parity},~\eqref{nice}, and~\eqref{short} imply that $r_2$ is a fruit in
$(T^*,r_1)$.
\end{remark}

The following is the main lemma of this section. It asserts that each tree of order $k$ has $\ell$-fine
partitions for all values of $\ell\leq k$.

\begin{lemma}\label{lem:TreePartition}
Let $T\in \treeclass{k}$ be a tree rooted at $r$ and let $\ell\in
\mathbb{N}$ with $\ell\le k$. Then $T$ has an $\ell$-fine partition.
\end{lemma}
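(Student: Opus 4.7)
My plan is to prove the lemma by induction on $v(T)$ via an iterative peeling procedure on $(T,r)$.

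\textbf{Base case} ($v(T)\le\ell$): I would take $W_A := \{r\}$, $W_B := \emptyset$, put the components of $T-r$ into $\shrubA$ as end subtrees with seed $r$, and $\shrubB := \emptyset$; all twelve conditions (a)--(l) are then straightforward to verify (in particular, $|V(T^*)\cap \neighbor(W)|=1$ for each such piece, making~\eqref{short} and~\eqref{ellfine:separatedinternalshrubs} vacuous, and~\eqref{Bsmall} reduces to $v(T)-1\ge 0$).

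\textbf{Inductive step} ($v(T)>\ell$): I would locate a deepest vertex $v\in V(T)$ with $|T(\uparrow v)| \ge \ell/3$, so each child $c$ of $v$ satisfies $|T(\uparrow c)| < \ell/3$. If $|T(\uparrow v)| \le \ell$, I would add $\parent(v)$ to $W$, commit $T(\uparrow v)$ as an end subtree with seed $\parent(v)$, and apply induction to $T - V(T(\uparrow v))$. Otherwise $|T(\uparrow v)|>\ell$, in which case I would greedily select a set $S$ of children of $v$ with $\sum_{c\in S} |T(\uparrow c)| \in [\ell/3,\ell]$, add $v$ to $W$, commit each $T(\uparrow c)$ ($c\in S$) as an end subtree with seed $v$, and induct on $T - \{v\} - \bigcup_{c\in S} V(T(\uparrow c))$. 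Each step removes at least $\ell/3$ vertices, so at most $3k/\ell$ cuts occur in total, giving~\eqref{few} with large slack. Having obtained the inductive partition, I would glue it together with the newly committed cuts and pieces, then define $W_A, W_B$ according to the bipartition of $T$ (even vs.\ odd distance from $r$); this yields~\eqref{root} and~\eqref{parity} for free, while~\eqref{nice} dictates that each end subtree enters $\shrubA$ or $\shrubB$ according to the parity of its seed.

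\textbf{Main obstacle.} The subtle conditions are~\eqref{nice}, the separation conditions~\eqref{short} and~\eqref{ellfine:separatedinternalshrubs} for internal subtrees, and the balance~\eqref{Bsmall}. A clean strategy is to arrange \emph{all} cut vertices at even distance from $r$, so that $W_B = \emptyset$, every committed piece is automatically placed in $\shrubA$, and~\eqref{parity}, \eqref{nice}, \eqref{Bend}, and~\eqref{Bsmall} all hold trivially; to achieve this I would, whenever the chosen $v$ above sits at odd depth, replace it by its parent (or one of its children of the right parity), which shifts $|T(\uparrow v)|$ only by a bounded factor that is absorbed by working with the looser window $[\ell/5,\ell]$ instead of $[\ell/3,\ell]$. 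The genuinely hard part is handling the ``slab'' pieces that remain trapped between consecutive cut vertices: these are the potential internal subtrees of $T$. For $\ell\ge 5$ I can space the cuts along each root-to-leaf path with gaps of $\ell+2$ so that each internal piece has its two attachment vertices at distance $\ge 4$ (verifying~\eqref{short}) and distinct internal pieces are separated by $>2$ in $T$ (verifying~\eqref{ellfine:separatedinternalshrubs}), all while keeping $|W| = O(k/\ell)$; for $\ell<5$ such internal pieces of the required form are impossible, and the construction must instead absorb almost all of $T$ into $W$, which is comfortably accommodated by the generous factor $336$ in~\eqref{few} (since $336k/\ell \ge k$ in this regime).
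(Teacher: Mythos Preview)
Your central simplification---forcing $W_B=\emptyset$ by shifting every cut vertex to even depth---does not work. Take the ``broom'': a root $r$ with a single child $v$, and $v$ having $k-2$ leaf children; set $\ell=k/2$ with $k$ large. Your peeling locates $v$ as the deepest vertex with $|T(\uparrow v)|\ge \ell/3$, and since $|T(\uparrow v)|=k-1>\ell$ you enter the ``select children'' branch, which wants to put $v$ itself into $W$. But $v$ sits at odd depth. Your parity fix of replacing $v$ by its parent fails ($r$ has only one child, whose subtree has size $k-1>\ell$, so no admissible $S$ exists), and replacing by a child also fails (each leaf has subtree of size $1<\ell/5$). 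More decisively, \emph{any} partition with $W_B=\emptyset$ must have $W=W_A$ contained in the even-depth vertices $\{r\}\cup\{\text{leaves}\}$; to make the unique shrub containing $v$ have size at most $\ell$, at least $k-\ell-1$ leaves must lie in $W_A$, which violates~\eqref{few} once $k>1344$. The correct partition here takes $W_A=\{v\}$ and $W_B=\{r\}$, so $W_B\neq\emptyset$ is forced. With this simplification gone, your proposed verification of~\eqref{nice}, \eqref{Bend}, and~\eqref{Bsmall} collapses.

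There is a second, independent gap: you never address condition~\eqref{2seeds}. An internal shrub produced by any peeling procedure can have three or more attachment vertices to $W$ whenever the tree branches inside it; your ``space the cuts along each root-to-leaf path'' idea controls only the path-like situation, not the branching. The paper's proof handles this (and the remaining subtle conditions) not by a single clever peeling but by a chain of refinements $W_1\to W_2\to\cdots\to W$: the initial $W_1$ comes from a peeling similar in spirit to yours, and then separate passes insert the branching vertices of the skeleton (for~\eqref{2seeds}, controlled via Fact~\ref{fact:treeshavemanyleaves3vertices}), the parents of $W$ (for~\eqref{ellfine:separatedinternalshrubs}), the roots of two-coloured shrubs (for~\eqref{nice}), the short internal paths (for~\eqref{short}), and finally the attachment vertices needed to push all internal shrubs into $\shrubA$ (for~\eqref{Bend} and~\eqref{Bsmall}). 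Each pass at most multiplies $|W|$ by a constant, and the accumulated product is where the constant $336$ in~\eqref{few} comes from.
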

Similar but simpler tree-cutting procedures were used in other literature concerning the
Loebl--Koml\'os--S\'os Conjecture in the dense setting,
cf.~\cite{AKS95,HlaPig:LKSdenseExact,PS07+,Z07+}. There, using the notation of
Conjecture~\ref{conj:LKS}, the trees in $\shrubA\cup\shrubB$ of an $\ell$-fine
partition of a tree $T\in\treeclass{k}$ are embedded in regular pairs of a
Regularity Lemma decomposition of the host graph $G$. In the current paper
however, a more complex decomposition result (Lemma~\ref{lem:LKSsparseClass}) than the Regularity Lemma is used to capture the structure of $G$. To this end we had to further strengthen the features of the $\ell$-fine partition. In particular, features~\eqref{2seeds},~\eqref{short},~\eqref{ellfine:separatedinternalshrubs} of Definition~\ref{ellfine} were introduced to handle the more complex embedding procedures in our setting.

\begin{remark}\label{rem:internalVSend}
\begin{enumerate}[(i)]
\item 
In our proof of Theorem~\ref{thm:main}, we shall apply
Lemma~\ref{lem:TreePartition} to a tree $T_\PARAMETERPASSING{T}{thm:main}\in\treeclass{k}$. The number
$\ell_\PARAMETERPASSING{L}{lem:TreePartition}$ will  be linear in $k$, and
thus~\eqref{few} of Definition~\ref{ellfine} tells us that the size of the  sets $W_A$ and
$W_B$ is bounded by an absolute constant.
\item\label{it:fewinternaltrees}
Each internal tree in $\shrubA$ of an $\ell$-fine partition has a unique vertex from $W_A$ above it. Thus with $\ell_\PARAMETERPASSING{L}{lem:TreePartition}$ as above also the number of internal trees in $\shrubA$ is bounded by an absolute constant. This need not not be the case for the number of end trees. For instance, if $(T_\PARAMETERPASSING{T}{thm:main},r)$ is a star with $k-1$ leaves and rooted at its centre $r$ then $W_A=\{r\}$ while the $k-1$ leaves of $T_\PARAMETERPASSING{T}{thm:main}$ form the end shrubs in $\shrubA$.
\end{enumerate} 
\end{remark}

\begin{proof}[Proof of Lemma~\ref{lem:TreePartition}]
First we shall use an inductive construction to get candidates for $W_A$, $W_B$,
$\shrubA$ and $\shrubB$, which we shall modify later on, so that they satisfy
all the  conditions required by Definition~\ref{ellfine}.

Set $T_0:=T$. Now, inductively for $i\ge 1$ choose a $\preceq$-maximal vertex
$x_i\in V(T_{i-1})$ with the property that $v(T_{i-1}(\uparrow x_i))>\ell$. We set $T_i:=T_{i-1}-(V(T_{i-1}(\uparrow x_i))\setminus
\{x_i\})$. If, say at step $i=i_\textrm{end}$, no such
$x_{i}$ exists, then $v(T_{i-1})\le \ell$. In that case,
set $x_i:=r$, set $W_1:=\{x_i\}_{i=1}^{i_\textrm{end}}$ and terminate. 
The fact that $v(T_{i-1}- V(T_i))\geq \ell$ for each $i<i_\textrm{end}$ implies
that
\begin{equation}\label{X}
|W_1|-1=i_\textrm{end}-1\le k/\ell\;.
\end{equation}

 Let $\mathcal{C}$  be the set of all
components of the forest $T-W_1$.
Observe that by the choice of the $x_i$ each
$T^*\in\mathcal C$ has order at most $\ell$. 

Let $A$ and $B$ be the colour classes of $T$ such that $r\in A$. Now, choosing
$W_A$ as $W_1\cap A$ and $W_B$ as $W_1\cap B$ and dividing $\mathcal C$
adequately into sets $\shrubA$ and $\shrubB$ would yield a quadruple that
satisfies conditions~\eqref{decompose}, \eqref{root}, \eqref{few},
\eqref{parity}, \eqref{small} and~\eqref{cut:precede}. In order to find also the
remaining properties satisfied,  we shall refine our tree partition by adding
more vertices to $W_1$, thus making the trees in $\shrubA\cup\shrubB$ smaller. In doing so, we have to be careful not to end up
violating~\eqref{few}. We shall enlarge the set of cut vertices in several
steps, accomplishing sequentially, in this order, also properties~\eqref{2seeds}, \eqref{ellfine:separatedinternalshrubs},
\eqref{nice},
\eqref{short}, and in the last step at the same time~\eqref{Bend}
and~\eqref{Bsmall}. It will be easy to check that in each of the steps none of the
previously established properties is lost, so we will not explicitly check them, except for~\eqref{few}.

For condition~\eqref{2seeds}, first define $T'$ as the subtree of $T$ that contains all vertices of $W_1$ and all vertices that lie on paths in $T$ which have both endvertices in $W_1$. Now, if a subtree $T^*\in\mathcal C$ does not already satisfy~\eqref{2seeds} for $W_1$, then $V(T^*)\cap V(T')$ must contain some  vertices of degree at least three. We will add the set $Y(T^*)$ of all these vertices to $W_1$. Formally, let $Y$ be the union of the sets $Y(T^*)$ over all $T^*\in\mathcal C$, and set $W_2:=W_1\cup Y$. Then the components of $T-W_2$ satisfy~\eqref{2seeds}.

Let us upper-bound the size of the set $W_2$. For each $T^*\in\mathcal C$, note that by Fact~\ref{fact:treeshavemanyleaves3vertices} for $T^*\cap T'$, we know that $|Y(T^*)|$ is at most the number of leaves of $T^*\cap T'$ (minus two). On the other hand, each leaf of $T^*\cap T'$ has a child in $W_1$ (in $T$). As these children are distinct for different trees $T^*\in\mathcal C$, we find that $ |Y|\leq |W_1|$ and thus
\begin{equation}\label{Y}
 |W_2|\leq 2|W_1|\;.
\end{equation}


Next, for condition~\eqref{ellfine:separatedinternalshrubs}, observe that by setting $W_3:=W_2
\cup \parent_T(W_2)$ the components of $T-W_3$ fulfill~\eqref{ellfine:separatedinternalshrubs}. We have 
\begin{equation}\label{eq:YY}
|W_3|\le 2|W_2|\overset{\eqref{Y}}\le 4|W_1|\;.
\end{equation} 

In order to ensure condition~\eqref{nice}, let $R^*$ be the set of the roots ($\preceq$-minimal vertices) of those components $T^*$ of  $T-W_3$ which contain neighbours of both colour classes of $T$. Setting $W_4:=W_3\cup R^*$ we see that~\eqref{nice} is satisfied for $W_4$. Furthermore, as for each vertex in $R^*$ there is a distinct member of $W_3$ above it in the order on $T$, we obtain
\begin{equation}\label{W3}
|W_4|\leq 2|W_3|\overset{\eqref{eq:YY}}\leq 8|W_1|.
\end{equation}

Next, we shall aim for a stronger version of property~\eqref{short}, namely,
\begin{enumerate}
\item[(\ref{short}')] if $V(T^*)\cap \neighbor_{T}(W_A\cup
W_B)=\{y_1,y_2\}$ with $y_1\neq y_2$ for some  $T^* \in\shrubA\cup
\shrubB$, then $\dist(y_1,y_2)\ge 6$.\label{shortBetter}
\end{enumerate}
The reason for requiring this strengthening is that later we might introduce additional cut vertices which would
``shorten $T^*$ by two''.

Consider a component $T^*$ of $ T-W_4$ which is an
internal tree of $T$. If $T^*$ contains two distinct neighbours $y_1$, $y_2$ of $W_4$ such that
$\dist_{T^*}(y_1,y_2)< 6$, then we call $T^*$ \emph{short}. Observe that there are at most
$|W_4|$ short trees, because each of these trees has a unique vertex from $W_4$
above it. Let $Z(T^*)\subset V(T^*)$ be the
vertices on the path from $y_1$ to $y_2$. Then
$|Z(T^*)|\le 6$. Letting $Z$ be the union of the sets $Z(T^*)$ over all short trees in
$T-W_4$, and set $W_5:=W_4\cup Z$, we obtain
\begin{equation}\label{W4}
|W_5|\leq |W_4|+6|W_4|\overset{\eqref{W3}}\leq 56|W_1|\overset{\eqref{X}}\leq 112 k/\ell.
\end{equation}

We still need to ensure~\eqref{Bend} and~\eqref{Bsmall}. To this end, consider 
the set $\mathcal C'$ of all components of $T-W_5$. Set $\mathcal C'_A:=\{T^*\in \mathcal C':\seed(T^*)\in A\}$ and set $\mathcal C'_B:=\mathcal C'\setminus \mathcal C'_A$. We
assume that 
\begin{equation}\label{eq:AssumeEndTrees}
\sum_{T^*\in \mathcal C'_A\ : \ T^*\textrm{ end tree of $T$}} v(T^*)\ge \sum_{T^*\in \mathcal C'_B\ : \ T^*\textrm{ end tree of $T$}}
v(T^*)\;,
\end{equation}
 as otherwise we can simply swap $A$ and $B$.


Now, for each $T^*\in \mathcal C'_B$ that is not a end subtree of $T$, set $X(T^*):=V(T^*)\cap\neighbor_{T}(W_5)$. Let $X$ be the union of all such sets $X(T^*)$. Observe that 
\begin{equation}\label{XW5}
|X|\le 2|W_5\cap B|\le 2|W_5|.
\end{equation} 
For $W:=W_5 \cup X$,  all internal trees of $T-W$ have their seeds in $A$. This will guarantee~\eqref{Bend}, and, together with~\eqref{eq:AssumeEndTrees}, also~\eqref{Bsmall}.

 Finally, set $W_A:=W\cap A$ and $W_B:=W\cap B$, and
let $\shrubA$ and $\shrubB$ be the sets of those components of $T-W$ that have
their seeds in $W_A$ and $W_B$, respectively. By construction,
$(W_A,W_B,\shrubA,\shrubB)$ has all the properties of an $\ell$-fine partition. In particular, for~\eqref{few}, we find with~\eqref{W4} and~\eqref{XW5} that $|W|\leq |W_5|+2|W_5\cap B|\leq 336 k/\ell$.
\end{proof}

For an $\ell$-fine partition $(W_A,W_B,\shrubA,\shrubB)$ of a rooted tree $(T,r)$, the
trees $T^*\in \shrubA\cup\shrubB$ are called \index{general}{shrub}{\em shrubs}. An
\emph{end shrub} is a shrub which is an end subtree. An \emph{internal shrub} is a shrub which is an internal subtree.  A \emph{knag}\index{general}{knag} is a component of the forest $T[W_A\cup W_B]$. Suppose that $T^*\in \shrubA$ is an internal shrub, and $r^*$ its $\preceq_r$-minimal vertex. Then $T^*-r^*$ contains a unique component with a vertex from $\neighbor_T(W_A)$. We call this component \index{general}{subshrub}\index{general}{principal subshrub}\emph{principal subshrub}, and the other components \index{general}{peripheral subshrub}\emph{peripheral subshrubs}.
\begin{definition}[\bf ordered skeleton]\index{general}{ordered
skeleton} We say that the sequence $\big(X_0,X_1,\ldots, X_m\big)$
is an \emph{ordered skeleton} of the $\ell$-fine partition
$(W_A,W_B,\shrubA,\shrubB)$ of a rooted tree $(T,r)$ if 
\begin{itemize}
 \item $X_0$ is a  knag and contains~$r$, and all other $X_i$ are either knags or shrubs, 
 \item $V(\bigcup_{i\le m}X_i)=V(T)$, and
 \item for each $i=1,\ldots,m$, the subgraph formed by
  $X_0\cup X_1\cup\ldots\cup X_i$  is connected in $T$.
  \end{itemize}
\end{definition}
Directly from Definition~\ref{ellfine} we get:
\begin{lemma}\label{lem:orderedskeleton}
Any $\ell$-fine
partition of any rooted tree has  an ordered skeleton. 
\end{lemma}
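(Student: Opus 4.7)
The plan is to build the ordered skeleton by a breadth-first (or depth-first) traversal of the auxiliary ``quotient'' graph whose vertices are the knags and shrubs of the $\ell$-fine partition.

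More concretely, let $\mathcal P$ denote the collection consisting of all knags of $(W_A,W_B,\shrubA,\shrubB)$ together with all shrubs in $\shrubA\cup\shrubB$. By property~\eqref{decompose} of Definition~\ref{ellfine}, together with the fact that the knags are the components of $T[W_A\cup W_B]$ and the shrubs are the components of $T-(W_A\cup W_B)$, the members of $\mathcal P$ partition $V(T)$, and each member of $\mathcal P$ induces a connected subgraph of $T$. Define an auxiliary graph $H$ on vertex set $\mathcal P$ by joining $X,Y\in\mathcal P$ whenever there is an edge of $T$ with one endpoint in $X$ and the other in $Y$. Since $T$ is connected and the pieces of $\mathcal P$ partition $V(T)$, the graph $H$ is connected.

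By property~\eqref{root} of Definition~\ref{ellfine}, the root $r$ lies in $W_A\cup W_B$; let $X_0$ be the (unique) knag containing~$r$. Now perform any search of $H$ starting at~$X_0$ (for instance, a BFS) and let $(X_0,X_1,\ldots,X_m)$ be the resulting enumeration of $\mathcal P$. By the standard property of such a search, for every $i\ge 1$ there is some index $j<i$ with $X_iX_j\in E(H)$, i.e.\ an edge of $T$ joins $X_i$ to $X_j\subset X_0\cup\cdots\cup X_{i-1}$. Since each $X_i$ is connected in $T$ and is attached to the already-constructed connected subgraph $X_0\cup\cdots\cup X_{i-1}$ by an edge of $T$, the union $X_0\cup X_1\cup\cdots\cup X_i$ is connected in $T$ as well. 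Together with $\bigcup_{i\le m}X_i=V(T)$, this shows that $(X_0,X_1,\ldots,X_m)$ is an ordered skeleton, as required.

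There is no real obstacle here; the entire argument is an unpacking of the definitions combined with the elementary observation that a search tree on a connected auxiliary graph produces an enumeration with connected prefixes.
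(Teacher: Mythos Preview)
Your proof is correct and is essentially a detailed unpacking of what the paper leaves implicit: the paper gives no proof beyond the remark ``Directly from Definition~\ref{ellfine} we get,'' and your quotient-graph-plus-search argument is precisely the natural way to make that remark rigorous.
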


\bigskip
Figure~\ref{fig:BinaryTreeCut} shows an $(\tau k)$-fine partition $(W_A,W_B, \shrubA, \shrubB)$ of a binary tree $T\in \treeclass{k}$, for a fixed $\tau>0$ and $k$ large. The vertices whose distance is $O(\log(\tau^{-1}))$ from the root comprise a sole knag of $T$ (with respect to $(W_A,W_B, \shrubA, \shrubB)$). This example will be important in Section~\ref{sssec:whyGexp}.
\begin{figure}[h]
 \centering
 \includegraphics[scale=0.7]{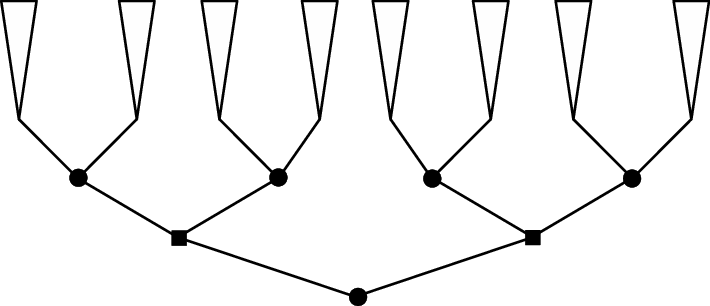}
 \caption[Fine partition of binary tree]{An $(\tau k)$-fine partition $(W_A,W_B, \shrubA,
\shrubB)$ of a binary tree $T\in \treeclass{k}$. The elements of the set $W_A$ are drawn as circles and those of $W_B$ as squares. The sole knag is of depth $O(\log(\tau^{-1}))$, two in this picture. Each schematic triangle represents one end shrub of $\shrubA\cup\shrubB$.}
 \label{fig:BinaryTreeCut}
\end{figure}

\section{Decomposing sparse graphs}\label{sec:class}
In this section, we work out a structural decomposition of a possibly
sparse graph which is suitable for embedding trees. Our motivation comes from
the success of the Regularity Method in the  setting of dense graphs
(see~\cite{KuhnOsthusSurv}). The main technical result of this section, the
``decomposition lemma'', Lemma~\ref{lem:decompositionIntoBlackandExpanding},
provides such a decomposition. Roughly speaking, each graph of a moderate
maximum degree can be decomposed into regular pairs, and two different expanding
parts.

We then combine Lemma~\ref{lem:decompositionIntoBlackandExpanding} with a lemma
on creating a gap in the degree sequence (Lemma~\ref{prop:gap}) to get a decomposition lemma
for graphs from~$\LKSgraphs{n}{k}{\eta}$, Lemma~\ref{lem:LKSsparseClass}.
Lemma~\ref{lem:LKSsparseClass} asserts that each graph
from~$\LKSgraphs{n}{k}{\eta}$ can be decomposed into vertices of degree much
larger than $k$, regular pairs, and expanding parts. 
Further we give a non-LKS-specific version of Lemma~\ref{lem:LKSsparseClass} in Lemma~\ref{lem:genericBD},
which asserts that \emph{each} graph with average degree bigger than an absolute constant has a sparse decomposition. Such a decomposition lemma was used by Ajtai, Koml\'os, Simonovits and Szemer\'edi in their work on the Erd\H os--S\'os conjecture and we expect that it will find applications in other tree embedding problems, and possibly elsewhere.


\subsection{Creating a gap in the degree sequence}\label{ssec:class-gap}
The goal of this section is to show that any graph
$G\in\LKSmingraphs{n}{k}{\eta}$ has a subgraph  $G'\in
\LKSsmallgraphs{n}{k}{\eta/2}$ which has a gap in its degree
sequence. Note that $G'$ then contains almost all the edges of $G$. This is formulated in the next lemma.

\begin{proposition}\label{prop:gap}
Let $G\in\LKSmingraphs{n}{k}{\eta}$ and let $(\Omega_i)_{i\in\mathbb N}$
be a sequence of positive numbers with
$\Omega_j/\Omega_{j+1}\leq \eta^2/100$ for all $j\in\mathbb N$. Then there is an index $i^{*}\leq 100\eta^{-2}$ and  a subgraph $G'\subseteq G$ such that 
\begin{enumerate}[(i)]
\item\label{item:(prop:gap)stillLKS}$G'\in \LKSsmallgraphs{n}{k}{\eta/2}$, and
\item\label{item:(prop:gap)gap} no vertex $v\in V(G')$ has degree
$\deg_{G'}(v)\in[\Omega_{i^*}k,\Omega_{i^{*}+1}k)$.
\end{enumerate}
\end{proposition}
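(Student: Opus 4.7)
My plan is a one-shot averaging argument over degree intervals combined with careful edge deletion. By~\eqref{eq:LKSminimalNotManyEdges}, $\sum_{v}\deg_G(v)<2kn$. For $i\ge 1$ I define the pairwise disjoint sets
\[V_i:=\{v\in V(G):\deg_G(v)\in[\Omega_ik,\Omega_{i+1}k)\},\]
so that $\sum_{i}\sum_{v\in V_i}\deg_G(v)<2kn$. The rapid growth $\Omega_{j+1}\ge (100/\eta^2)\Omega_j$ forces at most one initial index $i$ to satisfy $\Omega_ik\le\lceil(1+\eta)k\rceil$; hence among the first $100\eta^{-2}$ indices at least $100\eta^{-2}-1$ have $\Omega_ik>\lceil(1+\eta)k\rceil$. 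Averaging over those picks an $i^*\le 100\eta^{-2}$ with
\[\Omega_{i^*}k>\lceil(1+\eta)k\rceil\qquad\text{and}\qquad\sum_{v\in V_{i^*}}\deg_G(v)\leq\frac{\eta^2 kn}{50}.\]

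I then define $G'$ to be $G$ with every edge incident to $V_{i^*}$ removed. Because $V_{i^*}\subseteq\{v:\deg_G(v)>\lceil(1+\eta)k\rceil\}$, Fact~\ref{fact:propertiesOfLKSminimalGraphs}(\ref{neighbinL}) forces every $G$-neighbour of a $V_{i^*}$-vertex to be medium (of degree exactly $\lceil(1+\eta)k\rceil$); in particular no $V_{i^*}$-vertex is adjacent in $G$ to a vertex of degree $\ge\Omega_{i^*+1}k$. Consequently, in $G'$ the vertices of $V_{i^*}$ are isolated, vertices with $\deg_G(v)\ge\Omega_{i^*+1}k$ retain their $G$-degree, and every other vertex $v$ has $\deg_{G'}(v)\le\deg_G(v)<\Omega_{i^*}k$. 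This proves item~\ref{item:(prop:gap)gap}.

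For item~\ref{item:(prop:gap)stillLKS} I verify Definition~\ref{def:LKSsmall} at parameter $\eta/2$. The only vertices leaving $\largevertices{\eta/2}{k}{G'}$ relative to $\largevertices{\eta}{k}{G}$ are $V_{i^*}$ itself (at most $\eta^2 n/50$, since $\Omega_{i^*}>1$) and those medium vertices~$u$ losing more than $\lceil(1+\eta)k\rceil-(1+\eta/2)k\sim(\eta/2)k$ edges to $V_{i^*}$; a double-count with $\sum_{v\in V_{i^*}}\deg_G(v)\le\eta^2 kn/50$ bounds the latter by $\eta n/25$. Thus $|\largevertices{\eta/2}{k}{G'}|\ge(\tfrac12+\tfrac\eta 2)n$. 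Condition~\ref{def:LKSsmallA} is inherited from Fact~\ref{fact:propertiesOfLKSminimalGraphs}(\ref{neighbinL}) because $G'\subseteq G$, and~\ref{def:LKSsmallC} is immediate from $e(G')\le e(G)<kn$.

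The main obstacle is condition~\ref{def:LKSsmallB}, which demands every neighbour of a $G'$-vertex of degree less than $(1+\eta/2)k$ to have degree \emph{exactly} $\lceil(1+\eta/2)k\rceil$; this is not inherited from the $\LKSmingraphs$-structure. My plan is to enforce it by additionally stripping all edges incident to each vertex whose current degree is below $(1+\eta/2)k$, thereby isolating such vertices and making~\ref{def:LKSsmallB} vacuous. The delicate check is that these extra deletions neither create new degrees inside the gap $[\Omega_{i^*}k,\Omega_{i^*+1}k)$ (the wide ratio $\Omega_{i^*+1}/\Omega_{i^*}\ge 100/\eta^2$ buffers the above-gap vertices from sliding into it) nor reduce $|\largevertices{\eta/2}{k}{G'}|$ below $(\tfrac12+\tfrac\eta 2)n$ (only vertices already outside that count are further isolated). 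The $O(\eta^2 kn)$ deletion budget from the averaging step comfortably covers the extra removals.
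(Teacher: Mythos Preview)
Your use of Fact~\ref{fact:propertiesOfLKSminimalGraphs}(\ref{neighbinL}) to create the gap in one stroke is neat and correct: since every vertex of degree $>\lceil(1+\eta)k\rceil$ sees only medium vertices, removing the edges at $V_{i^*}$ indeed isolates $V_{i^*}$ without disturbing anything above the gap, so item~\eqref{item:(prop:gap)gap} holds after the first deletion. Properties~\ref{def:LKSsmallA} and~\ref{def:LKSsmallC} are also fine.

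The genuine problem is your treatment of Property~\ref{def:LKSsmallB}. You propose to isolate every vertex whose current degree is below $(1+\eta/2)k$, claiming the extra removals fit inside an $O(\eta^2 kn)$ budget and that ``only vertices already outside that count are further isolated''. Both claims fail. The set $\smallvertices{\eta/2}{k}{G'}$ contains not just the few medium vertices that were knocked down by your first deletion, but also every vertex that was \emph{originally} small in $G$ with degree below $(1+\eta/2)k$. There can be $\Theta(n)$ such vertices, each with $\Theta(k)$ edges (all going to medium vertices), so isolating them removes $\Theta(kn)$ edges---vastly more than your budget. Those $\Theta(kn)$ lost edges land on the medium vertices, which can then plunge well below $(1+\eta/2)k$; thus vertices that \emph{were} in $\largevertices{\eta/2}{k}{G'}$ drop out, and the process cascades. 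Neither the size of $\largevertices{\eta/2}{k}$ nor the gap is under control.

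The paper handles~\ref{def:LKSsmallB} much more surgically. After its own gap-creation steps it also runs a rule (T2) that severs every edge from a currently $(\eta/2)$-small vertex to a vertex \emph{above} the gap; this guarantees that above-gap vertices have no small neighbours and hence are untouched by the final clean-up. That final clean-up does not isolate small vertices; instead it successively deletes edges $uv$ with $u$ small and $\deg(v)\ne\lceil(1+\eta/2)k\rceil$. The key invariant is that any large $v$ can only lose edges until its degree hits exactly $\lceil(1+\eta/2)k\rceil$, at which point deletions to $v$ stop---so $\largevertices{\eta/2}{k}{G'}=\largevertices{\eta/2}{k}{G_{j^*}}$ and the gap survives automatically. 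Your scheme is missing both the (T2)-style separation and this ``trim down to the exact threshold, then stop'' mechanism.
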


\begin{proof}
Set $R:=\lfloor 100 \eta^{-2}\rfloor$. 
For $i\in [R]$ and any graph $H\subseteq G$ define the sets $X_i(H):=\{v\in
V(H)\::\: \deg_H(v)\in[\Omega_i k, \Omega_{i+1}k)\}$ and for $i=R+1$ set
$X_i(H):=\{v\in V(H)\::\: \deg_H(v)\in[\Omega_i k, \infty)\}$. As
\begin{equation*}
\sum_{i\in[R]}\quad\sum_{v\in X_{i}(G)\cup X_{i+1}(G)}\deg(v)\le 4e(G)\;,
\end{equation*}
by averaging  we find an index $i^*\in [R]$ such that
\begin{equation}\label{eq:sparselyconnectedpair}
\sum_{v\in X_{i^*}(G)\cup X_{i^*+1}(G)}\deg(v)\frac{4e(G)}{R}.
\end{equation}

Let $E_0$ be the set of all the edges incident with $X_{i^*}(G)\cup X_{i^*+1}(G)$. Now, starting with $G_0:=G-E_0$, successively define graphs $G_j\subsetneq G_{j-1}$ for $j\geq 1$ using any of the following two types of edge deletions:
\begin{enumerate}
\item[(T1)] If there is a vertex $v_j\in X_{i^*}(G_{j-1})$ then we choose an edge $e_j$ that is
incident with $v_j$, and set $G_j:=G_{j-1}-e_j$.
\item[(T2)] If there is an edge $e_j=u_jv_j$ of $G_{j-1}$ with $u_j\in \smallvertices{\eta/2}{k}{G_{j-1}}$ and $v_j\in \bigcup_{i=i^*+1}^{R+1}X_{i}(G_{j-1})$ then we set $G_j:=G_{j-1}-e_j$. 
\end{enumerate}
Since we keep deleting edges, the procedure stops at some point, say at step $j^*$, when neither of (T1), (T2) is applicable. Note that the resulting graph $G_{j^*}$ already has Property~\eqref{item:(prop:gap)gap}.

 Let $E_1 \subset E(G)$ be the set of those edges deleted by applying (T1). We shall estimate the size of $E_1$. First, observe that
\begin{equation*}
\left|\bigcup_{i=i^*+2}^{R+1} X_i(G)\right| \le \frac{2e(G)}{\Omega_{i^*+2}k}\;.
\end{equation*}
Moreover, each vertex of $\bigcup_{i=i^*+2}^{R+1} X_i(G)$ appears at most $ (\Omega_{i^*+1}-\Omega_{i^*})k < \Omega_{i^*+1}k$ times as the vertex $v_j$ in the deletions of type (T1). Consequently, 
\begin{equation}\label{kanteninE1}
 |E_1|\le \Omega_{i^*+1}\left|\bigcup_{i=i^*+2}^{R+1} X_i(G)\right|k\le \frac{2\Omega_{i^*+1}e(G)}{\Omega_{i^*+2}} \;.
\end{equation}

Now, observe that the vertices in $ \largevertices{\eta}{k}{G}\cap\smallvertices{\eta/2}{k}{G_{j^*}}$ have dropped their degree from  $(1+\eta)k$ to  $(1+\eta/2)k$ by operations other than~(T2). So each of these vertices  is incident with at least $\eta k/2$ edges from the set $E_0\cup E_1$. Therefore, by the definition of $E_0$, by~\eqref{eq:sparselyconnectedpair}, and by~\eqref{kanteninE1},
$$\left|\largevertices{\eta}{k}{G}\cap\smallvertices{\eta/2}{k}{G_{j^*}}\right|\le \frac{2\cdot|E_0\cup E_1|}{ \eta k/2}\le\left(\frac{4}{R}+\frac{2\Omega_{i^*+1}}{\Omega_{i^*+2}}\right)\cdot\frac{4e(G)}{\eta k}\leByRef{eq:LKSminimalNotManyEdges} \frac{\eta n}2\;.$$
Thus
$$|\largevertices{\eta/2}{k}{G_{j^*}}|\ge |\largevertices{\eta}{k}{G}|-|\largevertices{\eta}{k}{G}\cap\smallvertices{\eta/2}{k}{G_{j^*}}|\ge(1/2+\eta/2)n\;,$$ and consequently, $G_{j^*}\in\LKSgraphs{n}{k}{\eta/2}$. 

Last, we obtain the graph $G'$ by successively deleting any edge from $G_{j^*}$ which connects a vertex from $\smallvertices{\eta/2}{k}{G_{j^*}}$  with a vertex whose degree is not exactly $\lceil(1+\frac\eta2)k\rceil$. This does not affect the already obtained Property~\eqref{item:(prop:gap)gap},
since we could not apply~(T2) to $G_{j^*}$. We claim that for the resulting graph $G'$ we have $G'\in\LKSsmallgraphs{n}{k}{\eta/2}$. Indeed, $\largevertices{\eta/2}{k}{G'}=\largevertices{\eta/2}{k}{G_{j^*}}$, and thus $G'\in\LKSgraphs{n}{k}{\eta/2}$. Property~\ref{def:LKSsmallB} of Definition~\ref{def:LKSsmall} follows from the last step of the construction of $G'$. To see Property~\ref{def:LKSsmallA} of Definition~\ref{def:LKSsmall} we use  Fact~\ref{fact:propertiesOfLKSminimalGraphs}(2)  for $G$ (which by assumption is in $\LKSmingraphs{n}{k}{\eta}$).
\end{proof}


\subsection{Decomposition of graphs with moderate maximum
degree}\label{ssec:class-black} 

First we introduce some useful notions. We start with dense spots which indicate an accumulation of edges in a sparse graph.

\begin{definition}[\bf \index{general}{dense spot}$(m,\gamma)$-dense spot,
\index{general}{nowhere-dense}$(m,\gamma)$-nowhere-dense]\label{def:densespot} An \emph{$(m,\gamma)$-dense spot} in a graph $G$ is a non-empty bipartite sub\-graph  $D=(U,W;F)$ of  $G$ with
$\density(D)>\gamma$ and $\mindeg (D)>m$. We call $G$
\emph{$(m,\gamma)$-nowhere-dense} if it does not contain any $(m,\gamma)$-dense spot.
\end{definition}
We remark that dense spots as bipartite graphs do not have a 
specified orientation, that is, we view $(U,W;F)$ and $(W,U;F)$ as
the same object.

\begin{fact}\label{fact:sizedensespot}
Let $(U,W;F)$ be a $(\gamma k,\gamma)$-dense spot in a
graph $G$ of maximum degree at most $\Omega k$. Then
$\max\{|U|,|W|\}\le \frac{\Omega}{\gamma}k.$
\end{fact}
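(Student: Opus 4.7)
The plan is to bound $|U|$ and $|W|$ by a straightforward double-counting of the edges in the dense spot $D = (U,W;F)$, playing the density condition against the maximum-degree condition. By symmetry (swapping the roles of $U$ and $W$), it suffices to bound $|U|$.

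On the one hand, since $D$ is $(\gamma k, \gamma)$-dense, its bipartite density exceeds $\gamma$, so
\[
e(D) = \density(D)\cdot |U||W| > \gamma\,|U||W|.
\]
On the other hand, every vertex $w \in W$ satisfies $\deg_D(w) \le \deg_G(w) \le \Omega k$, because $D$ is a subgraph of $G$ whose maximum degree is bounded by $\Omega k$. Summing over $W$,
\[
e(D) \;=\; \sum_{w \in W} \deg_D(w) \;\le\; \Omega k \cdot |W|.
\]
Combining the two inequalities and cancelling $|W|$ (which is nonzero since $D$ is non-empty and of positive minimum degree),
\[
\gamma\,|U| < \Omega k, \qquad \text{i.e.,} \qquad |U| < \frac{\Omega}{\gamma}\,k.
\]
Repeating the argument with $U$ and $W$ interchanged yields $|W| < \frac{\Omega}{\gamma}k$, and the fact follows. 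There is no real obstacle here; the only thing to observe is that the minimum-degree hypothesis of a dense spot is not actually needed for this particular bound, only the density lower bound and the ambient maximum-degree bound.
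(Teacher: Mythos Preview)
Your proof is correct and follows essentially the same double-counting argument as the paper: bound $e(D)$ below by $\gamma|U||W|$ from the density condition and above by $\Omega k \cdot \min\{|U|,|W|\}$ from the maximum-degree condition, then cancel. The paper compresses both bounds into a single chain of inequalities using $\min\{|U|,|W|\}$ rather than invoking symmetry, but the content is identical.
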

\begin{proof}
It suffices to observe that $$\gamma |U||W|\leq
e(U,W)\leq\maxdeg(G)\cdot\min\{|U|,|W|\}\leq\Omega k\cdot \min\{|U|,|W|\}.$$
\end{proof}

The next fact asserts that in a bounded degree graph there cannot be too many edge-disjoint dense spots containing a given vertex.
\begin{fact}\label{fact:boundedlymanyspots}
Let $H$ be a graph of maximum degree at most $\Omega k$, let $v\in V(H)$, and let $\DenseSpots$ be a family of edge-disjoint $(\gamma k,\gamma)$-dense spots. Then less than $\frac{\Omega}{\gamma}$ dense spots from $\DenseSpots$ contain $v$.
\end{fact}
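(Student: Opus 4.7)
The plan is a one-line degree-counting argument. For each dense spot $D=(U,W;F) \in \DenseSpots$ that contains~$v$, the vertex~$v$ lies in either~$U$ or~$W$, and since $\mindeg(D) > \gamma k$ by Definition~\ref{def:densespot}, $v$ is incident to strictly more than $\gamma k$ edges of $D$.

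Because the dense spots in $\DenseSpots$ are pairwise edge-disjoint, the edges of $H$ at $v$ contributed by distinct spots do not overlap. Hence if $v$ belongs to $t$ dense spots from $\DenseSpots$, then
\[
\gamma k \cdot t \;<\; \sum_{\substack{D \in \DenseSpots \\ v \in V(D)}} \deg_D(v) \;\le\; \deg_H(v) \;\le\; \Omega k,
\]
which gives $t < \Omega/\gamma$. There is no real obstacle here; the only subtle point is that the strict inequality $\mindeg(D) > \gamma k$ from Definition~\ref{def:densespot} is what yields the strict bound $t < \Omega/\gamma$ as stated (rather than $t \le \Omega/\gamma$).
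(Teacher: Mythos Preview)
Your proof is correct and is essentially identical to the paper's own argument: counting edges at $v$ using $\mindeg(D)>\gamma k$, edge-disjointness of the spots, and $\deg_H(v)\le\Omega k$.
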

\begin{proof}
This follows as $v$ sends more than $\gamma k$ edges to each dense spot from $\DenseSpots$ it is incident with, the dense spots $\DenseSpots$ are edge-disjoint, and $\deg(v)\le \Omega k$. 
\end{proof}

Last, we include a bound concerning the total size of dense spots intersecting substantially a given set.
\begin{fact}\label{fact:substantialintersectspots}
Let $H$ be a graph of maximum degree at most $\Omega k$. Let $Y\subset V(H)$ be a set of size at most $A k$, and $\DenseSpots$ a family of edge-disjoint $(\gamma k,\gamma)$-dense spots. Define $\DenseSpots':=\{D\in \DenseSpots\::\: |V(D)\cap Y|\ge \beta k\}$.  Then for the set $X:=\bigcup_{D\in\DenseSpots'} V(D)$ we have $|X|\le \frac{2A\Omega^2}{\beta\gamma^2}k$.
\end{fact}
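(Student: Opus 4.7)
The plan is to bound $|X|$ by first bounding the number of dense spots in $\DenseSpots'$ and then using Fact~\ref{fact:sizedensespot} to bound the size of each such dense spot.

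First I would estimate $|\DenseSpots'|$ by double-counting incidences between $Y$ and $\DenseSpots'$. On one hand, every $D\in\DenseSpots'$ contributes at least $\beta k$ to the count $\sum_{y\in Y}|\{D\in\DenseSpots'\::\: y\in V(D)\}|$. On the other hand, by Fact~\ref{fact:boundedlymanyspots} applied to the subfamily $\DenseSpots'\subseteq\DenseSpots$, each $y\in Y$ belongs to fewer than $\Omega/\gamma$ dense spots of $\DenseSpots'$. Since $|Y|\le Ak$, this gives
\[
|\DenseSpots'|\cdot\beta k\;\le\;\sum_{y\in Y}|\{D\in\DenseSpots'\::\: y\in V(D)\}|\;<\;Ak\cdot\frac{\Omega}{\gamma}\;,
\]
and therefore $|\DenseSpots'|<\frac{A\Omega}{\beta\gamma}$.

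Next, Fact~\ref{fact:sizedensespot} bounds each side of a $(\gamma k,\gamma)$-dense spot in $H$ by $\Omega k/\gamma$, so $|V(D)|\le 2\Omega k/\gamma$ for every $D\in\DenseSpots'$. Combining the two bounds yields
\[
|X|\;\le\;\sum_{D\in\DenseSpots'}|V(D)|\;\le\;|\DenseSpots'|\cdot\frac{2\Omega k}{\gamma}\;<\;\frac{A\Omega}{\beta\gamma}\cdot\frac{2\Omega k}{\gamma}\;=\;\frac{2A\Omega^2}{\beta\gamma^2}k\;,
\]
as required. There is no real obstacle here; the only delicate point is to remember to apply Fact~\ref{fact:boundedlymanyspots} to the restricted family $\DenseSpots'$ (which is still edge-disjoint as a subfamily of $\DenseSpots$), so that the bound $\Omega/\gamma$ on the number of incident spots is valid.
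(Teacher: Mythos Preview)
Your proof is correct and follows essentially the same approach as the paper: double-count incidences $(y,D)$ with $y\in Y$, $D\in\DenseSpots'$, $y\in V(D)$ to bound $|\DenseSpots'|\le \frac{A\Omega}{\beta\gamma}$, and then combine with the size bound $|V(D)|\le \frac{2\Omega k}{\gamma}$ from Fact~\ref{fact:sizedensespot}. The paper's write-up is just a terser version of exactly this argument.
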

\begin{proof}
Let us count the number of certain pairs $(y,D)$ in two different ways.
$$\beta k|\DenseSpots'|\le \big|\{(y,D)\::\:y\in Y, D\in\DenseSpots', y\in V(\DenseSpots')\}\big|\overset{\mathrm{F}\ref{fact:boundedlymanyspots}}\leq |Y|\frac{\Omega}{\gamma}\;.$$
Put together, $|\DenseSpots'|\le \frac{A\Omega}{\beta\gamma}$. The fact now follows from Fact~\ref{fact:sizedensespot}.
\end{proof}

\medskip
Our second definition of this section might seem less intuitive at first sight.
It describes a property for finding dense spots outside some ``forbidden'' set
$U$, which in later applications will be the set of vertices already used for
a partial embedding of a tree $T_\PARAMETERPASSING{T}{thm:main}\in\treeclass{k}$ in Theorem~\ref{thm:main} during our sequential embedding procedure.

\begin{definition}[\bf
\index{general}{avoiding}$(\Lambda,\epsilon,\gamma,k)$-avoiding set]\label{def:avoiding} Suppose that
$G$ is a graph and $\DenseSpots$ is a family of dense spots in $G$. A set
$\smallatoms\subset \bigcup_{D\in\DenseSpots} V(D)$ is \emph{$(\Lambda,\epsilon,\gamma,k)$-avoiding} with
respect to $\DenseSpots$ if for every $\bar U\subset V(G)$ with $|\bar U|\le \Lambda k$ the following holds that for all but at most $\epsilon k$ vertices $v\in\smallatoms$. There is a dense spot $D\in\DenseSpots$ with $|\bar U\cap V(D)|\le \gamma^2 k$ that contains $v$.
\end{definition}

Note that a subset of a $(\Lambda,\epsilon,\gamma,k)$-avoiding set is also $(\Lambda,\epsilon,\gamma,k)$-avoiding.

We now come to the main concepts of this section, the bounded and the sparse
decompositions. These notions in a way correspond to the partition structure
from the Regularity Lemma, although naturally more
complex since we deal with (possibly) sparse graphs here. Lemma~\ref{lem:decompositionIntoBlackandExpanding} is then a corresponding
regularization result. 

\begin{definition}[\index{general}{bounded decomposition}{\bf
$(k,\Lambda,\gamma,\epsilon,\nu,\rho)$-bounded decomposition}]\label{bclassdef}
Let $\mathcal V=\{V_1, V_2,\ldots, V_s\}$ be a partition of the vertex set of a graph $G$. We say that $( \clusters,\DenseSpots, \Gblack, \Gexp,
\smallatoms )$ is a {\em $(k,\Lambda,\gamma,\epsilon,\nu,\rho)$-bounded
decomposition} of $G$ with respect to $\mathcal V$ if the following properties
are satisfied:
\begin{enumerate}
\item\label{defBC:clusters} The elements of $\clusters$ are disjoint subsets of 
$ V(G)$.
\item\label{defBC:RL} $\Gblack$ is a subgraph of $G-\Gexp$ on the vertex set $\bigcup \clusters$. For each edge
 $xy\in E(\Gblack)$ there are distinct $C_x\ni x$ and $C_y\ni y$ from $\clusters$,
and  $G[C_x,C_y]=\Gblack[C_x,C_y]$. Furthermore, 
$G[C_x,C_y]$ forms an $\epsilon$-regular pair of  density at least $\gamma^2$.
\item We have $\nu k\le |C|=|C'|\le \epsilon k$ for all
$C,C'\in\clusters$.\label{Csize}
\item\label{defBC:densepairs}  $\DenseSpots$ is a family of edge-disjoint $(\gamma
k,\gamma)$-dense spots  in $G-\Gexp$.  For
each $D=(U,W;F)\in\DenseSpots$ all the edges of $G[U,W]$ are covered
by $\DenseSpots$ (but not necessarily by $D$).
\item\label{defBC:dveapul} If  $\Gblack$
contains at least one edge between $C_1,C_2\in\clusters$ then there exists a dense
spot $D=(U,W;F)\in\DenseSpots$ such that $C_1\subset U$ and $C_2\subset
W$.
\item\label{defBC:prepartition}
For
all $C\in\clusters$ there is $V\in\mathcal V$ so that either $C\subseteq V\cap V(\Gexp)$ or $C\subseteq V\setminus V(\Gexp)$.
For
all $C\in\clusters$ and $D=(U,W; F)\in\DenseSpots$ we have $C\cap U\in\{\emptyset, C\}$.
\item\label{defBC:nowheredense}
$\Gexp$ is  a $(\gamma k,\gamma)$-nowhere-dense subgraph of $G$ with $\mindeg(\Gexp)>\rho k$.
\item\label{defBC:avoiding}
$\smallatoms$ is a $(\Lambda,\epsilon,\gamma,k)$-avoiding subset  of
$V(G)\setminus \bigcup \clusters$ with respect to dense spots $\DenseSpots$.
\end{enumerate}

\smallskip
We say that the bounded decomposition $(\clusters,\DenseSpots, \Gblack, \Gexp,
\smallatoms )$ {\em respects the avoiding threshold~$b$}\index{general}{avoiding threshold} if for each $C\in \clusters$ we either have $\maxdeg_G(C,\smallatoms)\le b$, or $\mindeg_G(C,\smallatoms)> b$.
\end{definition}
Let us remark that ``exp'' in $\Gexp$ stands for ``expander'' and ``reg'' in $\Gblack$ stands for ``regular(ity)''.

The members of $\clusters$ are called \index{general}{cluster}{\it clusters}. Define the
{\it cluster graph} \index{mathsymbols}{*Gblack@$\BGblack$}  $\BGblack$ as the graph
on the vertex set $\clusters$ that has an edge $C_1C_2$
for each pair $(C_1,C_2)$ which has density at least $\gamma^2$ in the graph
$\Gblack$. 

Property~\ref{defBC:prepartition} tells us that the clusters may be prepartitioned, just as it is the case in the classic Regularity Lemma. When classifying the graph $G_\PARAMETERPASSING{T}{thm:main}$ in Lemma~\ref{lem:LKSsparseClass} below we shall use the prepartition into (roughly) $\smallvertices{\alpha_\PARAMETERPASSING{T}{thm:main}}{k}{G_\PARAMETERPASSING{T}{thm:main}}$ and $\largevertices{\alpha_\PARAMETERPASSING{T}{thm:main}}{k}{G_\PARAMETERPASSING{T}{thm:main}}$.

As said above, the notion of bounded decomposition is needed for our Regularity
Lemma type decomposition given in
Lemma~\ref{lem:decompositionIntoBlackandExpanding}. It turns out that such a
decomposition is possible only when the graph is of moderate maximum degree. On
the other hand, Lemma~\ref{prop:gap} tells us that the vertex set of any
graph\footnote{Lemma~\ref{prop:gap} is stated only for graphs from
$\LKSmingraphs{n}{k}{\eta}$, but a similar statement can be made about any
graph. See discussion in the outline of the proof of Lemma~\ref{lem:genericBD}.} can be decomposed into vertices of enormous degree and moderate degree.
The graph induced by the latter type of vertices then admits the decomposition from
Lemma~\ref{lem:decompositionIntoBlackandExpanding}. Thus, it makes sense to
enhance the structure of bounded decomposition by vertices of unbounded degree.
This is done in the next definition.

\begin{definition}[\bf \index{general}{sparse
decomposition}$(k,\Omega^{**},\Omega^*,\Lambda,\gamma,\epsilon,\nu,\rho)$-sparse decomposition]\label{sparseclassdef}
Let $\mathcal V=\{V_1, V_2,\ldots, V_s\}$ be a partition of the vertex set of a graph $G$. We say that 
$\class=(\HugeVertices, \clusters,\DenseSpots, \Gblack, \Gexp, \smallatoms )$
is a\\ 
{\em $(k,\Omega^{**},\Omega^*,\Lambda,\gamma,\epsilon,\nu,\rho)$-sparse decomposition} of $G$
with respect to $V_1, V_2,\ldots, V_s$ if the following holds.
\begin{enumerate}
\item\label{def:classgap} $\HugeVertices\subset V(G)$,
$\mindeg_G(\HugeVertices)\ge\Omega^{**}k$,
$\maxdeg_H(V(G)\setminus \HugeVertices)\le\Omega^{*}k$, where $H$ is spanned by the edges of $\bigcup\DenseSpots$, $\Gexp$, and
edges incident with $\HugeVertices$,
\item \label{def:spaclahastobeboucla} $( \clusters,\DenseSpots, \Gblack,\Gexp,\smallatoms)$ is a 
$(k,\Lambda,\gamma,\epsilon,\nu,\rho)$-bounded decomposition of
$G-\HugeVertices$ with respect to $V_1\setminus \HugeVertices, V_2\setminus \HugeVertices,\ldots, V_s\setminus \HugeVertices$.
\end{enumerate}
\end{definition}

If the parameters do not matter, we call $\class$ simply a {\em sparse
decomposition}, and similarly we speak about a {\em bounded decomposition}. 

 \begin{definition}[\bf \index{general}{captured edges}captured edges]\label{capturededgesdef}
In the situation of Definition~\ref{sparseclassdef}, we refer to the edges in
$ E(\Gblack)\cup E(\Gexp)\cup
E_G(\HugeVertices,V(G))\cup E_G(\smallatoms,\smallatoms\cup \bigcup \clusters)$
as \index{general}{captured edges}{\em captured} by the sparse decomposition. 
 We write
\index{mathsymbols}{*Gclass@$\Gcapt$}$\Gcapt$ for the subgraph of $G$ on the same vertex set which consists of the captured edges.
Likewise, the captured edges of a bounded decomposition
$(\clusters,\DenseSpots, \Gblack,\Gexp,\smallatoms )$ of a graph $G$ are those
in $E(\Gblack)\cup E(\Gexp)\cup E_G(\smallatoms,\smallatoms\cup\bigcup\clusters)$.
\end{definition}

Throughout the paper we write \index{mathsymbols}{*GD@$\GD$}$\GD$ for the subgraph of $G$
which consists of the edges contained in $\DenseSpots$. We now include an easy fact about the relation of $\GD$ and $\Gblack$.
\begin{fact}\label{fact:denseVSblack}
Let $\class=(\HugeVertices, \clusters,\DenseSpots, \Gblack, \Gexp,\smallatoms )$ be a sparse decomposition of
a graph $G$. Then each edge $xy\in E(\GD)$ with
$x,y\in\bigcup\clusters$ is either contained in $\Gblack$, or is not captured.
\end{fact}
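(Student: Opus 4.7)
The plan is to examine each of the four categories that make up the captured edges of a sparse decomposition, as listed in Definition~\ref{capturededgesdef}, and rule out all but $E(\Gblack)$ for an edge $xy\in E(\GD)$ with both endpoints in $\bigcup\clusters$.

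First I would note that by item~\ref{defBC:densepairs} of Definition~\ref{bclassdef}, every dense spot in $\DenseSpots$ is a subgraph of $G-\Gexp$, so any edge covered by $\DenseSpots$ (that is, any edge of $\GD$) lies outside $E(\Gexp)$. Next, since $(\clusters,\DenseSpots,\Gblack,\Gexp,\smallatoms)$ is a bounded decomposition of $G-\HugeVertices$ by item~\ref{def:spaclahastobeboucla} of Definition~\ref{sparseclassdef}, the clusters are disjoint from $\HugeVertices$; hence $x,y\notin\HugeVertices$ and $xy\notin E_G(\HugeVertices,V(G))$. Finally, item~\ref{defBC:avoiding} of Definition~\ref{bclassdef} gives $\smallatoms\subseteq V(G)\setminus\bigcup\clusters$, so the assumption $x,y\in\bigcup\clusters$ forces $x,y\notin\smallatoms$ and thus $xy\notin E_G(\smallatoms,\smallatoms\cup\bigcup\clusters)$.

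Putting these three exclusions together, the only remaining way for $xy$ to be captured is to belong to $E(\Gblack)$; equivalently, if $xy\notin E(\Gblack)$ then $xy$ is not captured, which is exactly the claim. No estimates, pigeonhole arguments, or appeals to density/regularity are needed — this is a pure definition-chase — so there is no real obstacle beyond carefully tracking that the various inclusions in Definitions~\ref{bclassdef} and~\ref{sparseclassdef} force disjointness of $\bigcup\clusters$ from each of $\HugeVertices$ and $\smallatoms$, and that dense spots avoid $\Gexp$.
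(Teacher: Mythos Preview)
Your proposal is correct and essentially identical to the paper's proof: both rule out the $\Gexp$, $\HugeVertices$, and $\smallatoms$ contributions to the captured edges by invoking, respectively, Property~\ref{defBC:densepairs} and Property~\ref{defBC:avoiding} of Definition~\ref{bclassdef} and Property~\ref{def:spaclahastobeboucla} of Definition~\ref{sparseclassdef}, leaving $E(\Gblack)$ as the only remaining possibility.
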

\begin{proof}
Indeed, suppose that $xy\in E(\GD)$,
$x,y\in\bigcup\clusters$, and $xy\not\in E(\Gblack)$.  Property~\ref{def:spaclahastobeboucla} of
Definition~\ref{sparseclassdef} says that $x,y\notin \HugeVertices$. Further, by Property~\ref{defBC:avoiding} of Definition~\ref{bclassdef}, we
have $x,y\not\in \smallatoms$.  Last, Property~\ref{defBC:densepairs} of Definition~\ref{bclassdef} implies that $xy\not\in E(\Gexp)$. Hence $xy$ is not captured, as desired.
\end{proof}

We now give a bound on the number of clusters reachable through edges of
the dense spots from a fixed vertex outside $\HugeVertices$. 

\begin{fact}\label{fact:clustersSeenByAvertex}
Let  $\class=(\HugeVertices, \clusters,\DenseSpots, \Gblack, \Gexp,\smallatoms )$ be a 
$(k,\Omega^{**},\Omega^*,\Lambda,\gamma,\epsilon,\nu,\rho)$-sparse
decomposition of a graph  $G$. Let $x\in V(G)\setminus \HugeVertices$. Assume that $\clusters\not=\emptyset$, and let $\clustersize$ be the size of each of the members of~$\clusters$. Then there are less than
$$\frac{2(\Omega^*)^2k}{\gamma^2 \clustersize}\le\frac{2(\Omega^*)^2}{\gamma^2\nu}$$ clusters
$C\in\clusters$ with $\deg_{\GD}(x,C)>0$.
\end{fact}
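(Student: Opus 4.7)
The plan is to combine Facts~\ref{fact:boundedlymanyspots} and~\ref{fact:sizedensespot}, both applied to the subgraph $H$ from Property~\ref{def:classgap} of Definition~\ref{sparseclassdef} which has $\maxdeg_H(V(G)\setminus\HugeVertices)\le\Omega^* k$ and contains all edges of $\bigcup\DenseSpots$.

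First I would argue that any cluster $C\in\clusters$ with $\deg_{\GD}(x,C)>0$ is contained in $V(D)$ for some dense spot $D=(U,W;F)\in\DenseSpots$ with $x\in V(D)$. Indeed, an edge of $\GD$ from $x$ to $C$ lies in some $D\in\DenseSpots$, so both endpoints are in $V(D)$; in particular $x\in V(D)$. Now $C$ contains a vertex lying, say, in $U$, and Property~\ref{defBC:prepartition} of Definition~\ref{bclassdef} then forces $C\subset U\subset V(D)$. (Here we use $x\notin\HugeVertices$ so that $x$ can indeed belong to a dense spot, since $\DenseSpots$ lives in $G-\HugeVertices$ by Definition~\ref{sparseclassdef}\ref{def:spaclahastobeboucla}.)

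Next I would bound the number of dense spots containing $x$ and the number of clusters each one can host. Applying Fact~\ref{fact:boundedlymanyspots} to $H$ with $\Omega:=\Omega^*$, there are fewer than $\Omega^*/\gamma$ dense spots from $\DenseSpots$ containing $x$. For any such $D=(U,W;F)$, Fact~\ref{fact:sizedensespot} (again with $\Omega:=\Omega^*$, since the edges of $D$ lie in $H$) gives $\max\{|U|,|W|\}\le \Omega^* k/\gamma$. Because the clusters inside $V(D)$ are disjoint of common size $\clustersize$, at most
$$\frac{|U|+|W|}{\clustersize}\le\frac{2\Omega^* k}{\gamma\,\clustersize}$$
clusters of $\clusters$ are contained in $V(D)$.

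Multiplying the two bounds yields strictly fewer than $\frac{\Omega^*}{\gamma}\cdot\frac{2\Omega^* k}{\gamma\,\clustersize}=\frac{2(\Omega^*)^2 k}{\gamma^2\,\clustersize}$ clusters with $\deg_{\GD}(x,C)>0$; the final inequality then follows from $\clustersize\ge\nu k$ (Property~\ref{Csize} of Definition~\ref{bclassdef}). There is no real obstacle here; the only subtlety is checking that the facts may be invoked on the correct subgraph $H$ so that $\Omega^*$ plays the role of $\Omega$.
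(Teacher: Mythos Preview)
Your proof is correct and follows essentially the same approach as the paper: bound the number of dense spots containing $x$ via Fact~\ref{fact:boundedlymanyspots}, bound the number of clusters inside each such spot via Fact~\ref{fact:sizedensespot}, and multiply. You are in fact slightly more explicit than the paper in justifying why a cluster $C$ with $\deg_{\GD}(x,C)>0$ must lie inside some $D\in\DenseSpots$ containing $x$ (via Property~\ref{defBC:prepartition}), and in identifying the subgraph $H$ on which the degree bound $\Omega^* k$ holds, but these are the same ingredients the paper uses implicitly.
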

\begin{proof}
Property~\ref{def:classgap} of Definition~\ref{sparseclassdef} says that
$\deg_{\GD}(x)\le \Omega^*k$. For each $D\in
\DenseSpots$ with $x\in V(D)$
we have that
$\deg_{D}(x)>
\gamma k$, since $D$ is a $(\gamma k,\gamma )$-dense spot. 
By Fact~\ref{fact:boundedlymanyspots}
\begin{equation}\label{labello}
 |\{D\in\DenseSpots:\deg_{D}(x)>0\}|< \frac {\Omega^*}{\gamma}.
\end{equation}

Furthermore, 
by Fact~\ref{fact:sizedensespot}, and using Property~\ref{Csize} of Definition~\ref{bclassdef}, we see that for a fixed~$D\in\DenseSpots$, we have
$$|\{C\in\clusters\::\:
C\subset V(D)\}|\le \frac{2\Omega^*
k}{\gamma}\cdot\frac1{\clustersize}\le\frac{2\Omega^*}{\gamma\nu}\;.$$
Together with~\eqref{labello} this gives that the number of clusters $C\in\clusters$ with
$\deg_{\GD}(x,C)>0$ is less than $$\frac{\Omega^*}{\gamma}\cdot
\frac{2\Omega^*k}{\gamma \clustersize}\le\frac{\Omega^*}{\gamma}\cdot
\frac{2\Omega^*}{\gamma\nu}\;,$$ as desired.
\end{proof}

As a last step before we state the main result of this section we show that the
cluster graph $\BGblack$ corresponding to a
$(k,\Omega^{**},\Omega^*,\Lambda,\gamma,\epsilon,\nu,\rho)$-sparse
decomposition $(\HugeVertices, \clusters,\DenseSpots, \Gblack, \Gexp,\smallatoms)$  has bounded degree.

\begin{fact}\label{fact:ClusterGraphBoundedDegree}
Let  $\class=(\HugeVertices, \clusters,\DenseSpots, \Gblack, \Gexp,\smallatoms )$ be a 
$(k,\Omega^{**},\Omega^*,\Lambda,\gamma,\epsilon,\nu,\rho)$-sparse
decomposition of a graph  $G$, and let $\BGblack$ be the
corresponding cluster graph. Let $\clustersize$ be the size of each cluster in $\clusters$. 
Then $\maxdeg(\BGblack)\le \frac{\Omega^*
k}{\gamma^2\clustersize}\le\frac{\Omega^*}{\gamma^2\nu}$.
\end{fact}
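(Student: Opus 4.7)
The plan is a straightforward double-counting argument on the $\Gblack$-edges leaving a fixed cluster.

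Fix $C \in \clusters$ and let $N(C) \subseteq \clusters$ be the set of its neighbours in the cluster graph $\BGblack$. By definition of $\BGblack$, each $C' \in N(C)$ satisfies $\density_{\Gblack}(C,C') \ge \gamma^2$, which yields
\[
e_{\Gblack}(C,C') \;\ge\; \gamma^2 |C|\,|C'| \;=\; \gamma^2 \clustersize^2.
\]
Summing over $C' \in N(C)$ gives a lower bound $|N(C)|\cdot \gamma^2 \clustersize^2$ on the total number of $\Gblack$-edges with one endpoint in $C$.

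For the matching upper bound, I would argue that every $\Gblack$-edge is a $\GD$-edge. Indeed, if an edge $xy \in E(\Gblack)$ has $x \in C_x$ and $y \in C_y$, then by Property~\ref{defBC:dveapul} of Definition~\ref{bclassdef} there is a dense spot $D=(U,W;F) \in \DenseSpots$ with $C_x \subseteq U$ and $C_y \subseteq W$; by Property~\ref{defBC:densepairs} all of $G[U,W]$ is covered by $\DenseSpots$, so $xy \in E(\GD)$. Now $C \subseteq V(G) \setminus \HugeVertices$ by Property~\ref{def:spaclahastobeboucla} of Definition~\ref{sparseclassdef} (a bounded decomposition sits on $G - \HugeVertices$), so Property~\ref{def:classgap} of Definition~\ref{sparseclassdef} applies and gives $\deg_{\GD}(v) \le \Omega^* k$ for each $v \in C$. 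Hence
\[
\sum_{C' \in N(C)} e_{\Gblack}(C,C') \;\le\; \sum_{v \in C} \deg_{\GD}(v) \;\le\; \clustersize \cdot \Omega^* k.
\]

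Combining the two estimates,
\[
|N(C)| \;\le\; \frac{\clustersize\,\Omega^* k}{\gamma^2 \clustersize^2} \;=\; \frac{\Omega^* k}{\gamma^2 \clustersize},
\]
and the second inequality in the statement follows from the lower bound $\clustersize \ge \nu k$ of Property~\ref{Csize} of Definition~\ref{bclassdef}. There is no real obstacle here; the only subtle point worth spelling out is why $\Gblack$-edges are captured by $\GD$ (so that the $\Omega^* k$ degree bound, which is stated for the dense-spot / expander part, can be invoked), and this is exactly what Fact~\ref{fact:denseVSblack} encodes.
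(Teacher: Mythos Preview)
Your proof is correct and follows essentially the same double-counting argument as the paper: both bound $\deg_{\BGblack}(C)$ by summing the inequality $e_{\Gblack}(C,C')\ge\gamma^2\clustersize^2$ over neighbours $C'$ and then upper-bounding the total $\Gblack$-degree out of $C$ by $\Omega^*k\cdot\clustersize$. The paper's version is terser, simply citing the properties of Definitions~\ref{bclassdef} and~\ref{sparseclassdef} for the upper bound, whereas you explicitly justify why $\Gblack$-edges are $\GD$-edges so that the $\Omega^*k$ degree cap applies; this extra care is fine and does no harm.
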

\begin{proof}
Let $C\in\clusters$. Then by the definition of $\BGblack$, and by the properties of
Definitions~\ref{bclassdef} and~\ref{sparseclassdef}, we get 
$$\deg_{\BGblack}(C)\leq \sum_{C'\in\neighbor_{\BGblack}(C)}\frac{e_{\Gblack}(C,C')}{\gamma^2|C||C'|} \leq\frac{\Omega^*k|C|}{ \gamma^2 |C|\clustersize} \le \frac{\Omega^*}{ \gamma^2 \nu },$$
 as desired.
\end{proof}

\medskip

We now state the most important lemma of this section. It says that any
 graph of bounded degree has a bounded decomposition which captures almost all its edges. This lemma can be considered as a sort
of Regularity Lemma for sparse graphs.

\begin{lemma}[Decomposition lemma]\label{lem:decompositionIntoBlackandExpanding}
For each $\Lambda,\Omega,s\in\mathbb N$  and each $\gamma,\epsilon,\rho>0$ there
exist $k_0\in\mathbb{N}$, $\nu>0$ such that for every $k\ge k_0$ and every
$n$-vertex graph $G$ with $e(G)\le kn$, $\maxdeg(G)\le \Omega k$, and with a
given partition $\mathcal V$ of its vertex set into at most $s$ sets, there
exists a $(k,\Lambda,\gamma,\epsilon,\nu,\rho)$-bounded decomposition
$(\clusters,\DenseSpots, \Gblack, \Gexp,\smallatoms )$ with respect to $\mathcal
V$, which captures all but at most
$(\frac{4\epsilon}{\gamma}+\epsilon\Omega+\gamma+\rho)kn$ edges of $G$. 
Furthermore, this bounded decomposition respects any given avoiding threshold $b$ and we have
\begin{equation}\label{eq:sEr}
 |E(\DenseSpots)\setminus (E(\Gblack)\cup
E_G[\smallatoms,\smallatoms\cup\bigcup\clusters])|\le
(\frac{4\epsilon}{\gamma}+\epsilon\Omega +\gamma) kn\;.
\end{equation}
\end{lemma}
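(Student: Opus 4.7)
The plan is to build the decomposition in three stages. In Stage~1 I would greedily extract a maximal family $\DenseSpots$ of edge-disjoint $(\gamma k,\gamma)$-dense spots from $G$; by maximality the remainder $G_1:=G-E(\GD)$ is $(\gamma k,\gamma)$-nowhere-dense. A bookkeeping step --- possibly enlarging sides or splitting spots so that for each $D=(U,W;F)\in\DenseSpots$ every edge of $G[U,W]$ lies in some member of $\DenseSpots$ --- secures item~\ref{defBC:densepairs} of Definition~\ref{bclassdef}. In Stage~2 I would apply Lemma~\ref{lem:subgraphswithlargeminimumdegree} to $G_1$ with threshold $\rho k$ to extract $\Gexp\subseteq G_1$ with $\mindeg(\Gexp)>\rho k$, at a cost of at most $\rho kn$ edges; the nowhere-dense property of $G_1$ passes to $\Gexp$, giving item~\ref{defBC:nowheredense}.

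Stage~3 uses Lemma~\ref{lem:sparseRL} to regularize the dense-spot subgraph $\GD$. I would form the common refinement $\mathcal{A}$ of four partitions of $V(G)$: the prepartition $\mathcal{V}$; the cut $\{V(\Gexp),V(G)\setminus V(\Gexp)\}$; for each $D=(U,W;F)\in\DenseSpots$ the three-way split $\{U,W,V(G)\setminus(U\cup W)\}$; and a binary split based on whether a vertex's degree into the candidate avoiding set exceeds the threshold $b$. Every atom of $\mathcal{A}$ then sits wholly on one side of each dense spot it meets, either entirely inside or entirely outside $V(\Gexp)$, and inside one set of $\mathcal{V}$. To meet Lemma~\ref{lem:sparseRL}'s comparable-size condition I would subdivide oversized atoms into pieces of size in $[\nu k, 2\nu k]$ for a suitable small $\nu$, and discard any atom of size below $\nu k$, charging the induced edge loss against the overall budget.

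I would then apply Lemma~\ref{lem:sparseRL} with $H:=\GD$, ensemble $\{W_1,\ldots,W_\ell\}$ given by the subatoms, coarse prepartition $\mathcal{Z}$ induced by the same data, and auxiliary graph $F$ with $ij\in E(F)$ whenever $W_i$ and $W_j$ lie on opposite sides of a common dense spot. Facts~\ref{fact:boundedlymanyspots} and~\ref{fact:sizedensespot}, together with the threshold $\nu k$, bound $\maxdeg(F)$ by a constant depending only on $\Omega,\gamma,\nu$. The lemma outputs a refinement of each subatom into equal-size clusters $W_i^{(1)},\ldots,W_i^{(p_i)}$ plus garbage $W_i^{(0)}$. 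I would set $\clusters:=\{W_i^{(j)}:j\geq 1\}$, let $\Gblack$ be the union of $G[C,C']$ over $\epsilon$-regular cluster pairs $(C,C')$ of density $\geq \gamma^2$ lying in a common dense spot, and let $\smallatoms:=\bigcup_i W_i^{(0)}$ intersected with $\bigcup_{D\in\DenseSpots}V(D)$.

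Verification proceeds item-by-item through Definition~\ref{bclassdef}, mostly by construction. The captured-edge count combines the $\rho kn$ loss from Stage~2 with the bound $(4\epsilon/\gamma+\epsilon\Omega+\gamma)kn$ from Lemma~\ref{notmuchlost} applied to $\GD$, matching the claim. The avoiding property (item~\ref{defBC:avoiding}) uses Fact~\ref{fact:substantialintersectspots}: for any $\bar U$ of size $\leq\Lambda k$, vertices belonging solely to dense spots of large $\bar U$-intersection form a set of size linear in $k$, absorbable into $\epsilon k$ once $\epsilon$ is tuned relative to $\Lambda,\Omega,\gamma$. The avoiding threshold $b$ is honored by construction through part~(iv) of $\mathcal{A}$. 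The main obstacle is Stage~3: reconciling overlapping, variable-size dense spots with Lemma~\ref{lem:sparseRL}'s disjoint-ensemble, comparable-size, and bounded-$F$-degree requirements demands careful subatom construction and tight accounting of the edges lost to tiny atoms.
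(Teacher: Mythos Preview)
Your three-stage architecture matches the paper closely, and Stages~1 and~2 are correct. The genuine gap is in Stage~3, specifically in your choice of $\smallatoms$ and the ensuing verification of the avoiding property.

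You set $\smallatoms:=\bigcup_i W_i^{(0)}$, the garbage from Lemma~\ref{lem:sparseRL}, and then argue the $(\Lambda,\epsilon,\gamma,k)$-avoiding property via Fact~\ref{fact:substantialintersectspots}. That fact only bounds the set of vertices lying exclusively in ``bad'' dense spots by $\tfrac{2\Lambda\Omega^2}{\gamma^4}k$, a quantity depending on the \emph{input} parameters $\Lambda,\Omega,\gamma$; it is not at all bounded by $\epsilon k$, since $\epsilon$ is also an input and may be arbitrarily small relative to $\Lambda,\Omega,\gamma$. So the avoiding property fails as stated. There is also a circularity: you include the degree-threshold split with respect to $\smallatoms$ in the refinement $\mathcal A$, but $\smallatoms$ is only defined after Lemma~\ref{lem:sparseRL} is applied to the ensemble built from $\mathcal A$.

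The paper's fix is the key idea you are missing. After forming the common refinement $\mathcal X$ by the dense-spot sides alone, the paper declares $\smallatoms$ to be the union of the \emph{small} atoms --- those of size at most $2\tilde\nu k$, where $\tilde\nu:=\epsilon\cdot 3^{-\Omega\Lambda/\gamma^3}$ is chosen exponentially small --- and only the \emph{large} atoms are fed to Lemma~\ref{lem:sparseRL}. This simultaneously (i) makes $\smallatoms$ available before regularization, dissolving the circularity; (ii) ensures all $\GD$-edges touching $\smallatoms$ are captured rather than lost, so no separate charging is needed for discarded tiny atoms; and (iii) yields the avoiding property by a counting argument: a bad vertex forces every dense spot through it to meet $\bar U$ in $>\gamma^2 k$ vertices, whence there are at most $\tfrac{\Omega\Lambda}{\gamma^3}$ such spots, the atoms they determine number at most $3^{\Omega\Lambda/\gamma^3}$, and each has size at most $2\tilde\nu k$, so the bad set has size at most $3^{\Omega\Lambda/\gamma^3}\cdot\tilde\nu k=\epsilon k$.
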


A proof of Lemma~\ref{lem:decompositionIntoBlackandExpanding} is given in
Section~\ref{ssec:ProofOfDecomposition}. 

\subsection{Decomposition of LKS graphs}\label{ssec:DecompositionOfLKSGraphs}
Lemma~\ref{prop:gap} and
Lemma~\ref{lem:decompositionIntoBlackandExpanding} enable us to decompose graphs
in $\LKSgraphs{n}{k}{\eta}$ in a particular manner.

\begin{lemma}\label{lem:LKSsparseClass}
For every $\eta, \Lambda,\gamma,\epsilon,\rho>0$ there are $\nu>0$ and $k_0\in\mathbb N$ such
that for every $k>k_0$ and  for every number $b$ the following holds.
For every   sequence $(\Omega_j)_{j\in\mathbb N}$ of positive numbers with
$\Omega_j/\Omega_{j+1}\le \eta^2/100$ for all $j\in\mathbb N$
 and for every $G\in\LKSgraphs{n}{k}{\eta}$ there are an index $i$ and a subgraph $G' $ of $G$ with the following properties:
 \begin{enumerate}[(a)]
 \item $G'\in\LKSsmallgraphs{n}{k}{\eta/2}$,
 \item $i\leq 100\eta^{-2}$,
 \item\label{LKSclassif:prepart} $G'$ has a
 $(k,\Omega_{i+1},\Omega_{i},\Lambda,\gamma,\epsilon,\nu,\rho)$-sparse decomposition
$(\HugeVertices, \clusters,\DenseSpots,
\Gblack',\Gexp',\smallatoms)$ with respect to the partition
$\{V_1,V_2\}:=\{\smallvertices{\eta/2}{k}{G'},\largevertices{\eta/2}{k}{G'}\}$, and with respect to avoiding threshold $b$,
\item\label{it:LKSsparseGreyCapt} $(\HugeVertices, \clusters,\DenseSpots,
\Gblack',\Gexp',\smallatoms)$ captures all but at most
$(\frac{4\epsilon}{\gamma}+\epsilon\Omega_{\lfloor
100\eta^{-2}\rfloor}+\gamma+\rho )kn$ edges of $G'$, and
\item $|E(\DenseSpots)\setminus (E(\Gblack')\cup
E_{G'}[\smallatoms,\smallatoms\cup\bigcup\clusters])|\le
(\frac{4\epsilon}{\gamma}+\epsilon\Omega_{\lfloor
100\eta^{-2}\rfloor}+\gamma) kn$.
\end{enumerate}
\end{lemma}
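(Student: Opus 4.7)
The plan is to combine Lemma~\ref{prop:gap} with Lemma~\ref{lem:decompositionIntoBlackandExpanding}: the former produces the set $\HugeVertices$ of vertices of huge degree, and the latter decomposes everything else. Given the input $G \in \LKSgraphs{n}{k}{\eta}$ and the sequence $(\Omega_j)$, I would first apply Lemma~\ref{prop:gap} to obtain an index $i \leq 100\eta^{-2}$ and a subgraph $G' \subseteq G$ with $G' \in \LKSsmallgraphs{n}{k}{\eta/2}$ such that no vertex of $G'$ has degree in the interval $[\Omega_i k,\Omega_{i+1}k)$. This immediately gives items~(a) and~(b).

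Next, set $\HugeVertices := \{v \in V(G') : \deg_{G'}(v) \geq \Omega_{i+1}k\}$. By the gap property, every vertex of $V(G') \setminus \HugeVertices$ has $G'$-degree strictly less than $\Omega_i k$, so in particular $\maxdeg(G' - \HugeVertices) < \Omega_i k$. Since $e(G' - \HugeVertices) \leq e(G') \leq kn$ by Property~\ref{def:LKSsmallC} of Definition~\ref{def:LKSsmall}, Lemma~\ref{lem:decompositionIntoBlackandExpanding} applies to $G' - \HugeVertices$ with $\Omega := \Omega_i$, $s := 2$, the prepartition $\{\smallvertices{\eta/2}{k}{G'} \setminus \HugeVertices, \largevertices{\eta/2}{k}{G'} \setminus \HugeVertices\}$, the avoiding threshold $b$, and the remaining parameters $\Lambda,\gamma,\epsilon,\rho$ carried over. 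This yields a $(k,\Lambda,\gamma,\epsilon,\nu,\rho)$-bounded decomposition $(\clusters,\DenseSpots,\Gblack',\Gexp',\smallatoms)$ of $G' - \HugeVertices$ respecting $b$, which captures all but at most $(\tfrac{4\epsilon}{\gamma} + \epsilon\Omega_i + \gamma + \rho)kn$ edges, together with the analogous bound on $|E(\DenseSpots)\setminus(E(\Gblack')\cup E_{G'-\HugeVertices}[\smallatoms,\smallatoms\cup\bigcup\clusters])|$ supplied by~\eqref{eq:sEr}.

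Setting $\class := (\HugeVertices,\clusters,\DenseSpots,\Gblack',\Gexp',\smallatoms)$ yields the desired sparse decomposition. Condition~1 of Definition~\ref{sparseclassdef} holds by the definition of $\HugeVertices$, noting that the relevant graph $H$ is a subgraph of $G'$, so $\maxdeg_H(V(G')\setminus\HugeVertices) \leq \maxdeg(G'-\HugeVertices) < \Omega_i k$; condition~2 is exactly the output of Lemma~\ref{lem:decompositionIntoBlackandExpanding}. This establishes~(c). For~(d) and~(e), the edges of $G'$ split into those incident with $\HugeVertices$, which are captured by $\class$ via Definition~\ref{capturededgesdef}, and those of $G' - \HugeVertices$, of which at most the stated number are not captured by the bounded decomposition. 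Since $(\Omega_j)$ is strictly increasing (as $\Omega_j / \Omega_{j+1} \leq \eta^2/100 < 1$) and $i \leq 100\eta^{-2}$, we have $\Omega_i \leq \Omega_{\lfloor 100\eta^{-2}\rfloor}$, giving the bounds as stated.

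The proof is pure bookkeeping and I do not expect any serious obstacle; the only mild subtlety is the quantifier order on $\nu$ versus the sequence $(\Omega_j)$, which is resolved by selecting $\nu$ and $k_0$ from Lemma~\ref{lem:decompositionIntoBlackandExpanding} applied with $\Omega := \Omega_{\lfloor 100\eta^{-2}\rfloor}$ (the largest value of $\Omega_i$ that could be chosen), since smaller effective $\Omega$ only relaxes the constraints needed on $\nu$ and $k_0$.
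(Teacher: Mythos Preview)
Your approach is essentially identical to the paper's: pass to a suitable subgraph via Lemma~\ref{prop:gap}, set $\HugeVertices$ to be the vertices above the gap, and apply Lemma~\ref{lem:decompositionIntoBlackandExpanding} to $G'-\HugeVertices$ with the prepartition induced by $\smallvertices{\eta/2}{k}{G'}$ and $\largevertices{\eta/2}{k}{G'}$, choosing $\nu,k_0$ for the worst case $\Omega=\Omega_{\lfloor 100\eta^{-2}\rfloor}$.

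One small omission: Lemma~\ref{prop:gap} is stated for graphs in $\LKSmingraphs{n}{k}{\eta}$, not $\LKSgraphs{n}{k}{\eta}$. The paper first passes to an edge-minimal subgraph $\tilde G\subseteq G$ with $\tilde G\in\LKSmingraphs{n}{k}{\eta}$ and applies Lemma~\ref{prop:gap} to $\tilde G$; you should do the same. This is a one-line fix and does not affect the rest of your argument.
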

\begin{proof}
Let $\nu$ and $k_0$
be given by Lemma~\ref{lem:decompositionIntoBlackandExpanding}
for input parameters
$\Omega_\PARAMETERPASSING{L}{lem:decompositionIntoBlackandExpanding}:=\Omega_{\lfloor 100\eta^{-2}\rfloor}$,
$\Lambda_\PARAMETERPASSING{L}{lem:decompositionIntoBlackandExpanding}:=\Lambda,
\gamma_\PARAMETERPASSING{L}{lem:decompositionIntoBlackandExpanding}:=\gamma,
\epsilon_\PARAMETERPASSING{L}{lem:decompositionIntoBlackandExpanding}:=\epsilon,
\rho_\PARAMETERPASSING{L}{lem:decompositionIntoBlackandExpanding}:=\rho,
b_\PARAMETERPASSING{L}{lem:decompositionIntoBlackandExpanding}:=b$, 
 and
$s_\PARAMETERPASSING{L}{lem:decompositionIntoBlackandExpanding}:=2$. Now, given
$G$, let us consider a subgraph $\tilde G$ of $G$ such that $\tilde G\in
\LKSmingraphs{n}{k}{\eta}$. Lemma~\ref{prop:gap} applied to the sequence
$(\Omega_j)_j$ and $\tilde G$ yields a graph $G'\in \LKSsmallgraphs{n}{k}{\eta/2}$
and an index $i\leq 100\eta^{-2}$. We set $\HugeVertices:=\{v\in V(G)\::\:\deg_{G'}(v)\ge
\Omega_{i+1}k\}$.

Observe that by~\eqref{eq:LKSminimalNotManyEdges}, $e(G')<kn$. Let
$(\HugeVertices,\DenseSpots, \Gblack',\Gexp',\smallatoms)$ be the
$(k,\Lambda,\gamma,\epsilon,\nu,\rho)$-bounded decomposition of the graph
$G'-\HugeVertices$ with respect to $\{\smallvertices{\eta/2}{k}{G'},\largevertices{\eta/2}{k}{G'}\setminus \HugeVertices\}$ that
is given by Lemma~\ref{lem:decompositionIntoBlackandExpanding}. Clearly,
$(\HugeVertices, \clusters,\DenseSpots,
\Gblack',\Gexp',\smallatoms)$ is  a
$(k,\Omega_{i+1},\Omega_{i},\Lambda,\gamma,\epsilon,\nu,\rho)$-sparse
decomposition of $G'$ capturing at least as many edges as promised in the
statement of the lemma.
\end{proof}

\bigskip
The process of embedding a given tree $T_\PARAMETERPASSING{T}{thm:main}\in\treeclass{k}$ into $G_\PARAMETERPASSING{T}{thm:main}$ is based on the sparse decomposition
$\class=(\HugeVertices, \clusters,\DenseSpots,\Gblack,\Gexp,\smallatoms)$ of
a graph $G$ from Lemma~\ref{lem:LKSsparseClass} and is much more complex than in
approaches based on the standard Regularity Lemma. The embedding ingredient in the classic (dense)
Regularity Method inheres in Blow-up Lemma type statements which roughly tell
that  regular pairs of positive density in some sense behave like complete bipartite graphs.
In our setting, in addition to regular pairs\footnote{Some of the
regular pairs we shall use are already present in $\Gblack$, and there are some
additional regular pairs hidden in $\DenseSpots$ which we shall extract and make
use of in a form of so-called semiregular matchings
(Definition~\ref{def:semiregular}) in Sections~\ref{sec:augmenting}
and~\ref{sec:LKSStructure}.}  we shall use three other components of $\class$: the vertices of
huge degree $\HugeVertices$, the nowhere-dense graph $\Gexp$, and the avoiding
set $\smallatoms$. Each of these components requires a different strategy for
embedding (parts of) $T_\PARAMETERPASSING{T}{thm:main}$. Let us mention that rather major technicalities arise when combining these strategies; for example, for traversing between $\HugeVertices$ and the rest of the graph we have to introduce a certain ``cleaned'' structure in Lemma~\ref{lem:ConfWhenCXAXB}.

These strategies are described precisely and in detail
in Section~\ref{sec:embed}. A lighter informal account on the role of
$\smallatoms$ is given in Section~\ref{sssec:whyavoiding}. We discuss the use of $\Gexp$ in
Section~\ref{sssec:whyGexp}. Only very little can be said about the
set $\HugeVertices$ at an intuitive level: these vertices have huge degrees but
are very unstructured otherwise. If  only $o(kn)$ edges  are incident
with $\HugeVertices$ then we can neglect them. If, on the other hand, there are  $\Omega(kn)$ edges incident with 
$\HugeVertices$, then we have no choice but to use them for our embedding. Very roughly speaking, in that case we
find sets $\HugeVertices'\subset\HugeVertices$ and $V'\subset V(G)\setminus
\HugeVertices$ such that still $\mindeg(\HugeVertices',V')\gg k$, and
$\mindeg(V',\HugeVertices')=\Omega(k)$, and then use $\HugeVertices'$ and $V'$ in our
embedding.

\medskip
Last, let us note that when $G_\PARAMETERPASSING{T}{thm:main}$ is close to the extremal graph (depicted in Figure~\ref{fig:ExtremalGraph}) then all the structure in $G_\PARAMETERPASSING{T}{thm:main}$ captured by Lemma~\ref{lem:LKSsparseClass} accumulates in the cluster graph $\Gblack'$, i.e., $\HugeVertices$, $\Gexp'$ and $\smallatoms$ are all almost empty. For that reason, when some of $\HugeVertices$, $\Gexp'$ or $\smallatoms$ is substantial we gain some extra aid. In comparison, one of the almost extremal graphs for the Erd\H{o}s-S\'os Conjecture~\ref{conj:ES} has a substantial $\HugeVertices$-component (see Figure~\ref{fig:ExtremalGraphES2}).

\subsection{Decomposition of general graphs}
A version of Lemma~\ref{lem:LKSsparseClass} can be formulated for general graphs. To illustrate this, we present below a generic lemma of this type, which will not be used in the present paper. 

\begin{lemma}\label{lem:genericBD}
For every $\eta, \Lambda,\gamma,\epsilon,\rho>0$ there are $\nu>0$ and $k_0\in\mathbb N$ such that
for every   sequence $(\Omega_j)_{j\in\mathbb N}$ of positive numbers with
$\Omega_j/\Omega_{j+1}\le \eta^2/100$ the following holds. Suppose that $G$ is a graph of order $n$ with average degree $k>k_0$. Then there is an index $i\leq 100\eta^{-2}$, such that 
$G$ has a
 $(k,\Omega_{i+1},\Omega_{i},\Lambda,\gamma,\epsilon,\nu,\rho)$-sparse decomposition
$(\HugeVertices, \clusters,\DenseSpots,
\Gblack,\Gexp,\smallatoms)$ 
that captures all but at most
$(\eta+\frac{4\epsilon}{\gamma}+\epsilon\Omega_{\lfloor
100\eta^{-2}\rfloor}+\gamma+\rho )kn$ edges.
\end{lemma}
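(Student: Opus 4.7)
The strategy mirrors Lemma~\ref{lem:LKSsparseClass}: first open a gap in the degree sequence of $G$ by deleting a small number of edges, then apply the Decomposition Lemma~\ref{lem:decompositionIntoBlackandExpanding} to what remains. Without an LKS constraint to preserve, the gap-creation step becomes a direct simplification of the proof of Proposition~\ref{prop:gap}.

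Set $R:=\lfloor 100\eta^{-2}\rfloor$ and, for $i\in[R]$, let $X_i:=\{v\in V(G):\deg_G(v)\in[\Omega_ik,\Omega_{i+1}k)\}$. Since $e(G)=kn/2\le kn$, averaging over $i\in[R]$ yields an index $i^*$ with $\sum_{v\in X_{i^*}\cup X_{i^*+1}}\deg_G(v)\le 4e(G)/R\le \eta^2 kn/50$. Following Proposition~\ref{prop:gap}, I would first delete all edges incident to $X_{i^*}\cup X_{i^*+1}$ and then iteratively delete any remaining edge incident to a vertex whose current degree lies in $[\Omega_{i^*}k,\Omega_{i^*+1}k)$. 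Only vertices with $\deg_G(v)\ge\Omega_{i^*+2}k$ can be treated in the second phase; there are at most $n/\Omega_{i^*+2}$ such vertices, and each is handled fewer than $\Omega_{i^*+1}k$ times, contributing at most $(\Omega_{i^*+1}/\Omega_{i^*+2})kn\le \eta^2 kn/100$ further deletions. The resulting subgraph $G'\subseteq G$ satisfies $|E(G)\setminus E(G')|\le\eta kn$ and contains no vertex of degree in $[\Omega_{i^*}k,\Omega_{i^*+1}k)$.

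Next, define $\HugeVertices:=\{v\in V(G'):\deg_{G'}(v)\ge\Omega_{i^*+1}k\}$. The gap forces $\deg_{G'}(v)<\Omega_{i^*}k$ for every $v\notin\HugeVertices$, so $G'-\HugeVertices$ has maximum degree less than $\Omega_{i^*}k\le\Omega_R k$ and at most $kn$ edges. Applying Lemma~\ref{lem:decompositionIntoBlackandExpanding} with $\Omega:=\Omega_R$, the given $\Lambda,\gamma,\epsilon,\rho$, trivial prepartition, and an arbitrary avoiding threshold yields $\nu>0$ and a $(k,\Lambda,\gamma,\epsilon,\nu,\rho)$-bounded decomposition $(\clusters,\DenseSpots,\Gblack,\Gexp,\smallatoms)$ of $G'-\HugeVertices$ capturing all but $(4\epsilon/\gamma+\epsilon\Omega_R+\gamma+\rho)kn$ of its edges. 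The tuple $(\HugeVertices,\clusters,\DenseSpots,\Gblack,\Gexp,\smallatoms)$ is then a $(k,\Omega_{i^*+1},\Omega_{i^*},\Lambda,\gamma,\epsilon,\nu,\rho)$-sparse decomposition of $G'$ (the $\maxdeg_H$-bound uses the gap, and $\mindeg_{G'}(\HugeVertices)\ge\Omega_{i^*+1}k$ follows from the definition of $\HugeVertices$). Adding the $\le\eta kn$ edges of $E(G)\setminus E(G')$ to the uncaptured total gives the claimed bound for $G$.

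The main work is the counting in the gap-creation step, where both deletion phases must together remove at most $O(\eta)kn$ edges; this is precisely the bookkeeping from Proposition~\ref{prop:gap}. A secondary, essentially cosmetic, point is that the produced structure is formally a sparse decomposition of the subgraph $G'\subseteq G$ rather than of $G$ itself; this discrepancy is absorbed by the additional $\eta kn$ term in the uncaptured-edge bound.
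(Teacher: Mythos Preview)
Your proposal is correct and follows essentially the same approach as the paper's own proof outline: a non-LKS gap-creation step (the simplification of Proposition~\ref{prop:gap} using only the averaging~\eqref{eq:sparselyconnectedpair}, the initial deletion of $E_0$, and rule~(T1) with its bound~\eqref{kanteninE1}, omitting the LKS-specific rule~(T2)), followed by applying Lemma~\ref{lem:decompositionIntoBlackandExpanding} to $G'-\HugeVertices$. Your final remark about the decomposition formally being of $G'$ rather than $G$ is a genuine imprecision in the statement that the paper's outline likewise leaves unaddressed; as you say, it is absorbed by the $\eta kn$ term.
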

The proof follows the same strategy as that of Lemma~\ref{lem:LKSsparseClass}.
\begin{proof}[Proof outline]
First we apply a non-LKS-specific version of Lemma~\ref{prop:gap}. Such a lemma says that for each $G$ with average degree $k$ there exists a spanning sugraph $G'$ of $G$ with $e(G)-e(G')<\eta kn$, and an index $i\leq 100\eta^{-2}$ such that the assertion of Lemma~\ref{prop:gap}\eqref{item:(prop:gap)gap} is fulfilled. The proof of such a lemma follows the same lines as that of Lemma~\ref{prop:gap}. Using the notation of that lemma, we partition $V(G)$ into sets $X_i(G)$, and find an index $i$ such that~\eqref{eq:sparselyconnectedpair} holds. We then keep erasing edges using the rule~(T1). We do not apply the LKS-specific rule~(T2). The bound on the total number of erased edges  holds in this version as well (actually, only the bound~\eqref{kanteninE1} is needed).

The bounded-degree part can then be decomposed using Lemma~\ref{lem:decompositionIntoBlackandExpanding}, yielding the desired sparse decomposition.
\end{proof}
This decomposition could be used to attack other problems; probably with a version of Lemma~\ref{lem:genericBD} tailored to a particular setting similarly as we did in Lemma~\ref{lem:LKSsparseClass}.\footnote{We are not sure whether the property of Lemma~\ref{lem:LKSsparseClass}\eqref{it:LKSsparseGreyCapt} --- which gives a fine bound on the number of some specific type of uncaptured edges --- is a general feature required, or a specific requirement in our approach.} However, our feeling is that such a decomposition lemma is limited in applications to tree-containment problems. The reason is that two of the features of the sparse decomposition, the nowhere-dense graph $\Gexp$ and the avoiding set $\smallatoms$, seem to be useful only for embedding trees. See Section~\ref{sssec:whyavoiding} and Section~\ref{sssec:whyGexp} for a discussion of the respective embedding strategies.

\subsection{The role of the avoiding set $\smallatoms$}\label{sssec:whyavoiding}
Let us explain the role of the avoiding set $\smallatoms$
in Lemma~\ref{lem:decompositionIntoBlackandExpanding}. As said above, our aim in
Lemma~\ref{lem:decompositionIntoBlackandExpanding} will be to locally regularize
parts of the input graph $G$. Of course, first we try to regularize as large a part of the
$G$ as possible. The avoiding set arises as a result of the
impossibility to regularize certain parts of the graph. Indeed, it is one of the most surprising steps in our proof of
Theorem~\ref{thm:main} that the set $\smallatoms$ is initially defined as --
very loosely speaking -- ``those vertices where the Regularity Lemma fails to
work properly'', and only then we prove\footnote{See the last step of the
proof of Lemma~\ref{lem:decompositionIntoBlackandExpanding}.} that $\smallatoms$
actually satisfies the useful conditions of Definition~\ref{def:avoiding}.

We now sketch how to utilize avoiding sets for the purpose of embedding trees.
 In our proof of Theorem~\ref{thm:main} we preprocess the tree
$T=T_\PARAMETERPASSING{T}{thm:main}\in\treeclass{k}$ by considering its $(\tau k)$-fine partition, and then
sequentially embed its shrubs (and knags). Thus embedding techniques for
embedding a single shrub are the building blocks of our embedding machinery; and $\smallatoms$ is one of the enviroments which
provides us with such a technique. Let us discuss here the simpler case of end
shrubs. More precisely, we show how to extend a partial embedding of a tree by one end-shrub. To this end, let us suppose that $\phi$ is a partial embedding of a tree $T$, and $v\in V(T)$ is its \emph{active vertex}\index{general}{active vertex}, i.e., a vertex which is embedded, but not all its children are. We write $ U\subset V(G)$ for the current image of $\phi$. Let
$T'\subset T$ be an end-shrub which is not embedded yet, and suppose $u\in V(T')$ is adjacent to $v$. We have $v(T')\le\tau k$. 

We now show how to
extend the partial embedding $\phi$ to $T'$, assuming that
$\deg_G\big(\phi(v),\smallatoms\setminus U\big)\ge \gamma k$ for some
$(1,\epsilon,\gamma,k)$-avoiding set $\smallatoms$ (where $\tau\ll
\epsilon\ll \gamma\ll 1$). Let $X$ be the set of at most $\epsilon k$
exceptional vertices from Definition~\ref{def:avoiding} corresponding to the set
$U$. We now embed $T'$ into $G$, starting by embedding $u$ in a vertex of
$\smallatoms\setminus (U\cup X)$ in the neighborhood of $\phi(v)$. By Definition~\ref{def:avoiding}, there is a dense spot
$D=(A_D,B_D; F)\in\DenseSpots$ such that $\phi(u)\in V(D)$
and $|U\cap V(D)|\le \gamma^2k$. 
As $D$ is a dense spot, we have $\deg_G(\phi(u),V(D))>\gamma k$.
It is now easy to embed $T'$ into $D$ using the minimum degree in $D$. See
Figure~\ref{fig:EmbeddingAvoiding} for an illustration, and Lemma~\ref{lem:embed:avoidingFOREST} for a
precise formulation.
\begin{figure}[ht] \centering
\includegraphics[scale=1.00]{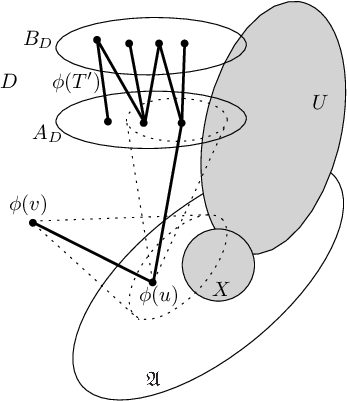}
\caption[Embedding using the set $\smallatoms$]{Embedding using the set $\smallatoms$.}
\label{fig:EmbeddingAvoiding}
\end{figure}

We indeed use the avoiding set for embedding shrubs of a fine partition of
$T$ as above. The major simplification we made in the exposition is that we
only discussed the case when $T'$ is an end shrub. To cover embedding of an internal
shrub $T'$ as well, one needs to have a more detailed control over the embedding, i.e., one must be able to extend the embedding from leaves of $T'$ to the neighboring cut-vertices of the fine partition, is such a way that one can then continue embedding of the shrubs below these cut-vertices. 

Last, let us remark, that unlike our baby-example above, we use an
$(\Lambda,\epsilon,\gamma,k)$-avoiding set with $\Lambda\gg 1$. This is because
in the actual proof one has to avoid more vertices than just the current image
of the embedding.

\subsection{The role of the nowhere-dense graph $\Gexp$ and using the $(\tau k)$-fine partition}\label{sssec:whyGexp}

In this section we shall give some intuition on how the $(\gamma k, \gamma)$-nowhere-dense graph $\Gexp$ from the
$(k,\Omega^{**},\Omega^*,\Lambda,\gamma,\epsilon',\nu,\rho)$-sparse
decomposition\footnote{We shall assume that $17\sqrt{\gamma}<\rho$; this will be
the setting of the sparse decomposition we shall work with in the proof of
Theorem~\ref{thm:main}.} $(\HugeVertices, \clusters,\DenseSpots, \Gblack,
\Gexp,\smallatoms)$ of a graph $G$ is useful for embedding a
given tree $T\in\treeclass{k}$. We start out with the rather simple case when $T$ is a path. We then  point out an issue
with this approach for trees with many branching vertices and show how to overcome this problem using the $(\tau k)$-fine partition from  Lemma~\ref{lem:TreePartition}. 

\paragraph{Embedding a path in $\Gexp$.} 
Assume we are given a path $T=u_1u_2\cdots u_k\in\treeclass{k}$ and we wish to embed it into $\Gexp$.
The naive idea is to apply a one-step look-ahead strategy. We first embed  $u_1$ in an arbitrary vertex $v\in V(\Gexp)$. Then, we extend our
embedding $\phi_\ell$ of the path $u_1\cdots u_\ell$ in $\Gexp$ in 
step $\ell$
 by em\-bedding $u_{\ell +1}$ in a (yet unused) neighbour $w$ of the image of the 
 \emph{active} vertex~$u_\ell$, requiring that
\begin{equation}\label{eq:inductiveembedding3}
\deg_{\Gexp}\big(w,\phi_\ell(u_1\cdots u_\ell)\big)<\sqrt\gamma k\;.
\end{equation}
Let us argue that such a vertex $w$ exists. First, observe that
Property~\ref{defBC:nowheredense} of Definition~\ref{bclassdef} implies that  $\phi_\ell(u_\ell)$ has  at least $\rho k$ neighbours.
By~\eqref{eq:inductiveembedding3} applied to $\ell -1$, at most $\sqrt\gamma k$ of these neighbours
lie inside $\phi_\ell(u_1\cdots u_{\ell-1})$; this property is also trivially satisfied when $\ell=1$. 
Further, an easy calculation shows that at most $16\sqrt\gamma k$ of them have degree more than $\sqrt\gamma k$ in $\Gexp$ into the set $\phi_\ell(u_1\cdots u_\ell)$, otherwise we would get a contradiction to $\Gexp$ being $(\gamma k, \gamma)$-nowhere-dense. Since we assumed $\rho> 17\sqrt\gamma$ we can find a vertex $w$ as desired and thus embed all of~$T$.


\paragraph{Embedding trees with many branching points and the role of fine partitions.}
We certainly cannot hope that a nonempty graph $\Gexp$ alone will provide us
with embeddings of all trees $T\in\treeclass{k}$ from Theorem~\ref{thm:main}. For instance, if $T$ is a star, then we need in $G$ a vertex of degree $k-1$, which $\Gexp$ might not have. In order to run into a problem with the method described above, we do not even need to have such a large degree in our tree $T$.

Consider a binary tree $T\in\treeclass{k}$, rooted
at its central vertex $r$. Now if we try to embed~$T$
 sequentially as above we will arrive at a moment
when there are many (as many as $k/2$) active vertices; regardless in which order we embed. Now, the
neighbourhoods of the images of the active vertices cannot be controlled much, i.e., they may be  intersecting considerably. Hence, embedding  children of active vertices we might block available space in the neighbourhoods of other active
vertices. 
 See
Figure~\ref{fig:gettingstuck} for an illustration.
\begin{figure}[t]
\centering 
\includegraphics{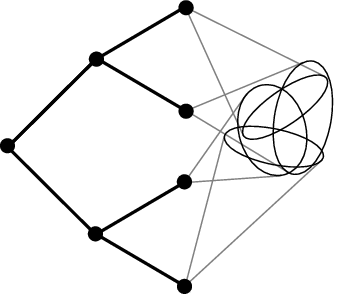}
\caption[Getting stuck while embedding binary tree]{Embedded part of the binary tree in bold. 
The neighbourhoods of active vertices may overlap.}\label{fig:gettingstuck}
\end{figure}

To rescue the situation we use the $(\tau k)$-fine partition $(W_A,W_B,\shrubA,\shrubB)$ of
$T$ (for some $0<\tau\ll \gamma$) given by Lemma~\ref{lem:TreePartition}.
Recall the structure of this partition, as shown in
Figure~\ref{fig:BinaryTreeCut}: the first $q$ levels of $T$ from the root $r$
comprise the sole knag. All other vertices
make up the end shrubs $T^*_1,\ldots,T^*_h$. 

We first embed the knag, which consists of
the cut vertices $W_A\cup W_B$, and so has size at most $O(\frac{1}{\tau})$. As $\rho k$ will be much larger than that, following a strategy similar to the one above we ensure that all of $W_A\cup W_B$ gets correctly embedded, we even have a (limited) choice for its images.
The next step is to make the transitions
 at the
 $q$-th level from embedding cut vertices $W_A\cup W_B$ to embedding shrubs $T^*_1,\ldots,T^*_h$. But since this step requires to exploit the structure of LKS graphs, we skip the details in the high-level overview here. We just remark that one needs to put the cut vertices $W_A\cup W_B$ 
 in the sets $\XA$ and $\XB$ from Lemma~\ref{prop:LKSstruct}; these vertices are powerful enough to allow such a transition. 
 
For the point we wish to make here, it is more relevant to see how to complete the last part of our embedding, that is, how to  embed a tree $T^*_i$ whose root $r_i$
 is already embedded in a vertex $\phi(r_i)\in  V(\Gexp)$.
Let $\text{im}_i:=\text{im}(\phi)$ be the current (partial) image of $\phi$ at this stage. 
We emphasize that at this moment we are working exclusively with the tree $T^*_i$, i.e., any other tree
$T^*_j$ is either completely embedded, or will be embedded only after we finish
the embedding of $T^*_i$. Suppose we are about to embed a vertex $v\in V(T^*_i)$ whose ancestor $v'\in
V(T^*_i)$ is already embedded in $V(\Gexp)$. We choose for the image of $v$ any (yet unused) vertex $w$ in the neighbourhood of $\varphi (v')$, requiring that
\begin{equation}\label{eq:wheretoembedinbinary}
\deg_{\Gexp}(w,\text{im}_i)<\rho k/100.
\end{equation}
This condition is very similar to our path-embedding procedure above, and can be proved in exactly the same way, using the fact that $\Gexp$ is $(\gamma k, \gamma)$-nowhere-dense. Note that during our embedding
$|\text{im}(\phi)\setminus \text{im}_i |$ will grow, but however is at most $v(T^*_i)\le \tau k$. Thus, for every vertex $v''\in
V(T^*_i)$, when its time comes to be embedded, we still have
$\deg_{\Gexp}\big(\phi(v'),\text{im}(\phi)\big)\le \rho k/100+\tau k<\rho
k/99$, and thus $v''$ can be embedded.

Note that the trick here was to keep on working on one subtree $T^*_i$, whose size is small enough to be negligible in comparison to the degree of a vertex in $\Gexp$ so that it does not matter that the set we wish to avoid having a considerable degree into ($\text{im}(\phi)$) is not the same as the one we can actually avoid having a considerable degree into ($\text{im}_i$). (Observe that since $\text{im}(\phi)$ keeps changing during the procedure, we cannot have direct control over it.)
Thus, breaking up the tree into tiny shrubs in the $(\tau k)$-fine partition was the key to successfully embedding it in this case.

\subsection{Proof of
Lemma~\ref{lem:decompositionIntoBlackandExpanding}}\label{ssec:ProofOfDecomposition}
This subsection is devoted to the proof of
Lemma~\ref{lem:decompositionIntoBlackandExpanding}. We give an overview of our
decomposition procedure. We start by extracting the edges of as many $(\gamma
k,k)$-dense spots from $G$ as possible; these together with the incident vertices will form the auxiliary graph $\GD$. Most of the
remaining edges will form the edge set of the graph $\Gexp$. Next, we consider
the intersections of the dense spots captured in $\GD$. To the subgraph of $\GD$
that is spanned by the large intersections we apply the Regularity Lemma for locally dense graphs
(Lemma~\ref{lem:sparseRL}), and thus obtain $\Gblack$. The other part of
$V(\GD)$ will be taken as the  $(\Lambda,\epsilon,\gamma,k)$-avoiding
set~$\smallatoms$.

\smallskip

\paragraph{Setting up the parameters.} We start by setting 
$$\tilde\nu:=\epsilon\cdot 3^{-\frac{\Omega\Lambda}{\gamma^3}}.$$
Let $q_\mathrm{MAXCL}$ be  given by Lemma~\ref{lem:sparseRL} for input parameters 
\begin{equation}\label{eq:in213}
m_\PARAMETERPASSING{L}{lem:sparseRL}:=\frac{\Omega}{\gamma\tilde\nu}\quad, \quad z_\PARAMETERPASSING{L}{lem:sparseRL}:=4s\quad \mbox{ and } \quad
\epsilon_\PARAMETERPASSING{L}{lem:sparseRL}:=\epsilon\;.
\end{equation}
 Define
an auxiliary parameter
$q:=\max\{q_\mathrm{MAXCL}, \epsilon^{-1}\}$ and choose the output parameters of
Lemma~\ref{lem:decompositionIntoBlackandExpanding} as $$k_0:=\left\lceil\frac{
q_\mathrm{MAXCL}}{\tilde\nu}\right\rceil \ \    \ \ \text{ and } \ \    \ \
\nu:=\frac{\tilde \nu}q .$$

\paragraph{Defining $\DenseSpots$ and $\Gexp$.}
Given a graph $G$, take a set $\DenseSpots$ of edge-disjoint
$(\gamma k, \gamma)$-dense spots such that the resulting
graph $\GD\subset G$ (which contains those vertices and edges that are
contained in~$\bigcup\DenseSpots$) has a maximal number  of edges. 

Then by
Lemma~\ref{lem:subgraphswithlargeminimumdegree} there exists a graph
$\Gexp\subset G- \GD$ with $\mindeg(\Gexp)> \rho k$ and such that
\begin{equation}\label{eq:almostalldashed}
|E(G)\setminus (E(\Gexp)\cup  E(\GD))|\le\rho kn\;.
\end{equation} 
This choice of $\DenseSpots$ and $\Gexp$ already satisfies Properties~\ref{defBC:densepairs} and~\ref{defBC:nowheredense}  of Definition~\ref{bclassdef}.


\paragraph{Preparing for an application of the Regularity Lemma.}
Let  
$$
 \mathcal X:= \bigboxplus_D \{ U, W, V(G)\setminus V(D)\}\;,
$$
where the partition refinement ranges over all $D=(U,W;F)\in\DenseSpots$.
Let $\mathcal B:=\{X\in\mathcal X\::\: X\subset V(\GD)\}$, $\tilde{\mathcal B}:=\{B\in\mathcal{B}\::\:|B|>2\tilde\nu k \}$, and 
$\tilde{\mathcal C}:=\mathcal B\setminus\tilde{\mathcal B}$. Furthermore let $\tilde B:=\bigcup_{B\in\tilde{\mathcal B}}B$ and  $\smallatoms:=\bigcup_{C\in\tilde{\mathcal C}}C$.  Let $V_{\leadsto \smallatoms}:=\{v\in V(G)\::\:\deg(v,\smallatoms)>b\}$.

Now,  partition each set $B\in\tilde{ \mathcal B}$  into
$c_B:=\lceil|B|/2\tilde\nu k\rceil$ sets $B_1,\ldots,B_{c_B}$ of
cardinalities differing by at most one, and let $\mathcal B'$ be the set
containing all the sets  $B_i$ (for all $B\in
\tilde{\mathcal B}$).
Then for each $B\in\mathcal B'$ we have that
\begin{equation}\label{eq:ClustersOfRightSize}
\tilde\nu k\le |B| \le 2 \tilde\nu k\le \epsilon k\;.
\end{equation}

Construct a graph $H$ on 
$\mathcal B'$ by making two vertices $A_1,A_2\in \mathcal
B'$ adjacent in $H$ if 
\begin{enumerate}[(A)]
\item\label{it:VV} there is a dense spot $D=(U,W; F)\in \DenseSpots$ such that  $A_1\subset U$ and $A_2\subset W$, and
\item\label{item:density} $\density_{G}(A_1,A_2)\ge \gamma$.
\end{enumerate}
Note that it follows from the way $\DenseSpots$ was chosen that if $A_1A_2\in E(H)$ then $G[A_1,A_2]=\GD[A_1,A_2]$. But on the other hand note that we do not necessarily have $G[A_1,A_2]=D[A_1,A_2]$ for the dense spot $D$ appearing in~\eqref{it:VV}; just because there may be several such dense spots $D$.

By assumption of Lemma~\ref{lem:decompositionIntoBlackandExpanding}, $\maxdeg(G)\le\Omega k$. So, for each
$B\in\mathcal B'$ we have
$e_G(B,\tilde B\setminus B)\le
\Omega k|B|$. On the other hand,
\eqref{eq:ClustersOfRightSize} and~\eqref{item:density}
imply that $\gamma\tilde\nu k|B|\deg_H(B)\le
e_G(B,\tilde B\setminus B)$.
We conclude that 
\begin{equation}\label{maxundmoritz}
\maxdeg(H)\le
\frac{\Omega}{\gamma\tilde\nu }=m_\PARAMETERPASSING{L}{lem:sparseRL}\;.
\end{equation} 

\paragraph{Regularising the dense spots in $\tilde B$.} 
We use Lemma~\ref{lem:sparseRL} with parameters $m_\PARAMETERPASSING{L}{lem:sparseRL},z_\PARAMETERPASSING{L}{lem:sparseRL}$ and $\epsilon_\PARAMETERPASSING{L}{lem:sparseRL}$ as defined by~\eqref{eq:in213}  on the graphs
$H_\PARAMETERPASSING{L}{lem:sparseRL}:=\GD$ and
$F_\PARAMETERPASSING{L}{lem:sparseRL}:=H$, together with the ensemble
$\mathcal{B}'$ in the role of the sets $W_i$, and partition of $V(\GD)$ induced by
 $$\mathcal Z_\PARAMETERPASSING{L}{lem:sparseRL}:=\mathcal V\boxplus \big\{V(\Gexp), V(G)\sm V(\Gexp)\big\}\boxplus\big\{V_{\leadsto \smallatoms},V(G)\setminus V_{\leadsto \smallatoms}\big\}\;.$$
Observe that $\mathcal B'$
is an $(\tilde\nu k)$-ensemble satisfying condition~\eqref{lem:sparseRL(item)samesize} of Lemma~\ref{lem:sparseRL},
by~\eqref{eq:ClustersOfRightSize}, by the choice of $k_0$, and by~\eqref{maxundmoritz}.
We thus  obtain integers $\{p_A\}_{A\in \mathcal{B}'}$ and a family
$\clusters=\{W^{(1)}_A,\ldots,W^{(p_A)}_A\}_{A\in\mathcal B'}$ and a set
$W_0:=\bigcup_{A\in\mathcal B'} W_A^{(0)}$ such that in particular we have the
following.
\begin{enumerate}[(I)]
\item We have $\epsilon^{-1}\le p_A\le q_\mathrm{MAXCL}$ for
all $A\in \mathcal{B}'$.\label{aaaaa}
\item We have $|W_{A}^{(a)}|=|W_{B}^{(b)}|$
for any $A,B\in \mathcal{B}'$ and for any $a\in [p_{A}]$, $b\in [p_B]$.\label{bbbbb}
\item \label{ccH}
For any $A\in\mathcal B'$ and any $a\in [p_A]$, there is $V\in\V$ such that $W_A^{(a)}\subset V$. We either have that $W_A^{(a)}\subset V(\Gexp)$, or $W_A^{(a)}\cap V(\Gexp)=\emptyset$ and $W_A^{(a)}\subset V_{\leadsto \smallatoms}$, or $W_A^{(a)}\cap V_{\leadsto \smallatoms}=\emptyset$.
\item \label{eq:boundTotalIrregularity}
$\sum_{e\in E(H)}|\mathrm{irreg}(e)|\le \epsilon\sum_{AB\in
E(H)}|A||B|$, where $\mathrm{irreg}(AB)$ is the set of all
edges of the graph $G$ contained in an $\epsilon$-irregular
pair $(W^{(a)}_A,W^{(b)}_B)$, with $a\in[p_A]$, $b\in[p_B]$, $AB\in E(H)$.
\end{enumerate}

Let $\Gblack$ be obtained from $\GD$ by erasing all vertices in $W_0$, and all edges that lie in pairs $(W^{(a)}_A,W^{(b)}_B)$ which are
irregular or of density at most $\gamma^2$. Then
Properties~\ref{defBC:clusters},~\ref{defBC:RL},~\ref{defBC:dveapul}
and~\ref{defBC:prepartition} of Definition~\ref{bclassdef} are satisfied.
Further, Lemma~\ref{notmuchlost} implies~\eqref{eq:sEr}.

Note that Properties~\eqref{aaaaa}, ~\eqref{bbbbb} and~\eqref{eq:ClustersOfRightSize}
imply that for all $A\in
\mathcal{B}'$ and for any $a\in [p_{A}]$ we have that
\[
\epsilon k\ge |A|\geq |W_{A}^{(a)}|\geq \frac{\tilde\nu k}{q_\mathrm{MAXCL}} \geq \frac{\tilde\nu k}{q}  =\nu k.
\]
Thus also Property~\ref{Csize} of Definition~\ref{bclassdef} holds.

 Furthermore,
by~\eqref{eq:almostalldashed} and~\eqref{eq:sEr},  the number of edges that are not captured
by $( \clusters,\DenseSpots, \Gblack, \Gexp,\smallatoms )$ is at most $(\frac{4\epsilon}{\gamma}+\epsilon\Omega+\gamma+\rho)kn$.

So, it only remains to see Property~\ref{defBC:avoiding} of Definition~\ref{bclassdef}.

\paragraph{The avoiding property of $\smallatoms$.}  
 In order to see
Property~\ref{defBC:avoiding} of Definition~\ref{bclassdef}, we have to show that $\smallatoms$ is $(\Lambda,\epsilon,\gamma,k)$-avoiding with respect to $\DenseSpots$. For this, let $\bar U\subset V(G)$ be such that  $|\bar U|\le \Lambda k$. Let $X$ be the set of those 
vertices $v\in\smallatoms$ that are not contained in any
dense spot $D\in\mathcal D$ for which $|\bar U\cap
V(D)|\le\gamma^2k$. Our aim is to see that $|X|\le \epsilon k$.

Let
$\mathcal{D}_X\subset\DenseSpots$ be the set of all dense
spots $D$ with $X\cap V(D)\neq\emptyset$. 
Setting $\mathcal{A}:=\{A\in \mathcal{\tilde C} :A\cap X\neq\emptyset \}$,
the definition of $\smallatoms$ trivially implies that
$\frac{|X|}{2\tilde\nu k}\le|\mathcal{A}|$. Now, by
the definition of $\mathcal B$, we know that there are at most
$3^{|\mathcal{D}_X|}$ sets $A\in\mathcal A$.
Indeed, for each $D=(U,W;F)\in\mathcal{D}_X$,
 either  $A$ is a subset of $U$, or of $W$, or of
$V(G)\setminus V(D)$. Thus,
\begin{equation}\label{eq_moleculesdetermineatom}
3^{|\mathcal{D}_X|}\geq |\mathcal A|\geq 
\frac{|X|}{\tilde\nu k}\;.
\end{equation}

By Fact~\ref{fact:boundedlymanyspots}, each vertex of $V(G)$ lies in at most $\Omega/\gamma$
of the $(\gamma k,\gamma)$-dense spots from $\DenseSpots$.
Hence
 $$\frac{\Omega}{\gamma}|\bar U|\ge\sum_{D\in\mathcal{D}_X}|V(D)\cap\bar U|\ge |\mathcal{D}_X|\gamma^2 k\overset{\eqref{eq_moleculesdetermineatom}}{\ge}\log_3\left(\frac{|X |}{\tilde\nu k}\right)\gamma^2 k\;,$$ where the second inequality holds by the definition of~$X$. Thus
$$|X|\leq 3^{\frac{\Omega\Lambda}{\gamma^3}}\cdot \tilde\nu k=\epsilon k\;,$$
 as desired.
 This finishes the proof of
Lemma~\ref{lem:decompositionIntoBlackandExpanding}.

\begin{remark}\label{rem:ambiguity}
The bounded decomposition given by
Lemma~\ref{lem:decompositionIntoBlackandExpanding} is not uniquely determined,
and can actually vary vastly. This is caused by the arbitrariness in the choice of
 the dense spots from which we obtain the cluster graph
$\Gblack$.

This situation is an acute contrast with the situation of decomposition of
dense graphs (which is given by the Szemer\'edi Regularity Lemma).  Indeed, in
the dense setting the structure of the cluster graph is essentially
unique, cf.~\cite{AlShSt:DistinctyRegularityPartitions}.\footnote{The setting needs to be somewhat strengthened as otherwise there are counterexamples to uniqueness; compare Theorem~1 and Theorem~2
in~\cite{AlShSt:DistinctyRegularityPartitions}. However morally this is true because of the uniqueness of graph limits~\cite{borgs-2008}.}

Of course, the ambiguity of the bounded
decomposition of $G$ propagates to Lemma~\ref{lem:LKSsparseClass}. We will have
to deal with implications of this ambiguity in Section~\ref{sec:LKSStructure}.
\end{remark}

\subsection{Lemma~\ref{lem:decompositionIntoBlackandExpanding} algorithmically}\label{sssec:DecomposeAlgorithmically}
Let us look back at the proof of
Lemma~\ref{lem:decompositionIntoBlackandExpanding} and see that we can get a
bounded decomposition of any bounded-degree graph
algorithmically in quasipolynomial time (in the order of the graph). Note that
this in turn provides efficiently a sparse classification of any graph since the
initial step of splitting the graph into huge degree vertices and bounded degree
(cf.~Lemma~\ref{prop:gap}) can be done in polynomial time.

There are only two steps in the proof of
Lemma~\ref{lem:decompositionIntoBlackandExpanding} which need to
be done algorithmically: the extraction of dense spots, and the simultaneous regularization of some dense pairs. 

It will be more convenient to work with a relaxation of the notion of dense spots. We call a graph $H$
\emph{$(d,\ell)$-thick}\index{general}{thick graph} if $v(H)\ge \ell$, and
$e(H)\ge d v(H)^2$. Thick graphs are a relaxation of dense spots, where the
minimum degree condition is replaced by imposing a lower bound on the order, and
the bipartiteness requirement is dropped. It can be verified that in our
proof it is not important  that the dense spots $\DenseSpots$ and the nowhere-dense graph
$\Gexp$ are parametrized by the same constants, i.e., the entire proof would go through even if the spots in $\DenseSpots$ were $(\gamma k,\gamma)$-dense, and $\Gexp$ was
$(\beta k,\beta)$-nowhere-dense for some $\beta\gg \gamma$. Each $(\beta
k,\beta)$-thick graph gives (algorithmically) a $(\beta k/4,\beta/4)$-dense
spot, and thus it is enough to extract thick graphs.

For the extraction of thick graphs we would need to efficiently answer the
following: Given a number $\beta>0$ find a number $\gamma>0$ such that for 
an input number $h$ and an $N$-vertex graph we can localize in $G$ a $(\gamma,h)$-thick graph if it contains a $(\beta,h)$-thick graph, or output NO otherwise.\footnote{We could additionally assume that $\maxdeg(G)\le O(h)$ due to the previous step of removing the set $\HugeVertices$ of huge degree vertices.}
Employing techniques from
a deep paper of Arora, Frieze and Kaplan~\cite{ArFrKa02}, one can solve this problem in quasipolynomial time $O(N^{c\cdot\log N})$. This was communicated to us by Maxim Sviridenko. On the negative side, a truly polynomial algorithm seems to be out of reach as Alon, Arora, Manokaran, Moshovitz, and Weinstein~\cite{Alonetal:Inapproximability} reduced the problem to the notorious hidden clique problem whose tractability has been open for twenty years.
\begin{theorem}[Alon et al.~\cite{Alonetal:Inapproximability}]\label{thm:hardness}
If there is no polynomial time algorithm for solving the clique problem for a planted clique of size $n^{1/3}$ then for any $\epsilon\in(0,1)$ and $\delta>0$ there is no polynomial time algorithm that distinguishes between a graph $G$ on $N$ vertices containing a clique of size $\kappa=N^{\epsilon}$ and a graph $G'$ on $N$ vertices in which the densest subgraph on $\kappa$ vertices has density at most $\delta$.\footnote{The result as stated in~\cite{Alonetal:Inapproximability} covers only the range $\epsilon\in(\frac13,1)$. However there is a simple reduction by taking many disjoint copies of the general range to the restricted one.}
\end{theorem}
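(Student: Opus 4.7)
The plan is to design a randomized reduction from the planted-clique problem at parameter $n^{1/3}$ to the promise problem of distinguishing an $N$-vertex graph containing a clique of size $\kappa=N^{\epsilon}$ from one whose densest $\kappa$-subgraph has density at most $\delta$. The input is a graph $H$ on $n$ vertices which is either $G(n,1/2)$ with a planted clique $K$ of size $n^{1/3}$ (YES instance) or an unplanted $G(n,1/2)$ (NO instance); we want to produce $G$ on $N$ vertices so that any efficient distinguisher for $G$ yields an efficient distinguisher for $H$, contradicting the assumed hardness.

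First I would blow each vertex of $H$ up into a cloud of $m$ vertices. Between two clouds $C_u,C_v$ I would insert edges whenever $uv\in E(H)$ and then independently keep each such edge with probability $q$, adding no edges inside a cloud. Set $N=nm$, and tune $m$ and $q$ so that $\kappa=N^{\epsilon}$ holds and the expected density within any $\kappa$-set sits well below $\delta$. In the YES case, the planted clique $K\subset V(H)$ blows up to a $q$-sparsified complete multipartite graph on $|K|\cdot m$ vertices; by a standard greedy/probabilistic argument (or by iterating the construction via a lexicographic or tensor product to amplify the clique from $n^{1/3}$ to $N^{\epsilon}$), this subgraph contains a true clique of size $\kappa$ with high probability, giving the YES side of the promise.

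In the NO case, one argues via a union bound over all $\binom{N}{\kappa}$ candidate subsets. For a fixed $\kappa$-set $S$, its edge density is a sum of independent Bernoullis determined by the sparsification step, conditional on the host $H$; the expectation is essentially $q$ times the density of $H$ on the clouds that $S$ meets, which is close to $q/2$ since $H\sim G(n,1/2)$. Choosing $q$ small enough forces this expectation below $\delta/2$, and a Chernoff bound gives an exponential-in-$\kappa$ concentration tail. Since $\kappa=N^{\epsilon}$, this exponential tail beats $\binom{N}{\kappa}\le N^{\kappa}$ once $m$ is polynomially large, so a.a.s.\ every $\kappa$-subgraph has density at most $\delta$.

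The main obstacle will be tuning parameters so everything lines up for arbitrary $\epsilon\in(0,1)$ and arbitrary $\delta>0$ simultaneously. For $\epsilon$ close to $1$ the target clique occupies nearly all of $G$, which forces $m$ to be polynomially large and makes it delicate to drive the NO-instance density below $\delta$ while still extracting a $\kappa$-clique in the YES instance. The natural remedy is a recursive or product-style amplification (the footnote of Alon et al.\ indicates precisely that the range $\epsilon\in(\tfrac13,1)$ reduces to arbitrary $\epsilon\in(0,1)$ by taking disjoint copies), coupled with a careful union-bound calibration that survives the amplification and preserves the completeness/soundness gap.
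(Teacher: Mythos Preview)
The paper does not prove this theorem: it is quoted from Alon et al.\ and used as a black box. The only argument the paper supplies is the footnote, which extends the cited range $\epsilon\in(\tfrac13,1)$ to all $\epsilon\in(0,1)$ by a disjoint-copies trick. Concretely, if one had a polynomial-time distinguisher for some small $\epsilon_0\le\tfrac13$, then given an instance $G$ on $N$ vertices for some $\epsilon_1\in(\tfrac13,1)$ one takes $m=N^{\epsilon_1/\epsilon_0-1}$ disjoint copies of $G$; the resulting graph $G'$ has $N'=mN$ vertices and $N^{\epsilon_1}=(N')^{\epsilon_0}$, a clique in $G$ survives in one copy, and since copies are edge-disjoint the densest $\kappa$-subgraph of $G'$ cannot be denser than the densest such subgraph of $G$. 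This contradicts the hardness already established for $\epsilon_1$.

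Your proposal instead attempts to reprove the Alon et al.\ result itself via blowup and random sparsification. That is a reasonable strategy and is in the spirit of the actual Alon et al.\ construction, but it is not what the present paper does, and your sketch leaves real work undone: extracting a genuine clique of size $\kappa$ from a $q$-sparsified complete multipartite graph is the crux of the reduction and needs a careful argument (it is not a one-line greedy step), and your parameter-tuning paragraph is vague precisely where the difficulty lies. Also note that you have the footnote's direction slightly garbled: the disjoint-copies argument reduces the \emph{small}-$\epsilon$ case to the already-proved range $\epsilon\in(\tfrac13,1)$, not the other way round.
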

Of course, Theorem~\ref{thm:hardness} leaves some hope for a polynomial time algorithm when $h=N^{o(1)}$ (which corresponds to $k_\PARAMETERPASSING{L}{lem:decompositionIntoBlackandExpanding}=n_\PARAMETERPASSING{L}{lem:decompositionIntoBlackandExpanding}^{o(1)}$).

\medskip

The regularity lemma can be made algorithmic~\cite{Alon94thealgorithmic}.
The algorithm from~\cite{Alon94thealgorithmic} is based on index pumping-up, and
thus applies even to the locally dense setting of Lemma~\ref{lem:sparseRL}.

\medskip

It will turn out that the extraction of dense spots is the only obstruction to a polynomial time algorithm for Theorem~\ref{thm:main}. In
Section~\ref{ssec:algorithmic} we sketch a truly polynomial time algorithm which avoids this step. It seems that the method sketched there is generally applicable for problems which employ sparse classifications.

\section{Augmenting a matching}\label{sec:augmenting}
In previous papers~\cite{AKS95,Z07+,PS07+,Cooley08,HlaPig:LKSdenseExact}
concerning the LKS~Conjecture in the dense setting the crucial turn was to find
a matching in the cluster graph of the host graph possessing certain properties.
We will prove a similar ``structural result'' in Section~\ref{sec:LKSStructure}.
In the present section, we prove the main tool for
Section~\ref{sec:LKSStructure}, namely Lemma~\ref{lem:Separate}. All  preceding
statements are only preparatory. The only exception is (the easy) Lemma~\ref{lem:edgesEmanatingFromDensePairsIII} which is recycled later, in Section~\ref{sec:configurations}.

\subsection{Dense spots and semiregular matchings}
We need two definitions  concerning graphs covered by dense spots.

\begin{definition}[\bf $(m,\gamma)$-dense
cover]\index{general}{dense cover}
 A \emph{$(m,\gamma)$-dense cover} of a
graph $G$ is a family $\DenseSpots$ of edge-disjoint
$(m,\gamma)$-dense
spots such that $E(G)=\bigcup_{D\in\DenseSpots}E(D)$.
\end{definition}

\begin{definition}[$\mathcal G(n,k,\Omega,\rho,\nu, \tau)$ and $\bar{\mathcal G}(n,k,\Omega,\rho,\nu)$]\label{tupelclass}
We define $\mathcal G(n,k,\Omega,\rho,\nu, \tau)$\index{mathsymbols}{*G@$\mathcal G(n,k,\Omega,\rho,\nu, \tau)$} to be the class of all tuples $(G,\DenseSpots,H,\mathcal A)$ with the following properties:
\begin{enumerate}[(i)]
\item  $G$ is a graph of order $n$ with $\maxdeg(G)\le \Omega k$,\label{maxroach}
\item $H$ is a bipartite subgraph of
$G$ with colour classes $A_H$ and $B_H$ and with $e(H)\ge \tau kn$,\label{duke}
\item  $\DenseSpots$ is a $(\rho k, \rho)$-dense cover of $G$,
\item $\mathcal A$ is a $(\nu k)$-ensemble in $G$,
and $A_H\subseteq \bigcup \mathcal A$,\label{bird}
\item  $A\cap U\in\{\emptyset,A\}$ for each $A\in\mathcal A$ and for each $D=(U,W;F)\in\DenseSpots$.\label{mingus}
\end{enumerate}
Those $G$, $\mathcal D$ and $\mathcal A$ for which all conditions but~\eqref{duke} and the last part of~\eqref{bird} hold will make up the triples $(G,\DenseSpots,\mathcal A)$ of the class  $\bar{\mathcal G}(n,k,\Omega,\rho,\nu)$\index{mathsymbols}{*G@$\bar{\mathcal G}(n,k,\Omega,\rho,\nu)$}.
\end{definition}

We now prove our first auxiliary lemma on our way towards
Lemma~\ref{lem:Separate}.

\begin{lemma}\label{lem:edgesEmanatingFromDensePairsII}
For every $\Omega\in\mathbb N$ and $\epsilon,\rho, \tau>0$ 
there is a number $\alpha>0$ such that for every $\nu\in(0,1)$
there exists a number $k_0\in\mathbb N$ such
that for each $k>k_0$ the following holds. 

For every $(G,\DenseSpots,H,\mathcal A)\in\mathcal
G(n,k,\Omega,\rho,\nu,\tau)$ there are
$(U,W;F)\in\DenseSpots$, $A\in\mathcal A$
and $X,Y\subseteq V(G)$ such that 
\begin{enumerate}[1)]
  \item $|X|=|Y|>\alpha\nu k$,
  \item  $X\subset A\cap U\cap A_H$ and $Y\subset W\cap B_H$, where $A_H$ and
  $B_H$ are the colour classes of $H$, and
  \item $(X,Y)$ is an $\epsilon$-regular pair in $G$ of density
  $\density(X,Y)\ge \frac{\tau\rho}{4\Omega}$.
\end{enumerate}
\end{lemma}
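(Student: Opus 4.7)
The plan is to exploit the tuple structure to distribute the at least $\tau k n$ edges of $H$ among a controllable collection of triples $(A, U, W)$ with $A \in \mathcal A$, $(U, W; F) \in \DenseSpots$ and $A \subseteq U$, then to extract a single such triple on which the pair $(A \cap A_H, W \cap B_H)$ already carries a constant density of $H$-edges, and finally to pull out a large $\epsilon$-regular sub-pair via a routine application of Lemma~\ref{lem:RL}.

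First I would assign to each $H$-edge $xy$ (with $x \in A_H$, $y \in B_H$) the unique triple $T = (A, U, W)$ determined as follows: the dense spot $D = (U, W; F)$ containing $xy$ is unique by edge-disjointness of $\DenseSpots$; the cluster $A \in \mathcal A$ with $x \in A$ is unique because $\mathcal A$ is a family of pairwise disjoint sets containing $A_H$; and property~\ref{mingus} of Definition~\ref{tupelclass} lets us orient $D$ so that $A \subseteq U$, forcing $y \in W$. This yields
$$\sum_{T}e_H(A \cap A_H,\,W \cap B_H) \;=\; e(H) \;\ge\; \tau k n.$$
Next I bound the total ``area'' $\sum_T |A|\cdot|W|$ from above: for a fixed $D = (U,W;F)$, density $\ge \rho$ gives $|U||W| \le e(D)/\rho$, and disjointness of $\mathcal A$ gives $\sum_{A \in \mathcal A,\,A \subseteq U}|A| \le |U|$, so $\sum_{A \subseteq U}|A|\cdot|W| \le e(D)/\rho$. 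Summing over $D$, using edge-disjointness of $\DenseSpots$ and $e(G) \le \Omega k n/2$, yields $\sum_T |A|\cdot|W| \le \Omega k n/(2\rho)$. An averaging argument then produces a triple $T^* = (A^*, U^*, W^*)$ with
$$e_H(A^* \cap A_H,\,W^* \cap B_H) \;\ge\; \tfrac{2\tau\rho}{\Omega}\,|A^*|\cdot|W^*|.$$

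Setting $X_0 := A^* \cap A_H$ and $Y_0 := W^* \cap B_H$ and using $|X_0|\le|A^*|$, $|Y_0|\le|W^*|$, the pair $(X_0,Y_0)$ has $G$-density at least $2\tau\rho/\Omega$, while Fact~\ref{fact:sizedensespot} and $\maxdeg(G)\le\Omega k$ imply $|X_0|,|Y_0|\le\Omega k/\rho$. Combining $|A^*|\ge\nu k$ and $|W^*|\ge\rho k$ with $e_H(X_0,Y_0)\le\Omega k \cdot |X_0|$ gives the lower bounds $|X_0|,|Y_0|\ge 2\tau\rho^2\nu k/\Omega^2$.

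To finish, I apply the Regularity Lemma~\ref{lem:RL} to $G[X_0, Y_0]$ with pre\-par\-ti\-tion $\{X_0,Y_0\}$ and parameter $\epsilon' := \min\{\epsilon,\tau\rho/(2\Omega)\}$, obtaining an equipartition with at most $M = M(\epsilon')$ classes on each side. Fact~\ref{fact:CanADensePairConsistOnlyOfSparseSubpairs?} (applied with $\alpha := 2\tau\rho/\Omega$, $\beta := \tau\rho/\Omega$) shows that at most $e_G(X_0,Y_0)/2$ edges lie in pairs of density $\le \tau\rho/\Omega$, and the standard bound on irregular pairs accounts for at most $\epsilon' |X_0||Y_0| \le e_G(X_0,Y_0)/4$ further edges. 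Hence some $\epsilon'$-regular pair between a class of $X_0$ and a class of $Y_0$ survives with density at least $\tau\rho/(2\Omega) \ge \tau\rho/(4\Omega)$ and with equal sides of size at least $\min(|X_0|,|Y_0|)/(2M) \ge \alpha\nu k$ for $\alpha := \tau\rho^2/(\Omega^2 M)$, which depends only on $\Omega,\epsilon,\rho,\tau$ as required. The only point requiring care is calibrating $\epsilon'$ so that losses from sparse and irregular pairs cannot eat the density surplus; since the initial density $2\tau\rho/\Omega$ is a fixed constant independent of $\nu$, this is routine.
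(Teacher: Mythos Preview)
Your argument is correct and reaches the same endgame as the paper, but your averaging step is organized differently and, in my view, more cleanly. The paper proceeds in two stages: first it averages over dense spots to find a single $D=(U,W;F)$ carrying a $\tfrac{2\tau}{\Omega}$-fraction of its edges inside $H$, and then it averages over $\mathcal A$ (via the ``bad set'' $\mathcal A'$) to locate $A\subseteq U$ with $e_G(A\cap A_H,W\cap B_H)\ge \tfrac{\tau}{\Omega}|F|\cdot\tfrac{|A|}{|U|}$. You collapse this into a single weighted average over triples $(A,U,W)$, using the density bound $|U||W|\le e(D)/\rho$ to control the total weight. Both routes land on a pair $(X_0,Y_0)$ of density $\Theta(\tau\rho/\Omega)$ and then invoke Lemma~\ref{lem:RL} identically. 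The paper's two-step route happens to yield a slightly sharper lower bound on $|X'|$ (by a factor of $\rho$), but this only affects the constant in $\alpha$ and is immaterial.

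Two small points of care. First, your displayed equality $\sum_T e_H(A\cap A_H,W\cap B_H)=e(H)$ is really an inequality $\ge$: an edge $xy$ with $x\in A$, $y\in W$ contributes to this term even when $xy$ lies in a different spot than $D$, so there can be overcounting; and your area bound should pick up a factor~$2$ since each spot admits two orientations. Neither affects the averaging conclusion. Second, the assertion that edges in irregular pairs number at most $\epsilon'|X_0||Y_0|$ is not what Lemma~\ref{lem:RL} gives directly when $|X_0|$ and $|Y_0|$ are unbalanced (the raw bound is $\epsilon'(|X_0|+|Y_0|)^2$, and the ratio $|Y_0|/|X_0|$ may depend on~$\nu$). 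The paper is equally casual here; the standard fix is to first pass to a slab $Y_0'\subseteq Y_0$ with $|Y_0'|=|X_0|$ on which the density survives by pigeonhole, and only then regularize.
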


\begin{proof}
Let $\Omega$, $\epsilon$, $\rho$ and $\tau$ be given. Applying
Lemma~\ref{lem:RL} to
$\epsilon_\PARAMETERPASSING{L}{lem:RL}:=\min\{\epsilon
,\frac{\rho^2}{8\Omega}\}$
and $\ell_\PARAMETERPASSING{L}{lem:RL}:=2$, we obtain numbers $n_0$ and $M$.
We set 
\begin{equation}\label{allllpha}
\alpha:=\frac{\tau\rho}{\Omega^2M},
\end{equation}
 and given $\nu\in(0,1)$, we set
$$k_0:=\frac{2n_0}{\alpha\nu M}.$$
Now suppose we are given  $k>k_0$ and $(G,\DenseSpots,H,\mathcal A)\in\mathcal G(n,k,\Omega,\rho,\nu,\tau)$.

Property~\eqref{maxroach} of Definition~\ref{tupelclass} gives that $e(G)\le \Omega
k n/2$, and Property~\eqref{duke} says  that $e(H)\ge
\tau kn$. So $e(H)/e(G)\geq 2\tau/ \Omega$. Averaging, we find a dense spot $D=(U,W;F)\in\DenseSpots$ such that
\begin{align}\label{eq:proporcialniDensity}
e_{D}(A_H,B_H)= |F\cap E(H)|\geq \frac{e(H)}{e(G)}|F|\ge
\frac{2\tau |F|}{\Omega}\;.
 \end{align}

Without loss of generality, we assume that
\begin{equation}\label{eq:WLOGUW}
e_{D}(U\cap A_H,W\cap B_H)\geq \frac12\cdot e_{D}(A_H,B_H) \ge e_{D}(U\cap B_H,W\cap A_H)\;,
\end{equation}
as otherwise one can just interchange the roles of $U$ and $W$.
Then,
\begin{align}
e_G(U\cap A_H,W\cap B_H)&\geBy{\eqref{eq:WLOGUW}} \frac12\cdot e_{D}(A_H,B_H) \geBy{\eqref{eq:proporcialniDensity}}\frac{\tau}{\Omega}\cdot |F|.\label{colchon}
\end{align}

Let $\mathcal A'\subset\mathcal A$ denote the set of
those $A\in\mathcal A$ with
$0< e_G(A\cap U\cap
A_H,W\cap B_H)< \frac{\tau}{\Omega}\cdot |F|\cdot \frac{|A|}{|U|}$. 
Note that for each $A\in\A'$ we have $A\subset U$ by Definition~\ref{tupelclass}~\eqref{mingus}. Therefore,
\begin{equation*}
e_G\left(\bigcup \mathcal A'\cap U\cap A_H,W\cap
B_H\right)
< \ \frac \tau\Omega \cdot|F|\cdot \frac{|\A'|}{|U|}
\leq \ \frac \tau\Omega \cdot|F|
\overset{\eqref{colchon}}\leq \ e_G(U\cap A_H,W\cap B_H)\;.
\end{equation*}

As $\mathcal A$ covers $A_H$, $G$ has an edge $xy$ with $x\in U\cap A_H\cap A$ for some $A\in\mathcal A\setminus \mathcal A'$ and $y\in W\cap
B_H$.  Set $X':=A\cap U\cap A_H=A\cap A_H$ and $Y':=W\cap B_H$. Then directly
from the definition of $\mathcal A'$ and since $D$ is a $(\rho k,\rho)$-dense spot, 
we obtain that
\begin{equation}
\label{eq:denX'Y'}
\density_G(X',Y')= \frac{e_G(X',Y')}{|X'||Y'|}  \ge
\frac{\frac{\tau}{\Omega}\cdot |F|\cdot \frac{|A|}{|U|}}{|A||W|}  >
\ \frac{\tau\rho}{\Omega}.
\end{equation}

Also,
since $(U,W;F)\in\DenseSpots$, we have
\begin{equation}\label{eq:denseIndMany}
|F|\ge \rho k |U|\;.
\end{equation}
This enables us to bound the size of $X'$ as follows.
\begin{align}
\begin{split}
\label{eq:sizPreX'}
|X'| &\geq
\frac{e_G(X',Y')}{\maxdeg{(G)}}
 \\[6pt]
\JUSTIFY{as $A\not\in\mathcal A'$ and by D\ref{tupelclass}\eqref{maxroach}}&  \ge
 \frac{\frac{\tau}{\Omega}\cdot \frac{|F|}{|U|}\cdot |A|}{\Omega k} \\[6pt]
\JUSTIFY{by \eqref{eq:denseIndMany}}& 
\ge \frac{\tau\cdot \rho k\cdot |A|}{\Omega^2 k} \\
&\ge \frac{\tau\rho \nu k}{\Omega^2 }\\
&\eqByRef{allllpha} \alpha \nu kM\;.
\end{split}
\end{align}

In the same way we see  that
\begin{align}
\label{eq:sizPreY'}
|Y'|
 \geq \ \alpha\nu kM\;.
\end{align}

Applying Lemma~\ref{lem:RL} to $G[X',Y']$ with prepartition
$\{X',Y'\}$ we obtain a collection of sets $\mathcal
C=\{C_i\}_{i=0}^p$, with $p<M$. By~\eqref{eq:sizPreX'},
and~\eqref{eq:sizPreY'}, we have that $|C_i|\ge \alpha\nu k$
for every $i\in[p]$. It is easy to
deduce from~\eqref{eq:denX'Y'} that
there is at least one $\epsilon_\PARAMETERPASSING{L}{lem:RL}$-regular (and thus $\epsilon$-regular) pair $(X,Y)$, $X,Y\in\mathcal
C\setminus\{C_0\}$, $X\subset X'$, $Y\subset Y'$ with
$\density(X,Y)\ge \frac{\tau\rho}{4\Omega}$. Indeed, it suffices to count the number of edges incident with $C_0$, lying in $\epsilon_{\mathrm L\ref{lem:RL}}$-irregular pairs or
belonging to too sparse pairs. These are strictly less than
$$(\epsilon_\PARAMETERPASSING{L}{lem:RL} +
\epsilon_\PARAMETERPASSING{L}{lem:RL} + \frac{\rho^2}{4\Omega}
)|X||Y| \leq\frac{\rho^2}{2\Omega} |X||Y|
\overset{\eqref{eq:denX'Y'}}\leq e(X',Y')$$ many, and thus not
all edges between $X'$ and $Y'$. This finishes the
proof of Lemma~\ref{lem:edgesEmanatingFromDensePairsII}.
\end{proof}

Instead of just one pair $(X,Y)$, as it is given by 
Lemma~\ref{lem:edgesEmanatingFromDensePairsII}, we shall later need several disjoint pairs. This motivates the following definition.

\begin{definition}[\bf $(\epsilon,d,\ell)$-semiregular matching]\label{def:semiregular} A collection
$\mathcal N$ of pairs $(A,B)$ with $A,B\subset V(H)$ is called an
\index{general}{semiregular matching}\emph{$(\epsilon,d,\ell)$-semiregular matching} of a
graph $H$ if \begin{enumerate}[(i)] \item $|A|=|B|\ge \ell$ for each
$(A,B)\in\mathcal N$, \item $(A,B)$ induces in $H$ an $\epsilon$-regular pair of
density at least $d$, for each $(A,B)\in\mathcal N$, and \item all involved sets
$A$ and $B$ are  pairwise disjoint.
\end{enumerate}
Sometimes, when the parameters do not matter (as for instance in Definition~\ref{altPath} below) we write lazily \emph{semiregular matching}.
\end{definition}

For a semiregular matching $\mathcal N$, we shall write 
\index{mathsymbols}{*V1@$\V_1(\M)$, $\V_2(\M)$, $\V(\M)$}$\V_1(\mathcal
N):=\{A\::\:(A,B)\in\mathcal N\}$, $\V_2(\mathcal N):=\{B\::\:(A,B)\in\mathcal N\}$
and $\V(\mathcal N):=\mathcal V_1(\mathcal N)\cup \mathcal V_2(\mathcal N)$. 
Furthermore, we set 
\index{mathsymbols}{*V1@$V_1(\mathcal M)$,
$V_2(\mathcal M)$, $V(\mathcal M)$} 
$V_1(\mathcal N):=\bigcup \mathcal V_1(\mathcal N)$,
$V_2(\mathcal N):=\bigcup \mathcal V_2(\mathcal N)$ and $V(\mathcal N):=V_1(\mathcal
N)\cup V_2(\mathcal N)= \bigcup \mathcal V(\mathcal N)$. 
 As these definitions suggest, the orientations of
the pairs $(A,B)\in\mathcal N$ are important. The sets $A$ and $B$ are called \index{mathsymbols}{*VERTEX@$\M$-vertex}\index{general}{vertex@$\M$-vertex}\emph{$\mathcal N$-vertices} and the pair $(A,B)$ is a \index{mathsymbols}{*EDGE@$\M$-edge}\index{general}{edge@$\M$-edge}\emph{$\mathcal N$-edge}.

We say that a semiregular matching $\mathcal N$
\index{general}{absorb}\emph{absorbes} a semiregular matching $\mathcal M$ if for every $(S,T)\in\mathcal M$ there exists $(X,Y)\in\mathcal N$ such that $S\subset X$
and $T\subset Y$. In the same way, we say that a family of dense spots $\mathcal D$
\index{general}{absorb}\emph{absorbes} a semiregular matching $\mathcal M$ if for every $(S,T)\in\mathcal M$ there exists $(U,W;F)\in\mathcal D$ such that $S\subset U$
and $T\subset W$.

We  later need
the following easy bound on the size of the elements  of
$\mathcal V(\mathcal M)$.

\begin{fact}\label{fact:boundMatchingClusters}
Suppose that $\mathcal M$ is an
$(\epsilon,d,\ell)$-semiregular
matching in a graph $H$. Then $|C|\le
\frac{\maxdeg{(H)}}{d}$ for each $C\in
\mathcal V(\mathcal M)$.
\end{fact}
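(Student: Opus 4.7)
The plan is very short because the statement is essentially a one-line density-versus-maximum-degree computation. Let me describe how I would carry it out.

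First, I would fix an arbitrary $C\in\mathcal V(\mathcal M)$ and, using the definition of $\mathcal V(\mathcal M)=\mathcal V_1(\mathcal M)\cup\mathcal V_2(\mathcal M)$, pick its partner $C'$ so that either $(C,C')\in\mathcal M$ or $(C',C)\in\mathcal M$. By Definition~\ref{def:semiregular} this pair is $\epsilon$-regular in $H$ with density at least $d$, and in particular $|C|=|C'|$.

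The next step is to bound $e_H(C,C')$ from two sides. On the one hand, the density hypothesis gives
\[
e_H(C,C')\ge d\,|C|\,|C'|.
\]
On the other hand, every vertex $v\in C'$ satisfies $\deg_H(v,C)\le\deg_H(v)\le\maxdeg(H)$, so summing over $C'$ yields
\[
e_H(C,C')\le \maxdeg(H)\cdot |C'|.
\]
Combining the two inequalities and cancelling $|C'|$ (which is positive since $|C'|\ge\ell\ge 1$) gives $|C|\le \maxdeg(H)/d$, as claimed.

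There is really no obstacle here; the only thing one has to be careful about is the direction of the pair, but since the bound is symmetric in $C$ and $C'$ it does not matter whether $C$ lies in $\mathcal V_1(\mathcal M)$ or $\mathcal V_2(\mathcal M)$. The same argument applies to both cases verbatim.
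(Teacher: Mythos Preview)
Your proof is correct and is essentially the same as the paper's: both compare the density lower bound on $e_H(C,C')$ against the trivial upper bound coming from the maximum degree. The paper phrases it as ``the maximum degree of $H$ is at least as large as the average degree of the vertices in $D$, which is at least $d|C|$,'' which is exactly your two-sided estimate divided through by $|C'|$.
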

\begin{proof}
Let for example $(C,D)\in\mathcal M$. The maximum degree of $H$ is at least as large as the average
degree of the vertices in $D$, which is at least
$d |C|$.
\end{proof}

The next lemma,
Lemma~\ref{lem:edgesEmanatingFromDensePairsIII}, is a second step towards Lemma~\ref{lem:Separate}. Whereas
Lemma~\ref{lem:edgesEmanatingFromDensePairsII} gives
one dense regular pair, in the same setting 
Lemma~\ref{lem:edgesEmanatingFromDensePairsIII} provides us with
a dense semiregular
matching.

\begin{lemma}\label{lem:edgesEmanatingFromDensePairsIII}
For every $\Omega\in\mathbb N$ and $\rho,\epsilon,\tau \in(0,1)$ there exists
$\alpha>0$ such that for every $\nu\in (0,1)$ there is a number $k_0\in\mathbb N$
such that the following holds for every $k>k_0$. 

 For each $(G,\DenseSpots,H,\mathcal A)\in\mathcal G(n,k,\Omega,\rho,\nu,\tau)$ there exists an
$(\epsilon ,\frac{\tau\rho}{8\Omega},\alpha\nu k)$-semiregular matching $\mathcal M$ of $G$ such that
\begin{enumerate}[(1)]
  \item for each $(X,Y)\in\mathcal M$ there are $A\in\mathcal A$, and
  $D=(U,W;F)\in \DenseSpots$ such that  $X\subset U\cap A\cap A_H$ and $Y\subset
  W\cap B_H$,\label{bedingung1}
  and
  \item $|V(\mathcal M)|\ge\frac{\tau}{2\Omega} n$.\label{bedingung3}
\end{enumerate}
\end{lemma}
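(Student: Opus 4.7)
The plan is to iterate Lemma~\ref{lem:edgesEmanatingFromDensePairsII}, peeling off one regular pair at a time, until the remaining edges of $H$ drop below some threshold. The natural threshold is $\tfrac{\tau}{2}kn$: this guarantees that at every step we can still legitimately apply Lemma~\ref{lem:edgesEmanatingFromDensePairsII} (with parameter $\tau/2$ in place of $\tau$), and it also ensures that by the time the iteration stops we have removed at least $\tfrac{\tau}{2}kn$ edges of $H$, which is exactly what we need to conclude~\eqref{bedingung3} via a crude degree-counting argument.

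More concretely, I would first fix the constants: set $\alpha:=\alpha_{\PARAMETERPASSING{L}{lem:edgesEmanatingFromDensePairsII}}$ obtained from Lemma~\ref{lem:edgesEmanatingFromDensePairsII} applied with $\tau_{\PARAMETERPASSING{L}{lem:edgesEmanatingFromDensePairsII}}:=\tau/2$ and the other parameters unchanged, and then let $k_0:=k_{0,\PARAMETERPASSING{L}{lem:edgesEmanatingFromDensePairsII}}$ be the threshold given by the same lemma for the chosen $\nu$. Given the input tuple, I would set $H_0:=H$ and iteratively produce a sequence $H_0\supseteq H_1\supseteq\dots$ as follows. While $e(H_i)\ge\tfrac{\tau}{2}kn$, observe that the tuple $(G,\DenseSpots,H_i,\A)$ still belongs to $\mathcal G(n,k,\Omega,\rho,\nu,\tau/2)$: properties~\eqref{maxroach},~\eqref{mingus} of Definition~\ref{tupelclass} depend only on $G,\DenseSpots,\A$; property~\eqref{bird} survives because $A_{H_i}\subseteq A_H\subseteq\bigcup\A$; and the bound~\eqref{duke} is precisely what the loop condition supplies. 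Apply Lemma~\ref{lem:edgesEmanatingFromDensePairsII} to obtain a pair $(X_i,Y_i)$ with $|X_i|=|Y_i|>\alpha\nu k$, sitting inside $A\cap U\cap A_{H_i}$ and $W\cap B_{H_i}$ respectively for suitable $A\in\A$ and $(U,W;F)\in\DenseSpots$, and forming an $\epsilon$-regular pair in $G$ of density at least $\frac{(\tau/2)\rho}{4\Omega}=\frac{\tau\rho}{8\Omega}$. Then set $H_{i+1}:=H_i-(X_i\cup Y_i)$ and continue.

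Let $\M$ consist of all pairs produced. For pairwise disjointness of the $\M$-vertices, note that the $X_j$'s sit in $A_H$ while the $Y_j$'s sit in $B_H$, so bipartiteness of $H$ rules out any $X_j\cap Y_i$; and for $j>i$ we have $X_j\subseteq A_{H_j}\subseteq V(H)\setminus(X_i\cup Y_i)$ since we deleted $X_i\cup Y_i$ at step $i$, and similarly for $Y_j$. Thus $\M$ is an $(\epsilon,\frac{\tau\rho}{8\Omega},\alpha\nu k)$-semiregular matching and property~\eqref{bedingung1} holds by construction.

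Finally, to verify~\eqref{bedingung3}, note that since each step strips away at least $2\alpha\nu k$ vertices from $H_i$, the loop terminates, and when it does we have removed at least $\tau kn-\frac{\tau}{2}kn=\frac{\tau}{2}kn$ edges of $H$ in total. Deleting $X_i\cup Y_i$ from $H_i$ removes at most $\sum_{v\in X_i\cup Y_i}\deg_{H_i}(v)\le(|X_i|+|Y_i|)\cdot\maxdeg(G)\le(|X_i|+|Y_i|)\Omega k$ edges, so
\[
\tfrac{\tau}{2}kn\;\le\;\sum_i(|X_i|+|Y_i|)\cdot\Omega k\;=\;|V(\M)|\cdot\Omega k,
\]
which yields $|V(\M)|\ge\frac{\tau}{2\Omega}n$. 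The only conceptual point that requires a little care is checking that $\A$ need not be modified during the iteration, so that the hypothesis $|A|\ge\nu k$ remains valid even though the portion of $A$ still visible in $H_i$ may become tiny; since Lemma~\ref{lem:edgesEmanatingFromDensePairsII} is applied to the unchanged $\A$, this causes no problem.
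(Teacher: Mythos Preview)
Your proof is correct and follows essentially the same approach as the paper: both arguments repeatedly apply Lemma~\ref{lem:edgesEmanatingFromDensePairsII} with parameter $\tau/2$ until the remaining bipartite graph has fewer than $\tfrac{\tau}{2}kn$ edges, and then deduce~\eqref{bedingung3} from the bound $\maxdeg(G)\le\Omega k$. The only cosmetic difference is that the paper phrases this as ``take an inclusion-maximal matching with property~\eqref{bedingung1} and argue by contradiction that~\eqref{bedingung2} must hold'', whereas you build the matching by an explicit iteration; the content is identical.
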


\begin{proof}
Let
$\alpha:=\alpha_\PARAMETERPASSING{L}{lem:edgesEmanatingFromDensePairsII}>0$ be given by Lemma~\ref{lem:edgesEmanatingFromDensePairsII}
  for the  input parameters
  $\Omega_\PARAMETERPASSING{L}{lem:edgesEmanatingFromDensePairsII}:=\Omega$,
  $\epsilon_\PARAMETERPASSING{L}{lem:edgesEmanatingFromDensePairsII}:=\epsilon$,
  $\tau_\PARAMETERPASSING{L}{lem:edgesEmanatingFromDensePairsII}:=\tau/2$
  and $\rho_\PARAMETERPASSING{L}{lem:edgesEmanatingFromDensePairsII}:=\rho$.
    Now, for $\nu_\PARAMETERPASSING{L}{lem:edgesEmanatingFromDensePairsII}:=\nu$,
  Lemma~\ref{lem:edgesEmanatingFromDensePairsII}  yields a number $k_0\in\mathbb
  N$.

Now let  $(G,\DenseSpots,H,\mathcal A)\in\mathcal G(n,k,\Omega,\rho,\nu,\tau)$. Let $\mathcal M$ be an inclusion-maximal
$(\epsilon \rho,\frac{\tau\rho}{8\Omega},\alpha\nu k)$-semiregular matching with property~\eqref{bedingung1}. 
We claim that
\begin{equation}\label{bedingung2}
  e_G(A_H\setminus V_1(\mathcal M),B_H\setminus V_2(\mathcal M))<\frac\tau 2 kn.
\end{equation}
Indeed, suppose otherwise. Then 
the bipartite subgraph $H'$ of $G$ induced by the sets $A_H\setminus
V_1(\mathcal M)=A_H\setminus V(\mathcal M)$ and $B_H\setminus V_2(\mathcal M)=B_H\setminus V(\mathcal M)$
satisfies Property~\eqref{duke} of Definition~\ref{tupelclass}, with
$\tau_\PARAMETERPASSING{D}{tupelclass}:=\tau/2$. So, we have that $(G, \DenseSpots, H', \mathcal A)\in \mathcal G(n,k,\Omega,\rho,\nu,\tau/2)$.

Thus
 Lemma~\ref{lem:edgesEmanatingFromDensePairsII} for $(G, \DenseSpots, H',
 \mathcal A)$ yields a dense spot $D=(U,W;F)\in\DenseSpots$ and a set
 $A\in\mathcal A$, together with two sets $X\subset U\cap A\cap (A_H\setminus 
 V(\mathcal M))$, $Y\subset W\cap (B_H\setminus  V(\mathcal M))$ such that
 $|X|=|Y|>\alpha_\PARAMETERPASSING{L}{lem:edgesEmanatingFromDensePairsII}\nu
 k=\alpha\nu k$, and such that $(X,Y)$ is $\epsilon_\PARAMETERPASSING{L}{lem:edgesEmanatingFromDensePairsII}$-regular and has density at least $$\frac{\tau_\PARAMETERPASSING{L}{lem:edgesEmanatingFromDensePairsII}\rho_\PARAMETERPASSING{L}{lem:edgesEmanatingFromDensePairsII}}{4\Omega_\PARAMETERPASSING{L}{lem:edgesEmanatingFromDensePairsII}}=\frac{\tau\rho}{8\Omega}.$$ As this contradicts the maximality of $\mathcal M$, we have shown~\eqref{bedingung2}.

In order to see~\eqref{bedingung3}, it suffices to observe that by~\eqref{bedingung2} and by Property~\eqref{duke} of Definition~\ref{tupelclass}, the set $V(\mathcal M)$ is incident with at least $\tau  kn-\frac\tau 2 kn=\frac\tau 2 kn$ edges. By Definition~\ref{tupelclass}~\eqref{maxroach}, it follows that $|V(\mathcal M)|\geq \frac\tau 2 kn\cdot \frac 1{\Omega k}\geq \frac{\tau}{2\Omega}n$, as desired.
\end{proof}

\subsection{Augmenting paths for matchings}

We now prove the main lemma of Section~\ref{sec:augmenting}, namely Lemma~\ref{lem:Separate}. We will use an augmenting path technique for our semiregular matchings, similar to the augmenting paths commonly used for traditional matching theorems. For this, we need the following definitions.

\begin{definition}[\bf Alternating path, augmenting
path\index{general}{alternating path}\index{general}{augmenting path}]\label{altPath} Given an $n$-vertex graph $G$,
and a semi\-regular matching $\M$, we  call a sequence $\mathfrak S=(Y_0,\A_1,Y_1, \A_2, Y_2,\ldots ,\A_h ,Y_h)$ ($h\ge 0$) an {\em $(\delta,s)$-alternating path for $\M$ from $Y_0$} if for all $i\in[h]$ we have 
 \begin{enumerate}[(i)]
\item $\A_i\subseteq \mathcal V_1(\mathcal M)$ and the sets $\mathcal A_i$ are pairwise disjoint, \label{alt1e}
\item  $Y_0\subseteq V(G) \setminus V(\mathcal M)$ and 
$Y_i=\bigcup_{(A,B)\in\M, A\in\A_i}B$,\label{alte2}
\item  $|Y_{i-1}|\geq\delta n$, and\label{alte3}
\item $e(A, Y_{i-1})\geq s\cdot |A|$,  for each $A\in\mathcal A_i$.\label{alte4}
\suspend{enumerate}
If in addition there is a set $\mathcal C$ of disjoint subsets of
$V(G)\setminus (Y_0\cup V(\M))$ such that 
\resume{enumerate}[{[(i)]}]
\item  $e(\bigcup \mathcal C,
Y_h)\geq t\cdot n$,\label{augm5}
\end{enumerate}
then we say that $\mathfrak S'=(Y_0,\A_1,Y_1, \A_2, Y_2,\ldots ,\A_h ,Y_h,\mathcal C)$ is an \emph{$(\delta,s,t)$-augmenting path for $\M$ from $Y_0$ to $\mathcal C$}.

The number $h$ is called the \index{general}{length of alternating path} the \emph{length} of $\mathfrak{S}$ (or of $\mathfrak{S'}$).
\end{definition}

Next, we show that a semiregular matching either has an augmenting path or admits a partition into two parts so that there are only few edges which cross these parts in a certain way.

\begin{lemma}\label{lem:augmentingPath}
 Given an $n$-vertex graph $G$ with $\maxdeg(G)\le \Omega
k$, a number $\tau\in (0,1)$, a semiregular matching $\M$, a set $Y_0\subseteq V(G)\setminus V(\M)$, and  a set $\C$ of disjoint subsets of $V(G)\setminus (V(\M)\cup Y_0)$, one of the following holds:
  \begin{itemize}
  \item[{\bf(M1)}] 
    There is a semiregular matching $\mathcal
  M''\subseteq \mathcal M$ with $e\left(\bigcup \mathcal C\cup
  V_1(\mathcal M\setminus\mathcal M''),Y_0\cup  V_2(\mathcal M'')\right)<\tau nk$,
    \item[{\bf(M2)}] 
$\M$ has an $(\frac\tau{2\Omega}, \frac{\tau^2}{8\Omega}k,\frac{\tau^2}{16\Omega}
k)$-augmenting path of length at most $2\Omega/\tau$ from $Y_0$ to $\mathcal C$.
    \end{itemize}
\end{lemma}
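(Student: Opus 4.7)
The plan is a standard augmenting-path argument for the semiregular matching $\M$, organised as a greedy process that will yield either~\textbf{(M2)} or~\textbf{(M1)}. Set $\delta := \tfrac{\tau}{2\Omega}$, $s := \tfrac{\tau^{2}}{8\Omega}k$, and $t := \tfrac{\tau^{2}}{16\Omega}k$, matching the parameters in the statement. Starting from the given $Y_{0}$, I build an $(\delta,s)$-alternating path $(Y_{0},\A_{1},Y_{1},\ldots,\A_{h},Y_{h})$ inductively as follows. Having produced a path of length $h$, I first check whether $e(\bigcup\C,Y_{h})\ge tn$; if so, I stop and output the augmenting path $(Y_{0},\A_{1},\ldots,\A_{h},Y_{h},\C)$, which gives~\textbf{(M2)}. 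Otherwise, if $|Y_{h}|\ge \delta n$, I extend by setting
\[
\A_{h+1}:=\Bigl\{\,A\in \V_{1}(\M)\setminus\textstyle\bigcup_{j=1}^{h}\A_{j}\;:\;e(A,Y_{h})\ge s|A|\,\Bigr\},
\]
and letting $Y_{h+1}$ be the union of the $\M$-partners of the sets in $\A_{h+1}$. If $|Y_{h}|<\delta n$ and~\textbf{(M2)} has not been triggered, I terminate the process and proceed to prove~\textbf{(M1)}. Conditions (i)--(iv) of Definition~\ref{altPath} are automatic from the construction.

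Since the $Y_{i}$'s with $i\ge 1$ lie in $V_{2}(\M)$ and are pairwise disjoint (because the $\A_{i}$'s are), while $Y_{0}$ is disjoint from $V(\M)$, the sets $Y_{0},Y_{1},\ldots,Y_{h-1}$ are pairwise disjoint subsets of $V(G)$, each of size at least $\delta n$. This gives the length bound $h\le 1/\delta=2\Omega/\tau$ required by~\textbf{(M2)}. Now suppose the process terminates without returning~\textbf{(M2)}, so $|Y_{h}|<\delta n$ and $e(\bigcup\C,Y_{i})<tn$ for every $i\in\{0,1,\ldots,h\}$ (each iteration began with exactly this check). Define $\M'':=\{(A,B)\in\M\::\:A\in\A_{1}\cup\cdots\cup\A_{h}\}$; then $V_{2}(\M'')=Y_{1}\cup\cdots\cup Y_{h}$, and $V_{1}(\M\setminus\M'')$ is the union of those $A$-sides of $\M$-pairs that were never selected into any $\A_{j}$.

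Since $\bigcup\C$, $V_{1}(\M\setminus\M'')$, $Y_{0}$ and the sets $Y_{1},\ldots,Y_{h}$ are pairwise disjoint, one obtains
\begin{equation*}
e\Bigl(\bigcup\C\cup V_{1}(\M\setminus\M''),\ Y_{0}\cup V_{2}(\M'')\Bigr)
=\sum_{i=0}^{h}e\Bigl(\bigcup\C,Y_{i}\Bigr)+\sum_{i=0}^{h}e\bigl(V_{1}(\M\setminus\M''),Y_{i}\bigr).
\end{equation*}
The first sum is at most $(h+1)tn$. For the second sum, when $i<h$ the selection rule for $\A_{i+1}$ forces $e(A,Y_{i})<s|A|$ for every $A\in\V_{1}(\M)\setminus\bigcup_{j=1}^{i+1}\A_{j}$, a superset of $\V_{1}(\M)\setminus\bigcup_{j=1}^{h}\A_{j}$; summing over these $A$ yields $e(V_{1}(\M\setminus\M''),Y_{i})<sn$, hence the total contribution over $i<h$ is at most $hsn$. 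The remaining $i=h$ term is bounded trivially by $|Y_{h}|\cdot\maxdeg(G)<\delta n\cdot\Omega k$. Plugging in $\delta,s,t$ and $h\le 2\Omega/\tau$ gives
\[
(h+1)tn+hsn+\delta\Omega kn\;<\;\Bigl(\tfrac{\tau}{8}+\tfrac{\tau^{2}}{16\Omega}+\tfrac{\tau}{4}+\tfrac{\tau}{2}\Bigr)kn\;<\;\tau kn,
\]
which is~\textbf{(M1)}. The main obstacle is bookkeeping: one must check that the selection rule for $\A_{i+1}$ controls $e(A,Y_{i})$ for every $A\in V_{1}(\M\setminus\M'')$ only when $i<h$, so the terminal index $i=h$ must be absorbed through the crude $\maxdeg(G)$ estimate; this is precisely why the stopping threshold is chosen as small as $\delta=\tau/(2\Omega)$, to ensure that the $i=h$ term fits comfortably inside the $\tau kn$ budget.
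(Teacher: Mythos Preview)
Your proof is correct and follows essentially the same approach as the paper's. The only cosmetic difference is that the paper picks an alternating path with $|\bigcup_\ell \A_\ell|$ maximal and checks the $\C$-condition once at the end (via pigeonhole over the $Y_\ell$'s), whereas you build the path greedily by taking $\A_{h+1}$ to be \emph{all} unused $A$'s with $e(A,Y_h)\ge s|A|$ and test the $\C$-condition at every step; both routes yield the same key fact that every $A\in\V_1(\M\setminus\M'')$ satisfies $e(A,Y_i)<s|A|$ for $i<h$, and both absorb the terminal index via the crude $\maxdeg$ bound.
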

\begin{proof}
If $|Y_0|\le \frac{\tau}{2\Omega} n$ then {\bf(M1)} is satisfied for $\M'':=\emptyset$. Let us therefore assume otherwise.
 
Choose a $(\frac\tau{2\Omega},\frac{\tau^2}{8\Omega} k)$-alternating path  $\mathfrak
S=(Y_0,\A_1,Y_1, \A_2,$ $Y_2,\ldots,\A_h, Y_h)$ for $\mathcal M$
 with $|\bigcup_{\ell=1}^h\mathcal A_\ell|$ maximal.

 Now, let $\ell^*\in \{0,1,\ldots,h\}$ be maximal with $|Y_{\ell^*}|\geq \frac\tau{2\Omega} n$. Then $\ell^*\in\{h,h-1\}$. Moreover,
 as $|Y_\ell|\geq\frac\tau{2\Omega} n$ for all $\ell\leq\ell^*$, we have that $(\ell^*+1)\cdot \frac\tau{2\Omega} n\leq |\bigcup_{\ell\leq\ell^*} Y_\ell| \leq n$ and thus
\begin{equation}\label{ellstarbounded}
 \ell^*+1\leq \frac{2\Omega}\tau.
\end{equation}

Let $\mathcal M''\subseteq \mathcal M$ consist of all $\M$-edges $(A,B)\in \M$ with  $A\in\bigcup_{\ell\in[h]}\A_\ell$. Then, by the choice of~$\mathfrak S$,
\begin{align} \nonumber
 e\left(V_1(\M\setminus \M''), \bigcup_{\ell=0}^{\ell^*}Y_{\ell}\right)&=\sum_{\ell=0}^{\ell^*}e\left(V_1(\M\setminus \M''), Y_{\ell}\right)\\ \label{endofthepath}
&  < (\ell^*+1)\cdot \frac{\tau^2}{8\Omega} k \cdot |V_1(\M\setminus \M'')|  \leByRef{ellstarbounded} \frac\tau 4 kn.
\end{align}
Furthermore, if $\ell^*=h -1$ (that is, if $|Y_{h}|<\frac\tau{2\Omega} n$) then  
\begin{equation}\label{deltanisnothing}
e\left(V_1(\M\setminus \M'')\cup\bigcup \mathcal C,Y_h\right)\ <\  \frac\tau{2\Omega} n\cdot\maxdeg(G)\ \leq \ \frac\tau{2\Omega}\Omega kn \ =\ \frac\tau 2kn.
\end{equation}

So, regardless whether $h=\ell^*$ or $h=\ell^*+1$, we get from~\eqref{endofthepath} and~\eqref{deltanisnothing} that
\[
e\left(V_1(\M\setminus \M'')\cup\bigcup \mathcal C, Y_0\cup V_2(\M'')\right)<
\frac 34\tau kn + e\left(\bigcup \mathcal C, \bigcup_{\ell=0}^{\ell^*}Y_{\ell}\right).
\]

Thus, if $e(\bigcup \mathcal C,\bigcup_{\ell=0}^{\ell^*}Y_\ell)\leq \frac\tau 4kn$,  we see that~$\mathbf{(M1)}$  satisfied for $\M''$. So, assume otherwise. 
Then, by~\eqref{ellstarbounded},
there is an index $j\in\{0,1,\ldots,\ell^*\}$ so that
\begin{equation*}\label{CYedges}
 e\left(\bigcup \mathcal C,Y_j\right)
 \ > \ \frac{\tau^2}{16\Omega}kn,
\end{equation*}
and thus, $(Y_0,\A_1,Y_1, \A_2,$ $Y_2,\ldots,\A_h, Y_h,\C)$ is an $(\frac\tau{2\Omega}, \frac{\tau^2}{8\Omega}
k, \frac{\tau^2}{16\Omega} k)$-augmenting path for $\mathcal M$.
This shows {\bf(M2)}.
\end{proof}

Building on Lemma~\ref{lem:edgesEmanatingFromDensePairsIII} and Lemma~\ref{lem:augmentingPath}
we prove the following.

\begin{lemma}\label{lem:AugmentORSeparate}
For every
$\Omega\in \mathbb N$ and  $\tau\in (0,\frac{1}{2\Omega})$ there is a number $\tau'\in (0,\tau)$ such that for every $\rho\in(0,1)$ there is a number
$\alpha\in (0,\tau'/2)$ such that for every $\epsilon\in (0,\alpha)$ there is a number
$\pi>0$ 
 such that for every $\gamma>0$
there is $k_0\in\mathbb N$ such
that the following holds for every $k>k_0$ and every $h\in (\gamma k,k/2)$.

Let $G$ be a graph  of order $n$ with
$\maxdeg(G)\le \Omega k$, with an $(\eps^3,\rho,h)$-semiregular matching~$\M$
 and with a $(\rho k, \rho)$-dense cover $\DenseSpots$ that  absorbs $\M$. Let $Y\subset
V(G)\setminus  V(\mathcal M)$, and let
$\mathcal C$ be an $h$-ensemble in $G$ outside $V(\mathcal M)\cup Y$.
Assume that $U\cap C\in\{\emptyset,C\}$ for each $D=(U,W;F)\in\DenseSpots$ and each $C\in\mathcal C\cup \mathcal V_1(\mathcal M)$. 

Then one of the following holds.
\begin{enumerate}[(I)]
  \item\label{it:AugmAss1} There is a semiregular matching $\mathcal
  M''\subseteq \mathcal M$ such that $$e\left(\bigcup \mathcal C\cup
  V_1(\mathcal M\setminus\mathcal M''),Y\cup  V_2(\mathcal M'')\right)<\tau nk.$$
\item\label{it:AugmAss2} There is an $(\epsilon,\alpha ,\pi h)$-semiregular
matching $\mathcal M'$ such that
  \begin{itemize}
    \item[{\bf(C1)}] $|V(\mathcal M)\setminus V(\mathcal M')|\le
    \epsilon n$, and $
    |V(\mathcal M')|\geq  |V(\mathcal M)| + \frac{\tau'}2 n$, and
    \item[{\bf(C2)}] for each $(T,Q)\in\mathcal M'$ there are sets $C_1\in\mathcal V_1(\mathcal
 M)\cup \mathcal C$, $C_2\in \mathcal V_2(\mathcal  M)\cup\{Y\}$ and a dense
 spot $D=(U,W;F)\in\DenseSpots$ such that $T\subset C_1\cap U$ and $Q\subset
C_2\cap W$.
  \end{itemize}
\end{enumerate}
\end{lemma}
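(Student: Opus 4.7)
The plan is to reduce to the augmenting-path case via Lemma~\ref{lem:augmentingPath}, and then build $\mathcal M'$ by performing a standard matching-augmentation shift along that path, with the new regular-pair structure extracted at each step of the shift via Lemma~\ref{lem:edgesEmanatingFromDensePairsIII}.

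First, apply Lemma~\ref{lem:augmentingPath} to $(G,\mathcal M,Y,\mathcal C)$ with its parameter $\tau$ set equal to the $\tau$ of the current lemma. Case $\mathbf{(M1)}$ immediately yields conclusion~\eqref{it:AugmAss1}. Otherwise case $\mathbf{(M2)}$ provides an $(\frac{\tau}{2\Omega},\frac{\tau^2}{8\Omega}k,\frac{\tau^2}{16\Omega}k)$-augmenting path $(Y_0,\mathcal A_1,Y_1,\ldots,\mathcal A_h,Y_h,\mathcal C)$ for $\mathcal M$, with $Y_0=Y$ and $h\le 2\Omega/\tau$. By the definition of alternating paths $|Y_i|\ge\frac{\tau}{2\Omega}n$ for $i<h$, while $|Y_h|\ge e(\bigcup\mathcal C,Y_h)/\maxdeg(G)\ge\frac{\tau^2}{16\Omega^2}n$. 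Combining this with condition~\eqref{alte4} of Definition~\ref{altPath} gives $e(\bigcup\mathcal A_i,Y_{i-1})\ge\frac{\tau^4}{128\Omega^3}kn$ for each $i\in[h]$, and trivially $e(\bigcup\mathcal C,Y_h)\ge\frac{\tau^2}{16\Omega}kn$.

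For each $i\in[h]$, consider the tuple $(G,\DenseSpots,G[\bigcup\mathcal A_i,Y_{i-1}],\mathcal A_i)$, and for $i=0$ consider $(G,\DenseSpots,G[\bigcup\mathcal C,Y_h],\mathcal C)$. Each lies in the class $\mathcal G(n,k,\Omega,\rho,\gamma,\cdot)$: the only nontrivial verification is condition~\eqref{mingus} of Definition~\ref{tupelclass}, which is exactly the hypothesis of the current lemma imposed on $\mathcal V_1(\mathcal M)\cup\mathcal C$. Applying Lemma~\ref{lem:edgesEmanatingFromDensePairsIII} to each of these tuples produces semiregular matchings $\mathcal N_0,\mathcal N_1,\ldots,\mathcal N_h$, each of total mass $|V(\mathcal N_i)|\ge c\,n$ for some positive $c=c(\tau,\Omega)$, and whose pair sizes and densities determine the parameters $\alpha$, $\pi$ and $\epsilon$ of $\mathcal M'$. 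A standard refinement step using the prepartition of the target sides by the clusters $\{B_{A'}:A'\in\mathcal A_{i-1}\}$ (and the analogous partition of $Y_h$), combined with Fact~\ref{fact:BigSubpairsInRegularPairs}, shrinks the mass of each $\mathcal N_i$ only by a constant factor while ensuring that every pair of $\mathcal N_i$ has its target inside a single cluster of $\mathcal V_2(\mathcal M)$, as required for property~$\mathbf{(C2)}$. We then construct $\mathcal M'$ by the shift: starting from $\mathcal M$, remove every pair $(A,B_A)$ with $A\in\bigcup_i\mathcal A_i$, and add all pairs of $\bigcup_{i=0}^h\mathcal N_i$. Pairwise disjointness of the $\mathcal N_i$'s (from the disjointness of the $\mathcal A_i$'s and from the partition of each $Y_i$ into the clusters $\{B_A:A\in\mathcal A_i\}$), together with the removal of the conflicting $\mathcal M$-pairs, makes $\mathcal M'$ a valid $(\epsilon,\alpha,\pi h)$-semiregular matching, and $\mathbf{(C2)}$ is immediate.

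The main obstacle is verifying $\mathbf{(C1)}$, which demands a delicate mass-flow accounting along the path. The net gain comes from the two endpoints of the augmenting path: $|V_1(\mathcal N_0)|\ge\frac{\tau^2}{64\Omega^2}n$ is new mass inside $\bigcup\mathcal C$, and $|V_2(\mathcal N_1)|$ is new mass inside $Y$, and neither can be attributed to anything already in $V(\mathcal M)$. For each interior index $i\in\{2,\ldots,h\}$, essentially all of each removed $\mathcal M$-cluster must be \emph{refunded} by a source (or target) of some $\mathcal N_i$; that this refund is nearly complete follows from the maximality built into the proof of Lemma~\ref{lem:edgesEmanatingFromDensePairsIII} (after the extraction of $\mathcal N_i$ only $O(\frac{\tau^4}{\Omega^3}kn)$ residual edges remain in the uncovered bipartite portion, whose total vertex mass is then controlled via $\maxdeg(G)\le\Omega k$). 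Choosing $\tau'$ small enough in terms of $\tau,\Omega$ ensures that after summing the at most $2\Omega/\tau$ transit losses along the path, the net gain still exceeds $\frac{\tau'}{2}n$; then $\epsilon$ is taken much smaller than $\tau'$ so that $|V(\mathcal M)\setminus V(\mathcal M')|\le\epsilon n$ as required.
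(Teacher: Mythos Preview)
Your overall strategy --- reduce to the augmenting-path case via Lemma~\ref{lem:augmentingPath}, then apply Lemma~\ref{lem:edgesEmanatingFromDensePairsIII} at each level of the path --- is exactly the paper's. The gap is entirely in the mass accounting for~$\mathbf{(C1)}$, and it is fatal as written.

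You remove every $\mathcal M$-pair $(A,B_A)$ with $A\in\bigcup_i\mathcal A_i$ and replace them by the $\mathcal N_i$'s. The claim that the refund is ``nearly complete'' because Lemma~\ref{lem:edgesEmanatingFromDensePairsIII} leaves few residual edges is false: a bound on $e\big(\bigcup\mathcal A_i\setminus V_1(\mathcal N_i),\,Y_{i-1}\setminus V_2(\mathcal N_i)\big)$ says nothing about $|\bigcup\mathcal A_i\setminus V_1(\mathcal N_i)|$, since those uncovered $A$-vertices may send all their edges into $V_2(\mathcal N_i)$. Lemma~\ref{lem:edgesEmanatingFromDensePairsIII} only promises $|V(\mathcal N_i)|\ge c(\tau,\Omega)\,n$, while $|\bigcup\mathcal A_i|$ can be of order $n$; so $|V(\mathcal M)\setminus V(\mathcal M')|$ in your construction is of order $c'(\tau,\Omega)\,n$, not $\epsilon n$. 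Your final sentence compounds this: you propose to take $\epsilon$ smaller than the transit losses, but the losses are fixed in terms of $\tau,\Omega$, which are chosen \emph{before} $\epsilon$ in the quantifier hierarchy, so $\epsilon n$ is much smaller than them, not larger.

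The paper's proof does not remove the $\mathcal M$-pairs wholesale. Instead it keeps a residual subpair $(A',B')\subset(A,B)$ of each one, and to make $|A'|=|B'|$ possible (so the residual is still a regular pair) it must arrange that $|A\cap V_1(\mathcal N^{(\ell-1)})|\ge|B\cap V_2(\mathcal N^{(\ell)})|$ for every $\mathcal M$-edge $(A,B)$. This per-pair inequality is the real content of the argument and requires two devices you are missing: the matchings $\mathcal M^{(\ell)}$ are built \emph{backwards} from $\ell=j^*$ down to $0$, with the source side $P^{(\ell-1)}\subset\bigcup\mathcal A_\ell$ restricted so that $|\tilde A|=|B_A\cap V_2(\mathcal M^{(\ell)})|$ for each $A$ (Step~3); and then a further trimming $\mathcal N^{(\ell)}\subset\mathcal M^{(\ell)}$ enforces the inequality exactly (Step~4). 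Only the tiny losses from this trimming and from discarding subpairs that became too thin contribute to $|V(\mathcal M)\setminus V(\mathcal M')|$, and those are genuinely $O(\epsilon n)$.
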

\begin{proof}
We divide the proof into five
steps.

\paragraph{Step 1: Setting up the parameters.}
Suppose that $\Omega$ and $\tau$ are given. 
For
$\ell=0,1,\ldots ,\lceil2\Omega/\tau\rceil$, we define the auxiliary para\-meters

\begin{equation}\label{auroraenpekin}
\tau^{(\ell)}:= \left(\frac{\tau^2}{32\Omega}\right)^{\lceil \frac{2\Omega}\tau\rceil -\ell +2}\;,
\end{equation}
and set $$\tau':=\frac{\tau^{(0)}}{2\Omega}.$$

Given $\rho$, we define 
$$ \alpha:=\frac{\tau'\rho}{16\Omega}.$$
Then, given $\epsilon$, 
for $\ell=0,1,\ldots ,\lceil2\Omega/\tau\rceil$, we define the further auxiliary parameters
$$\mu^{(\ell)}:= \alpha_\PARAMETERPASSING{L}{lem:edgesEmanatingFromDensePairsIII}\big(\Omega,\rho,\eps^3, \tau^{(\ell )}\big)$$
 which are given by
Lemma~\ref{lem:edgesEmanatingFromDensePairsIII} for
input parameters $\Omega_\PARAMETERPASSING{L}{lem:edgesEmanatingFromDensePairsIII}:=\Omega$, $\rho_\PARAMETERPASSING{L}{lem:edgesEmanatingFromDensePairsIII}:=\rho$, 
$\epsilon_\PARAMETERPASSING{L}{lem:edgesEmanatingFromDensePairsIII}:=\epsilon ^3$, and $\tau_\PARAMETERPASSING{L}{lem:edgesEmanatingFromDensePairsIII}:=\tau^{(\ell )}$.
Set
$$\pi:=\frac{\epsilon}{2}  \cdot \min\left\{\mu^{(\ell)}\::\:\ell=0,\ldots,\lceil 2\Omega/\tau\rceil\right\}\;,$$

Given the next input parameter $\gamma$, 
Lemma~\ref{lem:edgesEmanatingFromDensePairsIII} for parameters as above and the final input $\nu_\PARAMETERPASSING{L}{lem:edgesEmanatingFromDensePairsIII}:=\gamma$ yields $k_{0_\PARAMETERPASSING{L}{lem:edgesEmanatingFromDensePairsIII}}=:k_0^{(\ell)}$.
Set
$$k_0:=\max\left\{k_0^{(\ell)}\::\:\ell=0,\ldots,\lceil 2\Omega/\tau\rceil \right\}.$$

\paragraph{Step 2: Finding an augmenting path.}
We apply Lemma~\ref{lem:augmentingPath} to $G$, $\tau$, $\mathcal M$, $Y$ and $\mathcal C$. Since~$\mathbf{(M1)}$ corresponds to~\eqref{it:AugmAss1}, let us assume that the outcome of the lemma is $\mathbf{(M2)}$. Then there is a $(\frac\tau{2\Omega}, \frac{\tau^2}{8\Omega}k,\frac{\tau^2}{16\Omega} k)$-augmenting path $\mathfrak S'=(Y_0,\A_1,Y_1, \A_2,$ $Y_2,\ldots,$
$\A_{j^*}, Y_{j^*}, \C)$ for $\mathcal M$ starting from $Y_0:=Y$
such that $j^* \leq 2\Omega/\tau$.
%
%

Our aim is now to show that~\eqref{it:AugmAss2} holds. 

\paragraph{Step 3: Creating parallel
matchings.}

Inductively, for $\ell=j^*,j^*-1,\dots ,0$ we shall define
auxiliary bipartite induced subgraphs
$H^{(\ell)}\subset G$  with colour classes $P^{(\ell)}$ and $Y_{\ell}$ that satisfy
\begin{enumerate}[(a)]
\item \label{Hell}
$e(H^{(\ell)})\ge\tau^{(\ell)} kn,$
\end{enumerate}
and $(\eps^3,2\alpha,\mu^{(\ell)}h)$-semiregular matchings $\mathcal M^{(\ell)}$ that satisfy
 \begin{enumerate}[(a)]\setcounter{enumi}{1}
 \item $V_1(\mathcal M^{(\ell)})\subseteq P^{(\ell)}$,\label{liegtinP}
  \item for each $(A',B')\in\mathcal M^{(\ell)}$ there are a dense spot $(U,W;F)\in
  \DenseSpots$ and a set $A\in \V_1(\mathcal M)$ (or a set $A\in\mathcal C$ if $\ell=j^*$)  such that  $A'\subset U\cap A$  and $B'\subset W\cap Y_{\ell}$,\label{ribot}
  \item $|V(\mathcal M^{(\ell)})|\ge \frac{\tau^{(\ell)} }{2\Omega}n$, and\label{dasaltealpha}
  \item\label{dasneuef} $|B\cap V_2(\mathcal M^{(\ell)})|=|A\cap P^{(\ell -1)}|$ for each edge $(A,B)\in \mathcal M$, if $\ell>0$.
\end{enumerate}

We take $H^{(j^*)}$ as the induced
bipartite subgraph of $G$ with colour classes  $P^{(j^*)}:=\bigcup \mathcal C$
and $Y_{j^*}$. Definition~\ref{altPath}~\eqref{augm5} together with~\eqref{auroraenpekin} ensures~\eqref{Hell} for $\ell=j^*$.
 Now, for $\ell \leq j^*$, suppose $H^{(\ell)}$ is defined already. Further, if $\ell< j^*$ suppose also that $\mathcal M^{(\ell +1)}$ is defined already. We shall define $\mathcal M^{(\ell)}$, and, if $\ell >0$, we shall also define $H^{(\ell-1)}$.

Observe that  $(G,\DenseSpots,H^{(\ell)},\mathcal A_\ell)\in\mathcal G(n,k,\Omega,\rho,\frac{h}k,\tau^{(\ell)})$, because of~\eqref{Hell} and the assumptions of the lemma. So, 
applying Lemma~\ref{lem:edgesEmanatingFromDensePairsIII} to
  $(G,\DenseSpots,H^{(\ell)},\mathcal A_\ell)$  and noting that $\frac{\tau^{(\ell)}\rho}{8\Omega}\geq 2\alpha$ we obtain
  an
$(\eps^3,2\alpha,\mu^{(\ell)}h)$-semiregular matching $\mathcal
M^{(\ell)}$ that satisfies conditions~\eqref{liegtinP}--\eqref{dasaltealpha}.

If $\ell>0$, we define $H^{(\ell-1)}$ as follows. 
For each $(A,B)\in \mathcal M$ take a set $\tilde A\subset A$ of
cardinality $|\tilde A|=|B\cap V(\mathcal M^{(\ell)})|$ so that 
\begin{equation}\label{schoengross}
e(\tilde A,Y_{\ell-1})\ \ge\ \frac{\tau^2}{8\Omega}k\cdot |\tilde A| \;.
\end{equation}
This is possible by Definition~\ref{altPath}~\eqref{alte4}: just choose those vertices from $A$ for $\tilde A$ that send most edges to $Y_{\ell -1}$.
Let $P^{(\ell-1)}$ be the union of all the sets $\tilde A$.
Then, \eqref{dasneuef} is satisfied. Furthermore,
$$|P^{(\ell-1)}|\ =\ |V_2(\mathcal M^{(\ell)})|
\ \overset{\eqref{dasaltealpha}}\ge \ \frac{\tau^{(\ell)}}{4\Omega}n.$$ 
So, by~\eqref{schoengross},
\begin{equation}\label{KantOrGoethe}
e(P^{(\ell-1)},Y_{\ell-1})\ \ge\ \frac{\tau^2}{8\Omega} k \cdot |P^{(\ell-1)}|\  \geq \ \frac{\tau^2\cdot \tau^{(\ell)}}{32\Omega^2}kn \ \overset{\eqref{auroraenpekin}}= \ \tau^{(\ell-1)}kn\;.
\end{equation}

We let $H^{(\ell -1)}$ be the
bipartite subgraph of $G$ induced by the colour classes $P^{(\ell-1)}$ and $Y_{\ell-1}$. 
 Then~\eqref{KantOrGoethe}
establishes~\eqref{Hell} for $H^{(\ell -1)}$. This finishes step $\ell$.\footnote{Recall that  the matching $\mathcal M^{(\ell -1)}$ is only to be defined in step $\ell-1$.}

\paragraph{Step 4: Harmonising the matchings.}
Our semiregular matchings $\mathcal
M^{(0)},\ldots,\mathcal M^{(j^*)}$ will be a good base for constructing the semiregular matching $\mathcal M'$ we are after. However, we do not know anything about  $|B\cap V_2(\mathcal M^{(\ell)})| - |A\cap V_1(\mathcal M^{(\ell -1)})|$ for the $\M$-edges $(A,B)\in \mathcal M$.
But this term will be crucial in determining how much of $V(\M)$ gets lost when we replace some of its $\M$-edges with $\bigcup \M^{(\ell)}$-edges. For this reason, we refine $\mathcal M^{(\ell)}$ in a way that its $\mathcal M^{(\ell)}$-edges become almost equal-sized.

Formally, we shall inductively construct
semiregular matchings $\mathcal N^{(0)},\ldots, \mathcal
N^{(j^*)}$ such that  for $\ell=0,\ldots ,j^*$ we have
\begin{enumerate}[(A)]
\item \label{eq:NellSemiregular} $\mathcal N^{(\ell)}$ is an
$(\epsilon,\alpha,\pi h)$-semiregular matching,
\item\label{absorbi}$\mathcal M^{(\ell)}$ absorbes $\mathcal N^{(\ell)}$,
\item \label{dasneueF} if $\ell>0$ and $(A,B)\in\mathcal M$ with $A\in\mathcal A_\ell$ then $|A\cap  V(\mathcal N^{(\ell-1)})|\ge |B \cap V(\mathcal N^{(\ell)})| $, and
\item \label{eq:SizeNInduc} $|V_2(\mathcal N^{(\ell)})|\ge |V_1(\mathcal
N^{(\ell-1)})| - \frac{\epsilon}{2}\cdot |V_2(\mathcal M^{(\ell)})|$  if
$\ell>0$ \ and $|V_2(\mathcal N^{(0)})|\ge \frac{\tau^{(0)}}{2\Omega} n = \tau'
n$.
\end{enumerate}

Set $\mathcal N^{(0)}:=\mathcal M^{(0)}$. Clearly~\eqref{absorbi} holds for
$\ell=0$,~\eqref{eq:NellSemiregular} is easy to check, and~\eqref{dasneueF} is void. Finally, Property~\eqref{eq:SizeNInduc} holds because of~\eqref{dasaltealpha}.
Suppose now $\ell>0$ and that we already constructed matchings
$\mathcal N^{(0)},\ldots,\mathcal N^{(\ell-1)}$ satisfying Conditions~\eqref{eq:NellSemiregular}--\eqref{eq:SizeNInduc}.

 Observe that for any $(A,B)\in\mathcal M$ we have that
\begin{equation}\label{thisisenough}
|B\cap  V_2(\mathcal M^{(\ell)})| \
\overset{\eqref{liegtinP}, \eqref{dasneuef}}\geq \ |A\cap V_1(\mathcal M^{(\ell-1)})| \
\ge\ |A\cap V_1(\mathcal N^{(\ell-1)})|,
\end{equation} 
where the last inequality holds because of~\eqref{absorbi} for $\ell -1$.

So, we can choose a subset $X^{(\ell)}\subseteq V_2(\mathcal M^{(\ell)})$ such that 
$|B\cap X^{(\ell)}|=|A\cap V(\mathcal N^{(\ell-1)})|$ for each $(A,B)\in\mathcal M$.
Now, for each $(S,T)\in \mathcal M^{(\ell)}$ write $\widehat{T}:=T\cap X^{(\ell)}$, and choose a subset $\widehat{S}$ of $S$ of size $|\widehat{T}|$. 
 Set
\[
\mathcal{N}^{(\ell)}:=\left\{(\widehat{S},\widehat{T})\::\:(S,T)\in \mathcal
M^{(\ell)}, |\widehat{T}|\geq \frac{\epsilon}{2}\cdot |T| \right\}. 
\]
Then~\eqref{absorbi}  and~\eqref{dasneueF} hold for $\ell$.

For~\eqref{eq:NellSemiregular}, note that Fact~\ref{fact:BigSubpairsInRegularPairs} implies that $\mathcal N^{(\ell)}$ is an $\left(\eps,2\alpha-\eps^3,\frac{\epsilon}{2}\mu^{(\ell)}h\right)$-semiregular matching.

In order to see~\eqref{eq:SizeNInduc}, it suffices to observe that
\begin{align*}
|V_2(\mathcal N^{(\ell)})|
&=\sum_{(\widehat S,\widehat T)\in\mathcal N^{(\ell)}} |\widehat{T}|\\[8pt]
&\geq   | X^{(\ell)}|\  -\ \sum_{(S,T)\in\mathcal M^{(\ell)} } \frac{\epsilon}{2} \cdot |T|\\[8pt]
&\geq  \sum_{(A,B)\in\mathcal M} |A\cap V_1(\mathcal N^{(\ell-1)})|\ -\ \frac{\epsilon}{2} \cdot |V_2(\mathcal M^{(\ell)} )|\\[8pt]
&= |V_1(\mathcal N^{(\ell-1)})|\ -\ \frac{\epsilon }{2} \cdot |V_2(\mathcal M^{(\ell)} )|.
\end{align*}

\paragraph{Step 5: The final matching.}
For each $\ell=1,2,\ldots,j^*$ let $\mathcal L$ denote the set of all $\M$-edges $(A,B)\in\mathcal M$ with  $|A'|>\frac{\epsilon}{2} \cdot |A|$, where $A':=A\setminus
V_1(\mathcal N^{(\ell-1)})$. Further,  for each $(A,B)\in\mathcal M$, choose a set $B'\subset B\setminus V_2(\mathcal N^{(\ell)})$ of cardinality $|A'|$. This is possible by~\eqref{dasneueF}.
Set 
$$\mathcal K:=\{(A', B'):(A,B)\in\mathcal L\}.$$ 
By the assumption of the lemma, for every $(A',B')\in\mathcal K$
there are an edge $(A,B)\in\mathcal M$ and a dense spot $D=(U,W;F)\in\DenseSpots$ such that 
\begin{equation}\label{lacolegiala}
\text{$A'\subset A\subset U$ and
$B'\subset B\subset W$.}
\end{equation}
 Since $\mathcal M$ is $(\eps^3,\rho,h)$-semiregular we have by Fact~\ref{fact:BigSubpairsInRegularPairs} that $\mathcal K$ is a $(\eps,\rho-\eps^3,\frac{\epsilon}{2} h)$-semiregular matching. Set $$\mathcal M':=\mathcal
K\cup\bigcup_{\ell=0}^{j^*}\mathcal N^{(\ell)},$$ now it is easy to check that $\mathcal M'$ is an
$(\epsilon,\alpha,\pi h)$-semiregular matching. Using~\eqref{lacolegiala} together with~\eqref{absorbi} and \eqref{ribot}, we see that  {\bf(C2)} holds for $\mathcal M'$. 

In order to see {\bf(C1)}, we calculate
\begin{align}
\notag
|V(\mathcal M) \setminus V(\mathcal M')|
&\le
\sum_{(A,B)\in\mathcal M\setminus \mathcal L}|A'\cup B'|\ 
+\ 
\sum_{(A,B)\in\mathcal L}\ \sum_{\ell=1}^{j^*} \Big( |A\cap V_1(\mathcal N^{(\ell -1)})| - |B\cap V_2(\mathcal N^{(\ell)})| \Big)\\[10pt]
\notag &\le 
\frac{\epsilon}{2} \cdot  \sum_{(A,B)\in\mathcal M\setminus \mathcal L}|A\cup B| \ 
+ \ 
\sum_{\ell=1}^{j^*} \Big( |V_1(\mathcal N^{(\ell -1)})| - |V_2(\mathcal N^{(\ell)})| \Big) \\[10pt]
\notag &\overset{\eqref{eq:SizeNInduc}}\le
\frac{\epsilon}{2}   n\ 
+\ 
\sum_{\ell=1}^{j^*}\frac{\epsilon }{2} \cdot |V_2(\mathcal M^{(\ell)} )|\\[10pt]
 &\le
\epsilon  n\;.
\end{align}

Using the fact that $V_2(\mathcal N^{(0)})\subseteq V(\mathcal M')\setminus V(\mathcal M)$ the last calculation also implies that
 \begin{align*} 
|V(\mathcal M')|-|V(\mathcal M)|
&\ge |V_2(\mathcal N^{(0)})|-|V(\mathcal M) \setminus V(\mathcal M')|\\[8pt]
&\overset{\eqref{eq:SizeNInduc}}\ge
\tau' n -\epsilon n\\
&>\frac{\tau'}2 n\;,
\end{align*}
since $\eps<\alpha\leq \tau'/2$ by assumption.
\end{proof}

\medskip

Iterating Lemma~\ref{lem:AugmentORSeparate} we prove the main result of the section.

\begin{lemma}\label{lem:Separate}
For every
$\Omega\in \mathbb N$, $\rho\in (0,1/\Omega)$ there exists a number $\beta>0$ such that for
every $\epsilon\in (0,\beta)$, 
there are
$\epsilon',\pi>0$ such that for each $\gamma>0$ there exists $k_0\in\mathbb N$ such that  the following holds for
every $k>k_0$ and $c\in(\gamma k,k/2)$.

Let $G$ be a graph  of order $n$, with $\maxdeg(G)\le \Omega k$. Let 
$\DenseSpots$ be a $(\rho k, \rho)$-dense cover of $G$, and let $\mathcal M$ be
an $(\epsilon',\rho,c)$-semiregular matching that is absorbed by $\DenseSpots$. Let
$\mathcal C$ be a $c$-ensemble in $G$ outside $V(\mathcal M)$. 
Let $Y\subset V(G)\setminus (V(\mathcal M)\cup
 \bigcup\mathcal C)$.
Assume
that for each $(U,W;F)\in\DenseSpots$, and for each $C\in\mathcal V_1(\mathcal M)\cup \mathcal C$ we have that 
 \begin{equation}\label{eq:510ass1}
 U\cap C\in\{\emptyset,C\}\;.
 \end{equation} 

Then there exists an
$(\epsilon,\beta,\pi c)$-semiregular matching $\mathcal
M'$ such that
  \begin{enumerate}[(i)]
    \item\label{it:Sep1} $|V(\mathcal M)\setminus V(\mathcal M')|\le
    \epsilon n$,
    \item\label{it:Sep2} 
 for each $(T,Q)\in\mathcal M'$ there are sets $C_1\in\mathcal V_1(\mathcal
 M)\cup \mathcal C$, $C_2\in \mathcal V_2(\mathcal  M)\cup\{Y\}$ and a dense
 spot $D=(U,W;F)\in\DenseSpots$ such that $T\subset C_1\cap U$ and $Q\subset
C_2\cap W$, and
    \item\label{it:Sep3} $\mathcal M'$ can be partitioned into $\mathcal M_1$
    and $\mathcal M_2$ so that
    $$e\left( (\bigcup \mathcal C\cup V_1(\mathcal M )) \setminus   V_1(\mathcal M_1) \  , \ (Y\cup V_2(\mathcal M)) \setminus V_2(\mathcal M_2) \right)\  < \ \rho kn\;.$$
  \end{enumerate}
\end{lemma}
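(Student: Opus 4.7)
The approach is to iterate Lemma~\ref{lem:AugmentORSeparate}, starting with $(\mathcal{M}_0, \mathcal{C}_0, Y_0) := (\mathcal{M}, \mathcal{C}, Y)$. At step $i \ge 1$ I would apply Lemma~\ref{lem:AugmentORSeparate} to $(\mathcal{M}_{i-1}, \mathcal{C}_{i-1}, Y_{i-1})$ with fixed parameter $\tau_\PARAMETERPASSING{L}{lem:AugmentORSeparate} := \rho/2$ but updated density parameter $\rho_\PARAMETERPASSING{L}{lem:AugmentORSeparate} := \rho_{i-1}$, where $\rho_0 := \rho$ and $\rho_i := \alpha_{i-1}$ is the density delivered at the previous step. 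If the first alternative triggers we stop, setting $\mathcal{M}' := \mathcal{M}_{i-1}$, $\mathcal{M}_1 := \mathcal{M}_{i-1}''$ (the sub-matching delivered by that alternative), and $\mathcal{M}_2 := \mathcal{M}_{i-1} \setminus \mathcal{M}_{i-1}''$. Otherwise the second alternative produces $\mathcal{M}_i$ with $|V(\mathcal{M}_i)| \ge |V(\mathcal{M}_{i-1})| + \tfrac{\tau'}{2} n$, and I then refresh $\mathcal{C}_i$ as the collection of pieces of $\mathcal{C}_{i-1} \cup \mathcal{V}_1(\mathcal{M}_{i-1})$ that still lie outside $V(\mathcal{M}_i)$ and meet the ensemble size threshold, and set $Y_i := Y \setminus V(\mathcal{M}_i)$.

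Since $\tau'$ depends only on $\Omega$ and $\tau = \rho/2$ (as the proof of Lemma~\ref{lem:AugmentORSeparate} shows, it is independent of the current density $\rho_i$), each augmenting iteration contributes a fixed fraction $\tfrac{\tau'}{2}$ of $n$ to the matched vertex set, so the procedure must terminate via the first alternative after at most $N := \lceil 2/\tau' \rceil$ steps. The output density is then $\beta := \alpha_N = \left(\tfrac{\tau'}{16\Omega}\right)^{N} \rho$, a positive constant depending only on $\Omega$ and $\rho$. The regularity parameter transforms as $\epsilon \mapsto \epsilon^3$ when one re-enters Lemma~\ref{lem:AugmentORSeparate}, so I take $\epsilon'$ to be of order $\epsilon^{3^N}$, and $\pi$ to be the product of the $\pi$-parameters of Lemma~\ref{lem:AugmentORSeparate} across the $N$ steps. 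Setting the per-iteration $\epsilon_\PARAMETERPASSING{L}{lem:AugmentORSeparate} := \epsilon/(2N)$ ensures the cumulative vertex loss $|V(\mathcal{M}) \setminus V(\mathcal{M}')|$ stays below $\epsilon n$, giving property~\ref{it:Sep1}.

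Property~\ref{it:Sep2} follows inductively from \textbf{(C2)} of Lemma~\ref{lem:AugmentORSeparate}: each cluster produced at step $i$ is contained in a cluster of $\mathcal{V}_1(\mathcal{M}_{i-1}) \cup \mathcal{C}_{i-1}$ or $\mathcal{V}_2(\mathcal{M}_{i-1}) \cup \{Y\}$, which by induction traces back to $\mathcal{V}_1(\mathcal{M}) \cup \mathcal{C}$ or $\mathcal{V}_2(\mathcal{M}) \cup \{Y\}$, and the dense-spot containment is carried forward verbatim. The same induction preserves the ensemble-compatibility hypothesis~\eqref{eq:510ass1} for $\mathcal{C}_i$, since any new cluster lies inside a previous cluster already compatible with every dense spot. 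Finally, property~\ref{it:Sep3} follows from the edge bound of the terminating first alternative applied with $\tau = \rho/2$: the set difference between $(\bigcup \mathcal{C} \cup V_1(\mathcal{M})) \setminus V_1(\mathcal{M}_1)$ and $\bigcup \mathcal{C}_{i-1} \cup V_1(\mathcal{M}_{i-1} \setminus \mathcal{M}_1)$ consists of the at most $\epsilon n$ vertices lost along the iteration, which carry at most $\epsilon\Omega k n$ incident edges; combined with the $\tfrac{\rho}{2} n k$ bound from Lemma~\ref{lem:AugmentORSeparate} this stays below $\rho n k$ provided $\epsilon$ is chosen small enough relative to $\rho/\Omega$. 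The principal technical obstacle is precisely this parameter bookkeeping — propagating~\eqref{eq:510ass1} and surviving the $N$-fold $\epsilon \mapsto \epsilon^{1/3}$ inflation of the regularity parameter — rather than any new conceptual ingredient beyond Lemma~\ref{lem:AugmentORSeparate}.
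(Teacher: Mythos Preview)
Your approach is essentially the paper's: iterate Lemma~\ref{lem:AugmentORSeparate} with $\tau=\rho/2$, terminate after at most $\lceil 2/\tau'\rceil$ rounds when alternative~(I) fires, and read off $\mathcal M_1,\mathcal M_2$ from the resulting $\mathcal M''$. The parameter hierarchy (densities $\rho_i$ dropping by the factor $\tau'/(16\Omega)$, regularity $\epsilon'\sim\epsilon^{3^N}$, $\pi$ a product of the step-$\pi$'s) is exactly right.

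There is one organisational difference and one genuine bookkeeping gap. The paper, at each step, rebuilds the ensemble and the ``$Y$''-set directly from the \emph{original} data: $\tilde{\mathcal C}=\{C\setminus V(\mathcal M^{(\ell)}):C\in\mathcal V_1(\mathcal M)\cup\mathcal C,\ |C\setminus V_1(\mathcal M^{(\ell)})|\ge\Pi^{(\ell)}c\}$ and $\tilde Y=(Y\cup V_2(\mathcal M))\setminus V_2(\mathcal M^{(\ell)})$. You instead iterate from the previous step, and in particular take $Y_i=Y\setminus V(\mathcal M_i)$, omitting $V_2(\mathcal M)$. Both variants can be made to work, but yours requires an extra loss argument on the second coordinate of~(iii) that you did not mention.

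The real gap is your claim that the first-coordinate set difference ``consists of the at most $\epsilon n$ vertices lost along the iteration''. That is not true: besides the $\epsilon n$ loss from {\bf(C1)}, you also discard all sub-threshold remnants $C\setminus V(\mathcal M_i)$ when refreshing $\mathcal C_i$, and these are \emph{not} controlled by~{\bf(C1)}. In the paper this is handled by the bound $|X|\le |\mathcal V_1(\mathcal M)\cup\mathcal C|\cdot\Pi^{(\ell)}c\le \Pi^{(\ell)}n\le\tfrac{\rho}{2\Omega}n$, which uses that the threshold $\Pi^{(\ell)}c$ is the product of all the step-$\pi$'s and hence tiny. Your per-iteration setup would instead need $\sum_j\pi^{(j)}n$ small, which is achievable (each $\pi^{(j)}$ carries a factor of the step-$\epsilon$) but must be arranged explicitly. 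Also, your sentence ``per-iteration $\epsilon:=\epsilon/(2N)$'' conflicts with the cubing requirement you (correctly) noted a line earlier; the step-$\epsilon$'s must vary as $(\epsilon/2)^{3^{L-\ell}}$, and it is their \emph{sum} being at most $\epsilon$ that gives~(i).
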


\begin{proof}
Let $\Omega$ and $\rho$ be given. Let $\tau':=\tau'_\PARAMETERPASSING{L}{lem:AugmentORSeparate} $ be the output given by Lemma~\ref{lem:AugmentORSeparate} for input
parameters $\Omega_\PARAMETERPASSING{L}{lem:AugmentORSeparate} :=\Omega$ and  $\tau_\PARAMETERPASSING{L}{lem:AugmentORSeparate} :=\rho/2$.

Set $\rho^{(0)}:=\rho$, 
set $L:=\lceil 2/\tau'\rceil +1$, and for $\ell\in [L]$, 
 inductively define
$\rho^{(\ell)}$ to be the output $\alpha_\PARAMETERPASSING{L}{lem:AugmentORSeparate} $ given by Lemma~\ref{lem:AugmentORSeparate} for the further input
parameter $\rho_\PARAMETERPASSING{L}{lem:AugmentORSeparate} :=\rho^{(\ell-1)}$ (keeping $\Omega_\PARAMETERPASSING{L}{lem:AugmentORSeparate}=\Omega $ and  $\tau_\PARAMETERPASSING{L}{lem:AugmentORSeparate}=\rho/2$ fixed).
Then $\rho^{(\ell+1)}\leq\rho^{(\ell)}$ for all $\ell$.
 Set
$\beta:=\rho^{(L)}$.

Given $\epsilon<\beta$
 we set
$\epsilon^{(\ell)}:=(\epsilon/2)^{3^{L-\ell}}$ for $\ell\in[L]\cup\{0\}$, and 
set 
$\epsilon':=\epsilon^{(0)}$.
Clearly,
\begin{equation}\label{summederepsilons}
\sum_{\ell=0}^L\epsilon^{(\ell)} \ \leq \ 
 \epsilon.
\end{equation}

Now, for $\ell+1\in[L]$, 
let $\pi^{(\ell)}:=\pi_\PARAMETERPASSING{L}{lem:AugmentORSeparate}$ be given by Lemma~\ref{lem:AugmentORSeparate} for input parameters $\Omega_\PARAMETERPASSING{L}{lem:AugmentORSeparate} :=\Omega$, $\tau_\PARAMETERPASSING{L}{lem:AugmentORSeparate} :=\rho/2$, $\rho_\PARAMETERPASSING{L}{lem:AugmentORSeparate} :=\rho^{(\ell)}$ and $\eps_\PARAMETERPASSING{L}{lem:AugmentORSeparate} :=\epsilon^{(\ell+1)}$.  For $\ell\in[L]\cup\{0\}$, set $\Pi^{(\ell)}:=\frac{\rho}{2\Omega}\prod_{j=0}^{\ell-1}\pi^{(j)}$. Let $\pi:=\Pi^{(L)}$.

Given $\gamma$, let $k_0$ be the maximum of the lower bounds $k_{0_\PARAMETERPASSING{L}{lem:AugmentORSeparate}}$ given by Lemma~\ref{lem:AugmentORSeparate} for input parameters $\Omega_\PARAMETERPASSING{L}{lem:AugmentORSeparate} :=\Omega$, $\tau_\PARAMETERPASSING{L}{lem:AugmentORSeparate} :=\rho/2$, $\rho_\PARAMETERPASSING{L}{lem:AugmentORSeparate} :=\rho^{(\ell -1)}$, $\eps_\PARAMETERPASSING{L}{lem:AugmentORSeparate} :=\epsilon^{(\ell)}$, $\gamma_\PARAMETERPASSING{L}{lem:AugmentORSeparate}:=\gamma\Pi^{(\ell)}$,  for $\ell\in[L]$.

\medskip

Suppose now we are given $G$, $\mathcal D$, $\mathcal C$, $Y$ and $\mathcal M$. Suppose further that $c>\gamma k>\gamma k_0$. 
Let $\ell\in\{0,1,\ldots,L\}$ be maximal such that there is a matching $\mathcal M^{(\ell)}$ with the following properties:

\begin{enumerate}[(a)]
 \item\label{MellowYellow}
 $\mathcal M^{(\ell)}$ is an $( \epsilon^{(\ell)}, \rho^{(\ell)},\Pi^{(\ell)} c)$-semiregular matching,
\item 
 \label{eq:NatImpAl}
$|V(\mathcal M^{(\ell)})|\ge \ell\cdot \frac{\tau'}2 n$,
\item
\label{epsipepsi}
$|V(\mathcal M)\setminus V(\mathcal M^{(\ell)})|\le \sum_{i=0}^\ell\epsilon^{(\ell)} n$, and
    \item\label{dontworryaboutMell} 
 for each $(T,Q)\in\mathcal M^{(\ell)}$ there are sets $C_1\in\mathcal V_1(\mathcal
 M)\cup \mathcal C$, $C_2\in \mathcal V_2(\mathcal  M)\cup\{Y\}$ and a dense
 spot $D=(U,W;F)\in\DenseSpots$ such that $T\subset C_1\cap U$ and $Q\subset
C_2\cap W$.
\end{enumerate}

Observe that such a number $\ell$ exists, as for $\ell=0$ we may take $\mathcal M^{(0)}=\mathcal M$. Also note that $\ell\leq 2/\tau' <L$ because of~\eqref{eq:NatImpAl}.

We now  apply Lemma~\ref{lem:AugmentORSeparate} with input parameters
$\Omega_\PARAMETERPASSING{L}{lem:AugmentORSeparate}:=\Omega$, $\ \tau_\PARAMETERPASSING{L}{lem:AugmentORSeparate}:=\rho/2$, $\ \rho_\PARAMETERPASSING{L}{lem:AugmentORSeparate}:=\rho^{(\ell)}$, $\ \epsilon_\PARAMETERPASSING{L}{lem:AugmentORSeparate}:=\epsilon^{(\ell+1)}<\beta\leq \rho^{(\ell +1)}=\alpha_\PARAMETERPASSING{L}{lem:AugmentORSeparate} $, 
$\ \gamma_\PARAMETERPASSING{L}{lem:AugmentORSeparate}:=\gamma\Pi^{(\ell)}$ to the  graph
$G$ with the $(\rho^{(\ell)} k,\rho^{(\ell)})$-dense cover $\DenseSpots$, the 
$( \epsilon^{(\ell)},  \rho^{(\ell)},\Pi^{(\ell)}c)$-semiregular matching $\mathcal M^{(\ell)}$, the set 
$$\tilde Y:=(Y\cup V_2(\mathcal M))\setminus V_2(\mathcal M^{(\ell)}),$$
and the $(\Pi^{(\ell)}c)$-ensemble
$$\tilde{\mathcal C}:=  \left\{ C\setminus
V(\mathcal M^{(\ell)}) \ : \ C\in \V_1(\mathcal M)\cup \mathcal C,\  |C\setminus
V_1(\mathcal M^{(\ell)})|\ge
\Pi^{(\ell)}c\right\}.$$ 

Lemma~\ref{lem:AugmentORSeparate} yields a semiregular matching  which either corresponds to $\mathcal M''$ as in Assertion~\eqref{it:AugmAss1} or to $\mathcal M'$ as in Assertion~\eqref{it:AugmAss2}.
Note that in the latter case, the matching $\mathcal M'$ actually constitutes an $( \epsilon^{(\ell+1)},  \rho^{(\ell+1)},\Pi^{(\ell+1)}c)$-semiregular matching $\mathcal M^{(\ell+1)}$ fulfilling all the above properties for $\ell+1\leq L$. In fact,~\eqref{eq:NatImpAl} and~\eqref{epsipepsi} hold for $\M^{(\ell+1)}$ because of $\mathbf{(C1)}$, and it is not difficult to deduce~\eqref{dontworryaboutMell} from $\mathbf{(C2)}$ and from~\eqref{dontworryaboutMell} for $\ell$. But this contradicts the choice of $\ell$. We conclude that we obtained a semiregular matching $\mathcal M''\subseteq \mathcal M^{(\ell)}$ as in Assertion~\eqref{it:AugmAss1} of Lemma~\ref{lem:AugmentORSeparate}.

Thus, in other words, $\mathcal M^{(\ell)}$ can be partitioned into $\mathcal M_1$ and $\mathcal M_2$
so that
\begin{equation}\label{eq:LifeIsNice}
e\Big(\bigcup \tilde\C\cup V_1(\mathcal M_2)\ , \ \tilde Y\cup
V_2(\mathcal M_1) \Big)\ <\ \tau_\PARAMETERPASSING{L}{lem:AugmentORSeparate} kn\ =\ \rho kn/2.
\end{equation}
 Set $\mathcal M':=\mathcal M^{(\ell)}$. Then $\mathcal M'$ is $(\epsilon,\beta,\pi c)$-semiregular by~\eqref{MellowYellow}.
Note that Assertion~\eqref{it:Sep1} of the lemma holds by~\eqref{summederepsilons} and by~\eqref{epsipepsi}.  Assertion~\eqref{it:Sep2}  holds because of~\eqref{dontworryaboutMell}. 

Since
$$(Y\cup V_2(\mathcal M)) \setminus V_2(\mathcal M_2) \ \subseteq \ \tilde Y\cup V_2(\mathcal M_1),$$
and because of~\eqref{eq:LifeIsNice} we know that in order to prove Assertion~\eqref{it:Sep3} it suffices to show that
 the set
\begin{align*}
X\ := \ & 
\big((\bigcup \mathcal C\cup V_1(\mathcal M )) \setminus   V_1(\mathcal M_1) \big) \setminus\big(\bigcup\tilde\C\cup V_1(\mathcal M_2)\big)
\\
\ = \ &
\big(\bigcup \mathcal C\cup V_1(\mathcal M)\big) \setminus\big(\bigcup\tilde\C\cup V_1(\mathcal M^{(\ell)})\big)
\end{align*}
 sends at most $ \rho kn/2$ edges to the rest of the graph. 
For this, it would be enough to see that $|X| \leq \frac\rho{2\Omega}n$, as by assumption, $G$ has maximum degree $\Omega k$.

To this end, note that
by assumption, $| \V_1(\mathcal M)\cup \mathcal C|\le \frac{n}{c}$. 
Further,
the definition of $\tilde\C$ implies
that for each $A\in \mathcal C\cup \V_1(\mathcal M)$ we have 
that 
$|A\setminus \big(\bigcup\tilde \C\cup V_1(\mathcal
M^{(\ell)}\big)|\le \Pi^{(\ell)}c$.
Combining these two observations, we obtain that
$$|X| 
< \Pi^{(\ell)}n
\le \frac \rho{2\Omega}n\;,$$ 
as desired.

\end{proof}

\section{Rough structure of LKS graphs}\label{sec:LKSStructure}
In this section we give a structural result for graphs
$G\in\LKSsmallgraphs{n}{k}{\eta}$, stated in Lemma~\ref{prop:LKSstruct}. Similar structural results were essential
also for proving Conjecture~\ref{conj:LKS} in the dense setting
in~\cite{AKS95,PS07+}. There, a certain matching structure was proved to exist
in the cluster graph of the host graph. This matching structure then allowed to
embed a given tree into the host graph.

Naturally, in our possibly sparse setting the sparse decomposition $\class$ of $G$ will enter the picture (instead of just the cluster graph of $G$). There is an important subtlety though: we need to ``re-regularize'' the cluster graph $\BGblack$ of $\class$. The necessity of this step arises from the ambiguity of the sparse decomposition $\class$ given by Lemma~\ref{lem:LKSsparseClass}, see Remark~\ref{rem:ambiguity}. Consequently, the cluster graph $\BGblack$ given by a sparse decomposition $(\HugeVertices, \clusters,\DenseSpots, \Gblack, \Gexp,\smallatoms)$ of $G$ might not be suitable for locating a matching structure in
analogue to the dense setting. In this case, we have to find another
regularization of parts of $G$, partially based on $\Gblack$. Lemma~\ref{lem:Separate} is the main tool to this end. The re-regularization is captured by the semiregular matchings $\M_A$ and $\M_B$. 

Let us note that this step is one
of the biggest differences between our approach and the announced solution of
the Erd\H os--S\'os Conjecture by Ajtai, Koml\'os, Simonovits and Szemer\'edi. In
other words, the nature of the graphs arising in the Erd\H os--S\'os Conjecture
allows a less careful approach with respect to regularization, still yielding a
structure suitable for embedding trees. We discuss the necessity of this step in further detail
in Section~\ref{ssec:whyaugment}, after proving the main result of this section, Lemma~\ref{prop:LKSstruct}, in Section~\ref{sec6first}.

\subsection{Finding the structure}\label{sec6first}

We now introduce some notation we need in order to state Lemma~\ref{prop:LKSstruct}. Suppose that $G$ is a graph with a $(k,\Omega^{**},\Omega^*,\Lambda,\gamma,\epsilon,\nu,\rho)$-sparse decomposition 
$$\class=(\HugeVertices, \clusters,\DenseSpots, \Gblack, \Gexp,\smallatoms
)\;$$ with respect to $\largevertices{\eta}{k}{G}$ and
$\smallvertices{\eta}{k}{G}$.
 Suppose further that  $\mathcal M_A,\mathcal M_B$ are
$(\epsilon',d,\gamma k)$-semiregular matchings  in $\GD$. We then define the triple \index{mathsymbols}{*XA@$\XA(\eta,\class, \mathcal M_A,\mathcal M_B)$} \index{mathsymbols}{*XB@$\XB(\eta,\class, \mathcal M_A,\mathcal M_B)$}
\index{mathsymbols}{*XC@$\XC(\eta,\class, \mathcal M_A,\mathcal
M_B)$}
$(\XA,\XB,\XC)=(\XA,\XB,\XC)(\eta,\class, \mathcal M_A,\mathcal M_B)$
by setting
\begin{align*}
\XA&:=\largevertices{\eta}{k}{G}\setminus V(\mathcal M_B)\;,\\
\XB&:=\left\{v\in V(\mathcal M_B)\cap \largevertices{\eta}{k}{G}\::\:\widehat{\deg}(v)<(1+\eta)\frac
k2\right\}\;,\\
\XC&:=\largevertices{\eta}{k}{G}\setminus(\XA\cup\XB)\;,
\end{align*}
where 
$\widehat{\deg}(v)$ on the second
line is defined by
\begin{equation}\label{eq:defhatdeg}
\widehat{\deg}(v):=\deg_G\big(v,\smallvertices{\eta}{k}{G}\setminus
(V(\Gexp)\cup\smallatoms\cup V(\mathcal M_A\cup\mathcal M_B)\big)\;.
\end{equation}
Clearly, $\{\XA,\XB,\XC\}$ is a partition of $\largevertices{\eta}{k}{G}$.

We now give the main and only lemma of this section, a structural result for graphs from $\LKSsmallgraphs{n}{k}{\eta}$.
\begin{proposition}\label{prop:LKSstruct}
For every $\eta>0, \Omega>0, \gamma\in (0,\eta/3)$ there is
$\beta>0$ so that for every $\epsilon\in(0,\frac{\gamma^2\eta}{12})$ there exist $\epsilon',\pi>0$ such that for every $\nu>0$ there exists $k_0\in \mathbb N$ such that for every $\Omega^*$ with  $\Omega^*< \Omega$ and every $k$ with $k> k_0$  the following holds.

Suppose $\class=(\HugeVertices, \clusters,\DenseSpots, \Gblack, \Gexp,\smallatoms )$
is a $(k,\Omega^{**},\Omega^*,\Lambda,\gamma,\epsilon',\nu,\rho)$-sparse
decomposition of a graph $G\in\LKSsmallgraphs{n}{k}{\eta}$ with respect to  $S:=\smallvertices{\eta}{k}{G}$ and
$L:=\largevertices{\eta}{k}{G}$ which captures all but at most $\eta kn/6$ edges
of $G$. Let $\clustersize$ be the size of the clusters $\clusters$.\footnote{The number $\clustersize$ is irrelevant when $\clusters=\emptyset$. In particular, note that in that case we necessarily have $\M_A=\M_B=\emptyset$ for the semiregular matchings given by the lemma.} Write 
\begin{equation}\label{eq:S0alt}
S^0:=S\setminus \left(V(\Gexp)\cup\smallatoms\right)\;.
\end{equation}

Then $\GD$ contains  two
$(\epsilon,\beta,\pi \clustersize)$-semiregular matchings
$\mathcal M_A$ and $\mathcal M_B$ such
that for the triple $(\XA,\XB,\XC):=(\XA,\XB,\XC)(\eta,\class, \mathcal M_A,\mathcal M_B)$
we have
\begin{enumerate}[(a)]
  \item \label{prop6.1a}
$V(\mathcal M_A)\cap V(\mathcal
M_B)=\emptyset$,
\item \label{eq:lastminute} $V_1(\M_B)\subset S^0$,
\item \label{eq:Mspots} for each $(T,Q)\in\M_A\cup\M_B$, there is a dense spot  $(A_D,B_D; E_D)\in \DenseSpots$ with $T\subset
A_D$, $Q\subset B_D$, and furthermore, either $T\subset S$ or $T\subset L$, and
$Q\subset S$ or $Q\subset L$,
\item \label{eq:M1} for each $X_1\in\V_1(\M_A\cup\M_B)$ there exists a cluster $C_1\in \clusters$ such that $X_1\subset C_1$, and for each $X_2\in\V_2(\M_A\cup\M_B)$ there exists $C_2\in \clusters\cup\{L\cap \smallatoms\}$ such that $X_2\subset C_2$,
\item \label{fewfewfew} $e_{\Gcapt}\big(\XA,S^0\setminus V(\mathcal M_A)\big)\le
\gamma kn$,
\item\label{newpropertyS6} $e_{\Gblack}(V(G)\setminus
V(\M_A\cup \M_B))\le \epsilon \Omega^* kn$,
\item\label{nicDoNAtom} for the semiregular matching $\NAtom:=\{(X,Y)\in\M_A\cup\M_B\::\: (X\cup Y)\cap\smallatoms\not=\emptyset\}$ we have $e_{\Gblack}\big(V(G)\setminus
V(\M_A\cup \M_B),V(\NAtom)\big)\le \epsilon \Omega^* kn$,
\item\label{Mgoodisblack} for
$\Mgood:=\{(A,B)\in \mathcal M_A\::\: A\cup B\subset \XA \}$ we have that each $\Mgood$-edge is an edge of
$\BGblack$,
and  at least one of the following conditions holds 
\begin{itemize}
\item[{\bf(K1)}]$2e_G(\XA)+e_G(\XA, \XB)\ge \eta kn/3$,
\item[{\bf(K2)}]  $|V(\Mgood)|\ge \eta n/3$.
\end{itemize}
\end{enumerate}
\end{proposition}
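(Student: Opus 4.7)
The plan is to obtain $\M_B$ and $\M_A$ by two successive applications of Lemma~\ref{lem:Separate}, verify properties (a)--(g) essentially from the lemma's output, and then establish the dichotomy (K1)/(K2) in (h) via a careful edge-count using the hypothesis $G\in\LKSsmallgraphs{n}{k}{\eta}$.

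First I would construct $\M_B$ by applying Lemma~\ref{lem:Separate} to $G$ with starting matching $\M:=\emptyset$, with $\mathcal{C}$ the family of clusters of $\clusters$ contained in $S^0$, with $Y$ a suitable union of $L$-clusters and $L\cap\smallatoms$, and with a dense cover obtained from $\DenseSpots$ by restricting to those dense spots whose ``$U$-side'' sits in $S^0$. Assertion~(ii) of Lemma~\ref{lem:Separate} then gives $V_1(\M_B)\subset\bigcup\mathcal{C}\subset S^0$, which is property~(b); the cluster-respecting property of the dense cover yields the $\M_B$-parts of (c) and (d). The triple $(\XA,\XB,\XC)$ is then determined by $\M_B$. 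A second application of Lemma~\ref{lem:Separate}, with $\M:=\emptyset$, with $\mathcal{C}$ the $\XA$-clusters (avoiding $V(\M_B)$), with $Y$ a union of clusters in $V(G)\setminus(V(\M_B)\cup\HugeVertices)$ together with $L\cap\smallatoms$, and with the dense cover cropped to avoid $V(\M_B)$, produces $\M_A$ satisfying (a), (c), (d). The separation statement~(iii) applied here is precisely the bound $e_{\Gcapt}(\XA, S^0\setminus V(\M_A))\leq \gamma kn$ of (e); properties (f) and (g) follow from the same separation together with the observation that any $\Gblack$-edge between two clusters lies in some dense spot (property~(5) of Definition~\ref{bclassdef}), so $\Gblack$-edges missed by $\M_A\cup\M_B$ contribute to the separation bound.

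The heart of the argument is (h). Assume $|V(\Mgood)|<\eta n/3$, so (K2) fails; it suffices to prove (K1). The total $L$-degree is at least $(1+\eta)(1/2+\eta)kn$ by the LKS hypothesis. I would partition this total according to where the other endpoint lies: in $V_1(\M_B)\subset S^0$, in $S^0\setminus V(\M_A\cup\M_B)$, in $L$ itself, in $\HugeVertices\cup V(\Gexp)\cup\smallatoms$, or on an uncaptured edge (at most $\eta kn/6$ by hypothesis). The $\widehat{\deg}$-threshold $(1+\eta)k/2$ defining $\XB$/$\XC$ is tuned so that for $v\in\XC$ at least $(1+\eta)k/2$ of its degree reaches $S^0\setminus V(\M_A\cup\M_B)$, whereas for $v\in\XB$ this contribution is strictly smaller, forcing $\XB$-vertices to push their leftover degree either into $L$ (contributing to (K1)) or into captured matching slots (which, since $|V(\Mgood)|<\eta n/3$, are too few to absorb it). Combining these bounds with the error terms from (e), (f), (g) yields $2e_G(\XA)+e_G(\XA,\XB)\geq \eta kn/3$, i.e.~(K1).

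The main obstacle is this last step. The threshold $(1+\eta)k/2$ defining $\XB$ is tuned precisely so that the balance works, but keeping track of every error term and simultaneously exploiting Definition~\ref{def:LKSsmall} (which controls the degree of neighbours of $S$) requires care. A further technicality lies in arranging the two applications of Lemma~\ref{lem:Separate} so that $\M_B$ and $\M_A$ are structurally compatible --- disjoint supports, and with every matching set living inside a single cluster or inside $L\cap\smallatoms$ --- which requires appropriately pruning the dense cover and the set $Y$ at each step.
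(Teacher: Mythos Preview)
Your plan has a genuine gap: it omits the Gallai--Edmonds step, and without it properties~(f), (g) and the first assertion of~(h) do not follow.

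In the paper, the proof begins by applying the Gallai--Edmonds Matching Theorem to the \emph{cluster graph} $\BGblack$. This yields a separator $\SEPARATOR$ and a matching $N_0$, which is then extended (using factor-criticality) to a matching $N_1\subset\BGblack$ with $e_{\BGblack}(\clusters\setminus V(N_1))=0$. Only \emph{after} this does Lemma~\ref{lem:Separate} enter, and it is applied just \emph{once}, with the non-trivial starting matching $\mathcal M$ corresponding to a carefully chosen submatching $M\subset N_0$. The output $\mathcal M'$ is split into $\mathcal M_1$ and $\mathcal M_B$, and then one sets $\mathcal M_A:=(\mathcal N_1\setminus\mathcal M)\cup\mathcal M_1$. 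The crucial point is that $\mathcal M_A$ inherits the bulk of the Gallai--Edmonds matching $\mathcal N_1$; this is what drives~(f), since $e_{\Gblack}(V(G)\setminus V(\mathcal N_1))=0$ and $|V(\mathcal N_1)\setminus V(\M_A\cup\M_B)|\le\epsilon n$ by Lemma~\ref{lem:Separate}(i). It also gives the first half of~(h): $\Mgood$ turns out to live entirely inside $\mathcal N_1\setminus\mathcal M$, hence its edges are $\BGblack$-edges by construction.

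Your scheme, by contrast, applies Lemma~\ref{lem:Separate} twice with empty starting matching. But with $\mathcal M=\emptyset$, assertion~(i) of that lemma is vacuous, and assertion~(iii) only bounds edges going \emph{between} $\bigcup\mathcal C\setminus V_1(\M_1)$ and $Y\setminus V_2(\M_2)$. In particular, nothing controls $\Gblack$-edges that stay inside $L$-clusters (or inside any region not straddled by your choice of $\mathcal C$ and $Y$), so there is no reason for $e_{\Gblack}(V(G)\setminus V(\M_A\cup\M_B))$ to be small. Your claim that ``(f) and (g) follow from the same separation together with the observation that any $\Gblack$-edge lies in some dense spot'' conflates the one-sided separation bound of Lemma~\ref{lem:Separate}(iii) with a global coverage statement; the latter is precisely what Gallai--Edmonds supplies. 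Likewise, pairs produced by Lemma~\ref{lem:Separate} are merely absorbed by dense spots (its assertion~(ii)); they need not correspond to edges of $\BGblack$, so your $\Mgood$-edges have no reason to be $\BGblack$-edges.

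Your sketch of the (K1)/(K2) dichotomy is in the right spirit---the paper does derive a contradiction from $\neg\mathbf{(K1)}\wedge\neg\mathbf{(K2)}$ by a degree-count on $\XA$ against $|S\setminus V(\M_A\cup\M_B)|$ versus $|\XA|$---but the argument there relies on~(e) and on the structure of $\M_A$ relative to $\mathcal N_1$, so it cannot be separated from the Gallai--Edmonds step either.
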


\begin{remark}\label{rem:K1vsK2}
In some sense, property~\eqref{Mgoodisblack} is the most important part of Lemma~\ref{prop:LKSstruct}.
Note that the assertion {\bf(K2)} implies a
quantitatively weaker version of {\bf(K1)}. Indeed, consider $(C,D)\in \M_A$. An
average vertex $v\in C$ sends at least $\beta\cdot \pi\clustersize\ge \beta\cdot \pi\nu k$ 
edges to
$D$. Thus, if $|V(\Mgood)|\ge \eta n/3$ then
$\Mgood$ induces at least $(\eta n/6)\cdot\beta\cdot
\pi\nu k=\Theta(kn)$ edges in $\XA$. Such a bound, however, would be insufficient
for our purposes as later $\eta\gg\pi,\nu$.
\end{remark}

\def\LPJJ{_\PARAMETERPASSING{L}{lem:Separate}}
\begin{proof}[Proof of Lemma~\ref{prop:LKSstruct}]
\addtocounter{theorem}{-1}
The idea of the proof is to first obtain some information about the structure of the graph $\BGblack$ with the help of the Gallai--Edmonds Matching Theorem (Theorem~\ref{thm:GallaiEdmonds}). Then this rough structure is refined by Lemma~\ref{lem:Separate} to yield the assertions of the lemma.

Let us begin with setting the parameters. 
Let $\beta:=\beta\LPJJ$ be given by Lemma~\ref{lem:Separate} for input parameters $\Omega\LPJJ:=\Omega$, $\rho\LPJJ:=\gamma^2$,
and let $\epsilon'$ and $\pi$ be given by Lemma~\ref{lem:Separate} 
for further input parameter $\epsilon\LPJJ:=\epsilon$. Last, let $k_0$ be given by Lemma~\ref{lem:Separate} with the above parameters and $\gamma\LPJJ:=\nu$.

Without loss of generality we assume
that $\epsilon'\le \epsilon$ and $\beta<\gamma^2$.
 We write $\BS :=\{C\in\clusters\::\: C\subset S\}$ and
$\BL:=\{C\in\clusters\::\: C\subset L\}$. Further, let $\BSN:=\{C\in\BS\::\: C\subset S^0\}$. 
 
Let $\SEPARATOR$ be a separator and $N_0$ a matching given by
Theorem~\ref{thm:GallaiEdmonds} applied to  the graph $\BGblack$. We will presume that the pair $(\SEPARATOR,N_0)$ is chosen among all
the possible choices so that the number of vertices of $\BSN$ that are isolated
in $\BGblack-\SEPARATOR$ and are not covered by
$N_0$ is minimized. Let $\BSI$ denote the set of
vertices in $\BSN$ that are isolated in $\BGblack-\SEPARATOR$. Recall that the components of $\BGblack-\SEPARATOR$ are factor critical.
 
Define
 $\BSR\subset V(\BGblack)$ as a minimal set such that
\begin{itemize}
\item $\BSI\sm V(N_0) \subseteq \BSR$, and
\item if $C\in \BS$  and there is an edge $DZ\in
E(\BGblack)$ with $Z\in \BSR$, $D\in
\SEPARATOR$, $CD\in N_0$ then $C\in \BSR$.
\end{itemize}

Then each vertex from $\BSR$ is
reachable from $\BSI\sm V(N_0)$ by 
 a path  in $\BGblack$ that alternates between $\BSR$ and $\SEPARATOR$, and has
 every second edge in $N_0$. 
Also note that for all $CD\in N_0$ with $C\in\SEPARATOR$ and $D\in\BSN\setminus\BSR$ we have 
\begin{equation}\label{anti-skub}
\deg_{\BGblack}(C,\BSR)=0\;.
\end{equation}

Let us show another property of $\BSR$.
\begin{claim}\label{cl:SRS0}
$\BSR\subseteq \BSI \subseteq \BSR\cup V(N_0)$. In particular, $\BSR\subset \BSN$.
\end{claim}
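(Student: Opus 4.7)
The plan is to prove the two inclusions separately and then deduce the ``in particular'' statement. The inclusion $\BSI \subseteq \BSR \cup V(N_0)$ is immediate from the very definition of $\BSR$: any vertex of $\BSI$ not covered by $N_0$ lies in $\BSI\setminus V(N_0)\subseteq \BSR$ by the first bullet of the definition. The conclusion $\BSR \subset \BSN$ then follows from the combination $\BSR \subseteq \BSI \subseteq \BSN$, the second inclusion being built into the definition of $\BSI$. So the heart of the matter is to show $\BSR \subseteq \BSI$.

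For this I argue by contradiction, exploiting the extremal choice of $(\SEPARATOR, N_0)$ --- namely, the minimization of the number of $\BSN$-vertices that are isolated in $\BGblack - \SEPARATOR$ and uncovered by $N_0$. Suppose $C \in \BSR \setminus \BSI$. Unpacking the inductive build-up of $\BSR$, there is a sequence
$Z_0, D_1, C_1, D_2, C_2, \ldots, D_m, C_m=C$
with $Z_0 \in \BSI\setminus V(N_0)$, each $D_i\in \SEPARATOR$, each $D_iC_i\in N_0$, each non-matching edge $C_{i-1}D_i$ an edge of $\BGblack$ (setting $C_0:=Z_0$), and each $C_i\in\BSR$. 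Choose $C$ so that this certifying path is shortest; then each $C_i$ with $1\le i\le m-1$ is itself in $\BSR$ via a strictly shorter path, so by the choice of $C$ it lies in $\BSI$. Thus $\{Z_0\},\{C_1\},\ldots,\{C_{m-1}\}$ are singleton components of $\BGblack - \SEPARATOR$, while the component $K$ of $\BGblack - \SEPARATOR$ containing $C$ has at least two vertices.

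The key step is the standard ``augmenting swap'' along this alternating path: set
$$N_0' \ := \ \big(N_0 \setminus \{D_iC_i : i\in[m]\}\big)\ \cup\ \{Z_0D_1, C_1D_2,\ldots, C_{m-1}D_m\}.$$
Then $N_0'$ is a matching of size $|\SEPARATOR|$, and I must verify that $(\SEPARATOR,N_0')$ is again a Gallai--Edmonds pair for $\BGblack$. The components of $\BGblack - \SEPARATOR$ are unchanged and hence factor-critical, so only the condition of matching $\SEPARATOR$ into pairwise distinct components needs checking. In $N_0'$ the separator vertices $D_1,\ldots,D_m$ are matched to the $m$ pairwise distinct singleton components $\{Z_0\},\{C_1\},\ldots,\{C_{m-1}\}$; none of these was matched by any other separator vertex in $N_0$ (each $C_i$ had $D_i$ as its unique $N_0$-partner, and $Z_0$ was uncovered), so no conflict arises with the unaltered part of $N_0$.

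To finish the contradiction, note that $\BSI$ depends only on $\SEPARATOR$ and so is unchanged. However $Z_0$, previously an uncovered vertex of $\BSI$, is now matched by $N_0'$, while the only $\BSI$-vertex possibly un-matched by the swap would be $C_m = C$, which by hypothesis is not in $\BSI$. Thus the minimized quantity strictly decreases, contradicting the choice of $(\SEPARATOR,N_0)$. The main technical obstacle is precisely the verification in the previous paragraph that $N_0'$ still matches $\SEPARATOR$ into pairwise distinct components; this hinges on the shortest-path choice, which is what forces the intermediate $C_i$ to be singleton components and hence admissible distinct targets for the re-routed matching edges.
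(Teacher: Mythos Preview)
Your proof is correct and follows essentially the same augmenting-path approach as the paper: find an alternating path from $\BSI\setminus V(N_0)$ to a hypothetical $C\in\BSR\setminus\BSI$, swap the matching along it, and contradict the extremal choice of $(\SEPARATOR,N_0)$. Your write-up is in fact considerably more detailed than the paper's, which dismisses the verification that $(\SEPARATOR,N_0')$ satisfies the Gallai--Edmonds conclusions as ``straightforward to check''.

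One small remark: your shortest-path refinement (forcing the intermediate $C_i$ to lie in $\BSI$, hence to be singleton components) is not actually needed. The injectivity condition of Theorem~\ref{thm:GallaiEdmonds} applied to $N_0$ already guarantees that $C_1,\ldots,C_m$ lie in pairwise distinct components of $\BGblack-\SEPARATOR$, since they are the $N_0$-partners of the distinct separator vertices $D_1,\ldots,D_m$; and $Z_0$ is a singleton component unmatched in $N_0$, hence distinct from all of these. So the verification that $N_0'$ matches $\SEPARATOR$ into distinct components goes through without knowing that the intermediate $C_i$ are singletons. Your argument is not wrong, just slightly more than is required.
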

\begin{proof}[Proof of Claim~\ref{cl:SRS0}]
By the definition of $\BSR$, we only need to show that $\BSR\subset\BSI$.
So suppose there is  a vertex $C\in \BSR\sm\BSI$. 
By the definition of $\BSR$ there is a non-trivial path $R$ going from $C$ to $\BSI\sm
V(N_0)$, that alternates between $\BSR$ and $\SEPARATOR$, and has every second edge in
$N_0$. Then, the matching $N_0':=N_0\triangle E(R)$ covers more vertices of
$\BSI$ than $N_0$ does. Further, it is straightforward to check that the separator $\SEPARATOR$
together with the matching $N_0'$ satisfies the assertions of
Theorem~\ref{thm:GallaiEdmonds}. This is a contradiction, as desired.
\end{proof}

Using a very similar alternating path argument we see the
following.
\begin{claim}\label{augmiL}
If $CD\in N_0$ with $C\in \SEPARATOR$ and $D\notin \BSI$ then
$\deg_{\BGblack}(C,\BSR)=0$.
\end{claim}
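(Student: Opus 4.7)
The plan is to imitate the alternating-path argument used in Claim~\ref{cl:SRS0}, but extending the path by two further edges so that the augmentation uncovers the vertex $D$ instead of a vertex of $\BSI$. Suppose for contradiction that there exists $Z\in\BSR$ with $CZ\in E(\BGblack)$. By the definition of $\BSR$, there is a vertex $Z_0\in\BSR\cap(\BSI\setminus V(N_0))$ together with an alternating path
\[
R\colon Z_0=Z_0'-S_1-Z_1'-S_2-\cdots-S_h-Z_h'=Z
\]
in $\BGblack$, where $Z_i'\in\BSR$, $S_i\in\SEPARATOR$, and $S_iZ_i'\in N_0$ for every $i\in[h]$ (with the convention that $R$ is trivial if $h=0$, i.e.\ $Z=Z_0$).

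First I would verify that $C$ and $D$ are disjoint from $V(R)$. We have $C\in\SEPARATOR$ and $Z_i'\in\BSR\subseteq\BS$, so $C\ne Z_i'$. If $C=S_i$ for some $i$, then $CD, S_iZ_i'\in N_0$ would force $D=Z_i'\in\BSI$, contradicting the hypothesis $D\notin\BSI$. Thus $C\notin V(R)$. For $D$: by Theorem~\ref{thm:GallaiEdmonds} the matching $N_0$ takes $\SEPARATOR$ to $V(\BGblack)\setminus\SEPARATOR$, so $D\notin\SEPARATOR$ and $D\ne S_i$. Also $D\ne Z_i'$, since $Z_i'\in\BSI$ and $D\notin\BSI$.

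Having checked this, I would append the edges $ZC$ and $CD$ to $R$ to obtain the path
\[
P\colon Z_0-S_1-Z_1'-\cdots-S_h-Z-C-D,
\]
which is alternating with respect to $N_0$ (the edges $Z_{i-1}'S_i$ and $ZC$ are outside $N_0$, whereas $S_iZ_i'$ and $CD$ lie in $N_0$) and starts at the unmatched vertex $Z_0$. Setting $N_0':=N_0\triangle E(P)$ produces a matching of the same cardinality as $N_0$, with $S_1,\ldots,S_h,C$ rematched respectively to $Z_0,Z_1',\ldots,Z_{h-1}',Z$, and $D$ freed. Every separator vertex is still matched into a (singleton) component of $\BGblack-\SEPARATOR$ distinct from those used by $N_0$'s other edges, since all the $Z_i'$ lie in $\BSI$. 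Hence $(\SEPARATOR,N_0')$ is again a Gallai--Edmonds pair.

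The key point (and the only place the hypothesis $D\notin\BSI$ enters) is the count of vertices of $\BSN$ isolated in $\BGblack-\SEPARATOR$ and not covered by the matching. Since $\BSI$ depends only on $\SEPARATOR$, it is unchanged. Passing from $N_0$ to $N_0'$ puts the previously uncovered vertex $Z_0\in\BSI$ into the matching, while the newly uncovered vertex $D$ does not lie in $\BSI$. Therefore $N_0'$ covers strictly more vertices of $\BSI\cap\BSN$ than $N_0$, contradicting the minimality used to select $(\SEPARATOR,N_0)$. I do not foresee any real obstacle here; the only subtlety is bookkeeping of the degenerate case $h=0$, where $P$ reduces to $Z_0-C-D$ and the same symmetric-difference argument goes through verbatim.
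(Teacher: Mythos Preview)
Your proof is correct and follows essentially the same alternating-path argument as the paper (which the paper only sketches, noting it is ``very similar'' to Claim~\ref{cl:SRS0}). Your version is in fact more careful: you explicitly verify that $C$ and $D$ are disjoint from $V(R)$ and that $(\SEPARATOR,N_0')$ remains a valid Gallai--Edmonds pair. One minor point: when you write ``$Z_i'\in\BSI$'' you are implicitly invoking Claim~\ref{cl:SRS0} (since a priori you only know $Z_i'\in\BSR$); this is fine because that claim was already proved, but it would be cleaner to cite it.
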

\HIDDENPROOF{ Indeed, suppose
there is $CD\in N_0$ with $C\in \SEPARATOR$ and $D\notin \BSI$ but $CC'\in E(\BGblack)$ for some $C'\in\BSR$. Then there is a
non-trivial path $R$ with its first
three vertices $D,C$ and $C'$ and its last vertex in $\BSI\sm V(N_0)$ that alternates between $\BSR$ and $\SEPARATOR$, and has every second edge
in $N_0$. Then, the matching $N_0':=N_0\triangle E(R)$ covers
more vertices of $\BSI$ than $N_0$ does. Further, the
separator $\SEPARATOR$ together with the matching $N_0'$ satisfies the
assertions of Theorem~\ref{thm:GallaiEdmonds}. This
contradiction proves Claim~\ref{augmiL}.}

Using the factor-criticality of the components of $\BGblack-\SEPARATOR$ we extend $N_0$ to a matching $N_1$ as follows. For each component $K$ of
$\BGblack-\SEPARATOR$ which meets $V(N_0)$, we add a perfect matching of
$K-V(N_0)$. Furthermore, for each non-singleton component
$K$ of $\BGblack-\SEPARATOR$ which does not meet
$V(N_0)$, we add a matching which meets all but exactly one
vertex of $\BL\cap V(K)$. 
This is possible as by the
definition of the class $\LKSsmallgraphs{n}{k}{\eta}$ we have
that $\BGblack-\BL$ is independent, and so
$\BL\cap V(K)\neq\emptyset$.
This choice of $N_1$ guarantees that
\begin{equation}\label{eq:bir1}
e_{\BGblack}(\clusters\setminus V(N_1))=0\;.
\end{equation}

We set
\begin{align*}
M&:=\left\{ AB\in N_0\::\: A\in \BSR, B\in \SEPARATOR\right\}\;.
\end{align*}
We have that
\begin{equation}\label{eq:645}
e_{\BGblack}\big(\clusters\setminus V(N_1),V(M)\cap\BSR\big)=0\;.
\end{equation}

As $\BS$ is an independent set in $\BGblack$, we have that
\begin{equation}\label{eq:BinL}
\SEPARATOR_M:=V(M)\cap \SEPARATOR\subset \BL\;.
\end{equation}

The matching $M$ in $\BGblack$ corresponds to an $(\epsilon',\gamma^2,\clustersize)$-semiregular matching $\mathcal M$ in the underlying graph $\Gblack$, with $V_2(\mathcal M)\subset \bigcup \SEPARATOR$ (recall that semiregular matchings have orientations on their edges). Likewise, we define $\mathcal N_1$ as the $(\epsilon',\gamma^2,\clustersize)$-regular matching corresponding to $N_1$. The $\mathcal N_1$-edges are oriented so that $V_1(\mathcal N_1)\cap \bigcup \SEPARATOR=\emptyset$; this condition does not specify orientations of all the $\mathcal N_1$-edges and we orient the remaining ones in an arbitrary fashion. We write $\SR:=\bigcup\BSR$.

\begin{claim}\label{cl:CrossEdges1}
$e_{\Gcapt}\big(L\setminus (\smallatoms\cup V(\mathcal M)),\SR\big)=0$.
\end{claim}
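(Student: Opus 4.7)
The plan is to take an arbitrary captured edge $e=vv'$ with $v\in\SR$ and $v'\in L\setminus(\smallatoms\cup V(\mathcal M))$ and show, by eliminating every possible ``type'' of captured edge, that no such $e$ exists. The bulk of the work will be in the very last case, which requires an argument in the spirit of Claim~\ref{augmiL}.

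First I would use that $\SR=\bigcup\BSR\subset S^0$, so $\SR$ is disjoint from $V(\Gexp)\cup\smallatoms$. This immediately rules out $e\in E(\Gexp)$ and $e\in E_G(\smallatoms,\smallatoms\cup\bigcup\clusters)$, since the other endpoint $v'$ is by assumption not in $\smallatoms$. Next I would rule out that $e$ is an edge incident to $\HugeVertices$: vertices in $\HugeVertices$ have degree at least $\Omega^{**}k\gg (1+2\eta)k$, whereas Property~\ref{def:LKSsmallB} of Definition~\ref{def:LKSsmall} forces every neighbour of a vertex in $S=\smallvertices{\eta}{k}{G}$ to have degree exactly $\lceil(1+\eta)k\rceil$; since $v\in\SR\subset S$, the edge $e$ is not incident to $\HugeVertices$. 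By the definition of captured edges (Definition~\ref{capturededgesdef}), the only remaining possibility is $e\in E(\Gblack)$.

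Now $\Gblack$ lives on $\bigcup\clusters$, so we may write $v\in C$ and $v'\in C'$ with $C,C'\in\clusters$, and by Property~\ref{defBC:RL} of Definition~\ref{bclassdef} (or equivalently by the definition of $\BGblack$) we have $CC'\in E(\BGblack)$. Here $C\in\BSR$. Since $\BS$ is an independent set in $\BGblack$ (inherited from $S$ being independent in $G$ via Property~\ref{Sisindep} of Fact~\ref{fact:propertiesOfLKSminimalGraphs}, which is also enjoyed by graphs in $\LKSsmallgraphs{n}{k}{\eta}$), we must have $C'\in\BL$. Furthermore $C\in\BSR\subset \BSI$ is isolated in $\BGblack-\SEPARATOR$ (by definition of $\BSI$), so every $\BGblack$-neighbour of $C$ lies in $\SEPARATOR$; in particular $C'\in\SEPARATOR$.

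The core of the argument is now to show $C'\in V(M)$, which will contradict $v'\notin V(\mathcal M)$. Let $A$ be the unique cluster with $C'A\in N_0$ (it exists because $\SEPARATOR\subset V(N_0)$ by Theorem~\ref{thm:GallaiEdmonds}). I would split into two cases. If $A\in\BS$, then I apply the inductive closure condition in the definition of $\BSR$ to the triple $(D,Z,C):=(C',C,A)$ (note $DZ=C'C\in E(\BGblack)$, $Z=C\in\BSR$, $D=C'\in\SEPARATOR$, $CD\in N_0$), concluding $A\in\BSR$; hence $C'A\in M$ and $C'\in V(M)$, so $v'\in V(\mathcal M)$, a contradiction. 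If instead $A\in\BL$, then $A\notin\BSI$, so Claim~\ref{augmiL} applied to $AC'\in N_0$ gives $\deg_{\BGblack}(C',\BSR)=0$, contradicting $CC'\in E(\BGblack)$ with $C\in\BSR$. The hard part is really bookkeeping, making sure the two cases indeed cover the situation and that we are correctly interpreting $V(\mathcal M)$ as $\bigcup_{CD\in M}(C\cup D)$ so that ``$C'\in V(M)$'' translates back to ``$v'\in V(\mathcal M)$''. Either way we obtain a contradiction, proving the claim.
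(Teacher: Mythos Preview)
Your proof is correct and follows essentially the same approach as the paper's. The paper first establishes \eqref{eq:degCSR} (that every $C'\in\BL\setminus V(M)$ has $\deg_{\BGblack}(C',\BSR)=0$) and then rules out the non-$\Gblack$ captured edges, while you do these in the opposite order; in the core $\Gblack$ argument the paper splits on whether the $N_0$-partner $A$ of $C'$ lies in $\BSI$ (using \eqref{anti-skub} when it does and Claim~\ref{augmiL} when it does not), whereas you split on $A\in\BS$ versus $A\in\BL$ and invoke the closure rule defining $\BSR$ directly in the first case --- these case splits are equivalent and yield the same conclusion.
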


\begin{proof}[Proof of Claim~\ref{cl:CrossEdges1}] We start by
showing that for every cluster $C\in \BL\setminus V(M)$ we have
\begin{equation}\label{eq:degCSR}
\deg_{\BGblack}(C,\BSR)=0\;.
\end{equation} 
First, if $C\not
\in \SEPARATOR$, then~\eqref{eq:degCSR} is true since $\BSR\subseteq \BSI$ by
Claim~\ref{cl:SRS0}. So suppose that $C\in \SEPARATOR$, and let $D\in V(\BGblack)$ be such that $DC\in N_0$. Now if  $D\notin\BSI$
then~\eqref{eq:degCSR} follows from Claim~\ref{augmiL}. 
On the other hand, suppose $D\in\BSI\subseteq
\BS^0$. As $C\notin V(M)$, we know that $D\notin\BSR$, and
thus,~\eqref{eq:degCSR}
follows from~\eqref{anti-skub}. 

Now, by~\eqref{eq:degCSR}, $\Gblack$  has no edges between $L\setminus (\smallatoms\cup
V(\mathcal M))$ and $\SR$. Also, no such edges can be in
$\Gexp$  or incident with $\smallatoms$, since
$\BSR\subseteq \BSN$ by Claim~\ref{cl:SRS0}. Finally, since $G\in\LKSsmallgraphs{n}{k}{\eta}$, there
are no edges between $\HugeVertices$ and $S$. This proves the claim.
\end{proof}

We prepare ourselves for an application of Lemma~\ref{lem:Separate}. The
numerical parameters of the lemma are $\Omega\LPJJ,\rho\LPJJ,\epsilon\LPJJ$ and $\gamma\LPJJ$ as above. The input objects for the lemma are the graph $\GD$ of
order $n'\le n$, the collection of $(\gamma k,\gamma)$-dense spots
$\DenseSpots$, the matching $\mathcal M$, the $(\nu k)$-ensemble $\mathcal
C_\PARAMETERPASSING{L}{lem:Separate}:=\BSR\sm V(N_1)$, and the set $Y_\PARAMETERPASSING{L}{lem:Separate}:=L\cap
\smallatoms$.
Note that Definition~\ref{bclassdef}, item~\ref{defBC:dveapul}, implies that $\DenseSpots$ absorbes $\mathcal M$. Further,~\eqref{eq:510ass1} is satisfied by Definition~\ref{bclassdef}, item~\ref{defBC:prepartition}.

The output of Lemma~\ref{lem:Separate} is an $(\epsilon,\beta,\pi \clustersize)$-semiregular matching $\mathcal M'$ with the following properties. 
\begin{enumerate}[(I)]
\item\label{eq:MsubMApp}
$|V(\mathcal M)\setminus V(\mathcal M')|<\epsilon n'\le \epsilon n$.
\item\label{theCandDwelike}
For each $(T,U)\in\mathcal M'$ there are sets  $C\in \BSR$ 
and $D=(A_D,B_D;E_D)\in\DenseSpots$
such that $T\subseteq C\cap A_D$ and $U\subseteq ((L\cap \smallatoms)\cup\bigcup
\SEPARATOR_M)\cap B_D$. 

(Indeed, to see this we use that $\V_1(\mathcal
M)\subset \BSR$ and that $V_2(\mathcal M)\subseteq \bigcup \SEPARATOR_M$ by the
definition of~$\mathcal M$.)
\item\label{eq:SeparatedM} There is a partition of $\mathcal M'$ into $\mathcal M_1$ and $ \mathcal M_B$ such that
$$e_{\GD}\left(
\:
\left(\left(\SR\setminus V(\mathcal N_1)\right)\cup V_1(\mathcal M)\right)\setminus V_1(\mathcal M_1)
\:,\:
\left((L\cap \smallatoms)\cup V_2(\mathcal M)\right)\setminus V_2(\mathcal M_B)
\:\right)<\gamma k n' \;.$$
\end{enumerate}
We claim that also
\begin{enumerate}[(I)]\setcounter{enumi}{3}
\item\label{eq:M'Next}
$V(\mathcal{M'})\cap V(\mathcal N_1\setminus \mathcal M)=\emptyset$.
\end{enumerate}
 Indeed, let $(T,U)\in\mathcal M'$
be arbitrary. Then by~\eqref{theCandDwelike} there is $C\in
\BSR$ such that $T\subset C$. By Claim~\ref{cl:SRS0},
$C$ is a singleton component of $\BGblack-\SEPARATOR$. In particular, if $C$ is covered by $N_1$
then $C\in V(M)$. It follows that $T\cap V(\mathcal N_1\setminus \mathcal M)=\emptyset$.
In a similar spirit, the easy fact that $(Y\cup\bigcup \SEPARATOR_M)\cap V(\mathcal N_1\setminus \mathcal M)=\emptyset$ together with~\eqref{theCandDwelike} gives $U\cap V(\mathcal N_1\setminus \mathcal M)=\emptyset$. This establishes~\eqref{eq:M'Next}.

\medskip

Observe that~\eqref{theCandDwelike} implies that $V_1(\mathcal M')\subset \SR$, and so,  by Claim~\ref{cl:SRS0} we know that 
\begin{equation}\label{eq:Asub}
V_1(\mathcal M_B)\subseteq \SR\subseteq \bigcup\BSI\subseteq\SN.
\end{equation}

Set
\begin{equation}\label{eq:defMA} 
\mathcal M_A:=(\mathcal N_1\setminus \mathcal M)\cup \mathcal M_1\;.
\end{equation}

Then $\mathcal M_A$ is an
$(\epsilon,\beta,\pi\clustersize)$-semiregular
matching. Note that from now on, the sets $\XA, \XB$ and $\XC$ are defined. The
situtation is illustrated in Figure~\ref{fig:struktura1}.
\begin{figure}[t] \centering
\includegraphics[scale=1.0]{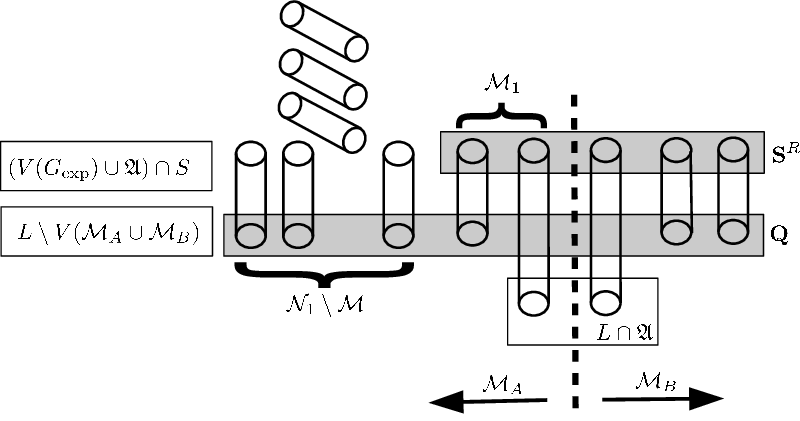}
\caption[Situation in $G$ in Lemma~\ref{prop:LKSstruct} after applying Lemma~\ref{lem:Separate}]{The situation in $G$ after applying Lemma~\ref{lem:Separate}. The dotted line illustrates the separation as in~(III).} 
\label{fig:struktura1}
\end{figure}
By~\eqref{eq:M'Next}, we
have $V(\mathcal M_A)\cap V(\mathcal M_B)=\emptyset$, as required for
Lemma~\ref{prop:LKSstruct}\eqref{prop6.1a}. Lemma~\ref{prop:LKSstruct}\eqref{eq:lastminute} follows from~\eqref{eq:Asub}. Observe that
by~\eqref{theCandDwelike}, also
Lemma~\ref{prop:LKSstruct}\eqref{eq:Mspots} and
Lemma~\ref{prop:LKSstruct}\eqref{eq:M1} are satisfied.

We now turn to Lemma~\ref{prop:LKSstruct}\eqref{fewfewfew}. First we prove
some auxiliary statements. 

\begin{claim}\label{cl:S0mN1}
We have 
$\BSN\setminus V(N_1\setminus M)\subset \BSR$.
\end{claim}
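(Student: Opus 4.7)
My plan is to take an arbitrary $C\in\BSN\setminus V(N_1\setminus M)$ and show $C\in\BSR$ by a short case analysis driven by whether or not $C$ is covered by $N_1$. Since $M\subset N_0\subset N_1$, the set $V(N_1)$ decomposes as $V(M)\cup V(N_1\setminus M)$, so the assumption $C\notin V(N_1\setminus M)$ leaves exactly two possibilities: either $C\notin V(N_1)$, or $C\in V(M)$. I would handle these separately and in each case conclude $C\in\BSR$ directly from the definitions.

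In the first case, $C\notin V(N_1)$ and $C\in\BSN\subseteq\BS$. I would unwind the construction of $N_1$ from $N_0$: components $K$ of $\BGblack-\SEPARATOR$ meeting $V(N_0)$ receive a perfect matching of $K-V(N_0)$ (so all vertices of $K$ lie in $V(N_1)$), and non-singleton components $K$ not meeting $V(N_0)$ receive a matching missing only a vertex in $\BL\cap V(K)$. Since $C\in\BS$, neither of these can leave $C$ uncovered; hence the component of $C$ in $\BGblack-\SEPARATOR$ is a singleton not meeting $V(N_0)$. This places $C$ in $\BSI\setminus V(N_0)$, which is contained in $\BSR$ by the very definition of $\BSR$.

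In the second case, $C\in V(M)$. The edges of $M$ have one endpoint in $\BSR$ and the other in $\SEPARATOR$, and by~\eqref{eq:BinL} we have $\SEPARATOR_M\subset \BL$. Since $C\in\BSN\subseteq\BS$ cannot be the $\BL$-endpoint, it must be the $\BSR$-endpoint of its $M$-edge, giving $C\in\BSR$.

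I do not expect any serious obstacle here: the only thing to be careful about is identifying the correct structural dichotomy (coverage by $N_1$ versus membership in $V(M)$) and confirming that when $C$ is not covered by $N_1$ the construction of $N_1$ really forces $C$ to be an isolated vertex outside $V(N_0)$ — this uses crucially that $\BS$ is independent in $\BGblack$, so the ``one uncovered vertex per non-singleton $N_0$-free component'' produced by the construction is always pushed into $\BL$ and never into $\BS$.
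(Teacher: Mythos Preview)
Your proof is correct and essentially the same as the paper's, just organized slightly differently: the paper splits first on whether $C\in\BSI$, deduces $C\in\BSR\cup V(N_1)$ in either case (invoking Claim~\ref{cl:SRS0} for the inclusion $\BSI\subseteq\BSR\cup V(N_0)$), and only then uses $C\notin V(N_1\setminus M)$ to land in $\BSR\cup V(M)$; you instead use the matching property of $N_1$ up front to get the clean dichotomy $C\notin V(N_1)$ versus $C\in V(M)$, and in the first branch read off $C\in\BSI\setminus V(N_0)\subseteq\BSR$ directly from the definition of $\BSR$. The $V(M)$ case is handled identically in both.
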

\begin{proof}[Proof of Claim~\ref{cl:S0mN1}]
Let $C\in\BSN\setminus V(N_1\setminus M)$. 
Note that if $C\notin \BSI$, then $C\in V(N_1)$. On the other hand, if $C\in \BSI$, then we use  Claim~\ref{cl:SRS0} to see that $C\in\BSR\cup V(N_1)$.
We deduce that in either case $C\in \BSR\cup V(N_1)$. The choice of $C$ implies that thus $C\in \BSR\cup V(M)$.
Now, if $C\in V(M)$, then $C\in\BSR$ by~\eqref{eq:BinL} and by the definition of $M$. Thus $C\in\BSR$ as desired.
\end{proof}

It will be convenient to work with a set $\bar S^0\subset S^0$, $\bar S^0:=\left(S\cap\bigcup \clusters\right)\setminus V(\Gexp)=\bigcup \BSN$. 
Note that $\bar S^0$ is essentially the same as $S^0$; the vertices in $S^0\setminus \bar S^0$ are isolated in $\Gcapt$ and thus have very little effect on our considerations. 

By Claim~\ref{cl:S0mN1}, we have
\begin{equation}\label{guanacos}
\bar S^0\setminus V(\M_A) \subset
\big(\bigcup \BSN\setminus V(N_1\setminus M)\big) \setminus V(\M_A) 
  \subset
\SR \setminus V(\M_A).
\end{equation}

 As every edge incident to
$S^0\setminus \bar S^0$ is uncaptured, we see that
\begin{align}
\label{fewfewfew:i}
E_{\Gcapt}\big(\XA\cap \smallatoms,S^0\setminus V(\mathcal
M_A)\big)
& \subset 
E_{\GD}\big((L\cap \smallatoms)\setminus V(\mathcal M_B),\bar S^0\setminus V(\mathcal
M_A)\big)\notag \\
\JUSTIFY{by \eqref{guanacos}}&\subset E_{\GD}\big(\:(L\cap \smallatoms)\setminus V(\mathcal
M_B)\:,\:\SR \setminus V(\M_A)\:\big).
\end{align}

We claim that furthermore
\begin{align}
\label{fewfewfew:ii}
E_{\Gblack}\big(\XA\cap\bigcup\clusters,S^0\setminus V(\mathcal
M_A)\big)\subset  E_{\GD}\left(
\left((L\cap \smallatoms)\cup V_2(\mathcal M)\right)\setminus V_2(\mathcal M_B),
\SR \setminus V(\M_A)
\:\right).
\end{align} 

Before proving~\eqref{fewfewfew:ii}, let us see that it implies Lemma~\ref{prop:LKSstruct}\eqref{fewfewfew}.
As
$G\in\LKSsmallgraphs{n}{k}{\eta}$, there are no edges between $\HugeVertices$
and $S$. That means that any captured edge from $\XA$ to $S^0\setminus V(\mathcal M_A)$ must start in 
$\smallatoms$ or in $\bigcup\clusters$.
Thus Lemma~\ref{prop:LKSstruct}\eqref{fewfewfew} follows by
plugging~\eqref{eq:SeparatedM} into~\eqref{fewfewfew:i}
and~\eqref{fewfewfew:ii}.

Let us now prove~\eqref{fewfewfew:ii}. First, observe that by the definition of $\XA$ and by the definition of $\mathcal M$ (and $M$) we have
\begin{equation}\label{eq:Hsplit}
 \XA\cap\bigcup\clusters\subset (V_2(\mathcal M)\setminus V_2(\mathcal M_B))\cup (L\setminus (\smallatoms\cup V(\mathcal M)))\;.
\end{equation}
Further, by applying~\eqref{guanacos} and Claim~\ref{cl:CrossEdges1} we get
\begin{equation}\label{eq:H0}
E_{\Gblack}\big(L\setminus (\smallatoms\cup V(\mathcal M)),\bar S^0\setminus V(\mathcal M_A)\big)=\emptyset\;.
\end{equation}

Therefore, we obtain
\begin{align*}
E_{\Gblack}\big(\XA\cap\bigcup\clusters, S^0\setminus V(\mathcal M_A)\big)
&\subset
E_{\Gblack}\big(\XA\cap\bigcup\clusters, \bar S^0\setminus V(\mathcal M_A)\big)\\
\JUSTIFY{by~\eqref{eq:Hsplit}}&\subset
E_{\Gblack}\big(V_2(\mathcal M)\setminus V_2(\mathcal M_B),\bar S^0\setminus
V(\mathcal M_A)\big)\\
&~~~~\cup E_{\Gblack}\big(L\setminus (\smallatoms\cup V(\mathcal
M)),\bar S^0\setminus V(\mathcal M_A)\big)
\\
\JUSTIFY{by~\eqref{guanacos}, \eqref{eq:H0}}&\subset
E_{\Gblack}\big(V_2(\mathcal M)\setminus V_2(\mathcal M_B),\SR \setminus V(\M_A)\big)\;,
\end{align*}
as needed for~\eqref{fewfewfew:ii}.

In order to
prove~\eqref{newpropertyS6} we first observe that
\begin{align}
V(\mathcal N_1)\setminus V(\M_A\cup \M_B)&\eqByRef{eq:defMA} V(\mathcal
N_1)\setminus V\big( (\mathcal N_1\setminus \M)\cup \M_1\cup \M_B\big)\notag\\
&= (V(\mathcal N_1)\cap V(\mathcal \M))\setminus V(\M_B\cup \M_1)\notag\\
&\eqByRef{eq:SeparatedM} (V(\mathcal N_1)\cap V(\mathcal \M))\setminus V(\M')\notag\\
&=  V(\M)\setminus V(\M')\;.\label{cl:matchINCL}
\end{align}

Now, we have
\begin{align*}
e_{\Gblack}(V(G)\setminus
V(\M_A\cup \M_B))&\le e_{\Gblack}(V(G)\setminus
V(\mathcal N_1))+\sum_{v\in V(\mathcal N_1)\setminus V(\M_A\cup
\M_B)}\deg_{\Gcapt}(v)\\
\JUSTIFY{by~\eqref{eq:bir1} and \eqref{cl:matchINCL}}&\le\sum_{v\in
V(\M)\setminus V(\M')}\deg_{\Gcapt}(v)\\ 
&\le |V(\M)\setminus V(\M')|\Omega^*k\\
\JUSTIFY{by~\eqref{eq:MsubMApp}} &<\epsilon
\Omega^* kn\;,
\end{align*}
which shows~\eqref{newpropertyS6}.
\smallskip

Let us turn to proving~\eqref{nicDoNAtom}. First, recall that we have
$V(\NAtom)\subset V(\M')\cup V(\mathcal N_1)$ (cf.~\ref{eq:defMA}). Since
$V(\mathcal N_1)\cap\smallatoms=\emptyset$ we actually have
\begin{equation}\label{eq:patchA}
V(\NAtom)=V(\NAtom)\cap V(\M')\;.
\end{equation}
Using~\eqref{eq:patchA} and~\eqref{theCandDwelike} we get
\begin{align}\nonumber
e_{\Gblack}\left(V(G)\setminus V(\mathcal
N_1),V(\NAtom)\right)&\le
e_{\Gblack}\left(V(G)\setminus V(\mathcal
N_1),V(\M')\cap \SR\right)\\
\nonumber
\JUSTIFY{by~\eqref{eq:645}}&\le 
e_{\Gblack}\left(V(G)\setminus V(\mathcal
N_1),(V(\M')\setminus V(\M))\cap \SR\right)\\
\nonumber
\JUSTIFY{by~\eqref{eq:M'Next}}&\le
e_{\Gblack}\left(V(G)\setminus V(\mathcal
N_1),(V(\M')\setminus V(\mathcal N_1))\cap \SR\right)\\
\label{eq:patchB}
&\le 2 e_{\Gblack}\left(V(G)\setminus V(\mathcal
N_1)\right)\eqByRef{eq:bir1}0\;. 
\end{align}
We have
\begin{align*}
e_{\Gblack}\left(V(G)\setminus V(\M_A\cup\M_B),V(\NAtom)\right) &\le e_{\Gblack}\left(V(G)\setminus V(\mathcal N_1),V(\NAtom)\right)\\ & \ \ +
e_{\Gblack}\left(V(\mathcal N_1)\setminus V(\M_A\cup\M_B),V(G)\right)\\
\JUSTIFY{by~\eqref{eq:patchB}}&\le 0+|V(\mathcal N_1)\setminus
V(\M_A\cup\M_B)|\Omega^* k\\
\JUSTIFY{by~\eqref{cl:matchINCL}, \eqref{eq:MsubMApp}}&\le \epsilon\Omega^* kn\;,
\end{align*}
as needed.

\smallskip

We have thus shown
Lemma~\ref{prop:LKSstruct}\eqref{prop6.1a}--\eqref{nicDoNAtom}. It only remains
to prove Lemma~\ref{prop:LKSstruct}\eqref{Mgoodisblack}, which we will do in the
remainder of this section.

\smallskip

We first collect several properties of $\XA$ and $\XC$.
The definitions of $\XC$ and $S^0$ give
\begin{equation}\label{eq:XCS0}
|\XC|(1+\eta)\frac{k}2 \le e_G\big(\XC,\SN \setminus V(\mathcal{M}_A\cup \mathcal{M}_B)\big) \le |\SN\setminus V(\mathcal{M}_A\cup \mathcal{M}_B)|(1+\eta)k\;.
\end{equation}

Each $v\in\XC$ has neighbours in $S$. Thus, by~\ref{def:LKSsmallB}.\ of Definition~\ref{def:LKSsmall} we have
\begin{equation}\label{tomze}
\deg_G(v)=\lceil(1+\eta)k\rceil
\end{equation}
for each $v\in\XC$. Further, each
vertex of $\XC$ has degree at least
$(1+\eta)\frac{k}2$ into $S$, and so,
\begin{equation}\label{eq:withrounding}
e_G(S,\XC)\ge |\XC|\left\lceil(1+\eta)\frac{k}2\right\rceil\;.
\end{equation}
Consequently (using the elementary inequality $\lceil a\rceil -\lceil\frac a2\rceil\le \frac a2$),
\begin{align}
e_G\big(\XA,\XC\big)
& \overset{\eqref{tomze}}\le  |\XC|\lceil(1+\eta)k\rceil - e_G(S,\XC)\notag \\
&\overset{\eqref{eq:withrounding}}\le  |\XC|(1+\eta)\frac{k}2\label{eq:PrI} \\
&\overset{\eqref{eq:XCS0}}\leq |\SN\setminus V(\mathcal{M}_A\cup
\mathcal{M}_B)|(1+\eta)k\;.\label{eq:eXAXC}
\end{align}

Let $\Mgood$ be defined as in Lemma~\ref{prop:LKSstruct}\eqref{Mgoodisblack},
that is, $\Mgood:=\{(A,B)\in \mathcal M_A\::\: A\cup B\subset \XA \}$. Note that~\eqref{eq:Asub} implies that $A\subseteq S$
for every $(A,B)\in \mathcal M_B$. Thus by the definition of $\XA$, 
\begin{equation}\label{cl:som}
\text{if $(A,B)\in \mathcal M_A\cup\mathcal M_B$ with $A\cup B\subset L$ then 
$(A,B)\in \Mgood$.}
\end{equation}

We will now show the first part of Lemma~\ref{prop:LKSstruct}\eqref{Mgoodisblack}, that is, we show that each $\Mgood$-edge is an edge
of $\BGblack$. Indeed,
by~\eqref{theCandDwelike}, we have that $V_1(\mathcal
M_1)\subset S$, so as $\XA\cap S=\emptyset$, it
follows that $\mathcal M_1\cap \Mgood=\emptyset$. Thus  $\Mgood\subset \mathcal N_1$. As $\mathcal N_1$ corresponds to a matching in $\BGblack$, all is as desired.

Finally, let us assume that
neither~{\bf(K1)} nor~{\bf(K2)} are fulfilled. After five preliminary
observations (Claim~\ref{claim:DZ4}--Claim~\ref{claim:DZ5}), we will derive a
contradiction from this assumption.

\begin{claim}\label{claim:DZ4}
We have
$|S\cap V(\mathcal M_A)|\le|\XA\cap V(\mathcal M_A)|$.
\end{claim}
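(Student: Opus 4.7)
The plan is to reduce the inequality to a per-edge bookkeeping statement about $\mathcal M_A$. First I would note that $V(\mathcal M_A)\subseteq\bigcup\clusters$, so every vertex of $V(\mathcal M_A)$ belongs to $S$ or to $L$ (Property~\ref{defBC:prepartition} of Definition~\ref{bclassdef} guarantees that each cluster lies entirely inside $S$ or entirely inside $L$, since the sparse decomposition is taken with respect to the partition $\{S,L\}$). Combining this with Lemma~\ref{prop:LKSstruct}\eqref{prop6.1a} (which says $V(\mathcal M_A)\cap V(\mathcal M_B)=\emptyset$) and the definition of $\XA=L\setminus V(\mathcal M_B)$, I obtain
\[
\XA\cap V(\mathcal M_A)\ =\ L\cap V(\mathcal M_A)\;,
\]
so the claim is equivalent to $|S\cap V(\mathcal M_A)|\le |L\cap V(\mathcal M_A)|$.

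Since each $\mathcal M_A$-edge $(A,B)$ satisfies $|A|=|B|$, it suffices to show that for every $(A,B)\in\mathcal M_A$ at least one of $A,B$ is a subset of $L$; indeed, then $|(A\cup B)\cap S|\le |A|=|B|\le |(A\cup B)\cap L|$, and summing over $\mathcal M_A$ yields the desired inequality. Using $\mathcal M_A=(\mathcal N_1\setminus\mathcal M)\cup\mathcal M_1$ (from \eqref{eq:defMA}), I split into two cases.

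For an edge $(T,U)\in\mathcal M_1\subseteq\mathcal M'$, property~(II) of the application of Lemma~\ref{lem:Separate} gives a cluster $C\in\BSR\subseteq\BSN\subseteq \BS$ with $T\subseteq C$ and, on the other side, $U\subseteq ((L\cap\smallatoms)\cup\bigcup\SEPARATOR_M)\subseteq L$ (using \eqref{eq:BinL}). Hence $T\subset S$ and $U\subset L$, so exactly one side lies in $L$. For an edge $(T,U)\in\mathcal N_1\setminus\mathcal M$, it corresponds to an edge of $N_1$ in the cluster graph $\BGblack$. Since $G\in\LKSsmallgraphs{n}{k}{\eta}$, the set $S$ is independent in $G$ (as observed after Definition~\ref{def:LKSsmall}), and therefore no pair of clusters both contained in $S$ can induce any edge of $\Gblack$; in particular $\BS$ is independent in $\BGblack$. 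Consequently at least one of the two clusters containing $T$ and $U$ lies in $\BL$, so at least one of $T,U$ is a subset of $L$.

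Both cases confirm the per-edge property, and summing over $\mathcal M_A$ finishes the proof. There is no real obstacle: the argument is pure bookkeeping combining the structural information already extracted (Lemma~\ref{lem:Separate}'s output~(II), the construction \eqref{eq:defMA}, the independence of $S$ in $G$, and property~\eqref{prop6.1a}).
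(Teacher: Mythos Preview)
Your argument is correct and follows essentially the same route as the paper: reduce to $|S\cap V(\mathcal M_A)|\le |L\cap V(\mathcal M_A)|$ via $\XA\cap V(\mathcal M_A)=L\cap V(\mathcal M_A)$, then observe that for every $\mathcal M_A$-edge at least one side lies in $L$ (using the independence of $S$). The paper does this in one line, without the case split $\mathcal M_1$ versus $\mathcal N_1\setminus\mathcal M$: it simply notes that each $\mathcal M_A$-vertex is in $S$ or in $L$ (this is Lemma~\ref{prop:LKSstruct}\eqref{eq:Mspots}), and that a side in $S$ forces its partner into $L$ because $S$ is independent and the pair has positive density. Your case analysis recovers the same fact with more machinery.

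One small inaccuracy: your opening claim $V(\mathcal M_A)\subseteq\bigcup\clusters$ is not quite right. By Lemma~\ref{prop:LKSstruct}\eqref{eq:M1}, sets in $\V_2(\mathcal M_A)$ may lie in $L\cap\smallatoms$ rather than in a cluster. This does not hurt your argument, since the fact you actually need---that each $\mathcal M_A$-vertex is entirely contained in $S$ or entirely in $L$---is given directly by Lemma~\ref{prop:LKSstruct}\eqref{eq:Mspots}, and you should cite that instead.
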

\begin{proof}[Proof of Claim~\ref{claim:DZ4}]
To see this, recall that each $\mathcal M_A$-vertex $U\in\V(\mathcal M_A)$ is
either contained in $S$, or in $L$. Further, if $U\subset S$ then
its partner in $\mathcal M_A$ must be in $L$, as $S$ is
independent. Now, the claim follows after noticing that $L\cap
V(\mathcal M_A)=\XA\cap V(\mathcal M_A)$.
\end{proof}

\begin{claim}\label{claim:DZ1}
We have $|S\setminus V(\mathcal M_A\cup \mathcal M_B)|+2\eta
n< |\XA\setminus V(\mathcal M_A)|+\eta n/3$.
\end{claim}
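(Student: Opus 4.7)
The plan is to reduce the claim to a transparent inequality that follows from the lower bound $|L|\ge(1/2+\eta)n$ built into $\LKSsmallgraphs{n}{k}{\eta}$, together with the orientation property $V_1(\M_B)\subset S$ from Lemma~\ref{prop:LKSstruct}\eqref{eq:lastminute} and the assumed failure of {\bf(K2)}.

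First I would sharpen Claim~\ref{claim:DZ4} to an equality. By~\eqref{prop6.1a} the sets $V(\M_A)$ and $V(\M_B)$ are disjoint and $\XA=L\sm V(\M_B)$, so $\XA\cap V(\M_A)=L\cap V(\M_A)$. By~\eqref{eq:Mspots} each $\M_A$-edge $(T,Q)$ is either entirely in $L$ (and hence belongs to $\Mgood$), entirely in $S$ (impossible, as $S$ is independent in $G$ by Definition~\ref{def:LKSsmall}(\ref{def:LKSsmallB}), while $\M_A$-edges are regular pairs of positive density), or crosses $S$--$L$. In the crossing case the two sides have equal size $|T|=|Q|$, so summing over $\M_A$-edges gives
\[
|L\cap V(\M_A)|\ =\ |S\cap V(\M_A)|\ +\ |V(\Mgood)|.
\]
Together with the disjoint-union identity $|S\sm V(\M_A\cup\M_B)|+|S\cap V(\M_A)|=|S\sm V(\M_B)|$ (which uses~\eqref{prop6.1a}), the claim is equivalent to
\[
|\XA|\ -\ |S\sm V(\M_B)|\ >\ |V(\Mgood)|\ +\ \tfrac{5\eta n}{3}.
\]

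Next I would lower-bound the LHS. Since $\XA=L\sm V(\M_B)$,
\[
|\XA|-|S\sm V(\M_B)|\ =\ (|L|-|S|)\ +\ \bigl(|S\cap V(\M_B)|-|L\cap V(\M_B)|\bigr).
\]
The first difference is at least $2\eta n$ because $|L|\ge(1/2+\eta)n$ and $|S|+|L|=n$. For the second, Lemma~\ref{prop:LKSstruct}\eqref{eq:lastminute} places $V_1(\M_B)$ entirely inside $S$, so for every $\M_B$-edge $(T,Q)$ the set $T$ lies in $S$ while $Q$ lies in $S$ or $L$; using $|T|=|Q|$ one obtains $|S\cap V(\M_B)|-|L\cap V(\M_B)|=2|V_2(\M_B)\cap S|\ge 0$. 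Hence $|\XA|-|S\sm V(\M_B)|\ge 2\eta n$.

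Finally, the assumed failure of {\bf(K2)} gives $|V(\Mgood)|<\eta n/3$, so $|V(\Mgood)|+5\eta n/3<2\eta n$, which yields the strict inequality displayed above and completes the proof. The only place where one has to be careful is the bookkeeping for the decomposition of $\M_A$-edges and the orientation of $\M_B$; once those are in place the arithmetic is immediate and no use is made of the failure of {\bf(K1)}.
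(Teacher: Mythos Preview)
Your proof is correct and follows essentially the same route as the paper. Both arguments hinge on $|L|-|S|\ge 2\eta n$, the observation that the only $(\M_A\cup\M_B)$-edges lying entirely in $L$ are those of $\Mgood$ (you derive this by sharpening Claim~\ref{claim:DZ4} and invoking~\eqref{eq:lastminute}, the paper bundles it into~\eqref{cl:som}), and then $\neg{\bf(K2)}$ to bound $|V(\Mgood)|$. Your version rearranges the claim into the equivalent inequality $|\XA|-|S\sm V(\M_B)|>|V(\Mgood)|+5\eta n/3$ and splits the $\M_A$- and $\M_B$-contributions explicitly, while the paper writes a single three-line chain; the ingredients and logic are identical.
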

\begin{proof}[Proof of Claim~\ref{claim:DZ1}]
As $G\in\LKSgraphs{n}{k}{\eta}$, we have $|S|+2\eta n\le
|L|$. Therefore,
\begin{align*}
|S\setminus V(\mathcal M_A\cup \mathcal M_B)|+2\eta
n\le & \ |L\setminus V(\mathcal M_A\cup \mathcal
M_B)|+\sum_{\substack{(A,B)\in\mathcal
M_A\cup\mathcal M_B\\A\cup B\subset L}}|A\cup B|\\
\eqBy{\eqref{cl:som}} & |\XA\setminus V(\mathcal M_A)|+|V(\Mgood)|\\
\lBy{$\neg${\bf(K2)}} & |\XA\setminus V(\mathcal M_A)|+\eta n/3\;.
\end{align*}
\end{proof}

\begin{claim}\label{claim:DZ2}
We have $e_{\Gcapt}\left(\XA\cap(\smallatoms\cup V(\mathcal M))
,
\SR\setminus V(\mathcal M_A)
\right)<\eta kn/2$.
\end{claim}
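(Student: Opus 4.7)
The plan is to reduce the claim to the separation bound~\eqref{eq:SeparatedM} that we obtained from Lemma~\ref{lem:Separate}. The argument has two parts: a purely set-theoretic reduction of both sides of the edge count to the sets appearing in~\eqref{eq:SeparatedM}, and a justification that the captured edges we are counting are (essentially) all $\GD$-edges.

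First, for the left side, note that $\XA \subseteq L$ while $V_1(\mathcal{M}) \subseteq \SR \subseteq \SN \subseteq S$, so $\XA \cap V(\mathcal{M}) \subseteq V_2(\mathcal{M})$. Since moreover $\XA \cap V(\mathcal{M}_B) = \emptyset$ by the very definition of $\XA$, we obtain
$$
\XA \cap (\smallatoms \cup V(\mathcal{M})) \;\subseteq\; \bigl((L \cap \smallatoms) \cup V_2(\mathcal{M})\bigr) \setminus V_2(\mathcal{M}_B).
$$
For the right side, pick $v \in \SR \setminus V(\mathcal{M}_A)$. Because $\mathcal{M}_A = (\mathcal{N}_1 \setminus \mathcal{M}) \cup \mathcal{M}_1$, either $v \notin V(\mathcal{N}_1)$ (then $v \in \SR \setminus V(\mathcal{N}_1)$) or else $v \in V(\mathcal{M})$; in the latter case $v \in \SR \subseteq S$ while $V_2(\mathcal{M}) \subseteq \bigcup \SEPARATOR_M \subseteq L$, forcing $v \in V_1(\mathcal{M})$. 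In either case $v \notin V_1(\mathcal{M}_1)$, so
$$
\SR \setminus V(\mathcal{M}_A) \;\subseteq\; \bigl((\SR \setminus V(\mathcal{N}_1)) \cup V_1(\mathcal{M})\bigr) \setminus V_1(\mathcal{M}_1).
$$

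Next I argue that each captured edge $xy$ between the two sets in question is a $\GD$-edge. Since $y \in \SR \subseteq \SN \subseteq S \setminus (V(\Gexp) \cup \smallatoms)$, the edge $xy$ lies neither in $\Gexp$ nor within $\smallatoms$; and since $y \in S$ and $G \in \LKSsmallgraphs{n}{k}{\eta}$, there are no captured edges from $\HugeVertices$ to $y$ (by Property~\ref{def:LKSsmallB} of Definition~\ref{def:LKSsmall}, neighbours of $y \in S$ have degree exactly $\lceil(1+\eta)k\rceil$, hence lie outside $\HugeVertices$). If $x \in V(\mathcal{M}) \cap \XA \subseteq \bigcup\clusters$, then both endpoints are in clusters, so the captured edge $xy$ must be a $\Gblack$-edge, and $\Gblack \subseteq \GD$. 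If instead $x \in \smallatoms$, the captured edge $xy$ is an element of $E_G(\smallatoms, \bigcup\clusters)$; any such edge that is not in $\GD$ lies in $E(G) \setminus (E(\GD) \cup E(\Gexp))$ (as $\Gexp$ does not touch $\SR$), and this set is bounded by $\rho k n$ by~\eqref{eq:almostalldashed}. The assumption that $\class$ misses at most $\eta kn/6$ edges forces $\rho$ (together with $\gamma$ and the $\epsilon$-terms) to be small compared with $\eta$, so this error is negligible.

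Applying~\eqref{eq:SeparatedM} with the two containments above then gives
$$
e_{\GD}\bigl(\XA \cap (\smallatoms \cup V(\mathcal{M})),\, \SR \setminus V(\mathcal{M}_A)\bigr) \;<\; \gamma k n' \;\le\; \gamma k n,
$$
and adding the $O(\rho kn)$ error from the non-$\GD$ captured edges from $\smallatoms$ (which is dominated by $\eta kn/6$ by the capture hypothesis) yields
$$
e_{\Gcapt}\bigl(\XA \cap (\smallatoms \cup V(\mathcal{M})),\, \SR \setminus V(\mathcal{M}_A)\bigr) \;<\; \gamma k n + \tfrac{\eta}{6} k n \;<\; \tfrac{\eta}{2} k n,
$$
using $\gamma < \eta/3$. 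The main obstacle is the bookkeeping around $\smallatoms$-edges that are captured but need not lie in $\GD$; this is handled by showing that any such edge avoids $\Gexp$ (because $\SR \cap V(\Gexp) = \emptyset$) and therefore contributes only to the small error controlled by the overall capture assumption on $\class$.
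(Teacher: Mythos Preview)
Your argument follows essentially the same route as the paper: the two set-theoretic containments reducing both sides to the sets in~\eqref{eq:SeparatedM}, and then the observation that between these sets there are no $\HugeVertices$-edges and no $\Gexp$-edges (since $\SR\subset S^0\subset V(G)\setminus V(\Gexp)$), so $e_{\Gcapt}$ is controlled by the $\GD$-bound $\gamma kn<\eta kn/2$.

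One remark on your extra bookkeeping around $\smallatoms$-edges: your worry that a captured edge from $\smallatoms$ to $\bigcup\clusters$ need not lie in $\GD$ is legitimate, but the way you dispose of it is loose. Equation~\eqref{eq:almostalldashed} lives inside the proof of Lemma~\ref{lem:decompositionIntoBlackandExpanding} and is not a hypothesis of Lemma~\ref{prop:LKSstruct}; and the capture hypothesis $|E(G)\setminus E(\Gcapt)|\le\eta kn/6$ bounds \emph{uncaptured} edges, not the quantity $|E(G)\setminus(E(\GD)\cup E(\Gexp))|$ you want, so invoking it to control that error is a non-sequitur. The paper simply asserts that after excluding $\HugeVertices$- and $\Gexp$-edges one may pass from $\GD$ to $\Gcapt$ (implicitly treating any such residual as negligible), and arrives directly at $e_{\Gcapt}\le\gamma kn$.
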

\begin{proof}[Proof of Claim~\ref{claim:DZ2}]
As 
\begin{align*}
\XA\cap (\smallatoms\cup V(\mathcal M))&\subset \left((L\cap \smallatoms)\cup
V_2(\mathcal M)\right)\setminus V_2(\mathcal M_B)\quad \mbox{and}\\
\SR\setminus V(\mathcal M_A)
&\subset
\left(\left(\SR\setminus V(\mathcal N_1)\right)\cup V_1(\mathcal
M)\right)\setminus V_1(\mathcal M_1)\;,
 \end{align*}
 we get from~\eqref{eq:SeparatedM} that
\begin{equation}\label{eq:iiModif1}
e_{\GD}\left(\XA\cap(\smallatoms\cup V(\mathcal M))
,
\SR\setminus V(\mathcal M_A)
\right)\le \gamma kn\;.
\end{equation}
Observe now that both sets $\XA\cap (\smallatoms\cup V(\mathcal M))$ and
$\SR\setminus V(\mathcal M_A)$ avoid $\HugeVertices$. Further, no edges between them belong to $\Gexp$, because Claim~\ref{cl:SRS0} implies that $\SR\setminus V(\mathcal
M_A)\subset S^0\subseteq V(G)\setminus V(\Gexp)$. Therefore, we can pass from $\GD$ to $\Gcapt$ in~\eqref{eq:iiModif1} to get
\begin{equation*}
e_{\Gcapt}\left(\XA\cap(\smallatoms\cup V(\mathcal M))
,
\SR\setminus V(\mathcal M_A)
\right)\le \gamma kn<\eta kn/2\;.
\end{equation*}
\end{proof}

\begin{claim}\label{claim:DZ3}
We have
$S\setminus (\SR\cup V(\mathcal M_A) )
\subset
S\setminus (\bar S^0\cup V(\mathcal M_A\cup\mathcal M_B))$.
\end{claim}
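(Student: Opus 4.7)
The plan is to pass to the contrapositive inside $S$: I would show that every vertex $v\in S$ lying in $\bar S^0\cup V(\mathcal M_A\cup\mathcal M_B)$ must also lie in $\SR\cup V(\mathcal M_A)$, and then split into three cases according to which of the three sets $\bar S^0$, $V(\mathcal M_A)$, $V(\mathcal M_B)$ contains $v$.

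The case $v\in V(\mathcal M_A)$ is immediate. For $v\in S\cap V(\mathcal M_B)$ with $v\notin V(\mathcal M_A)$, I would use that $\mathcal M_B\subseteq \mathcal M'$, so by property~\eqref{theCandDwelike} every vertex of $V_2(\mathcal M')$ lies in $(L\cap\smallatoms)\cup\bigcup\SEPARATOR_M$, and this set is contained in $L$ by~\eqref{eq:BinL}. Since $S\cap L=\emptyset$, the vertex $v$ must sit in $V_1(\mathcal M_B)$, and then~\eqref{eq:Asub} gives $v\in\SR$.

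In the remaining case $v\in\bar S^0\setminus V(\mathcal M_A\cup\mathcal M_B)$, there exists $C\in\BSN$ with $v\in C$. I would argue that $C\notin V(\mathcal N_1\setminus\mathcal M)$: otherwise, since $\mathcal M_A=(\mathcal N_1\setminus\mathcal M)\cup\mathcal M_1$ by~\eqref{eq:defMA}, we would have $v\in V(\mathcal M_A)$, contradicting our assumption. Then Claim~\ref{cl:S0mN1} directly yields $C\in\BSR$, so $v\in\SR$, as desired. Each step is routine unpacking of the definitions set up in the proof, so there is no genuine obstacle here.
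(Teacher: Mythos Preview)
Your argument is correct and follows essentially the same route as the paper's proof. The paper phrases it as two set inclusions, $\SR\cup V(\mathcal M_A)\supset S\cap V(\mathcal M_A\cup\mathcal M_B)$ and $\SR\cup V(\mathcal M_A)\supset \bar S^0$, but the ingredients are identical: \eqref{theCandDwelike} together with \eqref{eq:BinL} (equivalently \eqref{eq:Asub}) for the $\mathcal M_B$-part, and Claim~\ref{cl:S0mN1} together with \eqref{eq:defMA} for the $\bar S^0$-part. One small notational point: in your third case you write ``$C\notin V(\mathcal N_1\setminus\mathcal M)$'', but $C$ is a cluster and $V(\mathcal N_1\setminus\mathcal M)$ is a set of vertices of $G$; what you mean (and what Claim~\ref{cl:S0mN1} requires) is $C\notin V(N_1\setminus M)$ in the cluster graph, equivalently $C\not\subset V(\mathcal N_1\setminus\mathcal M)$.
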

\begin{proof}[Proof of Claim~\ref{claim:DZ3}]
The claim follows directly from the following two inclusions.
\begin{align}
\label{eq:FC1}
\SR\cup V(\mathcal M_A)&\supset S\cap V(\mathcal M_A\cup\mathcal
M_B)\;\mbox{, and}\\
\label{eq:FC2}
\SR\cup V(\mathcal M_A)&\supset \bar S^0\;.
\end{align}
Now,~\eqref{eq:FC1} is trivial, as by~\eqref{theCandDwelike} we have that
$\SR\supset S\cap V(\mathcal M_B)$.
 To see~\eqref{eq:FC2}, it suffices
by~\eqref{eq:defMA} to prove that $V(N_1\setminus M)\cup\BSR\supset \BSN$.
This
is however the subject of Claim~\ref{cl:S0mN1}.
\end{proof}

Next, we bound $e_{\Gcapt}\big(\XA,S\big)$. 
\begin{claim}\label{claim:DZ5}
We have $$e_{\Gcapt}\big(\XA,S\big)\le |S\cap V(\mathcal M_A)|(1+\eta)k+|S\setminus(\SN\cup V(\mathcal M_A\cup \mathcal M_B))|(1+\eta)k 
+\frac12\eta kn\;.$$
\end{claim}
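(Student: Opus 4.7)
The plan is to split the set $S$ into three parts according to how it interacts with the matchings $\M_A$, $\M_B$ and with $S^0 = \SN$, and then bound the captured-edge count to $\XA$ on each part by a different means. Recall that every $v \in S = \smallvertices{\eta}{k}{G}$ has $\deg_G(v) < (1+\eta)k$, so the bound $\deg_G(v,\XA)\le (1+\eta)k$ is the trivial ``default'' estimate; the point of the splitting is to identify where this default bound is strong enough and where we need the nontrivial structural information provided by Lemma~\ref{prop:LKSstruct}\eqref{fewfewfew}.

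Concretely, I would write
\[
e_{\Gcapt}(\XA,S)\ =\ e_{\Gcapt}(\XA, S\cap V(\M_A)) \ +\ e_{\Gcapt}(\XA, S^0\setminus V(\M_A))\ +\ e_{\Gcapt}(\XA, S\setminus(S^0\cup V(\M_A))).
\]
The first summand is immediately bounded by $|S\cap V(\M_A)|(1+\eta)k$ by the degree bound on $S$. For the middle summand, apply Lemma~\ref{prop:LKSstruct}\eqref{fewfewfew} to get a bound of $\gamma kn$, which is at most $\eta kn/2$ since $\gamma<\eta/3$ by assumption of Lemma~\ref{prop:LKSstruct}. So the only thing that is not completely immediate is the third summand.

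For the third summand the task is to show
\[
S\setminus(S^0\cup V(\M_A))\ =\ S\setminus(S^0\cup V(\M_A\cup\M_B)),
\]
or equivalently $V(\M_B)\cap S\subseteq S^0$. The first-class vertices are fine: Lemma~\ref{prop:LKSstruct}\eqref{eq:lastminute} says $V_1(\M_B)\subset S^0$. For the second-class vertices, trace through the construction of $\M_B$: since $\M_B\subseteq \M'$, the description~\eqref{theCandDwelike} of the output of Lemma~\ref{lem:Separate} gives $V_2(\M_B)\subseteq (L\cap\smallatoms)\cup\bigcup\SEPARATOR_M\subseteq L$ (using $\SEPARATOR_M\subset\BL$ from~\eqref{eq:BinL}), so $V_2(\M_B)\cap S=\emptyset$. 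Once this identity is in hand, using the degree bound $\deg_G(v,\XA)\le (1+\eta)k$ on each $v\in S\setminus(S^0\cup V(\M_A\cup \M_B))$ bounds the third summand by $|S\setminus(\SN\cup V(\M_A\cup \M_B))|(1+\eta)k$, and adding the three estimates gives precisely the claimed inequality.

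There is no real obstacle here; the only mildly delicate point is remembering to swap $V(\M_A)$ for $V(\M_A\cup\M_B)$ in the vertex-count of the third term, which is exactly why we need to record the location of $V(\M_B)\cap S$. Everything else is a direct application of the estimates that Lemma~\ref{prop:LKSstruct}\eqref{fewfewfew} and Lemma~\ref{prop:LKSstruct}\eqref{eq:lastminute} have already been set up to deliver.
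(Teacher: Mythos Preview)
Your proof is correct and in fact cleaner than the paper's. The paper partitions $S$ using the auxiliary set $\SR$ rather than $S^0$, which forces it through the intermediate Claims~\ref{claim:DZ2}, \ref{claim:DZ3} and~\ref{cl:CrossEdges1} to control the pieces $e_{\Gcapt}(\XA\cap(\smallatoms\cup V(\mathcal M)),\SR\setminus V(\mathcal M_A))$, $e_{\Gcapt}(\XA\setminus(\smallatoms\cup V(\mathcal M)),\SR\setminus V(\mathcal M_A))$ and to pass from $\SR$ to $S^0$. You sidestep all of this by splitting along $S^0$ from the start and invoking property~\eqref{fewfewfew} directly, which at this stage of the proof of Lemma~\ref{prop:LKSstruct} has already been established. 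With your route, Claims~\ref{claim:DZ2} and~\ref{claim:DZ3} become unnecessary for Claim~\ref{claim:DZ5}. The one place where you reach back into the proof internals is the use of~\eqref{theCandDwelike} and~\eqref{eq:BinL} to get $V_2(\M_B)\subset L$; this is legitimate since both are established well before Claim~\ref{claim:DZ5}, and it is the right way to handle the swap from $V(\M_A)$ to $V(\M_A\cup\M_B)$ in the third summand.
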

\begin{proof}[Proof of Claim~\ref{claim:DZ5}]
We have 
\begin{align*}
e_{\Gcapt}\big(\XA,S \big)
=& \ e_{\Gcapt}\big(\XA,S\cap V(\mathcal M_A)\big)
\\ 
&\mbox{~~~~}
+  e_{\Gcapt}\big(\XA,S\setminus(\SR\cup V(\mathcal M_A))\big) 
\\ 
&\mbox{~~~~}
+e_{\Gcapt}\big(\XA\setminus (\smallatoms\cup V(\mathcal M)),\SR\setminus V(\mathcal M_A)\big)
\\ 
&\mbox{~~~~}
+e_{\Gcapt}\left(\XA\cap(\smallatoms\cup V(\mathcal M)),\SR\setminus V(\mathcal
M_A)\right)\;.
\end{align*}
To bound the first term we use that the vertices in $S\cap V(\mathcal M_A)$ each have degree at most $(1+\eta) k$, and thus obtain $e_{\Gcapt}(\XA,S\cap V(\mathcal M_A))\le |S\cap V(\mathcal M_A)|(1+\eta)k$. To bound the second term, we again use a bound on degree of vertices of $S\setminus\big((\SR\cup V(\mathcal
M_A))\cup(S^0\setminus \bar S^0))$, together with Claim~\ref{claim:DZ3}. The third term is zero by Claim~\ref{cl:CrossEdges1}. The fourth term can be bounded by Claim~\ref{claim:DZ2}.
\addtocounter{theorem}{1}
\end{proof}

A relatively short double counting below will lead to the final contradiction. The idea behind this computation is given in Figure~\ref{fig:strukturaContradiction}.
\begin{figure}[t] \centering
\includegraphics[scale=0.9]{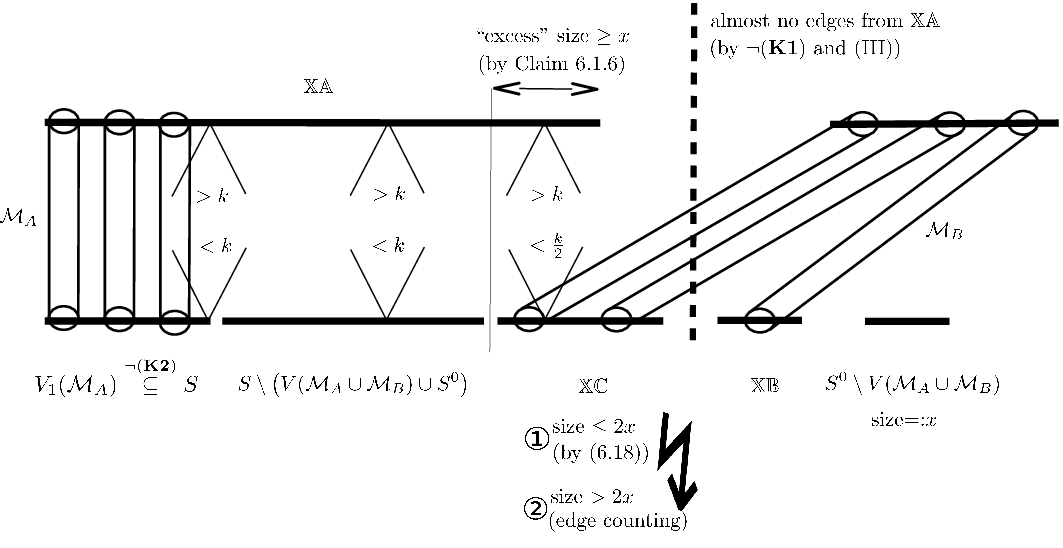}
\caption[Contradiction in Lemma~\ref{prop:LKSstruct}]{A simplified computation showing that $\neg {\bf(K1)}$, $\neg {\bf(K2)}$ leads to a contradiction. Denoting by $x$ the size of $S^0\setminus V(\M_A\cup\M_B)$ we get \ding{172} $|\XC|\le 2x$. On the other hand, each vertex of $\XA$ emanates $\gtrsim k$ edges which are absorbed by the sets $V_1(\M_A)$, $S\setminus (V(\M_A\cup\M_B)\cup S^0)$, and $\XC$. The vertices of $V_1(\M_A)$ and $S\setminus (V(\M_A\cup\M_B)\cup S^0)$ can absorb $\lesssim k$ edges. The vertices of $\XC$ receive $\lesssim\frac k2$ edges of $\XA$ by~\eqref{eq:PrI}. This leads to \ding{173} $|\XC|> 2x$, doubling the size of the ``excess'' vertices of $\XA$.} 
\label{fig:strukturaContradiction}
\end{figure}
\begin{align}
\begin{split}\label{eq:MCS}
|\XA|(1+\eta)k&\le \sum_{v\in \XA}\deg_G(v)\\
& \le \sum_{v\in \XA}\deg_{\Gcapt}(v)+2\big(e(G)-e(\Gcapt)\big)\\
& \le
2e_{\Gcapt}(\XA)+e_{\Gcapt}(\XA,\XB)+e_{\Gcapt}\big(\XA,\XC\big)\\
&\mbox{~~~~~~}+ e_{\Gcapt}\big(\XA,S\big)+\frac{\eta
kn}3\\[6pt]
\JUSTIFY{by $\neg${\bf(K1)}, \eqref{eq:eXAXC}, C\ref{claim:DZ5}}
&\le \frac 76\eta kn+ \big|\SN\setminus V(\mathcal{M}_A\cup
\mathcal{M}_B)\big|(1+\eta)k\\
&\mbox{~~~~~~}+ |S\cap V(\mathcal M_A)|(1+\eta)k\\
&\mbox{~~~~~~}+    |S\setminus(\SN\cup V(\mathcal M_A\cup \mathcal M_B))|(1+\eta)k
\\[6pt]
\JUSTIFY{by C\ref{claim:DZ4}}
&\le \frac 76\eta kn+|S\setminus V(\mathcal M_A\cup\mathcal
M_B)|(1+\eta)k\\
&\mbox{~~~~~~}+|\XA\cap V(\mathcal M_A)|(1+\eta)k\\[6pt]
\JUSTIFY{by C\ref{claim:DZ1}}
&\le \frac 76 \eta kn+\big(|\XA\setminus V(\mathcal M_A)|-\frac53\eta
n\big)(1+\eta)k\\
 &\mbox{~~~~~~}+|\XA\cap V(\mathcal M_A)|(1+\eta)k
\\[6pt]
&< |\XA|(1+\eta)k-\frac12\eta kn\;,
\end{split}
\end{align}
a contradiction. This finishes the proof of Lemma~\ref{prop:LKSstruct}.
\end{proof}

\subsection{The role of Lemma~\ref{lem:Separate} in the proof of
Lemma~\ref{prop:LKSstruct}}
\label{ssec:whyaugment}
Let us explain the role of Lemma~\ref{lem:Separate} in our proof of
Lemma~\ref{prop:LKSstruct}. First, let us attempt to use just the sparse
decomposition $\class$ to embed a tree $T\in \treeclass{k}$ in
$G\in \LKSgraphs{n}{k}{\eta}$. We will eventually see that this is
impossible and that we need to enhance $\class$ by a semiregular matching
(provided by Lemma~\ref{lem:Separate}).

We wish to find two sets $\mathbb{VA}$ and $\mathbb{VB}$ which are suitable for
embedding the cut vertices $W_A$ and $W_B$ of a $(\tau k)$-fine partition
$(W_A,W_B,\shrubA,\shrubB)$ of $T$, respectively.
In this sketch we just focus on finding $\mathbb{VA}$; the ideas behind finding
a set suitable set $\mathbb{VB}$ are similar.

To accommodate all the shrubs from $\shrubA$ --- which might contain up to $k$
vertices in total --- we need $\mathbb{VA}$ to have degree at least
$\sum_{T^*\in\shrubA}v(T^*)$ into a suitable set of vertices we reserve for
these shrubs. (The neighbourhood of a possible image of a vertex from $W_A$ has
to allow space for its children and for everything blocked by shrubs from
$\shrubA$ embedded earlier.)

Our methods of embedding in Section~\ref{sec:embed} determine which sets we find
`suitable' for $\shrubA$: these are the
large vertices $\largevertices{\eta}{k}{G}$, the
vertices of the nowhere-dense graph $\Gexp$, the avoiding set~$\smallatoms$, and
any matching consisting of regular pairs. This motivates us to look for a
semiregular matching~$\M$ which covers as much as possible of the set
$S^0:=\smallvertices{\eta}{k}{G}\setminus
\left(V(\Gexp)\cup\smallatoms\right)$ which consists of those vertices not
utilizable by any other of the methods above. As a next step one would prove
that there is a set $\mathbb{VA}$ with $$\mindeg\left(\mathbb{VA},V(G)\setminus (S^0\setminus
V(\M))\right)\gtrsim k\;.$$ In the dense setting~\cite{PS07+}, where the
structure of $G$ is determined by $\BGblack$, and
where $S^0=\smallvertices{\eta}{k}{G}$, such a matching $\M$ can be found inside
$\BGblack$ using the Gallai--Edmonds Matching Theorem. But here, just working
with $\BGblack$ is not enough for finding a suitable semiregular matching as the
following example shows.

\begin{figure}[ht] \centering \includegraphics[scale=0.8]{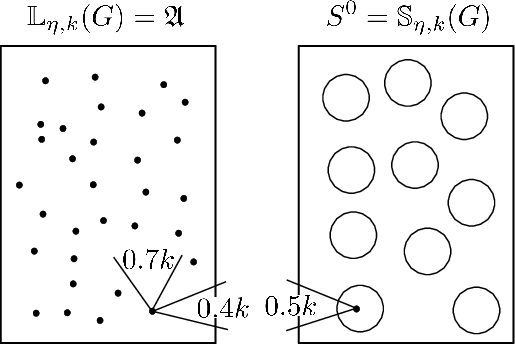}
\caption[Example of graph with $\BGblack$ empty]{An example of a graph $G\in\LKSgraphs{n}{k}{\eta:=\frac{1}{10}}$ in which $\BGblack$ is empty, and yet there is no candidate set for $\mathbb{VA}$ of vertices which have degrees at least $k$ outside the set~$S^0$.}
\label{fig:whyEnhancing}
\end{figure}
Figure~\ref{fig:whyEnhancing} shows a graph $G$ with
$\largevertices{\eta}{k}{G}\subseteq \smallatoms$, and where the vertices in
$S^0=\smallvertices{\eta}{k}{G}$ form clusters which do not induce any dense
regular pairs. Each $\largevertices{\eta}{k}{G}$-vertex sends $0.7k$ edges to
$\largevertices{\eta}{k}{G}$ and $0.4k$ edges to $\smallvertices{\eta}{k}{G}$,
and each $\smallvertices{\eta}{k}{G}$-vertex receives $0.5k$ edges
from $\largevertices{\eta}{k}{G}$. The edges between
$\largevertices{\eta}{k}{G}$ and $\smallvertices{\eta}{k}{G}$ are contained in
$\DenseSpots$. No vertex has degree $\gtrsim k$ outside $S^0$, and the
cluster graph $\BGblack$ contains no matching.

However in this situation we can still find a large semiregular matching $\M$ between $\largevertices{\eta}{k}{G}$ and $\smallvertices{\eta}{k}{G}$, by regularizing the crossing dense spots $\DenseSpots$.
(In general, obtaining a semiregular matching is of course more complicated.)

The example relates to Lemma~\ref{prop:LKSstruct} by setting $\XA:=\mathbb{VA}$, and $\M_A:=\M$. Indeed,~\eqref{fewfewfew} of Lemma~\ref{prop:LKSstruct} says that $\XA$-vertices send almost no edges to $S^0\setminus V(\M_A)$, and thus (since $\XA\subset \largevertices{\eta}{k}{G}$), they have degree $\gtrsim k$ outside $S^0\setminus V(\M_A)$.

\section{Configurations}\label{sec:configurations}
In this section we introduce ten configurations
--- called $\mathbf{(\diamond1)}$--$\mathbf{(\diamond10)}$ --- which may be found in a
graph $G\in \LKSgraphs{n}{k}{\eta}$. We will be able to infer from the main
results of this section
(Lemmas~\ref{lem:ConfWhenCXAXB}--\ref{lem:ConfWhenMatching}) and from other
structural results of this paper that each graph $G\in \LKSgraphs{n}{k}{\eta}$
contains at least one of these configurations.
Lemmas~\ref{lem:ConfWhenCXAXB}--\ref{lem:ConfWhenMatching} are based on the
structure provided by Lemma~\ref{prop:LKSstruct} which itself is in a sense the
most descriptive result of the structure of graphs from
$\LKSgraphs{n}{k}{\eta}$. However, the structure given by
Lemma~\ref{prop:LKSstruct} needs some burnishing. It will turn out in
Section~\ref{sec:embed} that each of the configurations  $\mathbf{(\diamond1)}$--$\mathbf{(\diamond10)}$ 
 is suitable for the
embedding of any tree from $\treeclass{k}$ as required for
Theorem~\ref{thm:main}.

This section is organized as follows. In Section~\ref{ssec:shadows} we introduce an auxiliary notion of shadows and prove some simple properties of them. Section~\ref{ssec:RandomSplittins}
introduces randomized splitting of the vertex set of an input
graph. In Section~\ref{ssec:ExceptionalVertices} we define certain cleaned
versions of the sets $\XA$ and $\XB$, and introduce other building blocks for
the configurations $\mathbf{(\diamond1)}$--$\mathbf{(\diamond10)}$. In
Section~\ref{ssec:TypesConf} we state some preliminary definitions and introduce the configurations $\mathbf{(\diamond1)}$--$\mathbf{(\diamond10)}$. In Section~\ref{ssec:cleaning} we prove certain ``cleaning lemmas''. The main results are then stated and proved  in Section~\ref{ssec:obtainingConf}. The results of
Section~\ref{ssec:obtainingConf} rely on the auxiliary lemmas of
Section~\ref{ssec:RandomSplittins} and~\ref{ssec:cleaning}.

\subsection{Shadows}\label{ssec:shadows}
We will find it convenient to work with the notion of a shadow. Given a graph $H$, a set
$U\subset V(H)$, and a number $\ell$ we define inductively
\index{mathsymbols}{*SHADOW@$\shadow$}
\begin{align*}
& \shadow^{(0)}_H(U,\ell):=U\text{, and }\\
& \shadow^{(i)}_H(U,\ell):=\{ v\in V(H)\::\:
\deg_H(v,\shadow^{(i-1)}_H(U,\ell))>\ell \} \text{ for }i\geq 1.
\end{align*}
We abbreviate $\shadow^{(1)}_H(U,\ell)$ as $\shadow_H(U,\ell)$. Further, the
graph $H$ is omitted from the subscript if it is clear from the context.
Note that the shadow of a set $U$ might intersect $U$.

Below, we state two facts which bound the size of a shadow of a given set.
Fact~\ref{fact:shadowbound} gives a bound in general graphs of bounded maximum
degree and Fact~\ref{fact:shadowboundEXPANDER} gives a stronger bound for
nowhere-dense graphs.
\begin{fact}\label{fact:shadowbound}
Suppose $H$ is a graph with $\maxdeg(H)\le \Omega k$. Then for each
$\alpha>0, i\in\{0,1,\ldots\}$, and each set $U\subset V(H)$, we have
$$|\shadow^{(i)}(U,\alpha k)|\le \left(\frac{\Omega}{\alpha}\right)^i|U|\;.$$
\end{fact}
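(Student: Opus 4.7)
The plan is to prove the bound by induction on $i$. The base case $i=0$ is immediate since $\shadow^{(0)}(U,\alpha k)=U$ by definition, and $(\Omega/\alpha)^0|U|=|U|$.

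For the inductive step, assume the bound holds for $i-1$, and consider the bipartite edge count between $\shadow^{(i)}(U,\alpha k)$ and $\shadow^{(i-1)}(U,\alpha k)$ in $H$. By the definition of the shadow, each vertex $v\in \shadow^{(i)}(U,\alpha k)$ sends strictly more than $\alpha k$ edges into $\shadow^{(i-1)}(U,\alpha k)$, giving a lower bound of $\alpha k\cdot|\shadow^{(i)}(U,\alpha k)|$ on the number of ordered pairs counted from this side. On the other hand, each vertex in $\shadow^{(i-1)}(U,\alpha k)$ contributes at most $\maxdeg(H)\le \Omega k$ edges, giving an upper bound of $\Omega k\cdot|\shadow^{(i-1)}(U,\alpha k)|$. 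Combining these two bounds yields
\[
|\shadow^{(i)}(U,\alpha k)|\;\le\; \frac{\Omega}{\alpha}\cdot|\shadow^{(i-1)}(U,\alpha k)|,
\]
and applying the inductive hypothesis finishes the step.

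There is no real obstacle here: the argument is a straightforward two-sided edge count together with a one-line induction. The only care needed is to correctly keep track of the direction of the inequality (strict lower bound by definition of shadow, non-strict upper bound from $\maxdeg$), but since the conclusion is a non-strict inequality this causes no issues.
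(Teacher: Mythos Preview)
Your proof is correct and is essentially the same double-counting argument as the paper's: both bound the edges between $\shadow^{(i-1)}(U,\alpha k)$ and $\shadow^{(i)}(U,\alpha k)$ from above by $\Omega k$ times the size of the former and from below by $\alpha k$ times the size of the latter, then induct. The only cosmetic difference is that the paper phrases the reduction as ``it suffices to prove the case $i=1$'' and applies it iteratively, whereas you write out the inductive step explicitly.
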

\begin{proof}
Proceeding by induction on $i$ it suffices to show that
$|\shadow^{(1)}(U,\alpha k)|\le \Omega|U|/\alpha$. To this end, observe that
$U$ sends out at most $\Omega k |U|$ edges while each vertex of
$\shadow(U,\alpha k)$ receives at least $\alpha k$ edges from $U$.
\end{proof}
\begin{fact}\label{fact:shadowboundEXPANDER}
Let $\alpha,\gamma,Q>0$ be three numbers such that $Q\ge1$ and $16Q\le
\frac{\alpha}{\gamma}$. Suppose that $H$ is a $(\gamma k,\gamma)$-nowhere-dense graph, and let $U\subset V(H)$ with $|U|\le Qk$. Then we have $$|\shadow(U,\alpha
k)|\le \frac{16Q^2\gamma}{\alpha}k.$$
\end{fact}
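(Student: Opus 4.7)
I will proceed by contradiction: suppose $s := |\shadow(U,\alpha k)| > \frac{16Q^2\gamma}{\alpha}k$ and produce a $(\gamma k,\gamma)$-dense spot, contradicting the hypothesis. Writing $S := \shadow(U,\alpha k)$, by passing to an arbitrary subset I may assume $|S| = s_0 := \lceil \tfrac{16Q^2\gamma}{\alpha}k\rceil$. Since $16Q\le\alpha/\gamma$ gives $\tfrac{16Q^2\gamma}{\alpha}\le Q$, we automatically have $s_0\le Qk$, so both $S$ and $U$ will have at most $Qk$ vertices.

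The first step is to do the edge count. By the defining property of the shadow, $\sum_{v\in S}\deg_H(v,U)>\alpha k\,s_0$. Writing $R:=S\cap U$, $S^*:=S\setminus U$, $U^*:=U\setminus S$ and splitting the sum according to the location of the endpoints, one verifies
\[
\sum_{v\in S}\deg_H(v,U) \;=\; e_H(S^*,U^*)+e_H(S^*,R)+e_H(R,U^*)+2\,e(H[R]).
\]
Since $H[S\cup U]$ contains all of the edges on the right-hand side (counting each once), this yields $e(H[S\cup U])\ge \alpha k s_0-e(H[R])$.

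Now I split into two cases. In \emph{Case 1} ($e(H[R])\ge \alpha k s_0/2$) the dense spot will be extracted from $H[R]$, which has at most $|R|\le Qk$ vertices and more than $\alpha k s_0/2\ge 8Q^2\gamma k^2$ edges; the average degree thus exceeds $16Q\gamma k$. Taking a subgraph of min degree at least half the average degree, and then a max-cut bipartization (which preserves at least half the degree at every vertex), produces a bipartite subgraph $H''$ of $H[R]$ with $\min\deg(H'')>4Q\gamma k>\gamma k$ and with both colour classes of size at most $Qk/2$; its density is therefore at least $(4Q\gamma k)/(Qk/2)=8\gamma>\gamma$, so $H''$ is a $(\gamma k,\gamma)$-dense spot. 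In \emph{Case 2} ($e(H[R])<\alpha k s_0/2$) one has $e(H[S\cup U])>\alpha k s_0/2$ on at most $|S|+|U|\le 2Qk$ vertices, so the average degree exceeds $\alpha s_0/(2Q)\ge 8Q\gamma k$; the same min-degree-extraction followed by max-cut bipartization yields a bipartite subgraph with minimum degree exceeding $2Q\gamma k>\gamma k$ and with both sides of size at most $Qk$, hence density at least $2\gamma>\gamma$, once again a $(\gamma k,\gamma)$-dense spot.

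The only delicate point in the plan is the one just addressed: namely, the sets $S$ and $U$ need not be disjoint, so one cannot directly regard $H[S,U]$ as a bipartite subgraph of $H$. The case split on $e(H[R])$ is precisely what handles this overlap cleanly, either finding the dense spot inside $R$ itself (when the overlap absorbs most of the shadow-edge count) or inside the larger graph $H[S\cup U]$ (when it does not). In both branches the numerical constants are comfortable thanks to the hypothesis $16Q\le\alpha/\gamma$, which ensures $s_0\le Qk$ and makes both the min-degree and density thresholds work out simultaneously.
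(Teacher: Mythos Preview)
Your approach is essentially the paper's: assume the shadow is too large, pass to $H[S\cup U]$, extract a subgraph of large minimum degree, bipartize via a max-cut, and exhibit a $(\gamma k,\gamma)$-dense spot. Two remarks.

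First, your claims that ``both colour classes have size at most $Qk/2$'' (Case~1) and ``both sides of size at most $Qk$'' (Case~2) are not justified: a bipartition $(A,B)$ of a vertex set of size $m$ need not have both parts of size at most $m/2$. Fortunately this is harmless, since using the weaker (correct) bound $|B|\le |V(H'')|$ still gives density $>\gamma$ in both cases --- $4\gamma$ in Case~1 and just over $\gamma$ in Case~2.

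Second, your case split is unnecessary. Your own decomposition shows that $2e(H[R])$ is one nonnegative summand of $\sum_{v\in S}\deg_H(v,U)$, so $e(H[R])\le\frac12\sum$ and hence $e(H[S\cup U])\ge\sum-e(H[R])\ge\frac12\sum$ always. The paper uses exactly this inequality, $e_H(U\cup W)\ge\frac12\sum_{v\in W}\deg_H(v,U)$, and then runs a single argument (your Case~2) without splitting.
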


\begin{proof}
Suppose otherwise and let $W\subset \shadow(U,\alpha k)$ be of size
$|W|=\frac{16Q^2\gamma}{\alpha}k\le Qk$. Then $e_H(U\cup W)\ge \frac12 \sum_{v\in W}\deg_H(v,U)\ge 8\gamma Q^2 k^2$. Thus $H[U\cup W]$ has average degree at least $$\frac{2 e_H(U\cup W)}{|U|+|W|}\ge 8\gamma Qk\;,$$ and therefore, by a well-known fact, contains a subgraph $H'$ of minimum degree at least $4\gamma Qk$. Taking a maximal cut $(A,B)$ in $H'$, it is easy to see that $H'[A,B]$ has minimum degree at least $2\gamma Qk\ge \gamma k$. Further, $H'[A,B]$ has density at least $\frac{|A|\cdot 2\gamma Qk}{|A||B|}\geq\gamma$,  contradicting the fact that $H$ is $(\gamma k,
\gamma)$-nowhere-dense.
\end{proof}

\subsection{Random splitting}\label{ssec:RandomSplittins}
\def\AXA{\mathfrak{Q}}
\def\aXa{\mathfrak{q}}
\def\VXV{\mathfrak{U}}
Suppose a graph $G$ (together with its bounded decomposition\footnote{Note that
in general we apply a \emph{sparse} decomposition (as opposed to a
\emph{bounded} decomposition) on the graph $G=G_\PARAMETERPASSING{T}{thm:main}$, cf.\
Lemma~\ref{lem:LKSsparseClass}. However, it turns out that when the vertices
$\HugeVertices$ of huge degrees form a substantial part of $G$ (which is when
the need of transition from bounded to sparse decomposition arises), the result
of this section is not needed.}) is given. In this section we split its vertex
set in several classes in a given ratio. It is important that most vertices will
have their degrees split obeying approximately this ratio. The
corresponding statement is given in Lemma~\ref{lem:randomSplit}. It will be used to split the vertices of the host graph $G=G_\PARAMETERPASSING{T}{thm:main}$
according to which part of the tree
$T=T_\PARAMETERPASSING{T}{thm:main}\in\treeclass{k}$ they will host. More
precisely, suppose that $(W_A,W_B,\shrubA,\shrubB)$ is an $\ell$-fine partition
of $T$ (for a suitable number $\ell$). Let $t_\mathrm{int}$ and $t_\mathrm{end}$
be the total sizes of the internal and end shrubs, respectively. We then
want to partition $V(G)$ into three sets
$\colouringp{0},\colouringp{1},\colouringp{2}$ (which correspond to $\VXV_1, \VXV_2,
\VXV_3$ in Lemma~\ref{lem:randomSplit}) in the ratio (approximately)
$$(|W_A|+|W_B|)\::\:t_\mathrm{int}\::\:t_\mathrm{end}$$ so that degrees of the
vertices of $V(G)$ are split proportionally. This will allow us to embed the vertices of
$W_A\cup W_B$ in $\colouringp{0}$, the internal shrubs in $\colouringp{1}$, and
end shrubs in $\colouringp{2}$. Actually, as our embedding procedure is
more complex, we not only require the degrees to be split proportionally, but
also to partition proportionally the objects from the bounded decomposition.
In Section~\ref{ssec:whyrandomsplitting} we give some reasons why such a random
splitting needs to be used.

Lemma~\ref{lem:randomSplit} below is formulated in an abstract setting, without
any reference to the tree $T$, and with a general number of classes in the partition.

\begin{lemma}\label{lem:randomSplit} For each $p\in\mathbb N$ and $a>0$ there exists $k_0>0$ such that
for each $k>k_0$ we have the following.

Suppose $G$ is a graph of order $n\ge k_0$ and $\maxdeg(G)\le\Omega^*k$ with its 
$(k,\Lambda,\gamma,\epsilon,k^{-0.05},\rho)$-bounded
decomposition $( \clusters,\DenseSpots, \Gblack, \Gexp, \smallatoms )$. As
usual, we write $\Gcapt$ for the subgraph captured by $( \clusters,\DenseSpots,
\Gblack, \Gexp, \smallatoms )$, and $\GD$ for the spanning subgraph of $G$
consisting of the edges in $\DenseSpots$. Let $\M$ be an
$(\epsilon,d,k^{0.95})$-semiregular matching in $G$, and $\VXV_1,\ldots, \VXV_p$
be subsets of $V(G)$. Suppose that $\Omega^*\ge 1$ and $\Omega^*/\gamma<k^{0.1}$.

Suppose that $\aXa_1,\ldots,\aXa_p\in\{0\}\cup[a,1]$ are reals with $\sum \aXa_i\le 1$.
Then there exists a partition $\AXA_1\cup \ldots\cup \AXA_p=V(G)$, and sets $\bar
V\subset V(G)$, $\bar\V\subset \V(\M)$, $\bar\clusters\subset\clusters$ with the following properties.
\begin{enumerate}[(1)]
  \item\label{It:H1} $|\bar V|\le \exp(-k^{0.1})n$,
  $|\bigcup\bar\V|\le \exp(-k^{0.1})n$,
  $|\bigcup\bar\clusters|<\exp(-k^{0.1})n$.
  \item\label{It:H2} For each $i\in [p]$ and each $C\in
  \clusters\setminus\bar\clusters$ we have $|C\cap \AXA_i|\ge
  \aXa_i|\AXA_i|-k^{0.9}$.
  \item\label{It:H3} For each $i\in [p]$ and each $C\in \V(\M)\setminus\bar\V$
  we have $|C\cap \AXA_i|\ge \aXa_i|\AXA_i|-k^{0.9}$.
  \item\label{It:H4} For each $i\in [p]$, $D=(U,W; F)\in\DenseSpots$
   and 
   $\mindeg_D (U\setminus \bar V,W\cap \AXA_i)\ge \aXa_i\gamma k-k^{0.9}$.
  \item\label{It:HproportionalSizes} For each $i,j\in[p]$ we have $|\AXA_i\cap
  \VXV_j|\ge \aXa_i|\VXV_j|-n^{0.9}$.
  \item\label{It:H5} For each $i\in [p]$ each $J\subset[p]$ and each $v\in V(G)\setminus \bar V$ we have $$\deg_H(v,\AXA_i\cap \VXV_J)\ge \aXa_i\deg_H(v,\VXV_J)-2^{-p}k^{0.9}\;,$$ for each of the graphs $H\in\{G,\Gcapt,\Gexp,\GD,\Gcapt\cup\GD\}$,
  where 
$\VXV_J:=\big(\bigcap_{j\in J}\VXV_j\big)\sm \big(\bigcup_{j\in [p]\setminus J}
\VXV_j\big)$.
  \item\label{It:H6} For each $i,i',j,j'\in [p]$ ($j\neq j'$), we have
\begin{align*}
e_H(\AXA_i\cap \VXV_{j},\AXA_{i'}\cap \VXV_{j'})&\ge
  \aXa_i\aXa_{i'} e_H(\VXV_j,\VXV_{j'})-k^{0.6}n^{0.6}\;,\\
e_H(\AXA_i\cap \VXV_{j},\AXA_{i'}\cap \VXV_{j})&\ge
  \aXa_i\aXa_{i'} e(H[\VXV_j])-k^{0.6}n^{0.6}\qquad\mbox{if $i\neq i'$, and}\\
  e(H[\AXA_i\cap \VXV_{j}])&\ge
  \aXa_i^2 e(H[\VXV_j])-k^{0.6}n^{0.6}\;.   
\end{align*}  
for each of the graphs
  $H\in\{G,\Gcapt,\Gexp,\GD,\Gcapt\cup\GD\}$.
  \item\label{It:H7} For each $i\in[p]$ if $\aXa_i=0$ then
  $\AXA_i=\emptyset$.
\end{enumerate}
\end{lemma}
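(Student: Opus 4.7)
The plan is to prove Lemma~\ref{lem:randomSplit} by a standard random assignment argument followed by concentration inequalities (Chernoff for binomial sums and Azuma/McDiarmid for edge counts), together with a union bound to simultaneously ensure all the listed conditions.

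First I would construct the partition probabilistically. For each $v\in V(G)$ independently, assign $v$ to class $\AXA_i$ with probability $\aXa_i$; since $\sum\aXa_i\le 1$, absorb the residual probability into any fixed class with $\aXa_i>0$, or, more cleanly, first replace $\aXa_1$ by $\aXa_1+(1-\sum\aXa_i)$ (note that every lower bound in the conclusion remains valid under such enlargement of a single class). If $\aXa_i=0$ we simply set $\AXA_i=\emptyset$, which immediately gives~\eqref{It:H7}. All subsequent analysis is for classes with $\aXa_i\ge a$.

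The next step is to handle items~\eqref{It:H2}--\eqref{It:H5} and~\eqref{It:HproportionalSizes} using Chernoff bounds for binomial sums of independent indicators. Indeed, $|C\cap\AXA_i|$, $|\VXV_j\cap\AXA_i|$, and $\deg_H(v,\AXA_i\cap \VXV_J)$ are all binomial random variables with expectations $\aXa_i|C|$, $\aXa_i|\VXV_j|$, and $\aXa_i\deg_H(v,\VXV_J)$ respectively; likewise for $\deg_D(u,W\cap\AXA_i)$, whose expectation is at least $\aXa_i\gamma k$ because $D$ is a $(\gamma k,\gamma)$-dense spot. Standard Chernoff bounds give deviations of order $\sqrt{(\text{expectation})\cdot k^{0.1}}$ with failure probability at most $\exp(-\Omega(k^{0.1}))$. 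For each concentration event we declare the relevant object (a vertex, a cluster, or an $\M$-vertex) \emph{bad} if the event fails for some choice of $i$, $J$, $H$, or $D$; the number of events associated to one vertex is at most $p\cdot 2^p\cdot 5\cdot(\text{dense spots through it})$, which by Fact~\ref{fact:boundedlymanyspots} is $O(2^p\Omega^*/\gamma)=k^{O(1)}$. A union bound then yields an expected bad-object count of order $n\cdot k^{O(1)}\exp(-\Omega(k^{0.1}))$, which is far below $\exp(-k^{0.1})n$ for $k$ large; by Markov's inequality a single outcome achieves~\eqref{It:H1}.

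For item~\eqref{It:H6}, the edge counts $e_H(\AXA_i\cap\VXV_j,\AXA_{i'}\cap\VXV_{j'})$ are bilinear in the assignment variables, hence are no longer sums of independent indicators. I would apply McDiarmid's bounded-differences inequality: reassigning a single vertex changes any such edge count by at most $\maxdeg(H)\le \Omega^*k\le k^{1.1}$ (using the assumption $\Omega^*/\gamma<k^{0.1}$ and $\Omega^*\ge 1$), so Azuma gives
\[
\Pr\!\left[\bigl|e_H(\cdots)-\mathbb{E}[e_H(\cdots)]\bigr|>t\right]\le 2\exp\!\Bigl(-\tfrac{t^2}{2n(\Omega^*k)^2}\Bigr).
\]
Choosing $t=\tfrac12 k^{0.6}n^{0.6}$ yields failure probability $\exp(-\Omega(n^{0.2}k^{-1}))$, which is negligible for $n\ge k_0$ with $k_0$ large enough in terms of $p$ and $a$; a further union bound over the $O(p^2)$ choices of $(i,i',j,j')$ and the five graphs $H$ completes this step. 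Whenever the expectation $\aXa_i\aXa_{i'}e_H(\cdots)$ is smaller than $k^{0.6}n^{0.6}$ the required lower bound holds trivially, so there is no issue in regimes where the expectation is tiny. The main technical point is precisely this Azuma calibration in~\eqref{It:H6}: one must verify that the $k^{0.6}n^{0.6}$ slack indeed absorbs the $\sqrt{n}\cdot\Omega^*k$ fluctuation, which is where the hypothesis $\Omega^*/\gamma<k^{0.1}$ and a sufficiently large choice of $k_0$ are used. Everything else is a routine simultaneous union bound over the polynomially many (in $n$, $k$) events, yielding a single outcome realizing all the conditions of the lemma.
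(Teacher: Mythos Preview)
Your approach---independent random assignment followed by Chernoff for the per-vertex/per-cluster conditions and Azuma--McDiarmid for the edge counts---is exactly the route the paper takes (the paper is in fact terser: it simply says all of~\eqref{It:H1},~\eqref{It:HproportionalSizes},~\eqref{It:H6} ``can be controlled in a straightforward way by the Chernoff bound'' and spells out only the~\eqref{It:H4} computation). Your handling of~\eqref{It:H2}--\eqref{It:H5} and~\eqref{It:HproportionalSizes} is correct and matches the paper.

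There is, however, a genuine gap in your Azuma calibration for~\eqref{It:H6}. You derive a failure probability of order $\exp\bigl(-\Omega(n^{0.2}k^{-1})\bigr)$ and then assert this is negligible ``for $n\ge k_0$ with $k_0$ large enough in terms of $p$ and $a$''. But the hypotheses give only $k>k_0$ and $n\ge k_0$ with $k_0=k_0(p,a)$ fixed; nothing forces $n^{0.2}\gg k$. Indeed one can have $n=k^2$, say, in which case $n^{0.2}/k=k^{-0.6}\to 0$ and your bound is useless. More to the point, the standard deviation of $e_H(\AXA_i\cap\VXV_j,\AXA_{i'}\cap\VXV_{j'})$ for an $\Omega^*k$-regular graph is of order $\Omega^*k\sqrt{n}$, which exceeds the stated slack $k^{0.6}n^{0.6}$ whenever $n\ll (\Omega^*)^{10}k^{4}$; so the lower bound in~\eqref{It:H6} cannot hold with the stated error term for all $n\ge k_0$.

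This is not a defect of your method but of the error term written in the lemma: the paper's own proof is no more detailed at this step and glosses over the same issue. What McDiarmid actually delivers (using $\sum_v c_v^2\le 2\Omega^*k\cdot e_H(\VXV_j,\VXV_{j'})$) is an error of order $\Omega^*k\sqrt{n\log n}$, and since every downstream use of~\eqref{It:H6} in the paper only needs the error to be $o(kn)$ (with $n>k$ in the LKS setting), this weaker bound is entirely sufficient. So your argument is salvageable with the error term adjusted; the flaw is only in the sentence claiming $\exp(-\Omega(n^{0.2}/k))$ is small.
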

\begin{proof}
We can assume that $\sum \aXa_i=1$ as all bounds in \eqref{It:H2}--\eqref{It:H6} are lower bounds. Assume that $k$ is
large enough. We assign each vertex $v\in V(G)$ to one of the sets $\AXA_1$,
\ldots, $\AXA_p$ at random with respective probabilities $\aXa_1,\ldots,\aXa_p$. Let $\bar V_1$ and $\bar V_2$ be the
vertices which do not satisfy~\eqref{It:H4} and~\eqref{It:H5},
respectively. Let $\bar\V$ be the sets of $\V(\M)$ which do
not satisfy~\eqref{It:H3}, and let $\bar\clusters$ be the
clusters of $\clusters$ which do not satisfy~\eqref{It:H2}.
Setting~$\bar V:=\bar V_1\cup\bar V_2$, we need to show
that~\eqref{It:H1}, \eqref{It:HproportionalSizes} and~\eqref{It:H6} are
fulfilled simultaneously with positive probability. Using the union bound,
it suffices to show that each of the properties~\eqref{It:H1},
\eqref{It:HproportionalSizes} and \eqref{It:H6} is violated with probability
at most $0.2$. The probability of each of these three
properties can be controlled in a
straightforward way by the Chernoff bound. We only give such a bound (with error probability at most $0.1$) on the size of the set $\bar V_1$ (appearing
in~\eqref{It:H1}), which is the most difficult one to control.

For $i\in [p]$, let $\bar V_{1,i}$ be the set of vertices $v$ for which there
exists $D=(U,W; F)\in\DenseSpots$, $U\ni v$, such that
$\deg_D(v,W\cap \AXA_i)<\aXa_i\gamma k-k^{0.9}$. We aim to show
that for each $i\in [p]$ the probability that $|\bar
V_{1,i}|>\exp(-k^{0.2})n$ is at most $\frac{1}{10p}$. Indeed, summing such an error bound together with similar bounds for other properties will allow us to conclude the statement.
This will in turn follow from the Markov Inequality provided that we show that
\begin{equation}\label{itsraining}
\expectation[|\bar V_{1,i}|]\le
\frac{1}{10p}\cdot\exp(-k^{0.2})n\;.
\end{equation}
Indeed, let us consider an arbitrary vertex $v\in V(G)$. By Fact~\ref{fact:sizedensespot},
$v$ is contained in at most $\Omega^*/\gamma$ dense spots of $\DenseSpots$. For a 
fixed dense spot $D=(U,W;F)\in\DenseSpots$ with $v\in
U$ let us bound the probability of the event $\mathcal
E_{v,i,D}$ that $\deg_D(v,W\cap \AXA_i)<\aXa_i\gamma k-k^{0.9}$.
To this end, fix a set $N\subset W\cap \neighbour_D(v)$ of size exactly
$\gamma k$ before the random assignment is performed. Now, elements of
$V(G)$ are distributed randomly into the sets $\AXA_1,\ldots, \AXA_p$. In particular, the number
$|\AXA_i\cap N|$ has binomial distribution with parameters
$\gamma k$ and $\aXa_i$. Using the Chernoff bound, we get
$$\probability[\mathcal E_{v,i,D}]\le
\probability\left[|\AXA_i\cap N|<\aXa_i\gamma k-k^{0.9}\right]\le
\exp(-k^{0.3}) \;.$$
Thus, it follows by summing
the tail over at most $\Omega^*/\gamma\le k^{0.1}$ dense
spots containing $v$, that
\begin{equation}\label{itsunny}
\probability[v\in \bar V_{1,i}]\le k^{0.1}\cdot
\exp(-k^{0.3})\;.
\end{equation}
Now,~\eqref{itsraining} follows by linearity of expectation.
\end{proof}

Lemma~\ref{lem:randomSplit} is utilized for the purpose of our proof of
Theorem~\ref{thm:main} using the notion of proportional partition introduced in
Definition~\ref{def:proportionalsplitting} below.

\subsection{Common settings}\label{ssec:ExceptionalVertices}
Throughout Section~\ref{sec:configurations} and Section~\ref{sec:embed} we shall
be working with the setting that comes from Lemma~\ref{prop:LKSstruct}. In order
to keep statements of the subsequent lemmas reasonably short we introduce the
following setting.

\begin{setting}\label{commonsetting}
We assume that the constants $\Lambda,\Omega^*,\Omega^{**},k_0$ and $\alphaD,\gamma,\epsilon,\epsilon',\eta,\pi,\rho, \tau, d$ satisfy
\begin{align}
\label{eq:KONST}
\begin{split}
 \eta\gg\frac1{\Omega^*}\gg \frac1{\Omega^{**}}\gg\rho\gg\gamma\gg
d \ge\frac1{\Lambda}\ge\epsilon\ge
\pi\ge  \alphaD
\ge \epsilon'\ge
\nu\gg \tau \gg \frac{1}{k_0}>0\;,
\end{split}
\end{align}
and that $k\ge k_0$.
Here, by writing $c>a_1\gg
a_2\gg \ldots \gg a_\ell>0$ we mean that there exist non-decreasing functions
$f_i:(0,c)^i\rightarrow (0,c)$ ($i=1,\ldots,\ell-1$) such that for each $i\in
[\ell-1]$ we have $a_{i+1}<f_{i}(a_1,\ldots,a_i)$.  \footnote{The precise relation between the parameters can be
found on page~\pageref{pageref:PAR}, with $\Omega^{**}:= \Omega_{j+1}$ and
$\Omega^*:= \Omega_j$ for a certain index $j\in [g]$ to be specified in the course of the proof there.}

\medskip

Suppose that $G\in\LKSsmallgraphs{n}{k}{\eta}$ is given together with its
$(k,\Omega^{**},\Omega^*,\Lambda,\gamma,\epsilon',\nu,\rho)$-sparse
decomposition $$\class=(\HugeVertices, \clusters,\DenseSpots, \Gblack,
\Gexp,\smallatoms )\;, $$
with respect to the partition
$\{\smallvertices{\eta}{k}{G},\largevertices{\eta}{k}{G}\}$, and with respect to the avoiding threshold $\frac{\rho k}{100\Omega^*}$. We write 
\begin{equation}
 \index{mathsymbols}{*VA@$\largeintoatoms$}\index{mathsymbols}{*VA@$\clustersintoatoms$}
\largeintoatoms:=\shadow_{\Gcapt-\HugeVertices}(\smallatoms,\frac{\rho k}{100\Omega^*})\quad\mbox{and}\quad\clustersintoatoms:=\{C\in\clusters\::\:C\subset \largeintoatoms\}\;.
\label{eq:deflargeintoatoms}
\end{equation}
The graph \index{mathsymbols}{*Gblack@$\BGblack$}$\BGblack$ is the corresponding cluster graph. Let $\clustersize$
\index{mathsymbols}{*C@$\clustersize$}
be the size of an arbitrary cluster in $\clusters$.\footnote{The number $\clustersize$ is not defined when $\clusters=\emptyset$. However in that case $\clustersize$ is never actually used.}
 Let \index{mathsymbols}{*G@$\Gcapt$}$\Gcapt$ be the spanning subgraph of $G$ formed by the edges captured by $\class$. There are two $(\epsilon,d,\pi \clustersize)$-semiregular matchings $\mathcal M_A$
and $\mathcal M_B$ in $\GD$, with the following
properties (we abbreviate
$\XA:=\XA(\eta,\class, \mathcal M_A,\mathcal M_B)$,
$\XB:=\XB(\eta,\class, \mathcal M_A,\mathcal M_B)$, and
$\XC:=\XC(\eta,\class, \mathcal M_A,\mathcal M_B)$):
\begin{enumerate}
\item\label{commonsetting1}
$V(\mathcal M_A)\cap V(\mathcal
M_B)=\emptyset$,
\item\label{commonsetting1apul}
$V_1(\mathcal \M_B)\subset S^0$, where
\begin{equation}\label{eq:defS0}
S^0:=\smallvertices{\eta}{k}{G}\setminus
(V(\Gexp)\cup\smallatoms)\;,
\end{equation}
\item\label{commonsetting2} for each $(X,Y)\in \M_A\cup\M_B$, there is a dense
spot $(U,W; F)\in \DenseSpots$ with $X\subset U$ and $Y\subset W$, and further,
either $X\subset \smallvertices{\eta}{k}{G}$ or $X\subset
\largevertices{\eta}{k}{G}$, and $Y\subset \smallvertices{\eta}{k}{G}$ or
$Y\subset \largevertices{\eta}{k}{G}$,
\item\label{commonsetting3}
for each $X_1\in\V_1(\M_A\cup\M_B)$ there exists a cluster $C_1\in \clusters$ such that $X_1\subset C_1$, and for each $X_2\in\V_2(\M_A\cup\M_B)$ there exists $C_2\in \clusters\cup\{\largevertices{\eta}{k}{G}\cap \smallatoms\}$ such that $X_2\subset C_2$,
\item\label{commonsettingMgood} each pair of the semiregular matching
$\Mgood:=\{(X_1,X_2)\in\M_A\::\: X_1\cup X_2\subset \XA\}$ corresponds to an
edge in $\BGblack$,
\item\label{commonsettingXAS0}$e_{\Gcapt}\big(\XA,S^0\setminus V(\mathcal
M_A)\big)\le \gamma kn$,
\item\label{commonsetting4} $e_{\Gblack}(V(G)\setminus V(\M_A\cup \M_B))\le
\gamma^2kn$,
\item\label{commonsettingNicDoNAtom} for the semiregular matching \index{mathsymbols}{*Natom@$\NAtom$}
$\NAtom:=\{(X,Y)\in\M_A\cup\M_B\::\: (X\cup Y)\cap\smallatoms\not=\emptyset\}$ we have $e_{\Gblack}\big(V(G)\setminus
V(\M_A\cup \M_B),V(\NAtom)\big)\le \gamma^2 kn$,
\item \label{commonsetting:numbercaptured}$|E(G)\setminus E(\Gcapt)|\le 2\rho
kn$, 
\item\label{commonsetting:DenseCaptured}$|E(\GD)\setminus (E(\Gblack)\cup
E_G[\smallatoms,\smallatoms\cup\bigcup\clusters])|\le \frac 54\gamma kn$.
\end{enumerate}

We write
\begin{align}
\label{eq:defV+}\index{mathsymbols}{*V_+@$V_+$}
V_+&:=V(G)\setminus 
(S^0\setminus V(\mathcal M_A\cup\mathcal
M_B)) \\  \label{defV+eq}
& = \largevertices{\eta}{k}{G}\cup
V(\Gexp)\cup\smallatoms\cup V(\mathcal M_A\cup\mathcal
M_B)\;,\\
\label{eq:defLsharp}\index{mathsymbols}{*L@$L_\#$}
L_\#&:=\largevertices{\eta}{k}{G}\setminus\largevertices{\frac9{10}\eta}{k}{\Gcapt}\;\mbox{,
and}\\ 
\label{eq:defVgood}\index{mathsymbols}{*Vgood@$\Vgood$} 
\Vgood&:=V_+\setminus (\HugeVertices\cup L_\#)\;,\\
\label{eq:defYA}
\YA&:=
 \shadow_{\Gcapt}\left(V_+\setminus L_\#, (1+\frac\eta{10})k\right) \setminus \shadow_{G-\Gcapt}\left(V(G),\frac\eta{100} k\right)\;,
 \index{mathsymbols}{*YA@$\YA$}\\
\label{eq:defYB} 
\YB&:=
 \shadow_{\Gcapt}\left(V_+\setminus L_\#, (1+\frac\eta{10})\frac k2\right) \setminus
 \shadow_{G-\Gcapt}\left(V(G),\frac\eta{100} k\right)\;,\index{mathsymbols}{*YB@$\YB$}\\
\WantiC&:=(\XA\cup\XB)\cap \shadow_G\left(\HugeVertices, \frac{\eta}{100}
k \right) \index{mathsymbols}{*V@$\WantiC$}\label{eq:defWantiC}\;,
\\ \nonumber
\gPatoms&:=\shadow_{\Gblack}(V(\NAtom),\gamma k)\setminus V(\M_A\cup\M_B)\;,
\index{mathsymbols}{*Pa@$\gPatoms$}\\
\nonumber
\gP_1&:=\shadow_{\Gblack}(V(G)\setminus V(\M_A\cup\M_B),\gamma k)\setminus V(\M_A\cup\M_B)\;,
\index{mathsymbols}{*P1@$\gP_1$}
\\ \nonumber
\gP&:=(\XA\setminus \YA)\cup((\XA\cup \XB)\setminus \YB)\cup\WantiC
\cup
L_\sharp \cup \gP_1\\
\nonumber
&~~~~~\cup\shadow_{\GD\cup\Gcapt}(\WantiC\cup
L_\sharp\cup\gPatoms\cup\gP_1,\frac{\eta^2 k}{10^5})\;,
\index{mathsymbols}{*P@$\gP$}
\\
\nonumber
\gP_2&:=\XA\cap \shadow_{\Gcapt}(S^0\setminus
V(\M_A),\sqrt\gamma k)\;,
\index{mathsymbols}{*P2@$\gP_2$}\\
\nonumber
\gP_3&:=\XA\cap\shadow_{\Gcapt}(\XA, \eta^3k/10^3)\;,\\
\label{def:Fcover}
 \mathcal F&:=\{C\in \V(\M_A):C\subset \XA\}\cup\V_1(\M_B)\;.
\end{align}
\end{setting}

The vertex set $\YA$ in Setting~\ref{commonsetting} should be regarded as 
$\XA$  cleaned from rare irregularities. Indeed, as it
turns out most of the vertices from $\XA$ are contained in $\YA$. Likewise,
$\YB$ should be regarded as a cleaned version of $\XA\cup\XB$. These properties are stated in
Lemma~\ref{lem:YAYB} below. 

On the interface between Lemma~\ref{outerlemma} and Lemma~\ref{lem:ConfWhenMatching} we shall need to work with a semiregular matching which is formed of only those edges $E(\DenseSpots)$ which are either incident with $\smallatoms$, or included in $\Gblack$. The following lemma provides us with an appropriate ``cleaned version of $\DenseSpots$''. The notion of being absorbed adapts in a straightforward way to two families of dense spots: a family of dense spots ${\DenseSpots}_1$ \emph{is absorbed} by another family ${\DenseSpots}_2$ if for every $D_1\in{\DenseSpots}_2$ there exists $D_2\in{\DenseSpots}_2$ such that $D_1$ is contained in $D_2$ as a subgraph.
\begin{lemma}\label{lem:clean-spots}
Assume Setting~\ref{commonsetting}. Then there exists a family
\index{mathsymbols}{*D@$\DenseSpots_\class$}
$\DenseSpots_\class$ of edge-disjoint
$(\gamma^3 k/4,\gamma/2)$-dense spots absorbed by $\DenseSpots$ such that
\begin{enumerate}
 \item $|E(\DenseSpots)\setminus E(\DenseSpots_\class)|\le \rho kn$, and
 \item $E(\DenseSpots_\class)\subset E(\Gblack)\cup E(G[\smallatoms, \smallatoms\cup\bigcup\clusters])$.
\end{enumerate}
\end{lemma}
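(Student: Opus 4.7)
The plan is to construct $\DenseSpots_\class$ by a two-stage cleaning of each dense spot in $\DenseSpots$: first intersect its edges with the good set $E^\star := E(\Gblack) \cup E_G[\smallatoms,\smallatoms\cup\bigcup\clusters]$ to enforce property~(2) of the lemma, and then apply Lemma~\ref{lem:subgraphswithlargeminimumdegree} to restore a minimum-degree condition. The global bound from item~(10) of Setting~\ref{commonsetting}, namely $|E(\DenseSpots)\setminus E^\star|\le\tfrac{5}{4}\gamma kn$, will control the loss from the first stage, while Fact~\ref{fact:boundedlymanyspots} will bound $\sum_{D\in\DenseSpots}|V(D)|$ and hence the loss from the second stage.

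More precisely, for each $D=(U,W;F)\in\DenseSpots$ I will set $F':= F\cap E^\star$ and call $D$ \emph{good} if $|F'|\ge\tfrac{3}{4}|F|$ and \emph{bad} otherwise. Bad spots will be discarded; since each of them contributes more than $\tfrac{1}{4}|F|$ edges to $E(\DenseSpots)\setminus E^\star$, their total edge count is at most $5\gamma kn$ by item~(10). For each good $D$, the bipartite graph $(U,W;F')$ inherits density greater than $\tfrac{3\gamma}{4}$ from $\density(D)>\gamma$; applying Lemma~\ref{lem:subgraphswithlargeminimumdegree} with threshold $\lfloor\gamma^3 k/4\rfloor+1$ yields a non-empty subgraph $D'$ with $\mindeg(D')>\gamma^3 k/4$ and $e(D')\ge|F'|-\tfrac{\gamma^3 k}{4}(|U|+|W|)$, which I place in $\DenseSpots_\class$. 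Since $|U|,|W|>\gamma k$ (because $D$ is $(\gamma k,\gamma)$-dense), the density drop from cleaning is at most $\tfrac{\gamma^3 k}{4}\bigl(\tfrac{1}{|U|}+\tfrac{1}{|W|}\bigr)\le\gamma^2/2$, so $\density(D')>\tfrac{3\gamma}{4}-\tfrac{\gamma^2}{2}>\tfrac{\gamma}{2}$ by~\eqref{eq:KONST}, confirming that $D'$ is a $(\gamma^3 k/4,\gamma/2)$-dense spot absorbed by~$D$; edge-disjointness of $\DenseSpots_\class$ and the inclusion $E(\DenseSpots_\class)\subseteq E^\star$ are automatic from the construction.

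To verify property~(1), Fact~\ref{fact:boundedlymanyspots} gives $\sum_{D\in\DenseSpots}|V(D)|\le n\Omega^*/\gamma$, so the cleaning stage loses at most $\tfrac{\gamma^3 k}{4}\cdot\tfrac{n\Omega^*}{\gamma}=\tfrac{\gamma^2\Omega^* kn}{4}$ edges in total; together with the $5\gamma kn$ bad-spot edges and the at most $\tfrac{5}{4}\gamma kn$ good-spot edges absent from $E^\star$, the total loss is $\ll\rho kn$ under~\eqref{eq:KONST}, since there $\gamma\ll\rho$ and $\gamma^2\Omega^*<\gamma^2/\rho<\rho$. The only delicate point, rather than a genuine obstacle, is the calibration of the good/bad threshold: naive cleaning at $\gamma^3 k/4$ costs up to $\gamma^2/2$ in density, so declaring $D$ good at the more natural threshold $|F'|\ge|F|/2$ would leave density strictly below $\gamma/2$ after cleaning; requiring $|F'|\ge\tfrac{3}{4}|F|$ instead supplies exactly the slack needed to land at density $\gamma/2$, while the $\tfrac{5}{4}\gamma kn$ bound from item~(10) comfortably absorbs the enlarged pile of discarded bad spots.
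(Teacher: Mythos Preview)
Your proof is correct and follows essentially the same two-stage strategy as the paper: discard dense spots that lose too many edges when restricted to $E^\star$, then clean the survivors for minimum degree. The differences are cosmetic: the paper uses a $\sqrt{\gamma}$ threshold (rather than your $1/4$) for declaring a spot bad, and it performs the minimum-degree cleaning by explicit sequential vertex removal with side-dependent thresholds $\gamma^2|A|/4$ and $\gamma^2|B|/4$ instead of invoking Lemma~\ref{lem:subgraphswithlargeminimumdegree}; correspondingly it bounds the per-spot cleaning loss by $\gamma^2|A||B|/2$ rather than summing $\tfrac{\gamma^3 k}{4}|V(D)|$ via Fact~\ref{fact:boundedlymanyspots}. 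Both routes land comfortably below $\rho kn$ under~\eqref{eq:KONST}.
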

The proof of Lemma~\ref{lem:clean-spots} is a warm-up for proofs in Section~\ref{ssec:cleaning}.
\begin{proof}[Proof of Lemma~\ref{lem:clean-spots}]
We discard those dense spots $D\in\DenseSpots$ for which 
\begin{align}\label{eq:dontviolate}
 \big|E(D)\setminus (E(\Gblack)\cup E(G[\smallatoms,
 \smallatoms\cup\bigcup\clusters])\big|\ge \sqrt{\gamma}e(D)\;.
\end{align}
 For each remaining dense spot $D\in\DenseSpots$ we show below how to extract a $(\gamma^3
 k/4,\gamma/2)$-dense spot $D'\subset D$ with $e(D')\ge
 (1-2\sqrt{\gamma})e(D)$  and $E(D)\subset E(\Gblack)\cup E(G[\smallatoms, \smallatoms\cup\bigcup\clusters])$. Let $\DenseSpots_\class$ be the set of all thus obtained $D'$. This way we ensure Property~2, and we also have Property~1, since
\begin{align*}
|E(\DenseSpots)\setminus E(\DenseSpots_\class)|&\le
\frac1{\sqrt{\gamma}}\left|E(\DenseSpots)\setminus\big(E(\Gblack)\cup
E(G[\smallatoms,\smallatoms\cup\bigcup\clusters])\big)\right|+2\sqrt{\gamma}\cdot
e(\DenseSpots)\\ \JUSTIFY{by
S\ref{commonsetting}\eqref{commonsetting:DenseCaptured}, and as
$e(\DenseSpots)\le e(G)\le kn$}&\le 3\rho kn\;.
\end{align*}

We now show how to extract a $(\gamma^3 k/4,\gamma/2)$-dense spot $D'\subset D$
with $e(D')\ge (1-2\sqrt{\gamma})e(D)$ and $E(D)\subset E(\Gblack)\cup E(G[\smallatoms, \smallatoms\cup\bigcup\clusters])$ from any spot $D\in\DenseSpots$ which does
not satisfy~\eqref{eq:dontviolate}. Let $D=(A,B; F)$, and $a:=|A|$, $b:=|B|$. As
$D$ is $(\gamma k, \gamma)$-dense, we have $a,b\ge \gamma k$. First, we 
discard from $D$ all edges not contained in $E(\Gblack)\cup
E(G[\smallatoms,\smallatoms\cup\bigcup\clusters])$ to obtain a dense spot
$D^*\subset D$ with $e(D^*)\ge (1-\sqrt{\gamma})e(D)$. Next, we perform a sequential cleaning procedure in $D^*$. As long as there are such vertices, discard from $A$ any vertex whose current degree is less
than $\gamma^2b/4$, and discard from $B$ any vertex whose current degree is less
than $\gamma^2a/4$. When this procedure terminates,  the
resulting graph $D'=(A',B'; F')$ has $\mindeg_{D'}(A')\ge \gamma^2 b/4\ge
\gamma^3 k/4$ and  $\mindeg_{D'}(B')\ge \gamma^3 k/4$. Note that we
deleted at most $a\cdot \gamma^2 b/4+b\cdot\gamma^2 a/4$ edges out of the at
least $(1-\sqrt{\gamma})e(D)$ edges of $D^*$. This means that $e(D')\ge (1-\sqrt{\gamma})e(D)-\gamma^2ab/2\ge (1-2\sqrt{\gamma})e(D)$, as desired. Thus we also have  the required density of $D'$, namely 
$\density_{D'}(A',B')\ge (1-2\sqrt{\gamma})\gamma\ge \gamma/2$. 
\end{proof}

\bigskip
In some cases, we shall in addition partition the set $V(G)$ into three sets as in
Lemma~\ref{lem:randomSplit}. This motivates the following
definition.
\begin{definition}[\bf Proportional
splitting]\label{def:proportionalsplitting}\index{general}{proportional splitting}
Let $\proporce{0},\proporce{1},\proporce{2}>0$ be three positive reals with
$\sum_i\proporce{i}\le 1$. Under Setting~\ref{commonsetting}, suppose that
$\colouringpartition$ is a partition of $V(G)\setminus\HugeVertices$ which satisfies assertions of Lemma~\ref{lem:randomSplit} with parameter
$p_\PARAMETERPASSING{L}{lem:randomSplit}:=10$ for graph
$G^*_\PARAMETERPASSING{L}{lem:randomSplit}:=(\Gcapt-\HugeVertices)\cup\GD$ (here,
by the union, we mean union of the edges), bounded decomposition $( \clusters,\DenseSpots,
\Gblack, \Gexp, \smallatoms )$, matching
$\M_\PARAMETERPASSING{L}{lem:randomSplit}:=\M_A\cup\M_B$, sets $\VXV_1:=\Vgood,
\VXV_2:=\XA\setminus(\HugeVertices\cup \gP)$, $\VXV_3:=\XB\setminus \gP$,
$\VXV_4:=V(\Gexp)$, $\VXV_5:=\smallatoms$, $\VXV_6:=\largeintoatoms$, $\VXV_7:=\gPatoms$, $\VXV_8:=\largevertices{\eta}{k}{G}$, $\VXV_9:=L_\sharp$, $\VXV_{10}:=\WantiC$ and reals
$\aXa_1:=\proporce{0},\aXa_2:=\proporce{1}$, $\aXa_3:=\proporce{2}$,
$\aXa_4:=\ldots\aXa_{10}=0$. Note that by
Lemma~\ref{lem:randomSplit}\eqref{It:H7} we have that $\colouringpartition$ is
a partition of $V(G)\setminus \HugeVertices$. We call $\colouringpartition$
\emph{proportional $(\proporce{0}:\proporce{1}:\proporce{2})$ splitting}.

We refer to properties of the proportional
$(\proporce{0}:\proporce{1}:\proporce{2})$  splitting $\colouringpartition$
using the numbering of Lemma~\ref{lem:randomSplit}; for example,
``Definition~\ref{def:proportionalsplitting}\eqref{It:HproportionalSizes}''
tells us among other things that $|(\XA\setminus\gP)\cap
\colouringp{0}|\ge\proporce{0}|\XA\setminus(\gP\cup\HugeVertices)|-n^{0.9}$.
\end{definition}
\begin{setting}\label{settingsplitting}
Under Setting~\ref{commonsetting}, suppose that we are given a
proportional 
\index{mathsymbols}{*Pa@$\proporce{i}$}
$(\proporce{0}:\proporce{1}:\proporce{2})$
splitting \index{mathsymbols}{*P@$\colouringp{i}$}
$\colouringpartition$ of $V(G)\setminus\HugeVertices$. We assume
that 
\begin{equation}\label{eq:proporcevelke}
\proporce{0},\proporce{1},\proporce{2}\ge\frac{\eta}{100}\;.
\end{equation}
Let\index{mathsymbols}{*V@$\exceptVertSplit$}\index{mathsymbols}{*V@$\exceptSemSplit$}\index{mathsymbols}{*V@$\exceptClustSplit$}
$\exceptVertSplit,\exceptSemSplit,\exceptClustSplit$ be the exceptional sets as in Definition~\ref{def:proportionalsplitting}\eqref{It:H1}.

We write \index{mathsymbols}{*F@$\shadowsplit$}
\begin{equation}\label{def:shadowsplit}
\shadowsplit:=\shadow_{\GD}\left(\bigcup 
\exceptSemSplit\cup\bigcup \exceptSemSplit^*\cup \bigcup \exceptClustSplit,\frac{\eta^2k}{10^{10}}\right)\;,
\end{equation} where \index{mathsymbols}{*V@$\exceptSemSplit^*$}$\exceptSemSplit^*$ are the partners of $\exceptSemSplit$ in $\M_A\cup\M_B$.

We have 
\begin{equation}\label{eq:boundShadowsplit}
|\shadowsplit|\le \epsilon n\;.
\end{equation}

For an arbitrary
set $U\subset V(G)$ and for $i\in\{0,1,2\}$ we write \index{mathsymbols}{*U@$U\colouringpI{i}$}$U\colouringpI{i}$ for
the set $U\cap\colouringp{i}$. 

For each $(X,Y)\in\M_A\cup\M_B$ such that $X,Y\notin\exceptSemSplit$
we write $(X,Y)\colouringpI{i}$ for an arbitrary fixed pair $(X'\subset
X,Y'\subset Y)$ with the property that
$|X'|=|Y'|=\min\{|X\colouringpI{i}|,|Y\colouringpI{i}|\}$. We extend this notion
of restriction to an arbitrary semiregular matching $\mathcal N\subset
\M_A\cup\M_B$ as follows. We set\index{mathsymbols}{*N@$\mathcal N\colouringpI{i}$}
$$\mathcal
N\colouringpI{i}:=\big\{(X,Y)\colouringpI{i}\::\:(X,Y)\in\mathcal
N \text{ with } X,Y\notin\exceptSemSplit\big\}\;.$$
\end{setting}

The next lemma provides some simple properties of a restriction of a
semiregular matching.

\begin{lemma}\label{lem:RestrictionSemiregularMatching}
Assume Setting~\ref{settingsplitting}. Then for each $i\in\{0,1,2\}$, and for
each $\mathcal N\subset\M_A\cup\M_B$ we have that $\mathcal N\colouringpI{i}$ is
a $(\frac{400\epsilon}{\eta},\frac
d2,\frac{\eta\pi}{200}\clustersize)$-semiregular matching satisfying
\begin{equation}\label{eq:restrictedmatchlarge}
|V(\mathcal N\colouringpI{i})|\ge
\proporce{i}|V(\mathcal N)|-2k^{-0.05}n\;.
\end{equation}
Moreover for all $v\not\in \shadowsplit$ and for all $i=0,1,2$ we have 
$\deg_{\GD}(v, V(\mathcal N)\colouringpI{i}\setminus V(\mathcal
N\colouringpI{i}))\le \frac {\eta^2k}{10^5}$. 
\end{lemma}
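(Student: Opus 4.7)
The plan is to verify the three assertions in order, using Fact~\ref{fact:BigSubpairsInRegularPairs} together with the two-sided concentration coming from Lemma~\ref{lem:randomSplit}\eqref{It:H3}. First I would reduce to the case $\proporce{0}+\proporce{1}+\proporce{2}=1$ (as in the proof of Lemma~\ref{lem:randomSplit}, this loses nothing). Then, because $\colouringpartition$ is a partition of $V(G)\sm\HugeVertices$, summing the three lower bounds $|C\colouringpI{j}|\ge \proporce{j}|C|-k^{0.9}$ for non-exceptional $C\in\V(\M_A\cup\M_B)$ yields the matching upper bound $|C\colouringpI{j}|\le \proporce{j}|C|+2k^{0.9}$; in particular, for any non-exceptional pair $(X,Y)\in\mathcal N$ we have $\big||X\colouringpI{i}|-|Y\colouringpI{i}|\big|\le 3k^{0.9}$.

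For the first assertion, I would apply Fact~\ref{fact:BigSubpairsInRegularPairs} to each pair $(X',Y')=(X,Y)\colouringpI{i}$. Since $|X|\ge \pi\clustersize\ge \pi\nu k$ and $\proporce{i}\ge \eta/100$, the estimate $|X'|\ge \proporce{i}|X|-k^{0.9}\ge (\eta/200)|X|$ holds for $k$ large. Thus the restriction ratio $\alpha=\eta/200$ exceeds $\epsilon$, so $(X',Y')$ is a $\frac{2\epsilon}{\eta/200}=\frac{400\epsilon}{\eta}$-regular pair with density at least $d-\epsilon\ge d/2$, and $|X'|\ge (\eta\pi/200)\clustersize$. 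For the second assertion, I would sum $2\min\{|X\colouringpI{i}|,|Y\colouringpI{i}|\}\ge \proporce{i}(|X|+|Y|)-2k^{0.9}$ over non-exceptional pairs; the contribution of pairs with an endpoint in $\exceptSemSplit$ is at most $2|\bigcup\exceptSemSplit|\le 2\exp(-k^{0.1})n$ by Lemma~\ref{lem:randomSplit}\eqref{It:H1}, and the sum of the error terms is at most $2k^{0.9}\cdot n/(\pi\nu k)$, both of which are absorbed into $2k^{-0.05}n$.

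The third assertion is the most delicate part, and will rely crucially on the definition of $\shadowsplit$. I would decompose $V(\mathcal N)\colouringpI{i}\setminus V(\mathcal N\colouringpI{i})$ into two parts: (a) vertices coming from pairs $(X,Y)\in\mathcal N$ with $X\in\exceptSemSplit$ or $Y\in\exceptSemSplit$, which lie entirely in $\big(\bigcup\exceptSemSplit\cup\bigcup\exceptSemSplit^*\big)\cap\colouringp{i}$; and (b) the mismatch sets $X\colouringpI{i}\sm X'$ and $Y\colouringpI{i}\sm Y'$ for non-exceptional pairs, each of size at most $3k^{0.9}$ by the two-sided concentration above. Since $v\notin\shadowsplit$, the contribution of (a) to $\deg_{\GD}(v,\cdot)$ is at most $\eta^2 k/10^{10}$. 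For (b), I would use Fact~\ref{fact:boundedlymanyspots} and Fact~\ref{fact:sizedensespot} to bound the number of $\M$-vertices $v$ meets in $\GD$ by $(\Omega^*)^2/(\gamma^2\pi\nu)$ (each dense spot contributes at most $\Omega^*/(\gamma\pi\nu)$ of them because each $\M$-vertex has size $\ge \pi\nu k$). Therefore the contribution of (b) is at most $3k^{0.9}\cdot (\Omega^*)^2/(\gamma^2\pi\nu)$, which is $o(k)$ and certainly below $\eta^2 k/10^5$ for large $k$, giving the required bound when summed with (a). The only step that requires genuine attention is keeping the constants in (b) under control; everything else is a routine application of the setting.
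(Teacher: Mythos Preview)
Your proposal is correct and follows essentially the same route as the paper. Both arguments use Fact~\ref{fact:BigSubpairsInRegularPairs} with restriction ratio $\eta/200$ for the regularity assertion, sum the per-pair lower bounds $|V((X,Y)\colouringpI{i})|\ge\proporce{i}(|X|+|Y|)-2k^{0.9}$ together with the bound $|\bigcup\exceptSemSplit|\le\exp(-k^{0.1})n$ for~\eqref{eq:restrictedmatchlarge}, and handle the moreover part by splitting into the exceptional-pair contribution (bounded via $v\notin\shadowsplit$) and the per-pair mismatch contribution of order $3k^{0.9}$ times the number of $\M$-vertices $v$ meets in $\GD$ (bounded via Facts~\ref{fact:sizedensespot} and~\ref{fact:boundedlymanyspots} by roughly $(\Omega^*)^2/(\gamma^2\pi\nu)$). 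Your explicit derivation of the two-sided bound $|C\colouringpI{j}|\le\proporce{j}|C|+2k^{0.9}$ from the $\sum\proporce{j}=1$ reduction is in fact more careful than the paper, which simply writes ``$3k^{0.9}$'' without comment.
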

\begin{proof}
Let us consider an arbitrary pair $(X,Y)\in\mathcal N$. By
Definition~\ref{def:proportionalsplitting}\eqref{It:H3} we have
\begin{equation}\label{eq:lossesOnePair}
|X\colouringpI{i}|\ge\proporce{i}|X|-k^{0.9}\geByRef{eq:proporcevelke}
\frac{\eta}{200}|X|\quad\mbox{and}\quad|Y\colouringpI{i}|\ge\proporce{i}|Y|-k^{0.9}\geByRef{eq:proporcevelke}
\frac{\eta}{200}|Y|\;.
\end{equation}
In particular, Fact~\ref{fact:BigSubpairsInRegularPairs} gives that
$(X,Y)\colouringpI{i}$ is a $400\epsilon/\eta$-regular pair of density at least
$d/2$.

We now turn to~\eqref{eq:restrictedmatchlarge}. The total order of pairs
$(X,Y)\in\mathcal N$ excluded entirely from $\mathcal N\colouringpI{1}$ is
at most $2\exp(-k^{0.1})n<k^{-0.05}n$ by
Definition~\ref{def:proportionalsplitting}\eqref{It:H1}. Further, for each $(X,Y)\in\mathcal N$ whose part is included to $\mathcal
N\colouringpI{1}$ we have by that
$|V((X,Y)\colouringpI{i})|\ge\proporce{i}(|X|+|Y|)-2k^{0.9}$
by~\eqref{eq:lossesOnePair}. As $|\mathcal N|\le \frac{n}{2k^{0.95}}$, and~\eqref{eq:restrictedmatchlarge} follows.

For the moreover part, note that by Fact~\ref{fact:sizedensespot} and Fact~\ref{fact:boundedlymanyspots}
\begin{equation*}
\deg_{\GD}(v, V(\mathcal N)\colouringpI{i}\setminus V(\mathcal
N\colouringpI{i}))\le \frac
{\eta^2k}{10^{10}}+\frac {(\Omega^*)^2}{\pi\nu\gamma^2}\cdot 3k^{0.9}\le \frac {\eta^2k}{10^5}\;.
\end{equation*}
\end{proof}

\smallskip
\HIDDENTEXT{there were lemmas lem:clSpadlychA, lem:clSpadlychB, now under TRIVIALCLEANING}

The following lemma gives a useful bound on some of the sets defined on
page~\pageref{eq:defLsharp}.

\begin{lemma}\label{lem:YAYB}
Suppose we are in Setting~\ref{commonsetting}. Suppose that all but at most
$\beta kn$ edges are captured by $\class$. 
Then,\begin{align}
\label{eq:Lsharp-small}
|L_\#|&\le \frac{20\beta}{\eta}n\\
\label{eq:XAYA}
|\XA\setminus\YA|&\le \frac{600\beta}{\eta^2}n\;\mbox{,
and}\\\label{eq:XBYB} 
|(\XA\cup \XB)\setminus\YB|&\le
\frac{600\beta}{\eta^2}n \;.
\end{align}

Further, if $e_G(\HugeVertices,\XA\cup\XB)\le\tilde \beta kn$ then
\begin{align}\label{eq:WantiCbound}
|\WantiC|\le \frac{100\tilde\beta n}{\eta}\;. 
\end{align}
\end{lemma}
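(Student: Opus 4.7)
The plan is to verify each of the four bounds by straightforward double-counting, using the definitions on page~\pageref{eq:defLsharp} and Setting~\ref{commonsetting}\eqref{commonsettingXAS0}. Each vertex excluded from one of the ``clean'' sets $\YA$, $\YB$, $\Vgood$ pays a definite toll (typically $\Theta(\eta k)$ edges) that can be accounted for against one of the three global budgets available: the total number of uncaptured edges ($\le\beta k n$), the captured edges joining $\XA$ to $S^0\setminus V(\M_A)$ ($\le\gamma k n$ by Setting~\ref{commonsetting}\eqref{commonsettingXAS0}), and the total number of edges incident with $\HugeVertices$.

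For~\eqref{eq:Lsharp-small}: by the definition of $L_\#$ each $v\in L_\#$ has $\deg_G(v)\ge(1+\eta)k$ while $\deg_{\Gcapt}(v)<(1+\tfrac{9}{10}\eta)k$, hence $\deg_{G-\Gcapt}(v)>\tfrac{\eta}{10}k$. Summing and using $2e(G-\Gcapt)\le 2\beta kn$ yields the claim. For~\eqref{eq:WantiCbound} the argument is even shorter: $|\WantiC|\cdot\tfrac{\eta}{100}k\le e_G(\HugeVertices,\XA\cup\XB)\le\tilde\beta k n$ from the shadow definition~\eqref{eq:defWantiC}.

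The substance is in~\eqref{eq:XAYA} and~\eqref{eq:XBYB}. I would split $\XA\setminus\YA$ into three parts: (i) vertices of $L_\#$, already bounded by~\eqref{eq:Lsharp-small}; (ii) vertices in $\shadow_{G-\Gcapt}(V(G),\tfrac{\eta}{100}k)$, bounded by $200\beta n/\eta$ via another double-count against uncaptured edges; and (iii) vertices $v\in\XA\setminus L_\#$ with $\deg_{\Gcapt}(v,V_+\setminus L_\#)\le(1+\tfrac{\eta}{10})k$. Since $v\notin L_\#$ gives $\deg_{\Gcapt}(v)\ge(1+\tfrac{9}{10}\eta)k$ and $\XA\subset V_+$, we get $\deg_{\Gcapt}\bigl(v,(S^0\setminus V(\M_A\cup\M_B))\cup L_\#\bigr)\ge\tfrac{4}{5}\eta k$. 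Hence either $\deg_{\Gcapt}(v,S^0\setminus V(\M_A\cup\M_B))\ge\tfrac{2}{5}\eta k$, which by Setting~\ref{commonsetting}\eqref{commonsettingXAS0} captures at most $O(\gamma/\eta) n$ vertices; or $\deg_{\Gcapt}(v,L_\#)\ge\tfrac{2}{5}\eta k$, which contributes at most $e_{\Gcapt}(V(G),L_\#)/(\tfrac{2}{5}\eta k)$ vertices. The last expression is the only slightly delicate ingredient: one needs an upper bound on $e_{\Gcapt}(V(G),L_\#)$, obtained via $\sum_{v\in L_\#}\deg_G(v)\le(1+\tfrac{9}{10}\eta)k|L_\#|+\sum_{v\in L_\#}\deg_{G-\Gcapt}(v) = O(\beta/\eta)kn$, using \eqref{eq:Lsharp-small} again. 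Plugging everything together gives~\eqref{eq:XAYA}.

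The proof of~\eqref{eq:XBYB} runs on the same tracks with the halved threshold $(1+\tfrac{\eta}{10})k/2$ in the shadow. The only new wrinkle is the analysis for vertices $v\in\XB\setminus L_\#$, where the definition of $\XB$ via $\widehat{\deg}(v)<(1+\eta)k/2$ (equation~\eqref{eq:defhatdeg}) forces $\deg_{\Gcapt}(v,V_+)>(\tfrac{1}{2}+\tfrac{2}{5}\eta)k$; combined with the failed-shadow inequality this again yields $\deg_{\Gcapt}(v,L_\#)=\Omega(\eta k)$, and we close the argument via the same $e_{\Gcapt}(V(G),L_\#)$ bound. The main obstacle is therefore the double bookkeeping in part (iii)---keeping track of $L_\#$ appearing on both sides, as a set of bad vertices and as a set receiving many captured edges---but once one has~\eqref{eq:Lsharp-small} in hand, the $e_{\Gcapt}(V(G),L_\#)$ estimate is automatic and everything falls into place.
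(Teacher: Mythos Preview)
Your proposal is correct and follows essentially the same approach as the paper. The paper's proof defines three exceptional sets $W_1=\shadow_{G-\Gcapt}(V(G),\eta k/100)$, $W_2=\{v:\deg_{\Gcapt}(v,L_\#)\ge\eta k/10\}$, $W_3=\{v\in\XA:\deg_{\Gcapt}(v,S^0\setminus V(\M_A))\ge\sqrt\gamma k\}$, bounds each by the same double-counting you describe, and observes $\XA\setminus\YA\subset W_1\cup W_2\cup W_3$ and $\XB\setminus\YB\subset W_1\cup W_2$. The only organisational difference is that the paper uses $v\notin W_1$ (few uncaptured edges) together with $v\in\largevertices{\eta}{k}{G}$ to force $\deg_{\Gcapt}(v)\ge(1+\eta)k-\eta k/100$, which makes your separate case~(i) for $v\in L_\#$ unnecessary; otherwise the bookkeeping and the key estimate on $e_{\Gcapt}(V(G),L_\#)$ are identical.
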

\begin{proof}
Let $W_1:=\{v\in V(G)\::\:\deg_G(v)-\deg_{\Gcapt}(v)\ge \eta k/100\}$. We have $|W_1|\le \frac{200\beta}{\eta}n$. 

Observe that $L_\#$ sends out at most
$(1+\frac9{10}\eta)k|L_\#|<\frac{40\beta}{\eta}kn$ edges in $\Gcapt$. Let $W_2:=\{v\in V(G)\::\:\deg_{\Gcapt}(v,L_\#)\ge \eta k/10\}$. We have $|W_2|\le \frac{400\beta}{\eta^2}n$.

Let $W_3:=\{v\in \XA\::\: \deg_{\Gcapt}(v,S^0\setminus V(\M_A))\ge \sqrt{\gamma} k\}$. By Property~\ref{commonsettingXAS0} we have $|W_3|\le \sqrt\gamma n$.

Now, observe that $\XA\setminus\YA\subset W_1\cup W_2\cup W_3$, and $\XB\setminus \YB\subset W_1\cup W_2$. 

The bound~\eqref{eq:WantiCbound} follows in a straightforward way.
\HIDDENTEXT{A lengthy proof removed and put into HIDDENTEXT.TEX under LENGTHY.}
\end{proof}

We finish this section with an auxiliary result which will only be used later in the proofs
of Lemmas~\ref{lem:ConfWhenNOTCXAXB} and~\ref{lem:ConfWhenMatching}.

\begin{lemma}\label{lem:propertyYA12}
Assume Settings~\ref{commonsetting} and~\ref{settingsplitting}.
We have that for $i=1,2$
\begin{align}\label{eq:trivka}
\XA\colouringpI{0}\setminus(\gP\cup \shadowsplit)\subseteq
\colouringp{0}\setminus \left(\shadowsplit\cup \shadow_{\GD}(\WantiC, \frac
{\eta^2 k}{10^5})\right)\;,\\
\label{eq:propertyYA12cA} 
\mindeg_{\Gcapt}\left(\XA\setminus(\gP\cup\exceptVertSplit),\Vgood\colouringpI{i} \right)\ge
\proporce{i}(1+\frac{\eta}{20})k\;,\\
\label{eq:propertyYA12cB2}
\mindeg_{\Gcapt}\left(\XB\setminus(\gP\cup\exceptVertSplit),\Vgood\colouringpI{i} \right)\ge
\proporce{i}(1+\frac{\eta}{20})\frac k2\;\mbox{, and}\\
\label{eq:propertyYA12cB3}
\maxdeg_{\Gcapt}\left(\XA\setminus(\gP_2\cup \gP_3),\bigcup
\mathcal F\right)\le \frac{3\eta^3}{2\cdot 10^3} k\;.
\end{align}
Moreover, $\mathcal F$ defined in~\eqref{def:Fcover} is an $(\M_A\cup\M_B)$-cover.
\end{lemma}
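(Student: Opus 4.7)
The plan is to treat each of the four bullet points, followed by the ``moreover'' statement, as a separate short argument, with the common themes being (i) unwinding the definitions of $\gP$, $\YA$, $\YB$, $\WantiC$, $\gP_2$, $\gP_3$, $\shadowsplit$, and (ii) passing from $V(G)$-level degrees to $\colouringp{i}$-level degrees via the random splitting property Definition~\ref{def:proportionalsplitting}\eqref{It:H5}.

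For~\eqref{eq:trivka}, note that $\XA\colouringpI{0}\subseteq \colouringp{0}$ by definition, and that the definition of $\gP$ includes the term $\shadow_{\GD\cup\Gcapt}(\WantiC,\eta^2k/10^5)$. Hence if $v\in\XA\colouringpI{0}\setminus\gP$ then $\deg_{\GD\cup\Gcapt}(v,\WantiC)\le \eta^2k/10^5$, which a fortiori bounds $\deg_{\GD}(v,\WantiC)$; together with the assumption $v\notin\shadowsplit$, this is exactly the stated containment.

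For~\eqref{eq:propertyYA12cA} and~\eqref{eq:propertyYA12cB2}, let $v\in\XA\setminus(\gP\cup\exceptVertSplit)$ (respectively $v\in\XB\setminus(\gP\cup\exceptVertSplit)$). Since $\XA\setminus\YA\subseteq \gP$ (resp.\ $(\XA\cup\XB)\setminus\YB\subseteq \gP$), we have $v\in\YA$ (resp.\ $v\in\YB$); by definition of $\YA$ (resp.\ $\YB$) this gives $\deg_{\Gcapt}(v,V_+\setminus L_\#)\ge (1+\tfrac{\eta}{10})k$ (resp.\ $\ge(1+\tfrac{\eta}{10})\tfrac{k}{2}$). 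Since $\WantiC\subseteq \gP$, we further have $v\notin\WantiC$, so $\deg_G(v,\HugeVertices)\le \tfrac{\eta}{100}k$ and therefore $\deg_{\Gcapt}(v,\Vgood)\ge (1+\tfrac{9\eta}{100})k$ (resp.\ $\ge (1+\tfrac{8\eta}{100})\tfrac{k}{2}$), using $\Vgood=(V_+\setminus L_\#)\setminus\HugeVertices$. To descend to $\Vgood\colouringpI{i}$, write $\Vgood$ as the disjoint union $\bigsqcup_{J\ni 1}\VXV_J$ of at most $2^{9}$ sets and apply Definition~\ref{def:proportionalsplitting}\eqref{It:H5} on each, which is legitimate as $v\notin\exceptVertSplit=\bar V$; summing incurs a total error of at most $k^{0.9}/2$, which is absorbed by the slack between $(1+\tfrac{9\eta}{100})\proporce{i}$ and $(1+\tfrac{\eta}{20})\proporce{i}$ since $\proporce{i}\ge \eta/100$ forces $\proporce{i}\cdot\tfrac{4\eta}{100}k\gg k^{0.9}$ for large $k$.

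For~\eqref{eq:propertyYA12cB3}, let $v\in\XA\setminus(\gP_2\cup\gP_3)$. Split $\bigcup\mathcal F$ as $\bigcup\{C\in\V(\M_A):C\subseteq\XA\}\cup V_1(\M_B)$. The first set is contained in $\XA$, so since $v\notin\gP_3$ its contribution is at most $\eta^3k/10^3$. The second set satisfies $V_1(\M_B)\subseteq S^0$ by Setting~\ref{commonsetting}\eqref{commonsetting1apul}, and is disjoint from $V(\M_A)$ by~\eqref{commonsetting1}, hence is contained in $S^0\setminus V(\M_A)$; since $v\notin\gP_2$ its contribution is at most $\sqrt{\gamma}k$. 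The hierarchy $\gamma\ll\eta$ of Setting~\ref{commonsetting} makes $\sqrt{\gamma}k\le\eta^3 k/(2\cdot 10^3)$, and summing gives the claimed bound.

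For the ``moreover'' part, consider any $(X,Y)\in\M_A\cup\M_B$. If $(X,Y)\in\M_B$, then $X\in\V_1(\M_B)\subseteq\mathcal F$. If $(X,Y)\in\M_A$, then by Setting~\ref{commonsetting}\eqref{commonsetting2} each of $X,Y$ is wholly in $L:=\largevertices{\eta}{k}{G}$ or wholly in $S:=\smallvertices{\eta}{k}{G}$; the case that both lie in $S$ is ruled out because $G\in\LKSsmallgraphs{n}{k}{\eta}$ forces $S$ to be independent (every $S$-neighbour has degree exactly $\lceil(1+\eta)k\rceil$, so lies in $L$), whereas the pair $(X,Y)$ has positive density $\ge d$. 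Thus at least one of $X,Y$ is contained in $L$, and since $V(\M_A)\cap V(\M_B)=\emptyset$ by~\eqref{commonsetting1}, any such cluster lies in $L\setminus V(\M_B)=\XA$ and hence in $\mathcal F$. No routine calculation is expected to be an obstacle; the only delicate point is the bookkeeping in~\eqref{eq:propertyYA12cA}/\eqref{eq:propertyYA12cB2}, where one must be careful to absorb the exponential-in-$p$ loss from summing Definition~\ref{def:proportionalsplitting}\eqref{It:H5} over the atoms $\VXV_J$ into the $\eta$-slack, but the constant $p=10$ in Definition~\ref{def:proportionalsplitting} was chosen precisely to make this work.
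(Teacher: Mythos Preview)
Your proposal is correct and follows essentially the same approach as the paper's proof: unwinding the definition of $\gP$ for~\eqref{eq:trivka}, using membership in $\YA$ (resp.\ $\YB$) together with $v\notin\WantiC$ and Definition~\ref{def:proportionalsplitting}\eqref{It:H5} for~\eqref{eq:propertyYA12cA}--\eqref{eq:propertyYA12cB2}, splitting $\bigcup\mathcal F$ via $\gP_2,\gP_3$ for~\eqref{eq:propertyYA12cB3}, and using independence of $\smallvertices{\eta}{k}{G}$ for the cover statement. Your write-up is in fact more explicit than the paper's in places (the summing over atoms $\VXV_J$, and the justification that $S$ is independent via Definition~\ref{def:LKSsmall}\eqref{def:LKSsmallB}); one small inaccuracy is the phrase ``exponential-in-$p$ loss'', since the $2^{-p}$ prefactor in~\eqref{It:H5} is designed so that the total error after summing over the $2^{p-1}$ relevant atoms is $k^{0.9}/2$ independently of~$p$.
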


\begin{proof}
The definition of $\gP$ gives~\eqref{eq:trivka}.

For~\eqref{eq:propertyYA12cA} and~\eqref{eq:propertyYA12cB2}, assume that $i=2$ (the other case is analogous).
Observe that 
\begin{align*}
\mindeg_{\Gcapt}&\left(\YA\setminus
(\WantiC\cup\exceptVertSplit),\Vgood\colouringpI{2}\right)\\
\JUSTIFY{by Def~\ref{def:proportionalsplitting}\eqref{It:H5}}
&\ge\proporce{2}\cdot \mindeg_{\Gcapt}(\YA\setminus
\WantiC,\Vgood)-k^{0.9}\\ 
\JUSTIFY{by
\eqref{eq:defVgood}}
&\ge\proporce{2}\cdot \big(\mindeg_{\Gcapt}(\YA,V_+\setminus
L_\sharp)-\maxdeg_{\Gcapt}(\YA\setminus\WantiC,\HugeVertices)\big)-k^{0.9}\\ 
\JUSTIFY{by~\eqref{eq:defYA}, \eqref{eq:defWantiC}}
&\ge \proporce{2}\cdot \left((1+\frac{\eta}{10})k-\frac{\eta k}{100}\right)-k^{0.9}\\ 
\JUSTIFY{by~\eqref{eq:KONST}, \eqref{eq:proporcevelke}}
&\ge \proporce{2}\cdot (1+\frac{\eta}{20})k\;,
\end{align*}
which proves~\eqref{eq:propertyYA12cA}, as $\XA\setminus (\gP\cup
 \exceptVertSplit)\subseteq \YA\setminus (\WantiC\cup \exceptVertSplit)$.
Similarly, we obtain that $$\mindeg_{\Gcapt}\left(\YB\setminus
(\WantiC\cup\exceptVertSplit),\Vgood\colouringpI{2}\right)\ge
\proporce{2}(1+\frac{\eta}{20})\frac k2\;,$$ which proves~\eqref{eq:propertyYA12cB2}.

We have $\maxdeg_{\Gcapt}(\XA\setminus \gP_3,\XA)< \frac{\eta^3}{10^3} k$, and 
$\maxdeg_{\Gcapt}(\XA\setminus \gP_2,S^0\setminus V(\M_A))<\sqrt\gamma k$.
Thus~\eqref{eq:propertyYA12cB3} follows from
Setting~\ref{commonsetting}\eqref{commonsetting1apul}
and by~\eqref{eq:KONST}.

For the ``moreover'' part, it
suffices to prove that $\{C\in \V(\M_A):C\subset \XA\}=\mathcal F\sm\V_1(\M_B)$ is an $\M_A$-cover. Let  $(T_1,T_2)\subset \M_A$. As $G\in\LKSsmallgraphs{n}{k}{\eta}$, we have
by Setting~\ref{commonsetting}\eqref{commonsetting2} that for some
$i\in\{1,2\}$, $T_i$ is contained in $\largevertices{\eta}{k}{G}$. Then
by Setting~\ref{commonsetting}\eqref{commonsetting1}, $T_i\subset \XA$, as desired.
\end{proof}

\subsection{Types of configurations}\label{ssec:TypesConf}
We can now define the following preconfigurations~$\mathbf{(\clubsuit)}$,
$\mathbf{(\heartsuit1)}$, $\mathbf{(\heartsuit2)}$, $\mathbf{(exp)}$, and
$\mathbf{(reg)}$, and the configurations\footnote{The
word ``configuration'' is used for a final structure in a graph which is suitable for embedding purposes while ``preconfigurations'' are building blocks for configurations.} $\mathbf{(\diamond1)}$--$\mathbf{(\diamond10)}$. It will follow from results from other sections that at least one of the configurations $\mathbf{(\diamond1)}$--$\mathbf{(\diamond10)}$ appears in each graph
$\LKSgraphs{n}{k}{\eta}$. More precisely, after getting the ``rough structure'' in Lemma~\ref{prop:LKSstruct} we get one of the configurations $\mathbf{(\diamond1)}$--$\mathbf{(\diamond10)}$ from Lemma~\ref{outerlemma}. The latter lemma reduces the situation to one of three cases which are then dealt with in Lemmas~\ref{lem:ConfWhenCXAXB}--\ref{lem:ConfWhenMatching} separately. Then, in Section~\ref{sec:embed}, we provide with an
embedding for a given tree $T_\PARAMETERPASSING{T}{thm:main}\in\treeclass{k}$.

We now give a brief overview of these configurations. Configuration~$\mathbf{(\diamond1)}$ covers the easy and lucky case when $G$ is contains a subgraph with high minimum degree. A very simple tree-embedding strategy similar to the greedy strategy turns out to work in this case.

The purpose of
Preconfiguration~$\mathbf{(\clubsuit)}$ is to utilize vertices
of~$\HugeVertices$. On one hand these vertices seem very powerful because of
their large degree, on the other hand the edges incident with them are very
unstructured. Therefore Preconfiguration~$\mathbf{(\clubsuit)}$  distills some
structure in~$\HugeVertices$. This preconfiguration is then a part of
configurations~$\mathbf{(\diamond2)}$--$\mathbf{(\diamond5)}$ which deal with the
case when $\HugeVertices$ is substantial. Indeed,
Lemma~\ref{lem:ConfWhenCXAXB} asserts that whenever $\HugeVertices$ is incident with many edges in the setting provided by Lemma~\ref{prop:LKSstruct}, at least one of
configurations~$\mathbf{(\diamond1)}$--$\mathbf{(\diamond5)}$ must occur. 

The cases when the number of edges incident with $\HugeVertices$ is negligible
are covered by configurations $\mathbf{(\diamond6)}$--$\mathbf{(\diamond10)}$. More precisely, in this setting Lemma~\ref{outerlemma} transforms the output structure of Lemma~\ref{prop:LKSstruct} into an input structure for either Lemma~\ref{lem:ConfWhenNOTCXAXB} or Lemma~\ref{lem:ConfWhenMatching}. These lemmas then assert that indeed one of the Configurations $\mathbf{(\diamond6)}$--$\mathbf{(\diamond10)}$ must occur. 
The configurations~$\mathbf{(\diamond6)}$--$\mathbf{(\diamond8)}$ involve
combinations of one of the
two preconfigurations $\mathbf{(\heartsuit1)}$ and $\mathbf{(\heartsuit2)}$ and
one of the two preconfigurations $\mathbf{(exp)}$ and
$\mathbf{(reg)}$. The idea here is that the knags are embedded
using the structure of $\mathbf{(exp)}$ or $\mathbf{(reg)}$ (whichever
applicable), the internal shrubs are embedded using the structure which is
specific to each of the
configurations~$\mathbf{(\diamond6)}$--$\mathbf{(\diamond8)}$, and the end
shrubs are embedded using the structure of $\mathbf{(\heartsuit1)}$ or
$\mathbf{(\heartsuit2)}$. The configuration $\mathbf{(\diamond10)}$ is very similar to the structures obtained in the dense setting
in~\cite{PS07+,HlaPig:LKSdenseExact} (see Section~\ref{ssec:EmbedOverview10} for a discussion), and $\mathbf{(\diamond9)}$ should be
considered as half-way towards it.

The reader may find it helpful to compare the definitions of the configurations
with Section~\ref{ssec:embeddingOverview} where an overview is given how these
configurations are used to embed the tree $T_\PARAMETERPASSING{T}{thm:main}$.

\smallskip
Some of the  configurations below are accompanied with parameters in the parentheses; note that we do not make explicit those numerical parameters which are inherited from Setting~\ref{commonsetting}.

\bigskip We start off by giving definitions of Configuration~$\mathbf{(\diamond1)}$. This is a very easy configuration in which a modification of the greedy tree-embedding strategy works.
\begin{definition}[\bf Configuration~$\mathbf{(\diamond1)}$]\index{mathsymbols}{**1@$\mathbf{(\diamond1)}$}
We say that a graph $G$ is in \emph{Configuration~$\mathbf{(\diamond1)}$} if
there exists a non-empty bipartite graph $H\subset G$ with $\mindeg_G(V(H))\ge k$ and $\mindeg(H)\ge k/2$.
\end{definition}

\bigskip We now introduce the configurations $\mathbf{(\diamond2)}$--$\mathbf{(\diamond5)}$ which make use of the set $\HugeVertices$. These configurations build on Preconfiguration $\mathbf{(\clubsuit)}$. Figure~\ref{fig:DIAMOND25overview} shows common features of the configurations $\mathbf{(\diamond2)}$--$\mathbf{(\diamond5)}$.

\begin{definition}[\bf Preconfiguration 
$\mathbf{(\clubsuit)}$]\index{mathsymbols}{***@$\mathbf{(\clubsuit)}$}
\label{def:PreClub}
Suppose that we are in
Setting~\ref{commonsetting}. We say that the graph $G$ is in
\emph{Preconfiguration~$\mathbf{(\clubsuit)}(\Omega^\star)$} if the following
conditions are met.
$G$ contains non-empty sets $L''\subset L'\subset
\largevertices{\frac9{10}\eta}{k}{\Gcapt}\setminus\HugeVertices$, and a non-empty set $\HugeVertices'\subset
\HugeVertices$ such that
\begin{align}\label{eq:clubsuitCOND1}
\maxdeg_{\Gcapt} (L',\HugeVertices\setminus \HugeVertices')&<\frac{\eta
k}{100} \;\mbox{,}\\ 
\label{eq:clubsuitCOND2}
\mindeg_{\Gcapt}(\HugeVertices',L'')&\ge \Omega^\star k\;\mbox{, and}\\
\label{eq:clubsuitCOND3}
\maxdeg_{\Gcapt}(L'',\largevertices{\frac9{10}\eta}{k}{\Gcapt}\setminus(\HugeVertices\cup
L'))&\le\frac{\eta k}{100}\;.
\end{align}
\end{definition}

\begin{definition}[\bf Configuration
$\mathbf{(\diamond2)}$]\index{mathsymbols}{**2@$\mathbf{(\diamond2)}$}Suppose that we are in Setting~\ref{commonsetting}. We say that the graph $G$ is in
\emph{Configuration $\mathbf{(\diamond2)}(\Omega^\star,
\tilde\Omega,\beta)$} if the following
conditions are met.

 The triple $L'',L',\HugeVertices'$ witnesses preconfiguration
$\mathbf{(\clubsuit)}(\Omega^\star)$ in $G$. There exist a
non-empty set $\HugeVertices''\subset \HugeVertices'$, a set $V_1\subset V(\Gexp)\cap\YB\cap L''$, and a set $V_2\subset V(\Gexp)$ with the following properties.
\begin{align*}
\mindeg_{\Gcapt}(\HugeVertices'',V_1)&\ge\tilde\Omega k\;\\
\mindeg_{\Gcapt}(V_1,\HugeVertices'')&\ge \beta k\;,\\
\mindeg_{\Gexp}(V_1,V_2)&\ge \beta k\;,\\
\mindeg_{\Gexp}(V_2,V_1)&\ge \beta k\;.
\end{align*}
\end{definition}

\begin{definition}[\bf Configuration
$\mathbf{(\diamond3)}$]\index{mathsymbols}{**3@$\mathbf{(\diamond3)}$}
\label{def:CONF3}
Suppose that we are in
Setting~\ref{commonsetting}. We say that the graph $G$ is in
\emph{Configuration $\mathbf{(\diamond3)}(\Omega^\star,
\tilde\Omega,\zeta,\delta)$} if the following
conditions are met.

 The triple $L'',L',\HugeVertices'$ witnesses preconfiguration
$\mathbf{(\clubsuit)}(\Omega^\star)$ in $G$. There exist a
non-empty set $\HugeVertices''\subset \HugeVertices'$, a set $V_1\subset \smallatoms\cap \YB\cap L''$, and a set $V_2\subset V(G)\setminus \HugeVertices$ such that the
following properties are satisfied.
\begin{align}
\nonumber
\mindeg_{\Gcapt}(\HugeVertices'',V_1)&\ge \tilde \Omega k\;,\\
\nonumber
\mindeg_{\Gcapt}(V_1,\HugeVertices'')&\ge \delta k\;,\\
\label{eq:WHtc}
\maxdeg_{\GD}(V_1, V(G)\setminus
(V_2\cup \HugeVertices))&\le \zeta k\;,\\ 
\label{confi3theothercondi}
\mindeg_{\GD}(V_2,V_1)&\ge \delta k\;.
\end{align}
\end{definition}

\begin{definition}[\bf Configuration
$\mathbf{(\diamond4)}$]\index{mathsymbols}{**4@$\mathbf{(\diamond4)}$}
\label{def:CONF4}
Suppose that we are in
Setting~\ref{commonsetting}. We say that the graph $G$ is in
\emph{Configuration
$\mathbf{(\diamond4)}(\Omega^\star, \tilde\Omega,\zeta,\delta)$} if the
following conditions are met.

 The triple $L'',L',\HugeVertices'$ witnesses preconfiguration
$\mathbf{(\clubsuit)}(\Omega^\star)$ in $G$. There exists a
non-empty set $\HugeVertices''\subset \HugeVertices'$, sets $V_1\subset \YB\cap L''$,
$\smallatoms'\subset \smallatoms$, and $V_2\subset V(G)\setminus \HugeVertices$ with the following properties
\begin{align}
\mindeg_{\Gcapt}(\HugeVertices'',V_1)&\ge \tilde\Omega k\;,\notag \\
\mindeg_{\Gcapt}(V_1,\HugeVertices'')&\ge \delta k\;,\notag \\
\mindeg_{\Gcapt\cup\GD}(V_1,\smallatoms')&\ge \delta k\;,\label{confi4:3} \\
\mindeg_{\Gcapt\cup\GD}(\smallatoms',V_1)&\ge \delta k\;,\label{confi4:4} \\
\mindeg_{\Gcapt\cup\GD}(V_2,\smallatoms')&\ge \delta k\;,\label{confi4othercondi} \\
\maxdeg_{\Gcapt\cup\GD}(\smallatoms',V(G)\setminus
(\HugeVertices\cup V_2))&\le \zeta k\;. \label{confi4lastcondi}
\end{align}
\end{definition}

\begin{definition}[\bf Configuration
$\mathbf{(\diamond5)}$]\index{mathsymbols}{**5@$\mathbf{(\diamond5)}$}
\label{def:CONF5}
Suppose that we are in
Setting~\ref{commonsetting}. We say that the graph $G$ is in
\emph{Configuration
$\mathbf{(\diamond5)}(\Omega^\star,\tilde\Omega,\delta,\zeta,\tilde\pi)$} if the
following conditions are met.

 The triple $L'',L',\HugeVertices'$ witnesses preconfiguration
 $\mathbf{(\clubsuit)}(\Omega^\star)$ in $G$. There exists a non-empty set $\HugeVertices''\subset \HugeVertices'$, and a set $V_1\subset (\YB\cap
L''\cap \bigcup\clusters)\setminus V(\Gexp)$ such that the following conditions are fulfilled.
\begin{align}
\mindeg_{\Gcapt}(\HugeVertices'',V_1)&\ge \tilde\Omega k\;, \\
\mindeg_{\Gcapt}(V_1,\HugeVertices'')&\ge \delta k\;,
\label{eq:diamond5P2}\\
\mindeg_{\Gblack}(V_1)&\ge \zeta k\;.\label{confi5last}
\end{align}
Further, we have 
\begin{equation}\label{eq:diamond5P4}
\mbox{$C\cap V_1=\emptyset$ or $|C\cap V_1|\ge \tilde\pi|C|$}
\end{equation} for every
$C\in\clusters$.
\end{definition}

In remains to introduce
configurations~$\mathbf{(\diamond6)}$--$\mathbf{(\diamond10)}$. In these
configurations the set $\HugeVertices$ is not utilized. All these
configurations make use of Setting~\ref{settingsplitting}, i.e., the set
$V(G)\setminus \HugeVertices$ is partitioned into three sets
$\colouringp{0},\colouringp{1}$ and $\colouringp{2}$. The purpose of $\colouringp{0},\colouringp{1}$ and $\colouringp{2}$
is to make possible to embed the knags, the internal shrubs, and the
end shrubs of $T_\PARAMETERPASSING{T}{thm:main}$, respectively. Thus the
parameters $\proporce{0}, \proporce{1}$ and $\proporce{2}$ are chosen proportionally to
the sizes of these respective parts of $T_\PARAMETERPASSING{T}{thm:main}$.
A summary picture for Configurations $\mathbf{(\diamond6)}$--$\mathbf{(\diamond7)}$, $\mathbf{(\diamond8)}$, and $\mathbf{(\diamond9)}$ is given in Figures~\ref{fig:DIAMOND67overview}, \ref{fig:DIAMOND8} and~\ref{fig:DIAMOND9}, respectively.

We first introduce four preconfigurations $\mathbf{(\heartsuit 1)}$,
$\mathbf{(\heartsuit 2)}$, $\mathbf{(exp)}$ and $\mathbf{(reg)}$ which are
building bricks for
configurations~$\mathbf{(\diamond6)}$--$\mathbf{(\diamond9)}$.
The preconfigurations $\mathbf{(\heartsuit 1)}$ and $\mathbf{(\heartsuit 2)}$ will
be used for embedding end shrubs of a fine partition of the tree
$T_\PARAMETERPASSING{T}{thm:main}$, and preconfigurations $\mathbf{(exp)}$ and
$\mathbf{(reg)}$ will be used for embedding its knags.

 An \emph{$\M$-cover}\index{general}{cover}\index{mathsymbols}{*COVER@$\M$-cover} of a
semiregular matching $\M$ is a family $\mathcal F\subset \V(\M)$ with the property that at least one of
the elements $S_1$ and $S_2$ is a member of $\mathcal F$, for each $(S_1,S_2)\in
\M$.

\begin{definition}[\bf Preconfiguration
$\mathbf{(\heartsuit 1)}$]\index{mathsymbols}{**1@$\mathbf{(\heartsuit1)}$}
\label{def:heart1}
 Suppose that we are in
Setting~\ref{commonsetting} and Setting~\ref{settingsplitting}. We say that the
graph $G$ is in \emph{Preconfiguration
$\mathbf{(\heartsuit 1)}(\gamma',h)$} of $V(G)$ if there are two
non-empty sets $V_0,V_1\subset
\colouringp{0}\setminus \left(\shadowsplit\cup\shadow_{\GD}(\WantiC, \frac{\eta^2 k}{10^5})\right)$ with the following
properties.
\begin{align}
\label{COND:P1:3}
\mindeg_{\Gcapt}\left(V_0,\Vgood\colouringpI{2}\right)&\ge h/2 \;\mbox{, and}\\
\label{COND:P1:4}
\mindeg_{\Gcapt}\left(V_1,\Vgood\colouringpI{2}\right)&\ge h \;.
\end{align} Further, there is an
$(\M_A\cup\M_B)$-cover $\mathcal F$ such that
\begin{equation}\label{COND:P1:5}
\maxdeg_{\Gcapt}\left(V_1,\bigcup\mathcal F\right)\le \gamma' k\;.
\end{equation}
\end{definition}

\begin{definition}[\bf Preconfiguration
$\mathbf{(\heartsuit 2)}$]\index{mathsymbols}{**2@$\mathbf{(\heartsuit2)}$} Suppose that we
are in Setting~\ref{commonsetting} and Setting~\ref{settingsplitting}. We say that the
graph $G$ is in \emph{Preconfiguration
$\mathbf{(\heartsuit 2)}(h)$} of $V(G)$ if there are two
non-empty sets $V_0,V_1\subset \colouringp{0}\setminus \left(\shadowsplit\cup \shadow_{\GD}(\WantiC, \frac {\eta^2 k}{10^5})\right)$ with the
following properties.
\begin{align}
\begin{split}\label{COND:P2:4}
\mindeg_{\Gcapt}\left(V_0\cup V_1,\Vgood\colouringpI{2}\right)&\ge h.
\end{split}
\end{align}
\end{definition}

\begin{definition}[\bf Preconfiguration
$\mathbf{(exp)}$]\index{mathsymbols}{**exp@$\mathbf{(exp)}$} 
\label{def:exp8}
Suppose that we
are in Setting~\ref{commonsetting} and Setting~\ref{settingsplitting}. We say that the
graph $G$ is in \emph{Preconfiguration
$\mathbf{\mathbf{(exp)}}(\beta)$} if there are two
non-empty sets $V_0,V_1\subset \colouringp{0}$ with the following properties.
\begin{align}
\label{COND:exp:1}
\mindeg_{\Gexp}(V_0,V_1)&\ge \beta k\;,\\
\label{COND:exp:2}
\mindeg_{\Gexp}(V_1,V_0)&\ge \beta k\;.
\end{align}
\end{definition}

\begin{definition}[\bf Preconfiguration
$\mathbf{(reg)}$]\index{mathsymbols}{**reg@$\mathbf{(reg)}$}
\label{def:reg}
 Suppose that we
are in Setting~\ref{commonsetting} and Setting~\ref{settingsplitting}. We say that the graph $G$ is in \emph{Preconfiguration
$\mathbf{\mathbf{(reg)}}(\tilde \epsilon, d', \mu)$}  if there are two  non-empty sets $V_0,V_1\subset \colouringp{0}$
and a non-empty family of vertex-disjoint $(\tilde\epsilon,d')$-super-regular pairs $\{(Q_0^{(j)},Q_1^{(j)}\}_{j\in\mathcal Y}$ (with respect to the edge set $E(G)$) with $V_0:=\bigcup Q_0^{(j)}$ and $V_1:=\bigcup Q_1^{(j)}$ such that
\begin{align}
\label{COND:reg:0}
\min\left\{|Q_0^{(j)}|,|Q_1^{(j)}|\right\}&\ge\mu k\;.
\end{align}
\end{definition}

\begin{definition}[\bf Configuration
$\mathbf{(\diamond6)}$]\index{mathsymbols}{**6@$\mathbf{(\diamond6)}$}
\label{def:CONF6}
Suppose that we are in
Settings~\ref{commonsetting} and~\ref{settingsplitting}. We say that the graph $G$
is in \emph{Configuration
$\mathbf{(\diamond6)}(\delta, \tilde \epsilon,d',\mu, \gamma', h_2)$} if the
following conditions are met.

The vertex sets $V_0,V_1$ 
witness Preconfiguration
 $\mathbf{(reg)}(\tilde \epsilon,d',\mu)$ or
 Preconfiguration~$\mathbf{(exp)}(\delta)$ and either Preconfiguration~$\mathbf{(\heartsuit1)}(\gamma',h_2)$ or
Preconfiguration~$\mathbf{(\heartsuit2)}(h_2)$. There exist non-empty sets $V_2,V_3\subset \colouringp{1}$ such that
 \begin{align}\label{COND:D6:1}
\mindeg_{G}(V_1,V_2)&\ge \delta k\;,\\ 
\label{COND:D6:2}
\mindeg_{G}(V_2,V_1)&\ge \delta k\;,\\ 
\label{COND:D6:3}
\mindeg_{\Gexp}(V_2,V_3)&\ge \delta k \;,\mbox{and}\\
\label{COND:D6:4}
\mindeg_{\Gexp}(V_3,V_2)&\ge \delta k\;.
\end{align}
\end{definition}

\begin{definition}[\bf Configuration
$\mathbf{(\diamond7)}$]\index{mathsymbols}{**7@$\mathbf{(\diamond7)}$}
\label{def:CONF7}
Suppose that we are in
Settings~\ref{commonsetting} and~\ref{settingsplitting}. We say that the graph $G$
is in \emph{Configuration
$\mathbf{(\diamond7)}(\delta, \rho', \tilde \epsilon, d',\mu, \gamma', h_2)$} if
the following conditions are met.

The sets $V_0,V_1$  witness Preconfiguration
 $\mathbf{(reg)}( \tilde \epsilon, d',\mu)$ and either Preconfiguration~$\mathbf{(\heartsuit
 1)}(\gamma', h_2)$ or Preconfiguration~$\mathbf{(\heartsuit 2)}(h_2)$. There
 exist non-empty sets $V_2\subset \smallatoms\colouringpI{1}\setminus \exceptVertSplit$ and $V_3\subset \colouringp{1}$ such that
 \begin{align}
\label{COND:D7:1}
\mindeg_{G}(V_1,V_2)&\ge \delta k\;,\\
\label{COND:D7:2}
\mindeg_{G}(V_2,V_1)&\ge \delta k\;,\\
\label{COND:D7:3}
\maxdeg_{\GD}(V_2,\colouringp{1}\setminus V_3)&< \rho' k \;\mbox{and}\\
\label{COND:D7:4}
\mindeg_{\GD}(V_3,V_2)&\ge \delta k\;.
\end{align}
\end{definition}

\begin{definition}[\bf Configuration
$\mathbf{(\diamond8)}$]\index{mathsymbols}{**8@$\mathbf{(\diamond8)}$}
\label{def:CONF8}
Suppose that we are in
Settings~\ref{commonsetting} and~\ref{settingsplitting}. We say that the graph $G$
is in \emph{Configuration
$\mathbf{(\diamond8)}(\delta,\rho',\epsilon_1,\epsilon_2, d_1,d_2,\mu_1,\mu_2, h_1,h_2)$}
 if the following conditions are met.

The vertex sets $V_0,V_1$  witness Preconfiguration
 $\mathbf{(reg)}(\epsilon_2, d_2,\mu_2)$ and Preconfiguration~$\mathbf{(\heartsuit 2)}(h_2)$.
 There exist non-empty sets $V_2\subset \colouringp{0}$, $V_3,V_4\subset \colouringp{1}$, $V_3\subset\smallatoms\setminus \exceptVertSplit$, and an $(\epsilon_1, d_1, \mu_1 k)$-semiregular matching $\mathcal N$ absorbed by $(\M_A\cup\M_B)\setminus \NAtom$, $V(\mathcal N)\subset \colouringp{1}\setminus V_3$ such that
\begin{align}
\label{COND:D8:1}
\mindeg_{G}(V_1,V_2)&\ge \delta k\;,\\
\label{COND:D8:2}
\mindeg_{G}(V_2,V_1)&\ge \delta k\;,\\
\label{COND:D8:3}
\mindeg_{\Gcapt}(V_2,V_3)&\ge \delta k\;,\\
\label{COND:D8:4}
\mindeg_{\Gcapt}(V_3,V_2)&\ge \delta k\;,\\
\label{COND:D8:5}
\maxdeg_{\GD}(V_3,\colouringp{1}\setminus V_4)&< \rho' k \;,\\
\label{COND:D8:6}
\mindeg_{\GD}(V_4,V_3)&\ge \delta k\;\mbox{, and}\\
\label{COND:D8:7}
\deg_{\GD}(v,V_3)+\deg_{\Gblack}(v,V(\mathcal N))&\ge h_1\;\mbox{for each $v\in V_2$.}
\end{align} 
\end{definition}
\begin{definition}[\bf Configuration
$\mathbf{(\diamond9)}$]\index{mathsymbols}{**9@$\mathbf{(\diamond9)}$}
\label{def:CONF9}
Suppose that we are in
Settings~\ref{commonsetting}, and~\ref{settingsplitting}. We say that the graph $G$
is in \emph{Configuration
$\mathbf{(\diamond9)}(\delta, \gamma', h_1, h_2, \epsilon_1, d_1,
\mu_1,\epsilon_2, d_2,\mu_2)$} if the following conditions are met.

The sets $V_0,V_1$ together with the $(\M_A\cup\M_B)$-cover $\mathcal F'$
witness Preconfiguration~$\mathbf{(\heartsuit1)}(\gamma',h_2)$. 
 There exists an $(\epsilon_1, d_1, \mu_1 k)$-semiregular matching $\mathcal N$ absorbed by $\M_A\cup\M_B$, $V(\mathcal N)\subset \colouringp{1}$.
Further, there is a family $\{(Q_0^{(j)},Q_1^{(j)})\}_{j\in\mathcal Y}$ as in Preconfiguration~$\mathbf{(reg)}(\epsilon_2,d_2,\mu_2)$. There is a set $V_2\subseteq
 V(\mathcal N)\setminus \bigcup \mathcal F'\subset \bigcup\clusters$ with the
 following properties:
\begin{align}
\label{conf:D9-XtoV}
\mindeg_{\GD}\left(V_1, V_2\right)\ge h_1\;,&\\
\label{conf:D9-VtoX}\mindeg_{\GD}\left(V_2,V_1\right)\ge
\delta k\;.
\end{align}
\end{definition}

Our last configuration, Configuration~$\mathbf{(\diamond10)}$, will lead to an embedding very similar to the one
in the dense case (treated in~\cite{PS07+}; see Section~\ref{ssec:EmbedOverview10}). In order to be able to formalize the configuration we need a preliminary definition. We shall
generalize the standard concept of a regularity graph (in the context of regular
partitions and Szemer\'edi's Regularity Lemma) to graphs with clusters whose sizes are only bounded from below.

\begin{definition}[\bf{$( \epsilon,d,\ell_1,\ell_2)$-regularized
graph}]\index{general}{regularized graph}\label{def:regularizedGraph} Let $G$ be
a graph, and let $\mathcal V$ be an $\ell_1$-ensemble that partitions $V(G)$.
Suppose that $G[X]$ is empty for each $X\in \mathcal V$ and suppose $G[X,Y]$ is
$\epsilon$-regular and of density either $0$ or at least $d$ for each $X,Y\in
\mathcal V$. Further suppose that for all $X\in \V$ it holds that 
$|\bigcup\neighbor_G(X)|\le \ell_2$.
Then we say that $(G,\mathcal V)$ is an \emph{$(\eps, d,\ell_1,
\ell_2)$-regularized graph}.

A semiregular matching $\M$ of $G$ is \index{general}{consistent matching}\emph{consistent} with $(G,\mathcal V)$ if $\V(\M)\subset \V$.
\end{definition}

\begin{definition}[\bf Configuration
$\mathbf{(\diamond10)}(\tilde\eps,d',\ell_1, \ell_2,
\eta')$]\index{mathsymbols}{**10@$\mathbf{(\diamond10)}$}
\label{def:CONF10}
Assume Setting~\ref{commonsetting}. The graph $G$ contains an $(
\tilde\epsilon, d', \ell_1, \ell_2)$-regularized graph $(\tilde G,\V)$  and there
is a $( \tilde\epsilon,  d',\ell_1 )$-semiregular matching $\M$ consistent with
$(\tilde G,\V)$.
There are a family $\LargeTen\subset \V$ and distinct clusters $A, B\in\V$  with
\begin{enumerate}[(a)]
\item\label{diamond10cond1} $E(\tilde G[A,B])\neq \emptyset$, 
\item\label{diamond10cond2} $\deg_{\tilde G}\big(v,V(\M)\cup \bigcup \LargeTen\big)\ge (1+\eta')k$ for all but at most $\tilde\epsilon |A|$
vertices $v\in A$ and for all but at most $\tilde\epsilon|B|$ vertices $v\in B$, and
\item\label{diamond10cond3}
 for each $X\in\LargeTen$ we have $\deg_{\tilde G}(v)\ge (1+\eta')k$ for all but at most $\tilde\epsilon|X|$ vertices $v\in X$.
 \end{enumerate}
\end{definition}

\subsection{The role of random splitting}\label{ssec:whyrandomsplitting}
The random splitting as introduced in Setting~\ref{settingsplitting} is used in Configurations $\mathbf{(\diamond6)}$--$\mathbf{(\diamond9)}$; the set $\colouringp{0}$ will host the cut-vertices $W_A\cup W_B$, the set $\colouringp{1}$ will host the internal shrubs, and the set $\colouringp{2}$ will (essentially) host the end shrubs of a $(\tau k)$-fine partition of $T_\PARAMETERPASSING{T}{thm:main}$.

The need for introducing the random splitting is dictated by Configurations $\mathbf{(\diamond6)}$--$\mathbf{(\diamond9)}$. To see this, let us try to follow the embedding plan from, for example, Section~\ref{ssec:EmbedOverview67} without the random splitting, i.e., dropping the conditions $\subset \colouringp{0}$, $\subset \colouringp{1}$, $\subset \colouringp{2}$ from Definitions~\ref{def:heart1}--\ref{def:CONF7}. Then the sets $V_2$ and $V_3$ in Figure~\ref{fig:DIAMOND67overview}, which will host the internal shrubs, may interfere with $V_0$ and $V_1$ primarily designated for $W_A$ and $W_B$. In particular, the conditions on degrees between $V_0$ and $V_1$ given by \eqref{COND:exp:1}--\eqref{COND:exp:2} in Definition~\ref{def:exp8}, or given by the super-regularity in Definition~\ref{def:reg} (in which $\beta_\PARAMETERPASSING{D}{def:exp8}>0$, or $d'_\PARAMETERPASSING{D}{def:reg}\mu_\PARAMETERPASSING{D}{def:reg}>0$ are tiny) need not be sufficient for embedding greedily all 
the cut-vertices and all the internal shrubs of $T_\PARAMETERPASSING{T}{thm:main}$. It should be noted that this problem occurs even in 
Preconfiguration~$\mathbf{(exp)}$, i.e., the expanding property does not add
enough strength to the minimum degree conditions due to the same peculiarity as
in Figure~\ref{fig:gettingstuck}. Restricting $V_0$ and $V_1$ to host only the
cut-vertices (only $O(1/\tau)=o(k)$ of them in total, cf.
Definition~\ref{ellfine}\eqref{few}), resolves the problem.

The above justifies the distinction between the space $\colouringp{0}$ for embedding the cut-vertices and the space $\colouringp{1}\cup\colouringp{2}$ for embedding the shrubs. There are some other approaches which do not need to further split $\colouringp{1}\cup\colouringp{2}$ but doing so seems to be the most convenient.

\subsection{Cleaning}\label{ssec:cleaning}
This section contains five ``cleaning lemmas''
(Lemma~\ref{lem:envelope}--Lemma~\ref{lem:clean-Match}). The basic setting of
all these lemmas is the same. There is a system of vertex sets and some density assumptions on edges between certain sets of this
system. The assertion the is that a small number of vertices can be discarded
from the sets so that some conditions on the minimum degree are fullfilled.
While the cleaning strategy is simply discarding the vertices which violate these
minimum degree conditions the analysis of the outcome is non-trivial and
employs amortized analysis. The simplest application of such an approach was the proof of Lemma~\ref{lem:clean-spots} above.

Lemmas~\ref{lem:envelope}--\ref{lem:clean-Match} are used to get the structures
required by (pre-)configurations introduced in Section~\ref{ssec:TypesConf},
based on rough structures found in Lemma~\ref{prop:LKSstruct}.

\bigskip
The first lemma will be used to obtain preconfiguration $\mathbf{(\clubsuit)}$
in certain situations.
\begin{lemma}\label{lem:envelope}
Let $\psi\in (0,1)$, and $\Gamma,\Omega\ge 1$ be arbitrary. Let $P$ and $Q$ be two
disjoint vertex sets in a graph $G$. Assume that $Y\subset V(G)$ is given. We
assume that \begin{equation}\label{eq:envelopeAss1}\mindeg(P,Q)\ge \Omega
k\;,\end{equation} and $\maxdeg(Q)\le \Gamma k$. Then there exist sets
$P'\subset P$, $Q'\subset Q\setminus Y$ and $Q''\subset Q'$ such that the
following holds.
\begin{enumerate}[(a)]
\item $\mindeg(P',Q'')\ge \frac{\psi^3\Omega}{4\Gamma^2}k$,
\item $\maxdeg(Q',P\setminus P')< \psi k$,
\item $\maxdeg(Q'',Q\setminus Q')<\psi k$, and
\item $e(P',Q'')\ge (1-\psi)e(P,Q)-\frac{2|Y\cap Q|\Gamma^2 }{\psi}k$.
\end{enumerate}
\end{lemma}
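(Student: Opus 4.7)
The plan is a sequential cleaning procedure with two interacting rules, followed by a one-shot refinement to obtain $Q''$ and an amortized double-counting analysis.

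Initialize $R_Q := Y\cap Q$ and $R_P := \emptyset$, and iteratively enlarge these ``removed'' sets by applying, in any order, the rules:
\begin{itemize}
\item[(R1)] if some $u \in P\setminus R_P$ satisfies $\deg(u, R_Q)\geq \bigl(1 - \tfrac{\psi^2}{2\Gamma^2}\bigr)\Omega k$, add $u$ to $R_P$;
\item[(R2)] if some $v \in Q\setminus R_Q$ satisfies $\deg(v, R_P)\geq \psi k/2$, add $v$ to $R_Q$.
\end{itemize}
Since both sets are monotonically growing, the process terminates. Set $P':=P\setminus R_P$ and $Q':=Q\setminus R_Q$, and then define $Q'':=\{v\in Q':\deg(v, Q\setminus Q')<\psi k\}$. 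Condition (b) is immediate from the failure of (R2) at termination, and (c) is immediate from the definition of $Q''$.

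For (a), the failure of (R1) forces $\deg(u, R_Q)< \bigl(1-\tfrac{\psi^2}{2\Gamma^2}\bigr)\Omega k$ for every $u\in P'$, hence $\deg(u, Q')\geq \tfrac{\psi^2 \Omega}{2\Gamma^2}k$. Every $v\in Q'\setminus Q''$ has at least $\psi k$ neighbors in $R_Q$, so a double count using $\maxdeg(Q)\leq\Gamma k$ (applied on the $R_Q$ side) gives $|Q'\setminus Q''|\leq \Gamma|R_Q|/\psi$, and therefore $\deg(u,Q'')\geq \tfrac{\psi^2\Omega}{2\Gamma^2}k - \tfrac{\Gamma|R_Q|}{\psi}$. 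Provided $|R_Q|$ is small enough (which is where the analysis is tight), this exceeds $\tfrac{\psi^3\Omega}{4\Gamma^2}k$. For (d), write $e(P,Q)-e(P',Q'') = e(R_P,Q)+e(P',R_Q)+e(P',Q'\setminus Q'')$ and bound each summand: $e(P',R_Q)\leq |R_Q|\Gamma k$; $e(P',Q'\setminus Q'')$ via the bound on $|Q'\setminus Q''|$ just derived; and $e(R_P,Q)\leq e(R_P,R_Q)+e(R_P,Q')$, where the second term is at most $|R_P|\cdot\bigl(1-\tfrac{\psi^2}{2\Gamma^2}\bigr)\Omega k$ does not exist as an upper bound on $\deg(u,Q')$ directly, so we instead charge it to the slack in (R1).

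The edge count $e(R_P,R_Q)$ is the pivot of the whole argument: bounded below by (R1) as $\geq |R_P|(1-\tfrac{\psi^2}{2\Gamma^2})\Omega k$, and bounded above via $\maxdeg(Q)\leq\Gamma k$ as $\leq |R_Q|\Gamma k$; combined, this yields $|R_P|\leq \tfrac{\Gamma|R_Q|}{\Omega(1-\psi^2/(2\Gamma^2))}\leq \tfrac{2\Gamma|R_Q|}{\Omega}$. To convert this into a bound on $|R_Q\setminus Y|$, use (R2): $|R_Q\setminus Y|\cdot\psi k/2 \leq e(R_P, R_Q\setminus Y)\leq e(R_P,R_Q)\leq |R_Q|\Gamma k$, so $|R_Q\setminus Y|\leq \tfrac{2\Gamma|R_Q|}{\psi}$. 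The total loss is then controlled by $|Y\cap Q|\Gamma k$ after iterating through a geometric closure argument; the careful choice of the (R1)-threshold $(1-\tfrac{\psi^2}{2\Gamma^2})\Omega k$ ensures the (R1)-count is strong enough that the final bound is $\tfrac{2|Y\cap Q|\Gamma^2}{\psi}k$ as in (d).

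The main obstacle is precisely this last step: since there is no a priori upper bound on $\deg(u,Q)$ for $u\in P$, the only available upper bound on $e(R_P,R_Q\setminus Y)$ comes from the $Q$-side and is self-referential in $|R_Q|$. Closing this requires tuning the (R1)-threshold so that the factor $\Omega/\Gamma^2$ absorbs the self-referential growth, yielding a clean bound on $|R_Q|$ in terms of $|Y\cap Q|$ and hence the advertised estimate in (d). Once this is in hand, (a)--(c) follow directly from the construction.
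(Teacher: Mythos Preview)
Your cleaning scheme is close in spirit to the paper's, but the choice of rule (R1) creates a genuine gap that the ``geometric closure'' paragraph does not close. The paper removes $u\in P$ when its degree into the \emph{remaining} set $Q''$ drops below $\tfrac{\psi^3\Omega}{4\Gamma^2}k$; this makes (a) automatic and, crucially, gives an \emph{upper} bound $f(u)<\tfrac{\psi^3\Omega}{4\Gamma^2}k$ on the degree of each discarded $u$ into $Q''$ at the moment of removal. That upper bound is exactly what powers the amortized inequality $\sum_{v} g(v)\le\sum_{u} f(u)$ and hence the bounds on $|Q^b|$, $|Q'\setminus Q''|$, and the edge loss in (d). Your (R1), by contrast, removes $u$ when $\deg(u,R_Q)$ is \emph{large}; since the hypotheses put no upper bound on $\deg(u,Q)$ for $u\in P$ (the two criteria coincide only when $\deg(u,Q)=\Omega k$ exactly), this gives no control on $\deg(u,Q\setminus R_Q)$, so the $R_P$-vertices can trigger unboundedly many (R2)-removals. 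The self-referential bound $|R_Q\setminus Y|\le 2\Gamma|R_Q|/\psi$ you derive is vacuous (the coefficient exceeds~$1$), and no tuning of the (R1)-threshold alone can close it without a $P$-side degree cap.

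A concrete failure of (a): take $\Gamma=\Omega=1$, $\psi=\tfrac12$, $|Y\cap Q|=k$. Let $P_0\subset P$ be $k$ vertices, each complete to $Y\cap Q$; all of $P_0$ is removed by (R1). Add a large set $Q_1\subset Q\setminus Y$ in which every vertex has $k/4$ neighbours in $P_0$ (nothing prevents giving $P_0$-vertices huge degree into $Q_1$, since $\maxdeg$ is only bounded on $Q$); all of $Q_1$ is removed by (R2), so $|R_Q|$ is as large as you like. Now add a single $u'\in P$ with $\deg(u',Q)=k$, all its neighbours in a set $Q_2\subset Q\setminus(Y\cup Q_1)$, and give each $w\in Q_2$ exactly $k/2$ neighbours inside $Q_1$. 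Then $u'\in P'$ (since $\deg(u',R_Q)=0$), each $w\in Q_2$ lies in $Q'\setminus Q''$ (since $\deg(w,R_P)=0$ but $\deg(w,R_Q)=k/2\ge\psi k$), and hence $\deg(u',Q'')=0$. The remedy is exactly the paper's: phrase the $P$-side rule as an upper bound on $\deg(u,Q'')$, not a lower bound on $\deg(u,R_Q)$, so that (a) holds by construction and the amortization runs off a small $f(u)$.
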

\begin{proof}
Initially, set $P':=P$, $Q'=:Q\setminus Y$ and $Q'':=Q'\setminus Y$. We shall
sequentially discard from the sets $P'$, $Q'$ and $Q''$ those vertices which
violate any of the properties (a)--(c). Further, if a vertex $v\in Q$ is removed
from $Q'$ then we remove it from the set $Q''$ as well. This way, we have
$Q''\subset Q'$ in each step. After this sequential cleaning procedure
finishes it only remains to establish~(d).

First, observe that the way we constructed $P'$ ensures
that
\begin{equation}\label{DianaAndMatej}
e(P\setminus P',Q'')\le \frac{\psi^3}{4\Gamma^2}e(P,Q)\;.
\end{equation}

Let $Q^b\subset Q\setminus Q'$ be the set of the vertices removed because of
condition~(b). For a vertex $u\in P\setminus P'$, we write $Q''_u$ for the set
$Q''$ just before the moment when $u$ was removed from $P'$. Likewise, we define the sets $P'_v,Q'_v,Q''_v$ for each $v\in Q\setminus Q''$. For $u\in P\setminus P'$
let $f(u):=\deg(u,Q''_u)$, for $v\in Q\setminus (Q'\cup Y)$ let
$g(v):=\deg(v,P\setminus P'_v)$, and for $w\in Q'\setminus Q''$ let
$h(w):=\deg(w,Q\setminus Q'_w)$. Observe that $\sum_{u\in P\setminus P'}f(u)\ge
\sum_{v\in Q^b}g(v)$. Indeed, at the moment when $v\in Q$ is removed
from~$Q'$, the $g(v)$ edges that $v$ sends to the set $P\setminus P'_v$ are 
counted in $\sum_{w\in \neighbor(v)\cap (P\setminus P')}f(w)$. We therefore have
\begin{equation*}
\frac{\psi^3}{4\Gamma^2}e(P,Q)\ge \frac{\psi^3}{4\Gamma^2}\sum_{u\in
P\setminus P'}\deg(u,Q)\ge\sum_{u\in P\setminus P'}f(u)\ge \sum_{v\in
Q^b}g(v)\ge |Q^b|\psi k\;,
\end{equation*}
and consequently,
\begin{equation}\label{eq:BoundB}
|Q^b|\le \frac{\psi^2}{4\Gamma^2 k}e(P,Q)\;.
\end{equation}
We also have
\begin{equation}\label{eq:BoundC}
|Q'\setminus Q''|\psi k\le \sum_{w\in Q'\setminus Q''}h(w)\le |Q^b\cup (Y\cap
Q)|\Gamma k\leByRef{eq:BoundB} \frac{\psi^2}{4\Gamma}e(P,Q)+|Y\cap Q|\Gamma k\;.
\end{equation}
Finally, we can lower-bound $e(P',Q'')$ as follows.
\begin{align*}e(P',Q'')&\ge
e(P,Q)-e(P\setminus P',Q'')-|Y\cap Q|\Gamma k-|Q^b|\Gamma k-|Q'\setminus
Q''|\Gamma k\\
\JUSTIFY{by~\eqref{DianaAndMatej},~\eqref{eq:BoundB},~\eqref{eq:BoundC}}
&\ge
e(P,Q)\Big(1-\frac{\psi^3}{4\Gamma^2}-\frac{\psi^2}{4\Gamma}-\frac{\psi}{4}\Big)-|Y\cap
Q|\Big(\frac{\Gamma^2k}{\psi}+\Gamma k\Big)\\
&\ge(1-\psi)e(P,Q)-\frac2\psi|Y\cap Q|\Gamma^2 k \;.\end{align*}
\end{proof}

\HIDDENTEXT{There was another cleaning lemma, removed by V.6.53, now in
Hiddentext, under CLEANING}

\bigskip
The purpose of the lemmas below
(Lemmas~\ref{lem:clean-C+yellow}--\ref{lem:clean-Match}) is to distill
vertex-sets for configurations $\mathbf{(\diamond2)}$-$\mathbf{(\diamond10)}$.
They will be applied in Lemmas~\ref{lem:ConfWhenCXAXB},~\ref{lem:ConfWhenNOTCXAXB},~\ref{lem:ConfWhenMatching}.
This is the final ``cleaning step'' on our way to the proof of Theorem~\ref{thm:main} --- the outputs of these lemmas can by used for a vertex-by-vertex embedding of any tree $T\in\treeclass{k}$ (although the corresponding embedding procedures given in Section~\ref{sec:embed} are quite complex).

The first two of these cleaning lemmas (Lemmas~\ref{lem:clean-C+yellow}
and~\ref{lem:clean-C+black}) are suited when the set $\HugeVertices$ of
vertices of huge degrees (cf.\ Setting~\ref{commonsetting}) needs to be considered.

For the following lemma, recall that we defined $[r]$ as the set of the first $r$ natural numbers, {\em not} including $0$.

\begin{lemma}\label{lem:clean-C+yellow}For all
$r,\Omega^*,\Omega^{**}\in \mathbb N$, and
 $\delta,\gamma,\eta\in (0,1)$,   with
$\left(\frac{3\Omega^*}\gamma\right)^r\delta<\eta/10$,
and $\Omega^{**}>1000$ the
following holds. Suppose  there are vertex sets $X_0, X_1,\ldots,X_r$ and $Y$ of
an $n$-vertex graph $G$ such that
\begin{enumerate}
  \item\label{hyp:Ysmall} $|Y|<\eta n/(4\Omega^*)$,
  \item \label{hyp:C+y-edges}$e(X_0,X_1)\geq \eta kn$,
\item \label{hyp:C+y-large} $\mindeg(X_0,X_1)\ge \Omega^{**}k$,
  \item \label{hyp:C+y-deg}
  $\mindeg(X_i,X_{i+1})\ge \gamma k$ for all $i\in [r-1]$, and 
  \item \label{hyp:C+y-bounded} $\maxdeg\left(Y\cup\bigcup_{i\in [r]}X_i\right)\le
  \Omega^* k$.
\end{enumerate}
Then there are sets $X_i'\subseteq X_i$ for $i=0,1,\ldots,r$ such that
\begin{enumerate}[(a)]
  \item \label{conc:X1Ydisj} $X_1'\cap Y=\emptyset$,
  \item \label{conc:C+y-deg}  
  $\mindeg(X_i',X_{i-1}')\ge \delta k$ for all $i\in [r]$,
  \item \label{conc:C+y-avoid}  
  $\maxdeg(X_i',X_{i+1}\setminus X_{i+1}')<\gamma k/2$ for all $i\in [r-1]$,
  \item \label{conc:C+y-large} $\mindeg(X_0',X_1')\ge
  \sqrt{\Omega^{**}}k$, and
  \item \label{conc:C+u-edges} $e(X_0',X_1')\ge \eta kn/2$, in particular
  $X_0'\neq\emptyset$.
\end{enumerate}
\end{lemma}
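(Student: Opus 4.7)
The plan is to construct the sets $X_i'$ via a sequential cleaning procedure and then bound the edges lost by a cascading analysis, in the spirit of the amortization used in Lemma~\ref{lem:envelope}.

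Initialize $X_0^{(0)}:=X_0$, $X_1^{(0)}:=X_1\setminus Y$ (which already secures (a)), and $X_i^{(0)}:=X_i$ for $2\leq i\leq r$. Iteratively search for a vertex violating one of the rules
\begin{itemize}
\item[(b$'$)] $v\in X_i'$ with $i\in[r]$ and $\deg(v,X_{i-1}')<\delta k$,
\item[(c$'$)] $v\in X_i'$ with $i\in[r-1]$ and $\deg(v,X_{i+1}\setminus X_{i+1}')\geq\gamma k/2$,
\item[(d$'$)] $v\in X_0'$ with $\deg(v,X_1')<\sqrt{\Omega^{**}}k$,
\end{itemize}
and remove it; the procedure terminates in finitely many steps, and the final sets~$X_i'$ satisfy (b), (c), (d) by construction. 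Write $S_i:=X_i\setminus X_i'$, partitioned as $S_i=S_i^Y\cup S_i^b\cup S_i^c\cup S_i^d$ according to which rule triggered the removal (with $S_i^Y$ nonempty only for $i=1$, $S_i^d$ nonempty only for $i=0$, and $S_i^c$ empty for $i=r$).

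For each $v\in S_i^c$ with $i\in[r-1]$, rule (c$'$) gives $\deg(v,S_{i+1})\geq\gamma k/2$ at the moment of removal, and this still holds at termination since $S_{i+1}$ only grows. Double counting yields the cascade
\[
|S_i^c|\cdot \gamma k/2\ \le\ e(S_i^c,S_{i+1})\ \le\ |S_{i+1}|\,\Omega^*k,\qquad\text{so}\qquad |S_i^c|\le\tfrac{2\Omega^*}{\gamma}|S_{i+1}|.
\]
Each $v\in S_0=S_0^d$ has $\deg(v,X_1')<\sqrt{\Omega^{**}}k$ and $\deg(v,X_1)\geq\Omega^{**}k$ by hypothesis~(\ref{hyp:C+y-large}), so $\deg(v,X_1\setminus X_1')\geq\Omega^{**}k/2$; double counting against the maximum-degree bound (\ref{hyp:C+y-bounded}) gives $|S_0|\leq 2|S_1|\,\Omega^*/\Omega^{**}$. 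To prove~(\ref{conc:C+u-edges}), decompose
\[
e(X_0,X_1)-e(X_0',X_1')\ =\ e(X_0',S_1^Y)+e(X_0',S_1^b)+e(X_0',S_1^c)+e(S_0,X_1).
\]
The first summand is at most $|Y|\Omega^*k\leq\eta kn/4$ by hypothesis~(\ref{hyp:Ysmall}); the second is at most $|S_1^b|\delta k$ because each removed vertex has $<\delta k$ neighbors in $X_0'$; the third is at most $|S_1^c|\,\Omega^*k$; and the last is at most $|S_0|\,\Omega^*k\le 2|S_1|(\Omega^*)^2 k/\Omega^{**}$. Iterating the cascade and using the parameter inequality $(3\Omega^*/\gamma)^r\delta<\eta/10$ to absorb the geometric factor $2\Omega^*/\gamma$ over $r$ levels, together with $\Omega^{**}>1000$ to suppress the $|S_0|$-contribution via the factor $1/\sqrt{\Omega^{**}}$, each summand is at most $\eta kn/10$, giving total loss at most $\eta kn/2$ and hence~(\ref{conc:C+u-edges}).

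The main obstacle is controlling the interleaved cascades of $S_i^b$ and $S_i^c$: rule~(b$'$) is a \emph{backward} minimum-degree requirement, whereas hypothesis~(\ref{hyp:C+y-deg}) provides only \emph{forward} minimum degrees, so large $|S_i^b|$ can in principle propagate into large $|S_{i-1}^c|$. The key point is that in the edge-loss estimate only the weighted quantity $|S_i^b|\delta+|S_i^c|\Omega^*/k$ matters, and the parameter choice $(3\Omega^*/\gamma)^r\delta<\eta/10$ is calibrated precisely to absorb this cascade.
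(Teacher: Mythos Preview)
Your cleaning procedure and overall decomposition match the paper's, but the cascade analysis has a genuine gap that your final paragraph flags without actually closing.

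The recursion $|S_i^c|\le \tfrac{2\Omega^*}{\gamma}|S_{i+1}|$ is correct but useless, because $|S_{i+1}|=|S_{i+1}^b|+|S_{i+1}^c|$ and nothing in the hypotheses bounds $|S_{i+1}^b|$: the assumptions give forward minimum degrees $\mindeg(X_i,X_{i+1})\ge\gamma k$ but say nothing about $\mindeg(X_{i+1},X_i)$, so $|S_{i+1}^b|$ can be as large as $n$. Your weighted-quantity remark does not fix this, since your double-counting $e(S_i^c,S_{i+1})\le |S_{i+1}|\Omega^*k$ already charged each $w\in S_{i+1}^b$ at the rate $\Omega^*k$, not $\delta k$. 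The missing idea is the \emph{time-ordered} charging: when $v\in S_i^c$ is removed, all of its $\ge\gamma k/2$ neighbours $w$ in $X_{i+1}\setminus X_{i+1}'$ were removed earlier, so at the moment of $w$'s removal the vertex $v$ was still in $X_i'$. Hence each such edge is counted in $g_{i+1}(w):=\deg(w,X_i'(w))$, and for $w\in S_{i+1}^b$ one has $g_{i+1}(w)<\delta k$ by definition. This yields
\[
|S_i^c|\cdot\tfrac{\gamma k}{2}\le \sum_{w\in S_{i+1}}g_{i+1}(w)\le \delta kn+|S_{i+1}^c|\,\Omega^*k,
\]
a clean recursion in $|S_i^c|$ alone that iterates to $|S_1^c|\le \tfrac{2(2\Omega^*)^{r-1}}{\gamma^r}\delta n$. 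You in fact use this $\delta k$-rate idea for the term $e(X_0',S_1^b)$; you just need to apply it inside the cascade as well.

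A second, smaller error: the bound $e(S_0,X_1)\le |S_0|\,\Omega^*k$ is false, since hypothesis~(\ref{hyp:C+y-bounded}) covers $Y\cup\bigcup_{i\in[r]}X_i$ but \emph{not} $X_0$. The paper handles the $S_0$-contribution via $h_0(v):=\deg(v,X_1'(v))<\sqrt{\Omega^{**}}k$ combined with $\deg(v,X_1)\ge\Omega^{**}k$, giving $\sum_{v\in S_0}h_0(v)\le e(X_0,X_1)/\sqrt{\Omega^{**}}$; this is where the factor $1/\sqrt{\Omega^{**}}$ you mention actually enters.
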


\begin{proof}
In the formulae below we refer to hypotheses of the lemma
as~``\ref{hyp:Ysmall}.''--``\ref{hyp:C+y-bounded}.''.

Set $X_1':=X_1\setminus Y$. For $i=0,2,3,4,\ldots,r$, set
$X_i':=X_i$. Discard sequentially from $X_i'$ any vertex that
violates any of the
Properties~\eqref{conc:C+y-deg}--\eqref{conc:C+y-large}.
Properties~\eqref{conc:X1Ydisj}--\eqref{conc:C+y-large} are
trivially satisfied  when the procedure terminates. To show that
Property~\eqref{conc:C+u-edges} holds at this point, we bound the
number of edges from $e(X_0,X_1)$ that are incident with $X_0\setminus X_0'$ or
with $X_1\setminus X_1'$ in an amortized way.

For $i\in\{0,\ldots,r\}$ and for $v\in X_i\setminus X_i'$ we
write
\begin{align*}
f_i(v)&:=\deg\big(v,X_{i+1}(v)\setminus X_{i+1}'(v)\big)\;,\\
g_i(v)&:=\deg\big(v,X_{i-1}'(v)\big)\;\mbox{, and}\\
h_i(v)&:=\deg\big(v,X_{i+1}'(v)\big)\;.
\end{align*}
where the sets $X_{i-1}'(v),X_{i}'(v),X_{i+1}'(v)$
above refer to the moment when $v$ is removed from $X_i'$ (we do
not define $f_i(v)$ and $h_i(v)$ for $i=r$ and $g_i(v)$ for $i=0$).
 
For $i\in[r]$ let
$X_i^{\ref{conc:C+y-deg}}$ denote the vertices in 
 $X_i\setminus X_i'$ that were removed from $X_i'$ because of violating 
 Property~\eqref{conc:C+y-deg}. Then for a given $i\in [r]$ we have that 
 \begin{equation}\label{eq:X_i^a}
 \sum_{v\in X_i^{\ref{conc:C+y-deg}}}
 g_i(v)<\delta kn.
 \end{equation}
For $i=1,\ldots,r-1$
 let  $X_i^{\ref{conc:C+y-avoid}}$  denote
the vertices in $X_i\setminus X_i'$ that violated
Property~\eqref{conc:C+y-avoid}. Set $X_r^{\ref{conc:C+y-avoid}}:=\emptyset$. For
a given $i\in [r-1]$ we have
\begin{equation}\label{eq:C+y-induction}
|X_i^{\ref{conc:C+y-avoid}}|\cdot \gamma k/2\le \sum_{v\in
X_i^{\ref{conc:C+y-avoid}}}f_i(v)\le \sum_{v\in
X_{i+1}\setminus
X_{i+1}'}g_{i+1}(v)\;\overset{\ref{hyp:C+y-bounded}., \eqref{eq:X_i^a}}{<}\;\delta
kn+|X_{i+1}^{\ref{conc:C+y-avoid}}|\cdot \Omega^*k\;,
\end{equation}
as $X_i\setminus X_i'=X_i^{\ref{conc:C+y-deg}}\cup X_i^{\ref{conc:C+y-avoid}}$,
for $i=2,\ldots,r$. Using~\eqref{eq:C+y-induction}
for $j=0,\ldots,r-1$, we inductively deduce that
\begin{equation}\label{eq:Indk}
|X_{r-j}^{\ref{conc:C+y-avoid}}|\frac{\gamma
}2\le \sum_{i=0}^{j-1}\left(\frac{2\Omega^*}\gamma\right)^i\delta
n\;.
\end{equation}
(The left-hand side is zero for $j=0$.) The
bound~\eqref{eq:Indk} for $j=r-1$ gives
\begin{equation}\label{eq:B73}
|X_{1}^{\ref{conc:C+y-avoid}}|\le
\frac2\gamma\cdot\sum_{i=0}^{r-2}\left(\frac{2\Omega^*}\gamma\right)^i\delta
n\le\frac{2(2\Omega^*)^{r-1}}{\gamma^r}\delta n\;.
\end{equation}
Therefore,
 \begin{equation}\label{eq:e(YX_1^c,X_0)}
 e(X_0,Y\cup X_1^{\ref{conc:C+y-avoid}})\leq |Y\cup
 X_1^{\ref{conc:C+y-avoid}}|\cdot
 \Omega^*k \overset{\eqref{eq:B73},  
 \ref{hyp:Ysmall}.}{\le}\frac {\eta kn}{4}+\left(\frac
 {2\Omega^*}{\gamma}\right)^r\delta kn\;.
\end{equation}
For any vertex  $v\in X_0\setminus X_0'$ we
have $h_0(v)<\sqrt{\Omega^{**}}k$,
 and at the same time by Hypothesis~\ref{hyp:C+y-large}.\ we have
$\deg(v,X_1)\geq \Omega^{**}k$. So,
\begin{equation}\label{eq:h(X_0-X_0')}
\sum_{v\in
X_0\setminus X_0'}h_0(v)\le \frac{e(X_0,X_1)}{\sqrt{\Omega^{**}}}\;.
\end{equation} We have
\begin{align*}
e(X_0',X_1')&\ge e(X_0,X_1)-e(X_0,Y\cup
X_1^{\ref{conc:C+y-avoid}})-\sum_{v\in X_0\setminus
X_0'}h_0(v)-\sum_{v\in X_1^{\ref{conc:C+y-deg}}}g_1(v)\;.
\end{align*}
(It requires a minute of meditation to see that edges between $X_0\setminus
X'_0$ and $X_1^{\ref{conc:C+y-deg}}$ are indeed not counted on the right-hand
side.) Therefore,
\begin{align*}
e(X_0',X_1')&\ge e(X_0,X_1)-e(X_0,Y\cup
X_1^{\ref{conc:C+y-avoid}})-\sum_{v\in X_0\setminus
X_0'}h_0(v)-\sum_{v\in X_1^{\ref{conc:C+y-deg}}}g_1(v)\\
\JUSTIFY{by \eqref{eq:X_i^a},
\eqref{eq:e(YX_1^c,X_0)}, \eqref{eq:h(X_0-X_0')}}
&\ge
e(X_0,X_1)-\frac {\eta kn}{4}-\left(\frac {2\Omega^*}\gamma\right)^r \delta kn-
\frac{e(X_0,X_1)}{\sqrt{\Omega^{**}}}-\delta kn\\
\JUSTIFY{by \ref{hyp:C+y-edges}.}&\ge  \eta k/2\;,
\end{align*} 
proving Property~\eqref{conc:C+u-edges}.
\end{proof}

\begin{lemma}\label{lem:clean-C+black}
Let $\delta,\eta,\Omega^*,\Omega^{**},h>0$, let $G$ be  an $n$-vertex graph,
let $X_0, X_1, Y\subset V(G)$, and let $\mathcal C$ be a system of subsets of
$V(G)$  such that
\begin{enumerate}
\item\label{hypfburg}
$20(\delta+\frac2{\sqrt{\Omega^{**}}})<\eta$,
  \item \label{hyp:C+b-edges} $2 kn \geq e(X_0,X_1)\geq \eta kn$,
  \item \label{hyp:C+b-large} $\mindeg(X_0,X_1)\ge \Omega^{**}k$, 
  \item   \label{hyp:C+b-bounded} $\maxdeg(X_1)\le  \Omega^* k$,
  \item $|Y|<\eta n/(4\Omega^*)$, and\label{hypfburg5}
  \item $10h|\mathcal C|\Omega^*<\eta n$.\label{hypfburg6}
\end{enumerate} 
Then  there are sets $X_0'\subseteq X_0$ and $X_1'\subseteq X_1\setminus Y$ such that \begin{enumerate}[a)]
   \item \label{conc:C+b-large} $\mindeg(X_0',X_1')\ge
   \sqrt{\Omega^{**}}k$,
  \item  \label{conc:C+b-deg} $\mindeg(X_1',X_0')\ge\delta k$, 
  \item \label{conc:C+b-cluster}for all $C\in \mathcal C$, either $X_1'\cap
  C=\emptyset$, or $|X_1'\cap C|\ge h$, and 
  \item \label{conc:C+b-edges}$e(X_0',X_1')\ge \eta kn/2$.
\end{enumerate}
\end{lemma}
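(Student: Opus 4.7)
The plan is to run a sequential cleaning procedure analogous to the one used in the proof of Lemma~\ref{lem:clean-C+yellow}, but simpler because here we have only two ``levels'' $X_0$ and $X_1$ (instead of $r+1$ layers). Initialize $X_0':=X_0$ and $X_1':=X_1\setminus Y$. Then repeatedly perform one of the following operations, as long as one is applicable:
\begin{itemize}
\item[(a')] remove from $X_0'$ any vertex $v$ with $\deg(v,X_1')<\sqrt{\Omega^{**}}\,k$;
\item[(b')] remove from $X_1'$ any vertex $v$ with $\deg(v,X_0')<\delta k$;
\item[(c')] remove from $X_1'$ any vertex $v$ such that some $C\in\mathcal C$ with $v\in C$ currently satisfies $|X_1'\cap C|<h$.
\end{itemize}
When the procedure terminates, properties~\eqref{conc:C+b-large}--\eqref{conc:C+b-cluster} hold by construction, and it remains only to verify the quantitative bound~\eqref{conc:C+b-edges}.

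The main task is to bound $e(X_0,X_1\setminus Y)-e(X_0',X_1')$, i.e., the total number of edges destroyed by (a'), (b') and (c'). First, I would observe that by Hypotheses~\ref{hyp:C+b-edges}. and~\ref{hyp:C+b-large}., $|X_0|\leq 2n/\Omega^{**}$; while by Hypothesis~\ref{hyp:C+b-bounded}. and~\ref{hypfburg5}., removing $Y$ costs $e(X_0,X_1\cap Y)\leq |Y|\Omega^*k<\eta kn/4$, so $e(X_0,X_1\setminus Y)\geq 3\eta kn/4$. For the losses during the procedure I would argue as follows:
\begin{itemize}
\item Each vertex removed from $X_0'$ by (a') kills at most $\sqrt{\Omega^{**}}\,k$ edges, and there are at most $|X_0|\leq 2n/\Omega^{**}$ such removals, giving a total of at most $2kn/\sqrt{\Omega^{**}}$ lost edges.
\item Each vertex removed from $X_1'$ by (b') kills at most $\delta k$ edges (its current degree into $X_0'$), and there are at most $n$ such removals, giving at most $\delta kn$ lost edges.
\item For (c'), the key observation is that for each fixed $C\in\mathcal C$ the vertices of $C$ removed by (c') number at most $h$, because they are all removed while $|X_1'\cap C|<h$ and the count $|X_1'\cap C|$ is monotonically non-increasing. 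Hence the total number of (c')-removals is at most $h|\mathcal C|$, and each kills at most $\Omega^* k$ edges (by Hypothesis~\ref{hyp:C+b-bounded}.), giving at most $h|\mathcal C|\Omega^*k< \eta kn/10$ by Hypothesis~\ref{hypfburg6}.
\end{itemize}
Summing these three bounds and invoking Hypothesis~\ref{hypfburg} (which implies $\delta+2/\sqrt{\Omega^{**}}<\eta/20$), the total number of edges lost is at most $\tfrac{\eta}{20}kn+\tfrac{\eta}{10}kn<\tfrac{\eta}{4}kn$. Combined with the lower bound $e(X_0,X_1\setminus Y)\geq 3\eta kn/4$, we conclude $e(X_0',X_1')\geq \eta kn/2$, which is~\eqref{conc:C+b-edges}.

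The only subtle point is the accounting for (c'), since a single vertex may lie in several $C\in\mathcal C$ and the sets $X_1'\cap C$ interact through the procedure; but the monotonicity of $|X_1'\cap C|$ makes the bound $h$ per cluster robust, and the much harsher amortized accounting of Lemma~\ref{lem:clean-C+yellow} (which needed to propagate across $r$ layers via~\eqref{eq:C+y-induction}) is not required here. Hence no iterative blowup occurs and the proof is purely a one-step edge-counting argument.
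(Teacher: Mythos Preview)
Your proof is correct and follows essentially the same cleaning-and-amortized-counting approach as the paper. The only cosmetic differences are that the paper removes an entire offending set $C\cap X_1'$ at once in step~(c') rather than vertex by vertex, and bounds the (a')-losses via the ratio $f(v)\le \deg(v,X_1)/\sqrt{\Omega^{**}}$ summed over $X_0\setminus X_0'$ rather than via your bound $|X_0|\le 2n/\Omega^{**}$; the arithmetic is the same.
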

\begin{proof}
Set $X_0':=X_0$ and $X_1':=X_1\setminus Y$ and discard sequentially from $X_0'$,
any vertex violating
Property~\ref{conc:C+b-large}). Further, we discard from $X_1'$ any vertex
violating Property~\ref{conc:C+b-deg}), or any  $C\in
\mathcal C$ violating~\ref{conc:C+b-cluster}).
When the process ends, we verify Property~\ref{conc:C+b-edges}) by bounding the
number of edges in $e(X_0,X_1)$ incident with $X_0\setminus X_0'$ or with
$X_1\setminus X_1'$. Given Assumption~\ref{hyp:C+b-edges}, and since
by Assumption~\ref{hypfburg5} there are
at most $\frac14\eta kn$ edges incident with $Y\cap X_1$
it suffices to prove that
\begin{equation}\label{eq:ItSuff}
e(X_0, X_1)- e(X_0', X_1')- e(Y\cap X_1, X_0)<\frac{\eta
kn}4\;.
\end{equation}

Denote by $X_1^{\ref{conc:C+b-deg}}$ the set of vertices in $X_1\setminus
(Y\cup X_1')$ that violated Property~\ref{conc:C+b-deg}), and  by
$X_1^{\ref{conc:C+b-cluster}}$ the set of vertices in $X_1\setminus (Y\cup X_1')$ that violated
Property~\ref{conc:C+b-cluster}). For a vertex $v\in X_1\setminus (Y\cup X_1')$, let $g(v)$
denote the number $\deg(v,X_0')$ at the very time when $v$ is removed from
$X_1'$. Analogously we define $f(v)$, for $v\in X_0\setminus X_0'$, as
$\deg(v,X_1')$ where the set $X_1'$ is considered at the point of removal of
$v$. We have $\sum_{v\in X_1^{\ref{conc:C+b-deg}}}g(v)<\delta kn$, $\sum_{v\in X_1^{\ref{conc:C+b-cluster}}}g(v)\leq |X_1^{\ref{conc:C+b-cluster}}| \Omega^* k<
h|\mathcal C|\cdot  \Omega^* k$, and $$\sum_{v\in X_0\setminus X_0'}f(v)\le
\frac{e(X_0,X_1)}{\sqrt{\Omega^{**}}}\leBy{\ref{hyp:C+b-edges}.}
\frac2{\sqrt{\Omega^{**}}}kn\;.$$ Thus,
\begin{align*}
e(X_0, X_1)&- e(X_0', X_1')- e(Y\cap X_1, X_0)\\&=\sum_{v\in
X_1^{\ref{conc:C+b-deg}}}g(v)+\sum_{v\in X_1^{\ref{conc:C+b-cluster}}}g(v)+\sum_{v\in X_0\setminus
X_0'}f(v)\\
&<\big(\delta+\frac2{\sqrt{\Omega^{**}}}\big)kn+h|\mathcal
C|\Omega^*k\\
\JUSTIFY{by~\ref{hypfburg}.\ and \ref{hypfburg6}.}&<\frac{\eta kn}4\;.
\end{align*}
establishing~\eqref{eq:ItSuff}.
\end{proof}

The next two lemmas (Lemmas~\ref{lem:clean-yellow} and~\ref{lem:clean-Match})
deal with cleaning outside the set of huge degree vertices $\HugeVertices$.

\begin{lemma}\label{lem:clean-yellow} For all $r, \Omega\in \mathbb N$,
$r\ge 2$ and all $\gamma,\delta,\eta>0$ such that
\begin{equation}\label{eq:condCY}
 \left(\frac{8\Omega}\gamma\right)^r\delta\le\frac\eta{10}
\end{equation}
 the following holds. Suppose there are
vertex sets $Y, X_0, X_1,\ldots,X_r\subset V$, where $V$ is a set of $n$
vertices. Suppose that edge sets $E_1,\ldots,E_r$ are given on $V$.
The expressions $\deg_i$, $\maxdeg_i$,
$\mindeg_i$, and $e_i$ below refer to the edge set $E_i$. Suppose that the following properties are fulfilled
\begin{enumerate}
  \item \label{hyp:yel-Ysmall}$|Y|<\delta n$,
  \item \label{hyp:yel-edges}$e_1(X_0,X_1)\geq \eta kn$,
  \item \label{hyp:yel-deg}for all $i\in [r-1]$ we have $\mindeg_{i+1}( X_i\setminus
  Y,X_{i+1})\ge \gamma k$, 
  \item \label{hyp:yel-bounded} for all $i\in\{0,\ldots,r-1\}$, we have
  $\maxdeg_{i+1}(X_{i})\le \Omega k$, and
  $\maxdeg_{i+1}(X_{i+1})\le \Omega k$.
\end{enumerate}
Then there are sets $X_i'\subseteq X_i\setminus Y$ ($i=0,\ldots,r$) satisfying
the following.
\begin{enumerate}[a)]
  \item \label{conc:yel-deg}For all $i\in [r]$ and we have 
  $\mindeg_{i}(X_{i}',X_{i-1}')\ge \delta k$,
  \item \label{conc:yel-avoid}for all $i\in [r-1]$ we have
  $\maxdeg_{i+1}(X_i',X_{i+1}\setminus X_{i+1}')<\gamma k/2$,
    \item \label{conc:yel-X0X1}$\mindeg_1(X_0',X'_1)\ge \delta k$, and
    \item \label{conc:yel-edges} $e_1(X'_0,X'_1)\ge \eta kn/2$
\end{enumerate}
\end{lemma}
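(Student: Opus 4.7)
The plan is to follow closely the amortized cleaning strategy of Lemma~\ref{lem:clean-C+yellow}, with the only real change being that the role previously played by the ``huge degree'' set (which guaranteed $\sqrt{\Omega^{**}}k$ in part~(d) of that lemma) is here replaced by the discard condition~(c) asking for $\mindeg_1(X_0',X_1')\ge\delta k$. First I would initialize $X_i':=X_i\setminus Y$ for every $i\in\{0,1,\ldots,r\}$ and sequentially remove from the current $X_i'$ any vertex that violates one of~(a),~(b), or~(c). When the procedure terminates, properties~(a)--(c) hold by construction, so the only thing to verify is the edge lower bound~(d).

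For the edge accounting, I would classify the vertices removed from each $X_i'$ ($i\in[r-1]$) into three disjoint sets: $X_i^{\mathrm{(a)}}$ (removed for violating the minimum-degree condition against $X_{i-1}'$), $X_i^{\mathrm{(b)}}$ (removed for violating the avoiding condition against $X_{i+1}$), and, for $i=0,1$, $X_i^{\mathrm{(c)}}$ (removed for violating the $\mindeg_1\ge\delta k$ condition between $X_0'$ and $X_1'$). Set $X_r^{\mathrm{(b)}}:=\emptyset$. For $v\in X_i\setminus X_i'$ let $g_i(v)$ denote $\deg_i(v,X_{i-1}')$ at the moment of removal, and $f_i(v):=\deg_{i+1}(v,X_{i+1}\setminus X_{i+1}')$ at the moment of removal. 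The standard observation is that $\sum_{v\in X_i^{\mathrm{(b)}}}f_i(v)\le \sum_{w\in X_{i+1}\setminus X_{i+1}'}g_{i+1}(w)$, because every edge counted on the left is also counted on the right at the moment $w$ is removed.

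Using $f_i(v)\ge \gamma k/2$ on $X_i^{\mathrm{(b)}}$ and the type~(a) bound $\sum_{v\in X_i^{\mathrm{(a)}}}g_i(v)<\delta k n$, together with $\maxdeg_{i+1}\le \Omega k$ on $X_{i+1}$, I obtain the recurrence
\[
 |X_i^{\mathrm{(b)}}|\cdot \frac{\gamma k}{2}\ \le\ \delta kn\ +\ |X_{i+1}^{\mathrm{(b)}}|\cdot \Omega k\ +\ |X_{i+1}^{\mathrm{(a)}}|\cdot \Omega k
\]
for $i\in[r-1]$, from which, iterating backwards from $i=r$ (where $|X_r^{\mathrm{(b)}}|=0$) and using \eqref{eq:condCY}, one gets
\[
 |X_1^{\mathrm{(b)}}|\ \le\ \frac{2}{\gamma}\sum_{j=0}^{r-2}\left(\frac{2\Omega}{\gamma}\right)^{j}\delta n\ \le\ \frac{2(2\Omega)^{r-1}}{\gamma^{r}}\,\delta n\ \le\ \frac{\eta n}{10\Omega\cdot k}\cdot k.
\]
Consequently $e_1(X_0,Y\cup X_1^{\mathrm{(b)}})\le (|Y|+|X_1^{\mathrm{(b)}}|)\Omega k\le \eta k n/4$ using Hypothesis~\ref{hyp:yel-Ysmall}.\ and~\eqref{eq:condCY}.

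It remains to bound the losses coming from $X_1^{\mathrm{(a)}}$, $X_0^{\mathrm{(a)}}$, and from the type~(c) removals in $X_0$ and $X_1$. Each of the type~(a) removals costs at most $\delta kn$ edges by the same argument as for \eqref{eq:X_i^a}; the type~(c) removals cost at most $\delta k n$ edges as well, since a vertex removed for violating~(c) has, at that moment, fewer than $\delta k$ neighbours into the other side, and the sets $X_0^{\mathrm{(c)}}, X_1^{\mathrm{(c)}}$ are disjoint. Summing up,
\[
 e_1(X_0',X_1')\ \ge\ e_1(X_0,X_1)\ -\ e_1(X_0,Y\cup X_1^{\mathrm{(b)}})\ -\ O(\delta kn)\ \ge\ \eta kn\ -\ \frac{\eta kn}{4}\ -\ \frac{\eta kn}{10}\ \ge\ \frac{\eta kn}{2},
\]
which gives~(d). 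The main technical point, as in Lemma~\ref{lem:clean-C+yellow}, is the backward induction giving a bound on $|X_1^{\mathrm{(b)}}|$ of the form $(\Omega/\gamma)^{O(r)}\delta n$, small enough by~\eqref{eq:condCY}; everything else is routine accounting.
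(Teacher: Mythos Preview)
Your approach is correct and essentially identical to the paper's proof: initialize $X_i':=X_i\setminus Y$, sequentially delete violators of (a), (b), (c), and then bound $|X_1^{\mathrm{(b)}}|$ by a backward induction driven by the telescoping inequality $\sum_{v\in X_i^{\mathrm{(b)}}}f_i(v)\le \sum_{w\in X_{i+1}\setminus X_{i+1}'}g_{i+1}(w)$.

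Two small slips to clean up. First, your displayed recurrence carries a spurious term $|X_{i+1}^{\mathrm{(a)}}|\cdot\Omega k$; taken literally this cannot be iterated since you never bound $|X_{i+1}^{\mathrm{(a)}}|$. The point is that the type-(a) contribution is already fully captured by $\sum_{w\in X_{i+1}^{\mathrm{(a)}}}g_{i+1}(w)<\delta kn$, so the correct recurrence is $|X_i^{\mathrm{(b)}}|\cdot\gamma k/2\le \delta kn+|Y|\Omega k+|X_{i+1}^{\mathrm{(b)}}|\Omega k$ (the $|Y|\Omega k$ term is what you may have meant to write), and this does iterate to the bound you claim. Second, condition~(c) is one-sided: it is a minimum-degree requirement on $X_0'$ into $X_1'$, so no vertex of $X_1'$ is ever removed for violating~(c); your set $X_1^{\mathrm{(c)}}$ is empty and can be dropped. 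Neither point affects the substance of the argument.
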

\begin{proof}
We proceed similarly as in the proof of Lemma~\ref{lem:clean-C+yellow}.
Set $X_i':=X_i\setminus Y$ for each $i=0,\ldots,r$.
Discard sequentially from $X_i'$ any vertex that violates
Property~\ref{conc:yel-deg}) or~\ref{conc:yel-avoid}), or~\ref{conc:yel-X0X1}).
When the procedure terminates, we certainly have
that~\ref{conc:yel-deg})--\ref{conc:yel-X0X1}) hold. We then show that
Property~\ref{conc:yel-edges}) holds by bounding the number of edges from
$e_1(X_0,X_1)$ that are incident with $X_0\setminus X_0'$ or with $X_1\setminus
X_1'$. For $i\in\{0,\ldots,r\}$ and
for $v\in X_i\setminus X_i'$ we write
\begin{align*}
f_{i+1}(v)&:=\deg_{i+1}(v,X_{i+1}\setminus X_{i+1}')\;,\\
g_{i}(v)&:=\deg_i(v,X_{i-1}')\;\mbox{, and}\\
h(v)&:=\deg_1(v,X_{1}')\;,
\end{align*}
where the sets $X_1', X_{i-1}'$ and $X_{i+1}'$
above refer to the moment\footnote{if $v\in Y$ then this
moment is the zero-th step} when $v$ is removed from $X_i'$ or from
$X_1'$ (we do not define $f_{i+1}(v)$ for
$i=r$ and $g_i(v)$ for $i=0$).

Let $X^{\ref{conc:yel-deg}}_i\subset X_i$, $X^{\ref{conc:yel-avoid}}_i\subset
X_i$ for $i\in [r-1]$ be the sets of vertices
removed from $X'_i$ because of Property~\ref{conc:yel-deg})
and~\ref{conc:yel-avoid}), respectively. Set
$X^{\ref{conc:yel-deg}}_r:=X_r\setminus X_r'$ and
$X^{\ref{conc:yel-X0X1}}_0:=X_0\setminus X'_0$.
We have for each $i\in[r]$,
\begin{align}\label{eq:HezkejVodotrysk}
\sum_{v\in X_i^{\ref{conc:yel-deg}}}g_i(v)&< \delta kn\;.
\end{align}
Also, note that we have
\begin{equation}\label{eq:X0X1}
\sum_{v\in X_0^{\ref{conc:yel-X0X1}}}h(v)\le \delta kn\;.
\end{equation}

We set $X_r^{\ref{conc:yel-avoid}}:=\emptyset$. For a given $i\in [r-1]$ we have
\begin{align}\nonumber
|X_i^{\ref{conc:yel-avoid}}|\cdot \frac{\gamma k}2&\le \sum_{v\in
X_i^{\ref{conc:yel-avoid}}}f_{i+1}(v)\\
\nonumber
&\le \sum_{v\in
X_{i+1}\setminus
X_{i+1}'}g_{i+1}(v)\\ 
\JUSTIFY{by~\ref{hyp:yel-bounded}.,
\eqref{eq:HezkejVodotrysk}}&\leq \delta
kn+|X_{i+1}^{\ref{conc:yel-avoid}}| \Omega k\;,\label{eq:yel-induction}
\end{align}
as $X_i\setminus X_i'\subset X_i^{\ref{conc:yel-deg}}\cup
X_i^{\ref{conc:yel-avoid}}\cup Y$, for $i=2,\ldots,r$. Using~\eqref{eq:yel-induction}, we deduce inductively that
\begin{equation}\label{eq:Indk1}
|X_{r-j}^{\ref{conc:yel-avoid}}|\le \left(\frac{8\Omega}\gamma\right)^j\delta
n\;,
\end{equation}
for $j=0,\ldots,r-1$.
(The left-hand side is zero for $j=0$.)
Therefore,
 \begin{align*}
e_1(X_0',X_1')&\geq
 e_1(X_0,X_1)-(|Y|+|X_1^{\ref{conc:yel-avoid}}|)\Omega k-\sum_{v\in
 X_1^{\ref{conc:yel-deg}}}g_1(v)-\sum_{v\in X^{\ref{conc:yel-X0X1}}_0}h(v)\\
 \JUSTIFY{by~\ref{hyp:yel-edges},
 \eqref{eq:Indk1},~\eqref{eq:HezkejVodotrysk},~\eqref{eq:X0X1}}&\ge \eta
 kn-\left(\frac{8\Omega}{\gamma}\right)^r\delta kn-2\delta kn\\
 &\ge \frac{\eta}2kn\;,
\end{align*}
establishing Property~\ref{conc:yel-edges}).
\end{proof}

\begin{lemma}\label{lem:clean-Match}
For all $r, \Omega\in \mathbb N$, $r\ge 2$
and all $\gamma,\eta,\delta,\epsilon,\mu,d>0$ with
\begin{equation}\label{eq:condCYmatch}
\text{ $20\epsilon<d$ \ and }\  \left(\frac{8\Omega}\gamma\right)^r\delta\le\frac\eta{30}
\end{equation}
the following holds. Suppose there are
vertex sets $Y, X_0, X_1,\ldots,X_r\subset V$, where $V$ is a set of
$n$ vertices. Let $P^{(1)}_i,\ldots,P^{(p)}_i$ partition $X_i$, for $i=0,1$.
Suppose that edge sets $E_1,E_2,E_3,\ldots,E_r$ are given on $V$.
The expressions $\deg_i$, $\maxdeg_i$, and $\mindeg_i$ below refer to the edge set $E_i$. Suppose that 
\begin{enumerate}
  \item \label{hyp:Match-Ysmall}$|Y|<\delta n$,
  \item \label{hyp:Match-edges}$|X_1|\geq \eta n$,
  \item \label{hyp:Match-deg}for all $i\in [r-1]$ we have
  $\mindeg_{i+1}(X_i\setminus Y,X_{i+1})\ge \gamma k$,
  \item \label{hyp:Match-reg} the family
  $\left\{(P^{(j)}_0,P^{(j)}_1)\right\}_{j\in[p]}$ is an $(\epsilon,d,\mu k)$-semiregular matching with respect to the edge set $E_1$, and
  \item \label{hyp:Match-bounded}for all $i\in\{0,\ldots,r-1\}$,
  $\maxdeg_{i+1}(X_{i+1})\le \Omega k$, and (when $i\not=r$) $\maxdeg_{i+1}(X_i)\le \Omega k$.
\end{enumerate}
Then there is a non-empty family $\{(Q_0^{(j)},Q_1^{(j)})\}_{j\in\mathcal Y}$
of vertex-disjoint $(4\epsilon,\frac d4)$-super-regular pairs with respect to $E_1$, with
\begin{enumerate}[a)]
 \item \label{conc:Match-superreg}  $|Q_0^{(j)}|,|Q_1^{(j)}|\ge \frac{\mu k}2$ for each  $j\in\mathcal Y$,
\end{enumerate}
 and sets $X_0':=\bigcup Q_0^{(j)}\subset X_0\setminus Y$, $X_1':=\bigcup Q_1^{(j)}\subset X_1\setminus Y$, $X_i'\subseteq X_i\setminus Y$ ($i=2,\ldots,r$) such that
\begin{enumerate}[a)]
  \setcounter{enumi}{1}
  \item \label{conc:Match-deg}for all $i\in [r-1]$ we have 
  $\mindeg_{i+1}(X_{i+1}',X_{i}')\ge \delta k$, and
  \item \label{conc:Match-avoid}for all $i\in [r-1]$, we have
  $\maxdeg_{i+1}(X_i',X_{i+1}\setminus X_{i+1}')<\gamma k/2$.
\end{enumerate}
\end{lemma}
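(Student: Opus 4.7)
My plan is to follow the template of Lemma~\ref{lem:clean-yellow}, augmenting it with a super-regularization step that extracts the desired family of $(4\epsilon,d/4)$-super-regular pairs from the given $(\epsilon,d,\mu k)$-semiregular matching $\{(P_0^{(j)},P_1^{(j)})\}_{j\in[p]}$. First I would set $X_i':=X_i\setminus Y$ for $i=0,1,\ldots,r$ and sequentially discard any vertex violating~\ref{conc:Match-deg}) or~\ref{conc:Match-avoid}). Since both conditions only quantify over $i\in[r-1]$, the set $X_0'$ is never touched, and $X_1'$ loses vertices solely through violations of~\ref{conc:Match-avoid}) at $i=1$. An inductive estimate entirely analogous to~\eqref{eq:Indk1} in the proof of Lemma~\ref{lem:clean-yellow}, together with the hypothesis $(8\Omega/\gamma)^r\delta\le\eta/30$, yields $L_0:=|X_0\setminus X_0'|\le\delta n$ and $L_1:=|X_1\setminus X_1'|\le (8\Omega/\gamma)^{r-1}\delta n+\delta n$, both much smaller than $\eta n$.

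The next step is to extract the super-regular pairs. For each $j$, put $A_j:=P_0^{(j)}\cap X_0'$ and $B_j:=P_1^{(j)}\cap X_1'$, and call $j$ \emph{good} if $|A_j|\ge \tfrac{3}{4}|P_0^{(j)}|$ and $|B_j|\ge \tfrac{3}{4}|P_1^{(j)}|$. A weighted Markov argument shows that the index set $\mathcal Y$ of good pairs is non-empty: if $j$ is bad on the $0$-side then $|P_0^{(j)}|\le 4|P_0^{(j)}\setminus X_0'|$, giving $\sum_{j\text{ bad on }0}|P_0^{(j)}|\le 4L_0$, and symmetrically on the $1$-side, whence $\sum_{j\in\mathcal Y}|P_1^{(j)}|\ge |X_1|-4(L_0+L_1)>0$ using $|X_0|=|X_1|\ge\eta n$ by hypothesis~\ref{hyp:Match-edges} and Definition~\ref{def:semiregular}.

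For each $j\in\mathcal Y$, Fact~\ref{fact:BigSubpairsInRegularPairs} applied with $\alpha=3/4$ shows that $(A_j,B_j)$ is $3\epsilon$-regular with density at least $d-\epsilon$; by Fact~\ref{fact:manyTypicalVertices}, discarding from each side the at most $3\epsilon$-fraction of atypical vertices produces sets $Q_0^{(j)}\subseteq A_j$, $Q_1^{(j)}\subseteq B_j$. Using $20\epsilon<d\le 1$, we then have $|Q_\bullet^{(j)}|\ge (1-3\epsilon)\cdot\tfrac{3}{4}\mu k\ge \mu k/2$, and the pair is $4\epsilon$-regular with minimum degree at least $(d-5\epsilon)|Q_\bullet^{(j)}|\ge (d/4)|Q_\bullet^{(j)}|$ on each side, confirming the $(4\epsilon,d/4)$-super-regularity claimed in~\ref{conc:Match-superreg}). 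Taking $X_0':=\bigcup_{\mathcal Y}Q_0^{(j)}$ and $X_1':=\bigcup_{\mathcal Y}Q_1^{(j)}$ completes the construction.

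The hard part will be preserving~\ref{conc:Match-deg}) at $i=1$, i.e.\ $\mindeg_2(X_2',X_1')\ge\delta k$, since the super-regularization shrinks $X_1'$ and could in principle reduce the $E_2$-degrees of $X_2'$-vertices beyond the available slack. My plan is to carry out the initial cleaning with the inflated threshold $2\delta k$ in place of $\delta k$ and then iterate a two-sided post-processing loop that alternates removing~\ref{conc:Match-avoid})-violators from $X_1'$ (which forces further super-regularization) with removing~\ref{conc:Match-deg})-violators from $X_2'$ until stability. The iteration terminates because no vertex is ever reinstated, and each extra pass only inflates the constants in the induction underlying~\eqref{eq:Indk1} by a bounded factor, which is absorbed by the slack in~\eqref{eq:condCYmatch}. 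The remaining instances of~\ref{conc:Match-avoid}) and~\ref{conc:Match-deg}) at $i\ge 2$ are preserved automatically by this procedure.
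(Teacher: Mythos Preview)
Your overall strategy is sound, but the ``two-sided post-processing loop'' in the last paragraph is where the real work lies, and your sketch does not justify it. The difficulty is exactly the one you name: once you shrink $X_1'$ by super-regularization, vertices of $X_2'$ may lose their $\delta k$ degree into $X_1'$; removing them enlarges $X_2\setminus X_2'$, which forces further c)-removals from $X_1'$, which forces re-super-regularization, and so on through all levels. Your claim that ``each extra pass only inflates the constants by a bounded factor'' is not substantiated---there is no a~priori bound on the number of passes, and a per-pass estimate is not the right way to control the total loss.

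The paper avoids this by interleaving \emph{all} removal rules in a single sequential cleaning: alongside the b) and c) rules it adds the rule ``remove $v\in X_i'\cap P_i^{(j)}$ if $\deg_1(v,X_{1-i}'\cap P_{1-i}^{(j)})\le \tfrac{d}{4}|P_{1-i}^{(j)}|$'' for $i=0,1$, and a rule that discards an entire pair $(P_0^{(j)},P_1^{(j)})$ once it has lost more than a quarter on either side to $Y\cup X_1^c$. The key point, which replaces your iteration argument, is that the super-regularity removals are \emph{self-limiting}: for any surviving index $j$, the pair $(P_0^{(j)}\setminus Y,\,P_1^{(j)}\setminus(Y\cup X_1^c))$ is still $2\epsilon$-regular of density $\ge 0.9d$, and an inductive argument shows that the super-regularity rule can only ever remove the atypical vertices of \emph{this fixed pair}---at most a $2\epsilon$-fraction on each side. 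This decouples the analysis completely: the bound on $|X_1^c|$ follows from the same induction as in Lemma~\ref{lem:clean-yellow} (super-regularity removals play no role there, since they occur only at levels $0,1$), and the super-regularity removals are bounded independently by the atypical-vertex count. Your one-shot removal of atypical vertices is the right instinct, but it must be phrased as a degree condition inside the sequential loop rather than as a post-processing step, so that it coexists with b) and c) from the start.
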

\begin{proof}
Initially, set $\mathcal{J}:=\emptyset$ and $X_i':=X_i\setminus Y$ for each $i=0,\ldots,r$. Discard sequentially from
$X_i'$ any vertex that violates any of the Properties~\ref{conc:Match-deg}) or~\ref{conc:Match-avoid}).
We would like to keep track of these vertices and therefore we call $X^b_i, X^c_i\subset X_i$ the sets
of vertices removed from $X'_i$ because of
Property~\ref{conc:Match-deg}), and~\ref{conc:Match-avoid}),
respectively.
Further, for $i=0,1$ and for $j\in[p]$
remove any vertex $v\in X'_i\cap P^{(j)}_i$ from $X'_i$ if
\begin{equation}\label{eq:pocitac}
\deg_1(v,X'_{1-i}\cap P^{(j)}_{1-i})\le \frac{d|P_{1-i}^{(j)}|}4\;. 
\end{equation}
For $i=0,1$, let $X_i^a$ be the set of those vertices of $X_i$ that were removed because of~\eqref{eq:pocitac}.

Last,  if for some $j\in[p]$ we have $|P^{(j)}_0\cap Y|>\frac{|P^{(j)}_0|}4$ or $|P^{(j)}_1\cap (Y\cup X^c_1)|>\frac{|P^{(j)}_1|}4$ we remove simultaneously the sets $P^{(j)}_0$ and $P^{(j)}_1$ entirely from $X_0'$ and $X_1'$, i.e., we set $X'_0:=X'_0\setminus P^{(j)}_0$ and $X'_1:=X'_1\setminus P^{(j)}_1$. We also add the index $j$ to the set $\mathcal{J}$ in this case.

When the procedure terminates define $\mathcal Y:=[p]\setminus \mathcal J$, and for $j\in\mathcal Y$ set $(Q_0^{(j)},Q_1^{(j)}):=(P_0^{(j)}\cap X_0',P_1^{(j)}\cap X_1')$. The sets $X_i'$ obviously satisfy Properties \ref{conc:Match-deg})--\ref{conc:Match-avoid}). We now turn to verifying Property~\ref{conc:Match-superreg}). This relies on the following claim.
\begin{claim}\label{cl:useReg}
If $j\in[p]\sm \mathcal J$ then
$|P^{(j)}_0\cap X^a_0|\le\frac{|P^{(j)}_0|}4$ and $|P^{(j)}_1\cap X^a_1|\le\frac{|P^{(j)}_1|}4$.
\end{claim}
\begin{proof}[Proof of Claim~\ref{cl:useReg}]
Recall that $E_1$ is the relevant underlying edge set when working with the
pairs $(P_0^{(j)},P_1^{(j)})$.  Also, recall that only vertices from $Y\cup
X^a_0$ were removed from $P_0^{(j)}$ and only vertices from $Y\cup X^a_1\cup
X^c_1$ were removed from $P_1^{(j)}$.

Since $j\notin \mathcal J$, the pair $(P_0^{(j)}\setminus Y,P_1^{(j)}\setminus (Y\cup X_1^c))$ is $2\epsilon$-regular of density at least $0.9d$ by Fact~\ref{fact:BigSubpairsInRegularPairs}. Let 
\begin{align*}
K_0&:=\big\{v\in P_0^{(j)}\setminus Y\::\:\deg_1(v,P_1^{(j)}\setminus (Y\cup X_1^c))<0.8d|P_1^{(j)}\setminus (Y\cup X_1^b)|\big\}\;\mbox{, and}\\
K_1&:=\big\{v\in P_1^{(j)}\setminus (Y\cup X_1^c)\::\:\deg_1(v,P_0^{(j)}\setminus Y )<0.8d|P_0^{(j)}\setminus Y|\big\}\;.
\end{align*}
By Fact~\ref{fact:manyTypicalVertices}, we have $|K_0|\le 2\epsilon
|P_0^{(j)}\setminus Y|\le 0.1d|P_0^{(j)}|$ and $|K_1|\le 0.1 d|P_1^{(j)}|$. In particular, we have
\begin{align}
\begin{split}\label{eq:WO1}
\mindeg_1(P^{(j)}_0\setminus (Y\cup K_0),P^{(j)}_1\setminus (Y\cup X^c_1\cup K_1))&\ge 0.8 d|P_1^{(j)}\setminus (Y\cup X_1^c)| -|K_1|\\
&\ge 0.8 d\cdot 0.75 |P_1^{(j)}|-0.1d|P_1^{(j)}|\\
&>0.25d|P_1^{(j)}|\;,\mbox{and}
\end{split}\\
\begin{split}\label{eq:WO2}
\mindeg_1(P^{(j)}_1\setminus (Y\cup X^c_1\cup K_1),P^{(j)}_0\setminus (Y\cup
K_0))&\ge 0.8 d|P_0^{(j)}\setminus Y| -|K_0|\\&\ge 0.8 d\cdot 0.75
|P_0^{(j)}|-0.1d|P_0^{(j)}|\\
&>0.25d|P_0^{(j)}|\;.
\end{split}
\end{align}
Then~\eqref{eq:WO1} and~\eqref{eq:WO2} allow us to prove that $P^{(j)}_i\cap X^a_i\subset K_i$ for $i=0,1$. Indeed, assume inductively that $P^{(j)}_i\cap X^a_i\subset K_i$ for $i=0,1$ throughout the cleaning process until a certain step. Then~\eqref{eq:WO1} and~\eqref{eq:WO2} assert that no vertex outside of $P^{(j)}_0\setminus (Y\cup K_0)$ or of $P^{(j)}_1\setminus (Y\cup X^c_1\cup K_1)$ can be removed because of~\eqref{eq:pocitac}, proving the induction step.
The claim follows.
\end{proof}
Putting together the definition of $\mathcal J$ (through which one controls the size of $P_i^{(j)}\cap (Y\cup X_i^c)$) and Claim~\ref{cl:useReg} (which controls the size of $P_i^{(j)}\cap X_i^a$) we get for each $j\in\mathcal Y$ and $i=0,1$, $$|Q_i^{(j)}|\ge\frac{|P_i^{(j)}|}2\ge \frac{\mu k}2\;.$$
Therefore, these pairs are $4\epsilon$-regular (cf.\ Fact~\ref{fact:BigSubpairsInRegularPairs}). Last, we get the property of $(4\epsilon,\frac d4)$-super-regularity from the definition of $X_i^c$ (cf.~\eqref{eq:pocitac}). Thus, the pairs $(Q_0^{(j)},Q_1^{(j)})$ are as required for Lemma~\ref{lem:clean-Match} and satisfy its Property~\ref{conc:Match-superreg}).

\medskip

The only thing we have to prove is that the set $X_1'$ is nonempty. By the definition, for each $j\in \mathcal J$, we either have 
$|P^{(j)}_1| \le 4(|(Y\cup X_1^c)\cap P^{(j)}_1|)$ or $|P^{(j)}_0| \le 4|Y\cap P^{(j)}_0|$. We use that that $|P^{(j)}_0|=|P^{(j)}_1|$ to see that
\begin{equation}\label{eq:casak}
\left|\bigcup_{\mathcal J} P^{(j)}_1\right| \leq 4(|Y|+|X_1^c|) \;.
\end{equation}

For $i\in\{1,\ldots,r\}$ and for $v\in X_i\setminus X_i'$ write
\begin{align*}
f_{i+1}(v)&:=\deg_{i+1}(v,X_{i+1}\setminus X_{i+1}')\;\mbox{, and}\\
g_{i}(v)&:=\deg_i(v,X_{i-1}')\;. 
\end{align*}
where the sets $X_1', X_{i-1}'$ and $X_{i+1}'$
above refer to the moment\footnote{if $v\in Y$ then this
moment is the zero-th step} when $v$ is removed from $X_i'$ (we do not define
$f_{i+1}(v)$ for $i=r$).

Observe that for each $i\in\{2,\ldots,r\}$, we have
\begin{align}\label{eq:HezkejVodotrysk1}
\sum_{v\in X_i^b}g_i(v)&< \delta kn\;.
\end{align}

We set $X_r^c:=\emptyset$. For a given $i\in [r-1]$ we have
\begin{align}\nonumber
|X_i^{\ref{conc:Match-avoid}}|\cdot \frac{\gamma k}2&\le \sum_{v\in
X_i^{\ref{conc:Match-avoid}}}f_{i+1}(v)\\
& \le \sum_{v\in
X_{i+1}\setminus 
X_{i+1}'}g_{i+1}(v)
\\ \label{eq:Match-induction1}
\JUSTIFY{by~\ref{hyp:Match-Ysmall}.\ ,\ref{hyp:Match-bounded}.\ ,
\eqref{eq:HezkejVodotrysk1}} & <\delta
kn+|X_{i+1}^{\ref{conc:Match-avoid}}| \Omega k,
\end{align}
as $X_i\setminus X_i'\subset X_i^b\cup X_i^{\ref{conc:Match-avoid}}\cup Y$,
for $i=2,\ldots,r$. Using~\eqref{eq:Match-induction1}, we deduce inductively that
$|X_{r-j}^{\ref{conc:Match-avoid}}|\le \left(\frac{8\Omega}\gamma\right)^j\delta
n$ for $j=1,2,\ldots,r-1$, and in particular that
\begin{equation}\label{eq:VX1B}
|X_{1}^{\ref{conc:Match-avoid}}|\le
\left(\frac{8\Omega}\gamma\right)^{r-1}\delta n\;.
\end{equation}

As $X_1^a=\emptyset$, we obtain that
\begin{align*}
|X'_1 | & =\left|X_1\setminus \left(\bigcup_{j\in\mathcal J}P^{(j)}_1\cup
\bigcup_{j\in\mathcal Y}\big(P^{(j)}_1\cap (Y\cup X_1^a\cup
X_1^c)\big)\right)\right|\\ 
\JUSTIFY{by~\eqref{eq:casak}}& \geq |X_1|-4(|Y|+|X_1^c|) -
\left|\bigcup_{j\in\mathcal Y}\left(P^{(j)}_1\cap X_1^a
\right)\right|\\ 
\JUSTIFY{by~\ref{hyp:Match-Ysmall}., \eqref{eq:condCYmatch}, \eqref{eq:VX1B}} &
\ge |X_1|-\frac{\eta n}2 - \left|\bigcup_{j\in\mathcal
Y}(P^{(j)}_1\cap X_1^a )\right|\\ \JUSTIFY{by Cl~\ref{cl:useReg}}  & \ge
|X_1|-\frac{\eta n}2 - \frac{|X_1|}4\\ \JUSTIFY{by \ref{hyp:Match-edges}.}& >0,
\end{align*} 
as desired.
\end{proof}

\subsection{Obtaining a configuration}\label{ssec:obtainingConf}
In this section we prove that the structure in the graph
$G\in\LKSgraphs{n}{k}{\eta}$ guaranteed by Lemma~\ref{prop:LKSstruct}
always leads to one of the configurations
$\mathbf{(\diamond1)}$--$\mathbf{(\diamond10)}$. We distinguish two cases. When the set  $\HugeVertices$ of
 vertices of huge degree (coming from a sparse decomposition
of $G$) sees many edges, then one of the configurations $\mathbf{(\diamond1)}$--$\mathbf{(\diamond5)}$ must
occur (cf.  Lemma~\ref{lem:ConfWhenCXAXB}). Otherwise, when the edges incident with $\HugeVertices$ can be neglected,
we obtain one of the configurations
$\mathbf{(\diamond6)}$--$\mathbf{(\diamond10)}$ 
(cf. Lemmas~\ref{lem:ConfWhenNOTCXAXB} and~\ref{lem:ConfWhenMatching}). How these
configurations help in embedding the tree
$T_\PARAMETERPASSING{T}{thm:main}\in\treeclass{k}$ will be shown in
Section~\ref{sec:embed}.

Lemmas~\ref{lem:ConfWhenCXAXB},~\ref{lem:ConfWhenNOTCXAXB},
and~\ref{lem:ConfWhenMatching} are stated in the next section, and their proofs
occupy
Sections~\ref{sssec:ProofConfWhenCXAXB},~\ref{sssec:ProofConfWhenNOTCXAXB},
and~\ref{sssec:ProofConfWhenMatching}, respectively. These results are put together in Lemma~\ref{outerlemma} of Section~\ref{sssec:StatementsofResultsObtaining}. 

\subsubsection{Statements of the results}\label{sssec:StatementsofResultsObtaining}
We first state the main result of this section, Lemma~\ref{outerlemma}.
Its proof is given in Section~\ref{ssec:proofofouterlemma}.

\begin{lemma}\label{outerlemma}
Suppose we are in Settings~\ref{commonsetting} and~\ref{settingsplitting}. Further suppose
that at least one of the cases {\bf(K1)} or {\bf(K2)} from
Lemma~\ref{prop:LKSstruct} occurs in $G$ (with $\Mgood$ as in Lemma~\ref{prop:LKSstruct}~\eqref{Mgoodisblack}). Then one of the configurations
\begin{itemize}
\item$\mathbf{(\diamond1)}$,
\item$\mathbf{(\diamond2)}\left(\frac{\eta^{27}\Omega^{**}}{4\cdot
10^{66}(\Omega^*)^{11}},\frac{\sqrt[4]{\Omega^{**}}}2,\frac{\eta^9\rho^2}{128\cdot
10^{22}\cdot (\Omega^*)^5}\right)$,
\item$\mathbf{(\diamond3)}\left(\frac{\eta^{27}\Omega^{**}}{4\cdot
10^{66}(\Omega^*)^{11}},\frac{\sqrt[4]{\Omega^{**}}}2,\frac\gamma2,\frac{\eta^9\gamma^2}{128\cdot
10^{22}\cdot(\Omega^*)^5}\right)$,
\item$\mathbf{(\diamond4)}\left(\frac{\eta^{27}\Omega^{**}}{4\cdot
10^{66}(\Omega^*)^{11}},\frac{\sqrt[4]{\Omega^{**}}}2,\frac\gamma2,\frac{\eta^9\gamma^3}{384\cdot
10^{22}(\Omega^*)^5}\right)$, 
\item$\mathbf{(\diamond5)}\left(\frac{\eta^{27}\Omega^{**}}{4\cdot
10^{66}(\Omega^*)^{11}},\frac{
\sqrt[4]{\Omega^{**}}}2,\frac{\eta^9}{128\cdot
10^{22}\cdot
(\Omega^*)^3},\frac{\eta}2,\frac{\eta^9}{128\cdot
10^{22}\cdot (\Omega^*)^4}\right)$,
  \item
  $\mathbf{(\diamond6)}\big(\frac{\eta^3\rho^4}{10^{14}(\Omega^*)^4},4\epsilonD,\frac{\gamma^3\rho}{32\Omega^*},\frac{\eta^2\nu}{2\cdot10^4
 },\frac{3\eta^3}{2000},\proporce{2}(1+\frac\eta{20})k\big)$,
  \item $\mathbf{(\diamond7)}\big(\frac
{\eta^3\gamma^3\rho}{10^{12}(\Omega^*)^4},\frac
{\eta\gamma}{400},4\epsilonD,\frac{\gamma^3\rho}{32\Omega^*},\frac{\eta^2\nu}{2\cdot10^4
}, \frac{3\eta^3}{2\cdot 10^3}, \proporce{2}(1+\frac\eta{20})k\big)$,
  \item
$\mathbf{(\diamond8)}\big(\frac{\eta^4\gamma^4\rho}{10^{15}
(\Omega^*)^5},\frac{\eta\gamma}{400},\frac{400\epsilon}{\eta},4\epsilonD,\frac
d2,\frac{\gamma^3\rho}{32\Omega^*},\frac{\eta\pi\clustersize}{200k},\frac{\eta^2\nu}{2\cdot10^4
}, \proporce{1}(1+\frac\eta{20})k,\proporce{2}(1+\frac\eta{20})k\big)$,
 \item 
 $\mathbf{(\diamond9)}\big(\frac{\rho
\eta^8}{10^{27}(\Omega^*)^3},\frac
{2\eta^3}{10^3}, \proporce{1}(1+\frac{\eta}{40})k,
\proporce{2}(1+\frac{\eta}{20})k, \frac{400\varepsilon}{\eta},
\frac{d}2,
\frac{\eta\pi\clustersize}{200k},4\epsilonD,\frac{\gamma^3\rho}{32\Omega^*},
\frac{\eta^2\nu}{2\cdot10^4 }\big)$,
  \item $\mathbf{(\diamond10)}\big( \epsilon, \frac{\gamma^2
d}2,\pi\sqrt{\epsilon'}\nu k, \frac
{(\Omega^*)^2k}{\gamma^2},\frac\eta{40} \big)$
\end{itemize}
occurs in $G$.
\end{lemma}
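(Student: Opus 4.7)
The plan is to first dispose of the trivially lucky case and then split the argument based on whether the vertices of huge degree $\HugeVertices$ are substantial or negligible. If $G$ contains a non-empty bipartite subgraph $H$ with $\mindeg_G(V(H))\ge k$ and $\mindeg(H)\ge k/2$, we are done in Configuration $\mathbf{(\diamond1)}$, so we assume no such $H$ exists. This rules out, in particular, any region of $G$ where a straightforward greedy tree-embedding succeeds, and forces us to exploit the sparse decomposition $\class$ together with the matchings $\M_A,\M_B$ provided by Lemma~\ref{prop:LKSstruct}.

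The first case is $e_G(\HugeVertices,\XA\cup\XB)\ge \tilde\beta kn$ for a suitable threshold $\tilde\beta = \tilde\beta(\eta,\Omega^*)$. Here the idea is to distill preconfiguration~$\mathbf{(\clubsuit)}$ by two applications of Lemma~\ref{lem:envelope}: apply it once with $P:=\HugeVertices$, $Q:=\largevertices{9\eta/10}{k}{\Gcapt}$ to pull out $\HugeVertices'$ and $L'$, and then again with the roles of large- and huge-degree vertices shifted to extract $L''$ inside~$L'$. The exceptional set $Y$ in both applications is $L_\#\cup \WantiC\cup (\XA\setminus \YA)\cup ((\XA\cup\XB)\setminus\YB)$, which is small by Lemma~\ref{lem:YAYB}. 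Combining this with the bound~\eqref{eq:WantiCbound} then forces a large linear number of edges from $\HugeVertices''$ into $L''\cap \YB$. Inside $L''\cap \YB$ we now have vertices whose non-$\HugeVertices$ neighbourhoods live overwhelmingly in one of four pieces of the sparse decomposition: $V(\Gexp)$, $\smallatoms$, $\largeintoatoms$, or $\bigcup\clusters\setminus (V(\Gexp)\cup\smallatoms)$. Pigeonholing and then applying Lemma~\ref{lem:clean-C+yellow} or Lemma~\ref{lem:clean-C+black} to the appropriate piece yields, respectively, Configuration $\mathbf{(\diamond2)}$, $\mathbf{(\diamond3)}$, $\mathbf{(\diamond4)}$, or $\mathbf{(\diamond5)}$; this step is the content of Lemma~\ref{lem:ConfWhenCXAXB}. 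The parameters produced match those advertised in the statement because each cleaning lemma loses a controlled polynomial factor in $\eta,\gamma,\rho,\Omega^*$.

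The second case is $e_G(\HugeVertices,\XA\cup\XB)< \tilde\beta kn$, so edges through $\HugeVertices$ are negligible. Here we feed Lemma~\ref{prop:LKSstruct}\eqref{Mgoodisblack} into the random proportional splitting given by Setting~\ref{settingsplitting}. Under $\mathbf{(K1)}$ we have $\ge \eta kn/3$ edges inside $\XA\cup\XB$, and Lemma~\ref{lem:propertyYA12} guarantees that after removing the small sets $\gP$, $\shadowsplit$, $\WantiC$ etc., most vertices of $\XA\setminus\gP$ and $\XB\setminus \gP$ still see $\gtrsim (1+\eta/20)\,\proporce{i}k$ vertices in $\Vgood\colouringpI{i}$. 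Routing these edges through the colour classes $\colouringp{0},\colouringp{1}$ and pigeonholing on the dense-spot/regular-pair/expander nature of $\Gcapt\cup\GD$, Lemma~\ref{lem:ConfWhenNOTCXAXB} invokes Lemma~\ref{lem:clean-yellow} and Lemma~\ref{lem:clean-Match} to produce one of Configurations $\mathbf{(\diamond6)}$--$\mathbf{(\diamond8)}$. Under $\mathbf{(K2)}$ we have $|V(\Mgood)|\ge \eta n/3$, and $\Mgood$ lives inside $\BGblack$. Now Lemma~\ref{lem:ConfWhenMatching}, using Lemma~\ref{lem:clean-spots} to clean $\DenseSpots$, either yields Configuration $\mathbf{(\diamond10)}$ directly (when enough clusters of $\Mgood$ are linked to an $\LKSgraphs{n}{k}{\eta}$-style cluster structure), or funnels the remaining edges into Configuration $\mathbf{(\diamond9)}$ by combining the $(\heartsuit1)$-preconfiguration on the $\colouringp{0}$ side with a super-regular pair from Lemma~\ref{lem:clean-Match} on the $\colouringp{1}$ side.

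The main obstacle is bookkeeping: at several points we must simultaneously respect (i) the partition of $V(G)$ given by Setting~\ref{settingsplitting} into $\colouringp{0},\colouringp{1},\colouringp{2}$ with the proportional-splitting guarantees of Lemma~\ref{lem:randomSplit}, (ii) the disjointness requirements between the exceptional shadow $\shadowsplit$, the ``unfriendly'' shadow $\gP$, the sets $L_\#$ and $\WantiC$, and (iii) the dense-spot structure needed so that the semiregular matchings obtained still lie inside $(\M_A\cup\M_B)\setminus \NAtom$ (or inside $\NAtom$, depending on the configuration). The chain of inequalities~\eqref{eq:KONST} is designed precisely so that every cleaning step loses at most a polynomial factor in $\eta,\gamma,\rho$, which is then absorbed by the next step. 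Once the case analysis is set up, each case is a direct invocation of one of Lemmas~\ref{lem:ConfWhenCXAXB}, \ref{lem:ConfWhenNOTCXAXB}, \ref{lem:ConfWhenMatching}, whose conclusions exhaust the ten configurations with the prescribed parameters.
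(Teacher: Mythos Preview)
Your high-level split (huge-degree substantial vs.\ negligible) matches the paper, and routing the first case to Lemma~\ref{lem:ConfWhenCXAXB} is correct. The genuine gap is in how you handle case~{\bf(K1)} when $\HugeVertices$ is negligible. You write that Lemma~\ref{lem:ConfWhenNOTCXAXB} ``invokes Lemma~\ref{lem:clean-yellow} and Lemma~\ref{lem:clean-Match} to produce one of Configurations $\mathbf{(\diamond6)}$--$\mathbf{(\diamond8)}$''. But look at the hypothesis and conclusion of Lemma~\ref{lem:ConfWhenNOTCXAXB}: it requires $e_{\Gexp}(\YA_1,\YA_2)\ge 2\rho kn$ --- specifically $\Gexp$-edges --- and it outputs \emph{only} Configuration~$\mathbf{(\diamond6)}$. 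The $\eta kn/3$ edges guaranteed by {\bf(K1)} between $\XA$ and $\XA\cup\XB$ may lie almost entirely in $\GD$ rather than $\Gexp$, and in that case Lemma~\ref{lem:ConfWhenNOTCXAXB} simply does not apply. Your plan has no route to any configuration in this subcase.

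What the paper actually does here is: after restricting to $\colouringp{0}$ and splitting into subcases {\bf(cA)}/{\bf(cB)}, it checks whether the surviving edges are mostly $\Gexp$-edges (then Lemma~\ref{lem:ConfWhenNOTCXAXB} gives $\mathbf{(\diamond6)}$) or mostly $\DenseSpots_\class$-edges. In the latter case it applies Lemma~\ref{lem:edgesEmanatingFromDensePairsIII} to extract a semiregular matching $\M$ from the dense spots between $\YA_1$ and $\YA_2$, performs a further type analysis {\bf(t1)}--{\bf(t5)} on where $V_1(\M)$ sits relative to $\shadow(V(\Gexp),\rho k)$, $\largeintoatoms$, and $R$, and then feeds this typed matching into Lemma~\ref{lem:ConfWhenMatching} under hypothesis~{\bf(M2)}. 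This is also why {\bf(K1)} can lead to any of $\mathbf{(\diamond6)}$--$\mathbf{(\diamond10)}$, not just $\mathbf{(\diamond6)}$--$\mathbf{(\diamond8)}$. Similarly, your treatment of {\bf(K2)} omits the construction of the input matching for Lemma~\ref{lem:ConfWhenMatching} (restriction of $\Mgood$ to $\colouringp{0}$, removal of $\gP\cup\exceptVertSplit\cup\shadowsplit$, and the Type~1/2/3/5 splitting), and Lemma~\ref{lem:ConfWhenMatching} can output any of $\mathbf{(\diamond6)}$--$\mathbf{(\diamond10)}$ there as well, not only $\mathbf{(\diamond9)}$ or $\mathbf{(\diamond10)}$.
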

Lemma~\ref{outerlemma}  will be proved in Section~\ref{ssec:proofofouterlemma}. The proof relies
on Lemmas~\ref{lem:ConfWhenCXAXB},~\ref{lem:ConfWhenNOTCXAXB} and~\ref{lem:ConfWhenMatching} below. For an input graph $G_\PARAMETERPASSING{L}{outerlemma}$ one of these lemmas is applied depending on the majority type of ``good'' edges in $G_\PARAMETERPASSING{L}{outerlemma}$. Observe that {\bf(K1)} of Lemma~\ref{prop:LKSstruct} guarantees edges between $\HugeVertices$ and $\XA\cup \XB$, or between $\XA$ and $\XA\cup \XB$ either in $E(\Gexp)$ or in $E(\GD)$. Lemma~\ref{lem:ConfWhenCXAXB} is used if we find edges between $\HugeVertices$ and $\XA\cup \XB$. Lemma~\ref{lem:ConfWhenNOTCXAXB} is used  if we find edges of $E(\Gexp)$  between $\XA$ and $\XA\cup \XB$. The remaining case can be reduced to the setting of Lemma~\ref{lem:ConfWhenMatching}.  Lemma~\ref{lem:ConfWhenMatching} is also used to obtain a configuration if we are in case {\bf(K2)} of Lemma~\ref{prop:LKSstruct}.

\begin{lem}\label{lem:ConfWhenCXAXB}
\HAPPY{H}
Suppose we are in Setting~\ref{commonsetting}. Assume that
\begin{align}
\label{libelle7.38b}
e_{\Gcapt}(\HugeVertices,\XA\cup\XB)&\ge \frac{\eta^{13}
kn}{10^{28}(\Omega^*)^3}.
\end{align}
Then $G$ contains at least one of the configurations 
\begin{itemize}
\item$\mathbf{(\diamond1)}$,
\item$\mathbf{(\diamond2)}\left(\frac{\eta^{27}\Omega^{**}}{4\cdot
10^{66}(\Omega^*)^{11}},\frac{\sqrt[4]{\Omega^{**}}}2,\frac{\eta^9\rho^2}{128\cdot
10^{22}\cdot (\Omega^*)^5}\right)$,
\item$\mathbf{(\diamond3)}\left(\frac{\eta^{27}\Omega^{**}}{4\cdot
10^{66}(\Omega^*)^{11}},\frac{\sqrt[4]{\Omega^{**}}}2,\frac\gamma2,\frac{\eta^9\gamma^2}{128\cdot
10^{22}\cdot(\Omega^*)^5}\right)$,
\item$\mathbf{(\diamond4)}\left(\frac{\eta^{27}\Omega^{**}}{4\cdot
10^{66}(\Omega^*)^{11}},\frac{\sqrt[4]{\Omega^{**}}}2,\frac\gamma2,\frac{\eta^9\gamma^3}{384\cdot
10^{22}(\Omega^*)^5}\right)$, or 
\item$\mathbf{(\diamond5)}\left(\frac{\eta^{27}\Omega^{**}}{4\cdot
10^{66}(\Omega^*)^{11}},
\frac{\sqrt[4]{\Omega^{**}}}2,\frac{\eta^9}{128\cdot
10^{22}\cdot
(\Omega^*)^3},\frac{\eta}2,\frac{\eta^9}{128\cdot
10^{22}\cdot (\Omega^*)^4}\right)$.
\end{itemize}
\end{lem}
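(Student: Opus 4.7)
\medskip

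\noindent\textbf{Plan of proof.} The strategy has two stages: first, use Lemma~\ref{lem:envelope} to distil preconfiguration $\mathbf{(\clubsuit)}$ out of the many edges from $\HugeVertices$ into $\XA\cup\XB$; and second, perform a four-way case analysis on where most vertices of the resulting set $L''\cap\YB$ lie within the sparse decomposition, using one of the cleaning lemmas of Section~\ref{ssec:cleaning} in each branch to produce one of $\mathbf{(\diamond2)}$--$\mathbf{(\diamond5)}$. Configuration $\mathbf{(\diamond1)}$ plays the role of a trivial fallback and is returned whenever the first cleaning already yields a bipartite subgraph with $\mindeg\ge k/2$ on both sides (which is possible because every vertex of $\HugeVertices\cup\XA\cup\XB$ has $\deg_G\ge k$ automatically).

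\medskip

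\noindent\emph{Step 1 (preconfiguration $\mathbf{(\clubsuit)}$).} Setting~\ref{commonsetting}\eqref{commonsetting:numbercaptured} together with Lemma~\ref{lem:YAYB} bounds $|L_\sharp|$ and $|(\XA\cup\XB)\setminus\YB|$ by $O(\rho/\eta^2)\,n$, so from~\eqref{libelle7.38b} we still have
$e_\Gcapt\bigl(\HugeVertices,\YB\cap(\largevertices{\tfrac{9}{10}\eta}{k}{\Gcapt}\setminus\HugeVertices)\bigr)\ge \tfrac12\cdot \tfrac{\eta^{13}kn}{10^{28}(\Omega^*)^3}$. We then refine $\HugeVertices$ to those vertices with captured degree at least $\sigma k$ into $Q:=\largevertices{\tfrac{9}{10}\eta}{k}{\Gcapt}\setminus\HugeVertices$ (with $\sigma$ a suitable polynomial in $\eta/\Omega^*$), apply Lemma~\ref{lem:envelope} with this as $P$, $Y:=L_\sharp\cup((\XA\cup\XB)\setminus\YB)$, $\psi:=\eta/100$, and $\Gamma:=\Omega^*$, and set $\HugeVertices':=P'$, $L':=Q'$, $L'':=Q''$. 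Parts (a)--(c) of the lemma give exactly~\eqref{eq:clubsuitCOND1}--\eqref{eq:clubsuitCOND3} of Definition~\ref{def:PreClub} with the claimed $\Omega^\star$; part (d) guarantees $e(\HugeVertices',L'')=\Omega(kn)$, hence $|L''\cap\YB|=\Omega(n)$.

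\medskip

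\noindent\emph{Step 2 (case analysis).} Every vertex of $L''\cap\YB$ has positive captured degree and avoids $\HugeVertices$, hence lies in $V(\Gexp)\cup\smallatoms\cup\bigcup\clusters$. Split $L''\cap\YB$ into
\begin{itemize}
\item[(a)] $\cap\,V(\Gexp)$,
\item[(b)] $\cap\,\smallatoms$,
\item[(c)] $\cap\,\bigl(\largeintoatoms\setminus(V(\Gexp)\cup\smallatoms)\bigr)$,
\item[(d)] the complement of the above three, which necessarily lies in $\bigcup\clusters\setminus V(\Gexp)$ and has $\deg_\Gcapt(\cdot,\smallatoms)\le \tfrac{\rho k}{100\Omega^*}$.
\end{itemize}
Pigeonhole on $|L''\cap\YB|$ selects one class of size $\ge \tfrac14|L''\cap\YB|$, and averaging further restricts $\HugeVertices'$ to the vertices with $\Gcapt$-degree at least $\tfrac{\Omega^\star k}{8}$ into the selected class, so that the min-degree hypothesis of the relevant cleaning lemma is met.

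\medskip

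\noindent\emph{Step 3 (extracting the configuration).} For case (a) we apply Lemma~\ref{lem:clean-C+yellow} with $r=2$, $X_0:=\HugeVertices'$, $X_1:=$ the case-(a) set, $X_2:=V(\Gexp)$, using $\mindeg(\Gexp)>\rho k$ to verify Hypothesis~\ref{hyp:C+y-deg}; the outputs become $\HugeVertices''$, $V_1$, $V_2$ fulfilling $\mathbf{(\diamond2)}$, the mutual $\mindeg_{\Gexp}$ bounds coming from conclusion~\ref{conc:C+y-deg}) together with $\maxdeg\bigl(X_1',X_2\setminus X_2'\bigr)<\gamma k/2$. Case (b) is analogous but with the intermediate edge set $\GD$ and $X_2$ the $\GD$-neighbourhood of the case-(b) set outside $\HugeVertices$; the avoiding bound~\eqref{eq:WHtc} comes from Lemma~\ref{lem:clean-C+yellow}\ref{conc:C+y-avoid}), giving $\mathbf{(\diamond3)}$. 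Case (c) uses the chain $\HugeVertices'\to V_1\to \smallatoms'\to V_2$ in the edge set $E(\Gcapt)\cup E(\GD)$ and Lemma~\ref{lem:clean-C+yellow} with $r=3$; Hypothesis~\ref{hyp:C+y-deg} for the $V_1\to\smallatoms'$ step is the definition of $\largeintoatoms$, and the conclusions yield $\mathbf{(\diamond4)}$. In case (d) the captured edges of $V_1$-candidates lie mainly in $\Gblack$, and Lemma~\ref{lem:clean-C+black} applied with $\mathcal C:=\clusters$ preserves the cluster-proportionality required by~\eqref{eq:diamond5P4}, delivering $\mathbf{(\diamond5)}$. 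In any of these cases, should the cleaned sets $V_1,\HugeVertices''$ end up satisfying $\mindeg\ge k/2$ on both sides of the bipartite graph between them, we output $\mathbf{(\diamond1)}$ instead.

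\medskip

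\noindent\emph{Main obstacle.} The hard part is the quantitative bookkeeping: the target parameters of $\mathbf{(\diamond2)}$--$\mathbf{(\diamond5)}$ are fixed explicit monomials in $\eta,\gamma,\rho,\Omega^*,\Omega^{**}$, and each application of Lemma~\ref{lem:envelope}, Lemma~\ref{lem:clean-C+yellow}, or Lemma~\ref{lem:clean-C+black} drops degree and density parameters by controlled polynomial factors (through $\psi^3/\Gamma^2$ in the envelope and through the $\delta$ of the cleaning lemmas). Propagating the $\Omega^{**}$-factor from $\mindeg_G(\HugeVertices)\ge \Omega^{**}k$ all the way to the final $\mindeg(\HugeVertices'',V_1)\ge\tfrac{\sqrt[4]{\Omega^{**}}}{2}k$ requires invoking Lemma~\ref{lem:clean-C+yellow}\ref{conc:C+y-large}) (which outputs $\sqrt{\Omega^{**}}$) twice across the case analysis or once with an appropriately large starting parameter. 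Verifying that these products match the stated constants $\Omega^\star,\tilde\Omega,\delta,\zeta,\tilde\pi,\beta$ in each of the four configurations is the routine but delicate part of the proof.
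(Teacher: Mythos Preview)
Your outline follows the paper's two-stage strategy (envelope lemma for $\mathbf{(\clubsuit)}$, then four-way case split into $\mathbf{(\diamond2)}$--$\mathbf{(\diamond5)}$), but there is one genuine missing idea and two imprecisions that matter.

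\textbf{The missing idea: the $\NUP/\NDOWN$ split.} Before anything else, the paper partitions $\neighbor_{\Gcapt}(\HugeVertices)$ into $\NUP:=\{v:\deg_{\Gcapt}(v,\HugeVertices)\ge k\}$ and $\NDOWN$. If $e_{\Gcapt}(\HugeVertices,\NUP)$ is a constant fraction of $e_{\Gcapt}(\HugeVertices,\XA\cup\XB)$, then either the bipartite graph $\Gcapt[\HugeVertices,\NUP]$ has average degree $\ge k$ (giving $\mathbf{(\diamond1)}$), or $|\HugeVertices|>|\NUP|$ forces $e(G)>kn$, a contradiction. This is where $\mathbf{(\diamond1)}$ actually enters---not as a fallback inside the subcases. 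More importantly, in the complementary case the paper restricts the four sets $N_1,\dots,N_4$ to lie inside $\NDOWN$. This is \emph{essential} for your case~(d): to verify $\mindeg_{\Gblack}(V_1)\ge \eta k/2$ in $\mathbf{(\diamond5)}$, you need each $v\in V_1$ to have $\deg_{\Gcapt}(v,V(G)\setminus\HugeVertices)\ge \tfrac{9\eta}{10}k$, which follows from $v\in\largevertices{\tfrac{9}{10}\eta}{k}{\Gcapt}$ \emph{together with} $\deg_{\Gcapt}(v,\HugeVertices)<k$. Without the $\NDOWN$ restriction, a case-(d) vertex could send almost all its captured degree into $\HugeVertices$, leaving nothing for $\Gblack$, and your argument for~\eqref{confi5last} breaks.

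\textbf{Pigeonhole on edges, not vertices.} Selecting a class of size $\ge\tfrac14|L''\cap\YB|$ does not guarantee any edges from $\HugeVertices'$ into it; all the degree could concentrate in a small class. The paper instead defines $C_i:=\{v\in\HugeVertices^*:\deg_{\Gcapt}(v,N_i)\ge \tfrac14\deg_{\Gcapt}(v,\bigcup_j N_j)\}$ so that every $v\in\HugeVertices^*$ lies in some $C_i$, and then one $i$ has $e_{\Gcapt}(C_i,N_i)\ge\tfrac1{16}e_{\Gcapt}(\HugeVertices^*,\bigcup_j N_j)$---this directly feeds the edge-count hypothesis of Lemmas~\ref{lem:clean-C+yellow} and~\ref{lem:clean-C+black}.

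\textbf{Propagating $\Omega^{**}$.} Your preliminary refinement of $\HugeVertices$ to degree $\ge\sigma k$ with $\sigma$ polynomial in $\eta/\Omega^*$ throws away the $\Omega^{**}$ factor before the envelope lemma even runs. The paper keeps all of $\HugeVertices$ (whose captured degree into $Q=\largevertices{\eta}{k}{G}\setminus\HugeVertices$ is automatically $\ge\Omega^{**}k$, since $\HugeVertices$ is independent and all edges at $\HugeVertices$ are captured), applies Lemma~\ref{lem:envelope} with $\Omega_{\ref{lem:envelope}}=\Omega^{**}$, then defines $\HugeVertices^*\subset\HugeVertices'$ as the vertices with degree $\ge\sqrt{\Omega^{**}}\,k$ into $L''\cap(\XA\cup\XB)\cap\NDOWN$; a short counting argument (using $e(G)\le kn$) shows $\HugeVertices^*$ still carries $\Omega(kn)$ edges. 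The subsequent cleaning lemma, run with $\Omega^{**}_{\text{lemma}}=\sqrt{\Omega^{**}}/4$, then outputs $\mindeg(\HugeVertices'',V_1)\ge\tfrac{\sqrt[4]{\Omega^{**}}}{2}k$ in one pass.
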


\smallskip

\begin{lem}\label{lem:ConfWhenNOTCXAXB}
Suppose that we are in Setting~\ref{commonsetting} and
Setting~\ref{settingsplitting}. If there exist two disjoint sets $\YA_1,\YA_2\subset V(G)$ such that
\begin{align}
\label{eq:manyXAXAXBobt}
e_{\Gexp}(\YA_1,\YA_2)&\ge 2\rho kn\;,
\end{align}
and either
\begin{align}
\label{2ndcondiObt2}
\YA_1\cup \YA_2\subset \XA\colouringpI{0}\setminus (\gP\cup \exceptVertSplit\cup
\shadowsplit)&\mbox{, or}\\
\label{3rdcondiObt2}
\YA_1\subset \XA\colouringpI{0}\setminus (\gP\cup \exceptVertSplit\cup
\shadowsplit\cup \gP_2\cup \gP_3)&\mbox{, and }\YA_2\subset
\XB\colouringpI{0}\setminus (\gP\cup \exceptVertSplit\cup \shadowsplit)
\end{align}
then $G$ has
configuration~$\mathbf{(\diamond6)}(\frac{\eta^3\rho^4}{10^{14}(\Omega^*)^3},0,1,1,
\frac {3\eta^3}{2\cdot 10^3},\proporce{2}(1+\frac{\eta}{20})k)$.
\end{lem}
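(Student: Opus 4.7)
My plan is to produce all four sets $V_0, V_1, V_2, V_3$ witnessing Configuration~$\mathbf{(\diamond6)}$ by a single application of Lemma~\ref{lem:clean-yellow}, using the hypothesis $e_{\Gexp}(\YA_1,\YA_2) \ge 2\rho k n$ together with the global min-degree $\rho k$ of $\Gexp$ and the proportional splitting.

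I first set up the cleaning chain. In case~\eqref{2ndcondiObt2} I put $(X_0, X_1) := (\YA_1, \YA_2)$, and in case~\eqref{3rdcondiObt2} I put $(X_0, X_1) := (\YA_2, \YA_1)$, so that in either case $X_1$ will end up as the ``$V_1$'' of Configuration~$\mathbf{(\diamond6)}$ that must satisfy the stronger Heart condition. Put $Z := V(\Gexp) \cap \colouringp{1}$. Summing Lemma~\ref{lem:randomSplit}\eqref{It:H5} (applied to $H = \Gexp$) over all subsets $J \subseteq [10]$, every $v \in V(\Gexp) \setminus \exceptVertSplit$ satisfies $\deg_{\Gexp}(v, \colouringp{1}) \ge \proporce{1}\rho k - k^{0.9} \ge \tfrac{\proporce{1}\rho}{2} k$, using $\mindeg(\Gexp) \ge \rho k$ and $\proporce{1} \ge \eta/100$. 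I then take a random bipartition $(Z_A, Z_B)$ of $Z$; the Chernoff bound combined with a union bound over the at most $2n$ relevant vertices yields that, with positive probability, simultaneously $\deg_{\Gexp}(v, Z_A) \ge \tfrac{\proporce{1}\rho}{8} k$ and $\deg_{\Gexp}(v, Z_B) \ge \tfrac{\proporce{1}\rho}{8} k$ for every $v \in (X_1 \cup Z) \setminus \exceptVertSplit$. Fix such a bipartition and set $X_2 := Z_A$, $X_3 := Z_B$, $Y := \exceptVertSplit$.

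Now I apply Lemma~\ref{lem:clean-yellow} with $r := 3$, edge sets $E_1 := E(\Gexp)$, $E_2 := E(G)$, $E_3 := E(\Gexp)$, and numerical parameters (renaming constants to avoid confusion with the ambient notation) $\Omega := \Omega^*$, $\eta := 2\rho$, $\gamma := \proporce{1}\rho/8$, $\delta := \frac{\eta^3\rho^4}{10^{14}(\Omega^*)^3}$. Hypothesis~(1) holds as $|\exceptVertSplit| \le \exp(-k^{0.1})n < \delta n$ for large $k$; hypothesis~(2) is the assumption $e_{\Gexp}(X_0,X_1) \ge 2\rho k n$; hypothesis~(3) holds by the paragraph above (using $G \supseteq \Gexp$ for the step $X_1 \to X_2$); hypothesis~(4) follows from $\maxdeg(G) \le \Omega^* k$ on $V(G) \setminus \HugeVertices$, which contains $\YA_1 \cup \YA_2 \cup Z$; the numerical condition~\eqref{eq:condCY} reduces to $\delta \le \proporce{1}^3\rho^4/(5 \cdot 64^3 (\Omega^*)^3)$ and is easily implied by $\proporce{1} \ge \eta/100$ and the chosen value of $\delta$. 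Setting $V_i := X_i'$, conclusions~(a) and~(c) give $\mindeg_{\Gexp}(V_0,V_1), \mindeg_{\Gexp}(V_1,V_0), \mindeg_G(V_2,V_1), \mindeg_{\Gexp}(V_3,V_2) \ge \delta k$, and combining~(b) with hypothesis~(3) and $\delta \le \gamma/2$ yields $\mindeg_G(V_1,V_2), \mindeg_{\Gexp}(V_2,V_3) \ge \gamma k/2 \ge \delta k$. Non-emptiness of $V_2, V_3$ is forced: if say $V_2 = \emptyset$, then (b) for $i = 1$ would give $\maxdeg_G(V_1, X_2) < \gamma k/2$, contradicting hypothesis~(3) combined with the non-emptiness of $V_1$ implied by~(d). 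Thus $V_0, V_1 \subseteq \colouringp{0}$ and $V_2, V_3 \subseteq \colouringp{1}$ witness Preconfiguration~$\mathbf{(exp)}(\delta)$ and the $G$-, $\Gexp$-degree chain of Configuration~$\mathbf{(\diamond6)}$.

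To finish, I verify the Heart preconfiguration. By Lemma~\ref{lem:propertyYA12}\eqref{eq:trivka}, $V_0, V_1 \subseteq \colouringp{0} \setminus (\shadowsplit \cup \shadow_{\GD}(\WantiC, \eta^2 k/10^5))$. In case~\eqref{2ndcondiObt2}, $V_0, V_1 \subseteq \XA \setminus (\gP \cup \exceptVertSplit)$, so~\eqref{eq:propertyYA12cA} gives $\mindeg_{\Gcapt}(V_0 \cup V_1, \Vgood\colouringpI{2}) \ge \proporce{2}(1+\eta/20)k = h_2$, establishing Preconfiguration~$\mathbf{(\heartsuit 2)}(h_2)$. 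In case~\eqref{3rdcondiObt2}, $V_1 \subseteq \XA \setminus (\gP \cup \gP_2 \cup \gP_3 \cup \exceptVertSplit)$ gives $\mindeg_{\Gcapt}(V_1, \Vgood\colouringpI{2}) \ge h_2$ by~\eqref{eq:propertyYA12cA} and $\maxdeg_{\Gcapt}(V_1, \bigcup \mathcal F) \le \frac{3\eta^3}{2\cdot 10^3} k = \gamma' k$ by~\eqref{eq:propertyYA12cB3}, while $V_0 \subseteq \XB \setminus (\gP \cup \exceptVertSplit)$ gives $\mindeg_{\Gcapt}(V_0, \Vgood\colouringpI{2}) \ge h_2/2$ by~\eqref{eq:propertyYA12cB2}; hence Preconfiguration~$\mathbf{(\heartsuit 1)}(\gamma', h_2)$ is witnessed with cover $\mathcal F$ from~\eqref{def:Fcover}. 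The main obstacle is producing the balanced bipartition $(Z_A, Z_B)$ that simultaneously balances the $\Gexp$-degrees of vertices in $X_1$ and of vertices in $Z$; the random bipartition plus Chernoff handles this cleanly, after which the chained cleaning lemma does the rest.
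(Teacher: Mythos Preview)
Your overall strategy—one application of Lemma~\ref{lem:clean-yellow} with $r=3$, followed by Lemma~\ref{lem:propertyYA12} for the Heart preconfiguration—is exactly the paper's, but there is one genuine gap and one unnecessary detour.

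\textbf{The gap.} Your verification of hypothesis~\ref{hyp:yel-deg} of Lemma~\ref{lem:clean-yellow} at $i=1$ relies on the claim that every $v\in X_1\setminus\exceptVertSplit$ satisfies $\deg_{\Gexp}(v,Z_A)\ge\tfrac{\proporce{1}\rho}{8}k$. But the preceding sentence only establishes $\deg_{\Gexp}(v,\colouringp{1})\ge\tfrac{\proporce{1}\rho}{2}k$ for $v\in V(\Gexp)\setminus\exceptVertSplit$, and nothing in~\eqref{2ndcondiObt2}--\eqref{3rdcondiObt2} forces $\YA_1$ or $\YA_2$ to lie inside $V(\Gexp)$: the edge-count~\eqref{eq:manyXAXAXBobt} is silent about vertices of $\YA_i$ that are isolated in $\Gexp$, and for such $v$ one has $\deg_{\Gexp}(v,Z_A)=0$. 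The paper fixes this by first passing to $\YA_1':=\{v\in\YA_1:\deg_{\Gexp}(v,\YA_2)\ge\rho k\}\subseteq V(\Gexp)$ and observing that $e_{\Gexp}(\YA_1',\YA_2)\ge\rho kn$ still holds; you need the analogous restriction on your $X_1$.

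\textbf{The detour.} The random bipartition $(Z_A,Z_B)$ is superfluous. The paper simply takes $X_2=X_3:=V(\Gexp)\colouringpI{1}$; Lemma~\ref{lem:clean-yellow} does not require the sets $X_i$ to be pairwise disjoint, and Definition~\ref{def:CONF6} does not require $V_2,V_3$ to be disjoint either. Dropping the bipartition removes the Chernoff step entirely and shortens the argument.
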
 

\smallskip

\begin{lem}\label{lem:ConfWhenMatching}
Suppose that we are in Setting~\ref{commonsetting} and
Setting~\ref{settingsplitting}. Let $\DenseSpots_\class$ be as in Lemma~\ref{lem:clean-spots}. Suppose that there exists an
$(\bar\epsilon,\bar d,\beta k)$-semiregular matching $\M$, $V(\M)\subset \colouringp{0}$, $|V(\M)|\ge \frac {\rho n}{\Omega^*}$, with one of the following two sets of properties.
\begin{itemize}
 \item[{\bf(M1)}] $\M$ is absorbed by $\Mgood$, $\bar\epsilon:=\frac{10^5\epsilon'}{\eta^2}$, $\bar d:=\frac{\gamma^2}4$, and $\beta:=\frac
{\eta^2\clustersize}{8\cdot 10^3 k}$.
 \item[{\bf(M2)}] $E(\M)\subset E(\DenseSpots_\class)$, $\M$ is absorbed by $\DenseSpots_\class$, $\bar\epsilon:=\epsilonD$, $\bar d:=\frac{\gamma^3\rho}{32\Omega^*}$, and $\beta:=\frac{\alphaD\rho}{\Omega^*}$.
\end{itemize}
Suppose further that one of the following occurs.
\begin{itemize}
  \item [$\mathbf{(cA)}$] $V(\M)\subset \XA\colouringpI{0}\setminus
  (\gP\cup\exceptVertSplit\cup\shadowsplit)$, and we have for the set
$$R:=\shadow_{\Gcapt}\left((\largeintoatoms\cap\largevertices{\eta}{k}{G})\setminus V(\M_A\cup\M_B),\frac{2\eta^2 k}{10^5}\right)$$
one of the following
\begin{itemize}
 \item[{\bf(t1)}] $V_1(\M)\subset \shadow_{\Gcapt}(V(\Gexp),\rho k)$,
 \item[{\bf(t2)}]$V_1(\M)\subset \largeintoatoms$,
 \item[{\bf(t3)}]$V_1(\M)\subset R\setminus (\shadow_{\Gcapt}(V(\Gexp),\rho k)\cup \largeintoatoms)$, or
\item[{\bf(t5)}]$V(\M)\subset V(\Gblack)\setminus\left(\shadow_{\Gcapt}(V(\Gexp),\rho k)\cup \largeintoatoms\cup R\right)$.
\end{itemize}
  \item [$\mathbf{(cB)}$] $V_1(\M)\subset \XA\colouringpI{0}\setminus
  (\gP\cup \gP_2\cup\gP_3\cup\exceptVertSplit\cup\shadowsplit)$ and
  $V_2(\M)\subset \XB\colouringpI{0}\setminus
  (\gP\cup\exceptVertSplit\cup\shadowsplit)$, and we have
\begin{itemize}
 \item[{\bf(t1)}]$V_1(\M)\subset \shadow_{\Gcapt}(V(\Gexp),\rho k)$,
 \item[{\bf(t2)}]$V_1(\M)\subset \largeintoatoms$, or
 \item[{\bf(t3--5)}]$V_1(\M)\cap \left(\shadow_{\Gcapt}(V(\Gexp),\rho k)\cup \largeintoatoms\right)=\emptyset$.
\end{itemize}
\end{itemize}
then at least one of the
following configurations occurs:
\begin{itemize}
  \item $\mathbf{(\diamond6)}\big(\frac{\eta^3\rho^4}{10^{12}(\Omega^*)^4},4\epsilonD,
  \frac{\gamma^3\rho}{32\Omega^*},\frac{\eta^2\nu}{2\cdot10^4
  },\frac{3\eta^3}{2000},\proporce{2}(1+\frac\eta{20})k\big)$,
  \item $\mathbf{(\diamond7)}\big(\frac
{\eta^3\gamma^3\rho}{10^{12}(\Omega^*)^4},\frac
{\eta\gamma}{400},4\epsilonD,\frac{\gamma^3\rho}{32\Omega^*},\frac{\eta^2\nu}{2\cdot10^4
}, \frac{3\eta^3}{2000}, \proporce{2}(1+\frac\eta{20})k\big)$,
  \item
$\mathbf{(\diamond8)}\big(\frac{\eta^4\gamma^4\rho}{10^{15}
(\Omega^*)^5},\frac{\eta\gamma}{400},\frac{400\epsilon}{\eta},4\epsilonD,\frac
d2,\frac{\gamma^3\rho}{32\Omega^*},\frac{\eta\pi\clustersize}{200k},\frac{\eta^2\nu}{2\cdot10^4
}, \proporce{1}(1+\frac\eta{20})k,\proporce{2}(1+\frac\eta{20})k\big)$,
 \item 
 $\mathbf{(\diamond9)}\big(\frac{\rho
\eta^8}{10^{27}(\Omega^*)^3},\frac
{2\eta^3}{10^3}, \proporce{1}(1+\frac{\eta}{40})k,
\proporce{2}(1+\frac{\eta}{20})k, \frac{400\varepsilon}{\eta}, \frac{d}2,
\frac{\eta\pi\clustersize}{200k},4\epsilonD,\frac{\gamma^3\rho}{32\Omega^*},
\frac{\eta^2\nu}{2\cdot10^4}\big)$,
  \item $\mathbf{(\diamond10)}\big( \epsilon, \frac{\gamma^2
d}2,\pi\sqrt{\epsilon'}\nu k,\frac
{(\Omega^*)^2k}{\gamma^2},\frac\eta{40} \big)$.
\end{itemize}
\end{lem}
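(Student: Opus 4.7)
The plan is to dissect the lemma by the auxiliary case distinctions \textbf{(t1)}--\textbf{(t5)} and $\mathbf{(cA)}$ vs.~$\mathbf{(cB)}$, and to show that in each case the matching $\M$ together with the enforcement of being in $\colouringp{0}$ already gives us $V_0,V_1$ witnessing one of the preconfigurations $\mathbf{(reg)}$ or $\mathbf{(\heartsuit1)/(\heartsuit2)}$, after which a short chain of neighbourhood-expansion arguments produces the remaining sets $V_2,V_3,V_4$ of the target diamond configuration. The choice of target configuration is dictated by the type label: \textbf{(t1)} feeds into $\mathbf{(\diamond6)}$ via $\Gexp$, \textbf{(t2)} into $\mathbf{(\diamond7)}$ via $\smallatoms$ (using the definition \eqref{eq:deflargeintoatoms} of $\largeintoatoms$), \textbf{(t3)} into $\mathbf{(\diamond8)}$ via a mixture of $\smallatoms$ and a further semiregular matching, and \textbf{(t5)} together with case M2 into $\mathbf{(\diamond10)}$ (the "dense-like" configuration); the ambivalent case \textbf{(t3--5)} of $\mathbf{(cB)}$ is routed to $\mathbf{(\diamond9)}$.

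First I would set up the $\mathbf{(\heartsuit)}$ and $\mathbf{(reg)}$ ingredients. In case \textbf{(M2)} the matching $\M$ is already $(\epsilonD,\frac{\gamma^3\rho}{32\Omega^*},\beta k)$-semiregular inside $\colouringp{0}$, so after trimming the $O(\epsilonD)$ atypical vertices from each pair via Lemma~\ref{lem:clean-Match} the $\M$-vertices $V_0:=V_1(\M)$, $V_1:=V_2(\M)$ form a family of $(4\epsilonD,\frac{\gamma^2 d}{2})$-super-regular pairs, which is exactly what $\mathbf{(reg)}$ asks for. Crucially, since $V_0,V_1\subset\XA\cup\XB$ and the restriction to $\colouringp{0}$ avoided $\shadowsplit$, the minimum-degree estimates \eqref{eq:propertyYA12cA}--\eqref{eq:propertyYA12cB2} of Lemma~\ref{lem:propertyYA12} applied to $\Vgood\colouringpI{2}$ deliver exactly the shadow condition \eqref{COND:P1:3}--\eqref{COND:P1:4} of $\mathbf{(\heartsuit1)}$ (or \eqref{COND:P2:4} of $\mathbf{(\heartsuit2)}$), where the $\M_A\cup\M_B$-cover $\mathcal F$ from \eqref{def:Fcover} of Setting~\ref{commonsetting} provides the cover needed for \eqref{COND:P1:5}, using \eqref{eq:propertyYA12cB3} to bound degrees into $\bigcup\mathcal F$. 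In case \textbf{(M1)}, the matching $\M$ is only $(\bar\epsilon,\frac{\gamma^2}{4},\beta k)$-semiregular inside $\Mgood$; we argue that each $\Mgood$-edge, being in $\Gblack$ by Setting~\ref{commonsetting}\eqref{commonsettingMgood}, contributes to the regularised graph of $\mathbf{(\diamond10)}$.

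Next I would extend from $V_1$ to $V_2,V_3,V_4$ according to the type. In \textbf{(t1)}, every vertex of $V_1(\M)$ has $\rho k$ neighbours in $V(\Gexp)$; applying Lemma~\ref{lem:clean-yellow} with $r=2$ and edge sets $E_1:=E(G)$, $E_2:=E(\Gexp)$ yields sets $V_2\subset\colouringp{1}\cap V(\Gexp)$ and $V_3\subset\colouringp{1}\cap V(\Gexp)$ realising \eqref{COND:D6:1}--\eqref{COND:D6:4}. In \textbf{(t2)} we replace the second step by $\smallatoms$: Lemma~\ref{lem:clean-yellow} (with $E_2:=E(\GD)$) produces a set $V_2\subset\smallatoms\colouringpI{1}$ of size satisfying \eqref{COND:D7:1}--\eqref{COND:D7:2}, after which the avoiding property of $\smallatoms$ (Definition~\ref{def:avoiding}) together with the shadow definition of $\largeintoatoms$ yields a dense spot $D\in\DenseSpots_\class$ inside which one extracts $V_3$ satisfying \eqref{COND:D7:3}--\eqref{COND:D7:4}. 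In \textbf{(t3)}/\textbf{(t3--5)}, we follow the same pattern but only after using the shadow set $R$ (which brings in an intermediate $\M_A\cup\M_B$-edge) to produce the additional semiregular matching $\mathcal N$ absorbed by $(\M_A\cup\M_B)\setminus\NAtom$ required by $\mathbf{(\diamond8)}$ or $\mathbf{(\diamond9)}$; the combined shadow condition \eqref{COND:D8:7} is delivered by summing the contributions of the two extraction steps.

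The main obstacle, as usual in this paper, will be bookkeeping the parameters and the exceptional sets. In particular, one has to verify that after each cleaning step the sets $V_i$ still avoid $\shadowsplit\cup\shadow_{\GD}(\WantiC,\tfrac{\eta^2 k}{10^5})$ --- this is needed to preserve the $(\heartsuit)$ preconfiguration. Fortunately, Lemma~\ref{lem:RestrictionSemiregularMatching} already guarantees that outside $\shadowsplit$ the random splitting loses at most $\tfrac{\eta^2 k}{10^5}$ in any degree, and Lemma~\ref{lem:YAYB} together with hypothesis~\ref{libelle7.38b} rules out the worst behaviour of $\WantiC$ by making it of order $O(\rho)$, which is negligible compared to the $\Omega(\gamma k)$ minimum degrees we are handling. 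The constants claimed in each of the output configurations are obtained by tracking the product of loss factors $\frac{\eta}{\Omega^*}$ through at most $r\le 3$ cleaning stages, which gives the powers of $\Omega^*$ and $\eta$ appearing in the displayed parameters.
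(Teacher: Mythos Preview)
Your high-level routing of cases is right and matches the paper exactly: \textbf{(t1)}$\to\mathbf{(\diamond6)}$, \textbf{(t2)}$\to\mathbf{(\diamond7)}$, \textbf{(t3)}$\to\mathbf{(\diamond8)}$, \textbf{(t3--5)}$\to\mathbf{(\diamond9)}$, \textbf{(t5)}$\to\mathbf{(\diamond10)}$. But there are three genuine problems with how you propose to execute this.

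First, you conflate the \textbf{(M1)}/\textbf{(M2)} dichotomy with the routing. These are orthogonal: \textbf{(M1)}/\textbf{(M2)} only record the regularity parameters of $\M$, while the type label alone decides the target configuration. Your sentence ``In case \textbf{(M1)} \ldots\ contributes to the regularised graph of $\mathbf{(\diamond10)}$'' is wrong --- an \textbf{(M1)} matching in type \textbf{(t1)} still goes to $\mathbf{(\diamond6)}$, and the paper handles \textbf{(M1)}/\textbf{(M2)} uniformly in each of the lemmas for \textbf{(t1)}--\textbf{(t3)}, only noting at the end that the worse of the two parameter sets dominates.

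Second, and more seriously, your two-step cleaning will not work. You propose to first trim $\M$ via Lemma~\ref{lem:clean-Match} to get super-regular pairs $(V_0,V_1)$, and then \emph{separately} apply Lemma~\ref{lem:clean-yellow} starting from $V_1(\M)$ to extract $V_2,V_3$. But the second cleaning deletes vertices from $V_1(\M)$ independently of the first, so you cannot guarantee $\mindeg(V_1,V_2)\ge\delta k$ for the \emph{already-trimmed} $V_1$. The paper's solution is a single application of Lemma~\ref{lem:clean-Match} with $X_0:=V_2(\M)$, $X_1:=V_1(\M)$, $X_2,X_3$ the target sets in $\colouringp{1}$; this lemma is designed precisely to produce super-regular pairs on $(X_0',X_1')$ \emph{and} the chain of min-degree conditions $X_1'\to X_2'\to X_3'$ in one coordinated pass.

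Third, your appeal to ``hypothesis~\eqref{libelle7.38b}'' is misplaced: that inequality belongs to Lemma~\ref{lem:ConfWhenCXAXB}, not to the present lemma, and in the context where Lemma~\ref{lem:ConfWhenMatching} is invoked one has rather its \emph{negation}. The avoidance of $\shadow_{\GD}(\WantiC,\tfrac{\eta^2 k}{10^5})$ that Preconfiguration~$\mathbf{(\heartsuit1)}$ demands is not obtained by bounding $|\WantiC|$; it comes for free from the hypothesis $V(\M)\subset\XA\colouringpI{0}\setminus\gP$, because $\gP$ already contains $\shadow_{\GD\cup\Gcapt}(\WantiC,\tfrac{\eta^2 k}{10^5})$ --- this is exactly~\eqref{eq:trivka} of Lemma~\ref{lem:propertyYA12}.
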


\subsubsection{Proof of Lemma~\ref{outerlemma}}\label{ssec:proofofouterlemma}
In the proof, we distinguish different types of edges captured in cases
{\bf(K1)} and {\bf(K2)}. If in case {\bf(K1)} many of the captured edges from
$\XA$ to $\XA\cup \XB$ are incident with $\HugeVertices$, we will get one of the
configurations $\mathbf{(\diamond1)}$--$\mathbf{(\diamond5)}$ by employing
Lemma~\ref{lem:ConfWhenCXAXB}. Otherwise, there must be many edges from $\XA$ to
$\XA\cup \XB$ in the graph $\Gexp$, or in $\GD$.
Lemma~\ref{lem:ConfWhenNOTCXAXB} shows that the former case leads to
configuration $\mathbf{(\diamond6)}$. We will reduce the latter case to the
situation in Lemma~\ref{lem:ConfWhenMatching} which gives one of the configurations
$\mathbf{(\diamond6)}$--$\mathbf{(\diamond10)}$.

We use Lemma~\ref{lem:ConfWhenMatching} to give one of the
configurations
$\mathbf{(\diamond6)}$--$\mathbf{(\diamond10)}$ also in case
{\bf(K2)}. \footnote{Actually, our proof of
Lemma~\ref{lem:ConfWhenMatching} implies that one does not get configuration $\mathbf{(\diamond9)}$ in case {\bf(K2)}; but this fact is never needed.} 

\bigskip

Let us now turn to the details of the proof. If
$e_G(\HugeVertices,\XA\cup\XB)\ge \frac{\eta^{13}kn}{10^{28}(\Omega^*)^3}$ then
we use Lemma~\ref{lem:ConfWhenCXAXB} to obtain one of the configurations
$\mathbf{(\diamond1)}$--$\mathbf{(\diamond5)}$, with the parameters as in 
the statement of Lemma~\ref{outerlemma}.

Thus, in the remainder of the proof we assume that
\begin{equation}\label{eq:notmanyfromhuge}
e_G(\HugeVertices,\XA\cup\XB)< \frac{\eta^{13}kn}{10^{28}(\Omega^*)^3}\;.
\end{equation}

We now bound the size of the set $\gP$. By
Setting~\ref{commonsetting}\eqref{commonsetting:numbercaptured} we have that 
\begin{equation*}
 |E(G)\sm E(\Gcapt)|\leq 2\rho kn .
\end{equation*}
 Plugging this into Lemma~\ref{lem:YAYB} we get
$|L_\sharp|\le \frac{40\rho n}{\eta}$, $|\XA\setminus\YA|\le \frac{1200\rho
n}{\eta^2}$, and $|(\XA\cup \XB)\setminus\YB|\le \frac{1200\rho
n}{\eta^2}$. Further, using~\eqref{eq:notmanyfromhuge},
Lemma~\ref{lem:YAYB} also gives that $|\WantiC|\le
\frac{\eta^{12}n}{10^{26}(\Omega^*)^3}$. It follows from Setting~\ref{commonsetting}\eqref{commonsettingNicDoNAtom} that $|\gPatoms|\le \gamma n$. Last, by Setting~\ref{commonsetting}\eqref{commonsetting4} we have $|\gP_1|\le 2\gamma n$.
Thus,
\begin{align}\nonumber
|\gP|&\le |\XA\setminus\YA|+|(\XA\cup
\XB)\setminus\YB|+|\WantiC| +|L_\sharp|+|\gP_1|\\
\nonumber
&~~~+\left|\shadow_{\GD\cup\Gcapt}(\WantiC\cup
L_\sharp\cup \gPatoms\cup \gP_1,\frac{\eta^2 k}{10^5})\right|\\
\label{eq:sizeofP}
&\overset{\eqref{eq:KONST}}\le \frac{2\eta^{10}n}{10^{21}(\Omega^*)^2}\;,
\end{align}
where we used Fact~\ref{fact:shadowbound} to bound the size of the shadows.

\bigskip

Let us first turn our attention to case {\bf(K1)}.
By Definition~\ref{def:proportionalsplitting} we have $\HugeVertices\cap
\colouringp{0}=\emptyset$. Therefore,
\begin{align}
\nonumber
e_{\Gcapt}\big(\XA\colouringpI{0}\setminus \gP, &(\XA\cup
\XB)\colouringpI{0}\setminus \gP\big)=e_{\Gcapt}\big((\XA\setminus
(\HugeVertices\cup \gP))\colouringpI{0},(\XA\setminus
(\HugeVertices\cup \gP))\colouringpI{0}\cup (\XB\setminus
\gP)\colouringpI{0}\big)\\
\nonumber
\JUSTIFY{by Def~\ref{def:proportionalsplitting}~\eqref{It:H6}}&\ge
\proporce{0}^2\cdot e_{\Gcapt}\big(\XA\setminus (\HugeVertices\cup \gP), (\XA\cup
\XB)\setminus (\HugeVertices\cup \gP)\big)-k^{0.6}n^{0.6}\\
\nonumber
\JUSTIFY{by~\eqref{eq:proporcevelke}}&\ge \frac
{\eta^2}{10^4}\big(e_{\Gcapt}(\XA,
\XA\cup \XB)-2e_{\Gcapt}(\HugeVertices,
\XA\cup \XB)-2|\gP|\Omega^*k\big)-k^{0.6}n^{0.6}\\
\nonumber
\JUSTIFY{by~{\bf(K1)},~\eqref{eq:notmanyfromhuge},~\eqref{eq:sizeofP}}&\ge
\frac {\eta^2}{10^4}\Big(\frac {\eta kn}{4}-\frac{2\eta^{13}
kn}{10^{28}(\Omega^*)^3}-\frac{4\eta^{10}
kn}{10^{21}\Omega^*}\Big)-k^{0.6}n^{0.6}\\ 
&>\frac {\eta^3 kn}{10^{5}}\;.
\label{eq:Diana9}
\end{align}

We consider the
following two complementary cases:
\begin{itemize}
  \item [$\mathbf{(cA)}$] {$e_{\Gcapt}((\XA\setminus \gP)\colouringpI{0})\ge
 40\rho kn$.}
  \item [$\mathbf{(cB)}$] {$e_{\Gcapt}((\XA\setminus
  \gP)\colouringpI{0})<40\rho kn$.}
\end{itemize}

Note that  $\XA\setminus
\gP\subseteq \YA$, and $(\XA\cup \XB)\setminus
\gP\subseteq \YB$.
  We shall now define in each of the cases~$\mathbf{(cA)}$ and~$\mathbf{(cB)}$ certain sets $\YA_1, \YA_2$ which will have a minimum number of edges between them. Although the definition of these sets is different for the cases~$\mathbf{(cA)}$ and~$\mathbf{(cB)}$, for ease of notation they receive the same names.  
  
In case~$\mathbf{(cA)}$ a standard argument (take a maximal cut) gives disjoint sets $\YA_1, \YA_2\subseteq 
(\XA\setminus (\gP\cup \exceptVertSplit\cup
\shadowsplit))\colouringpI{0}\subseteq \YA$ with
\begin{align}
\nonumber
e_{\Gcapt}(\YA_1,\YA_2)\ge &\frac 12(e_{\Gcapt}(\XA\setminus
\gP)\colouringpI{0}-|\exceptVertSplit\cup \shadowsplit|\cdot \Omega^*k)\\
\label{eq:PocitaniCaseA}
\JUSTIFY{by
 Def~\ref{def:proportionalsplitting}\eqref{It:H1} and by~\eqref{eq:boundShadowsplit}}\ge & \frac
 12(40\rho kn-2\epsilon \Omega^*kn)\notag \\  > &19\rho kn\;.
\end{align}

Let us now define $\YA_1, \YA_2$ for case~$\mathbf{(cB)}$.
Property~\ref{commonsettingXAS0} of Setting~\ref{commonsetting} implies that
\begin{equation}
\label{eq:Dsmallvecer}
|\gP_2|\le \sqrt\gamma n\;\mbox{.}
\end{equation}

Also, by Definition~\ref{def:proportionalsplitting}\eqref{It:H6} we have
\begin{align*}
e_{\Gcapt}(\XA)&\le \frac{1}{\proporce{0}^2}\big(e_{\Gcapt}((\XA\setminus
\gP)\colouringpI{0})+k^{0.6}n^{0.6}\big)+e_{\Gcapt}(\HugeVertices,\XA)+|\gP|\Omega^*k\\
\JUSTIFY{by~\eqref{eq:proporcevelke},~$\mathbf{(cB)}$,~\eqref{eq:notmanyfromhuge},
and~\eqref{eq:sizeofP}}&\le\frac{10^4}{\eta^2}\cdot \big(40\rho kn+
k^{0.6}n^{0.6}\big)+
\frac{\eta^{13}}{10^{28}(\Omega^*)^3}kn+\frac{\eta^{10}}{10^{20}\Omega^*}kn\\
\JUSTIFY{by~\eqref{eq:KONST}}&<\frac {\eta^{8}}{10^{15}\Omega^*}kn\;.
\end{align*}

Consequently, 
$$|\gP_3|\cdot\frac{\eta^3k}{10^3}\le e_{\Gcapt}(\gP_3,\XA)\leq 2\cdot \frac{\eta^8}{10^{15}\Omega^*}kn,$$
and thus,
\begin{align}
\label{eq:D2mala}
|\gP_3|&\le 2\cdot \frac{\eta^5}{10^{12}\Omega^*}n\;.
\end{align}
Set $\YA_1:=(\XA\setminus (\gP\cup \gP_2\cup \gP_3\cup
\exceptVertSplit\cup \shadowsplit))\colouringpI{0}\subseteq \YA$ and
$\YA_2:=(\XB\setminus (\gP\cup \exceptVertSplit\cup
\shadowsplit))\colouringpI{0}\subseteq \YB$. Then the sets $\YA_1$ and $\YA_2$ are
disjoint and we have
\begin{align}
\nonumber
e_{\Gcapt}(\YA_1,\YA_2)&\ge
e_{\Gcapt}\left((\XA\setminus
\gP)\colouringpI{0},((\XA\cup\XB)\setminus
\gP)\colouringpI{0}\right)-2e_{\Gcapt}((\XA\setminus
\gP)\colouringpI{0})\\
\nonumber &~~~-(|\gP_2|+|\gP_3|+2|\exceptVertSplit|+2|\shadowsplit|)\cdot
\Omega^*k
\\
\nonumber
\JUSTIFY{by~\eqref{eq:Diana9}, $\mathbf{(cB)}$, \eqref{eq:Dsmallvecer},
\eqref{eq:D2mala}, D\ref{def:proportionalsplitting}\eqref{It:H1}, \eqref{eq:boundShadowsplit}} &\ge\frac{\eta^3kn}{
10^{5}}-80\rho kn-\sqrt{\gamma}\Omega^*kn-\frac{2\eta^5}{10^{12}}kn-4\epsilon
\Omega^* kn\\
&\geByRef{eq:KONST}
19\rho kn\;.\label{eq:PocitaniCaseB}
\end{align}
We have thus defined $\YA_1, \YA_2$ for both cases~$\mathbf{(cA)}$ and~$\mathbf{(cB)}$.

Observe first that if
$e_{\Gexp}(\YA_1,\YA_2)\ge 2\rho kn$
then we may apply
Lemma~\ref{lem:ConfWhenNOTCXAXB} to obtain
Configuration~$\mathbf{(\diamond6)}(\frac{\eta^3\rho^4}{10^{14}(\Omega^*)^3},0,1,1,
\frac {3\eta^3}{2\cdot 10^3},\proporce{2}(1+\frac{\eta}{20})k)$.
Hence, from now on, let us assume that $e_{\Gexp}(\YA_1,\YA_2)> 2\rho kn$.
Then
by~\eqref{eq:PocitaniCaseA} and~\eqref{eq:PocitaniCaseB} we have that $$e_{\GD}(\YA_1,\YA_2)\ge 17\rho kn.$$

 We fix a family $\DenseSpots_\class$ as in Lemma~\ref{lem:clean-spots}. In particular, we have 
 \begin{equation}\label{flordelino}
  e_{\DenseSpots_\class}(\YA_1,\YA_2)\ge 16\rho kn. 
 \end{equation}

Let $R:=\shadow_{\Gcapt}\left((\largeintoatoms\cap\largevertices{\eta}{k}{G})\setminus V(\M_A\cup\M_B),\frac{2\eta^2 k}{10^5}\right)$. For $i=1,2$ define 
\begin{align}
\begin{split}\label{eq:defY1Y5}
\mathbb{Y}^{(1)}_i& :=\shadow_{G}(V(\Gexp),\rho k)\cap \YA_i\;,\\
\mathbb{Y}^{(2)}_i& :=(\largeintoatoms\cap \YA_i)\setminus\mathbb{Y}^{(1)}_i\;,\\
\mathbb{Y}^{(3)}_i& :=(R\cap \YA_i)\setminus(\mathbb{Y}^{(1)}_i\cup \mathbb{Y}^{(2)}_i)\;,\\
\mathbb{Y}^{(4)}_i& :=(\smallatoms\cap \YA_i)\setminus(\mathbb{Y}^{(1)}_i\cup \mathbb{Y}^{(2)}_i\cup \mathbb{Y}^{(3)}_i)\;,\\
\mathbb{Y}^{(5)}_i& := \YA_i\setminus(\mathbb{Y}^{(1)}_i\cup\ldots \cup 
\mathbb{Y}^{(4)}_i)\;.
\end{split}
\end{align}
Clearly, the sets $\mathbb{Y}^{(j)}_i$ partition $\YA_i$ for $i=1,2$.

We now present two lemmas (one for
case {\bf(cA)} and one for case {\bf(cB)}) which help to distinguish several subcases based on  the majority type of edges we find between $\YA_1$ and $\YA_2$. The first of the two lemmas follows by
simple counting from~\eqref{flordelino}.

\begin{lemma}\label{lem:MajorityTypeCB}
In case {\bf(cB)}, we have one of the following.
\begin{itemize}
 \item[{\bf(t1)}] $e_{\DenseSpots_{\class}}\left(\mathbb{Y}^{(1)}_1,\YA_2\right)\ge 2\rho kn$,
 \item[{\bf(t2)}] $e_{\DenseSpots_{\class}}\left(\mathbb{Y}^{(2)}_1,\YA_2\right)\ge 2\rho kn$,
\item[{\bf(t3)}] $e_{\DenseSpots_{\class}}\left(\mathbb{Y}^{(3)}_1,\YA_2\right)\ge 2\rho kn$,
\item[{\bf(t4)}] $e_{\DenseSpots_{\class}}\left(\mathbb{Y}^{(4)}_1,\YA_2\right)\ge 2\rho kn$, or
\item[{\bf(t5)}] $e_{\DenseSpots_{\class}}\left(\mathbb{Y}^{(5)}_1,\YA_2\right)\ge 2\rho kn$.
\end{itemize}
\end{lemma}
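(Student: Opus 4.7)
The plan is to do essentially nothing beyond pigeonholing over the partition of $\YA_1$ provided by~\eqref{eq:defY1Y5}. First I would verify that $\mathbb{Y}^{(1)}_1,\ldots,\mathbb{Y}^{(5)}_1$ really do form a partition of $\YA_1$: this is immediate from the construction in~\eqref{eq:defY1Y5}, where each $\mathbb{Y}^{(j)}_1$ is explicitly defined by intersecting with $\YA_1$ a certain set and then removing the previously-defined classes $\mathbb{Y}^{(j')}_1$ for $j'<j$, with $\mathbb{Y}^{(5)}_1$ collecting the leftover.

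Since each edge of $\DenseSpots_\class$ with one endpoint in $\YA_1$ and the other in $\YA_2$ is counted in exactly one of the five terms $e_{\DenseSpots_\class}(\mathbb{Y}^{(j)}_1,\YA_2)$, we have
\[
\sum_{j=1}^{5} e_{\DenseSpots_\class}\bigl(\mathbb{Y}^{(j)}_1,\YA_2\bigr)\ =\ e_{\DenseSpots_\class}(\YA_1,\YA_2)\ \geq\ 16\rho kn,
\]
where the inequality is~\eqref{flordelino}. By averaging, at least one index $j\in\{1,2,3,4,5\}$ satisfies
\[
e_{\DenseSpots_\class}\bigl(\mathbb{Y}^{(j)}_1,\YA_2\bigr)\ \geq\ \tfrac{16}{5}\rho kn\ >\ 2\rho kn,
\]
which is exactly one of the conclusions {\bf(t1)}--{\bf(t5)}. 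There is no real obstacle here; the entire content is a one-line pigeonhole, which is why the excerpt flags the lemma as following ``by simple counting''. The only sanity check worth performing is that the classes in~\eqref{eq:defY1Y5} are indeed defined so as to be pairwise disjoint (the successive subtractions $\setminus(\mathbb{Y}^{(1)}_1\cup\cdots\cup \mathbb{Y}^{(j-1)}_1)$ guarantee this), so that no edge is double-counted on the left-hand side above.
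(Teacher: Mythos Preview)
Your proposal is correct and matches the paper's approach exactly: the paper states that this lemma ``follows by simple counting from~\eqref{flordelino}'', which is precisely the pigeonhole over the partition $\mathbb{Y}^{(1)}_1,\ldots,\mathbb{Y}^{(5)}_1$ of $\YA_1$ that you spell out.
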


Our second lemma is a bit more involved.

\begin{lemma}\label{lem:MajorityTypeCA}
In case {\bf(cA)}, we have one of the following.
\begin{itemize}
 \item[{\bf(t1)}] $e_{\DenseSpots_{\class}}(\mathbb{Y}^{(1)}_1,\YA_2)+e_{\DenseSpots_{\class}}(\YA_1,\mathbb{Y}^{(1)}_2)\ge 4\rho kn$,
 \item[{\bf(t2)}] $e_{\DenseSpots_{\class}}\left(\mathbb{Y}^{(2)}_1,\YA_2\setminus \mathbb{Y}^{(1)}_2\right)+e_{\DenseSpots_{\class}}\left(\YA_1\setminus \mathbb{Y}^{(1)}_1,\mathbb{Y}^{(2)}_2\right)\ge 4\rho kn$,
\item[{\bf(t3)}] $e_{\DenseSpots_{\class}}\left(\mathbb{Y}^{(3)}_1,\YA_2\setminus (\mathbb{Y}^{(1)}_2\cup \mathbb{Y}^{(2)}_2)\right)
+
e_{\DenseSpots_{\class}}\left(\YA_1\setminus (\mathbb{Y}^{(1)}_1\cup \mathbb{Y}^{(2)}_1),\mathbb{Y}^{(3)}_2\right)\ge 4\rho kn$, or
\item[{\bf(t5)}] $e_{\DenseSpots_{\class}}\left(\mathbb{Y}^{(5)}_1,\mathbb{Y}^{(5)}_2\right)\ge 2\rho kn$.
\end{itemize}
\end{lemma}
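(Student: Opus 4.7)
The plan is to partition the edges of $\DenseSpots_\class$ between $\YA_1$ and $\YA_2$ according to the pair of indices $(j_1,j_2)\in\{1,\ldots,5\}^2$ recording which $\mathbb{Y}^{(j_1)}_1$ and $\mathbb{Y}^{(j_2)}_2$ the endpoints lie in, and then argue by pigeonhole. Starting point: by~\eqref{flordelino} we have $e_{\DenseSpots_\class}(\YA_1,\YA_2)\ge 16\rho kn$. A direct inspection of the definitions in the four cases shows that any edge of type $(j_1,j_2)$ contributes to at least one of the four quantities in $\mathbf{(t1)}$--$\mathbf{(t3)}$ or $\mathbf{(t5)}$, except for the three residual types $(4,4),(4,5),(5,4)$. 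Indeed: if $\min(j_1,j_2)=1$ it goes into the corresponding term of $\mathbf{(t1)}$; if $\min(j_1,j_2)=2$ it goes into $\mathbf{(t2)}$; if $\min(j_1,j_2)=3$ then $\mathbf{(t3)}$ picks it up; and the only way to avoid $\mathbf{(t5)}$ with $\min(j_1,j_2)\ge 4$ is to have a $4$ somewhere.

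The main (and only) non-trivial step is to bound the total number of $\DenseSpots_\class$-edges of these three residual types. Here I would use the defining property of $\DenseSpots_\class$ from Lemma~\ref{lem:clean-spots}, namely $E(\DenseSpots_\class)\subset E(\Gblack)\cup E(G[\smallatoms,\smallatoms\cup\bigcup\clusters])\subset E(\Gcapt-\HugeVertices)$, together with the fact that for each residual type at least one endpoint $u$ satisfies both $u\notin\HugeVertices$ and $u\notin\largeintoatoms$. For a $(4,4)$-edge, both endpoints lie in $\smallatoms$ and in particular the $\mathbb{Y}^{(4)}_2$-endpoint $u$ is not in $\largeintoatoms$ (otherwise it would belong to $\mathbb{Y}^{(2)}_2$), so by~\eqref{eq:deflargeintoatoms} we have $\deg_{\Gcapt-\HugeVertices}(u,\smallatoms)\le \tfrac{\rho k}{100\Omega^*}$; since $\mathbb{Y}^{(4)}_1\subset\smallatoms$, this bounds the $\DenseSpots_\class$-degree of $u$ into $\mathbb{Y}^{(4)}_1$ by $\tfrac{\rho k}{100\Omega^*}$. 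For $(4,5)$- and $(5,4)$-edges, the $\mathbb{Y}^{(5)}$-endpoint $u$ is again outside $\largeintoatoms$ and outside $\HugeVertices$, and the opposite endpoint is in $\mathbb{Y}^{(4)}\subset\smallatoms$, so the same degree bound applies. Summing over $u$ on the appropriate side in each of the three types and using $|\mathbb{Y}^{(4)}_2|,|\mathbb{Y}^{(5)}_i|\le n$ together with $\Omega^*\ge 1$, the total contribution of residual edges is at most $\tfrac{3\rho kn}{100\Omega^*}\le \rho kn$.

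Consequently the four quantities appearing in $\mathbf{(t1)}$--$\mathbf{(t3)}$ and $\mathbf{(t5)}$ sum to at least $16\rho kn-\rho kn=15\rho kn$ (each covered edge is counted at least once, any double-counting only helping the lower bound). If all four bounds failed simultaneously, the sum would be strictly less than $4\rho kn+4\rho kn+4\rho kn+2\rho kn=14\rho kn$, a contradiction. Hence one of $\mathbf{(t1)}$--$\mathbf{(t3)}$, $\mathbf{(t5)}$ must hold, as required. The only place where care is needed is the bookkeeping in Step~2 to verify that the three residual types really are the only ones not covered by the stated expressions; I expect no other obstacle.
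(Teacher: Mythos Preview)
Your proposal is correct and follows essentially the same approach as the paper's proof: both bound the residual edges (those with an endpoint in $\mathbb{Y}^{(4)}_i$ and the other in $\mathbb{Y}^{(4)}_{3-i}\cup\mathbb{Y}^{(5)}_{3-i}$) using the definition of $\largeintoatoms$ and the containment $E(\DenseSpots_\class)\subset E(\Gcapt-\HugeVertices)$, then conclude by pigeonhole from~\eqref{flordelino}. Your write-up is actually more explicit than the paper's about the final counting ($15\rho kn$ versus $14\rho kn$), whereas the paper just states the residual bound and leaves the pigeonhole implicit.
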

\begin{proof}
By~\eqref{flordelino}, we only need to establish that 
$$e_{\DenseSpots_{\class}}\left(\mathbb{Y}^{(4)}_1,\YA_2\setminus (\mathbb{Y}^{(1)}_2\cup \mathbb{Y}^{(2)}_2\cup \mathbb{Y}^{(3)}_2)\right)
+
e_{\DenseSpots_{\class}}\left(\YA_1\setminus (\mathbb{Y}^{(1)}_1\cup \mathbb{Y}^{(2)}_1\cup \mathbb{Y}^{(3)}_1),\mathbb{Y}^{(4)}_2\right)< \rho kn\;.$$
For this, note that $\mathbb{Y}^{(4)}_1\subset\smallatoms$ and that $\YA_2\setminus (\mathbb{Y}^{(1)}_2\cup \mathbb{Y}^{(2)}_2\cup \mathbb{Y}^{(3)}_2)$ is disjoint from $\largeintoatoms$. Thus we have $e_{\DenseSpots_{\class}}\left(\mathbb{Y}^{(4)}_1,\YA_2\setminus (\mathbb{Y}^{(1)}_2\cup \mathbb{Y}^{(2)}_2\cup \mathbb{Y}^{(3)}_2)\right)<\frac{\rho k n}{100\Omega^*}$. We can bound the other summand using a symmetric argument.
\end{proof}

Next, we prove a lemma that will provide the crucial step for finishing case  {\bf(K1)}.
\begin{lemma}\label{lem:Isabelle}
Let $G^*$ be the spanning subgraph of $\GD$ formed by the edges of
$\DenseSpots_\class$. If there are two disjoint sets $Z_1$ and $Z_2$ with
$e_{G^*}(Z_1,Z_2)\ge 2\rho kn$ then there exists an
$(\epsilonD,\frac{\gamma^3\rho}{32\Omega^*},\frac{\alphaD\rho
k}{\Omega^*})$-semiregular matching $\mathcal N$ in $G^*$ with $V_i(\mathcal
N)\subset Z_i$ ($i=1,2$), and $|V(\mathcal N)|\ge\frac{\rho n}{\Omega^*}$.
\end{lemma}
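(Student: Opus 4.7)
The plan is to deduce Lemma~\ref{lem:Isabelle} from a single application of Lemma~\ref{lem:edgesEmanatingFromDensePairsIII}. I would take the host graph to be $\bar G := G - \HugeVertices$, whose maximum degree is at most $\Omega^* k$ by Definition~\ref{sparseclassdef}\eqref{def:classgap}, together with the $(\gamma k, \gamma)$-dense cover $\DenseSpots$ furnished by Definition~\ref{bclassdef}\eqref{defBC:densepairs}. The bipartite subgraph $H$ is defined as $G^*[Z_1, Z_2]$. Since $V(G^*) \subseteq \bigcup\clusters \cup \smallatoms$ is disjoint from $\HugeVertices$ by Lemma~\ref{lem:clean-spots}(2), the hypothesis $e_{G^*}(Z_1,Z_2) \geq 2\rho kn$ supplies the density parameter $\tau := 2\rho$, and every edge of $H$ lies in $\DenseSpots_\class \subseteq \DenseSpots$, as required by Definition~\ref{tupelclass}.

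The central step is to construct an ensemble $\mathcal A$ satisfying Definition~\ref{tupelclass}. I would derive $\mathcal A$ from the atom partition
\[
 \mathcal X \ :=\ \underset{D=(U,W;F)\in\DenseSpots}{\bigboxplus}\{U,\ W,\ V(\bar G)\setminus V(D)\}
\]
of $V(\bar G)$, by setting
\[
 \mathcal A \ :=\ \bigl\{\,X \cap Z_1 \ :\ X \in \mathcal X,\ |X \cap Z_1| \geq \nu_L k\,\bigr\}
\]
for a suitable constant $\nu_L$. Condition~(v) of Definition~\ref{tupelclass} is then immediate, because each $A \in \mathcal A$ is contained in a single atom of $\mathcal X$ and the atoms by construction satisfy $X \cap U \in \{\emptyset, X\}$ for every $D = (U,W;F) \in \DenseSpots$. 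Property~\ref{defBC:prepartition} of Definition~\ref{bclassdef} forces each cluster $C \in \clusters$ to lie in a single atom of $\mathcal X$, and $\smallatoms$ is itself (by its construction in Lemma~\ref{lem:decompositionIntoBlackandExpanding}) a union of small atoms of $\mathcal X$; therefore every atom meeting $V(G^*)$ is either a union of clusters or is contained in $\smallatoms$, giving structural control on the atoms. To secure condition~(iv), I would restrict attention to the subgraph $H' \subseteq H$ obtained by removing edges whose $Z_1$-endpoint falls in a ``small'' atom (one with $|X \cap Z_1| < \nu_L k$).

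The main obstacle is the parameter bookkeeping: $\nu_L$ must be chosen simultaneously large enough so that the matching elements have the prescribed size $\alphaD \rho k/\Omega^*$, and small enough so that the edge loss in passing from $H$ to $H'$ is a negligible fraction of $2\rho kn$. The slack available in the hierarchy~\eqref{eq:KONST} --- in which $\pi \geq \alphaD \geq \epsilon' \geq \nu$, while $\rho$ and $\gamma$ are much larger --- is precisely what makes this possible; indeed $\alphaD$ in~\eqref{eq:KONST} should be set up in relation to the output $\alpha$ of Lemma~\ref{lem:edgesEmanatingFromDensePairsIII} so that $\alpha\nu_L \geq \alphaD \rho/\Omega^*$. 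Applying Lemma~\ref{lem:edgesEmanatingFromDensePairsIII} with input parameters $\Omega_L := \Omega^*$, $\rho_L := \gamma$, $\tau_L := 2\rho$, $\epsilon_L := \epsilonD$, and $\nu_L$ as above then yields an $(\epsilonD, \tfrac{\rho\gamma}{4\Omega^*}, \alphaD \rho k/\Omega^*)$-semiregular matching $\mathcal N$ with $V_i(\mathcal N) \subseteq Z_i$ for $i=1,2$ and $|V(\mathcal N)| \geq \tfrac{\rho n}{\Omega^*}$. The output density $\tfrac{\rho\gamma}{4\Omega^*}$ is at least $\tfrac{\gamma^3 \rho}{32\Omega^*}$ because $\gamma \leq 1$, so $\mathcal N$ meets all the conclusions demanded by Lemma~\ref{lem:Isabelle}.
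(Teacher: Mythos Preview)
Your reduction to Lemma~\ref{lem:edgesEmanatingFromDensePairsIII} is the right idea and matches the paper. However, the paper's execution is far simpler, and your version has a genuine gap.

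The paper takes $G^*$ itself as the host graph, $\DenseSpots_\class$ as the dense cover (these are $(\gamma^3 k/4,\gamma/2)$-dense by Lemma~\ref{lem:clean-spots}, so $\rho_L:=\gamma^3/4$), and---this is the key simplification---the \emph{singleton} ensemble $\mathcal A:=\{Z_1\}$. From $\maxdeg(G^*)\le\Omega^* k$ and $e_{G^*}(Z_1,Z_2)\ge 2\rho kn$ one obtains $|Z_1|\ge 2\rho n/\Omega^*\ge 2\rho k/\Omega^*$, so $\nu_L:=\rho/\Omega^*$, and Lemma~\ref{lem:edgesEmanatingFromDensePairsIII} delivers the matching directly; the constant $\alphaD$ was chosen in~\eqref{eq:KONST} precisely as the corresponding $\alpha_\PARAMETERPASSING{L}{lem:edgesEmanatingFromDensePairsIII}$. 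No atom partition and no edge deletion are involved.

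Your route has two problems. First, $\DenseSpots$ is not a dense cover of $\bar G=G-\HugeVertices$ in the sense of Definition~\ref{tupelclass}(iii): Definition~\ref{bclassdef}\eqref{defBC:densepairs} only places the spots of $\DenseSpots$ inside $G-\Gexp$, whereas $\bar G$ contains all $\Gexp$-edges and all uncaptured edges. You must take $\GD$ or $G^*$ as the host. Second, and more seriously, your edge-loss step does not go through. There is no bound in the hierarchy~\eqref{eq:KONST} on the number of atoms of $\mathcal X$, so the total weight $\sum_{|X\cap Z_1|<\nu_L k}|X\cap Z_1|$ of the discarded portion of $Z_1$ is uncontrolled; in particular, nothing in the hypotheses of Lemma~\ref{lem:Isabelle} prevents $Z_1$ from being spread across many atoms each meeting it in fewer than $\nu_L k$ vertices (for instance $Z_1\subseteq\smallatoms$), in which case you lose essentially all of $e_{G^*}(Z_1,Z_2)$ when passing to $H'$. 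The singleton ensemble sidesteps this entirely.
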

\begin{proof}
As the maximum degree $G^*$ is bounded by $\Omega^* k$,
we have $|Z_1|\ge \frac{2\rho n}{\Omega^*}\ge \frac{2\rho
k}{\Omega^*}$. Thus, $$(G^*,\DenseSpots_\class,G^*[Z_1,Z_2],\{Z_1\})\in\mathcal
G\left(v(\GD), k, \Omega^*, \frac{\gamma^3}4, \frac{\rho}{\Omega^*},2\rho\right)\;.$$
Lemma~\ref{lem:edgesEmanatingFromDensePairsIII} (which applies with these parameters by the choice of $\alphaD$ and $k_0$ by~\eqref{eq:KONST}, also cf.~page~\pageref{pageref:PAR} for the precise choice)
immediately gives the desired output.
\end{proof}
We use Lemma~\ref{lem:Isabelle} with $Z_1,Z_2$ being the pair of sets containing
many edges as in the cases {\bf(t1)}--{\bf(t3)} and {\bf(t5)} of
Lemma~\ref{lem:MajorityTypeCA}\footnote{The quantities in
Lemma~\ref{lem:MajorityTypeCA} have two summands. We take the sets $Z_1$,$Z_2$
as those appearing in the majority summand.} and {\bf(t1)}--{\bf(t5)} of
Lemma~\ref{lem:MajorityTypeCB}. The lemma outputs a semiregular matching
$\M_\PARAMETERPASSING{L}{lem:ConfWhenMatching}:=\mathcal N_\PARAMETERPASSING{L}{lem:Isabelle}$. This matching is a basis of the input for Lemma~\ref{lem:ConfWhenMatching}{\bf(M2)} (subcase {\bf(t1)}--{\bf(t3)}, {\bf(t5)}, or {\bf(t3--5)}). Thus, we get one of the configurations $\mathbf{(\diamond6)}$--$\mathbf{(\diamond10)}$ as in the statement of the lemma. This finishes the proof for case {\bf(K1)}.

\bigskip

Let us now turn our attention to case {\bf(K2)}.
For every pair $(X,Y)\in \Mgood$, let $X'\subseteq X\colouringpI{0}\setminus
(\gP\cup \exceptVertSplit\cup \shadowsplit)$ and $Y'\subseteq Y\colouringpI{0}\setminus
(\gP\cup \exceptVertSplit\cup \shadowsplit)$ be maximal with $|X'|=|Y'|$. Define
$\mathcal N:=\{(X',Y')\::\: (X,Y)\in \Mgood\;,\; |X'|\ge \frac
{\eta^2\clustersize}{2\cdot 10^3}\}$. By Lemma~\ref{lem:RestrictionSemiregularMatching}, and using~\eqref{eq:KONST} and~\eqref{eq:proporcevelke},
we know that 
$$|V(\Mgood\colouringpI{0})|\ge  \frac {\eta^2n}{400}.$$ Therefore, we have
\begin{align}\label{campoafuera}
|V(\mathcal N)|&\ge |V(\Mgood\colouringpI{0})|-2|\gP\cup \exceptVertSplit\cup
\shadowsplit|-2\frac {\eta^2n}{2\cdot 10^3}\notag \\
\JUSTIFY{by {\bf(K2)}, \eqref{eq:sizeofP}, Def\ref{def:proportionalsplitting}\eqref{It:H1}, \eqref{eq:boundShadowsplit}}
&\ge \frac {\eta^2n}{400}-\frac {4\cdot
\eta^{10} n}{10^{21}(\Omega^*)^2}-4\varepsilon n-\frac {\eta^2n}{10^3}\notag \\&>
\frac {\eta^2n}{1000}\;.
\end{align}
By Fact~\ref{fact:BigSubpairsInRegularPairs}, $\mathcal N$ is a $(\frac {4\cdot
10^3\epsilon'}{\eta^2}, \frac{\gamma^2}2,\frac
{\eta^2\clustersize}{2\cdot 10^3})$-semiregular matching.

We use the definitions of the sets $\mathbb{Y}^{(1)}_i,\ldots,\mathbb{Y}^{(5)}_i$ as given in~\eqref{eq:defY1Y5} with $\YA_i:=V_i(\mathcal N)$ ($i=1,2$). As $V(\mathcal N)\subset V(\Gblack)$, we have that $\mathbb{Y}^{(4)}_i=\emptyset$ ($i=1,2$).
A set $X\in \V_i(\mathcal N)$ is said to be of \emph{Type~1} if $\left|X\cap
\mathbb{Y}^{(1)}_i\right|\ge\frac14|X|$. 
Analogously, we define elements of $\V(\mathcal N)$ of \emph{Type~2}, \emph{Type~3}, and \emph{Type~5}.  

By~\eqref{campoafuera} and as $V(\Mgood)\subset \XA$, we are in subcase $\mathbf{(cA)}$. 
For each  $(X_1,X_2)\in\mathcal N$ with at least one $X_i\in \{X_1,X_2\}$
being of Type~1, set $X_i':=X_i \cap \mathbb{Y}^{(1)}_i$ 
and take an arbitrary set $X_{3-i}'\subset X_{3-i}$ of size $|X_i'|$. Note that
by Fact~\ref{fact:BigSubpairsInRegularPairs} $(X'_i,X_{3-i}')$ forms a
$\frac{10^5\epsilon'}{\eta^2}$-regular pair of density at least $\gamma^2/4$. We let $\mathcal N_1$ be the semiregular matching consisting of all pairs
$(X'_i,X_{3-i}')$ obtained in this way.\footnote{Note that we are thus changing
the orientation of some subpairs.}

Likewise, we construct $\mathcal N_2,\mathcal N_3$ and $\mathcal N_5$ using the features of
Type~2, 3, and 5. Observe that the matchings $\mathcal N_i$ may intersect.

Because of~\eqref{campoafuera} and since we included at least one quarter of each $\mathcal N$-edge into one of
$\mathcal N_1,\mathcal N_2,\mathcal N_3$ and $\mathcal N_5$, one of the
semiregular matchings $\mathcal N_i$ satisfies $|V(\mathcal N_i)|\geq \frac{\eta^2n}{16\cdot 1000} \geq \frac{\rho}{\Omega^*}n$. So, $\mathcal N_i$ serves as a matching $\M_\PARAMETERPASSING{L}{lem:ConfWhenMatching}$ for Lemma~\ref{lem:ConfWhenMatching}{\bf(M1)}. Thus, we get one of the configurations $\mathbf{(\diamond6)}$--$\mathbf{(\diamond10)}$ as in the statement of the lemma. This finishes case {\bf(K2)}.

\subsubsection{Proof of Lemma~\ref{lem:ConfWhenCXAXB}}\label{sssec:ProofConfWhenCXAXB}
Set $\tilde\eta:=\frac{\eta^{13}}{10^{28}(\Omega^*)^3}$. 
Define $\NUP:=\{v\in V(G)\::\: \deg_{\Gcapt}(v,\HugeVertices)\ge
k\}$, and $\NDOWN:=\neighbor_{\Gcapt}(\HugeVertices)\setminus \NUP$. 
Recall that by the definition of the class $\LKSsmallgraphs{n}{k}{\eta}$, the
set $\HugeVertices$ is independent, and thus the sets $\NUP$ and $\NDOWN$ are
disjoint from $\HugeVertices$. Also, using the same definition, we have
\begin{align} 
\label{eq:NCpodL} \neighbor_{\Gcapt}(\HugeVertices)&\subset
\largevertices{\eta}{k}{G}\setminus \HugeVertices\quad\mbox{, and thus }\\
\label{eq:MuzuProniknoutsL}
e_{\Gcapt}(\HugeVertices,
B)&=e_{\Gcapt}(\HugeVertices, B\cap\largevertices{\eta}{k}{G})\;\mbox{for any $B\subset V(G)$.}
\end{align}

We shall distinguish two cases.

\noindent\underline{\bf Case A:} $e_{\Gcapt}(\HugeVertices,\NUP)\ge
e_{\Gcapt}(\HugeVertices,\XA\cup\XB)/8$.\\ Let us focus on the bipartite
subgraph $H'$ of $\Gcapt$ induced by the sets $\HugeVertices$ and $\NUP$.
Obviously, the average degree of the vertices of $\NUP$ in $H'$ is at least $k$.

First, suppose that $|\HugeVertices|\le |\NUP|$. Then,
the average degree of $\HugeVertices$ in $H'$ is at least $k$, and hence, the
average degree of $H'$ is at least $k$. Thus, there exists a bipartite subgraph $H\subset H'$ with $\mindeg(H)\ge k/2$. Furthermore, $\mindeg_{\Gcapt}(V(H))\ge k$. We conclude that we are in Configuration~$\mathbf{(\diamond1)}$.

Now, suppose $|\HugeVertices|>|\NUP|$. Using the bounds given by Case~A, and using
\eqref{libelle7.38b}, we get $$|\NUP|\ge
\frac{e_{\Gcapt}(\HugeVertices,\NUP)}{\Omega^*k}\ge\frac{\tilde{\eta}kn}{8\Omega^*k}=\frac{\tilde{\eta}n}{8\Omega^*}\;.$$
Therefore, we have
 $$e(G)\ge\sum_{v\in\HugeVertices}\deg_{\Gcapt}(v)\ge
 |\HugeVertices|\Omega^{**}k>
 |\NUP|\Omega^{**}k\ge\frac{\tilde{\eta}n}{8\Omega^*}\Omega^{**}k\geByRef{eq:KONST}
 kn\;,$$ a contradiction to Property~\ref{def:LKSsmallC} of Definition~\ref{def:LKSsmall}.

\bigskip
\noindent\underline{\bf Case B:} $e_{\Gcapt}(\HugeVertices,\NUP)<
e_{\Gcapt}(\HugeVertices,\XA\cup\XB)/8$.\\ Consequently, we get
\begin{equation}\label{eq:eGCNUB}
e_{\Gcapt}(\HugeVertices,(\XA\cup\XB)\setminus \NUP)\ge
\frac78e_{\Gcapt}(\HugeVertices,\XA\cup\XB)\geByRef{libelle7.38b} \frac
78\tilde\eta kn\;.
\end{equation}

\def\Lenve{\PARAMETERPASSING{L}{lem:envelope}}

We now apply Lemma~\ref{lem:envelope} to $\Gcapt$ with input sets $P_\Lenve:=\HugeVertices$, $Q_\Lenve:=\largevertices{\eta}{k}{G}\setminus \HugeVertices$,
$Y_\Lenve:=\largevertices{\eta}{k}{G}\setminus\largevertices{\frac9{10}\eta}{k}{\Gcapt}$,
and parameters $\psi_\Lenve:=\tilde \eta/100$, $\Gamma_\Lenve:=\Omega^*$, and
$\Omega_\Lenve:=\Omega^{**}$. Assumption~\eqref{eq:envelopeAss1} of
the lemma follows from~\eqref{eq:NCpodL}. The lemma yields three sets
$L'':=Q''_\Lenve$, $L':=Q'_\Lenve$ , $\HugeVertices':=P'_\Lenve$, and it is easy to check that these witness
Preconfiguration~$\mathbf{(\clubsuit)}(\frac{\tilde\eta^3\Omega^{**}}{4\cdot10^6(\Omega^*)^2})$.

Recall that $e(G)\le kn$. Since by the definition of $Y_\Lenve$, we have
$|Y_\Lenve|\le\frac{40\rho}{\eta}n$,
we obtain from Lemma~\ref{lem:envelope}(d)  that
\begin{align}
e_{\Gcapt}(\HugeVertices,\largevertices{\eta}{k}{G})-e_{\Gcapt}(\HugeVertices',L'')&\le
\frac{\tilde \eta}{100}
e_{\Gcapt}(\HugeVertices,\largevertices{\eta}{k}{G})+\frac{|Y_\Lenve|200(\Omega^*)^2k}{\tilde{\eta}}\nonumber\\
\nonumber &\le \frac{\tilde
\eta}{100}kn+\frac{40\rho
n}{\eta}\cdot\frac{200(\Omega^*)^2k}{\tilde{\eta}}\\
&\leByRef{eq:KONST}
\frac{\tilde\eta}2 kn.\label{libelle7.36b}
\end{align}
 So, 
\begin{align}\nonumber
e_{\Gcapt}\big(\HugeVertices',(L''\cap(\XA\cup\XB))\setminus \NUP\big)
&\ge
e_{\Gcapt}\big(\HugeVertices,(\largevertices{\eta}{k}{G}\cap(\XA\cup\XB))\setminus
\NUP\big)
\nonumber\\
&~~-\big(e_{\Gcapt}(\HugeVertices,\largevertices{\eta}{k}{G}\big)-e_{\Gcapt}(\HugeVertices',L'')\big)
\nonumber\\
\nonumber 
&=e_{\Gcapt}(\HugeVertices,(\XA\cup\XB)\setminus
\NUP)
\\ \nonumber
&~~-\big(e_{\Gcapt}(\HugeVertices,\largevertices{\eta}{k}{G}\big)-e_{\Gcapt}(\HugeVertices',L'')\big)\\
&\nonumber\geByRef{libelle7.36b}e_{\Gcapt}(\HugeVertices,(\XA\cup\XB)\setminus \NUP)
-\frac{\tilde\eta}2 kn\\ &\geByRef{eq:eGCNUB} \frac38\tilde\eta kn\;.\label{eq:eGCNUP}
\end{align}

We define $$\HugeVertices^*:=\left\{v\in\HugeVertices'\::\: \deg_{\Gcapt}(v,L''\cap (\XA\cup\XB)\cap
\NDOWN)\ge\sqrt{\Omega^{**}}k\right\}\;.$$ 

Using that $e(G)\le kn$, we shall show the following.
\begin{lemma}\label{lem:C*XAXBNDOWN}
We have 
$e_{\Gcapt}(\HugeVertices^*,L''\cap (\XA\cup\XB)\cap \NDOWN)\ge \frac18\tilde\eta kn$.
\end{lemma}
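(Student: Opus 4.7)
The plan is a standard averaging/discarding argument: lower-bound $e_{\Gcapt}(\HugeVertices',T)$ for $T:=L''\cap(\XA\cup\XB)\cap\NDOWN$, then show that the atypical vertices of $\HugeVertices'$ (those in $\HugeVertices'\setminus\HugeVertices^*$) can only absorb a small fraction of these edges.

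First I would observe that every edge counted in~\eqref{eq:eGCNUP} already lands in $\NDOWN$. Indeed, an endpoint in $(L''\cap(\XA\cup\XB))\setminus\NUP$ that receives an edge from $\HugeVertices'\subset\HugeVertices$ must lie in $\neighbour_{\Gcapt}(\HugeVertices)\setminus\NUP=\NDOWN$ by the very definition of $\NUP,\NDOWN$. Hence~\eqref{eq:eGCNUP} can be rewritten as
\[
e_{\Gcapt}(\HugeVertices',T)\;\ge\;\tfrac{3}{8}\tilde\eta\, kn.
\]

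Next I would bound the contribution of $\HugeVertices'\setminus\HugeVertices^*$ to this quantity. By the definition of $\HugeVertices^*$, each vertex $v\in\HugeVertices'\setminus\HugeVertices^*$ satisfies $\deg_{\Gcapt}(v,T)<\sqrt{\Omega^{**}}\,k$, so
\[
e_{\Gcapt}(\HugeVertices'\setminus\HugeVertices^*,T)\;<\;|\HugeVertices|\cdot\sqrt{\Omega^{**}}\,k.
\]
To estimate $|\HugeVertices|$, I would use Property~\ref{def:classgap} of Definition~\ref{sparseclassdef} ($\mindeg_G(\HugeVertices)\ge\Omega^{**}k$) together with Property~\ref{def:LKSsmallC} of Definition~\ref{def:LKSsmall} (which gives $e(G)\le kn$, and hence $\sum_{v\in V(G)}\deg_G(v)\le 2kn$). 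This yields
\[
|\HugeVertices|\;\le\;\frac{2kn}{\Omega^{**}k}\;=\;\frac{2n}{\Omega^{**}},
\]
so
\[
e_{\Gcapt}(\HugeVertices'\setminus\HugeVertices^*,T)\;<\;\frac{2n}{\Omega^{**}}\cdot\sqrt{\Omega^{**}}\,k\;=\;\frac{2kn}{\sqrt{\Omega^{**}}}.
\]

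Finally I would invoke the hierarchy~\eqref{eq:KONST}, in particular $\eta\gg 1/\Omega^{*}\gg 1/\Omega^{**}$, to see that $\Omega^{**}$ is huge compared to any fixed polynomial in $1/\eta$ and $\Omega^{*}$; in particular $\sqrt{\Omega^{**}}\ge 16/\tilde\eta$ with $\tilde\eta=\eta^{13}/(10^{28}(\Omega^{*})^3)$. Thus $2kn/\sqrt{\Omega^{**}}\le \tfrac{1}{8}\tilde\eta\,kn$, and subtracting this bound from the displayed lower bound above gives
\[
e_{\Gcapt}(\HugeVertices^*,T)\;\ge\;\tfrac{3}{8}\tilde\eta kn-\tfrac{1}{8}\tilde\eta kn\;=\;\tfrac{1}{4}\tilde\eta kn\;\ge\;\tfrac{1}{8}\tilde\eta kn,
\]
as required. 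The only mildly delicate step is checking that the parameter hierarchy in~\eqref{eq:KONST} really does give $\sqrt{\Omega^{**}}\ge 16/\tilde\eta$, but this is a direct consequence of $1/\Omega^{**}$ being chosen after $\eta$ and $\Omega^{*}$ in the chain.
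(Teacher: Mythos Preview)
Your proof is correct and uses essentially the same ingredients as the paper's: the lower bound~\eqref{eq:eGCNUP}, the defining threshold of $\HugeVertices^*$, the edge bound $e(G)\le kn$ from Definition~\ref{def:LKSsmall}, and the parameter hierarchy~\eqref{eq:KONST}. The only cosmetic difference is that the paper argues by contradiction (assuming the conclusion fails, deducing $|\HugeVertices'\setminus\HugeVertices^*|\ge\tilde\eta n/(4\sqrt{\Omega^{**}})$, and then getting $e(G)>kn$), whereas you bound $|\HugeVertices|$ upfront and subtract directly; the arithmetic is the same either way.
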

\begin{proof}
Suppose otherwise. Then by~\eqref{eq:eGCNUP}, we obtain that
 $$e_{\Gcapt}(\HugeVertices'\setminus \HugeVertices^*,L''\cap (\XA\cup\XB)\cap\NDOWN)
 \ge \frac14\tilde\eta kn\;.$$ On the other hand, by the definition of $\HugeVertices^*$,
 $$|\HugeVertices'\setminus \HugeVertices^*|\sqrt{\Omega^{**}}k\ge e_{\Gcapt}(\HugeVertices'\setminus \HugeVertices^*,L''\cap (\XA\cup\XB)\cap\NDOWN)\;.$$ 
Consequently, we have $$|\HugeVertices'\setminus \HugeVertices^*|\ge\frac{\tilde\eta kn}{4\sqrt{\Omega^{**}}k}=\frac{\tilde\eta n}{4\sqrt{\Omega^{**}}}\;.$$ 
Thus, as $\HugeVertices$ is independent, $$e(G)\ge \sum_{v\in\HugeVertices}\deg_{\Gcapt}(v)\ge |\HugeVertices|\Omega^{**}k\ge |\HugeVertices'\setminus \HugeVertices^*|\Omega^{**}k\ge \frac{\tilde\eta}4 \sqrt{\Omega^{**}}kn\gByRef{eq:KONST} kn\;,$$ a contradiction.
\end{proof}

Let us define $O:=\shadow_{\Gcapt}(\smallatoms,\gamma k)$. 
Next, we define \begin{align*}
N_1&:=V(\Gexp)\cap L''\cap (\XA\cup\XB)\cap \NDOWN\;,\\
N_2&:=\smallatoms\cap L''\cap 
(\XA\cup\XB)\cap \NDOWN \;,\\
N_3&:=O\cap L''\cap  (\XA\cup\XB)\cap \NDOWN\;\mbox{, and}\\
N_4&:=(L''\cap (\XA\cup\XB)\cap
\NDOWN)\setminus(N_1\cup N_2\cup N_3)\;.
                \end{align*}
Observe that
\begin{equation}\label{eq:ON4}
O\cap N_4=\emptyset\;.
\end{equation}
Further, for $i=1,\ldots,4$ define
$$C_i:=\left\{v\in\HugeVertices^*\::\: \deg_{\Gcapt}(v, N_i)\ge \deg_{\Gcapt}(v, L''\cap (\XA\cup\XB)\cap
\NDOWN)/4\right\}\;.$$ Easy counting gives that there exists an index $i\in[4]$ such
that
\begin{equation}\label{eq:OneInFour}
e_{\Gcapt}(C_i,N_i)\ge \frac1{16}e_{\Gcapt}(\HugeVertices^*,L''\cap (\XA\cup\XB)\cap
\NDOWN)\geBy{L\ref{lem:C*XAXBNDOWN}}\frac1{128}\tilde\eta kn\;.
\end{equation}

\def\LSP{\PARAMETERPASSING{L}{lem:clean-C+black}}
\def\L74{\PARAMETERPASSING{L}{lem:clean-C+yellow}}
Set $Y:=(\XA\cup\XB)\setminus (\YB\cup\HugeVertices)=(\XA\cup\XB)\setminus \YB$, and
$\eta_\L74=\eta_\LSP:=\frac{1}{128}\tilde\eta$. By Lemma~\ref{lem:YAYB} we have
\begin{equation}\label{eq:Ysmall}
|Y|< \frac{\eta_\L74 n}{4\Omega^*}\;.
\end{equation}

We split the rest of the proof into four subcases according to the value of $i$.

\noindent\underline{\bf Subcase B, $i=1$.}\\
We shall apply Lemma~\ref{lem:clean-C+yellow} with
 $r_\L74:=2$,
$\Omega^*_\L74:=\Omega^*,\Omega^{**}_\L74:=\sqrt{\Omega^{**}}/4$,
$\delta_\L74:=\frac{\eta_\L74\rho^2}{100(\Omega^*)^2}$, $\gamma_\L74:=\rho$,
$\eta_\L74$, $X_0:=C_1$, $X_1:=N_1$, and $X_2:=V(\Gexp)$, and
$Y$, and the graph $G_\L74$, which is formed by the vertices of $G$, with all edges from $E(\Gcapt)$ that are
in $E(\Gexp)$ or that  are incident with $\HugeVertices$. We briefly
verify the assumptions of Lemma~\ref{lem:clean-C+yellow}. First of all the choice of  $\delta_\L74$ guarantees
that $\left(\frac{3\Omega_\L74^*}{\gamma_\L74}\right)^2\delta_\L74<\frac{\eta_\L74}{10}$. Assumption~\ref{hyp:Ysmall} is
given by~\eqref{eq:Ysmall}. Assumption~\ref{hyp:C+y-edges} holds since we assume 
that~\eqref{eq:OneInFour} is satisfied for $i=1$ and by definition of
$\eta_\L74$. Assumption~\ref{hyp:C+y-large} follows from the definitions of $C_1$ and of $\HugeVertices^*$. Assumption~\ref{hyp:C+y-deg} follows from the fact that $X_1\subset V(\Gexp)=X_2$, and since $\mindeg(\Gexp)>\rho k$ which is guaranteed by the definition of
a $(k,\Omega^{**},\Omega^*,\Lambda,\gamma,\epsilon',\nu,\rho)$-sparse decomposition. This definition also guarantees 
Assumption~\ref{hyp:C+y-bounded}, as $Y\cup X_1\cup X_2\subset V(G)\setminus \HugeVertices$.

Lemma~\ref{lem:clean-C+yellow} outputs sets $\HugeVertices'':=X_0'$, $V_1:=X_1'$,
$V_2:=X_2'$ with $\mindeg_{\Gcapt}(\HugeVertices'', V_1)\ge \sqrt[4]{\Omega^{**}}k/2$ (by~\eqref{conc:C+y-large}),
$\maxdeg_{\Gexp}(V_1,X_2\setminus V_2)< \rho k/2$ (by~\eqref{conc:C+y-avoid}),
$\mindeg_{\Gcapt}(V_1,\HugeVertices'')\ge \delta_\L74 k$ (by~\eqref{conc:C+y-deg}), and
$\mindeg_{\Gexp}(V_2,V_1)\ge \delta_\L74 k$ (by~\eqref{conc:C+y-deg}).
By~\eqref{conc:X1Ydisj},  we have that $V_1\subset \YB\cap
L''$. As
$\mindeg_{\Gexp}(V_1,X_2)\ge \mindeg(\Gexp)\ge \rho k$, we have $\mindeg_{\Gexp}(V_1,V_2)\ge \mindeg_{\Gexp}(V_1,X_2)-\maxdeg_{\Gexp}(V_1,X_2\setminus
V_2)\ge \delta_\L74 k$. 

Since $L'$, $L''$ and $\HugeVertices'$ witness
Preconfiguration~$\mathbf{(\clubsuit)}(\frac{\tilde \eta^{3}\Omega^{**}}{4\cdot
10^{66}(\Omega^*)^{11}})$, this verifies that we have Configuration $\mathbf{(\diamond2)}\left(\frac{\tilde \eta^{3}\Omega^{**}}{4\cdot
10^{66}(\Omega^*)^{11}},\sqrt[4]{\Omega^{**}}/2,\frac{\tilde\eta\rho^2}{12800(\Omega^*)^2}\right)$.

\noindent\underline{\bf Subcase B, $i=2$.}\\
We apply Lemma~\ref{lem:clean-C+yellow} with numerical parameters $r_\L74:=2$,
$\Omega^*_\L74:=\Omega^*$, $\Omega^{**}_\L74:=\sqrt{\Omega^{**}}/4$,
$\delta_\L74:=\frac{\eta_\L74
\gamma^2}{100(\Omega^*)^2}$, $\gamma_\L74:=\gamma$, and $\eta_\L74$. Further input
to the lemma are sets $X_0:=C_2$, $X_1:=N_2$, and $X_2:=V(G)\setminus
\HugeVertices$, and the set $Y$. The underlying graph $G_\L74$ is the graph
$\GD$ with all egdes incident with $\HugeVertices$ added. Verifying assumptions
of Lemma~\ref{lem:clean-C+yellow} is analogous to Subcase~B, $i=1$ with the exception of Assumption 4. Let us therefore turn to verify it.
To this end, it suffices to observe that each each vertex in $X_1$ is contained
in at least one $(\gamma k,\gamma)$-dense spot from $\DenseSpots$
(cf.~Definition~\ref{def:avoiding}), and thus has degree at least $\gamma k$ in
$X_2$.
 
The output of Lemma~\ref{lem:clean-C+yellow} are sets $X_0',X_1'$, and $X_2'$ which witness
Configuration~$\mathbf{(\diamond3)}(\frac{\tilde \eta^{3}\Omega^{**}}{4\cdot
10^{66}(\Omega^*)^{11}},$ $\sqrt[4]{\Omega^{**}}/2,\gamma/2,\frac{\tilde\eta\gamma^2}{12800(\Omega^*)^2})$. In fact, the only thing not analogous to the preceding subcase is that we have to check~\eqref{eq:WHtc}, in other words, we have to verify that $$\maxdeg_{\GD}\big(X_1', V(G)\setminus (X_2'\cup\HugeVertices)\big)\leq \frac{\gamma k}2\;.$$ As $V(G)\setminus (X_2'\cup\HugeVertices)=X_2\setminus X'_2$, this follows from~\eqref{conc:C+y-avoid} of Lemma~\ref{lem:clean-C+yellow}.

\noindent\underline{\bf Subcase B, $i=3$.}\\
We apply Lemma~\ref{lem:clean-C+yellow} with numerical parameters $r_\L74:=3$,
$\Omega^*_\L74:=\Omega^*$, $\Omega^{**}_\L74:=\sqrt{\Omega^{**}}/4$, 
$\delta_\L74:=\frac{\eta_\L74\gamma^3}{300(\Omega^*)^3}$, $\gamma_\L74:=\gamma$,
and $\eta_\L74$. Further inputs are the sets $X_0:=C_3$, $X_1:=N_3$, $X_2:=\smallatoms$, and $X_3:=V(G)\setminus \HugeVertices$, and the set $Y$. The underlying graph is
$G_\L74:=\Gcapt\cup\GD$.
Verifying assumptions Lemma~\ref{lem:clean-C+yellow} is analogous to Subcase~B,
$i=1$, only for Assumption 4 we observe that $\mindeg_{\Gcapt\cup\GD}
(X_1,X_2)\ge\mindeg_{\Gcapt}
(X_1,X_2)\geq\gamma k$ by definition of $X_1=N_3\subset O$, and
$\mindeg_{\Gcapt\cup\GD}(X_2,X_3)\ge \mindeg_{\GD}(X_2,X_3)\ge\gamma
k$ for the same reason as in Subcase~B, $i=2$.
  
Lemma~\ref{lem:clean-C+yellow} outputs
Configuration~$\mathbf{(\diamond4)}\left(\frac{\tilde \eta^{3}\Omega^{**}}{4\cdot
10^{66}(\Omega^*)^{11}},\sqrt[4]{\Omega^{**}}/2,\gamma/2,\frac{\tilde\eta\gamma^3}{38400(\Omega^*)^3}\right)$, with $\HugeVertices'':=X_0'$, $V_1:=X_1'$, $\smallatoms':=X'_2$ and $V_2:=X'_3$. Indeed, all calculations are similar to the ones in the preceding two subcases, we only need to note additionally that $\mindeg_{\Gcapt\cup\GD}(V_1,\smallatoms')\geq \frac{\gamma k}{2} \frac{\tilde\eta\gamma^3 k}{38400(\Omega^*)^3}$, which follows from the definition of $N_3$ and of $O$.

\noindent\underline{\bf Subcase B, $i=4$.}\\
We have $\clusters\neq\emptyset$ and $\clustersize$ is the size of an arbitrary cluster in $\clusters$. We are going apply
Lemma~\ref{lem:clean-C+black} with $\delta_\LSP:=\eta_\LSP/100$, $\eta_\LSP$,
$h_\LSP:=\eta_\LSP \clustersize/(100\Omega^*)$, $\Omega^*_\LSP:=\Omega^*$,
$\Omega^{**}_\LSP:=\sqrt{\Omega^{**}}/4$ and sets $X_0:=C_4$, $X_1:=N_4$, and
$Y$. The underlying graph is $G_\LSP:=\Gcapt$, and $\mathcal C_\LSP$ is the set
of clusters $\clusters$.

The fact $e(G)\leq kn$ together with~\eqref{eq:OneInFour} and the choice of $\eta_\LSP$ gives
Assumption~\ref{hyp:C+b-edges} of  Lemma~\ref{lem:clean-C+black}. The choice of
$C_4$ and $\HugeVertices^*$ gives Assumption~\ref{hyp:C+b-large}. The fact that
$X_1\cap \HugeVertices=\emptyset$ yields Assumption~\ref{hyp:C+b-bounded}. With the
help of~\eqref{eq:KONST} it is easy to check Assumption~\ref{hypfburg}.
Inequality~\eqref{eq:Ysmall} implies Assumption~\ref{hypfburg5}.
To
verify Assumption~\ref{hypfburg6}, it
is enough to use that $|\mathcal C_\LSP|\le \frac{n}\clustersize$. We have thus verified all the
assumptions of Lemma~\ref{lem:clean-C+black}.

We claim that Lemma~\ref{lem:clean-C+black} outputs Configuration
$\mathbf{(\diamond5)}\left(
\frac{\tilde \eta^{3}\Omega^{**}}{4\cdot
10^{66}(\Omega^*)^{11}},
\sqrt[4]{\Omega^{**}}/2,\frac{\tilde\eta}{12800},\frac\eta2,\frac{\tilde\eta}{12800\Omega^*}\right)$, with $\HugeVertices'':=X_0'$ and $V_1:=X_1'$.
In fact,  all conditions of the configuration, except 
condition~\eqref{confi5last}, which we check below, are easy to verify. (Note that $V_1\subseteq \YB$ since $V_1\subseteq X_1=N_4\subseteq\XA\cup\XB$. Also,  $V_1\subseteq L''$, and thus disjoint from $\HugeVertices$. Moreover, by the conditions of Lemma~\ref{lem:clean-C+black}, $V_1$ is disjoint from $Y$. So, $V_1\subset\YB$.) 
For~\eqref{confi5last}, observe that~\eqref{eq:ON4}
implies that $\maxdeg_{\Gcapt}(N_4,\smallatoms)\leq\gamma k$. Further, we have
$X_1'\subset N_4\setminus Y$. So for all $x\in X_1'\subseteq
\NDOWN\setminus Y$, we have that $\deg_{\Gcapt}(x,V(G)\setminus \HugeVertices)\ge \frac{9\eta k}{10}$. As $N_4\subseteq \bigcup\clusters\setminus
V(\Gexp)$, we obtain $\deg_{\Gblack}(x)\ge \frac{9\eta
k}{10}-\gamma k\ge\frac{\eta k}2$, fulfilling~\eqref{confi5last}.

\subsubsection{Proof of Lemma~\ref{lem:ConfWhenNOTCXAXB}\HAPPY{M}}\label{sssec:ProofConfWhenNOTCXAXB}
Set $\YA_1':=\{v\in\YA_1\::\:\deg_{\Gexp}(v,\YA_2)\ge\rho k\}$. By~\eqref{eq:manyXAXAXBobt} we have
\begin{align}
\label{eq:manyXAXAXBobtLP}
e_{\Gexp}(\YA_1',\YA_2)&\ge \rho kn\;.
\end{align}

Set $r_\PARAMETERPASSING{L}{lem:clean-yellow}:=3$,
$\Omega_\PARAMETERPASSING{L}{lem:clean-yellow}:=\Omega^*$,
$\gamma_\PARAMETERPASSING{L}{lem:clean-yellow}:=\frac{\rho\eta}{10^3}$, 
$\delta_\PARAMETERPASSING{L}{lem:clean-yellow}:=\frac
{\eta^3\rho^4}{10^{14}(\Omega^*)^3}$,
$\eta_\PARAMETERPASSING{L}{lem:clean-yellow}:=\rho$.
Observe that~\eqref{eq:condCY} is satisfied for these parameters.
Set $Y_\PARAMETERPASSING{L}{lem:clean-yellow}:=\exceptVertSplit$, $X_0:=\YA_2$,
$X_1:=\YA_1'$, $X_2=X_3:=V(\Gexp)\colouringpI{1}$, and $V:=V(G)$.
Let $E_2:=E(\Gcapt)$, and $E_1=E_3:=E(\Gexp)$. We now briefly verify
conditions~\ref{hyp:yel-Ysmall}--\ref{hyp:yel-bounded} of Lemma~\ref{lem:clean-yellow}.
Condition~\ref{hyp:yel-Ysmall} follows from 
Definition~\ref{def:proportionalsplitting}\eqref{It:H1} and~\eqref{eq:KONST}.
Condition~\ref{hyp:yel-edges} follows from~\eqref{eq:manyXAXAXBobtLP}. 
Using Definition~\ref{def:proportionalsplitting}\eqref{It:H5},~\eqref{eq:proporcevelke} and~\eqref{eq:KONST}, we see that
Condition~\ref{hyp:yel-deg} for $i=1$ follows from the definition of $\YA_1'$, and for $i=2$ from the fact that $\mindeg(\Gexp)\ge\rho k$.
Last, Condition~\ref{hyp:yel-bounded} follows from the fact that
$\bigcup_{i=0}^3 X_i$ is disjoint from $\HugeVertices$. 

Lemma~\ref{lem:clean-yellow} yields four non-empty
sets $X_0',\ldots,X_3'$. By 
assertions~\eqref{conc:yel-deg},~\eqref{conc:yel-avoid},~\eqref{conc:yel-X0X1}, and
hypothesis~\ref{hyp:yel-deg} of Lemma~\ref{lem:clean-yellow},
for all $i\in\{0,1,2,3\}$, $j\in\{i-1, i+1\}\sm \{-1,4\}$ we have
\begin{equation}
\label{eq:towardsD62D63Exp}
\mindeg_{H_{i,j}}(X_i',X_j')\ge
\delta_\PARAMETERPASSING{L}{lem:clean-yellow} k,
\end{equation}
where $H_{i,j}=\Gexp$, except for $\{i,j\}=\{1,2\}$, where $H_{i,j}=G_\class$.

Thus, the sets $X'_0$ and $X'_1$ witness
Preconfiguration~$\mathbf{(exp)}(\delta_\PARAMETERPASSING{L}{lem:clean-yellow})$.
By Lemma~\ref{lem:propertyYA12}, and by~\eqref{2ndcondiObt2} and~\eqref{3rdcondiObt2}, the pair
$X_0',X_1'$ together with the cover $\mathcal F$ from~\eqref{def:Fcover} witnesses either 
Preconfiguration~$\mathbf{(\heartsuit1)}(\frac
{3\eta^3}{2\cdot 10^3},\proporce{2}(1+\frac{\eta}{20})k)$ (with respect to $\mathcal F$)
or Preconfiguration~$\mathbf{(\heartsuit2)}(\proporce{2}(1+\frac{\eta}{20})k)$.

Notice that~\eqref{eq:towardsD62D63Exp}
establishes the properties~\eqref{COND:D6:1}--\eqref{COND:D6:4}. Thus the sets $X_0',\ldots,X_3'$ witness
Configuration~$\mathbf{(\diamond6)}(\delta_\PARAMETERPASSING{L}{lem:clean-yellow},0,1,1,
\frac{3\eta^3}{2\cdot 10^3},\proporce{2}(1+\frac{\eta}{20})k)$.

\HIDDENTEXT{The old version (something might be wanted to be copied from it for OBT in HIDDENTEXT.txt, ``PALOALTO''}

\subsubsection{Proof of Lemma~\ref{lem:ConfWhenMatching}}\label{sssec:ProofConfWhenMatching}
In Lemmas~\ref{whatwegetfrom(t1)}, \ref{whatwegetfrom(t2)}, \ref{whatwegetfrom(t3)}, \ref{ObtConf9}, \ref{whatwegetfrom(t5)} below, we show that cases $\mathbf{(t1)}$, $\mathbf{(t2)}$, $\mathbf{(t3)}$, $\textbf{(t3--t5)}$, and $\mathbf{(t5)}$ of Lemma~\ref{lem:ConfWhenMatching}
lead to configuration $\mathbf{(\diamond6)}$, $\mathbf{(\diamond7)}$, $\mathbf{(\diamond8)}$, $\mathbf{(\diamond9)}$, and $\mathbf{(\diamond10)}$, respectively. While the first three of these cases are resolved by a fairly straightforward application of the Cleaning Lemma (Lemma~\ref{lem:clean-Match}), the latter two cases require some further non-trivial computations.

\def\MgoodR{\Mgood\colouringpI{0}} 

\begin{lemma}\label{whatwegetfrom(t1)} 
In case $\mathbf{(t1)}$ (of either subcase $\mathbf{(cA)}$ or subcase $\mathbf{(cB)}$) we obtain Configuration
$\mathbf{(\diamond6)}\big(\frac{\eta^3\rho^4}{10^{12}(\Omega^*)^4},4\epsilonD, \frac{\gamma^3\rho}{32\Omega^*}, 
 \frac{\eta^2\nu}{2\cdot10^4}, \frac{3\eta^3}{2000},
\proporce{2}(1+\frac\eta{20})k\big)$.
\end{lemma}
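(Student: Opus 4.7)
The plan is to produce the desired Configuration~$\mathbf{(\diamond6)}$ by feeding the matching~$\M$ into Lemma~\ref{lem:clean-Match}, using the shadow property from~$\mathbf{(t1)}$ to extend $V_1(\M)$ by two steps into~$\Gexp$, and then reading off Preconfigurations $\mathbf{(reg)}$ and $\mathbf{(\heartsuit 1)}$ or $\mathbf{(\heartsuit 2)}$ from the output. First I would set
\[
X_0:=V_2(\M),\quad X_1:=V_1(\M),\quad X_2:=V(\Gexp)\cap\colouringp{1},\quad X_3:=V(\Gexp)\cap\colouringp{1},
\]
together with the exceptional set $Y:=\exceptVertSplit\cup\shadowsplit$, and use $E_1:=E(G)$ to record the matching~$\M$, $E_2:=E(\Gcapt)$ to capture the shadow step $V_1(\M)\to V(\Gexp)$, and $E_3:=E(\Gexp)$ to record the second expander step. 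The matching pairs $(P_0^{(j)},P_1^{(j)})$ are simply the $\M$-edges (in both cases $\mathbf{(M1)}$ and $\mathbf{(M2)}$, with the corresponding semiregularity parameters).

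The hypotheses of Lemma~\ref{lem:clean-Match} would be verified as follows. Smallness of $Y$ is immediate from Definition~\ref{def:proportionalsplitting}\eqref{It:H1} and~\eqref{eq:boundShadowsplit}. The size bound $|X_1|\ge \eta_{\ref{lem:clean-Match}} n$ follows from $|V(\M)|\ge \rho n/\Omega^*$, so we can afford $\eta_{\ref{lem:clean-Match}}:=\rho/(2\Omega^*)$. Assumption~\ref{hyp:Match-deg} for $i=1$ is the key step: since $V_1(\M)\subset\shadow_{\Gcapt}(V(\Gexp),\rho k)$, every $v\in X_1\setminus Y$ has at least $\rho k$ neighbours in $V(\Gexp)$ via $\Gcapt$, and by Definition~\ref{def:proportionalsplitting}\eqref{It:H5} at least $\proporce{1}\rho k/2$ of them lie in $\colouringp{1}$; this gives the required bound with $\gamma_{\ref{lem:clean-Match}}:=\eta\rho/200$. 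Assumption~\ref{hyp:Match-deg} for $i=2$ uses $\mindeg(\Gexp)\ge \rho k$ together with the same random-splitting estimate. The regularity assumption~\ref{hyp:Match-reg} is built into $\M$, and the bounded-degree assumption~\ref{hyp:Match-bounded} follows since $X_1\cup X_2\cup X_3\subset V(G)\setminus\HugeVertices$, where the maximum degree is at most $\Omega^* k$. A short numerical check, analogous to the one made in Lemma~\ref{outerlemma}, shows that $(8\Omega^*/\gamma_{\ref{lem:clean-Match}})^3\delta_{\ref{lem:clean-Match}}\le\eta_{\ref{lem:clean-Match}}/30$ when $\delta_{\ref{lem:clean-Match}}$ is chosen to be the $\delta$ of $\mathbf{(\diamond6)}$ in the statement.

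Lemma~\ref{lem:clean-Match} then outputs a non-empty family of super-regular pairs $\{(Q_0^{(j)},Q_1^{(j)})\}_{j\in\mathcal Y}$ of the advertised size, together with cleaned sets $X_2',X_3'$. I would take $V_0:=\bigcup Q_0^{(j)}$, $V_1:=\bigcup Q_1^{(j)}$, $V_2:=X_2'$, $V_3:=X_3'$. The super-regular pairs directly witness Preconfiguration~$\mathbf{(reg)}\bigl(4\epsilonD,\tfrac{\gamma^3\rho}{32\Omega^*},\tfrac{\eta^2\nu}{2\cdot 10^4}\bigr)$. The degree assertions~\eqref{COND:D6:2} and~\eqref{COND:D6:4} come straight from output~\ref{conc:Match-deg} of Lemma~\ref{lem:clean-Match}. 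For~\eqref{COND:D6:1} I combine the shadow lower bound $\deg_{\Gcapt}(v,V(\Gexp)\cap\colouringp{1})\ge \proporce{1}\rho k/2$ for $v\in V_1$ with output~\ref{conc:Match-avoid}, which removes at most $\gamma_{\ref{lem:clean-Match}}k/2$ of these neighbours. The same argument, using $\mindeg(\Gexp)\ge\rho k$, gives~\eqref{COND:D6:3}.

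Finally, for the heart preconfiguration, I would invoke Lemma~\ref{lem:propertyYA12}. In subcase~$\mathbf{(cA)}$, both $V_0,V_1\subset V(\M)\subset \XA\colouringpI{0}\setminus(\gP\cup\exceptVertSplit\cup\shadowsplit)$, hence~\eqref{eq:trivka} and~\eqref{eq:propertyYA12cA} yield Preconfiguration~$\mathbf{(\heartsuit 2)}(\proporce{2}(1+\eta/20)k)$. In subcase~$\mathbf{(cB)}$ we additionally have $V_1\subset\XA\setminus(\gP_2\cup\gP_3)$, so~\eqref{eq:propertyYA12cB2} and~\eqref{eq:propertyYA12cB3}, together with the fact that the family $\mathcal F$ of~\eqref{def:Fcover} is an $(\M_A\cup\M_B)$-cover, produce Preconfiguration~$\mathbf{(\heartsuit 1)}(\tfrac{3\eta^3}{2\cdot 10^3},\proporce{2}(1+\eta/20)k)$. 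The main obstacle in fleshing this out is the bookkeeping between the densities coming from $\mathbf{(M1)}$ versus $\mathbf{(M2)}$ and the density promised by the output $\mathbf{(reg)}$, which needs the choice of $d_{\ref{lem:clean-Match}}$ and an argument (either a slightly tighter super-regularisation, losing only $O(\epsilon)$ in density, or absorbing the factor-$4$ loss into the declared $d'$) to ensure that the resulting super-regular density meets the bound $\tfrac{\gamma^3\rho}{32\Omega^*}$ in both cases.
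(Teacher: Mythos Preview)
Your proposal is correct and follows essentially the same route as the paper: feed $X_0=V_2(\M)$, $X_1=V_1(\M)$, $X_2=X_3=V(\Gexp)\colouringpI{1}$ and $Y=\exceptVertSplit\cup\shadowsplit$ into Lemma~\ref{lem:clean-Match} (with $E_2=E(\Gcapt)$, $E_3=E(\Gexp)$), read off Preconfiguration~$\mathbf{(reg)}$ from the super-regular output, derive \eqref{COND:D6:1}--\eqref{COND:D6:4} from assertions~\ref{conc:Match-deg})--\ref{conc:Match-avoid}), and invoke Lemma~\ref{lem:propertyYA12} for the heart preconfiguration. The density bookkeeping you flag as the ``main obstacle'' is disposed of in the paper by a one-line parameter comparison (the targets $4\epsilonD$, $\tfrac{\gamma^3\rho}{32\Omega^*}$, $\tfrac{\eta^2\nu}{2\cdot10^4}$ are chosen so that both the $\mathbf{(M1)}$- and $\mathbf{(M2)}$-outputs $4\bar\epsilon$, $\bar d/4$, $\beta/2$ meet them via the hierarchy~\eqref{eq:KONST}).
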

\begin{proof}

 We use Lemma~\ref{lem:clean-Match}
with the following input parameters:
$r_\PARAMETERPASSING{L}{lem:clean-Match}:=3$,
$\Omega_\PARAMETERPASSING{L}{lem:clean-Match}:=\Omega^*$,
$\gamma_\PARAMETERPASSING{L}{lem:clean-Match}:=\eta\rho/200$,
$\eta_\PARAMETERPASSING{L}{lem:clean-Match}:=\rho/(2\Omega^*)$,
$\delta_\PARAMETERPASSING{L}{lem:clean-Match}:=\eta^3\rho^4/(10^{12}(\Omega^*)^4)$,
$\epsilon_\PARAMETERPASSING{L}{lem:clean-Match}:=\bar{\epsilon}$,
$\mu_\PARAMETERPASSING{L}{lem:clean-Match}:=\beta$ and 
$d_\PARAMETERPASSING{L}{lem:clean-Match}:=\bar{d}$. Note these parameters
satisfy the numerical conditions of Lemma~\ref{lem:clean-Match}. We use the vertex sets
$Y_\PARAMETERPASSING{L}{lem:clean-Match}:=\exceptVertSplit\cup \shadowsplit$, $X_0:=V_2(\M)$,
$X_1:=V_1(\M)$, $X_2=X_3:=V(\Gexp)\colouringpI{1}$, and $V:=V(G)$. 
The partitions of $X_0$
and $X_1$ in Lemma~\ref{lem:clean-Match} are the ones induced by $\V(\M)$, and
the set $E_1$ consists of all edges from $E(\DenseSpots_\class)$ between pairs from $\M$. 
Further, set $E_2:=E(\Gcapt)$ and $E_3:=E(\Gexp)$. 
 
Let us verify the conditions of Lemma~\ref{lem:clean-Match}.
Condition~\ref{hyp:Match-Ysmall} follows from 
Definition~\ref{def:proportionalsplitting}\eqref{It:H1} and~\eqref{eq:boundShadowsplit}.
Condition~\ref{hyp:Match-edges} holds by the assumption on $\M$.
Condition~\ref{hyp:Match-deg}  follows from Definition~\ref{def:proportionalsplitting}\eqref{It:H5}
by~\eqref{eq:proporcevelke}, and for $i=1$ also from the definition of $\M$.
Conditions~\ref{hyp:Match-reg} hold by the definition of $\M$. Finally,
Condition~\ref{hyp:Match-bounded} follows from the properties of the sparse
decomposition~$\class$.

The output of Lemma~\ref{lem:clean-Match} are four sets $X'_0,\ldots,X'_3$. By
Lemma~\ref{lem:propertyYA12}, the sets $X_0'$ and $X_1'$ witness
Preconfiguration~$\mathbf{(\heartsuit
1)}(3\eta^3/(2\cdot 10^3),\proporce{2}(1+\frac{\eta}{20})k)$, or
$\mathbf{(\heartsuit 2)}(\proporce{2}(1+\frac{\eta}{20})k)$.
Further, Lemma~\ref{lem:clean-Match}\eqref{conc:Match-superreg} gives that $(X_0',X_1')$ witnesses Preconfiguration
$\mathbf{(reg)}(4\bar{\epsilon},\bar{d}/4,\beta/2)$.
 It is now easy to verify that we have Configuration
$\mathbf{(\diamond6)}\big(\frac{\eta^3\rho^4}{10^{12}(\Omega^*)^4},4\bar{\epsilon},\frac
{\bar{d}}{4},\frac{\beta}{2},\frac{3\eta^3}{2\cdot 10^3},
\proporce{2}(1+\frac\eta{20})k\big)$.

This leads to
Configuration~$\mathbf{(\diamond6)}$ with parameters as claimed. Indeed, no matter whether we have {\bf(M1)} or
{\bf(M2)}, we have $4\epsilonD\ge 4\cdot \frac{10^5\epsilon'}{\eta^2}$, and
$\gamma^3\rho/(32\Omega^*)\le\gamma^2/4$, and $\eta^2\nu/(2\cdot10^4)\le\eta^2\clustersize/(8\cdot10^3
 k)\le\eta^2\epsilon'/(8\cdot10^3)\le \alphaD\rho/\Omega^*$ (for the latter recall that $\clustersize\leq \eps ' k$ by Definition~\ref{bclassdef}~\eqref{Csize}).
\end{proof}

\begin{lemma}\label{whatwegetfrom(t2)}
In case $\mathbf{(t2)}$ (of either subcase $\mathbf{(cA)}$ or subcase $\mathbf{(cB)}$) we obtain Configuration
$\mathbf{(\diamond7)}\big(\frac{\eta^3\gamma^3\rho}{10^{12}(\Omega^*)^4},\frac{\eta\gamma}{400},4\epsilonD, \frac{\gamma^3\rho}{32\Omega^*},
 \frac{\eta^2\nu}{2\cdot10^4},
\frac{3\eta^3}{2\cdot 10^3}, \proporce{2}(1+\frac\eta{20})k\big)$.
\end{lemma}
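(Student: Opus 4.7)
The plan is to mimic the proof of Lemma~\ref{whatwegetfrom(t1)}, but with one additional cleaning layer to account for the intermediate hop through the avoiding set $\smallatoms$ that is forced by the hypothesis $V_1(\M)\subset \largeintoatoms$. Concretely, I would apply Lemma~\ref{lem:clean-Match} with $r_{\mathrm L\ref{lem:clean-Match}}:=3$, input sets $X_0:=V_2(\M)$, $X_1:=V_1(\M)$, $X_2:=(\smallatoms\cap\colouringp{1})\setminus\exceptVertSplit$ and $X_3:=\colouringp{1}$, with edge sets $E_1$ the $\M$-edges from $E(\DenseSpots_\class)$, $E_2:=E(\Gcapt)$, $E_3:=E(\GD)$, and $Y_{\mathrm L\ref{lem:clean-Match}}:=\exceptVertSplit\cup\shadowsplit$. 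The partitions on $X_0,X_1$ are those given by $\V(\M)$ and the semiregularity parameters are those of $\M$ itself, i.e.\ $\epsilon_{\mathrm L\ref{lem:clean-Match}}:=\bar\epsilon$, $d_{\mathrm L\ref{lem:clean-Match}}:=\bar d$, $\mu_{\mathrm L\ref{lem:clean-Match}}:=\beta$. The remaining numerical choices are $\Omega_{\mathrm L\ref{lem:clean-Match}}:=\Omega^*$, $\gamma_{\mathrm L\ref{lem:clean-Match}}:=\eta\gamma/200$, $\eta_{\mathrm L\ref{lem:clean-Match}}:=\rho/(2\Omega^*)$, and $\delta_{\mathrm L\ref{lem:clean-Match}}:=\eta^3\gamma^3\rho/(10^{12}(\Omega^*)^4)$, so that $\gamma_{\mathrm L\ref{lem:clean-Match}}/2=\eta\gamma/400=\rho'$ and the numerical condition $(8\Omega^*/\gamma_{\mathrm L\ref{lem:clean-Match}})^3\delta_{\mathrm L\ref{lem:clean-Match}}\le \eta/30$ holds by~\eqref{eq:KONST}.

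Next I would verify the remaining hypotheses of Lemma~\ref{lem:clean-Match}. The bound on $|Y_{\mathrm L\ref{lem:clean-Match}}|$ follows from Definition~\ref{def:proportionalsplitting}\eqref{It:H1} and~\eqref{eq:boundShadowsplit}; $|X_1|\ge \eta_{\mathrm L\ref{lem:clean-Match}} n$ follows from $|V(\M)|\ge \rho n/\Omega^*$; and the semiregularity of $\M$ on $E_1$ is part of the assumption of the lemma. The key minimum-degree conditions~\ref{hyp:Match-deg} are obtained as follows. For $i=1$ (edges from $X_1$ into $X_2$), the hypothesis $V_1(\M)\subset\largeintoatoms$ gives $\deg_{\Gcapt-\HugeVertices}(v,\smallatoms)\ge \rho k/(100\Omega^*)$ for every $v\in X_1$; projecting to $\colouringp{1}$ via Definition~\ref{def:proportionalsplitting}\eqref{It:H5} together with~\eqref{eq:proporcevelke}, and then removing the small sets $\exceptVertSplit$ and $\shadowsplit$ using Lemma~\ref{lem:RestrictionSemiregularMatching}, leaves at least $\eta\rho k/(10^4\Omega^*)\ge \gamma_{\mathrm L\ref{lem:clean-Match}} k$ such neighbours by~\eqref{eq:KONST}. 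For $i=2$ (edges from $X_2$ into $X_3$), every vertex of $\smallatoms$ lies in a $(\gamma k,\gamma)$-dense spot of $\DenseSpots$ by Definition~\ref{bclassdef}\eqref{defBC:avoiding}, so $\deg_{\GD}(v)\ge \gamma k$; proportional splitting then yields $\deg_{\GD}(v,\colouringp{1})\ge \eta\gamma k/200=\gamma_{\mathrm L\ref{lem:clean-Match}} k$. The degree condition for $i$ in the matching layer is immediate from the semiregularity of $\M$.

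Finally, Lemma~\ref{lem:clean-Match} produces a family of $(4\bar\epsilon,\bar d/4)$-super-regular pairs $\{(Q_0^{(j)},Q_1^{(j)})\}$ of sizes at least $\beta k/2$, unioning to $V_0:=X_0'\subset V_2(\M)$ and $V_1:=X_1'\subset V_1(\M)$, and also sets $V_2:=X_2'$, $V_3:=X_3'$ satisfying $\mindeg_G(V_1,V_2),\mindeg_G(V_2,V_1),\mindeg_{\GD}(V_3,V_2)\ge\delta_{\mathrm L\ref{lem:clean-Match}} k$ as well as $\maxdeg_{\GD}(V_2,X_3\setminus V_3)<\gamma_{\mathrm L\ref{lem:clean-Match}} k/2=\rho' k$; since $X_3=\colouringp{1}$, the last bound is exactly~\eqref{COND:D7:3} of Configuration~$\mathbf{(\diamond7)}$. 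In both subcases $\mathbf{(cA)}$ and $\mathbf{(cB)}$ the assumptions of the lemma place $V_0,V_1$ inside $\XA\cup\XB$ restricted to $\colouringp{0}$ and outside of $\gP\cup\exceptVertSplit\cup\shadowsplit$, so Lemma~\ref{lem:propertyYA12} (with the cover $\mathcal F$ from~\eqref{def:Fcover}) asserts that $(V_0,V_1)$ witnesses either Preconfiguration~$\mathbf{(\heartsuit1)}(3\eta^3/(2\cdot 10^3),\proporce{2}(1+\eta/20)k)$ or $\mathbf{(\heartsuit2)}(\proporce{2}(1+\eta/20)k)$, while the super-regular pairs supply Preconfiguration~$\mathbf{(reg)}(4\bar\epsilon,\bar d/4,\beta/2)$. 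A routine comparison of these parameters with those in the statement (using~\eqref{eq:KONST} and $\clustersize\ge \nu k$) shows they are at least as strong as the ones claimed, so the tuple $(V_0,V_1,V_2,V_3)$ witnesses Configuration~$\mathbf{(\diamond7)}$ with the advertised parameters. The only real difficulty is the parameter bookkeeping: the extra cleaning level costs a further factor of roughly $\gamma^3\rho$ in $\delta$ compared to the $\mathbf{(t1)}$ case, so one must make sure to choose $\delta_{\mathrm L\ref{lem:clean-Match}}$ small enough for~\eqref{eq:condCYmatch} while keeping it at least as large as the $\delta$ announced for $\mathbf{(\diamond7)}$.
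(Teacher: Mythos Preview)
Your proposal is correct and follows essentially the same approach as the paper: apply Lemma~\ref{lem:clean-Match} with $r=3$, the same sets $X_0,\dots,X_3$, the same edge sets $E_1,E_2,E_3$, and the same numerical parameters (the paper takes $\eta_{\mathrm L\ref{lem:clean-Match}}:=\rho/\Omega^*$ rather than your $\rho/(2\Omega^*)$, and does not pre-remove $\exceptVertSplit$ from $X_2$, but these are immaterial). One small slip: your reference to Lemma~\ref{lem:RestrictionSemiregularMatching} when bounding the loss from $\exceptVertSplit$ and $\shadowsplit$ is not quite the right citation---that lemma concerns restrictions of semiregular matchings, whereas here you only need that $|\exceptVertSplit|$ is exponentially small (Definition~\ref{def:proportionalsplitting}\eqref{It:H1}); the argument itself is fine.
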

\begin{proof}
 We use
Lemma~\ref{lem:clean-Match} with the following input parameters:
$r_\PARAMETERPASSING{L}{lem:clean-Match}:=3$,
$\Omega_\PARAMETERPASSING{L}{lem:clean-Match}:=\Omega^*$,
$\gamma_\PARAMETERPASSING{L}{lem:clean-Match}:=\eta\gamma/200$,
$\eta_\PARAMETERPASSING{L}{lem:clean-Match}:=\rho/\Omega^*$,
$\delta_\PARAMETERPASSING{L}{lem:clean-Match}:=\eta^3\gamma^3\rho/(10^{12}(\Omega^*)^4)$,
$\epsilon_\PARAMETERPASSING{L}{lem:clean-Match}:=\bar{\epsilon}$,
$\mu_\PARAMETERPASSING{L}{lem:clean-Match}:=\beta$ and 
$d_\PARAMETERPASSING{L}{lem:clean-Match}:=\bar d$. We use the
 vertex sets
$Y_\PARAMETERPASSING{L}{lem:clean-Match}:=\exceptVertSplit\cup \shadowsplit$, $X_0:=V_2(\M)$,
$X_1:=V_1(\M)$, $X_2:=\smallatoms\colouringpI{1}$,
$X_3:=\colouringp{1}$, and $V:=V(G)$. The partitions of $X_0$
and $X_1$ in Lemma~\ref{lem:clean-Match} are the ones induced by $\V(\M)$, and
the set $E_1$ consists of all edges from $E(\DenseSpots_{\class})$ between pairs from $\M$. Further, set $E_2:=E(\Gcapt)$ and $E_3:=E(\GD)$. 

The conditions of Lemma~\ref{lem:clean-Match} are verified as before, let us just note that
Condition~\ref{hyp:Match-deg} follows from Definition~\ref{def:proportionalsplitting}\eqref{It:H5}
and by~\eqref{eq:proporcevelke}, and for $i=1$ from the definition of $\M$,  
while for $i=2$ it holds since $\smallatoms$ is covered by the set $\DenseSpots$
of $(\gamma k, \gamma)$-dense spots (cf.~Definition~\ref{def:avoiding}).

It is now easy to check that the output of Lemma~\ref{lem:clean-Match} are sets that witness Configuration 
$\mathbf{(\diamond7)}\big(\frac{\eta^3\gamma^3\rho}{10^{12}(\Omega^*)^4},\frac{\eta\gamma}{400},4\bar{\epsilon},\frac{\bar{d}}{4},\frac{\beta}{2},
\frac{3\eta^3}{2\cdot 10^3}, \proporce{2}(1+\frac\eta{20})k\big)$.
\end{proof}

Before proceeding with dealing with cases $\mathbf{(t3)}$, $\mathbf{(t5)}$ and
{\bf (t3--5)} we state some properties of the matching
$\bar\M:=\big(\M_A\cup\M_B\big)\colouringpI{1}$.

\begin{lemma}\label{lem:MRes}
For $V_{\mathrm{leftover}}:=V(\M_A\cup\M_B)\colouringpI{1}\setminus V(\bar\M)$ and  $Y_{\bar\M}:=\exceptVertSplit\cup\shadowsplit\cup\shadow_{\GD}(V_{\mathrm{leftover}},\frac{\eta^2 k}{1000})$, we have
\begin{enumerate}[(a)]
\item \label{eq:M-semiregN}
$\bar\M$ is a
$(\frac{400\varepsilon}{\eta},\frac
d2,\frac{\eta\pi\clustersize}{200})$-semiregular matching absorbed by $\M_A\cup
\M_B$ and
$V(\bar\M)\subseteq\colouringp{1}$, and
\item  \label{eq:sizeYM}
$|Y_{\bar\M}|\le\frac{3000\epsilon\Omega^* n}{\eta^2}$.
\end{enumerate}
\end{lemma}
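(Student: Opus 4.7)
The plan is to obtain part (a) as a direct specialization of Lemma~\ref{lem:RestrictionSemiregularMatching}, and part (b) via a short union bound whose only nontrivial input is the ``moreover'' clause of the same lemma.

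For (a), I would apply Lemma~\ref{lem:RestrictionSemiregularMatching} with $\mathcal{N}:=\M_A\cup\M_B$ and $i:=1$. By definition of the restriction operator in Setting~\ref{settingsplitting}, $\bar\M = \mathcal{N}\colouringpI{1}$, so the conclusion of that lemma gives precisely the $(\tfrac{400\epsilon}{\eta},\tfrac{d}{2},\tfrac{\eta\pi}{200}\clustersize)$-semiregularity asserted here. The inclusion $V(\bar\M)\subseteq\colouringp{1}$ and the absorption property are immediate from the way each restricted pair $(X,Y)\colouringpI{1}$ is defined as a sub-pair of an original $(X,Y)\in\M_A\cup\M_B$ sitting inside $\colouringp{1}$.

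For (b), the key observation is that $V_{\mathrm{leftover}} = V(\M_A\cup\M_B)\colouringpI{1}\setminus V(\bar\M)$ is literally the set $V(\mathcal{N})\colouringpI{i}\setminus V(\mathcal{N}\colouringpI{i})$ appearing in the moreover clause of Lemma~\ref{lem:RestrictionSemiregularMatching}. That clause tells us $\deg_{\GD}(v, V_{\mathrm{leftover}}) \le \eta^2 k/10^5$ for every $v\notin\shadowsplit$. Since the threshold $\eta^2 k/1000$ used in the definition of $Y_{\bar\M}$ is larger than $\eta^2 k/10^5$, any vertex lying in $\shadow_{\GD}(V_{\mathrm{leftover}}, \eta^2 k/1000)$ must already lie in $\shadowsplit$. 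Therefore $Y_{\bar\M}\subseteq\exceptVertSplit\cup\shadowsplit$, and the stated bound follows from $|\exceptVertSplit|\le\exp(-k^{0.1})n$ (Definition~\ref{def:proportionalsplitting}\eqref{It:H1}) together with $|\shadowsplit|\le\epsilon n$ (inequality~\eqref{eq:boundShadowsplit}), both of which are far below $\tfrac{3000\epsilon\Omega^*n}{\eta^2}$ under the parameter hierarchy~\eqref{eq:KONST}.

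I do not anticipate any real obstacle: the lemma is essentially a bookkeeping check against Lemma~\ref{lem:RestrictionSemiregularMatching}, and the definition of $Y_{\bar\M}$ has been set up precisely so that its shadow threshold dominates the one produced by that lemma. The only thing worth double-checking is this numerical comparison $\eta^2 k/1000 > \eta^2 k/10^5$, which forces the shadow containment above.
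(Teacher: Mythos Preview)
Your proof is correct. Part~(a) matches the paper exactly. For part~(b), however, you take a genuinely different route from the paper.

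The paper proceeds by first bounding $|V_{\mathrm{leftover}}|$ directly: it counts the contribution from pairs in $\exceptSemSplit\cup\exceptSemSplit^*$ and, for the remaining pairs, uses the $k^{0.9}$ error in Definition~\ref{def:proportionalsplitting}\eqref{It:H3} to get $|V_{\mathrm{leftover}}|\le 2\epsilon n$. It then applies Fact~\ref{fact:shadowbound} to bound the shadow by $|V_{\mathrm{leftover}}|\cdot\frac{1000\Omega^*}{\eta^2}$, which is where the constant $\frac{3000\epsilon\Omega^*}{\eta^2}$ in the statement comes from.

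Your argument instead observes that the moreover clause of Lemma~\ref{lem:RestrictionSemiregularMatching} already packages this calculation: it gives the degree bound $\eta^2 k/10^5$ into $V_{\mathrm{leftover}}$ for vertices outside $\shadowsplit$, and since $\eta^2 k/1000 > \eta^2 k/10^5$, the shadow in the definition of $Y_{\bar\M}$ collapses into $\shadowsplit$. This yields the sharper conclusion $Y_{\bar\M}=\exceptVertSplit\cup\shadowsplit$ and hence $|Y_{\bar\M}|\le 2\epsilon n$, well under the stated bound. Your route is cleaner and avoids repeating the bookkeeping; the paper's route is more self-contained and explains the form of the constant in the lemma statement.
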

\begin{proof}
Lemma~\ref{lem:MRes}~\eqref{eq:M-semiregN} follows from Lemma~\ref{lem:RestrictionSemiregularMatching}.

Observe that from properties \eqref{It:H1} and \eqref{It:H3} of Definition~\ref{def:proportionalsplitting} we can calculate that
\begin{equation}\label{sizeofleftoverN}
|V_\text{leftover}|\leq 3\cdot k^{0.9}\cdot |\M_A\cup \M_B|+\left|\bigcup
\exceptSemSplit\cup \exceptSemSplit^*\right|\leq 3\cdot k^{0.9}\cdot \frac{n}{2\pi
\clustersize}+2\exp(-k^{0.1})\leByRef{eq:KONST}2\epsilon n.
\end{equation}
Then
\begin{align*}
|Y_{\bar\M}|&\le |\bar V|+|\shadowsplit|+
\left|\shadow_{\GD}\left(V_\text{leftover}, \frac{\eta^2 k}{1000}\right)\right|\\ \JUSTIFY{by Fact~\ref{fact:shadowbound}}&\le |\bar V| + |\shadowsplit|+|V_\text{leftover}| \frac{1000\Omega^*}{\eta^2}\\
\JUSTIFY{by~\eqref{sizeofleftoverN}, \textrm{D}\ref{def:proportionalsplitting}\eqref{It:H1}, \eqref{eq:KONST} \eqref{eq:boundShadowsplit}}&<
\frac{3000\epsilon\Omega^* n}{\eta^2}\;,
\end{align*}
as desired for Lemma~\ref{lem:MRes}\eqref{eq:sizeYM}.
\end{proof}

\begin{lemma}\label{whatwegetfrom(t3)}
In Case $\mathbf{(t3)} \mathbf{(cA)}$  we obtain Configuration
$\mathbf{(\diamond8)}\big(\frac{\eta^4\gamma^4\rho }{10^{15}
(\Omega^*)^5},\frac{\eta\gamma}{400},\frac{400\epsilon}{\eta},4\bar{\epsilon},\frac
d2,\frac{\bar{d}}{4},\frac{\eta\pi\clustersize}{200k},\frac{\beta}{2},$ $
\proporce{1}(1+\frac\eta{20})k,\proporce{2}(1+\frac\eta{20})k\big)$.
\end{lemma}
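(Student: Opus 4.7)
The plan follows the same template as Lemmas~\ref{whatwegetfrom(t1)} and~\ref{whatwegetfrom(t2)}: apply Lemma~\ref{lem:clean-Match} with $r=4$ to a chain of vertex sets that starts from the semiregular matching $\M$ and terminates in $\colouringp{1}$. The matching $\M$ will supply, after cleaning, the super-regular pair $(V_0,V_1)$ required by Preconfiguration $\mathbf{(reg)}$, while the three remaining cleaned sets will play the roles of $V_2,V_3,V_4$ of Configuration $\mathbf{(\diamond8)}$. The novelty compared to cases $\mathbf{(t1)}$ and $\mathbf{(t2)}$ is the intermediate set $V_2\subset\largeintoatoms\cap\largevertices{\eta}{k}{G}$, which we can reach thanks to the assumption $V_1(\M)\subset R$; the chain is thus one link longer. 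We must additionally produce the semiregular matching $\mathcal N$ that witnesses the mixed-degree condition~\eqref{COND:D8:7}, which will be the main obstacle.

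Concretely, I will invoke Lemma~\ref{lem:clean-Match} with $r_\PARAMETERPASSING{L}{lem:clean-Match}:=4$, $\Omega_\PARAMETERPASSING{L}{lem:clean-Match}:=\Omega^*$, $\gamma_\PARAMETERPASSING{L}{lem:clean-Match}:=\eta\gamma/200$, $\eta_\PARAMETERPASSING{L}{lem:clean-Match}:=\rho/\Omega^*$, $\delta_\PARAMETERPASSING{L}{lem:clean-Match}:=\eta^4\gamma^4\rho/(10^{15}(\Omega^*)^5)$, and with the super-regularity parameters of $\M$, on the sets $Y_\PARAMETERPASSING{L}{lem:clean-Match}:=\exceptVertSplit\cup\shadowsplit$, $X_0:=V_2(\M)$, $X_1:=V_1(\M)$, $X_2:=((\largeintoatoms\cap\largevertices{\eta}{k}{G})\setminus V(\M_A\cup\M_B))\cap(\colouringp{0}\setminus(\gP\cup V(\Gexp)))$, $X_3:=\smallatoms\colouringpI{1}$, $X_4:=\colouringp{1}$; the partitions of $X_0$ and $X_1$ are induced by $\V(\M)$, and the edge sets are $E_1$ (the edges of $\DenseSpots_\class$ internal to $\M$-pairs), $E_2:=E_3:=E(\Gcapt)$, $E_4:=E(\GD)$. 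Hypothesis~\ref{hyp:Match-deg} for $i=1$ is furnished by the defining property of $R$ in case $\mathbf{(t3)}$, combined with Definition~\ref{def:proportionalsplitting}\eqref{It:H5} and~\eqref{eq:proporcevelke}; for $i=2$ by the defining shadow of $\largeintoatoms$; for $i=3$ by the fact that each vertex of $\smallatoms$ lies in some $(\gamma k,\gamma)$-dense spot of $\DenseSpots$. The remaining hypotheses are routine, verified exactly as in Lemmas~\ref{whatwegetfrom(t1)} and~\ref{whatwegetfrom(t2)}. Setting $(V_0,V_1,V_2,V_3,V_4):=(X_0',\ldots,X_4')$, the conditions~\eqref{COND:D8:1}--\eqref{COND:D8:6} then follow by combining the output~\eqref{conc:Match-deg} (for one direction) with the corresponding Hypothesis~\ref{hyp:Match-deg} and output~\eqref{conc:Match-avoid} (for the reverse direction). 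Preconfiguration $\mathbf{(reg)}(4\bar\epsilon,\bar d/4,\beta/2)$ is granted by~\eqref{conc:Match-superreg}, and Preconfiguration $\mathbf{(\heartsuit 2)}(\proporce{2}(1+\eta/20)k)$ follows from Lemma~\ref{lem:propertyYA12}\eqref{eq:propertyYA12cA}, observing that $V_0\cup V_1\subset\XA\cap\colouringp{0}$ and is disjoint from $\gP\cup\exceptVertSplit\cup\shadowsplit$ by the assumption of case $\mathbf{(cA)}$.

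The heart of the proof, and the main obstacle I expect, is the verification of~\eqref{COND:D8:7}. I take $\mathcal N:=((\M_A\cup\M_B)\setminus\NAtom)\colouringpI{1}$; its $(\epsilon_1,d_1,\mu_1 k)$-semiregularity parameters, its absorption by $(\M_A\cup\M_B)\setminus\NAtom$, and the inclusion $V(\mathcal N)\subset\colouringp{1}\setminus V_3$ follow by the argument of Lemma~\ref{lem:MRes}\eqref{eq:M-semiregN} applied to the sub-matching. For a fixed $v\in V_2$, since $V_2\cap\gP=\emptyset$ forces $v\notin L_\#\cup\WantiC$, we have $\deg_{\Gcapt}(v,V(G)\setminus\HugeVertices)\ge(1+9\eta/10)k$; moreover $\deg_{\Gexp}(v)=0$ since $V_2\cap V(\Gexp)=\emptyset$, and $\deg(v,\HugeVertices)\le\eta k/100$ since $v\notin\WantiC$. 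Therefore the captured non-huge degree of $v$ decomposes (up to the $\tfrac{5\gamma kn}{4}$ slack from Setting~\ref{commonsetting}\eqref{commonsetting:DenseCaptured}, absorbed by a further mild shadow-avoidance enlargement of $Y$) as $\deg_{\Gblack}(v,\bigcup\clusters)+\deg_{\GD}(v,\smallatoms)\ge(1+\tfrac{89\eta}{100})k$. Since $v\notin\gP$, the containments $\gP_1,\gPatoms\subset\gP$ give $\deg_{\Gblack}(v,V(G)\setminus V(\M_A\cup\M_B))<\gamma k$ and $\deg_{\Gblack}(v,V(\NAtom))<\gamma k$, so $\deg_{\Gblack}(v,V(\M_A\cup\M_B)\setminus V(\NAtom))+\deg_{\GD}(v,\smallatoms)\ge(1+\tfrac{4\eta}{5})k$. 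Transferring to $\colouringp{1}$ via Definition~\ref{def:proportionalsplitting}\eqref{It:H5} (summed over the partition refinement of $V(G)\setminus\HugeVertices$ induced by the predefined $\VXV_J$) yields the same inequality with the factor $\proporce{1}$ in front and a $O(k^{0.9})$ error. Finally, since $v\notin\shadowsplit$ together with Lemma~\ref{lem:RestrictionSemiregularMatching} bounds $\deg_{\Gblack}(v,(V(\M_A\cup\M_B)\setminus V(\NAtom))\colouringpI{1}\setminus V(\mathcal N))\le\eta^2k/10^5$, and the clean-Match conclusion~\eqref{conc:Match-avoid} for $i=2$ bounds $\deg_{\GD}(v,\smallatoms\colouringpI{1}\setminus V_3)<\tfrac{\eta\gamma k}{400}$. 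Summing everything produces $\deg_{\GD}(v,V_3)+\deg_{\Gblack}(v,V(\mathcal N))\ge\proporce{1}(1+\eta/20)k=h_1$, as required. The delicacy lies in simultaneously using every shadow-avoidance property baked into $\gP$, $\shadowsplit$, and into the constructions of $V_2$ and $\mathcal N$, each absorbing a different category of exceptional edges of $v$.
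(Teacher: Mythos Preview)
Your overall approach matches the paper's: the $r=4$ application of Lemma~\ref{lem:clean-Match}, the chain through $\largeintoatoms\cap\largevertices{\eta}{k}{G}$ into $\smallatoms\colouringpI{1}$ and $\colouringp{1}$, the choice $\mathcal N=((\M_A\cup\M_B)\setminus\NAtom)\colouringpI{1}$, and the decomposition strategy for~\eqref{COND:D8:7} are all correct. There is, however, a genuine gap in your definition of $X_2$.

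You write ``the containments $\gP_1,\gPatoms\subset\gP$'', but by the definition of $\gP$ only the \emph{shadow} $\shadow_{\GD\cup\Gcapt}(\gPatoms,\eta^2k/10^5)$ is included, not $\gPatoms$ itself. Hence from $v\notin\gP$ you cannot deduce $v\notin\gPatoms$, and your conclusion $\deg_{\Gblack}(v,V(\NAtom))<\gamma k$ fails. Relatedly, your $X_2$ does not exclude $\smallatoms$; if $v\in X_2'\cap\smallatoms$ then $v\notin\bigcup\clusters$, so $\deg_{\Gblack}(v)=0$ and the decomposition behind~\eqref{COND:D8:7} collapses. Conversely, excluding all of $\gP$ from $X_2$ creates a new obstacle for Hypothesis~\ref{hyp:Match-deg} at $i=1$: you would need $\deg_{\Gcapt}(v,\gP)$ small for $v\in V_1(\M)$, and while $v\notin\gP$ bounds the degree into $\WantiC\cup L_\sharp\cup\gPatoms\cup\gP_1$ (via the shadow term in $\gP$), it gives no control over the degree into $(\XA\setminus\YA)\cup((\XA\cup\XB)\setminus\YB)$ or into the shadow component itself. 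The paper fixes all three issues at once by taking
\[
X_2:=(\largevertices{\eta}{k}{G}\cap\largeintoatoms)\colouringpI{0}\setminus\big(V(\Gexp)\cup\smallatoms\cup V(\M_A\cup\M_B)\cup\WantiC\cup L_\sharp\cup\gPatoms\cup\gP_1\big),
\]
i.e.\ excluding $\smallatoms$ and $\gPatoms$ explicitly while \emph{not} excluding the rest of $\gP$; each excluded set is then one whose degree from $v\in V_1(\M)\setminus\gP$ is already bounded. The paper also takes the larger $Y:=Y_{\bar\M}$ from Lemma~\ref{lem:MRes} rather than your $\exceptVertSplit\cup\shadowsplit$; your choice would work too via Lemma~\ref{lem:RestrictionSemiregularMatching}, but the paper's makes the leftover bound $\deg_{\GD}(v,V_{\mathrm{leftover}})\le\eta^2k/1000$ immediate.
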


\begin{proof}
We use Lemma~\ref{lem:clean-Match} with the following input parameters:
$r_\PARAMETERPASSING{L}{lem:clean-Match}:=4$,
$\Omega_\PARAMETERPASSING{L}{lem:clean-Match}:=\Omega^*$,
$\gamma_\PARAMETERPASSING{L}{lem:clean-Match}:=\eta\gamma/200$,
$\eta_\PARAMETERPASSING{L}{lem:clean-Match}:=\rho/\Omega^*$,
$\delta_\PARAMETERPASSING{L}{lem:clean-Match}:=\eta^4\gamma^4\rho/(10^{15}(\Omega^*)^5)$,
$\epsilon_\PARAMETERPASSING{L}{lem:clean-Match}:=\bar{\epsilon}$,
$\mu_\PARAMETERPASSING{L}{lem:clean-Match}:=\beta$ and
$d_\PARAMETERPASSING{L}{lem:clean-Match}:=\bar d$. 
We use the following vertex sets
$Y_\PARAMETERPASSING{L}{lem:clean-Match}:=Y_{\bar\M}$, $X_0:=V_2(\M)$,
$X_1:=V_1(\M)$, $$X_2:=(\largevertices{\eta}{k}{G}\cap\largeintoatoms)\colouringpI{0}\setminus\big(V(\Gexp)\cup\smallatoms\cup V(\M_A\cup\M_B)\cup \WantiC\cup L_\sharp\cup \gPatoms\cup\gP_1\big)\;,$$
$X_3:=\smallatoms\colouringpI{1}$,  $X_4:=\colouringp{1}$, and $V:=V(G)$. The partitions $P^{(j)}_i$ of $X_0$ and $X_1$ in Lemma~\ref{lem:clean-Match} are the ones induced by $\V(\M)$, and
the set $E_1$ consists of all edges from $E(\DenseSpots_\class)$ between pairs from $\M$.  Further, set $E_2=E_3:=E(\Gcapt)$ and $E_4:=E(\GD)$.

Most of the conditions of Lemma~\ref{lem:clean-Match} are verified as before, let us only note the few differences. Condition~\ref{hyp:Match-Ysmall} follows from Lemma~\ref{lem:MRes}\eqref{eq:sizeYM}. Using Definition~\ref{def:proportionalsplitting}\eqref{It:H5}
and~\eqref{eq:proporcevelke}, we find that
Condition~\ref{hyp:Match-deg} for $i=2$ follows from the definition of $\largeintoatoms$, and Condition~\ref{hyp:Match-deg}
for $i=3$ holds as it is the same as Condition~\ref{hyp:Match-deg}
for $i=2$ in Lemma~\ref{whatwegetfrom(t2)}. In order to prove Condition~\ref{hyp:Match-deg} for $i=1$ we first observe that since we are in case  $\mathbf{(t3)} $, we have 
\begin{equation}\label{eq:usememe}
V_1(\M)\subset 
\shadow_{\Gcapt}\left((\largeintoatoms\cap\largevertices{\eta}{k}{G})\setminus
V(\M_A\cup\M_B),\frac{2\eta^2 k}{10^5}\right)\setminus(
\shadow_{\Gcapt}(V(\Gexp),\rho k)\cup \largeintoatoms)\;.
\end{equation}
Also, since we in case $\mathbf{(cA)}$, we have
\begin{equation}\label{eq:usememezwei}
V_1(\M)\cap\gP=\emptyset\;.
\end{equation}

Thus, for each $v\in V_1(\M)$ we have, using Definition~\ref{def:proportionalsplitting}\eqref{It:H5},
\begin{align*}
\deg_{\Gcapt}(v,X_2)&\ge\proporce{0} \Big(\deg_{\Gcapt}(v,(\largevertices{\eta}{k}{G}\cap\largeintoatoms)\setminus V(\M_A\cup\M_B))\\
&~~~~~~~~~-\deg_{\Gcapt}(v,V(\Gexp)\cup\smallatoms\cup\WantiC\cup L_\sharp\cup \gPatoms\cup\gP_1) \Big)-k^{0.9}\\
\JUSTIFY{by~\eqref{eq:usememe} \& \eqref{eq:usememezwei} \& \eqref{eq:proporcevelke}}&\ge \frac{\eta}{100}\left(\frac{2\eta^2 k}{10^5}-\rho k- \frac{\rho k}{100\Omega^*}-\frac{\eta^2 k}{10^5}\right)-k^{0.9}\\
\JUSTIFY{by~\eqref{eq:KONST}}&\ge \frac{\eta \gamma k}{200}\;,
\end{align*}
which indeed verifies Condition~\ref{hyp:Match-deg} for $i=1$.

Define $\mathcal N:=\bar\M\setminus\{(X,Y)\in\bar\M\::\:X\cup Y\subset
V(\NAtom)\}$. By Lemma~\ref{lem:MRes}~\eqref{eq:M-semiregN} we have that
$\mathcal N\subseteq \bar{\mathcal M}$ is a
$(\frac{400\epsilon}{\eta},\frac{d}{2},\frac{\eta\pi
\clustersize}{200})$-semiregular matching absorbed by $\M_A\cup\M_B$, and that
$V(\mathcal N)\subset\colouringp{1}$.

To see that the output of
Lemma~\ref{lem:clean-Match} together with the matching $\mathcal N$ leads to Configuration 
$\mathbf{(\diamond8)}\big(\frac{\eta^4\gamma^4\rho }{10^{15}
(\Omega^*)^5},\frac{\eta\gamma}{400},\frac{400\epsilon}{\eta},4\bar{\epsilon},\frac
d2,\frac{\bar{d}}{4},\frac{\eta\pi\clustersize}{200k},\frac{\beta}{2},
\proporce{1}(1+\frac\eta{20})k,\proporce{2}(1+\frac\eta{20})k\big)$ let us show that~\eqref{COND:D8:7} is satisfied (the other conditions are more easily seen to hold). 

For this, let $v\in X_2'$. We have to show that

\begin{equation}\label{todososolhos}
\deg_{\GD}(v,X'_3)+\deg_{\Gblack}(v,V(\mathcal N))
\ge  \proporce{1}(1+\frac{\eta}{20})k.
\end{equation}

 Note that $v\not\in V(\Gexp)$, and thus $\deg_{\Gexp}(v)=0$. This allows us to
calculate as follows:

\begin{align}
\begin{split}\label{eq:OPL}
\deg_{\GD}(v,X'_3)+\deg_{\Gblack}(v,V(\mathcal N))&\ge
\deg_{\Gcapt}(v,\colouringp{1})-\deg_{\GD}(v,X_3\setminus X'_3)\\
&~~~-\deg_{\Gblack}(v,V(\NAtom))-\deg_{\Gblack}(v,V_\mathrm{leftover})\\
&~~~-\deg_{\Gblack}(v,V(G)\setminus V(\M_A\cup\M_B))\;.
\end{split}
\end{align}

We now bound the terms of the right-hand side of~\eqref{eq:OPL}.
From Definition~\ref{def:proportionalsplitting}\eqref{It:H5} we obtain that
$\deg_{\Gcapt}(v,\colouringp{1})\ge
\proporce{1}\left(\deg_{\Gcapt}(v)-\deg_G(v,\HugeVertices)\right)-k^{0.9}$.
Lemma~\ref{lem:clean-Match}\eqref{conc:Match-avoid} gives that
$\deg_{\GD}(v,X_3\setminus X'_3)\le \frac{\eta\gamma k}{400}$. As $v\not\in
\gPatoms\cup V(\M_A\cup \M_B)$, we have $\deg_{\Gblack}(v,V(\NAtom))<\gamma k$. As $v\not\in Y_{\bar\M}$ and thus $v\not\in
\shadow_{\GD}\left(V_\text{leftover}, \frac{\eta^2 k}{1000}\right)$  we have
$\deg_{\GD}(v,V_\text{leftover})\le \frac{\eta^2 k}{1000}$. Last, recall that
$v\not\in \gP_1\cup V(\M_A\cup \M_B)$, and consequently
$\deg_{\Gblack}(v,V(G)\setminus V(\M_A\cup\M_B))<\gamma k$. Putting these bounds together, we find that
\begin{align*}
\deg_{\GD}(v,X'_3)+\deg_{\Gblack}(v,V(\mathcal N))&\ge
\proporce{1}\left(\deg_{\Gcapt}(v)-\deg_G(v,\HugeVertices)\right)-\frac{2\eta^2 k}{1000}\\
\JUSTIFY{as $v\in \largevertices{\eta}{k}{G}\setminus
(L_\sharp\cup\WantiC)$}&\ge \proporce{1}\left((1+\frac{9\eta}{10})k-\frac{\eta
k}{100}\right)-\frac{ \eta^2 k}{500}\\
\JUSTIFY{by~\eqref{eq:proporcevelke} \& \eqref{eq:KONST}}&\ge  \proporce{1}(1+\frac{\eta}{20})k\;.
\end{align*}
This shows~\eqref{todososolhos}.
\end{proof}

\begin{lemma}\label{ObtConf9}
In case {\bf(t3--5)}$\mathbf{(cB)}$ we get Configuration
$\mathbf{(\diamond9)}\big(\frac{\rho
\eta^8}{10^{27}(\Omega^*)^3},\frac
{2\eta^3}{10^3}, \proporce{1}(1+\frac{\eta}{40})k,
\proporce{2}(1+\frac{\eta}{20})k, \frac{400\varepsilon}{\eta},
\frac{d}2, \frac{\eta\pi\clustersize}{200k},4\epsilonD,\frac{\gamma^3\rho}{32\Omega^*}, \frac{\eta^2 \nu}{2\cdot 10^4}\big)$.
\end{lemma}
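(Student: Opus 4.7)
The natural choice is to take $V_0:=V_2(\M)$, $V_1:=V_1(\M)$, and $\mathcal F':=\mathcal F$ from~\eqref{def:Fcover}. In a first step I would verify Preconfiguration~$\mathbf{(\heartsuit 1)}(\gamma',h_2)$ directly from Lemma~\ref{lem:propertyYA12}: since case~$\mathbf{(cB)}$ guarantees $V_1\subseteq \XA\colouringpI{0}\setminus(\gP\cup\gP_2\cup\gP_3\cup\exceptVertSplit\cup\shadowsplit)$ and $V_0\subseteq \XB\colouringpI{0}\setminus(\gP\cup\exceptVertSplit\cup\shadowsplit)$, the inclusions of~\eqref{eq:trivka} place both sets in $\colouringp{0}\setminus(\shadowsplit\cup\shadow_{\GD}(\WantiC,\eta^2 k/10^5))$, while~\eqref{eq:propertyYA12cA}--\eqref{eq:propertyYA12cB2} supply the minimum‑degree bounds~\eqref{COND:P1:3}--\eqref{COND:P1:4} into $\Vgood\colouringpI{2}$, and~\eqref{eq:propertyYA12cB3} together with the choice $\gamma'=2\eta^3/10^3$ yields~\eqref{COND:P1:5}. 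The matching $\mathcal N:=\bar\M\setminus\{(X,Y)\in\bar\M\::\:X\cup Y\subseteq V(\NAtom)\}$ is then taken exactly as in the proof of Lemma~\ref{whatwegetfrom(t3)}, and Lemma~\ref{lem:MRes}\eqref{eq:M-semiregN} delivers the required $(\epsilon_1,d_1,\mu_1k)$‑semiregular matching absorbed by $\M_A\cup\M_B$ inside $\colouringp{1}$.

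To produce simultaneously the super‑regular family $\{(Q_0^{(j)},Q_1^{(j)})\}$ for Preconfiguration~$\mathbf{(reg)}$ and the set $V_2$ with the two cross‑degree bounds~\eqref{conf:D9-XtoV}--\eqref{conf:D9-VtoX}, I plan to invoke Lemma~\ref{lem:clean-Match} with $r=2$, chain $X_0:=V_2(\M)$, $X_1:=V_1(\M)$, $X_2:=V(\mathcal N)\setminus\bigcup\mathcal F'$, exceptional set $Y:=\exceptVertSplit\cup\shadowsplit\cup Y_{\bar\M}$, the partitions of $X_0,X_1$ inherited from $\V(\M)$, edge set $E_1$ equal to the $\M$‑edges of $\DenseSpots_\class$, and $E_2:=E(\GD)$. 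The numerical parameters are dictated by case~{\bf(M2)}; the resulting family satisfies~\eqref{conc:Match-superreg} with super‑regularity parameters $(4\epsilonD,\gamma^3\rho/(32\Omega^*),\eta^2\nu/(2\cdot 10^4))$, and setting $V_2:=X_2'$ gives~\eqref{conf:D9-VtoX} immediately from~\eqref{conc:Match-deg}, while~\eqref{conf:D9-XtoV} follows by combining the avoidance guarantee~\eqref{conc:Match-avoid} with the claim that each $v\in V_1$ has at least $\proporce{1}(1+\eta/20)k$ neighbours in $\GD$ inside $V(\mathcal N)\setminus\bigcup\mathcal F'$.

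The main obstacle is exactly this last lower bound. The core calculation I expect to run starts from $\deg_{\Gcapt}(v)\ge(1+\tfrac{9}{10}\eta)k$ (as $v\notin L_\sharp$) and successively removes the contributions forbidden by the hypotheses of case~$\mathbf{(cB)}$ and case~$\mathbf{(t3{-}5)}$: at most $\eta k/100$ edges to $\HugeVertices$ (as $v\notin\WantiC$), at most $\rho k$ edges into $V(\Gexp)$ (as $v\notin\shadow_{\Gcapt}(V(\Gexp),\rho k)$), at most $\rho k/(100\Omega^*)$ edges into $\smallatoms$ (as $v\notin\largeintoatoms$), at most $\gamma k$ edges of $\Gblack$ to $V(G)\setminus V(\M_A\cup\M_B)$ (as $v\notin\gP_1$), and at most $\gamma k$ edges of $\Gblack$ to $V(\NAtom)$ (as $v\notin\gPatoms$), leaving $\deg_{\Gblack}(v,V(\M_A\cup\M_B)\setminus V(\NAtom))\ge(1+4\eta/5)k$. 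Applying Definition~\ref{def:proportionalsplitting}\eqref{It:H5} to pass to $\colouringp{1}$, subtracting $\deg_{\GD}(v,V_{\mathrm{leftover}})\le\eta^2 k/10^{10}$ via $v\notin\shadowsplit$ to pass from $V(\M_A\cup\M_B)\colouringpI{1}$ to $V(\bar\M)$ (and hence into $V(\mathcal N)$), and finally subtracting the bound $\deg_{\Gcapt}(v,\bigcup\mathcal F')\le 3\eta^3 k/(2\cdot 10^3)$ from~\eqref{eq:propertyYA12cB3} together with the global bound of Setting~\ref{commonsetting}\eqref{commonsetting:DenseCaptured} (which controls $\deg_{\GD\setminus\Gblack}(v,\cdot)$ locally after a standard shadow argument) gives the promised lower bound $\proporce{1}(1+\eta/20)k$, comfortably exceeding $h_1+\gamma_{\PARAMETERPASSING{L}{lem:clean-Match}}k/2$. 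Because each constant loss above is polynomially smaller than $\proporce{1}\eta k\geByRef{eq:proporcevelke}\eta^2 k/100$, the hierarchy~\eqref{eq:KONST} ensures the estimate closes; once it does, all conditions of Configuration~$\mathbf{(\diamond 9)}$ with the stated parameters are in place.
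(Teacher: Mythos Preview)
Your overall plan---apply Lemma~\ref{lem:clean-Match} to the chain $X_0=V_2(\M)$, $X_1=V_1(\M)$, $X_2\subset V(\bar\M)$, and read off Preconfigurations~$\mathbf{(reg)}$ and~$\mathbf{(\heartsuit1)}$ via Lemma~\ref{lem:propertyYA12}---is exactly the paper's route. There are, however, two genuine gaps in the ``core calculation'' that you yourself flag as the main obstacle.

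\textbf{The sets $\gP_1$ and $\gPatoms$ do not help you.} Both are defined with a trailing ``$\setminus V(\M_A\cup\M_B)$''. Thus $v\notin\gP_1$ and $v\notin\gPatoms$ carry no information whenever $v\in V(\M_A\cup\M_B)$, and in case~{\bf(M1)} you have $V_1(\M)\subset V(\Mgood)\subset V(\M_A)$ outright. (Moreover, $\gPatoms$ is not even contained in $\gP$---only its shadow is---so ``$v\notin\gPatoms$'' is not established by~$\mathbf{(cB)}$ in the first place.) Consequently the two subtractions ``at most $\gamma k$ edges of $\Gblack$ to $V(G)\setminus V(\M_A\cup\M_B)$'' and ``at most $\gamma k$ edges of $\Gblack$ to $V(\NAtom)$'' are unjustified, and your bound $\deg_{\Gblack}(v,V(\M_A\cup\M_B)\setminus V(\NAtom))\ge(1+4\eta/5)k$ does not follow. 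Relatedly, you cannot ``apply Definition~\ref{def:proportionalsplitting}\eqref{It:H5} to pass to $\colouringp{1}$'' on the set $V(\M_A\cup\M_B)\setminus V(\NAtom)$, because that set is not one of the $\VXV_j$'s.

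\textbf{How the paper avoids this.} The paper takes $\mathcal N:=\bar\M$ (keeping $\NAtom$) and enlarges the cover to $\mathcal F':=\mathcal F\cup\{X\in\V(\M_B):X\subset\smallatoms\}$; this still gives $V(\bar\M)\setminus\bigcup\mathcal F'\subset\bigcup\clusters$, and the bound $\maxdeg_{\Gcapt}(V_1(\M),\bigcup\mathcal F')\le 2\eta^3 k/10^3$ now also uses $V_1(\M)\cap\largeintoatoms=\emptyset$ from~{\bf(t3--5)}. The degree estimate then starts from $\mindeg_{\Gcapt}(V_1(\M),\Vgood\colouringpI{1})\ge\proporce{1}(1+\eta/20)k$ (i.e.\ \eqref{eq:propertyYA12cA}, where the restriction to $\colouringp{1}$ is already built in via $\VXV_1=\Vgood$), and subtracts only degree into $\smallatoms$ (via $\largeintoatoms$), into $V(\Gexp)$ (via~{\bf(t3--5)}), and into $\largevertices{\eta}{k}{G}\setminus V(\M_A\cup\M_B)\subset\XA$ (via $\gP_3$). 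None of these steps require $v\notin V(\M_A\cup\M_B)$, and none touch $\gP_1$ or $\gPatoms$. Adopting this order---restrict to $\Vgood\colouringpI{1}$ first, then subtract---closes the estimate cleanly.
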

\begin{proof}
Recall that by Lemma~\ref{lem:propertyYA12}
we know that $\mathcal F$, as defined in~\eqref{def:Fcover}, is an $(\M_A\cup \M_B)$-cover. We introduce another $(\M_A\cup \M_B)$-cover, 
$$\mathcal F':=\mathcal F\cup\{X\in \V(\M_B)\::\: X\subset \smallatoms\}\;.$$
By~\eqref{eq:propertyYA12cB3} and as we are in case $\mathbf{(cB)}$, we have $\maxdeg_{\Gcapt}\left(V_1(\M),\bigcup
\mathcal F\right)\le \frac{2\eta^3}{3\cdot 10^3} k$.
Furthermore, as we are in case {\bf(t3--5)}, we have
$V_1(\M)\cap \largeintoatoms=\emptyset$. Thus,

\begin{equation}\label{cl:laptopN}
\maxdeg_{\Gcapt}\left(V_1(\M),\bigcup
\mathcal F'\right)\le \frac{2\eta^3}{10^3} k.
\end{equation}

 We use
Lemma~\ref{lem:clean-Match} with the following input parameters:
$r_\PARAMETERPASSING{L}{lem:clean-Match}:=2$,
$\Omega_\PARAMETERPASSING{L}{lem:clean-Match}:=\Omega^*$,
$\gamma_\PARAMETERPASSING{L}{lem:clean-Match}:=\eta^4/10^{11}$,
$\eta_\PARAMETERPASSING{L}{lem:clean-Match}:=\rho/2\Omega^*$,
$\delta_\PARAMETERPASSING{L}{lem:clean-Match}:=\rho
\eta^8/(10^{27}(\Omega^*)^3)$,
$\epsilon_\PARAMETERPASSING{L}{lem:clean-Match}:=\bar{\epsilon}$, $\mu_\PARAMETERPASSING{L}{lem:clean-Match}:=\beta$ and
$d_\PARAMETERPASSING{L}{lem:clean-Match}:=\bar d$. 
We use the following vertex sets
$Y_\PARAMETERPASSING{L}{lem:clean-Match}:=Y_{\bar\M}$, $X_0:=V_2(\M)$,
$X_1:=V_1(\M)$, and $X_2:=V(\bar\M)\setminus \bigcup\mathcal F'\subset
\bigcup\clusters\colouringpI{1}$.
The partitions of $X_0$ and $X_1$ in Lemma~\ref{lem:clean-Match} are the ones
induced by $\V(\M)$, and the set $E_1$ consists of all edges from $E(\DenseSpots_{\class})$
between pairs from $\M$.  Further, set $E_2:=E(\GD)$.

Condition~\ref{hyp:Match-Ysmall} of Lemma~\ref{lem:clean-Match} follows from Lemma~\ref{lem:MRes}\eqref{eq:sizeYM}.
Condition~\ref{hyp:Match-edges} follows by the assumption of Lemma~\ref{ObtConf9} on the size of $V(\M)$. 
Condition~\ref{hyp:Match-reg} follows from the definition of $\mathcal M$. 
Condition~\ref{hyp:Match-bounded} holds since $V(\M)$ does not meet $\HugeVertices$.

It remains to see Condition~\ref{hyp:yel-deg}, for $i=1$.
For this, first note that from Lemma~\ref{lem:propertyYA12} we get
that
\begin{equation}\label{batman}
\mindeg_{\Gcapt}\left(V_1(\M),\Vgood\colouringpI{1}\right)\overset{\mathbf{(cB)}}\ge
\mindeg_{\Gcapt}\left(\XA\setminus(\gP\cup
\exceptVertSplit),\Vgood\colouringpI{1}\right)\ge\proporce{1}(1+\frac{\eta}{20})k\;.
\end{equation}

From this, we calculate that
\begin{align}\label{tikitikitiki}
\notag
\mindeg_{\GD}\big(V_1(\M),V(\M_A\cup \M_B)\colouringpI{1}\big) & 
\ge
\mindeg_{\Gcapt}\big(V_1(\M),V(\M_A\cup
 \M_B)\colouringpI{1}\big)\\ 
 \nonumber & ~~~-\maxdeg_{\Gexp}\big(V_1(\M),
 V(\M_A\cup\M_B)\big)\\
 \JUSTIFY{by~\eqref{eq:defVgood} \& ~\eqref{eq:defV+}}&\ge \notag
 \mindeg_{\Gcapt}\big(V_1(\M), \Vgood\colouringpI{1}\big)\\ 
\notag  &~~~-\maxdeg_{\Gcapt}\big(V_1(\M),
 \smallatoms\big)\\ 
  &~~~\notag-\maxdeg_{\Gcapt}\big(V_1(\M),
  \largevertices{\eta}{k}{G}\setminus V(\M_A\cup\M_B)\big) \\  
  \notag &~~~-\maxdeg_{\Gcapt}\big(V_1(\M),
 V(\Gexp)\setminus V(\M_A\cup\M_B)\big)\\
 & \notag ~~~-\maxdeg_{\Gcapt}\big(V_1(\M), V(\Gexp)\cap
 V(\M_A\cup\M_B)\big)\\
  \JUSTIFY{by~\eqref{batman}, as $V_1(\M)\cap
\largeintoatoms=\emptyset$ \& $\mathbf{(cB)}$} &\ge \proporce{1}(1+\frac {\eta}{20}k)-\frac{\rho k}{100\Omega^*}\notag \\ 
 \nonumber &~~~-\maxdeg_{\Gcapt}\big(\XA\setminus \gP_3,\XA\big)\notag \\ 
 \nonumber &~~~-\maxdeg_{\Gcapt}(V_1(\M),
V(\Gexp))\\
  \JUSTIFY{by def of $\gP_3$ \& as $V_1(\M)\cap \shadow_{G}(V(\Gexp),\rho
 k)=\emptyset$ by \bf{(t3--5)}} &\ge  \proporce{1}(1+\frac{\eta}{20})k
 -\frac{\rho k}{100\Omega^*}-\frac{\eta^3 k}{10^3}-\rho k.
\end{align}

We obtain
\begin{align}
\nonumber
\mindeg_{\GD}(V_1(\M)\setminus
Y_\PARAMETERPASSING{L}{lem:clean-Match},X_2)&
\ge  \mindeg_{\GD}\big(V_1(\M)\setminus
Y_{\bar M},V(\bar \M)\big)-\maxdeg_{\GD}(V_1(\M),
\bigcup \mathcal F')\\ 
\JUSTIFY{by def of $\bar \M$, ~\eqref{cl:laptopN}} & \ge  \nonumber
\mindeg_{\GD}\big(V_1(\M),V(\M_A\cup \M_B)\colouringpI{1}\big)\\
\nonumber &~~~-\maxdeg_{\GD}(V_1(\M)\setminus 
 Y_{\bar M}, V_{\mathrm{leftover}})
 -\frac {2\eta^3k}{10^3}\\
 \nonumber
  \JUSTIFY{by~\eqref{tikitikitiki} and by def of $Y_\PARAMETERPASSING{L}{lem:clean-Match}$} &\ge  \proporce{1}(1+\frac{\eta}{20})k -\frac{\rho k}{100\Omega^*}-\frac{\eta^3 k}{10^3}-\rho k-\frac{\eta^2 k}{1000}-\frac {2\eta^3k}{10^3}\\
  \label{eq:densecase-degToV_1N}
&\ge
 \proporce{1}(1+\frac{\eta}{30})k\;.
\end{align}

Since the last term is greater than
$\gamma_\PARAMETERPASSING{L}{lem:clean-Match} k = \frac {\eta^4}{10^{11}}k$ by~\eqref{eq:proporcevelke}, we
see that Condition~\ref{hyp:yel-deg} of Lemma~\ref{lem:clean-Match} is satisfied.

The output of Lemma~\ref{lem:clean-Match} are three non-empty sets
$X_0',X_1',X_2'$ disjoint from $Y_\PARAMETERPASSING{L}{lem:clean-Match}$,
together with $(4\bar\epsilon,\frac{\bar d}4)$-super-regular pairs
$\{Q_0^{(j)},Q_1^{(j)}\}_{j\in\mathcal Y}$ which cover $(X'_0,X'_1)$ with the
following properties.
\begin{align} 
\label{eq:MaSi}
\JUSTIFY{by
Lemma~\ref{lem:clean-Match}~\eqref{conc:Match-superreg}}&\min\left\{|Q_0^{(j)}|,|Q_1^{(j)}|\right\}\ge
\frac{\beta k}{2}\;\mbox{for each $j\in\mathcal Y$}\;,\\
\label{eq:towardsD92N}\JUSTIFY{by
Lemma~\ref{lem:clean-Match}~\eqref{conc:Match-deg}}&\mindeg_{\GD}(X_2',X_1')\ge
\delta_\PARAMETERPASSING{L}{lem:clean-Match} k\;,\\ 
\begin{split}
\label{eq:towardsD93N}\JUSTIFY{by
Lemma~\ref{lem:clean-Match}~\eqref{conc:Match-avoid} and \eqref{eq:densecase-degToV_1N}}&\mindeg_{\GD}(X_1',X_2')
\ge\proporce{1}(1+\frac{\eta}{30})k-\frac
{\eta^4k}{2\cdot 10^{11}}\\& ~~~~~~~~~~~~~~~~~~~~~~~~~\ge \proporce{1}(1+\frac{\eta}{40})k\;.
\end{split}
\end{align}

We now verify that the sets $X_0',X_1',X_2'$, 
the semiregular matching $\mathcal
N_\PARAMETERPASSING{D}{def:CONF9}:=\bar\M$ together with the
$(\M_A\cup\M_B)$-cover $\mathcal F'$, and the family $\{(Q_0^{(j)},Q_0^{(j)})\}_{j\in\mathcal Y}$ satisfy all the conditions of
Configuration~$\mathbf{(\diamond9)}(\delta_\PARAMETERPASSING{L}{lem:clean-Match},\frac
{2\eta^3}{10^3}, \proporce{1}(1+\frac{\eta}{40})k,
\proporce{2}(1+\frac{\eta}{20})k, \frac{400\varepsilon}{\eta},
\frac{d}2, \frac{\eta\pi\clustersize}{200k},4\epsilonD,\gamma^3\rho/32\Omega^*, \eta^2 \nu/2\cdot 10^4)$.

By Lemma~\ref{lem:propertyYA12}, since we are in case $\mathbf{(cB)}$ and by~\eqref{cl:laptopN}, the pair
$X_0',X_1'$ together with the $(\M_A\cup \M_B)$-cover $\mathcal F'$ witnesses
Preconfiguration~$\mathbf{(\heartsuit1)}(\frac
{2\eta^3}{10^3},\proporce{2}(1+\frac{\eta}{20})k)$. By Lemma~\ref{lem:MRes}~\eqref{eq:M-semiregN},
$\bar\M$ is as required for Configuration~$\mathbf{(\diamond9)}$.

To see that $G$ is in Preconfiguration~$\mathbf{(reg)}(4\epsilonD,\gamma^3\rho/32\Omega^*, \eta^2 \nu/2\cdot 10^4)$, note that $4\bar\eps\le 4\epsilonD$ and $\bar d/4\ge  \gamma^3\rho/32\Omega^*$ (in both cases {\bf (M1)} and  {\bf (M1)}).
Further,
 Property~\eqref{COND:reg:0} follows from~\eqref{eq:MaSi} since $\beta/2\ge \eta^2 \nu/2\cdot 10^4$.
 
  Finally, by definition of
$X_2$, the set $X_2'$ is as required, with
 Property~\eqref{conf:D9-XtoV} following from~\eqref{eq:towardsD93N}, and Property~\eqref{conf:D9-VtoX} following from~\eqref{eq:towardsD92N}.
\end{proof}

We are now reaching the last lemma of this section, dealing with the last remaining case.

\begin{lemma}\label{whatwegetfrom(t5)}
In Case $\mathbf{(t5)}\mathbf{(cA)}$ we get Configuration
$\mathbf{(\diamond10)}\big(\epsilon,\frac{\gamma^2
d}{2},\pi\sqrt{\epsilon'}\nu k, \frac
{(\Omega^*)^2k}{\gamma^2},\frac{\eta}{40}\big)$.
\end{lemma}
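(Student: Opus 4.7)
\medskip

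\noindent\textbf{Plan of proof of Lemma~\ref{whatwegetfrom(t5)}.}
The idea is that case $\mathbf{(t5)}\mathbf{(cA)}$ is the ``dense-like'' situation: a substantial portion of $V(G)$ sits inside $V(\Gblack)=\bigcup\clusters$ and behaves as in a Szemer\'edi regularity partition, so Configuration~$\mathbf{(\diamond10)}$ should be read off almost directly from the sparse decomposition $\class$. Accordingly, I will take
\[
\tilde G:=\Gblack, \qquad \V:=\clusters,
\]
and verify that $(\tilde G,\V)$ is an $\bigl(\epsilon,\tfrac{\gamma^2 d}{2},\pi\sqrt{\epsilon'}\nu k,\tfrac{(\Omega^*)^2k}{\gamma^2}\bigr)$-regularized graph. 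The regularity threshold and density come from Property~\ref{defBC:RL} of Definition~\ref{bclassdef} (using $\epsilon'\le\epsilon$ and $\gamma^2\ge\tfrac{\gamma^2 d}{2}$); the cluster-size lower bound $\pi\sqrt{\epsilon'}\nu k$ is subsumed by $\clustersize\ge\nu k$ from Property~\ref{Csize}; and Fact~\ref{fact:ClusterGraphBoundedDegree} combined with $|C|\le\epsilon k$ gives the neighbourhood bound $\ell_2$.

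\medskip

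\noindent\textbf{The consistent matching $\M_\PARAMETERPASSING{D}{def:CONF10}$.}
Next I build a cluster-level matching $\mathcal N$ with $\V(\mathcal N)\subset\clusters$. In case $\mathbf{(M1)}$, the matching $\M$ is absorbed by $\Mgood$, and by Setting~\ref{commonsetting}~\eqref{commonsetting3} each $\Mgood$-vertex is contained in a cluster; moreover by Setting~\ref{commonsetting}~\eqref{commonsettingMgood} the corresponding cluster pair is in $E(\BGblack)$. I take $\mathcal N$ to consist of all such cluster pairs carrying at least one $\M$-edge. In case $\mathbf{(M2)}$ the $\M$-edges live in $E(\DenseSpots_\class)\cap E(\Gblack)$ by Lemma~\ref{lem:clean-spots}, so each $\M$-edge $(X_1,X_2)$ decomposes along the cluster partition as $X_i=\bigsqcup_C(X_i\cap C)$; by averaging I choose, for each $(X_1,X_2)$, a cluster pair $(C_1,C_2)\in E(\BGblack)$ that retains a $\Theta(1)$ fraction of the pair's edges, and then greedily select a sub-collection of pairwise cluster-disjoint such $(C_1,C_2)$'s. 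Fact~\ref{fact:BigSubpairsInRegularPairs} (used on $\Gblack[C_1,C_2]$) gives the required $(\epsilon,\tfrac{\gamma^2 d}{2},\pi\sqrt{\epsilon'}\nu k)$-semiregularity.

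\medskip

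\noindent\textbf{Choosing $(A,B)$ and $\LargeTen$.} I pick a pair $(A,B)\in\mathcal N$ such that $A$ and $B$ each contain a linear fraction of vertices inheriting the ``typical'' properties of $V(\M)$; existence follows by averaging over the pairs of $\mathcal N$, of which there are $\Omega(n/\clustersize)$. I let
\[
\LargeTen:=\{C\in\clusters\setminus\V(\mathcal N)\::\: C\subset\largevertices{\eta}{k}{G}\text{ and }|\{v\in C\::\:\deg_{\Gblack}(v)\ge(1+\tfrac\eta{40})k\}|\ge(1-\epsilon)|C|\},
\]
which makes condition~\eqref{diamond10cond3} of Definition~\ref{def:CONF10} automatic, and condition~\eqref{diamond10cond1} follows from $AB\in E(\BGblack)$.

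\medskip

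\noindent\textbf{Verification of condition~\eqref{diamond10cond2}.} This is the heart of the proof. Let $v\in A$ (the case $v\in B$ is identical). Because $V(\M)\subset\XA\setminus(\gP\cup\exceptVertSplit\cup\shadowsplit)$, and because subcase~$\mathbf{(t5)}$ specifies $V(\M)\cap(\shadow_{\Gcapt}(V(\Gexp),\rho k)\cup\largeintoatoms\cup R)=\emptyset$, every $v\in V(\M)\cap A$ satisfies the chain of bounds
\[
\deg_{\Gblack}(v)\ge\deg_G(v)-\deg_{G\setminus\Gcapt}(v)-\deg_{\Gcapt}(v,\HugeVertices\cup V(\Gexp)\cup\smallatoms)\ge (1+\tfrac{97\eta}{100})k,
\]
using $v\notin L_\#$, $v\notin\WantiC$, $v\notin\shadow_{G-\Gcapt}(V(G),\tfrac\eta{100}k)$ (built into $\YA$), and the avoidance conditions above. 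I then exhibit, for all but $\epsilon|A|$ vertices $v\in A$, that $v$ satisfies the same degree bound; the exceptional vertices are absorbed into $L_\#$, $\WantiC$, $\gP$, $\exceptVertSplit$, $\shadowsplit$, or shadows into $V(\Gexp)\cup\smallatoms\cup\HugeVertices$, each of which contributes $o(|A|)$ by Lemma~\ref{lem:YAYB} and the sparse-decomposition bounds. Finally I argue that for such a typical $v$, almost all of this $\Gblack$-degree lands in $V(\mathcal N)\cup\bigcup\LargeTen$: the only clusters to avoid are (i) clusters $C\subset\smallvertices{\eta}{k}{G}$, controlled by $e_{\Gcapt}(\XA,S^0\setminus V(\M_A))\le\gamma kn$ of Setting~\ref{commonsetting}~\eqref{commonsettingXAS0} together with a standard shadow bound (Fact~\ref{fact:shadowbound}); and (ii) clusters $C\subset\largevertices{\eta}{k}{G}$ outside $\LargeTen\cup\V(\mathcal N)$, which by the definition of $\LargeTen$ consist mostly of vertices violating $\deg_{\Gblack}\ge(1+\tfrac\eta{40})k$, and hence the total size of $\bigcup$(such $C$) is $O(\rho n/\eta^2)$ by Lemma~\ref{lem:YAYB}. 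Combining both cases yields $\deg_{\tilde G}(v,V(\mathcal N)\cup\bigcup\LargeTen)\ge(1+\tfrac\eta{40})k$ for all but $\epsilon|A|$ vertices of $A$.

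\medskip

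\noindent\textbf{Main obstacle.} The decisive technical step is to make the bounds valid for almost all of $A$ rather than merely for $V(\M)\cap A$: the $\M$-vertex $T\subset A$ may occupy as little as a $\pi$-fraction of $A$. I overcome this by showing that each of the excluded ``bad'' sets $L_\#$, $\WantiC$, $\gP$, $\exceptVertSplit$, $\shadowsplit$, together with the shadows into $V(\Gexp)$, $\smallatoms$, and $\HugeVertices$, has total size $o(n)$, so by averaging over the $\Omega(n/\clustersize)$ candidate pairs $(A,B)\in\mathcal N$ I can select one in which the bad vertices comprise at most an $\epsilon$-fraction of $A\cup B$. In case $\mathbf{(M2)}$ a subsidiary obstacle is the cluster-alignment step, since the $\M$-edges are not \emph{a priori} cluster-aligned; this is handled by the density refinement above, using that $\Gblack$-edges respect the cluster partition (Definition~\ref{bclassdef}~\eqref{defBC:RL}).
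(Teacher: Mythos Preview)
Your proposal has a fundamental gap in the choice of the matching for $\mathbf{(\diamond10)}$. You take $\mathcal N$ to be cluster pairs carrying edges of the \emph{hypothesis} matching $\M$, so $|V(\mathcal N)|$ is only of order $\rho n/\Omega^*$. For condition~\eqref{diamond10cond2} you need a typical $v\in A$ to have degree $\ge(1+\tfrac\eta{40})k$ into $V(\mathcal N)\cup\bigcup\LargeTen$. But $v$ may send a large share of its $\Gblack$-degree to $(\M_A\cup\M_B)$-vertices lying in $\smallvertices{\eta}{k}{G}$; these are neither in your small $\mathcal N$ nor in $\LargeTen$ (which you restrict to $\largevertices{\eta}{k}{G}$). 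Setting~\ref{commonsetting}\eqref{commonsettingXAS0} only bounds edges from $\XA$ to $S^0\setminus V(\M_A)$, not to $S^0\cap V(\M_A)$, so your step ``(i)'' does not close this leak. The paper resolves this by taking $\M_\PARAMETERPASSING{D}{def:CONF10}:=\M_A\cup\M_B$; since $(\M_A\cup\M_B)$-vertices are proper subsets of clusters, this forces the family $\V$ to be a genuine refinement that includes $\V(\M_A\cup\M_B)$ as atoms (together with cleaned remainders of the clusters), rather than $\clusters$ itself. The hypothesis matching $\M$ is used only to \emph{locate} the pair $(A_\PARAMETERPASSING{D}{def:CONF10},B_\PARAMETERPASSING{D}{def:CONF10})$, not as the configuration's matching.

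Your extension from ``good on $V(\M)\cap A$'' to ``good on all but $\epsilon|A|$ of $A$'' also fails. A vertex $v\in A\setminus V(\M)$ may lie in $\shadow_{\Gcapt}(V(\Gexp),\rho k)$ or in $\largeintoatoms$---case~$\mathbf{(t5)}$ excludes these only for $V(\M)$, not for the ambient cluster---and then the chain of inequalities you write for $\deg_{\Gblack}(v)$ breaks. Averaging over the global bad sets $L_\#,\WantiC,\ldots$ does not help, because these sets do not capture ``large $\Gexp$- or $\smallatoms$-degree''; indeed, your step ``(ii)'' conflates low $\Gblack$-degree with membership in $L_\#$, which Lemma~\ref{lem:YAYB} does not support. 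The paper's mechanism is instead a regularity transfer via Lemma~\ref{lem:degreeIntoManyPairs}: once a positive fraction of a cluster has $\Gblack$-degree $\ge x$ into a union of regular partners, almost the whole cluster has degree $\ge x-o(k)$ into that union. This is why the paper first isolates a good $\M$-edge (Subclaim~\ref{lem:ABobt3}), then passes to a hosting $\BGblack$-edge (Subclaim~\ref{cl:CACB}), and only afterwards invokes the regularity transfer to obtain the ``all but $2\epsilon'\clustersize$'' conclusion.
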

\begin{proof}
Since we are in case  $\mathbf{(t5)}$, we have  $V(\M)\subset V(\Gblack)$. Therefore,
\begin{align}
\mindeg_{\Gblack}(V(\M),\Vgood)&\ge \notag
\mindeg_{\Gcapt}(V(\M),V_+\setminus L_\sharp)
-\maxdeg_{\Gcapt}(V(\M),\HugeVertices)\\
\notag
&~~~-\maxdeg_{\Gcapt}(V(\M),\smallatoms)
-\maxdeg_{\Gcapt}(V(\M),V(\Gexp))\\
\label{eq:3veci2N}
&\ge (1+\frac\eta{20})k,
\end{align}
where the last line follows as $V(\M)\subseteq \XA\sm\gP\subseteq\YA\setminus\WantiC$ by $\mathbf{(cA)}$ and furthermore, $V(\M)\cap (\shadow_{G}(V(\Gexp),\rho k)\cup \largeintoatoms)=\emptyset$ by  $\mathbf{(t5)}$.

Define
\begin{align}
\nonumber
\gC&:=\left\{C\setminus \big(L_\#\cup V(\M_A\cup \M_B)\cup
\WantiC\cup\gP_1\big)\::\: C\in\clusters\right\}\;,
\index{mathsymbols}{*C@$\gC$}
\\ \nonumber
\gC^-&:=\left\{C\in\gC\::\:
|C|<\sqrt{\epsilon'}\clustersize\right\}\;,
\index{mathsymbols}{*C@$\gC^-$}
\end{align}

 We have
 \begin{equation}
 \left|\bigcup \gC^-\right|\leq \sum_{C\in\gC}\sqrt{\epsilon'}|C|\le
\sqrt{\epsilon'} n\;.\label{eq:CminusMala}
 \end{equation}

Set $\V^\circ:= \V(\M_A\cup \M_B)\cup (\gC\setminus \gC^-)$ and let $G^\circ$ be
the subgraph of $G$ with vertex set $\bigcup \V^\circ$ and all edges from $E(\Gblack)$
induced by $\bigcup \V^\circ$ plus all edges of $E(\Gcapt)\setminus E(\Gexp)$
between $X$ and $Y$ for all $(X,Y)\in\M_A\cup \M_B$.
Apply Fact~\ref{fact:BigSubpairsInRegularPairs} (and recall Definition~\ref{bclassdef}~\eqref{defBC:RL}) to see that each pair of sets $X,Y\in\V^\circ$ forms an $\eps$-regular pair of density either $0$ or at least $\gamma^2d/2$ (whose edges either lie in $\Gblack$ or touch $\smallatoms$).

Next, observe that from Setting~\ref{commonsetting}~\eqref{commonsetting2},
Fact~\ref{fact:sizedensespot} and Fact~\ref{fact:boundedlymanyspots}, and using Definition~\ref{bclassdef}\eqref{defBC:prepartition}, we find that for all $X\in \V^\circ$ which lie in some cluster of $\clusters$, we have
$|\bigcup \neighbor_{G^\circ}(X)|\le |\bigcup\neighbor_{\GD}(X)|\le 
\frac {\Omega^*}{\gamma}\cdot \frac {\Omega^* k}{\gamma}$.
Also, observe that for all $X\in \V^\circ$ which do not lie in some cluster of
$\clusters$, we know from Setting~\ref{commonsetting}~\eqref{commonsetting3}
that $X$ does not see any edges from $E(\Gblack)$. This means that  $\bigcup
\neighbor_{G^\circ}(X)$ is contained in the partner of $X$ in $\M_A\cup M_B$
(which has size at most $\clustersize\le \epsilon'k$ by
Setting~\ref{commonsetting}~\eqref{commonsetting3} and
Definition~\ref{bclassdef}~\eqref{Csize}).

Thus we obtain that
\begin{equation}\label{lem:regularizedobt3}
\text{$(G^\circ,{\V}^\circ)$ is
an $(\epsilon, \frac{\gamma^2
d}2,\pi\sqrt{\epsilon'}\clustersize, \frac
{(\Omega^*)^2k}{\gamma^2})$-regularized graph.}
\end{equation}

 Define $$\mathcal
L^\circ:=\left\{X\in \V^\circ\setminus  \V(\M_A\cup \M_B)\::\: \mindeg_{G^\circ}(X)\ge
(1+\frac{\eta}2)k\right\}.$$

We claim that the following holds.
\begin{claim}\label{lem:thedesiredAandBdoexist}
There are  distinct $X_A,X_B\in \V^\circ$, with $E(G^\circ[X_A,X_B])\neq \emptyset$, such that we have $\deg_{\Gblack}(v,V(\M_A\cup\M_B)\cup\bigcup\mathcal L^\circ)\ge
(1+\frac{\eta}{40})k$ for all but at most $2\epsilon'\clustersize$ vertices $v\in X_A$, and all but at most $2\epsilon'\clustersize$ vertices $v\in X_B$. 
\end{claim}

Then, setting $\tilde G_\PARAMETERPASSING{D}{def:CONF10}:=G^\circ$, $\V_\PARAMETERPASSING{D}{def:CONF10}:=\V^\circ$, $\M_\PARAMETERPASSING{D}{def:CONF10}:=\M_A\cup \M_B$, $\mathcal L^*_\PARAMETERPASSING{D}{def:CONF10}:=\mathcal L^\circ$, $A_\PARAMETERPASSING{D}{def:CONF10}:= X_A$, and  $B_\PARAMETERPASSING{D}{def:CONF10}:= X_B$,
we have obtained
Configuration~$\mathbf{(\diamond10)}\big( \epsilon, \frac{\gamma^2
d}2,\pi\sqrt{\epsilon'}\nu k,\frac {(\Omega^*)^2k}{\gamma^2},\eta/40 \big)$.
Indeed,
 using~\eqref{lem:regularizedobt3}, and the definition of $\mathcal L^\circ$ we see that 
$( \tilde
G_\PARAMETERPASSING{D}{def:CONF10},\V_\PARAMETERPASSING{D}{def:CONF10})$,
$\M_\PARAMETERPASSING{D}{def:CONF10}$ and $\mathcal
L^*_\PARAMETERPASSING{D}{def:CONF10}$ are as desired and
fulfil~\eqref{diamond10cond3}.
  Claim~\ref{lem:thedesiredAandBdoexist} together with the fact that
  $\deg_{G^\circ}(v, V(\M_A\cup \M_B)\cup \bigcup \mathcal L^\circ)\ge
  \deg_{\Gblack}(v, V(\M_A\cup \M_B)\cup \bigcup \mathcal L^\circ)$ for all
  $v\in V(G^\circ)$ ensure that also \eqref{diamond10cond1}
  and~\eqref{diamond10cond2} hold.

\medskip

It only remains to prove Claim~\ref{lem:thedesiredAandBdoexist}.

\begin{proof}[Proof of Claim~\ref{lem:thedesiredAandBdoexist}]
 In order  to find $X_A$ and $X_B$ as in the statement of the lemma, we shall exploit the matching $\M$; the relation between $\M$ and
$(G^\circ,{\V}^\circ)$, $\M_A\cup\M_B$, and $\mathcal L^\circ$ is not direct. We
proceed as follows. In  Subclaim~\ref{lem:ABobt3} we find a suitable $\M$-edge.
In case $\mathbf{(M1)}$ this $\M$-edge gives readily a suitable pair
$(A_\PARAMETERPASSING{D}{def:CONF10},B_\PARAMETERPASSING{D}{def:CONF10})$. In
case $\mathbf{(M2)}$ we have to work on the $\M$-edge to get a suitable
$\BGblack$-edge, this will be done  in Subclaim~\ref{cl:CACB}. Only then do we
find $(A_\PARAMETERPASSING{D}{def:CONF10},B_\PARAMETERPASSING{D}{def:CONF10})$.

\begin{subclaim}\label{lem:ABobt3}
There is an $\M$-edge $(A,B)$ such that
$\deg_{\Gblack}(v,V(\M_A\cup\M_B)\cup\bigcup\mathcal L^\circ)\ge
(1+\frac{\eta}{40})k+\frac{\eta k}{200}$ for at least $|A|/2$ vertices $v\in A$, and at least $|B|/2$
vertices $v\in B$.
\end{subclaim}
\begin{proof}[Proof of Subclaim~\ref{lem:ABobt3}]
Set
$S:=\shadow_{\Gblack}(\bigcup \gC^-,\frac{\eta
 k}{200})$, and note that by Fact~\ref{fact:shadowbound} we have $|S|\le |\bigcup\mathcal C^-|\cdot \frac {200\Omega^*}{\eta}$. So, setting
 $\M_S:=\{(X,Y)\in\M\::\:|(X\cup Y)\cap S|\ge |X\cup Y|/4\}$ we find that $$|V(\M_S)|\ \le \  4|S| \ \overset{\eqref{eq:CminusMala}}\le \ \frac
{800\sqrt{\epsilon'}\Omega^*n}{\eta} \ < \ \frac{\rho n}{ \Omega^*} \ \leq \
|V(\M)|,$$ where the last inequality holds by assumption of
Lemma~\ref{whatwegetfrom(t5)}. Consequently, $\M\neq\M_S$.

Let $(A,B)\in\M\setminus \M_S$. We will show that $(A,B)$ satisfies the
requirements of the subclaim. 
 We start by proving that
 \begin{equation}\label{fact:VplusCapG0}
V_{+}\cap V(G^\circ)\sm (V(\M_A\cup \M_B)\cup \bigcup \mathcal L^\circ)\subseteq  V(\Gexp)\cup
(\largeintoatoms\cap \largevertices{\eta}{k}{G}).
 \end{equation}
Indeed, observe that by~\eqref{defV+eq},
 \begin{align*}
 V_{+}\cap V(G^\circ) &\subseteq V(\M_A\cup \M_B)\cup V(\Gexp) \cup \big(
 \largevertices{\eta}{k}{G}\sm (L_\sharp \cup\WantiC\cup \gP_1)\big)\\
 &\subseteq V(\M_A\cup \M_B)\cup V(\Gexp) \cup 
 \big(\largevertices{\frac
 {9\eta}{10}}{k}{\Gcapt}
 \setminus (\WantiC\cup \gP_1)\big)\;.
 \end{align*}
 
 So, in order to show~\eqref{fact:VplusCapG0}, it suffices to see that 
  for each $X\in \V^\circ\setminus \V(\M_A\cup \M_B)$ with $X\subseteq
 \largevertices{\frac
 {9\eta}{10}}{k}{\Gcapt}
 \setminus (\WantiC\cup \gP_1\cup V(\Gexp)\cup
 \largeintoatoms)$ we have $X\in \mathcal L^\circ$. So assume $X$ is as above. Let $v\in X$. We calculate
 
 \begin{align*}
 \deg_{\Gblack}(v, V(G^\circ))&\ge \deg_{\Gblack}(v, V(\M_A\cup \M_B))\\
 \JUSTIFY{$v\notin V(\Gexp)$}&\ge (1+\frac
 {9\eta}{10})k-\deg_G(v,\HugeVertices)-\deg_{\GD}(v,
 \smallatoms)\\ 
 & \ -\deg_{\Gblack}(v, \bigcup \clusters\setminus V(\M_A\cup \M_B))\\
 \JUSTIFY{$v\notin \WantiC\cup\largeintoatoms\cup \gP_1\cup V(\M_A\cup
 \M_B)$}&\ge (1+\frac {9\eta}{10})k -\frac {\eta k}{100}-\frac{\rho
 k}{100\Omega^*} -\gamma k\\
&\ge (1+\frac {\eta}{2})k\;.
 \end{align*}
 We deduce that $X\in\mathcal L^\circ$, which finishes the proof of~\eqref{fact:VplusCapG0}.
 
 Next, observe that by the definition of $\mathcal C$, we have 
 \begin{align}
  V_+\cap V(G^\circ) & \notag \supseteq \Vgood\cap V(G^\circ)  \\ \notag & \supseteq \Vgood\sm \big( \Vgood\sm V(G^\circ)\big)\\  & \supseteq \Vgood\sm (\WantiC\cup \gP_1\cup \bigcup
\gC^- \cup \smallatoms\cup V(\Gexp)).\label{vorabrechnung}
 \end{align}

We are now ready to prove Subclaim~\ref{lem:ABobt3}.
For each vertex $v\in A\setminus S$, we have
\begin{align*}
\deg_{\Gblack}\left(v,V(\M_A\cup \M_B)\cup \bigcup \mathcal L^\circ\right)
&\ge  \deg_{\Gblack}(v, V_+\cap V(G^\circ))\\
 &~~~-\deg_{\Gblack}\left(v,(V_+\cap V(G^\circ))\setminus (V(\M_A\cup \M_B)\cup
 \mathcal L^\circ)\right)\\
\JUSTIFY{by~\eqref{vorabrechnung}, ~\eqref{fact:VplusCapG0}} 
&\ge
\deg_{\Gblack}(v, \Vgood)
-\deg_{\Gblack}(v, \WantiC\cup \gP_1\cup \bigcup
\gC^-)\\
&~~~-\deg_{\Gblack}(v,\smallatoms)-2\deg_{\Gblack}(v,V(\Gexp))\\
&~~~-\deg_{\Gblack}\Big(v,(\largeintoatoms\cap \largevertices{\eta}{k}{G})\setminus V(\M_A\cup\M_B)\Big)\\
 \JUSTIFY{by ~\eqref{eq:3veci2N}, as $v\not\in  S\cup\gP$, by $\mathbf{(t5)}$} 
 &\ge  (1+\frac{\eta}{20})k-
\frac{\eta^2 k}{10^5}-\frac{\eta k}{200}-\frac{\rho k}{100\Omega^*}-2\rho k
-\frac{2\eta^2 k}{10^5}
\\
&>
  (1+\frac{\eta}{40})k+\frac {\eta k}{200}\;,
\end{align*}
where for the second to last inequality we used the abreviation  `by
$\mathbf{(t5)}$' to indicate that this case implies that  $v\notin
\shadow_{\Gcapt}(V(\Gexp), \rho k)\cup \shadow_{\Gcapt}((\largeintoatoms\cap
\largevertices{\eta}{k}{G})\setminus V(\M_A\cup M_B), \frac{2\eta^2k}{10^5})$.
As $|A\setminus S|\ge |A|/2$, we note that the set $A$ fulfils the requirements of the claim.

The same calculations hold for $B$. This finishes the proof of
Subclaim~\ref{lem:ABobt3}.
\end{proof}

The next auxiliary subclaim is needed in our proof of
Claim~\ref{lem:thedesiredAandBdoexist} in case {\bf(M2)}.
\begin{subclaim}\label{cl:CACB}
Suppose that case {\bf(M2)} occurs. Then there exists an edge $C_AC_B\in E(\BGblack)$ such that $\deg_{\Gblack}(v,V(\M_A\cup\M_B)\cup\bigcup\mathcal L^\circ)\ge
(1+\frac{\eta}{40})k+\frac{\eta k}{400}$ for all but at most
$2\epsilon'\clustersize$ vertices $v\in C_A$, and all but at most
$2\epsilon'\clustersize$ vertices $v\in C_B$. Moreover, there exist $A,B\in
\V(\M)$ such that $|C_A\cap A|>\sqrt{\epsilon'}\clustersize$ and $|C_B\cap
B|>\sqrt{\epsilon'}\clustersize$.
\end{subclaim}
\begin{proof}[Proof of Claim~\ref{cl:CACB}]
Let $(A,B)\in\M$ be given as in Subclaim~\ref{lem:ABobt3}.  Let $P_A\subset A$,
and $P_B\subset B$ be the vertices which fail the assertion of
Subclaim~\ref{lem:ABobt3}. Note that with this notation,
Subclaim~\ref{lem:ABobt3} states that
\begin{equation}\label{apalache}
|A\sm P_A|\geq |A|/2.
\end{equation}

Call a cluster $C\in \clusters$ \emph{$A$-negligible} if $|C\cap (A\setminus
P_A)|\le \frac {\gamma^3\clustersize}{16\Omega^*k}|A|$. Let $R_A$ be the union
of all $A$-negligible clusters. 

Recall that $(A,B)$ is entirely contained in one dense spot from
$(U,W;F)\in \DenseSpots_\class$ (cf.\ {\bf(M2)}). So by
Fact~\ref{fact:sizedensespot}, and since the spots in $\DenseSpots_\class$ are $(\frac{\gamma^3
k}4,\frac{\gamma^3 k}4)$-dense, we know that $\max\{|U|,|W|\}\leq
\frac{4\Omega^* k}{\gamma^3}$. In particular, there are at most $\frac{4\Omega^* k}{\gamma^3\clustersize}$ $A$-negligible clusters which intersect to $A\cap R_A$.

As these clusters are all disjoint, we find that
 $$|(A\cap R_A)\sm P_A|\le \frac{4\Omega^* k}{\gamma^3\clustersize}\cdot  |C\cap (A\setminus
P_A)|\le \frac{|A|}{4}.$$ 
This gives 
$$|A\setminus (P_A\cup R_A)|\geBy{\eqref{apalache}} \frac{|A|}2-|(A\cap R_A)\sm P_A|\ge \frac{|A|}4\;.$$ 

Similarly, we
can introduce the notion $B$-negligible clusters, and the set $R_B$, and get
$|(B\cap R_B)\sm P_B|\le \frac{|B|}{4}$ and $|B\setminus (P_B\cup R_B)|\ge |B|/4$. 

By the regularity of the pair $(A,B)$ there exists at least one edge $ab\in E\big(G^*[A\setminus (P_A\cup R_A),B\setminus (P_B\cup R_B)]\big)$, where $a\in A, b\in B$, and $G^*$ is the graph formed by edges of $\DenseSpots_\class$. As $V(\M)\subset V(\Gblack)$ by the assumption of case {\bf(t5)}, we have that $ab\in E(\Gblack)$. Let $C_A,C_B\in\clusters$ be the clusters containing $a$ and $b$, respectively. Note that $C_AC_B\in E(\BGblack)$.

Now as $a\notin R_A$, also $C_A$ is disjoint from $R_A$, and thus $$|C_A\cap (A\setminus
 P_A)|>\frac {\gamma^3\clustersize}{16\Omega^*k}\cdot \frac {\alphaD\rho
 k}{\Omega^*} > \sqrt{\epsilon'}\clustersize\;.$$
 This proves the ``moreover'' part of the claim for $C_A$.
 So there are at least $2\epsilon'\clustersize$ vertices $v$ in $C_A$ with $\deg_{\Gblack}(v,V(\M_A\cup\M_B)\cup\bigcup\mathcal L^\circ)\ge (1+\frac\eta{40})k+\frac{\eta k}{200}$ (by the definition of $P_A$). By Lemma~\ref{lem:degreeIntoManyPairs}, and using Facts~\ref{fact:sizedensespot} and~\ref{fact:boundedlymanyspots}, 
 we thus have that $\deg_{\Gblack}(v,V(\M_A\cup\M_B)\cup\bigcup\mathcal L^\circ)\ge (1+\frac\eta{40})k+\frac{\eta k}{400}$ for all but at most $2\epsilon'\clustersize$ vertices $v$ of $C_A$.
The same calculations hold for $C_B$. 
\end{proof}

In the remainder of the proof of Claim~\ref{lem:thedesiredAandBdoexist} we have
to distiguish between cases {\bf(M1)} and {\bf(M2)}.

Let us first consider the case {\bf(M2)}.
Let $C_A,C_B\in\clusters$ and $A,B\in\V(\M)$ be given by Subclaim~\ref{cl:CACB}.   
We have $|C_A\setminus (\WantiC
\cup
L_\sharp \cup \gP_1)|> \sqrt{\epsilon'}|C_A|$ by Subclaim~\ref{cl:CACB} and by
the definition of $\M$ and the definition of $\gP$. 
Thus, $C_A\cap V(G^\circ)$ is non-empty. Let $X_A\in\V^\circ$ be an arbitrary
set in $C_A$. Similarly, we obtain a set $X_B\in\V^\circ$, $X_B\subset C_B$. The
claimed properties of the pair $(X_A,X_B)$ follow directly from
Subclaim~\ref{cl:CACB}.

It remains to treat the case {\bf(M1)}.
Let $(A,B)$ be from Subclaim~\ref{lem:ABobt3}. Let $(X_A,X_B)\in\Mgood$ be such
that $X_A\supset A$ and $X_B\supset B$. Claim~\ref{lem:ABobt3} asserts that at least $$\frac{|A|}2\geBy{\bf{(M1)}}\frac{\eta^2\clustersize}{2\cdot 10^4} > 2\epsilon'\clustersize$$ vertices of $A$ have large degree (in $\Gblack$) into the set $V(\M_A\cup\M_B)\cup\bigcup\mathcal L^\circ$. Therefore, by Lemma~\ref{lem:degreeIntoManyPairs}, 
 $X_A$ and $X_B$ satisfy the assertion of the
Claim.

This proves Claim~\ref{lem:thedesiredAandBdoexist}, and thus
 finishes the proof of Lemma~\ref{whatwegetfrom(t5)}. 
 \end{proof}
\end{proof}

The proof of  Lemma~\ref{lem:ConfWhenMatching} follows by putting together Lemmas~\ref{whatwegetfrom(t1)}, \ref{whatwegetfrom(t2)}, \ref{whatwegetfrom(t3)}, \ref{ObtConf9}, and \ref{whatwegetfrom(t5)}.

\section{Embedding trees}\label{sec:embed}
In this section we provide an embedding of a tree $T_\PARAMETERPASSING{T}{thm:main}\in\treeclass{k}$ in
the setting of the configurations introduced in
Section~\ref{sec:configurations}.  In Section~\ref{ssec:embeddingOverview} we
first give a fairly detailed overview of the embedding techniques used. In
Section~\ref{ssec:Duplicate} we introduce a class of stochastic processes which
will be used for some embeddings. Section~\ref{ssec:EmbeddingShrubs} contains a
number of lemmas about embedding small trees, and use them  for embedding knags and shrubs of a given fine partition of $T_\PARAMETERPASSING{T}{thm:main}$. Embedding the entire tree $T_\PARAMETERPASSING{T}{thm:main}$ is then handled in the final Section~\ref{sec:MainEmbedding}. There we have to distinguish between particular configurations. The configurations are grouped into three categories (Section~\ref{sssec:EmbedDiamon0Diamond1}, Section~\ref{sssec:EmbedMoreComplex}, and Section~\ref{sssec:OrderedSkeleton}) corresponding to the similarities between the configurations.

\subsection{Overview of the embedding procedures}\label{ssec:embeddingOverview}
Recall that we are working under Setting~\ref{commonsetting}. 
Given a host graph $G_\PARAMETERPASSING{T}{thm:main}$ with one of the Configurations $\mathbf{(\diamond2)}$--$\mathbf{(\diamond10)}$, we have to embed in it a given tree $T=T_\PARAMETERPASSING{T}{thm:main}\in\treeclass{k}$, which comes with its  
$(\tau k)$-fine partition $(W_A,W_B,\shrubA,\shrubB)$. The $\tau k$-fine partition of $T$ will make it possible to combine embeddings of smaller parts of $T$ into one embedding of the whole tree. This means that we will first develop tools for embedding singular shrubs and
knags of the $(\tau k)$-fine partition into various basic building bricks of the
configurations: the avoiding set $\smallatoms$, the expander $\Gexp$,  regular pairs, and  vertices of huge degree
$\HugeVertices$. Second, we will combine these basic techniques to embed the
entire tree $T$. Here, the order in which different parts of $T$ are embedded
is important. Also, it will be crucial at some points to reserve places for parts of the tree
which will be embedded only later. 

In the following subsections, we draft our embedding techniques. We group them into five categories comprising of related configurations\footnote{Configuration $\mathbf{(\diamond1)}$ is trivial (see
Section~\ref{sssec:EmbedDiamon0Diamond1}) and needs no draft.}: Configurations
$\mathbf{(\diamond2)}$--$\mathbf{(\diamond5)}$, Configurations $\mathbf{(\diamond6)}$--$\mathbf{(\diamond7)}$,  Configuration~$\mathbf{(\diamond8)}$, Configuration~$\mathbf{(\diamond9)}$, and Configuration~$\mathbf{(\diamond10)}$, treated in
Sections~\ref{ssec:EmbedOverview25},~\ref{ssec:EmbedOverview67}, \ref{ssec:EmbedOverview8}, \ref{ssec:EmbedOverview9}, \ref{ssec:EmbedOverview10},
respectively.

\subsubsection{Embedding overview for Configurations
$\mathbf{(\diamond2)}$--$\mathbf{(\diamond5)}$}\label{ssec:EmbedOverview25}
In each of the Configurations
$\mathbf{(\diamond2)}$--$\mathbf{(\diamond5)}$ we have sets $\HugeVertices',\HugeVertices'',L'', L'$ and $V_1$. Further, we have some additional sets ($V_2$ and/or $\smallatoms'$) depending on the particular configuration.

A common embedding scheme for Configurations $\mathbf{(\diamond2)}$--$\mathbf{(\diamond5)}$ is illustrated in Figure~\ref{fig:DIAMOND25overview}. 
\begin{figure}[ht]
\centering 
\includegraphics{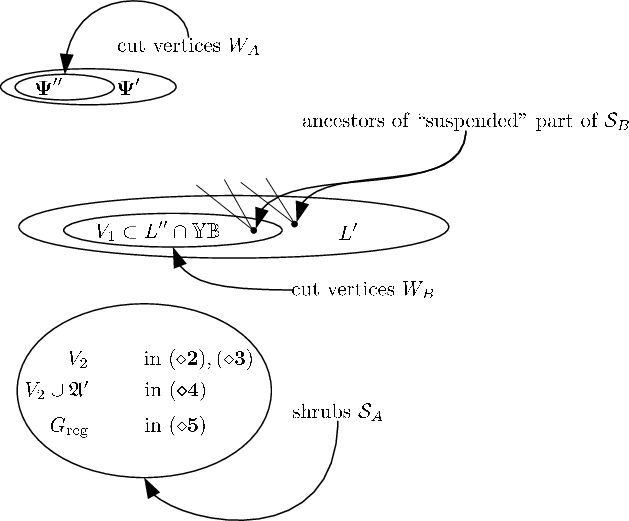}
\caption[Embedding overview for Configurations~$\mathbf{(\diamond2)}$--$\mathbf{(\diamond5)}$]{An overview of embedding 
of a tree $T\in\treeclass{k}$ given with its fine partition $(W_A,W_B,\shrubA,\shrubB)$ using
Configurations~$\mathbf{(\diamond2)}$--$\mathbf{(\diamond5)}$. The knags are
embedded between $\HugeVertices''$ and $V_1$, all the shrubs $\shrubA$ are embedded into sets specific to particular configurations so that the vertices neighbouring $W_A$ are embedded in $V_1$. Parts of the shrubs $\shrubB$ are embedded directly (using various embedding techniques), while the rest is ``suspended'', i.e., the ancestors of the unembedded remainders are embedded on vertices which have large degrees in $\HugeVertices'$. The embedding of $\shrubB$ is then finalized in the last stage.}
\label{fig:DIAMOND25overview}
\end{figure}
There are two stages of the embedding procedure: the knags, the shrubs $\shrubA$ and some parts of the shrubs $\shrubB$ are embedded in Stage~1, and then in Stage~2 the remainders of $\shrubB$ are embedded. Recall that $\shrubA$ contains both internal and end shrubs while $\shrubB$ contains exclusively end shrubs. We note that here the shrubs $\shrubB$ are further subdivided and some parts of them are embedded in the Stage~1 and some in Stage~2.

\begin{itemize}
\item In Stage~1, the knags of $T$ are embedded in $\HugeVertices''$ and $V_1$ so that $W_A$ is mapped to $\HugeVertices''$ and $W_B$ is mapped to $V_1$.
\item In Stage~1, the internal and end shrubs of $\shrubA$ are embedded using the sets $V_1,V_2$ and $\smallatoms'$ which are specific to the particular Configurations~$\mathbf{(\diamond2)}$--$\mathbf{(\diamond5)}$. The vertices of $\shrubA$ neighbouring $W_A$ are always embedded in $V_1$. Parts of the shrubs $\shrubB$ are embedded while the ancestors of the unembedded remainders are embedded on vertices which have large degrees in $\HugeVertices'$. 
\item In Stage~2, the embedding of $\shrubB$ is finalized. The remainders of $\shrubB$ are embedded starting with embedding their roots in $\HugeVertices'$.
\end{itemize}
A hierarchy of the embedding lemmas used to resolve Configurations
$\mathbf{(\diamond2)}$--$\mathbf{(\diamond5)}$ is given in Table~\ref{tab:Conf25}.
\begin{table}
\centering
\begin{tabular}{ccccc}
\hline
\multicolumn{5}{|c|}{Main embedding lemma: Lemma~\ref{lem:conf2-5}}\\
\hline
\multicolumn{1}{c}{$\Uparrow$}& &\multicolumn{1}{c}{$\Uparrow$}& &\multicolumn{1}{c}{$\Uparrow$}\\
\cline{1-1}\cline{3-3}\cline{5-5}
\multicolumn{1}{|c|}{Shrubs $\shrubA$}& &\multicolumn{1}{|c|}{Shrubs $\shrubB$ (Stage 1): Lemma~\ref{lem:blueShrubSuspend}}& &\multicolumn{1}{|c|}{Shrubs $\shrubB$ (Stage 2): Lemma~\ref{lem:embedC'endshrub}}\\
\cline{3-3}\cline{5-5}
\multicolumn{1}{|l|}{$\mathbf{(\diamond2)}$: Lemma~\ref{lem:embed:greyFOREST}}& & & & \\
\multicolumn{1}{|l|}{$\mathbf{(\diamond3)}$: Lemma~\ref{lem:HE3}}& & & & \\
\multicolumn{1}{|l|}{$\mathbf{(\diamond4)}$: Lemma~\ref{lem:HE4}}& & & & \\
\multicolumn{1}{|l|}{$\mathbf{(\diamond5)}$: regularity}& & & & \\
\cline{1-1}
\end{tabular}
\caption[Embedding lemmas for Configurations $\mathbf{(\diamond2)}$--$\mathbf{(\diamond5)}$]{Embedding lemmas employed for Configurations $\mathbf{(\diamond2)}$--$\mathbf{(\diamond5)}$.}
\label{tab:Conf25}
\end{table}

\subsubsection{Embedding overview for Configurations
$\mathbf{(\diamond6)}$--$\mathbf{(\diamond7)}$}\label{ssec:EmbedOverview67}
Suppose Setting~\ref{commonsetting} and~\ref{settingsplitting} (see Remark~\ref{rem:h1h2} below for a comment on the constants $\proporce{0},\proporce{1},\proporce{2}$). Recall that we have in each of these configurations sets $V_0\cup V_1\subset \colouringp{0}$, sets $V_2\cup V_3\subset \colouringp{1}$ and $\Vgood\colouringpI{2}$.

A common embedding scheme for Configurations $\mathbf{(\diamond6)}$--$\mathbf{(\diamond7)}$ is illustrated in Figure~\ref{fig:DIAMOND67overview}. 
\begin{figure}[ht]
\centering 
\includegraphics{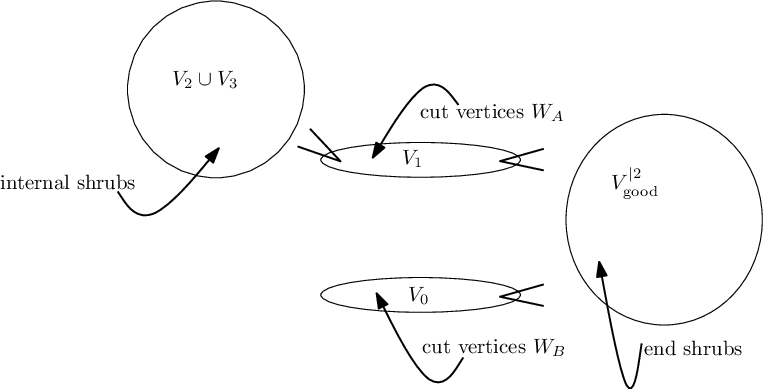}
\caption[Embedding overview for Configurations~$\mathbf{(\diamond6)}$--$\mathbf{(\diamond7)}$]{An overview of embedding a fine partition $(W_A,W_B,\shrubA,\shrubB)$
of a tree $T\in\treeclass{k}$ using
Configurations~$\mathbf{(\diamond6)}$--$\mathbf{(\diamond7)}$. The knags are
embedded between $V_0$ and $V_1$, the internal shrubs are embedded in
$V_2\cup V_3$, and the end shrubs are embedded
using $\Vgood\colouringpI{2}$.}
\label{fig:DIAMOND67overview}
\end{figure}
The embedding has three parts.
\begin{itemize}
\item The knags of $T$ are embedded between $V_0$ and $V_1$ so that $W_A$ is
mapped to $V_1$ and $W_B$ is mapped to $V_0$ using either the
Preconfiguration~$\mathbf{(exp)}$ or $\mathbf{(reg)}$. Thus $W_A\cup W_B$ ar mapped to $\subset \colouringp{0}$.
\item The internal shrubs  of $T$ are embedded in $V_2\cup V_3$, always putting 
neighbours of $W_A$ into $V_2$. Note that the internal shrubs are therefore
embedded in $\colouringp{1}$, and thus there is no interference with embedding
the knags. We need to understand why a mere degree of $\delta k$ (from $V_1$ to $V_2$, ensured by~\eqref{COND:D6:1}
and~\eqref{COND:D7:1}, with $\delta\ll 1$) is sufficient for embedding internal
shrubs of potentially big total order, that is, how to ensure that already
embedded internal trees do not cause a blockage later. Here the
expansion\footnote{This expansion is given by the presence of $\Gexp$ in
Configurations~$\mathbf{(\diamond6)}$
(cf.~\eqref{COND:D6:3}--\eqref{COND:D6:4}), and by the presence of the avoiding
set $\smallatoms$ in Configurations~$\mathbf{(\diamond7)}$ ($V_2\subset
\smallatoms\colouringpI{1}\setminus\exceptVertSplit$).} ruling between the $V_2$
and $V_3$ comes into play. This property (together with other properties of
Preconfigurations~$\mathbf{(exp)}$ and $\mathbf{(reg)}$) will allow that, once
finished embedding an internal tree, the follow-up knag can be embedded in a
place (in $V_1$) which sees very little of the previously embedded internal shrubs.
 
This is the only part of the embedding process which makes use of the specifics of Configurations~$\mathbf{(\diamond6)}$ and $\mathbf{(\diamond7)}$. For this reason we will be able to follow the same embedding scheme as presented here also for Configuration~$\mathbf{(\diamond8)}$, the only difference being the embedding of the internal shrubs (see Section~\ref{ssec:EmbedOverview8}).
\item The end shrubs are embedded in the yet unoccupied part of $G$.
For this we use the properties of Preconfigurations~$\mathbf{(\heartsuit1)}$ or $\mathbf{(\heartsuit2)}$.
The end shrubs are embedded using (but not entirely into) the designated vertex
set $\Vgood\colouringpI{2}$. 
\end{itemize}
The above embedding scheme is divided in two main steps: first the knags and the
internal trees are embedded (see Lemma~\ref{lem:embed:skeleton67}), and this
partial embedding is then extended to end shrubs (see Lemmas~\ref{lem:embed:heart1} and~\ref{lem:embed:heart2}). A more detailed hierarchy of the embedding lemmas which are used is given in Table~\ref{tab:Conf69}.
\begin{table}
\centering
\begin{tabular}{ccccc}
\hline
\multicolumn{5}{|c|}{Main embedding lemma: Lemma~\ref{lem:embed:total68}}\\
\hline
\multicolumn{3}{c}{$\Uparrow$}& &\multicolumn{1}{c}{$\Uparrow$}\\
\cline{1-3}\cline{5-5}
\multicolumn{3}{|c|}{Internal part}&\multicolumn{1}{|c|}{ }&\multicolumn{1}{|c|}{End shrubs}\\
\multicolumn{3}{|c|}{$\mathbf{(\diamond6)}$, $\mathbf{(\diamond7)}$: Lemma~\ref{lem:embed:skeleton67}}
& &\multicolumn{1}{|l|}{$\mathbf{(\heartsuit1)}$: Lemma~\ref{lem:embed:heart1}}\\
\multicolumn{3}{|c|}{$\mathbf{(\diamond8)}$: Lemma~\ref{lem:embed:skeleton8}}
& &\multicolumn{1}{|l|}{$\mathbf{(\heartsuit2)}$: Lemma~\ref{lem:embed:heart2}}\\
\cline{1-3}
\cline{5-5}
\multicolumn{1}{c}{$\Uparrow$}&\multicolumn{1}{c}{ }&\multicolumn{1}{c}{$\Uparrow$}& & \\
\cline{1-1}\cline{3-3}
\multicolumn{1}{|c|}{Knags}&\multicolumn{1}{c}{ }&\multicolumn{1}{|c|}{Internal shrubs}& & \\
\multicolumn{1}{|l|}{$\mathbf{(exp)}$: Lemma~\ref{lem:embed:greyFOREST}}&\multicolumn{1}{c}{ }&\multicolumn{1}{|c|}{$\mathbf{(\diamond6)}$: Lemma~\ref{lem:embedStoch:DIAMOND6}}& &\\
\multicolumn{1}{|l|}{$\mathbf{(reg)}$: Lemma~\ref{lem:embed:superregular}}&\multicolumn{1}{c}{ }&\multicolumn{1}{|c|}{$\mathbf{(\diamond7)}$: Lemma~\ref{lem:embedStoch:DIAMOND7}}& &\\
\cline{1-1}
& & \multicolumn{1}{|c|}{$\mathbf{(\diamond8)}$: Lemmas~\ref{lem:embedStoch:DIAMOND7},~\ref{lem:embed:BALANCED},~\ref{lem:embed:regular}}& & \\
\cline{3-3}
\end{tabular}
\caption[Embedding lemmas for Configurations $\mathbf{(\diamond6)}$--$\mathbf{(\diamond8)}$]{Embedding lemmas employed for Configurations $\mathbf{(\diamond6)}$--$\mathbf{(\diamond8)}$ when embedding a tree $T\in\treeclass{k}$ with a given fine partition.}
\label{tab:Conf69}
\end{table}
\begin{remark}\label{rem:h1h2}
In our application of Lemma~\ref{lem:ConfWhenNOTCXAXB} the number $\proporce{1}$ will be approximately the proportion of the total order of the internal shrubs of a given fine partition $(W_A,W_B,\shrubA,\shrubB)$ of $T$ while $\proporce{2}$ will be approximately the proportion of the total order of the end shrubs. The number $\proporce{0}$ is just a small constant. 

These numbers -- scaled up by $k$ -- determine the parameter $h_1\approx \proporce{1}k$ (in Configurations~$\mathbf{(\diamond8)}$ and $\mathbf{(\diamond9)}$) and $h_2\approx\proporce{2}k$ (in Configurations~$\mathbf{(\diamond6)}$--$\mathbf{(\diamond9)}$). The properties of these configurations will then allow to embed all the internal shrubs and end shrubs. Note that the parameter $h_1$ does not appear in Configurations~$\mathbf{(\diamond6)}$ and $\mathbf{(\diamond7)}$. This suggests that the total order of the internal shrubs is not at all important in 
Configurations~$\mathbf{(\diamond6)}$--$\mathbf{(\diamond7)}$. Indeed, we would succeed even embedding a tree with internal shrubs of total order say $100k$.\footnote{Configuration~$\mathbf{(\diamond8)}$ has this property only in part. We would succeed even embedding a tree with principal subshrubs of total order say $100k$ provided that the  total order of peripheral subshrubs is somewhat smaller than  $h_1$.}

In view of this it might be tempting to think that the end shrubs in $\shrubA$ could also be embedded using the same technique as the internal shrubs into the sets $V_2\cup V_3$ provided by these configurations (cf.\ Figure~\ref{fig:DIAMOND67overview}). This is however not the case. Indeed, the minimum degree conditions~\eqref{COND:D6:1},~\eqref{COND:D7:1}, and~\eqref{COND:D8:1} allow embedding only a small number of shrubs from a single cut-vertex $x\in W_A$ while there may be many end shrubs attached to $x$; cf.\ Remark~\ref{rem:internalVSend}\eqref{it:fewinternaltrees}.
\end{remark}

\subsubsection{Embedding overview for Configuration $\mathbf{(\diamond8)}$}\label{ssec:EmbedOverview8}
Suppose Setting~\ref{commonsetting} and~\ref{settingsplitting}. We are working with sets $V_0$, $V_1$, $\Vgood\colouringpI{2}$, $V_2, V_3$ and $V_4$ and with semiregular matching $\mathcal N$ coming from the configuration.

The embedding scheme follows Table~\ref{tab:Conf69}, and is illustrated in Figure~\ref{fig:DIAMOND8}. 
\begin{figure}[ht]
\centering 
\includegraphics{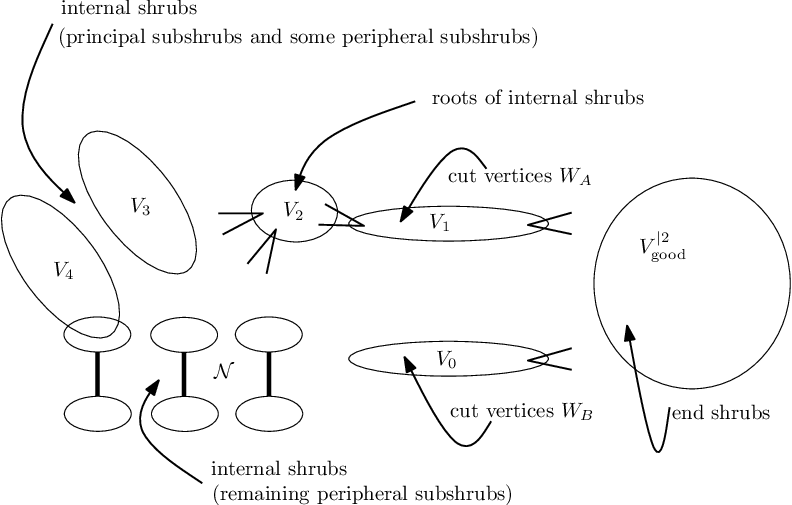}
\caption[Embedding overview for Configuration~$\mathbf{(\diamond8)}$]{An overview of embedding a fine partition $(W_A,W_B,\shrubA,\shrubB)$
of a tree $T\in\treeclass{k}$ using
Configuration~$\mathbf{(\diamond8)}$. The knags are
embedded between $V_0$ and $V_1$. The roots of the internal shrubs are embedded in $V_2$. Some of the subshrubs of the internal shrubs are embedded in $V_3\cup V_4$ and some in $\mathcal N$; principal subshrubs are always embedded in $V_3\cup V_4$. The end shrubs are embedded in using $\Vgood\colouringpI{2}$.}
\label{fig:DIAMOND8}
\end{figure}
Embedding of the knags and of the external shrubs is done in the same way as in Configurations~$\mathbf{(\diamond6)}$--$\mathbf{(\diamond7)}$. We only describe here the way the internal shrubs are embedded. Their roots are embedded in $V_2$. From that point we proceed embedding subshrub by subshrub. Some of the subshrubs get embedded between $V_3$ and $V_4$. This pair of sets has the same expansion property as the pair $V_2,V_3$ in Configuration~$\mathbf{(\diamond7)}$. In particular, it allows to avoid the shadow of the already occupied set so that the follow-up knag can be embedded in location almost isolated from the previous images, similarly as described in Section~\ref{ssec:EmbedOverview67}. For this reason we make sure that principal subshrubs get embedded here. The degree condition from $V_2$ to $V_3$ is too weak to ensure that all remaining subshrubs are embedded between $V_3$ and $V_4$. Therefore we might have to embed some subshrubs in $\mathcal N$.  Condition~\eqref{COND:D8:7} --- where $h_1$ is 
approximately the order of the internal shrubs, as in Remark~\ref{rem:h1h2} --- indicates that it should be possible to accommodate all 
the subshrubs.
For technical reasons, the order in which different types of subshrubs are embedded is very important.

\subsubsection{Embedding overview for Configuration $\mathbf{(\diamond9)}$}\label{ssec:EmbedOverview9}
The embedding process in Configuration~$\mathbf{(\diamond9)}$
follows the same scheme as in Configurations
$\mathbf{(\diamond6)}$--$\mathbf{(\diamond8)}$, but the embedding of the
internal shrubs follows the regularity method. Assuming the simplest situation $\mathcal F=\V_2(\mathcal N)$ and $V_2=V_1(\mathcal N)$, we
would have $\mindeg_{\Gblack}(V_1,V_1(\mathcal N))\ge
h_1$ (cf.~\eqref{conf:D9-XtoV}). See Figure~\ref{fig:DIAMOND9} for an illustration.
\begin{figure}[ht]
\centering 
\includegraphics{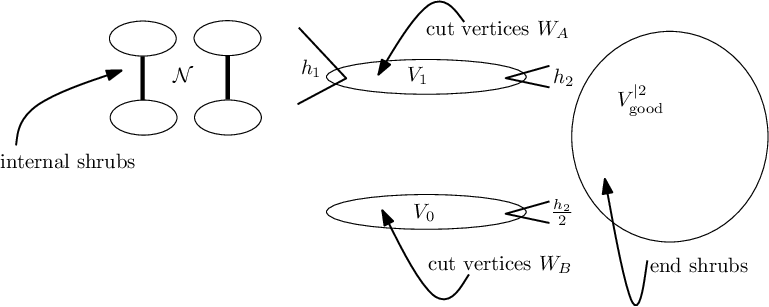}
\caption[Embedding overview for Configuration~$\mathbf{(\diamond9)}$]{An overview of embedding a fine partition $(W_A,W_B,\shrubA,\shrubB)$
of a tree $T\in\treeclass{k}$ using
Configuration~$\mathbf{(\diamond9)}$. The knags are
embedded between $V_0$ and $V_1$, the internal shrubs using the regularity
method in $\mathcal N$ and the end shrubs are embedded
using $\Vgood\colouringpI{2}$.}
\label{fig:DIAMOND9}
\end{figure}
Similarly as above, the knags are embedded between $V_0$ and $V_1$. The internal
shrubs are accommodated using the regularity method in $\mathcal N$, and the end
shrubs are embedded in $\Vgood\colouringpI{2}$ using Preconfiguration~$\mathbf{(\heartsuit1)}$. The embedding lemma for this configuration is given in Lemma~\ref{lem:embed9}.

\subsubsection{Embedding overview for Configuration $\mathbf{(\diamond10)}$}\label{ssec:EmbedOverview10}
Configuration~$\mathbf{(\diamond10)}$ is very closely related to the structure obtained by Piguet and Stein~\cite{PS07+} in their solution of the dense approximate case of Conjecture~\ref{conj:LKS}, Theorem~\ref{thm:PiguetStein}. Let us describe their proof first. Piguet and Stein prove that when $k>qn$ (for some fixed $q>0$ and $k$ sufficiently large) the cluster graph\footnote{ordinary, in the sense of the classic Regularity Lemma} $\BGblack$ of a graph $G\in \LKSgraphs{n}{k}{\eta}$ contains the following structure (cf.~\cite[Lemma~8]{PS07+}). There is a set of clusters $\BL\subset \clusters$ such that each cluster in $\BL$ contains only vertices of captured degrees at least $(1+\frac\eta2)k$. There is a matching $M\subset \BGblack$, and an edge $AB$, with $A,B\in\BL$. One of the following conditions is satisfied
\begin{enumerate}
 \item[\textbf{(H1)}] $M$ covers $\neighbor_{\BGblack}(\{A,B\})$, or
 \item[\textbf{(H2)}] $M$ covers $\neighbor_{\BGblack}(A)$, and the vertices in $B$ have captured degrees at least $(1+\frac\eta2)\frac k2$ into $\bigcup(\BL\cup V(M))$. Further, each edge in $M$ has at most one endvertex in $\neighbor_{\BGblack}(A)$.
\end{enumerate}
Piguet and Stein use structures~\textbf{(H1)} and~\textbf{(H2)} to embed any
given tree $T\in\treeclass{k}$ into $G$ using the regularity method; see
Sections~3.6 and~3.7 in~\cite{PS07+}, respectively. Actually, a slight relaxation of~\textbf{(H1)} and~\textbf{(H2)} would be sufficient for the embedding to work, as can be easily seen from their proof: Again, there is a set of clusters $\BL\subset \clusters$ such that each cluster in $\BL$ contains only vertices of captured degrees at least $(1+\frac\eta2)k$, there is a matching $M\subset \BGblack$, and an edge $AB$, $A,B\in\BL$. One of the following conditions is satisfied
\begin{enumerate}
 \item[\textbf{(H1')}] the vertices in $A\cup B$ have captured degrees at least $(1+\frac\eta2)k$ into the vertices of $\bigcup (\BL\cup V(M))$, or
 \item[\textbf{(H2')}] the vertices in $A$ have captured degrees at least $(1+\frac\eta2)k$ into the vertices of $\bigcup V(M)$, and the vertices in $B$ have captured degrees at least $(1+\frac\eta2)\frac k2$ into $\bigcup(\BL\cup V(M))$. Further, each edge in $M$ has at most one endvertex in $\neighbor_{\BGblack}(A)$.
\end{enumerate}
It can be seen that Configuration~$\mathbf{(\diamond10)}$ is a direct
counterpart to~\textbf{(H1')}.\footnote{Observe that some parts of $\BGblack$
are irrelevant in the embedding process of~\cite{PS07+}. The objects $\BGblack$,
$\BL$, and $M$ in the structural result  of~\cite{PS07+} correspond to $(\tilde G,\V)$, $\mathcal L^*$, and $\M$ in Configuration~$\mathbf{(\diamond10)}$.} (The counterpart of~\textbf{(H2')} is contained in Configuration~$\mathbf{(\diamond9)}$ and the similarity is somewhat weaker.)

\medskip
The embedding lemma for Configuration~$\mathbf{(\diamond10)}$ is stated in Lemma~\ref{lem:embed10}.

\subsection{Stochastic process $\Duplicate(\ell)$}\label{ssec:Duplicate}
Let us introduce a class of stochastic processes, which we call
\index{mathsymbols}{*Duplicate@$\Duplicate(\ell)$}$\Duplicate(\ell)$ ($\ell\in\mathbb N$). These are discrete processes $(X_1,Y_1),(X_2,Y_2),\ldots,(X_q,Y_q)\in\{0,1\}^2$ (where $q\in \mathbb N$ is arbitrary) satisfying the following.
\begin{itemize}
  \item For each $i\in[q]$, we have either
  \begin{enumerate}[(a)]
    \item $X_i=Y_i=0$ (deterministically), or
    \item $X_i=Y_i=1$ (deterministically), or
    \item\label{duplC} exactly one of $X_i$ and $Y_i$ is one, and in that case $\probability[X_i=1]=\frac12$.
  \end{enumerate}
  \item If the distribution of $(X_i,Y_i)$ is according to~\eqref{duplC}, then the random choice is made independently of the values $(X_j,Y_j)$ ($j<i$).
  \item We have $\sum_{i=1}^q(X_i+Y_i)\le \ell$.
\end{itemize}

Needless to say that this definition is not deep and its purpose is only to adopt the language we shall be using later. The following lemma asserts that the first and second component of a process $\Duplicate(\ell)$ are typically balanced.

\begin{lemma}\label{lem:randomduplicate}
Suppose that $(X_1,Y_1),(X_2,Y_2),\ldots,(X_q,Y_q)$ is a process in
$\Duplicate(\ell)$. Then for any $a>0$ we have
$$\probability\left[\sum_{i=1}^q
X_q-\sum_{i=1}^q Y_q\ge a\right]\le \exp\left(-\frac{a^2}{2\ell}\right)\;.$$
\end{lemma}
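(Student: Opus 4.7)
The plan is to reduce this to a standard martingale concentration argument on the difference sequence. Set $Z_i := X_i - Y_i$ for $i=1,\dots,q$, so that $Z_i\in\{-1,0,1\}$. The defining trichotomy of $\Duplicate(\ell)$ means that, conditional on the history $\mathcal F_{i-1}:=\sigma((X_j,Y_j):j<i)$, exactly one of the following holds: either $Z_i=0$ deterministically (the cases $X_i=Y_i=0$ and $X_i=Y_i=1$), or $Z_i$ is $\pm1$ with probability $1/2$ each (the case~(c) in the definition). In both situations $\expectation[Z_i\mid\mathcal F_{i-1}]=0$, so $(Z_i)$ is a bounded martingale difference sequence and $S_q:=\sum_{i=1}^q Z_i$ is a martingale.

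The key observation — and the reason we can do better than the crude Azuma bound $\exp(-a^2/(2q))$ — is the pointwise inequality $Z_i^2\le X_i+Y_i$. Indeed this is clear in the deterministic cases (where $Z_i=0$), while in the random case we have $Z_i^2=1=X_i+Y_i$. Setting $V_i:=\expectation[Z_i^2\mid\mathcal F_{i-1}]$, the same trichotomy gives $V_i\le X_i+Y_i$ as well (note $X_i+Y_i$ is $\mathcal F_{i-1}$-measurable in the cases where $V_i\ne 0$), so using the hypothesis $\sum_i(X_i+Y_i)\le\ell$,
\begin{equation*}
\sum_{i=1}^q V_i\ \le\ \sum_{i=1}^q (X_i+Y_i)\ \le\ \ell\qquad\text{almost surely.}
\end{equation*}

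Now the standard Chernoff-type computation goes through. For any $t>0$, since $Z_i\mid\mathcal F_{i-1}$ is either identically $0$ or symmetric on $\{-1,+1\}$, the conditional moment generating function satisfies $\expectation[e^{tZ_i}\mid\mathcal F_{i-1}]\le\cosh(t)^{V_i}\le\exp(t^2 V_i/2)$ (this is $1$ when $V_i=0$ and $\cosh t\le e^{t^2/2}$ when $V_i=1$). Iterating by conditioning on $\mathcal F_{q-1},\mathcal F_{q-2},\dots$ yields
\begin{equation*}
\expectation\bigl[e^{tS_q}\bigr]\ \le\ \expectation\!\left[\exp\!\left(\tfrac{t^2}{2}\sum_{i=1}^q V_i\right)\right]\ \le\ \exp\!\left(\tfrac{t^2\ell}{2}\right).
\end{equation*}
By Markov's inequality, $\probability[S_q\ge a]\le\exp(-ta+t^2\ell/2)$, and optimizing with $t=a/\ell$ gives the claimed bound $\exp(-a^2/(2\ell))$.

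There is no real obstacle here; the only subtle point is noticing that $\sum V_i$, not $q$, is the correct variance proxy — that is, that the ``inactive'' steps (where $X_i=Y_i$) contribute nothing to the MGF — and that the budget constraint $\sum(X_i+Y_i)\le\ell$ bounds this variance proxy almost surely. Once that is in place, the inequality is just a one-line application of the Azuma–Hoeffding/Freedman machinery.
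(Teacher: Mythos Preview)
Your approach is correct and amounts to a more careful version of the paper's argument. The paper simply lets $J$ be the set of indices where case~(c) occurs, notes $|J|\le\ell$, writes $\sum_i(X_i-Y_i)=\sum_{i\in J}(X_i-Y_i)$, and invokes the Chernoff bound for independent symmetric $\pm1$ variables. This is somewhat informal, since in the applications the type at step $i$ may depend adaptively on the earlier coin flips, so the $(X_i-Y_i)_{i\in J}$ are not literally independent; your martingale formulation is precisely the rigorous way to handle that.

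One small wobble: the intermediate inequality $\expectation[e^{tS_q}]\le\expectation\bigl[\exp\bigl(\tfrac{t^2}{2}\sum_i V_i\bigr)\bigr]$ does not follow from iterated conditioning as you describe, and can in fact fail when the $V_i$ are history-dependent (after one peel you are left with $\expectation[e^{tS_{q-1}}e^{t^2V_q/2}]$, but $V_q$ need not be $\mathcal F_{q-2}$-measurable, so you cannot iterate). The fix is standard: your conditional MGF bound shows that $M_n:=\exp\bigl(tS_n-\tfrac{t^2}{2}\sum_{i\le n}V_i\bigr)$ is a supermartingale, hence $\expectation[M_q]\le1$; combined with the almost-sure bound $\sum_i V_i\le\ell$ this gives $\expectation[e^{tS_q}]=\expectation\bigl[M_q\,e^{\frac{t^2}{2}\sum_i V_i}\bigr]\le e^{t^2\ell/2}\,\expectation[M_q]\le e^{t^2\ell/2}$, and the rest of your argument goes through unchanged.
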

\begin{proof}
We shall be using the following version of the Chernoff bound for sums of
independent random variables $Z_i$, with distribution
$\probability[Z_i=1]=\probability[Z_i=-1]=\frac12$.
\begin{equation}\label{eq:CHERNOFF}
\probability\left[\sum_{i=1}^n Z_i\ge a\right]\le
\exp\left(-\frac{a^2}{2n}\right)\;.
\end{equation}

Let $J\subset [q]$ be the set of all indices $i$ with $X_i+Y_i=1$. By the definition of $\Duplicate(\ell)$, we have $|J|\le \ell$. 
By~\eqref{eq:CHERNOFF} we have
\begin{align*}
\probability\left[\sum_J
(X_i-Y_i)\ge a\right]\le \exp\left(-\frac{a^2}{2|J|}\right)\le
\exp\left(-\frac{a^2}{2\ell}\right) \;.
\end{align*}
\end{proof}

We shall use the stochastic process $\Duplicate$ to guarantee that certain fixed
vertex sets do not get overfilled during our tree embedding procedure.
$\Duplicate$ is used in Lemmas~\ref{lem:embedStoch:DIAMOND6}
and~\ref{lem:embedStoch:DIAMOND7} through Lemma~\ref{lem:randomshrubembedding}.

\subsection{Embedding small trees}\label{ssec:EmbeddingShrubs}
When embedding the tree $T_\PARAMETERPASSING{T}{thm:main}$ in our proof of Theorem~\ref{thm:main} it will be important to control where different bits of $T_\PARAMETERPASSING{T}{thm:main}$ go. This motivates the following notation. Let $X_1,\ldots,X_\ell\subset V(T)$ be arbitrary vertex sets of a tree $T$, and let $V_1,\ldots,V_\ell\subset V(G)$ be arbitrary vertex sets of a graph $G$. Then an embedding $\phi:V(T)\rightarrow V(G)$ of $T$ in $G$ is an \emph{$(X_1\hookrightarrow V_1,\ldots,X_\ell\hookrightarrow V_\ell)$-embedding} \index{mathsymbols}{**embedding@$(X_1\hookrightarrow V_1,\ldots,X_\ell\hookrightarrow V_\ell)$-embedding}\index{general}{**embedding@$(X_1\hookrightarrow V_1,\ldots,X_\ell\hookrightarrow V_\ell)$-embedding} if $\phi(X_i)\subset V_i$ for each $i\in[\ell]$.

We provide several sufficient conditions for embedding a small tree with
additional constraints. 

\HIDDENTEXT{Lemma about spots hidden under SPOTS}

The first lemma deals with embedding using an avoiding
set.
\begin{lemma}\label{lem:embed:avoidingFOREST}\HAPPY{M}\HAPPY{D} Let $\Lambda,k
\in \mathbb N$ and let $\epsilon,\gamma\in (0,\frac12)$ with $\gamma^2 >\eps$.
Suppose $\smallatoms$ is a $(\Lambda,\epsilon,\gamma,k)$-avoiding set with respect to  a set  $\DenseSpots$ of $(\gamma k,\gamma)$-dense spots in a graph $H$. Suppose that $(T_1,r_1),\ldots,(T_\ell,r_\ell)$ are rooted trees with $|\bigcup_i T_i|\leq \gamma k/2$. Let $U\subset V(H)$ with $|U|\le \Lambda k$, and let $U^*\subseteq  \smallatoms$ with $ |U^*|\ge\eps k+\ell$.
 Then there are mutually disjoint $(r_i\hookrightarrow U^*, V(T_i)\setminus\{r_i\}\hookrightarrow V(H)\setminus U)$-embeddings of the trees $(T_i,r_i)$ in $H$.
\end{lemma}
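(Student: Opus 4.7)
\medskip

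The plan is to embed the trees $T_1,\dots,T_\ell$ sequentially, using the avoiding property to place each root $r_i$ at a vertex whose dense spot is only lightly intersected by the currently forbidden set, and then to embed the rest of $T_i$ greedily inside that dense spot. Write $\phi$ for the partial embedding being built, and for each $i\in[\ell]$ let $U_i := U\cup\phi\big(V(T_1)\cup\cdots\cup V(T_{i-1})\big)$. Since $|\bigcup_j T_j|\le \gamma k/2$, we have $|U_i|\le |U|+\gamma k/2\le \Lambda k$ (absorbing the tiny $\gamma k/2$ slack, which is harmless because the avoiding property is unaffected by enlarging $U$ by a lower order term; alternatively one may simply apply the definition to an arbitrary $\Lambda k$-sized superset of $U_i$).

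For the placement of $r_i$, apply Definition~\ref{def:avoiding} with $\bar U:=U_i$ to obtain an exceptional set $X_i\subset\smallatoms$ with $|X_i|\le\epsilon k$ such that every $v\in\smallatoms\setminus X_i$ lies in some dense spot $D_v=(A_{D_v},B_{D_v};F_{D_v})\in\DenseSpots$ with $|U_i\cap V(D_v)|\le\gamma^2 k$. Since
\[
|U^*\setminus (X_i\cup U_i)|\ge |U^*|-\epsilon k-(i-1)\ge \ell-(i-1)\ge 1,
\]
we may pick some $v_i\in U^*\setminus(X_i\cup U_i)$ and set $\phi(r_i):=v_i$. Let $D_i:=D_{v_i}$, and without loss of generality assume $v_i\in A_{D_i}$.

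It remains to embed $T_i-r_i$ into $V(D_i)\setminus U_i$ so that no vertex of $U_i$ is reused. Process the vertices of $T_i$ in BFS order from $r_i$. When embedding a vertex $u\in V(T_i)$ whose parent $p$ has already been mapped to $\phi(p)\in V(D_i)$, we seek a neighbour of $\phi(p)$ in $V(D_i)\setminus\big(U_i\cup \phi(V(T_1)\cup\cdots\cup V(T_{i-1}))\cup\phi(V(T_i)\text{ so far})\big)$. The bipartite minimum degree of $D_i$ is greater than $\gamma k$, while the set of forbidden neighbours has size at most
\[
|U_i\cap V(D_i)|+|V(T_i)|\le \gamma^2 k+\tfrac{\gamma k}{2}<\gamma k,
\]
using $\gamma<\tfrac12$ (so $\gamma^2<\gamma/2$). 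Hence an available neighbour exists and the greedy step succeeds. Note that $\phi(p)$ alternates between $A_{D_i}$ and $B_{D_i}$ as we descend the tree, and both sides have minimum degree exceeding $\gamma k$ into the other, so the argument is uniform in the parity of the BFS level. After embedding all of $T_i$, update $U_{i+1}$ and repeat.

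The main (and essentially only) obstacle is making sure the bookkeeping works out: the avoiding-set exceptional set $X_i$ has size $\le\epsilon k$, and the root reservoir $U^*$ has the $\epsilon k+\ell$ slack built in to absorb both $X_i$ and the $\ell-1$ previously chosen roots, while the $\gamma k/2$ upper bound on $|\bigcup T_i|$ is tuned precisely so that the greedy extension inside each dense spot never gets stuck.
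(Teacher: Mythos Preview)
Your approach is the same as the paper's, but there is one genuine (if easily fixed) technical gap. You apply the avoiding property to the growing set $U_i = U \cup \phi(T_1\cup\cdots\cup T_{i-1})$, and this requires $|U_i|\le \Lambda k$. The hypothesis only gives $|U|\le \Lambda k$, so $|U_i|$ may be as large as $\Lambda k + \gamma k/2$, and Definition~\ref{def:avoiding} then does not apply. Your remark about ``absorbing the tiny $\gamma k/2$ slack'' or ``applying the definition to an arbitrary $\Lambda k$-sized superset of $U_i$'' does not rescue this: the definition is a $\forall$-statement over sets of size at most $\Lambda k$, and a $\Lambda k$-sized \emph{subset} of $U_i$ would not help you avoid all of $U_i$.

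The fix is exactly what the paper does: invoke the avoiding property \emph{once}, with $\bar U := U$, to get a single exceptional set $Y$ of size $\le \epsilon k$. Since $|U^*|\ge \epsilon k + \ell$, all $\ell$ roots can be placed in $U^*\setminus Y$, each landing in a dense spot $D_i$ with $|U\cap V(D_i)|\le \gamma^2 k$. In the greedy extension of $T_i$, the forbidden set inside $V(D_i)$ is then $U\cap V(D_i)$ together with all previously embedded tree vertices, of total size at most $\gamma^2 k + \gamma k/2 < \gamma k$ (using $\gamma<1/2$), which is below the minimum degree of $D_i$. This is precisely your bound, just with the $\gamma k/2$ slack absorbed at the greedy step rather than (illegitimately) at the avoiding step.
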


\begin{proof}
Since $\smallatoms$ is $(\Lambda,\epsilon,\gamma,k)$-avoiding, there
exists a set $Y\subset \smallatoms$ with $|Y|\le\epsilon k$, such that each vertex $v$ in
$\smallatoms\setminus Y$ has degree at least $\gamma k$ into some $(\gamma
k,\gamma)$-dense spot $D\in \DenseSpots$ with $|U\cap V(D)|\le\gamma^2k$. In particular, $U^* \setminus Y$ is large enough so that we can embed all vertices $r_i$ there. We extend this embedding successively to an embedding of $\bigcup_i T_i$, in each step finding a suitable image in $V(D)\setminus U$ for one neighbour of an already embedded vertex $v\in \bigcup_i V(T_i)$. This is possible since the image of $v$ has degree at least $\gamma k - |U\cap V(D)|> \gamma k/2\geq \sum_i v(T_i) $ into $V(D)\setminus U$.
\end{proof}

The next lemma deals with embedding a tree into a nowhere-dense graph, a
primal example of which is the graph $\Gexp$.

\begin{lemma}\label{lem:embed:greyFOREST}
\HAPPY{M}\HAPPY{D}
Let $k\in\mathbb N$, let $Q\ge 1$ and let $\gamma,\zeta\in (0,1)$ be such that
 $128Q\gamma\le\zeta^2$. Let $H$ be a
$(\gamma k,\gamma)$-nowhere-dense graph. Let $(T_1,r_1),\ldots,(T_\ell,r_\ell)$ be rooted trees of total order less than $\zeta k/4$. Let $V_1,V_2,U,U^*\subset V(H)$  be
four sets with $U^*\subset V_1$, $|U|<Q k$,
$|U^*|>\frac{32Q^2\gamma}{\zeta} k +\ell$, and
$\mindeg_{H}(V_j,V_{3-j})\ge\zeta k$ for $j=1,2$.   Then there are mutually
disjoint $(r_i\hookrightarrow U^*, \Veven(T_i) \hookrightarrow V_1\setminus U,\Vodd(T_i)\hookrightarrow V_2\setminus U)$-embeddings of the trees
$(T_i,r_i)$ in $H$.
\end{lemma}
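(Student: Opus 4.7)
The plan is to mimic the greedy strategy used in the proof of Lemma~\ref{lem:embed:avoidingFOREST}, but with the avoiding-set tool replaced by Fact~\ref{fact:shadowboundEXPANDER}, which bounds the number of vertices of a nowhere-dense graph that have many neighbours in a prescribed set.

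First, I would define the auxiliary ``bad set'' $B := \shadow_H(U, \zeta k/2)$. The hypothesis $128Q\gamma \le \zeta^2$, combined with $\zeta \le 1$, gives $32Q\gamma \le \zeta$, which rewrites as $16Q \le (\zeta/2)/\gamma$. This is exactly the applicability condition of Fact~\ref{fact:shadowboundEXPANDER} with threshold $\zeta k/2$ and $|U| \le Qk$. Hence $|B| \le \frac{32Q^2\gamma}{\zeta}k$, and so $|U^* \setminus B| \ge |U^*| - |B| > \ell$ by the assumption on $|U^*|$. I would then embed the $\ell$ roots $r_1,\ldots,r_\ell$ into pairwise distinct vertices of $U^* \setminus B$; by construction every $\phi(r_i)$ satisfies $\deg_H(\phi(r_i), U) < \zeta k/2$.

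Next, I would extend the partial embedding tree by tree, processing each $T_i$ by depth-first search. When the currently active vertex $v$ has image $\phi(v) \in V_j$ and I want to embed a child $u$ of $v$, I would choose $\phi(u)$ to be any neighbour of $\phi(v)$ lying in $V_{3-j} \setminus (U \cup \mathrm{im}(\phi) \cup B)$. The minimum-degree hypothesis gives $\deg_H(\phi(v), V_{3-j}) \ge \zeta k$; the inductive invariant $\phi(v) \notin B$ gives $\deg_H(\phi(v), U) < \zeta k/2$; the current image has fewer than $\zeta k/4$ vertices by the bound on the total order of the shrubs; and $|B|$ is controlled as above. The requirement $\phi(u) \notin B$ preserves the invariant, so that later extensions from $\phi(u)$ will again succeed.

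The hard part is the verification that these contributions add up to strictly less than $\zeta k$, so that at least one admissible neighbour of $\phi(v)$ remains at every extension step. The numerical assumption $128Q\gamma \le \zeta^2$ is calibrated precisely for this purpose, and the strict inequalities built into the hypotheses ($|U| < Qk$, $|U^*| > \frac{32Q^2\gamma}{\zeta}k + \ell$, and total tree order strictly less than $\zeta k/4$) provide exactly the small amount of slack required to conclude that the greedy extension succeeds at every step, producing the desired mutually disjoint $(r_i \hookrightarrow U^*, \Veven(T_i) \hookrightarrow V_1\setminus U, \Vodd(T_i) \hookrightarrow V_2\setminus U)$-embeddings.
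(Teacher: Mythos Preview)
Your proposal is correct and follows essentially the same approach as the paper: define $B:=\shadow_H(U,\zeta k/2)$, bound $|B|$ via Fact~\ref{fact:shadowboundEXPANDER}, place the roots in $U^*\setminus B$, and extend greedily into $V_{3-j}\setminus(U\cup B\cup\mathrm{im}(\phi))$ while maintaining the invariant that every image lies outside $B$. The paper is just slightly more explicit in the final arithmetic, recording $|B|\le\frac{32Q^2\gamma}{\zeta}k\le\frac{\zeta}{4}k$ so that $\phi(v)$ has at least $\zeta k-\zeta k/2-\zeta k/4=\zeta k/4>\sum_i v(T_i)$ neighbours in $V_j\setminus(U\cup B)$; you leave this step as ``calibrated precisely for this purpose,'' which is fine but could be written out.
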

\begin{proof}
Set $B:=\shadow_H(U,\zeta k/2)$. By Fact~\ref{fact:shadowboundEXPANDER}, we have
$|B|\le\frac{32Q^2\gamma}{\zeta} k\le\frac{\zeta}{4}k$. In particular, $U^*\setminus B$ is large enough to accommodate the images $\phi (r_i)$ of all vertices $r_i$.

Successively, extend $\phi$, in each step mapping a neighbour
$u$ of some already embedded vertex $v\in \bigcup_i V(T_i)$ to a yet
unused neighbour of $\phi(v)$ in $V_j\setminus (B\cup
U)$, where $j$ is either 1 or 2, depending on the parity of $\dist_T(r,v)$.
 This is possible as
 $\phi(v)$, lying outside $B$,  has at least $\zeta k/2$ neighbours in $V_i\setminus U$. Thus $\phi (v)$ has  at least $\zeta k/4$ neighbours in $V_i\setminus (U\cup B)$, which is more than $\sum_iv(T_i)$. 
\end{proof}

The next three standard lemmas deal with embedding trees in a regular or a super-regular pair. We omit their proofs.
\begin{lemma}\label{lem:embed:regular}
 Let $\epsilon>0$
and $\beta>2\epsilon$. Let $(C,D)$ be an $\epsilon$-regular pair in a graph $H$, with $|C|=|D|=:\ell$, and with density $\density(C,D)\geq
3\beta$. Suppose that there are sets $X\subseteq C$, $Y\subseteq D$, and
$X^*\subset X$ satisfying $\min\{|X|,|Y|\}\ge 4\frac{\epsilon}{\beta}\ell$ and
$|X^*|> \frac\beta2\ell$.  Let $(T,r)$ be a rooted tree of order $v(T)\le
\epsilon \ell$.  Then there exists an $(r\hookrightarrow
X^*,\Veven(T)\hookrightarrow X,\Vodd(T)\hookrightarrow Y)$-embedding of $T$ in $H$.
\end{lemma}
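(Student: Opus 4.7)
The plan is a routine greedy embedding on the regular pair, with the only care taken being that every image (with the possible exception of leaves) is a \emph{typical} vertex. First I would pass to the regularity of the restricted pair: since $\min\{|X|,|Y|\} \ge \tfrac{4\eps}{\beta}\ell$, Fact~\ref{fact:BigSubpairsInRegularPairs} applied with $\alpha = \tfrac{4\eps}{\beta}$ yields that $(X,Y)$ is $\tfrac{\beta}{2}$-regular of density at least $3\beta-\eps \ge \tfrac{5\beta}{2}$.

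Call $x \in X$ \emph{good} if $\deg(x,Y) \ge 2\beta |Y|$, and analogously for $y \in Y$. By Fact~\ref{fact:manyTypicalVertices}, the sets $B_X \subseteq X$ and $B_Y \subseteq Y$ of non-good vertices satisfy $|B_X| \le \tfrac{\beta}{2}|X| \le \tfrac{\beta}{2}\ell$ and $|B_Y| \le \tfrac{\beta}{2}|Y| \le \tfrac{\beta}{2}\ell$. In particular, since $|X^*| > \tfrac{\beta}{2}\ell \ge |B_X|$, the set $X^* \setminus B_X$ is non-empty, so we can choose $\phi(r) \in X^* \setminus B_X$.

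Next I would extend $\phi$ greedily in BFS order from $r$. Suppose inductively that a vertex $v$ has already been placed at a good vertex $\phi(v)$ on the correct side (say $\phi(v) \in X$), and that we wish to embed a child $u$ of $v$ to a good vertex of $Y$ that is a neighbour of $\phi(v)$ and not yet used. The number of admissible choices is at least
\[
\deg(\phi(v),Y) - |B_Y| - v(T) \;\ge\; 2\beta|Y| - \tfrac{\beta}{2}|Y| - \eps\ell \;\ge\; \tfrac{3\beta}{2}|Y| - \eps\ell \;>\; 0,
\]
where the last inequality uses $|Y| \ge \tfrac{4\eps}{\beta}\ell$ and $v(T) \le \eps\ell$. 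Pick any such vertex as $\phi(u)$; by construction it is good, preserving the induction hypothesis for its own children. Iterating until all of $T$ is embedded produces the required $(r\hookrightarrow X^*,\Veven(T)\hookrightarrow X,\Vodd(T)\hookrightarrow Y)$-embedding. There is no real obstacle here; the only point requiring attention is reserving the "good-image" invariant at every step, which the calculation above supplies with room to spare.
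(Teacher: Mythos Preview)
Your proof is correct and is precisely the standard greedy argument the paper has in mind; the paper itself omits the proof of this lemma, calling it a ``standard'' fact about embedding small trees in regular pairs.
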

\HIDDENTEXT{proof hidden under FILLINGREGULAR}

\begin{lemma}\label{lem:fillingCD}
 Let
$\beta,\epsilon>0$ and $\ell \in\mathbb N$ be such that $\beta>2\epsilon$. Let
$(C,D)$ be an $\epsilon$-regular pair with $|C|=|D|=\ell$ of density
$\density(C,D)\geq 3\beta$ in a graph $H$. 
Let $(T_1,r_1),(T_2,r_2),\ldots ,(T_s,r_s)$ be rooted trees with
$v(T_i)\leq\epsilon\ell$ for all $i\in [s]$. Let $U\subset V(H)$ fulfill $|C\cap U|=|D\cap U|$, and let $X^*\subseteq (C\cup D)\setminus U$ be such that
\begin{equation}\label{eq:conFill}
|X^*|\geq \sum_{i=1}^sv(T_i)
+ 50\beta\ell\;.
\end{equation}
 Then there are mutually
disjoint $(r_i\hookrightarrow X^*, V(T_i) \hookrightarrow (C\cup D)\setminus U)$-embeddings of the trees
$(T_i,r_i)$ in~$H$.
\end{lemma}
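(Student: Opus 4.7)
\emph{Plan.} I would embed the trees sequentially using Lemma~\ref{lem:embed:regular}. Writing $U_j := U\cup\bigcup_{i\le j}\phi_i(V(T_i))$ for the vertices used after the first $j$ embeddings, to embed $T_{j+1}$ I would apply Lemma~\ref{lem:embed:regular} to the pair $(C,D)$ -- or, if it is preferable to place the root in $D$, to the pair $(D,C)$ -- with $X:=C\setminus U_j$, $Y:=D\setminus U_j$, and $X^*_{j+1}:=X^*\setminus U_j$ in the role of the landing set. The three hypotheses to verify at every step are $v(T_{j+1})\le\epsilon\ell$ (which is given), $|X^*_{j+1}|>\tfrac{\beta}{2}\ell$, and $\min\{|X|,|Y|\}\ge 4\epsilon\ell/\beta$.

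The first of these follows at once from~\eqref{eq:conFill}: since the embedded trees occupy in total at most $\sum_{i\le s}v(T_i)$ vertices, $|X^*_{j+1}|\ge|X^*|-\sum v(T_i)\ge 50\beta\ell>\tfrac{\beta}{2}\ell$. The starting point for the second is that the assumption $|C\cap U|=|D\cap U|$ together with $X^*\subseteq(C\cup D)\setminus U$ forces $\ell-|C\cap U|=\ell-|D\cap U|\ge\max\{|X^*\cap C|,|X^*\cap D|\}\ge|X^*|/2$, so before the first embedding each colour class contains at least $\tfrac12(\sum v(T_i)+50\beta\ell)$ free vertices.

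The main obstacle is to prevent one side of the pair from being depleted much faster than the other as the sequential embedding progresses: a single unbalanced tree (say a star rooted at its centre) may place almost all of its vertices on one side of $(C,D)$. To control this I would choose the image of each root greedily -- place it in $X^*_{j+1}\cap C$ or in $X^*_{j+1}\cap D$ so that the larger of the two bipartition classes $\Veven(T_{j+1}),\Vodd(T_{j+1})$ is sent to whichever of $C\setminus U_j$, $D\setminus U_j$ is currently bigger. Whenever both $X^*_{j+1}\cap C$ and $X^*_{j+1}\cap D$ are non-empty such a choice is available, and because $v(T_{j+1})\le\epsilon\ell$ an easy induction then keeps the discrepancy $\bigl||C\setminus U_j|-|D\setminus U_j|\bigr|$ bounded by $\epsilon\ell$. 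If at some stage one of the intersections $X^*\cap C$, $X^*\cap D$ (say the former) becomes essentially empty, then the remaining roots are forced into $D$; but in that regime the initial bound gives $|X^*\cap D|\ge|X^*|-25\beta\ell\ge\sum v(T_i)+25\beta\ell$, and this surplus on the opposite side $C$ is large enough to absorb even a worst-case star-like concentration of vertices there. In either regime one obtains $\min\{|X|,|Y|\}\ge 25\beta\ell-\epsilon\ell\ge 4\epsilon\ell/\beta$ (using $\beta\gg\epsilon$), so Lemma~\ref{lem:embed:regular} applies at every step and the sequential construction terminates with mutually disjoint embeddings of the desired form.
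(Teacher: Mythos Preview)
The paper does not give a proof here; it calls the argument standard and refers to~\cite{HlaPig:LKSdenseExact}. Your plan---iterate Lemma~\ref{lem:embed:regular}, choosing the orientation of each $T_{j+1}$ so as to keep $\bigl||C\setminus U_j|-|D\setminus U_j|\bigr|\le\epsilon\ell$ while both halves of $X^*$ remain available, and fall back on the one-sided surplus otherwise---is precisely the standard argument, and the balancing analysis is carried out correctly.

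One numerical step does not quite close. Your final inequality $25\beta\ell-\epsilon\ell\ge 4\epsilon\ell/\beta$ amounts to $25\beta^2\ge 4\epsilon+\epsilon\beta$, which fails for, say, $\beta=0.03$ and $\epsilon=0.01$; the hypothesis $\beta>2\epsilon$ alone is not strong enough. In practice this is harmless: in the paper's sole application of the lemma (inside the proof of Lemma~\ref{lem:blueShrubSuspend}) one takes $\beta_\PARAMETERPASSING{L}{lem:fillingCD}=\sqrt{\epsilon'}$ and $\epsilon_\PARAMETERPASSING{L}{lem:fillingCD}=\epsilon'$, so $\beta^2=\epsilon$ and your bound holds with plenty of room. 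The gap therefore reflects a mild looseness in the stated hypothesis rather than an error in your method. Your treatment of the transition between the balanced and forced one-sided regimes is also a bit compressed; to make it watertight, record that when $X^*_{j+1}\cap C$ first drops below the threshold at least $|X^*\cap C|$ minus that threshold many images already lie in $C$, whence the balance invariant forces nearly as many in $D$, and combining this with $|D\setminus U|\ge|X^*\cap D|$ recovers the full $|X^*|-\sum v(T_i)\ge 50\beta\ell$ surplus on each side for all later steps.
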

\HIDDENTEXT{the proof is hidden under FILLINGCDPROOF}

\begin{lemma}\label{lem:embed:superregular}
Let $d>10\epsilon>0$. Suppose that $(A,B)$ forms an $(\epsilon,d)$-super-regular pair with $|A|,|B|\ge \ell$. Let $U_A\subset A$, $U_B\subset B$ be such that $|U_A|\le |A|/2$ and $|U_B|\le d|B|/4$. Let $(T,r)$ be a rooted tree of order at most $d\ell/4$, and let $v\in A\setminus U_A$ be arbitrary. Then there exists an $(r\hookrightarrow v,\Veven(T,r)\hookrightarrow A\setminus U_A, \Vodd(T,r)\hookrightarrow B\setminus U_B)$-embedding of $T$.
\end{lemma}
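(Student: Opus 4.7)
The plan is a greedy breadth-first embedding in which a preprocessing step identifies the vertices of $B$ that are suitable hosts for odd-distance vertices of $T$. Since $|U_A|\le|A|/2$ we have $|A\setminus U_A|\ge|A|/2\ge\eps|A|$, so Fact~\ref{fact:BigSubpairsInRegularPairs} (applied with $\alpha:=1/2$) turns $(A\setminus U_A,B)$ into a $4\eps$-regular pair of density at least $d-\eps$. Swapping sides and invoking Fact~\ref{fact:manyTypicalVertices}, we obtain a set $B^*\subseteq B$ with $|B\setminus B^*|\le 4\eps|B|$ such that $\deg(b,A\setminus U_A)\ge(d-5\eps)|A\setminus U_A|\ge(d-5\eps)|A|/2$ for every $b\in B^*$.

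Now enumerate $V(T)$ in breadth-first order as $r=u_1,u_2,\ldots,u_{v(T)}$ and set $\phi(r):=v$. Assume inductively that $\phi$ has been defined on $\{u_1,\ldots,u_{i-1}\}$ so that $\phi(\Veven(T,r)\cap\{u_j\}_{j<i})\subseteq A\setminus U_A$ and $\phi(\Vodd(T,r)\cap\{u_j\}_{j<i})\subseteq B^*\setminus U_B$, and let $p$ be the parent of $u_i$. If $u_i\in\Vodd(T,r)$, so $\phi(p)\in A\setminus U_A$, we attempt to place $\phi(u_i)$ at a neighbour of $\phi(p)$ inside $(B^*\setminus U_B)\setminus\phi(\{u_1,\ldots,u_{i-1}\})$; by super-regularity $\phi(p)$ has at least $d|B|$ neighbours in $B$, while the forbidden set within $B$ has size at most
$$|B\setminus B^*|+|U_B|+v(T)\ \le\ 4\eps|B|+\tfrac{d}{4}|B|+\tfrac{d}{4}\ell\ \le\ 4\eps|B|+\tfrac{d}{2}|B|\ <\ d|B|,$$
using $\ell\le|B|$ and $4\eps<d/2$. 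If instead $u_i\in\Veven(T,r)\setminus\{r\}$, then $\phi(p)\in B^*$, so $\phi(p)$ has at least $(d-5\eps)|A|/2$ neighbours in $A\setminus U_A$, while the number of already embedded vertices on the $A$-side is at most $v(T)\le d\ell/4\le d|A|/4$; this leaves at least $(d-5\eps)|A|/2-d|A|/4=|A|\bigl(d/4-5\eps/2\bigr)>0$ admissible candidates by the hypothesis $d>10\eps$. In either case an admissible image exists, and so $\phi$ extends to $u_i$.

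The only real subtlety lies in the preprocessing step, where one must verify that $A\setminus U_A$ retains at least an $\eps$-fraction of $A$ in order to invoke Fact~\ref{fact:BigSubpairsInRegularPairs}; this follows from the hypotheses $|U_A|\le|A|/2$ and $\eps<d/2<1/2$. The subsequent greedy extension is a routine degree count, with the two critical inequalities $4\eps<d/2$ and $(d-5\eps)/2>d/4$ both reducing to $d>10\eps$, and the bound $v(T)\le d\ell/4$ ensuring that neither side ever fills up too quickly.
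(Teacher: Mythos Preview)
The paper omits the proof of this lemma (calling it standard), and your argument is exactly the expected one: identify the typical vertices $B^*\subseteq B$ via Facts~\ref{fact:BigSubpairsInRegularPairs} and~\ref{fact:manyTypicalVertices}, then embed greedily, using super-regularity for the $A\to B$ step and typicality of $B^*$ for the $B\to A$ step. The degree counts are correct and the hypothesis $d>10\epsilon$ is invoked exactly where it is needed.
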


Suppose that we we have a rooted tree $(T,r)$ to be embedded, and its root was
already on a vertex $\phi(r)$. Suppose that $r$ has degree $\ell_X+\ell_Y$ in a
regular pair $(X,Y)$, where $\ell_X:=\deg(\phi(r), X), \ell_Y:=\deg(\phi(r),
Y)$, with $\ell_X\ge \ell_Y$, say.
The hope is that we can embed $T$ in $(X,Y)$ as long as $v(T)$ is a bit smaller
than $\ell_X+\ell_Y$. For this, the greedy strategy does not work (see
Figure~\ref{fig:embedBalancedUnbalance}) and we need to be somewhat more careful.
\begin{figure}[ht]
\centering 
\includegraphics{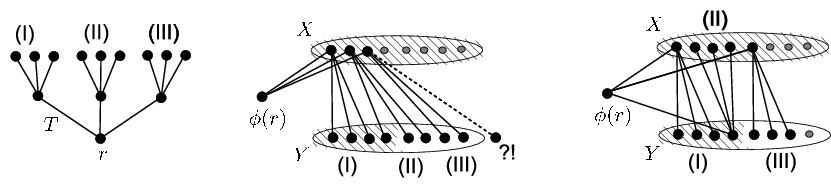}
\caption[Balanced and unbalanced embedding in a regular pair]{An example of a
rooted tree $(T,r)$, depicted on the left. The forest $T-r$ has three components (I), (II), (III) of total
order 12. Say the vertex $r$ is embedded so that for the regular pair $(X,Y)$ we
have $\deg(\phi(r),X)=8$, $\deg(\phi(r),Y)=4$ (neighbourhoods of $\phi(r)$
hatched).

While the greedy strategy does not work (middle), splitting the process into
a balanced and an unbalanced stage (right) does --- here the components~(I) and~(II)
are embedded in the balanced stage and the component~(III) in the unbalanced
stage.}
\label{fig:embedBalancedUnbalance}
\end{figure}
 We split the embedding process into two stages.
In the first stage we choose a subset of the components of $T-r$ of total order 
approximately $2\min\big(\ell_X,\ell_Y\big)=2\ell_Y$. When embedding these, we choose 
orientations of each component in such a way that the image is approximately
balanced with respect to $X$ and $Y$. In the second stage we 
embed the remaining components so that their roots are embedded in $X$.
We refer to the first stage as embedding in an
\index{general}{balanced way of embedding}\index{general}{unbalanced way of embedding}\emph{balanced way}, and as embedding in an \emph{unbalanced way}
to the second stage.

The next lemma says that each regular pair can be filled-up in a balanced way by
trees.
\begin{lemma}\label{lem:embed:BALANCED}
Let $G$ be a graph, $v\in V(G)$ be a vertex, $\M$ be an
$(\epsilon,d,\nu k)$-semiregular matching in $G$, and $\{f_{CD}\}_{(C,D)\in\M}$ a
family of integers between $-\tau k$ and $\tau k$. Suppose $(T,r)$ is a rooted tree,
$$v(T)\le \left(1-\frac{4(\epsilon+\frac{\tau}\nu)}{d-2\epsilon}\right)|V(\M)|\;,$$ with the property that each component of $T-r$ has order at most $\tau k$. If $V(\M)\subset \neighbor_G(v)$ then there
exists an $(r\hookrightarrow v,V(T-r)\hookrightarrow V(\M))$-embedding $\phi$ of $T$ such that for each $(C,D)\in\M$ we have $|C\cap
\phi(T)|+f_{CD}=|D\cap \phi(T)|\pm\tau k$.
\end{lemma}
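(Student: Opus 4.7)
Set $\phi(r)=v$, then process the components $T^{*}_{1},\ldots,T^{*}_{s}$ of $T-r$ in an arbitrary order, extending $\phi$ after each and embedding every component entirely into a single pair $(C,D)\in\M$. For every pair track the running imbalance $\Delta_{CD}:=|D\cap\phi(T)|-|C\cap\phi(T)|-f_{CD}$. For the $i$-th component, rooted at its unique vertex adjacent to $r$, let $a_{i},b_{i}$ be the two color-class sizes, so $a_{i}+b_{i}=v(T^{*}_{i})\le\tau k$. Two choices are made at each step: which pair $(C,D)$ hosts $T^{*}_{i}$, and which of $C,D$ receives the root of $T^{*}_{i}$.

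\textbf{Balance via orientation.} Whichever pair hosts $T^{*}_{i}$, the two root-side options contribute $(a_{i},b_{i})$ or $(b_{i},a_{i})$ to $(|C\cap\phi(T)|,|D\cap\phi(T)|)$, shifting $\Delta_{CD}$ by $\pm(b_{i}-a_{i})$ with $|b_{i}-a_{i}|\le\tau k$. Picking at every step the sign that does not increase $|\Delta_{CD}|$ preserves the invariant $|\Delta_{CD}|\le\tau k$ by a one-line induction: if $|\Delta_{CD}|\le|b_{i}-a_{i}|$ the chosen move leaves the magnitude at most $|b_{i}-a_{i}|\le\tau k$, otherwise the move strictly decreases $|\Delta_{CD}|$. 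At termination this invariant is exactly the required output bound for every pair.

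\textbf{Pair selection and single-component embedding.} Put $\alpha:=\tfrac{2(\epsilon+\tau/\nu)}{d-2\epsilon}$, so the hypothesis reads $v(T)\le(1-2\alpha)|V(\M)|$. Because $|f_{CD}|\le\tau k$, the balance invariant also forces $||C\cap\phi(T)|-|D\cap\phi(T)||\le 2\tau k$ on every pair at every step. Call a pair with $|C|=|D|=\ell$ \emph{available} if both sides have more than $\alpha\ell+\tau k$ free vertices. If every pair were unavailable, each would have at least $2(1-\alpha)\ell-4\tau k$ vertices embedded (one side by definition, the other by near-balance), so the total embedded would exceed $(1-\alpha)|V(\M)|-4\tau k|\M|\ge(1-\alpha-2\tau/\nu)|V(\M)|$ (using $|\M|\le|V(\M)|/(2\nu k)$); this contradicts the hypothesis since $\alpha>2\tau/\nu$, an inequality that follows routinely from the definition of $\alpha$ and $d-2\epsilon<2$. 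Inside any available pair $(C,D)$, Fact~\ref{fact:BigSubpairsInRegularPairs} makes the free sub-pair $(2\epsilon/\alpha)$-regular of density at least $d-\epsilon$; Fact~\ref{fact:manyTypicalVertices} then supplies, on each side, many typical vertices whose free-degree is at least $(d-\epsilon-2\epsilon/\alpha)\alpha\ell\ge\tau k$, the last inequality holding by the very definition of $\alpha$. Starting the greedy embedding at a typical vertex of the prescribed root-side and extending each new vertex to a typical free neighbour on the opposite side embeds $T^{*}_{i}$ in at most $\tau k$ steps without obstruction.

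\textbf{Main obstacle.} The quantitative heart of the proof is absorbing two independent per-pair reserves into the single hypothesis slack $2\alpha|V(\M)|$: a reserve of $\alpha\ell$ on each side (keeping the free sub-pair regular and dense after Fact~\ref{fact:BigSubpairsInRegularPairs}), and an additional reserve $\tau k$ on each side (to host one more component of order up to $\tau k$ without losing the greedy step). These reserves together consume exactly the factor $2$ in the hypothesis, and the pigeonhole survives only because of the strict inequality $\alpha>2\tau/\nu$. Once availability is secured, the orientation-based balance control and the greedy embedding inside the free sub-pair combine routinely to yield the required embedding $\phi$.
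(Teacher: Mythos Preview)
Your proof is correct and is precisely the standard argument the paper has in mind: the paper does not prove this lemma at all, stating only that ``the proof of Lemma~\ref{lem:embed:BALANCED} is standard, and is given for example in~\cite[Lemma~6.6]{HlaPig:LKSdenseExact}.'' Your greedy scheme --- orientation choice to keep $|\Delta_{CD}|\le\tau k$, pigeonhole to locate an available pair, and greedy embedding inside the free sub-pair via Facts~\ref{fact:BigSubpairsInRegularPairs} and~\ref{fact:manyTypicalVertices} --- is exactly that standard argument, and your numerics check out (in particular $(d-\epsilon-2\epsilon/\alpha)\alpha\ell\ge(2\tau/\nu)\ell\ge2\tau k$ and $\alpha>2\tau/\nu$ both follow cleanly from the definition of $\alpha$ and $d<1$).
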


The proof of Lemma~\ref{lem:embed:BALANCED} is standard, and is given for
example in~\cite[Lemma~6.6]{HlaPig:LKSdenseExact}. 

Lemma~\ref{lem:embed:BALANCED} suggests the following definitions. 
The \emph{discrepancy}\index{general}{discrepancy} of a set $X$ with respect to a pair of sets $(C,D)$ is the number $|C\cap X|-|D\cap X|$.
$X$ is \emph{$s$-balanced}\index{general}{balanced set} with respect to a semiregular matching $\M$ if the discrepancy of $X$ with respect to each $(C,D)\in \M$ is at most $s$ in absolute value.
\begin{lemma}\label{lem:embed:ONESIDE}\HAPPY{D,M}
Let $G$ be a graph, $v\in V(G)$ be a vertex, $\M$ be an
$(\epsilon,d,\nu k)$-semiregular matching in $G$ with an $\M$-cover $\mathcal F$, and $U\subset
V(G)$. Suppose $(T,r)$ is a rooted tree with 
$$
v(T)+|U|\le
\deg_G\left(v,V(\M)\setminus \bigcup\mathcal F\right)-\frac{4(\epsilon+\frac{\tau}\nu)}{d-2\epsilon}|V(\M)|\;,$$
such that each component
of $T-r$ has order at most $\tau k$. Then there
exists an $(r\hookrightarrow v,V(T-r)\hookrightarrow V(\M)\setminus U)$-embedding $\phi$ of $T$.
\end{lemma}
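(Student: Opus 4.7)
\medskip

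\noindent\emph{Plan.} Since $\mathcal F$ is an $\mathcal M$-cover, for each pair $(C,D)\in\mathcal M$ at most one of $C,D$ lies outside $\mathcal F$. After relabelling, let $\mathcal M^{*}\subseteq\mathcal M$ be the sub-matching of those pairs for which the (unique) non-cover side is $D$; thus $V(\mathcal M)\setminus\bigcup\mathcal F=\bigcup_{(C,D)\in\mathcal M^{*}}D$. The plan is to enumerate the components $T_{1},\dots,T_{s}$ of $T-r$ with roots $r_{1},\dots,r_{s}$ adjacent to $r$, and process them one by one: for each $T_{i}$ we choose a pair $(C,D)\in\mathcal M^{*}$ whose remaining portion still contains an unused neighbour $u$ of $v$ in $D\setminus U$, and embed $T_{i}$ into the sub-pair of $(C,D)$ that avoids $U$ and the previous images, mapping $r_{i}$ to $u$, the set $V_{\mathrm{even}}(T_{i},r_{i})$ into $D$ and the set $V_{\mathrm{odd}}(T_{i},r_{i})$ into $C$.

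The embedding of an individual $T_{i}$ is carried out by Lemma~\ref{lem:embed:regular} applied to the restricted pair $(C\setminus(U\cup\Phi),D\setminus(U\cup\Phi))$, where $\Phi$ denotes the image of the partial embedding built so far. Since $v(T_{i})\le\tau k\le\varepsilon\nu k\le\varepsilon|D|$ and $\deg(v,D\setminus(U\cup\Phi))\ge1$ with $u\in N(v)\cap D\setminus(U\cup\Phi)$ serving as the singleton root-target, the hypotheses of Lemma~\ref{lem:embed:regular} are met provided both $|C\setminus(U\cup\Phi)|$ and $|D\setminus(U\cup\Phi)|$ exceed $\tfrac{4\varepsilon}{d-2\varepsilon}\cdot|D|$. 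Accordingly, call a pair $(C,D)\in\mathcal M^{*}$ \emph{full} once one of these two quantities drops below $\tfrac{4\varepsilon}{d-2\varepsilon}|D|+\tau k$; once full, we simply never choose it again.

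To see that the greedy never fails, I would track how many $v$-neighbours in $\bigcup V_{2}(\mathcal M^{*})\setminus U$ remain accessible. Initially this count is at least $\deg_{G}(v,V(\mathcal M)\setminus\bigcup\mathcal F)-|U|$. It decreases by at most one per vertex embedded, and additionally by at most $\tfrac{4\varepsilon}{d-2\varepsilon}|D|+\tau k$ per pair that we declare full. Using $|D|\ge\nu k$, so that there are at most $|V(\mathcal M)|/(2\nu k)$ pairs in $\mathcal M^{*}$, the cumulative wastage across all pairs is bounded by
\[
\Bigl(\tfrac{4\varepsilon}{d-2\varepsilon}+\tfrac{\tau/\nu}{d-2\varepsilon}\cdot\tfrac{1}{k}\cdot k\Bigr)\cdot|V(\mathcal M)|\ \le\ \tfrac{4(\varepsilon+\tau/\nu)}{d-2\varepsilon}\,|V(\mathcal M)|,
\]
which is exactly the slack allowed in the hypothesis. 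Hence, after embedding the first $i-1$ components, the available $v$-neighbours in non-full pairs still number at least $v(T)-\sum_{j<i}v(T_{j})\ge v(T_{i})\ge 1$, so some eligible pair $(C,D)$ exists, and the inductive step goes through.

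The main obstacle is the bookkeeping of the two simultaneous sources of loss per pair: the regularity buffer $\tfrac{4\varepsilon}{d-2\varepsilon}|D|$ required by Lemma~\ref{lem:embed:regular}, and the per-component rounding wastage of up to $\tau k$ vertices (arising because a component may only partially fit before we close off a pair). The bound $|D|\ge\nu k$ is what allows the second term to be re-expressed as $(\tau/\nu)|D|$ and combined into the single slack coefficient $\tfrac{4(\varepsilon+\tau/\nu)}{d-2\varepsilon}$ appearing in the hypothesis, so no further slack is needed.
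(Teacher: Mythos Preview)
The paper omits this proof entirely as ``standard'', so there is no paper argument to compare against; your greedy scheme (assign each component of $T-r$ to a pair whose $D$-side still has an unused $v$-neighbour, declare a pair \emph{full} once either side drops below a regularity buffer plus $\tau k$, and track the wastage) is indeed the standard approach and your budget analysis is essentially right. In particular, your accounting for the $C$-side filling up is sound: when a pair becomes full because $|C\setminus(U\cup\Phi)|$ is small, the ``lost'' $v$-neighbours in $D$ are at most $|D|=|C|<|(U\cup\Phi)\cap C|+\text{threshold}$, and the $|U\cap C|$ part was already paid by subtracting all of $|U|$ up front while the $|\Phi\cap C|$ part is covered by your ``one per embedded vertex'' charge.

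There is one genuine technical slip. You invoke Lemma~\ref{lem:embed:regular} with the \emph{singleton} $X^*=\{u\}$, but that lemma requires $|X^*|>\tfrac{\beta}{2}\ell$, which a single vertex cannot satisfy (and your implied choice $\beta=d-2\varepsilon$ also violates the density condition $d\ge 3\beta$). The fix is routine: do not use Lemma~\ref{lem:embed:regular} as a black box for placing the root. Instead, by $\varepsilon$-regularity at most $\varepsilon|D|$ vertices of $D$ fail to have degree at least $(d-\varepsilon)|C\setminus(U\cup\Phi)|$ into $C\setminus(U\cup\Phi)$; so add ``$\deg(v,D\setminus(U\cup\Phi))\le\varepsilon|D|$'' to your fullness criterion, pick any \emph{typical} $u\in N(v)\cap D\setminus(U\cup\Phi)$ as the image of $r_i$, and then embed $T_i-r_i$ level by level. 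This extra clause contributes at most $\varepsilon|D|$ additional wastage per pair, i.e.\ at most $\tfrac{\varepsilon}{2}|V(\mathcal M)|$ in total, which is comfortably absorbed by the slack $\tfrac{4(\varepsilon+\tau/\nu)}{d-2\varepsilon}|V(\mathcal M)|$ in the hypothesis. (Your displayed wastage inequality is also a bit garbled---the factor $\tfrac{1}{k}\cdot k$ is odd---but the intended bound is correct and lies well within the allowed slack.)
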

The proof of Lemma~\ref{lem:embed:ONESIDE} is again standard and we again omit it.

The following lemma uses a probabilistic technique to embed a shrub while reserving a  set of vertices in the host graph for later use. We wish the reserved set to use about as much space inside certain given sets $P_i$ as the image of our shrub does. (In later applications the sets $P_i$ correspond to neighbourhoods of vertices which are still `active'.) 

Lemma~\ref{lem:randomshrubembedding} will find an immediate application in all the remaining lemmas of this subsection. However it is really necessary only for Lemmas~\ref{lem:embedStoch:DIAMOND6}--\ref{lem:embedStoch:DIAMOND7}, which deal with
embedding shrubs in the presence of one of the Configurations~$\mathbf{(\diamond
6)}$--$\mathbf{(\diamond8)}$. For Lemmas~\ref{lem:HE3} and~\ref{lem:HE4}, which are for Configurations~$\mathbf{(\diamond
3)}$ and $\mathbf{(\diamond4)}$ a simpler auxiliary lemma (without reservations) would suffice.

\begin{lemma}\label{lem:randomshrubembedding}\HAPPY{M,D}
Let $H$ be a graph, let $X^*,X_1,X_2,
P_1,P_2,\ldots,P_L\subset V(H)$, and let $(T_1,r_1)$, \ldots, 
$(T_\ell,r_\ell)$ be rooted trees, such that $L\le k$,
$|P_j|\le k$ for each $j\in[L]$, and $|X^*|\geq 2\ell$.
Suppose that $\mindeg (X_1\cup X^*,X_{2})\geq 2\sum v(T_i)$ and $\mindeg
(X_2,X_{1})\geq 2\sum v(T_i)$.

Then there exist pairwise disjoint $\left(r_i\hookrightarrow
X^*,\Veven(T_i,r_i)\setminus\{r_i\}\hookrightarrow
X_1,\Vodd(T_i,r_i)\hookrightarrow X_2\right)$-em\-beddings $\phi_i$ of $T_i$ in
$G$ and a set $C\subset (X_1\cup X_2)\sm\bigcup\phi_i(T_i)$ of size $\sum v(T_i)$ such that for each $j\in[L]$ we have
\begin{equation}
  \label{Pjdoesitright}  |P_j\cap \bigcup\phi_i(T_i)|\le |P_j\cap C|+k^{3/4}.
\end{equation}
\end{lemma}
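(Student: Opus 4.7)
My plan is to embed the trees one vertex at a time in BFS order, making an independent fair coin flip at each non-root vertex to decide which of two candidate locations becomes the image and which goes into the reserved set $C$. The analysis is then a concentration argument via the $\Duplicate$ process (Lemma~\ref{lem:randomduplicate}) and a union bound over the $L\le k$ sets $P_j$.

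Set $n:=\sum_i v(T_i)$. In Phase~I, embed $r_1,\ldots,r_\ell$ into $\ell$ arbitrary distinct vertices of $X^*$; this is possible since $|X^*|\ge 2\ell$. In Phase~II, enumerate the non-root vertices of $\bigcup_i T_i$ as $u_1,\ldots,u_{n-\ell}$ in BFS order within each tree, and process them in turn. At step $m$, the parent $p$ of $u_m$ is already embedded at $v:=\phi(p)$, and $u_m$ must go to $Y_m\in\{X_1,X_2\}$ determined by its parity. The hypothesis gives $\deg_H(v,Y_m)\ge 2n$; after $m-1$ non-root steps at most $2(m-1)<2n$ vertices of $Y_m$ have been used (at most one image and one reservation per step), so $v$ has two still-unused neighbours $a_m,b_m$ in $Y_m$. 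Flip an independent fair coin: on heads put $\phi(u_m):=a_m$ and add $b_m$ to $C$; on tails, swap the roles. In Phase~III, append $\ell$ arbitrary unused vertices of $X_1\cup X_2$ to $C$ to reach $|C|=n$; this is possible since $|X_1|\ge 2n$ (from $\mindeg_H(X_2,X_1)\ge 2n$) while only $2(n-\ell)$ vertices of $X_1\cup X_2$ were spent in Phase~II.

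For the analysis, fix $j\in[L]$. Let $X_m:=\mathbf{1}[\phi(u_m)\in P_j]$ and $Y_m:=\mathbf{1}[\text{the step-}m\text{ reservation}\in P_j]$. By construction $(X_m,Y_m)$ follows one of the three $\Duplicate$ patterns: both $0$ if neither of $a_m,b_m$ lies in $P_j$; both $1$ if both do; or exactly one is $1$ with independent $\mathrm{Bernoulli}(\tfrac12)$ assignment. Hence $(X_1,Y_1),\ldots,(X_{n-\ell},Y_{n-\ell})\in\Duplicate(n)$, and Lemma~\ref{lem:randomduplicate} applied with $a:=k^{3/4}/2$ gives
\[
\probability\Big[\textstyle\sum_m X_m-\sum_m Y_m\ge k^{3/4}/2\Big]\le \exp\!\big(-k^{3/2}/(8n)\big).
\]
Since the degree hypothesis forces $n\le \maxdeg(H)=O(k)$, this probability is $\exp(-\Omega(\sqrt k))$, which beats $1/k$ for large $k$. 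A union bound over the $L\le k$ sets $P_j$ yields a realisation in which
\[
|P_j\cap\textstyle\bigcup_i\phi_i(T_i)|-|P_j\cap C|\;=\;\big(|P_j\cap\{\phi(r_i)\}_i|-|P_j\cap C_2|\big)+\big(\textstyle\sum_m X_m-\sum_m Y_m\big)\;\le\;\ell+k^{3/4}/2
\]
for every $j$, where $C_2$ denotes the Phase-III extension. This gives \eqref{Pjdoesitright} provided $\ell\le k^{3/4}/2$, which holds in the applications of the lemma.

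The main obstacle I anticipate is the bookkeeping needed to guarantee two unused neighbours of the parent at \emph{every} step, which hinges on the fact that the min-degree hypothesis beats the cumulative use by a factor of two, leaving exactly room for one image plus one reservation per step. Once this is in place, the concentration step is straightforward via Lemma~\ref{lem:randomduplicate}; the only other subtlety is the treatment of the deterministically placed roots, whose potential unbalanced contribution of up to $\ell$ to the left-hand side of~\eqref{Pjdoesitright} is absorbed into the $k^{3/4}$ slack.
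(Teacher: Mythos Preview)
Your overall strategy matches the paper's, but there is a genuine gap: as written you do not prove the lemma as stated, because you rely on the extra assumption $\ell\le k^{3/4}/2$. The paper avoids this by randomizing the placement of the roots as well. Note that the hypothesis is $|X^*|\ge 2\ell$, not $|X^*|\ge\ell$: the point is to choose pairwise disjoint two-element sets $A_1,\ldots,A_\ell\subset X^*$ and flip an independent fair coin in each $A_i$ to decide which element is $\phi(r_i)$ and which goes into $C$. With this change the roots contribute to the same $\Duplicate$ process as the non-root steps, the set $C$ has size exactly $\sum_i v(T_i)$ without any Phase~III patching, and the bound $|P_j\cap\bigcup\phi_i(T_i)|\le|P_j\cap C|+k^{3/4}$ comes out directly with no $\ell$ leftover.

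A secondary point: your process is not in $\Duplicate(n)$, since $\sum_m(X_m+Y_m)$ can be as large as $2(n-\ell)$. The correct bound is $\sum_m(X_m+Y_m)\le|P_j|\le k$ (all the candidate vertices $a_m,b_m$ are distinct), so the process lies in $\Duplicate(k)$ and Lemma~\ref{lem:randomduplicate} with $a=k^{3/4}$ gives failure probability $\exp(-\sqrt{k}/2)$; there is no need to argue that $n=O(k)$, which the hypotheses of the lemma do not guarantee.
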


\begin{proof}
Let $m:=\sum v(T_i)$.

We construct pairwise disjoint random $\left(r_i\hookrightarrow
X^*,\Veven(T_i,r_i)\setminus\{r_i\}\hookrightarrow
X_1,\Vodd(T_i,r_i)\hookrightarrow X_2\right)$-embeddings $\phi_i$ and a set
$C\subseteq V(H)\sm\bigcup\phi_i(T_i)$ which satisfies~\eqref{Pjdoesitright}
with positive probability. Then the statement follows.

Enumerate the vertices of $\bigcup T_i$ as $\bigcup
V(T_i)=\{v_1,\ldots,v_{m}\}$ such that $v_i=r_i$ for
$i=1,\ldots,\ell$, and such that for each $j>\ell$ we have that the parent of
$v_j$ lies is the set $\{v_1,\ldots,v_{j-1}\}$.
Pick pairwise disjoint sets $A_1,\ldots,A_\ell\subset X^*$ of size two.
Uniformly at random denote one element of $A_j$ as $x_j$ and the other as $y_j$. 

Now, successively for $i= \ell+1,\ldots, m$, we shall  define vertices
$x_i$ and $y_i$. Let $r$ denote the root of the tree in which $v_i$ lies, and
let $v_s=\parent(v_i)$.
We shall choose $x_{i},y_j\in X_{j_i}$ where $j_i=\dist (r,v_i) \mod 2 +1$.
In step $i$, proceed as follows. Since $x_s\in X_{j_{s}}$ (or since $x_s\in X^*$), we have 
$$\deg (x_{s},X_{j_i}\sm  \bigcup_{h<i}\{x_h,y_h\})\geq 2.$$ Hence, 
we may take an arbitrary subset $A_i\subseteq (\neighbor(x_{s})\cap
X_{{j_{i}}})\setminus  \bigcup_{h<i}\{x_h,y_h\}$ of size exactly two. As above,
randomly label its elements as $x_i$ and $y_i$ independently of all other
choices.

The choices of the maps $(v_j\mapsto x_j)_{j=1}^{m}$ determine
$\phi_1,\ldots,\phi_\ell$.
Then $C:=\{y_1,\ldots,y_m\}$
has  size exactly $m$ and avoids $\bigcup \phi_i (T_i)$. 

 For each
$j\in[L]$ we set up a stochastic process $\mathfrak
S^{(j)}=\left((X_i^{(j)},Y_i^{(j)})\right)_{i=1}^{m}$, defined by
$X^{(j)}_i=\mathbf{1}_{\{x_i\in P_j\}}$ and $Y^{(j)}_i=\mathbf{1}_{\{y_i\in P_j\}}$. Note that $\mathfrak S^{(j)}\in
\Duplicate(|P_j|)\subset\Duplicate(k)$. Thus, for a fixed $j\in[L]$, by
Lemma~\ref{lem:randomduplicate}, the probability that
$|P_j\cap(\bigcup\phi_i(T_i))|>|P_j\cap C|+k^{3/4}$ is at most
$\exp(-\sqrt{k}/2)$.
Using the union bound over all $j\in [L]$ we get that Property~\ref{stochEmbedDIAMOND6P2} holds with probability at least $$1-L\cdot \exp\left(-\frac{\sqrt{k}}{2}\right)>0\;.$$ This finishes the proof.
\end{proof}

We now get to the first application of Lemma~\ref{lem:randomshrubembedding}.

\begin{lemma}\label{lem:embedStoch:DIAMOND6}
Assume we are in Setting~\ref{commonsetting}. Suppose
that the sets $V_2,V_3$ are such that for $j=2,3$ we have
\begin{equation}\label{mucuruvi}
\mindeg_H(V_j, V_{5-j})\geq \delta k,
\end{equation}
where  $\delta>300/k$, and $H$ is a $(\gamma k, \gamma)$-nowhere dense graph. Suppose that $U,U^*,
P_1,P_2,\ldots,P_L\subset V(G)$,
and $L\le k$, are such that $|U|\le \frac{ \delta}{24\sqrt\gamma} k$, $U^*\subset V_2$, $|U^*|\ge\frac{\delta}{4}k$, and $|P_j|\le k$ for each
$j\in[L]$.
Let $(T,r)$ be a rooted tree of order at most $\delta k/8$.

Then there exists a $\left(r\hookrightarrow U^*,\Veven(T,r)\setminus\{r\}\hookrightarrow
V_2\setminus U,\Vodd(T,r)\hookrightarrow V_3\setminus U\right)$-embedding $\phi$ of $T$ in
$G$ and a set
 $C\subset (V_2\cup V_3) \sm (U\cup \phi(T))$ of size $v(T)$ such that for each $j\in[L]$ we have
\begin{equation}
  \label{stochEmbedDIAMOND6P2} |P_j\cap \phi(T)|\le |P_j\cap C|+k^{3/4}.
\end{equation}
\end{lemma}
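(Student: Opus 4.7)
The plan is to reduce this to a direct application of Lemma~\ref{lem:randomshrubembedding}, after first trimming away the ``shadow'' of the forbidden set $U$ inside $H$. The only reason one cannot appeal to Lemma~\ref{lem:randomshrubembedding} out of the box is that the minimum degree hypothesis concerns $V_2, V_3$ rather than $V_2\setminus U$ and $V_3\setminus U$: a priori a vertex of $V_2$ could send most of its $V_3$-neighbours into $U$. The nowhere-density of $H$, together with the smallness of $U$, lets us discard the offending vertices cheaply.

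Concretely, I would set $B:=\shadow_H(U,\delta k/2)$ and apply Fact~\ref{fact:shadowboundEXPANDER} with $\alpha:=\delta/2$ and $Q:=\delta/(24\sqrt{\gamma})$ (so the hypothesis $16Q\le\alpha/\gamma$ becomes the harmless $4\sqrt{\gamma}\le 3$). This yields $|B|\le \delta k/18$. Put $X_1:=V_2\setminus(U\cup B)$, $X_2:=V_3\setminus(U\cup B)$ and $X^*:=U^*\setminus B$. By the definition of $B$, any $v\in X_1\cup X^*$ satisfies
\[
\deg_H(v,X_2)\ \ge\ \deg_H(v,V_3)-\deg_H(v,U)-|B|\ \ge\ \delta k-\tfrac{\delta k}{2}-\tfrac{\delta k}{18}\ \ge\ \tfrac{4\delta k}{9},
\]
and the analogous bound holds for $v\in X_2$. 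Since $v(T)\le \delta k/8$, both quantities comfortably exceed $2v(T)$. Moreover $|X^*|\ge |U^*|-|B|\ge \delta k/4-\delta k/18\ge \delta k/5>2$, using $\delta k>300$.

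Now feed $X_1,X_2,X^*$, the trees $(T_1,r_1):=(T,r)$ (so $\ell=1$), and the sets $P_1,\dots,P_L$ into Lemma~\ref{lem:randomshrubembedding}. It outputs an $(r\hookrightarrow X^*,\Veven(T,r)\setminus\{r\}\hookrightarrow X_1,\Vodd(T,r)\hookrightarrow X_2)$-embedding $\phi$ and a set $C\subset (X_1\cup X_2)\setminus \phi(T)$ of size $v(T)$ with $|P_j\cap\phi(T)|\le|P_j\cap C|+k^{3/4}$ for every $j\in[L]$. Since $X^*\subset U^*$ and $X_1\cup X_2\subset (V_2\cup V_3)\setminus U$, the embedding $\phi$ and the set $C$ satisfy all the conclusions of the lemma verbatim, including~\eqref{stochEmbedDIAMOND6P2}.

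There is no real obstacle here beyond bookkeeping: the only slightly non-routine point is verifying that the shadow bound from Fact~\ref{fact:shadowboundEXPANDER} is applicable with the chosen $Q$, which the hypothesis $|U|\le \frac{\delta}{24\sqrt{\gamma}}k$ was tailored to make work. The randomized balancing with respect to the sets $P_j$ is entirely outsourced to Lemma~\ref{lem:randomshrubembedding}.
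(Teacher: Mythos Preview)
Your proof is correct and follows essentially the same route as the paper's: define the shadow $B$ of $U$, bound $|B|$ via Fact~\ref{fact:shadowboundEXPANDER}, set $X_1,X_2,X^*$ by removing $U\cup B$, verify the degree hypotheses, and invoke Lemma~\ref{lem:randomshrubembedding}. The only differences are cosmetic: the paper uses the threshold $\delta k/4$ (rather than your $\delta k/2$) in defining $B$, obtaining $|B|\le \delta k/9$ and the slightly weaker $\mindeg\ge \delta k/2$, which still suffices. (One tiny slip: $\delta k/4-\delta k/18=7\delta k/36$, not $\ge\delta k/5$; but $7\delta k/36\ge 2$ is all you need, and that holds comfortably.)
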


\begin{proof}
Set $B:=\shadow_{\Gexp}(U,\delta k/4)$.
By Fact~\ref{fact:shadowboundEXPANDER}, we have that
$|B|\le 64\frac{\gamma}{\delta}(\frac{ \delta}{24\sqrt\gamma})^2 k\le\frac{\delta}{4}k-2$. In
particular, $X^*:=U^*\setminus B$ has size at least $2$. Set $X_1:=V_2\sm (U\cup B)$ and set $X_2:=V_3\sm (U\cup B)$. Using~\eqref{mucuruvi},
we find that
$$\mindeg_{\Gexp}(X_j,X_{3-j})\ge \delta k - \maxdeg_{\Gexp}(X_j,U) - |B|\ge 
\delta k - \frac{\delta}{4}k - \frac{\delta}{4}k
\ge 2v(T)$$
 for $j=1,2$.
We may thus apply Lemma~\ref{lem:randomshrubembedding} to obtain the desired embedding $\phi$ and the set $C$.
\end{proof}

\begin{lemma}\label{lem:embedStoch:DIAMOND7}
Assume Setting~\ref{commonsetting} and Setting~\ref{settingsplitting}. Suppose
that we are given sets $Y_1,Y_2\subset\colouringp{1}\setminus \exceptVertSplit$
with $Y_1\subset\smallatoms$, and
\begin{enumerate}[(i)]
\item $\maxdeg_{\GD}(Y_1,\colouringp{1}\setminus
Y_2)\le \frac{\eta \gamma}{400}$, and \label{luckyluke}
\item $\mindeg_{\GD}(Y_2,Y_1)\ge \delta k$.\label{jollyjumper}
\end{enumerate}

Suppose that $U,U^*, P_1,P_2,\ldots,P_L\subset V(G)$ are sets such that $|U|\le
\frac{\Lambda\delta}{2\Omega^*} k$, $U^*\subset Y_1$,
with $|U^*|\ge\frac{\delta}{4}k$, $|P_j|\le k$ for each $j\in[L]$, and $L\le
k$.
Suppose $(T_1,r_1),\ldots,(T_\ell,r_\ell)$ are rooted trees of total order at
most $\delta k/1000$.
Suppose further that $\delta<\eta\gamma/100$, $\epsilon'<\delta/1000$,  and $k>1000/\delta$.

Then there exist pairwise disjoint $\left(r_i\hookrightarrow
U^*,\Veven(T_i,r_i)\hookrightarrow Y_1\setminus U,\Vodd(T_i,r_i)\hookrightarrow
Y_2\setminus U\right)$-em\-beddings $\phi_i$ of $T_i$ in $G$ and a set $C\subset
V(G-\bigcup \phi_i(T_i))$ of size $\sum v(T_i)$ such that for each $j\in[L]$ we
have that
\begin{equation}
 \label{stochEmbedDIAMOND7P2} |P_j\cap \bigcup\phi_i(T_i)|\le |P_j\cap
 C|+k^{3/4}.
\end{equation}
\end{lemma}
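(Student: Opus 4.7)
The strategy mirrors that of Lemma~\ref{lem:embedStoch:DIAMOND6}: we will prepare sets $X^*, X_1, X_2$ meeting the degree hypotheses of Lemma~\ref{lem:randomshrubembedding}, and then apply that lemma to produce the embeddings together with the reservation set $C$. The new ingredient, replacing the shadow in $\Gexp$ used in Lemma~\ref{lem:embedStoch:DIAMOND6}, is the $(\Lambda,\epsilon,\gamma,k)$-avoiding property of $\smallatoms\supset Y_1$. Since $|U|\le\frac{\Lambda\delta}{2\Omega^*}k\le\Lambda k$, Definition~\ref{def:avoiding} produces an exceptional set $Y_1^{\mathrm{exc}}\subset Y_1$ with $|Y_1^{\mathrm{exc}}|\le\epsilon k$ such that every $v\in Y_1\setminus Y_1^{\mathrm{exc}}$ lies in a dense spot $D_v=(A_{D_v},B_{D_v};F_{D_v})\in\DenseSpots$ (say with $v\in A_{D_v}$) satisfying $|V(D_v)\cap U|\le\gamma^2 k$.

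Set $X^*:=U^*\setminus Y_1^{\mathrm{exc}}$, $X_1:=Y_1\setminus(U\cup Y_1^{\mathrm{exc}})$, and $X_2:=Y_2\setminus U$ (the latter will be trimmed further below). Since $|U^*|\ge\delta k/4$ and $\epsilon'<\delta/1000$, we have $|X^*|\ge 2\sum_i v(T_i)$. For the degree $\mindeg_G(X_1\cup X^*,X_2)$: every such $v$ satisfies $v\in Y_1\setminus Y_1^{\mathrm{exc}}$ and $v\notin\exceptVertSplit$, so Definition~\ref{def:proportionalsplitting}\eqref{It:H4} applied inside $D_v$ gives $\deg_{D_v}(v,B_{D_v}\cap\colouringp{1})\ge\proporce{1}\gamma k-k^{0.9}\ge\eta\gamma k/100-k^{0.9}$ using~\eqref{eq:proporcevelke}. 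Hypothesis~\eqref{luckyluke} bounds $\deg_{\GD}(v,\colouringp{1}\setminus Y_2)\le\eta\gamma k/400$, so $\deg_{D_v}(v,B_{D_v}\cap Y_2)\ge\eta\gamma k/200$. Subtracting $|V(D_v)\cap U|\le\gamma^2 k$ and using $\gamma\ll\eta$, we obtain $\deg_G(v,X_2)\ge\eta\gamma k/400$, which comfortably exceeds $2\sum_i v(T_i)\le\delta k/500\le\eta\gamma k/50000$ (recall $\delta<\eta\gamma/100$).

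The other bound, $\mindeg_G(X_2,X_1)\ge 2\sum_i v(T_i)$, is the main obstacle. Hypothesis~\eqref{jollyjumper} gives $\deg_{\GD}(v,Y_1)\ge\delta k$ for $v\in X_2$, but $|U|$ may be much larger than $\delta k$, so a direct subtraction of $|U\cap Y_1|$ would be too wasteful. The plan is to shrink $X_2$ by removing those $v\in Y_2$ with $\deg_{\GD}(v,U\cap Y_1)\ge\delta k/4$. Every $u\in U\cap Y_1\subset\smallatoms$ lies in at most $\Omega^*/\gamma$ dense spots (Fact~\ref{fact:boundedlymanyspots}), each of size at most $\Omega^*k/\gamma$ (Fact~\ref{fact:sizedensespot}), so the number of vertices $w$ with $uw\in E(\GD)$ is at most $(\Omega^*)^2 k/\gamma^2$. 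Double-counting the $\GD$-edges between $U\cap Y_1$ and $Y_2$ therefore yields that the ``shadow'' set $\{v\in Y_2:\deg_{\GD}(v,U\cap Y_1)\ge\delta k/4\}$ has size $O(|U|\Omega^*/\delta)$, which is negligible compared with the $Y_2$-mass of interest; after trimming $X_2$ by this shadow we retain $\mindeg_G(X_2,X_1)\ge\delta k/2\gg 2\sum_i v(T_i)$.

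With both degree hypotheses verified, Lemma~\ref{lem:randomshrubembedding} applied to $X^*,X_1,X_2$ together with the sets $P_1,\dots,P_L$ produces pairwise disjoint $(r_i\hookrightarrow X^*,\Veven(T_i,r_i)\setminus\{r_i\}\hookrightarrow X_1,\Vodd(T_i,r_i)\hookrightarrow X_2)$-embeddings $\phi_i$ and a set $C\subset(X_1\cup X_2)\setminus\bigcup_i\phi_i(T_i)$ of size $\sum_i v(T_i)$ with $|P_j\cap\bigcup_i\phi_i(T_i)|\le|P_j\cap C|+k^{3/4}$ for all $j\in[L]$, which is exactly the conclusion of the lemma. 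The whole argument is quite parallel to that of Lemma~\ref{lem:embedStoch:DIAMOND6}; the one genuinely new piece of work is the shadow-trimming step described above, which plays the role that Fact~\ref{fact:shadowboundEXPANDER} played there.
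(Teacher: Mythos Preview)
There is a genuine gap in the order of operations. You invoke the avoiding property of $\smallatoms$ with respect to the set $U$, obtaining dense spots $D_v$ with $|V(D_v)\cap U|\le\gamma^2 k$; this is what drives your $\mindeg(X_1\cup X^*,X_2)$ estimate for $X_2=Y_2\setminus U$. But then you trim $X_2$ by removing the shadow $S=\{v\in Y_2:\deg_{\GD}(v,U\cap Y_1)\ge\delta k/4\}$, and after this trimming the bound $\mindeg(X_1\cup X^*,X_2)\ge 2\sum v(T_i)$ is no longer established: you have no control on $|V(D_v)\cap S|$, so in principle all $\eta\gamma k/400$ of $v$'s $\GD$-neighbours in $Y_2\setminus U$ found inside $D_v$ could lie in $S$ and be discarded. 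Your global size bound $|S|=O(\Omega^*|U|/\delta)$ (which is of order $\Lambda k$) does not help, since a single dense spot has order up to $2\Omega^* k/\gamma$, far exceeding $\eta\gamma k/400$.

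The paper's proof avoids this by reversing the order: first set $U':=U\cup\shadow_{\GD}(U,\delta k/2)$, and only then apply the avoiding property, with respect to $U'$ rather than $U$. The hypothesis $|U|\le\frac{\Lambda\delta}{2\Omega^*}k$ is designed exactly so that Fact~\ref{fact:shadowbound} gives $|U'|\le\Lambda k$, and hence the avoiding property still applies. The resulting dense spots $D_v$ then satisfy $|V(D_v)\cap U'|\le\gamma^2 k$, which in one stroke guarantees that $v$'s neighbours via $D_v$ avoid both $U$ and the shadow. Taking $X_2:=Y_2\setminus(U'\cup B)$, both degree bounds hold simultaneously and Lemma~\ref{lem:randomshrubembedding} applies. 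Your shadow-trimming idea is the right instinct, but it needs to be folded into the avoiding step rather than performed afterwards.
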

\begin{proof} Set $U':=\shadow_{\GD}(U,\delta k/2)\cup U$. By
Fact~\ref{fact:shadowbound}, we have $|U'|\le \Lambda k$. As $Y_1$ is a
$(\Lambda,\epsilon',\gamma,k)$-avoiding set, by Definition~\ref{def:avoiding}
there exists a set $B\subset Y_1$, $|B|\le \epsilon' k$ such that for all $v\in
Y_1\setminus B$ there exists a dense spot $D_v\in\DenseSpots$ with $v\in V(D_v)$
and $|V(D_v)\cap U'|\le \gamma^2k$. As $Y_1$ is disjoint from
$\exceptVertSplit$, by Definition~\ref{def:proportionalsplitting}\eqref{It:H4}
and by~\eqref{eq:proporcevelke}, we have that $\deg_{D_v}(v,V(D_v)\colouringpI{1})\ge\frac{\eta\gamma}{200} k$. 
By~\eqref{luckyluke}, we have that $\deg_{\GD}(v,V(D_v)\colouringpI{1}\setminus Y_2)<\frac{\eta\gamma
}{400}k$, and hence, $$\deg_{\GD}\big(v,(V(D_v)\colouringpI{1}\cap Y_2)\setminus
U'\big)\ge \frac{\eta\gamma k}{400}-\gamma^2k\ge \frac{\eta\gamma k}{800}\;.$$ Thus,
\begin{equation}\label{gr1}
\mindeg_{\GD}(Y_1\setminus B, Y_2\setminus (U'\cup B))\ge
\frac{\eta\gamma k}{800} -\eps'k\ge 2\sum v(T_i)\;.
\end{equation}
Further, by the definition of $U'$ and by~\eqref{jollyjumper}, we have
\begin{equation}\label{gr2}
\mindeg_{\GD}(Y_2\setminus U',Y_1\setminus (U\cup B))\ge \frac{\delta k}2 -\eps'k \ge
2\sum v(T_i)\;.
\end{equation}

Set $X^*:=U^*\setminus B$, and note that $|X^*|\ge \delta
k/4-\epsilon' k\ge 2\ell$. Set $X_1:=Y_1\sm (U\cup B)$ and $X_2:=Y_2\sm (U'\cup
B)$.
Inequalities~\eqref{gr1} and~\eqref{gr2} guarantee that we may apply
Lemma~\ref{lem:randomshrubembedding} to obtain the desired embeddings $\phi_i$.
\end{proof}

\begin{lemma}\label{lem:HE3}
Assume Setting~\ref{commonsetting}. Suppose that the sets $L',L'',\HugeVertices',\HugeVertices'',V_1,V_2$ witness Configuration~$\mathbf{(\diamond3)}(0,0,\gamma/4,\delta)$.
Suppose that $U,U^*\subset V(G)$ are sets such that $|U|\le  k$,
$U^*\subset V_1$, $|U^*|\ge\frac{\delta}{4}k$.
Suppose $(T,r)$ is a rooted tree of order at most $\delta k/1000$. Suppose further that $\delta\leq\gamma/100$, $\epsilon'<\delta/1000$, and $4\Omega^*/\delta\le \Lambda$.

Then there is an $\left(r\hookrightarrow U^*,\Veven(T,r)\setminus\{r\}\hookrightarrow V_1\setminus
U,\Vodd(T,r)\hookrightarrow V_2\setminus U\right)$-embedding of~$T$ in~$G$.
\end{lemma}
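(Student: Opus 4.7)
\medskip

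The plan is to mimic the strategy used in the proof of Lemma~\ref{lem:embedStoch:DIAMOND7}, exploiting the fact that $V_1\subseteq\smallatoms$. First I set up a single "forbidden" set large enough that we can apply the avoiding property of $\smallatoms$ once and for all before starting the embedding. Concretely, define $U':=U\cup\shadow_{\GD}(U,\delta k/4)$. Using Fact~\ref{fact:shadowbound}, $|U'|\le(1+4\Omega^*/\delta)k\le\Lambda k$, so Definition~\ref{def:avoiding} applied to $\smallatoms$ with this $U'$ yields an exceptional set $B\subseteq\smallatoms$ of size at most $\epsilon' k$ with the property that every $v\in\smallatoms\setminus B$ lies in a dense spot $D_v\in\DenseSpots$ satisfying $|V(D_v)\cap U'|\le\gamma^2 k$.

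Next I verify the two minimum-degree conditions that will drive a straightforward greedy embedding. For $v\in V_1\setminus(U\cup B)$, since $D_v$ is $(\gamma k,\gamma)$-dense, $\deg_{D_v}(v)\ge\gamma k$; since $\GD$ has no edges to $\HugeVertices$, the Configuration $\mathbf{(\diamond3)}$ bound~\eqref{eq:WHtc} with $\zeta=\gamma/4$ gives $\deg_{D_v}(v,V(G)\setminus V_2)\le\gamma k/4$; and $\deg_{D_v}(v,U')\le\gamma^2 k$. Combining with $\delta\le\gamma/100$, this yields
$$\deg_{D_v}\bigl(v,V_2\setminus(U'\cup W)\bigr)\ \ge\ \gamma k-\tfrac{\gamma k}{4}-\gamma^2 k-|W|\ \ge\ \tfrac{\gamma k}{2}-|W|$$
for any auxiliary set $W$. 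For $u\in V_2\setminus U'$, the fact that $u\notin\shadow_{\GD}(U,\delta k/4)$ together with~\eqref{confi3theothercondi} gives $\deg_{\GD}(u,V_1)\ge\delta k$ and $\deg_{\GD}(u,U)\le\delta k/4$, hence
$$\deg_{\GD}\bigl(u,V_1\setminus(U\cup B\cup W)\bigr)\ \ge\ \tfrac{3\delta k}{4}-\epsilon' k-|W|.$$

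Now I embed $T$ vertex by vertex in a breadth-first fashion, maintaining the invariant that the image so far respects $\Veven(T,r)\setminus\{r\}\hookrightarrow V_1\setminus(U\cup B)$ and $\Vodd(T,r)\hookrightarrow V_2\setminus U'$. Start by mapping $r$ into $U^*\setminus B$, which is non-empty because $|U^*|\ge\delta k/4>\epsilon' k\ge|B|$. At each subsequent step, suppose the parent $v$ of the next vertex $w$ is already embedded, and let $W$ denote the image of $T$ so far; note $|W|\le v(T)\le\delta k/1000$. If $w\in\Vodd(T,r)$, then $\phi(v)\in V_1\setminus(U\cup B)$, and by the first degree computation $\phi(v)$ has at least $\gamma k/2-\delta k/1000>0$ free neighbors in $V_2\setminus(U'\cup W)$, in which we place $\phi(w)$. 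If $w\in\Veven(T,r)\setminus\{r\}$, then $\phi(v)\in V_2\setminus U'$, and the second computation produces at least $3\delta k/4-\epsilon' k-\delta k/1000>0$ free neighbors in $V_1\setminus(U\cup B\cup W)$, in which we place $\phi(w)$.

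The main (minor) obstacle is just bookkeeping to check that the constants line up: one needs $|U'|\le\Lambda k$ (uses $4\Omega^*/\delta\le\Lambda$), $|B|+v(T)\ll\delta k$ (uses $\epsilon'<\delta/1000$ and $v(T)\le\delta k/1000$), and enough slack in the $V_2$-to-$V_1$ count so that the $\delta k/4$ loss to the shadow and $\epsilon' k$ loss to $B$ still leaves room for the image. All these are ensured by the numerical hypotheses of the lemma, so the greedy procedure never stalls and produces the desired $(r\hookrightarrow U^*,\Veven(T,r)\setminus\{r\}\hookrightarrow V_1\setminus U,\Vodd(T,r)\hookrightarrow V_2\setminus U)$-embedding.
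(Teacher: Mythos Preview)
Your proof is correct and follows essentially the same approach as the paper: define $U'$ as $U$ together with its $\GD$-shadow, invoke the avoiding property of $V_1\subseteq\smallatoms$ to obtain the exceptional set $B$, and then use the dense-spot degree together with~\eqref{eq:WHtc} and~\eqref{confi3theothercondi} to set up a greedy bipartite embedding between $V_1\setminus(U\cup B)$ and $V_2\setminus U'$. The only cosmetic difference is that the paper packages the final step as an application of Lemma~\ref{lem:randomshrubembedding} with empty sets $P_i$ (which, with no $P_i$'s, reduces exactly to your greedy argument), and uses shadow threshold $\delta k/2$ rather than your $\delta k/4$; neither affects the argument. One tiny imprecision: when you embed children of $r$, you have $\phi(r)\in U^*\setminus B$ rather than $V_1\setminus(U\cup B)$, but your degree computation for $V_1\setminus B$ only uses $v\notin B$, so it still applies.
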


\begin{proof} The proof of this lemma is very similar to the one of Lemma~\ref{lem:embedStoch:DIAMOND7} (in fact, even easier). 
Set $U':=\shadow_{\GD}(U,\delta k/2)\cup U$ and note that $|U'|\le \Lambda k$ by
Fact~\ref{fact:shadowbound}. As $V_1$ is $(\Lambda,\epsilon',\gamma,k)$-avoiding, by Definition~\ref{def:avoiding} there is a set $B\subset V_1$, $|B|\le \epsilon' k$ such that for all $v\in V_1\setminus B$
there exists a dense spot $D_v\in\DenseSpots$ with 
$\deg_{D_v}(v,V(D_v)\sm U')\ge \gamma k/2$. 
By~\eqref{eq:WHtc}, we know that $\deg_{\GD}(v,V(D_v)\setminus
V_2)\leq \gamma  k/4$, and hence, 
$\deg_{\GD}\big(v,(V(D_v)\cap V_2)\setminus U'\big)\ge
\gamma k/{4}$.
Thus,
\begin{equation}\label{gr1grgr}
\mindeg_{\GD}(V_1\setminus B, V_2\setminus U')\ge
\frac{\gamma k}{4} \ge 2v(T)\;.
\end{equation}
Further, by the definition of $U'$ and by~\eqref{confi3theothercondi}, we have
\begin{equation}\label{gr2grgr}
\mindeg_{\GD}(V_2\setminus U',V_1\setminus U)\ge \frac{\delta k}2 \ge 2(T)\;.
\end{equation}

Set $X^*:=U^*\setminus B$, and note that $|X^*|\ge \delta k/4-\epsilon' k\ge 2$. Set $X_1:=V_1\sm (U\cup B)$ and $X_2:=V_2\sm (U'\cup B)$. Inequalities~\eqref{gr1grgr} and~\eqref{gr2grgr} guarantee that we may apply Lemma~\ref{lem:randomshrubembedding} (with empty sets $P_i$) to obtain the desired embedding $\phi$.
\end{proof}

\begin{lemma}\label{lem:HE4}\HAPPY{D, M}
Assume Setting~\ref{commonsetting}. Suppose that the sets $L',L'',\HugeVertices',\HugeVertices'',V_1,\smallatoms', V_2$ witness Configuration~$\mathbf{(\diamond4)}(0,0,\gamma/4,\delta)$.
Suppose that $U\subset V(G)$, $U^*\subset V_1$ are sets such
that $|U|\le k$ and $|U^*|\ge\frac{\delta}{4}k$.
Suppose $(T,r)$ is a rooted tree of order at most $\delta k/20$ with a fruit $r'$. Suppose further that $4\epsilon'\le \delta\le \gamma/100$, and $ \Lambda\geq 300(\frac{\Omega^*}{\delta})^3$.

Then there exists an $\left(r\hookrightarrow U^*,r'\hookrightarrow
V_1\setminus U, 
V(T)\setminus\{r,r'\}\hookrightarrow (\smallatoms'\cup V_2)\setminus
 U\right)$-embedding of~$T$ in~$G$.
\end{lemma}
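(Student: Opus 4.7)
Root $T$ at $r$. Since $r'$ is a fruit, $r'$ lies at even distance $\ge 4$ from $r$, so $r' \in \Veven(T,r)$. The plan is to build an embedding $\phi$ that sends
\[
r, r' \mapsto V_1, \qquad \Vodd(T,r) \mapsto \smallatoms', \qquad \Veven(T,r) \setminus \{r, r'\} \mapsto V_2,
\]
which is consistent with the bipartition of $T$ and implies the conclusion.

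The embedding will be carried out greedily in BFS order from $r$, using the following edge supply. By~\eqref{confi4:3}--\eqref{confi4othercondi}, $\mindeg_{\Gcapt\cup\GD}$ in both directions between $V_1$ and $\smallatoms'$, and from $V_2$ to $\smallatoms'$, is at least $\delta k$. In the reverse $\smallatoms' \to V_2$ direction, every $v \in \smallatoms' \subset \smallatoms$ lies in some $(\gamma k,\gamma)$-dense spot $D \subset G - \HugeVertices$, so $\deg_{\GD}(v) \ge \gamma k$, and combined with~\eqref{confi4lastcondi} (using $V(D) \cap \HugeVertices = \emptyset$), this gives $\deg_{\GD}(v, V_2) \ge 3\gamma k/4$.

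To counteract the size of $U$ (which may be comparable to $\delta k$), we enlarge it to
\[
\bar U := U \cup \shadow_{\Gcapt \cup \GD}(U, \delta k/10).
\]
By Fact~\ref{fact:shadowbound}, $|\bar U| \le (1 + 10\Omega^*/\delta)|U| \le \Lambda k$, using the hypothesis $\Lambda \ge 300(\Omega^*/\delta)^3$. Applying the $(\Lambda, \eps', \gamma, k)$-avoiding property of $\smallatoms$ with $\bar U$ yields a bad set $Y \subset \smallatoms$ of size $\le \eps' k \le \delta k/4$ such that each $v \in \smallatoms \setminus Y$ lies in a dense spot $D_v$ with $|V(D_v) \cap \bar U| \le \gamma^2 k$. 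For every vertex $v$ placed outside $\bar U$ the shadow bound $\deg_{\Gcapt\cup\GD}(v, U) < \delta k/10$ combines with the min-degree conditions to leave at least $9\delta k/10$ neighbours of the required type in the target set minus $U$; for every $v \in \smallatoms' \setminus Y$ the dense spot $D_v$ yields at least $3\gamma k/4 - \gamma^2 k \ge \gamma k/2$ neighbours in $V_2 \setminus \bar U$. Subtracting $|Y| \le \eps' k$ and the at most $v(T) \le \delta k/20$ previously used images leaves at least $\delta k/2$ valid choices at every step.

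The embedding begins by picking $\phi(r) = v_r \in U^*$ and then places the remaining vertices greedily as unused neighbours of their parent's image in the appropriate target set, maintaining the invariants that images in $\smallatoms'$ lie outside $Y$ and images in $V_2$ lie outside $\bar U$ so that the process may continue. When BFS reaches $r'$, its image is chosen in $V_1 \setminus U$ as a neighbour of its $\smallatoms'$-parent using~\eqref{confi4:4}.

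The main obstacle is the initial choice of $v_r \in U^*$: unlike Lemma~\ref{lem:HE3}, where $V_1 \subset \smallatoms$ and the avoiding property applies directly to $v_r$, here $V_1 \not\subset \smallatoms$ and the shadow alone need not carve out a vertex of $U^*$ with small $U$-degree. This is resolved by an averaging argument over $U^*$ (of size $\ge \delta k/4$), using the fact that vertices of $\smallatoms$ have degree at most $(1+\eta)k$ to bound the total number of $\Gcapt\cup\GD$-edges from $U^*$ to $\smallatoms' \cap (U \cup Y)$ and hence to produce a suitable seed $v_r$ with $\deg(v_r, \smallatoms' \setminus (U \cup Y)) \ge 1$; the analogous step when positioning $r'$ into $V_1 \setminus U$ as a neighbour of its $\smallatoms'$-parent is handled in the same way.
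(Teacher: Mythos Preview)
Your bipartite plan ($r,r'\to V_1$, $\Vodd\to\smallatoms'$, $\Veven\setminus\{r,r'\}\to V_2$) is the right one and matches the paper. But your resolution of the ``main obstacle'' at $r$ (and hence at $r'$) does not work.

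First, the premise of your averaging is false: vertices of $\smallatoms$ need not have degree at most $(1+\eta)k$. The avoiding set $\smallatoms$ is not contained in $\smallvertices{\eta}{k}{G}$; its vertices can have degree up to $\Omega^*k$. Second, even granting your degree bound, the arithmetic fails. The number of $(\Gcapt\cup\GD)$-edges between $U^*$ and $\smallatoms'\cap(U\cup Y)$ is at most $|U\cup Y|\cdot(1+\eta)k\approx k^2$, so averaging over $|U^*|\ge\delta k/4$ only yields a vertex whose bad-degree is at most about $4k/\delta$, which is far larger than the $\delta k$ budget from~\eqref{confi4:3}. There is therefore no guarantee that any $v_r\in U^*$ has even one usable neighbour in $\smallatoms'\setminus(U\cup Y)$, let alone enough for all children of $r$. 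The same gap recurs when you place $r'$ and then try to continue from $r'$ into $\smallatoms'$.

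The paper proceeds differently. It builds an \emph{iterated} shadow $U'\supset U\cup\shadow(U,\delta k/4)\cup\shadow^{(2)}(U,\delta k/4)$ and then $U''\supset\shadow_{\GD}(U',\delta k/2)$; the hypothesis $\Lambda\ge300(\Omega^*/\delta)^3$ is exactly what keeps $|U''|\le\Lambda k$ so that the avoiding property can be applied to $U''$. The tree is then split at the fruit: first $T-T(\uparrow r')$ is embedded via Lemma~\ref{lem:randomshrubembedding} (so the root lands in $U^*$, odd levels in $\smallatoms'\setminus(B\cup U')$, even levels in $V_2\setminus U''$); then, since $\parent(r')$ was placed in $\smallatoms'\setminus U'\subset\smallatoms'\setminus\shadow(U,\delta k/4)$, condition~\eqref{confi4:4} gives $\ge 3\delta k/4$ neighbours in $V_1\setminus U$, so $r'$ can be placed there; finally $T(\uparrow r')$ is embedded by a second call to Lemma~\ref{lem:randomshrubembedding}, using~\eqref{confi4:3} and~\eqref{confi4:4}. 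The second-order shadow in $U'$ is precisely what allows the re-entry from $V_1$ back into $\smallatoms'$ at both ends; no averaging is involved.
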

\begin{proof}\
Set 
$$U':=\tilde U\cup \shadow_{\Gcapt-\HugeVertices}(U,\delta k/4)\cup
\shadow^{(2)}_{\Gcapt-\HugeVertices}(\tilde U,\delta k/4)$$ and let $$U'':=\tilde U\cup \shadow_{\GD}(U',\delta
k/2).$$ We use Fact~\ref{fact:shadowbound} to see that $|U'|\le \frac{\delta}{4\Omega^*} \Lambda
 k$ and  $|U''|\le \Lambda
 k$.  We then use Definition~\ref{def:avoiding} and~\eqref{confi4lastcondi} to find a set $B\subset \smallatoms'$ of size at most $\epsilon' k$ such that
\begin{equation}\label{gr81grgr}
\mindeg_{\GD}(\smallatoms'\setminus B, V_2\setminus U'')\ge 2v(T)\;.
\end{equation}

 Using~\eqref{gr81grgr},  and employing~\eqref{confi4:3} and ~\eqref{confi4othercondi}, we see that we may apply Lemma~\ref{lem:randomshrubembedding} with $X^*_\PARAMETERPASSING{L}{lem:randomshrubembedding}:= U^*$,
 $X_{1,\PARAMETERPASSING{L}{lem:randomshrubembedding}}:=\smallatoms'\sm (B\cup U')$ and $X_{2,\PARAMETERPASSING{L}{lem:randomshrubembedding}}:=V_2\sm U''$ (and with empty sets $P_i$) in order to embed the tree $T-T(r,\uparrow r')$ rooted at $r$.
Then embed $T(r,\uparrow r')$, by applying  Lemma~\ref{lem:randomshrubembedding} a second time, using~\eqref{confi4:3} and~\eqref{confi4:4}.
\end{proof}

\subsection{Main embedding lemmas}\label{sec:MainEmbedding}
For this section, we need to introduce the notion of a ghost. Given a semiregular matching $\mathcal N$, we call an involution $\mathfrak{d}:V(\mathcal N)\rightarrow V(\mathcal N)$ with the property that $\mathfrak{d}(S)=T$ for each $(S,T)\in\mathcal N$ a \index{general}{matching involution}\emph{matching involution}.

Assume Setting~\ref{commonsetting} and fix a matching involution $\mathfrak{b}$ for $\mathcal M_A\cup\mathcal M_B$. For any set $U\subset V(G)$ we then define \index{general}{ghost}\index{mathsymbols}{*GHOST@$\ghost$} by
$$\ghost(U):=U\cup \mathfrak{b}\big(U\cap V(\M_A\cup\M_B)\big)\;.$$
Clearly, we have that $|\ghost(U)|\le 2|U|$, and $|\ghost(U)\cap S|=|\ghost(U)\cap T|$ for each $(S,T)\in\M_A\cup\M_B$.

The notion of ghost extends to other semiregular matchings. If $\mathcal N$ is a semiregular matching and $\mathfrak{d}$ a matching involution for $\mathcal N$ then we write $\ghost_{\mathfrak{d}}(U):=U\cup \mathfrak{d}\big(U\cap V(\mathcal N)\big)$.

\subsubsection{Embedding in Configuration~$\mathbf{(\diamond1)}$}\label{sssec:EmbedDiamon0Diamond1}
This subsection contains an easy observation that $\treeclass{k}\subset G$ in case $G$ contains Configuration~$\mathbf{(\diamond1)}$.

\begin{lemma}\label{lem:embed:greedy}
Let   $G$ be a graph, and let $A, B\subseteq V(G)$ be such that
$\mindeg(G[A,B])\ge k/2$, and $\mindeg(A)\ge k$. Then
$\treeclass{k}\subset G$.
\end{lemma}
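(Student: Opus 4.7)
The plan is to embed $T$ in two phases using a restricted greedy strategy that exploits a careful separation of the larger colour class of $T$ into its leaves and its internal vertices.

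Let $X, Y$ be the colour classes of $T$, chosen so $|Y| \le |X|$, and hence $|Y| \le k/2$. Split $X = X_L \cup X_I$, where $X_L$ consists of those leaves of $T$ that lie in $X$ and $X_I$ of the remaining, internal, vertices. Fact~\ref{fact:treeshavemanyleaves} gives $|X_L| \ge |X| - |Y| + 1$, and consequently $|X_I| \le |Y| - 1 \le k/2 - 1$. Set $T' := T - X_L$; since $X_L$ consists only of leaves of $T$, the graph $T'$ is a subtree of $T$ with vertex set $Y \cup X_I$. Root both $T$ and $T'$ at an arbitrary vertex $r \in Y$.

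Phase~1 will greedily embed $T'$ in BFS order from $r$ while maintaining the invariant $\phi(Y) \subseteq A$ and $\phi(X_I) \subseteq B$: place $\phi(r)$ at an arbitrary vertex of $A$, and then map each new vertex $v$ whose parent $p$ is already embedded to an unused neighbour of $\phi(p)$ on the prescribed side. When $v \in X_I$, we have $\phi(p) \in A$, so $\phi(p)$ has at least $k/2$ neighbours in $B$; at most $|X_I| - 1 \le k/2 - 2$ vertices of $B$ are in use at this moment, so a valid image remains. When $v \in Y \setminus \{r\}$, we have $\phi(p) \in B$, so $\phi(p)$ has at least $k/2$ neighbours in $A$; at most $|Y| - 1 \le k/2 - 1$ vertices of $A$ are in use, and again a valid image exists.

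Phase~2 will embed the leaves $X_L$ one by one in arbitrary order. Each $v \in X_L$ has its parent $p \in Y$ already embedded with $\phi(p) \in A$, and hence $\deg_G(\phi(p)) \ge k$. Since fewer than $k$ vertices of $G$ are used at any moment of this phase, an unused neighbour of $\phi(p)$ can be chosen as $\phi(v)$; this image is allowed to lie anywhere in $V(G)$ and we do not constrain its location further.

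The main obstacle is conceptual rather than technical: one must recognise that the stronger condition $\mindeg(A) \ge k$ is only needed when embedding the many leaves in $X$ produced by Fact~\ref{fact:treeshavemanyleaves}, whereas the bipartite condition $\mindeg(G[A,B]) \ge k/2$ alone suffices for the ``skeleton'' $T'$, precisely because removing the $X$-leaves shrinks the $X$-side to size at most $k/2 - 1$ and thus balances it against $Y$. The trivial case $k = 1$ is dealt with by placing the unique vertex of $T$ into any vertex of $A$.
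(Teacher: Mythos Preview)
Your proof is correct and takes essentially the same approach as the paper: both use Fact~\ref{fact:treeshavemanyleaves} to strip off the leaves in the larger colour class $X$, embed the resulting skeleton greedily with $Y\hookrightarrow A$ and $X\setminus X_L\hookrightarrow B$ using the bipartite minimum degree, and then attach the $X$-leaves using $\mindeg(A)\ge k$. Your write-up is simply a more detailed version of the paper's three-line argument.
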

\begin{proof}
Let $T\in\treeclass{k}$ have colour classes $X$ and $Y$, with $|X|\ge k/2 \ge
|Y|$.  By Fact~\ref{fact:treeshavemanyleaves}, for the set $W$ of those leaves of $T$ that lie in $X$, we have $|X\setminus W|\le k/2$. We embed
$T-W$ greedily in $G$, mapping $Y$ to $A$ and $X\setminus W$ to $B$. We then embed $W$ using the fact that $\mindeg(A)\ge k$.
\end{proof}

\subsubsection{Embedding in Configurations $\mathbf{(\diamond2)}$--$\mathbf{(\diamond5)}$}\label{sssec:EmbedMoreComplex}
In this section we show how to embed $T_\PARAMETERPASSING{T}{thm:main}$ in the presence of configurations $\mathbf{(\diamond2)}$--$\mathbf{(\diamond5)}$. As outlined in Section~\ref{ssec:EmbedOverview25} our main embedding lemma, Lemma~\ref{lem:conf2-5}, builds on Lemma~\ref{lem:blueShrubSuspend} which handles Stage~1 of the embedding, and Lemma~\ref{lem:embedC'endshrub} which handles Stage~2.

\begin{lemma}\label{lem:embedC'endshrub}\HAPPY{D,M}
Assume we are in Setting~\ref{commonsetting}.
Suppose $L'',L'$ and $\HugeVertices'$ witness Preconfiguration
$\mathbf{(\clubsuit)}(\frac{10^5\Omega^*}{\eta})$.
 Let $(T,r)$ be a rooted tree of order at most $\gamma^2\nu k/6$. 
 Let $U\subset V(G)$ with $|U|+v(T)\le k$, and let $v\in \HugeVertices'\setminus U$. 
 Then there exists an $(r\hookrightarrow v,V(T)\hookrightarrow V(G)\setminus U)$-embedding of $(T,r)$.
\end{lemma}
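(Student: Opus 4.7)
The plan is a greedy depth-first embedding. I set $\phi(r):=v$ and then, processing the remaining vertices of $T$ in DFS order from $r$, map each tree vertex $y\ne r$ to a yet-unused $G$-neighbour of its parent's image $w=\phi(\parent(y))$. At any stage at most $|U|+v(T)\le k$ host vertices are forbidden, so the procedure succeeds as long as every expanded image has at least $v(T)-1\le\gamma^{2}\nu k/6$ unused $G$-neighbours.

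To guarantee this, I maintain the invariant that every non-leaf vertex of $T$ is mapped into $\{v\}\cup L''$. This is sufficient, since $v$ has $\deg_G(v)\ge\Omega^{**}k$ and every $u\in L''\subseteq\largevertices{9\eta/10}{k}{\Gcapt}$ satisfies $\deg_G(u)\ge\deg_{\Gcapt}(u)\ge(1+\tfrac{9\eta}{10})k$; in both cases, after removing the at most $k$ forbidden vertices, at least $\tfrac{9\eta}{10}k\gg v(T)$ unused neighbours remain (using $\gamma^{2}\nu\ll\eta$ from Setting~\ref{commonsetting}). Leaves of $T$ can then be placed at any unused $G$-neighbour of their parent's image without endangering subsequent steps, so the only non-trivial task is to ensure that whenever an internal vertex $y$ of $T$ must be embedded, the parent image $w$ has an unused $L''$-neighbour.

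When $w=v$ this is immediate from the defining lower-degree condition of $\mathbf{(\clubsuit)}(\tfrac{10^{5}\Omega^{*}}{\eta})$, which gives $\deg_{\Gcapt}(v,L'')\ge\tfrac{10^{5}\Omega^{*}}{\eta}k\gg|U|+v(T)$. The main obstacle is the remaining case $w\in L''$: here I would combine the two upper-degree conditions of $\mathbf{(\clubsuit)}$---few $\Gcapt$-neighbours of $L'$ in $\HugeVertices\sm\HugeVertices'$ and few $\Gcapt$-neighbours of $L''$ in $\largevertices{9\eta/10}{k}{\Gcapt}\sm(\HugeVertices\cup L')$---with the lower bound $\deg_{\Gcapt}(w)\ge(1+\tfrac{9\eta}{10})k$ to show that most $\Gcapt$-neighbours of $w$ lie in $\HugeVertices'\cup L'$. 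To pin down neighbours specifically inside $L''$ I expect it is cleanest to refine the invariant by alternating the images of internal tree vertices between $\HugeVertices'$ and $L''$ along the tree (which is consistent with the bipartition of $T$, since $\HugeVertices$ is independent in $G$), reducing the step to the strong lower bound on $\deg_{\Gcapt}(\,\cdot\,,L'')$ from any $\HugeVertices'$-vertex. This alternating bookkeeping is where I expect the bulk of the technical work.
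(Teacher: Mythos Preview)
Your alternating invariant has a genuine gap: Preconfiguration~$\mathbf{(\clubsuit)}$ does \emph{not} give any lower bound on $\deg_{\Gcapt}(w,\HugeVertices')$ for $w\in L''$. The three conditions of $\mathbf{(\clubsuit)}$ bound the degree of $L''$-vertices into $\HugeVertices\setminus\HugeVertices'$ and into $\largevertices{9\eta/10}{k}{\Gcapt}\setminus(\HugeVertices\cup L')$, but they say nothing about the degree into $V(G)\setminus\largevertices{9\eta/10}{k}{\Gcapt}$, nor into $L'\setminus L''$. A vertex $w\in L''$ may perfectly well have $\deg_{\Gcapt}(w,\HugeVertices)=0$, with all of its captured degree going to $V(\Gexp)$, to $\smallatoms$, or through $\Gblack$ into clusters. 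Your claim that ``most $\Gcapt$-neighbours of $w$ lie in $\HugeVertices'\cup L'$'' is therefore false in general, and the alternating scheme $\HugeVertices'\leftrightarrow L''$ cannot be maintained.

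The paper's proof acknowledges exactly this difficulty. It proceeds by induction on $v(T)$: from $v\in\HugeVertices'$ one uses the large degree into $L''$, but then splits $L''$ into five pieces according to where each $L''$-vertex actually sends its captured degree (into $V(\Gexp)$, into $\smallatoms$, into the shadow of $\smallatoms$, back into $\HugeVertices$, or into clusters via $\Gblack$). Only the fourth of these pieces supports your alternating idea---there the children go to $L_\HugeVertices\subset L''$, the grandchildren return to $\HugeVertices'$, and one recurses. The other four pieces each require a different small-tree embedding tool (Lemmas~\ref{lem:embed:greyFOREST}, \ref{lem:embed:avoidingFOREST}, \ref{lem:embed:regular}) tailored to the structure available there. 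Your proposal is essentially the paper's Case~IV promoted to the whole argument, and that promotion is not justified.
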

\begin{proof}
We proceed by induction on the order of $T$. The base
$v(T)\leq 2$ obviously holds. Let us assume Lemma~\ref{lem:embedC'endshrub} is true for all trees $T'$ with $v(T')<v(T)$.

Let $U_1:=\shadow_{\Gcapt}(U-\HugeVertices,\eta k/200)$, and $U_2:=\bigcup \{C\in \clusters\::\: |C\cap U|\ge \frac12|C|\}$. We have $|U_1|\le \frac{200\Omega^*}{\eta}k$ by Fact~\ref{fact:shadowbound}, and $|U_2|\le 2|U|$. Set 
\begin{align*}
L_{\smallatoms} &:=L''\cap\shadow_{\Gcapt}(\smallatoms,\frac{\eta k}{50}),\\
L_{\HugeVertices} &:=L''\cap\shadow_{\Gcapt}\left(\HugeVertices, |U\cap\HugeVertices|+\frac{\eta k}{50}\right),\text{ and }\\
L_{\clusters} &:=L''\cap\shadow_{\Gblack}\left(V({\Gblack}),(1+\frac{\eta}{50})k-|U\cap\HugeVertices|\right)\;.
\end{align*}
Observe that $L_{\clusters}\subseteq \bigcup \clusters$ and that since $L''\subseteq 
\largevertices{\frac9{10}\eta}{k}{\Gcapt}\setminus\HugeVertices$, we have
$$L''\subset V(\Gexp)\cup\smallatoms\cup L_{\HugeVertices}\cup L_{\smallatoms}\cup L_{\clusters}\;.$$
As by~\eqref{eq:clubsuitCOND3}, we have $\deg_G(v,L'')\ge \frac{10^5\Omega^*k}{\eta}>5(|U\cup U_1 \cup U_2|+v(T)+\eta k)$,
 one of the following five cases must occur.

\medskip
\noindent{\underline{\sl Case I: $\deg_G(v,V(\Gexp)\setminus U)>v(T)+\eta k$.}}
Lemma~\ref{lem:embed:greyFOREST} gives an embedding of the forest $T-r$ (whose components are rooted at neighbours of $r$). The input sets/parameters of Lemma~\ref{lem:embed:greyFOREST} are $Q_\PARAMETERPASSING{L}{lem:embed:greyFOREST}:=1$, 
$\zeta_\PARAMETERPASSING{L}{lem:embed:greyFOREST}:=12\sqrt\gamma$, $U^*_\PARAMETERPASSING{L}{lem:embed:greyFOREST}:=(\neighbor_G(v)\cap V(\Gexp))\setminus U$, $U_\PARAMETERPASSING{L}{lem:embed:greyFOREST}:=U$, $V_1=V_2:=V(\Gexp)$.

\smallskip
\noindent{\underline{\sl Case II: $\deg_G(v,\smallatoms\setminus U)>v(T)+\eta k$.}} Lemma~\ref{lem:embed:avoidingFOREST} gives an embedding of the forest $T-r$ (whose components are rooted at neighbours of $r$). The input sets/parameters of Lemma~\ref{lem:embed:avoidingFOREST} are $U^*_\PARAMETERPASSING{L}{lem:embed:avoidingFOREST}:=(\neighbor_G(v)\cap \smallatoms)\setminus U$, $U_\PARAMETERPASSING{L}{lem:embed:avoidingFOREST}:=U$ and $\eps_\PARAMETERPASSING{L}{lem:embed:avoidingFOREST}:=\eps'\leq\eta$.
Here, and below, we tacitly implicitly assume parameters of the same name to be the same, i.e.~$\gamma_\PARAMETERPASSING{L}{lem:embed:avoidingFOREST}:=\gamma$.

\smallskip
\noindent{\underline{\sl Case III: $\deg_G(v,L_{\smallatoms}\setminus (U\cup U_1))>v(T)+\eta k$.}} We only outline the strategy. Embed the children of~$r$ in $L_{\smallatoms}\setminus (U\cup U_1)$ using a map $\phi:\children_T(r)\rightarrow L_{\smallatoms}\setminus (U\cup U_1)$. By definition of $L_{\smallatoms}$, and $U_1$, we have $\deg_{\Gcapt}(\phi(w),\smallatoms\setminus U)> \frac{\eta k}{100}$ for each $w\in \children_T(r)$. Now, for every $w\in \children_T(r)$ we can proceed as in Case~II to extend this embedding to the rooted tree $\big(T(r,\uparrow w) ,w\big)$. That is, Case~III is ``Case~II with an extra step in the beginning''.


\smallskip
\noindent{\underline{\sl Case IV: $\deg_G(v,L_{\HugeVertices}\setminus U)>v(T)+\eta k$.}} 
We embed the children $\children_T(r)$ of $r$ in distinct vertices of $L_{\HugeVertices}\setminus U$. This is possible by the assumption of Case~IV. 

Now,~\eqref{eq:clubsuitCOND1} implies that $\mindeg_{\Gcapt}(L_{\HugeVertices},\HugeVertices')\ge |U\cap \HugeVertices|+ \frac{\eta k}{100}$. Consequently, $\mindeg_{\Gcapt}(L_{\HugeVertices},\HugeVertices'\setminus U)\ge \frac{\eta k}{100}$. Therefore, for each $w\in\children_T(r)$ embedded in  $L_{\HugeVertices}\setminus U$ we can find an embedding of $\children_T(w)$ in $\HugeVertices'\setminus U$ such that the images of grandchildren of $r$ are disjoint. We fix such an embedding. We can now apply induction. More specifically, for each grandchild $u$ of $r$ we embed the rooted tree $\big(T(r,\uparrow u), u\big)$ using Lemma~\ref{lem:embedC'endshrub} (employing induction) using the updated set $U$, to which the images of the newly embedded vertices were added.

\smallskip
\noindent{\underline{\sl Case V: $\deg_G(v,L_{\clusters}\setminus (U\cup \cup U_1\cup U_2))\ge v(T)$.}} Let $u_1,\ldots,u_\ell$ be the children of $r$. Let us consider arbitrary distinct neighbours $x_1,\ldots,x_\ell\in L_{\clusters}\setminus (U\cup U_1\cup U_2)$ of $v$. Let $T_i:=T(r,\uparrow u_i)$. We sequentially embed the rooted trees $(T_i,u_i)$, $i=1,\ldots,\ell$, writing $\phi$ for the embedding. In step $i$, consider the set $W_i:=\left(U\cup \bigcup_{j<i} \phi(T_j)\right)\setminus \HugeVertices$. Let $D_i\in\clusters$ be the cluster containing $x_i$. By definition of $L_{\clusters}$ and of $U_1$, $$\deg_{\Gblack}(x_i,V(\Gblack)\setminus W_i)\ge \frac{\eta k}{50}-\frac{\eta k}{200}\ge \frac{\eta k}{100}\;.$$ Fact~\ref{fact:clustersSeenByAvertex} yields a cluster $C_i\in \clusters$ such that $$\deg_{\Gblack}(x_i,C_i\setminus W_i)\ge \frac{\eta}{100}\cdot \frac{\gamma\clustersize}{2(\Omega^*)^2}>\frac{\gamma^2\clustersize}{2}+v(T)>\frac{12\epsilon'\clustersize}{\gamma^2}+v(T)\;.$$ In particular there 
is at least one edge from $E(\Gblack)$ between $C_i$ and $D_i$, and therefore, $(C_i,D_i)$ forms an $\epsilon'$-regular pair of density at least $\gamma^2$ in $\Gblack$. 
Map $u_i$ to $x_i$ and let $F_1,\ldots,F_m$ be the components of the forest $T_i-u_i$.
We now  sequentially embed the trees $F_j$ in the pair $(D_i,C_i)$ using Lemma~\ref{lem:embed:regular}, with  
$X_\PARAMETERPASSING{L}{lem:embed:regular}:=C_i\setminus (W_i\cup \bigcup_{q<j}\phi(F_q))$, $X^*_\PARAMETERPASSING{L}{lem:embed:regular}:=\neighbor_{\Gblack}(x_i,X_\PARAMETERPASSING{L}{lem:embed:regular})$, $Y_\PARAMETERPASSING{L}{lem:embed:regular}:=D_i\setminus (W_i\cup \{x_i\}\cup \bigcup_{q<j}\phi(F_q))$, $\eps_\PARAMETERPASSING{L}{lem:embed:regular}:=\eps'$,
and $\beta_\PARAMETERPASSING{L}{lem:embed:regular}:=\gamma^2/3$.
\end{proof}

We are now ready for the lemma that will handle Stage~1 in configurations $\mathbf{(\diamond2)}$--$\mathbf{(\diamond5)}$.

\begin{lemma}\label{lem:blueShrubSuspend}
\HAPPY{D,M}
Assume we are in Setting~\ref{commonsetting}, with $L'',L',\HugeVertices'$ witnessing
$\mathbf{(\clubsuit)}(\Omega^\star)$ in $G$.
Let $U\subset V(G)\setminus \HugeVertices$
and let $(T,r)$ be  a rooted tree
with $v(T)\le k/2$ and $|U|+v(T)\le k$.
Suppose that each component of $T-r$ has order at most $\tau k$.
Let $x\in (L''\cap \YB)\setminus\bigcup_{i=0}^2\shadow_{\Gcapt}^{(i)}(\ghost(U),
\eta k/1000 )$.

Then there is a subtree $T'$ of $T$ with $r\in V(T')$
which has an $(r\hookrightarrow x, V(T')\setminus\{r\}\hookrightarrow
V(G)\setminus \HugeVertices)$-embedding $\phi$.
Further, the components of $T-T'$ can be partitioned into two (possibly empty) sets $\mathcal C_1$, $\mathcal C_2$, such that
 the
following two assertions
hold.
\begin{enumerate}[(a)]
 \item 
 \label{eq:boty}
If $\mathcal C_1\neq\emptyset$, then $\mindeg_{\Gcapt}(\phi(\parent (V(\bigcup
\mathcal C_1))),\HugeVertices')> k+\frac{\eta k}{100} -v(T')$, 
\item
\label{eq:botyzwei} 
 $\parent (V(\bigcup \mathcal C_2))\subseteq \{r\}$, and
$\deg_{\Gcapt}(x,\HugeVertices')>\frac k2+\frac{\eta k}{100}
-v(T'\cup\bigcup \mathcal C_1)$.
\end{enumerate}
\end{lemma}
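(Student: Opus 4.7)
The plan is to perform a sequential BFS-style embedding of $T$, starting with $r \mapsto x$, and to suspend subtrees whose embedding cannot be completed. Set $T' := \{r\}$ initially, and then process the components of $T - r$ one by one. For each component $F$ of $T - r$, rooted at $r_F = \parent_T(r_F)^{-1}(r) \cap V(F)$, try to extend $\phi$ along $F$ in BFS order, maintaining the invariant that $\phi(T')\setminus\{x\}\subseteq V(G)\setminus\HugeVertices$. When extending by a child $w$ of an already-embedded vertex $u\in T'$, we look for an image in
$\neighbor_{\Gcapt}(\phi(u))\setminus B_u$,
where $B_u$ contains $\HugeVertices$, the current image $\phi(T')$, and a layer of the $\ghost(U)$-shadow whose threshold decreases with $\mathrm{dist}_T(r,u)$: for $u=r$ we forbid $\shadow_{\Gcapt}(\ghost(U),\eta k/1000)$, for children of $r$ we forbid $\ghost(U)$ itself, and for deeper vertices $B_u\cap\ghost(U)=\emptyset$. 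This is consistent because $x\notin \shadow^{(i)}_{\Gcapt}(\ghost(U),\eta k/1000)$ for $i=0,1,2$, so iterated shadow containments transfer outward along the embedding.

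If, at some stage, we cannot place a remaining child of $u$ in $B_u$'s complement, we \emph{stop} processing below $u$. All unembedded subtrees that hang from unembedded children of $u$ are then collected: if $u\neq r$ they are placed in $\mathcal C_1$, and if $u=r$ they are placed in $\mathcal C_2$ (so $\parent(V(\bigcup\mathcal C_2))\subseteq\{r\}$ automatically). To verify (a) and (b), note that failure at $u$ means $|\neighbor_{\Gcapt}(\phi(u))\setminus B_u|$ is smaller than the number of unembedded children still to place, which is at most $v(T)-v(T')+v(\bigcup\mathcal C_1)$ (if $u\neq r$) or $\tfrac12 v(T)+v(\bigcup\mathcal C_1)$ (if $u=r$, using $v(T)\le k/2$ and the fact that only half the vertices of $T$ can be children of $r$ under the bipartite structure used by $\YB$). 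Since $B_u\setminus\HugeVertices$ has size bounded by $|\phi(T')|+|\ghost(U)\text{-shadow}|\le v(T')+\eta k/500$, this forces $\deg_{\Gcapt}(\phi(u),\HugeVertices)$ to be very close to the whole $\Gcapt$-degree of $\phi(u)$, which by $\phi(u)\in L''\cup L'$ (see below) is at least $(1+\tfrac{9\eta}{10})k$. Combining with $\eqref{eq:clubsuitCOND1}$, which gives $\deg_{\Gcapt}(\phi(u),\HugeVertices\setminus\HugeVertices')<\eta k/100$, yields $\deg_{\Gcapt}(\phi(u),\HugeVertices')$ as large as claimed in (a) respectively (b).

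The main obstacle is ensuring that at every suspension point $u$ the image $\phi(u)$ lies in $L'$, so that the bound on $\deg_{\Gcapt}(L',\HugeVertices\setminus\HugeVertices')$ from $\mathbf{(\clubsuit)}$ can be used. We engineer this as follows. For $u=r$ we have $\phi(r)=x\in L''\subset L'$ by hypothesis, giving (b) directly. For $u\neq r$, the failure-to-embed condition forces $\phi(u)$ to satisfy $\deg_{\Gcapt}(\phi(u),\HugeVertices)\ge (1+\tfrac{9\eta}{10})k - (v(T)-v(T'))-\eta k/500$, so in particular $\deg_{\Gcapt}(\phi(u))\ge (1+\tfrac{9\eta}{10})k$, meaning $\phi(u)\in\largevertices{\tfrac{9}{10}\eta}{k}{\Gcapt}$. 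To upgrade this to $\phi(u)\in L'$, we preferentially choose images in $L'$ when the alternatives in $\neighbor_{\Gcapt}(\phi(u'))\setminus B_{u'}$ are abundant enough, using that at $x$ we have $\deg_{\Gcapt}(x,V_+\setminus L_\#)\ge(1+\tfrac{\eta}{10})k/2$ from $x\in\YB$, and at subsequent levels we either have room in $L'$ or the failure threshold is already reached (in which case we may retroactively mark this as a suspension at the parent, escalating to an ancestor whose image already lies in $L'$). Iterating this over all components of $T-r$, a final bookkeeping of $v(T')+v(\bigcup\mathcal C_1)+v(\bigcup\mathcal C_2)=v(T)$ together with the derived degree inequalities yields conditions (a) and (b) exactly as stated.
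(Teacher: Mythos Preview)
Your BFS-with-suspension outline has a genuine gap at its core. When you suspend at a vertex $u\neq r$, you need $\deg_{\Gcapt}(\phi(u),\HugeVertices')$ to be large, and you try to deduce this from the failure condition: all $\Gcapt$-neighbours of $\phi(u)$ outside $B_u$ are exhausted. But this only tells you that the neighbours of $\phi(u)$ lie in $\HugeVertices\cup\phi(T')\cup[\text{shadow}]$; it says nothing about how \emph{many} neighbours $\phi(u)$ has. A priori $\phi(u)$ is just some vertex of $V(G)\setminus\HugeVertices$ with no lower bound on $\deg_{\Gcapt}(\phi(u))$ at all, so you cannot conclude that its degree into $\HugeVertices$ is near $k$. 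Your attempted fix (``preferentially choose images in $L'$'', ``retroactively escalate to an ancestor in $L'$'') is not a proof: you give no mechanism guaranteeing that $L'$-images are always available along the BFS, and escalation discards already-embedded material in a way that invalidates the bookkeeping for~(a). Your derivation of~(b) is also broken: the claim that ``only half the vertices of $T$ can be children of $r$'' is simply false (take $T$ a star).

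The paper's proof is structurally different and avoids both issues. It does \emph{not} do BFS. First, $\mathcal C_2$ is chosen upfront: one sets aside exactly enough components of $T-r$ so that the remainder $\tilde{\mathcal C}$ has total order about $\deg_{\Gcapt}(x,\Vgood\setminus\shadow(\ghost(U)))$, and then~(b) follows immediately from $x\in\YB$ and~\eqref{eq:clubsuitCOND1}, with no failure argument. Second, the components in $\tilde{\mathcal C}$ are embedded whole, one at a time, using the sparse decomposition: some go into $\M_A\cup\M_B$ via regularity (Lemma~\ref{lem:fillingCD}), others into $\Gexp$ or $\smallatoms$ (Lemmas~\ref{lem:embed:greyFOREST},~\ref{lem:embed:avoidingFOREST}), and a case analysis (V1)--(V3) shows one of these routes is always available. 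Suspension into $\mathcal C_1$ happens only in the single subcase~(V3c), where the root of the component has been deliberately placed at a vertex $y\in L'$ chosen via Claim~\ref{cl:TEC}; since $y\in L'\subset\largevertices{9\eta/10}{k}{\Gcapt}$ by the $\mathbf{(\clubsuit)}$ definition, the bound~(a) then follows. The structure of the sparse decomposition is essential here, and your proposal does not engage with it.
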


\begin{proof}
Let $\mathcal C$ be the set of all components of $T-r$. We start by defining
$\mathcal C_2$. Then, we have to distribute  $T-\bigcup\mathcal C_2$ between
$T'$ and $\mathcal C_1$. First, we find a set $\mathcal C_M\subseteq \mathcal
C\sm\mathcal C_2$ which fits into the matching $\mathcal M_A\cup \mathcal M_B$
(and thus will form part of $T'$). Then, we consider the remaining components of
$\mathcal C\sm\mathcal C_2$: some of these will be embedded entirely,  of others
we only embed the root, and leave the rest for $\mathcal C_1$. Everything embedded
will become a part of $T'$.

Throughout the proof we write $\shadow$ for $\shadow_{\Gcapt}$.

\medskip

Set $\overline\Vgood:=\Vgood\setminus \shadow(\ghost(U),\frac{\eta k}{1000})$, and choose
 $\tilde{\mathcal C}\subseteq\mathcal C$ such that 
\begin{equation}\label{eq:HNY}
\deg_{\Gcapt}(x, \overline\Vgood)-\frac{\eta k}{30}
\ < \ 
\sum_{S\in \tilde{\mathcal C}} v(S)
\ \leq \
\max\left\{0,\deg_{\Gcapt}\left(x, \overline\Vgood\right)-\frac{\eta k}{40}\right\}.
\end{equation}
Set $\mathcal C_2:=\mathcal C\sm \tilde{\mathcal C}$.
Note that this choice clearly satisfies
the first part of~\eqref{eq:botyzwei}. Let us now verify the second part of~\eqref{eq:botyzwei}. For this, we calculate
\begin{align*}
\deg_{\Gcapt}(x, \HugeVertices') & \geq
\deg_{\Gcapt}(x,V_+\setminus L_\#)-\deg_{\Gcapt}(x,\shadow(\ghost(U),\frac{\eta k}{1000}))
\\
&~~~~-\deg_{\Gcapt}(x,V_+\setminus(L_\#\cup \shadow(\ghost(U),\frac{\eta k}{1000})\cup\HugeVertices))\\
&~~~~-\deg_{\Gcapt}(x,\HugeVertices\setminus\HugeVertices')\\
\JUSTIFY{by~\eqref{eq:defYB}, $x\not\in\shadow^{(2)}(\ghost(U),\frac{\eta k}{1000})$,~\eqref{eq:HNY}, \eqref{eq:clubsuitCOND1}}
&\ge \left(\frac k2 +
\frac{\eta k}{20}\right)-\frac{\eta k}{1000}-\left(
\sum_{S\in \tilde{\mathcal C}} v(S)+\frac{\eta
k}{30}\right)-\frac{\eta k}{100}\\
& > \ \frac k2 - \sum_{S\in \tilde{\mathcal C}} v(S) + \frac{\eta k}{20}
\\
&\geq \frac k2
- v(T'\cup\bigcup\mathcal C_1)+ \frac{\eta k}{100},
\end{align*}
as desired for~\eqref{eq:botyzwei}. 

\smallskip

Now, set 
\begin{equation}\label{eq:trickyM}
\mathcal M :=\big\{(X_1,X_2)\in \mathcal
M_A\cup \mathcal M_B\::\: \deg_{\GD}(x,(X_1\cup X_2)\setminus\smallatoms)>
0\big\}\;.
\end{equation}
\begin{claim}\label{cl:Megdes}
We have $|V(\mathcal M)|\le \frac{4(\Omega^*)^2}{\gamma^2}k$.
\end{claim} 
\begin{proof}[Proof of Claim~\ref{cl:Megdes}]
Indeed, let $(X_1,X_2)\in\M$, i.e.~$(X_1,X_2)\in\M_A\cup\M_B$ with $\deg_{\GD}(x,(X_1\cup
X_2)\setminus\smallatoms)> 0$. Then, using Property~\ref{commonsetting3} of
Setting~\ref{commonsetting}, we see that there exists a cluster $C_{(X_1,X_2)}\in\clusters$
such that $\deg_{\GD}(x,C_{(X_1,X_2)})>0$, and either $X_1\subset
C_{(X_1,X_2)}$ or $X_2\subset C_{(X_1,X_2)}$. In particular, there exists
a dense spot $(A_{(X_1,X_2)},B_{(X_1,X_2)};F_{(X_1,X_2)})\in\DenseSpots$ such
that $x\in A_{(X_1,X_2)}$, and $X_1\subset B_{(X_1,X_2)}$ or $X_2\subset
B_{(X_1,X_2)}$. By Fact~\ref{fact:boundedlymanyspots}, there are at most
$\frac{\Omega^*}{\gamma}$ such dense spots, let $Z$ denote the union of all vertices contained in these spots. 
Fact~\ref{fact:sizedensespot} implies that $|Z|\le \frac{2(\Omega^*)^2}{\gamma^2}k$.
Thus $|V(\M)|\le 2 |V(\M)\cap Z|\le 2 |Z|\le
\frac{4(\Omega^*)^2}{\gamma^2}k$. 
\end{proof}
First we shall embed as
many components from $\tilde{\mathcal C}$ as possible in $\mathcal M$. To this
end, consider an inclusion-maximal subset $\mathcal C_M$ of $\tilde{\mathcal
C}$ with
\begin{equation}\label{eq:trickySum}
\sum_{S\in\mathcal C_M}v(S)\leq \deg_{\Gcapt}(x,V(\mathcal
M))-\frac{\eta k}{1000}\;.
\end{equation}

We aim to utilize the degree of $x$ to $V(\M)$
to embed $\mathcal C_M$ in $V(\M)$ using the regularity method.
 
\begin{subremark}
This remark (which may as well be skipped at a first reading) is aimed at those readers that are wondering about a seeming inconsistency of the defining
formulas~\eqref{eq:trickyM} for $\M$, and~\eqref{eq:trickySum} for $\mathcal
C_M$. 
That is,~\eqref{eq:trickyM} involves the degree in $\GD$ and
excludes the set $\smallatoms$, while~\eqref{eq:trickySum} involves the degree in
$\Gcapt$. The setting in~\eqref{eq:trickyM} was chosen so that it allows us to
control the size of $\M$ in Claim~\ref{cl:Megdes}, crucially relying on Property~\ref{commonsetting3} of
Setting~\ref{commonsetting}. Such a
control is necessary to make the regularity method work. Indeed,  in each regular
pair there may be a small number of atypical vertices\footnote{The issue of atypicality itself could be avoided by preprocessing each pair $(S,T)$ of $\M_A\cup\M_B$
and making it super-regular. However this is not possible for atypicality with
respect to a given (but unknown in advance) subpair $(S',T')$.}, and we must
avoid these vertices when embedding the components by the regularity method.
 Thus without the control on $|\M|$ it might happen that the degree of $x$  
 is unusable because $x$ sees very small numbers of atypical vertices in an enormous number 
 of sets corresponding to $\M$-vertices. On the other hand, the edges $x$ sends 
 to $\smallatoms$ can be utilized by other techniques in later stages.
Once we have defined $\M$ we want to use the full degree to $V(\M)$ to ensure we
can embed the shrubs as balanced as possible into the $\M$-edges. This is necessary as otherwise part of the degree of $x$ might be unusable for embedding, e.g.~because it might go to $\M$-vertices whose partners are already full.
\end{subremark}

 For each  $(C,D)\in
\mathcal M$ we choose $\mathcal C_{CD}\subseteq \mathcal C_M$ maximal such that 
\begin{equation}
\sum_{S\in \mathcal C_{CD}}v(S)\le
\deg_{\Gcapt}(x,(C\cup  D)\setminus \ghost
(U))- (\frac\gamma{\Omega^*})^3 |C|\;,\label{eq:hezkamistnost}
\end{equation}
and further, we
require  $\mathcal C_{CD}$ to be disjoint from families
$\mathcal C_{C'D'}$ defined in previous
steps. 
We claim that $\{\mathcal C_{CD}\}_{(C,D)\in\M}$
forms a partition of $\mathcal C_M$, i.e., all the elements of $\mathcal C_M$
are  used. Indeed, otherwise, by the maximality of $\mathcal
C_{CD}$ and since the components of $T-r$ have size at most $\tau k$, we obtain
\begin{align}\label{eq:ForM}
\begin{split}
\sum_{S\in \mathcal C_{CD}}v(S)&\ge \deg_{\Gcapt}(x,(C\cup  D)\setminus \ghost
(U))-(\frac\gamma{\Omega^*})^3 |C|-\tau k\\
&\geByRef{eq:KONST} \deg_{\Gcapt}(x,(C\cup  D)\setminus \ghost
(U))-2(\frac\gamma{\Omega^*})^3 |C|\;,
\end{split}
\end{align}
for each $(C,D)\in\M$.
Then we have
\begin{align*}
\sum_{S\in \mathcal
C_M}v(S)&>\sum_{(C,D)\in \mathcal
M}\sum_{S\in \mathcal C_{CD}} v(S)\\
\JUSTIFY{by~\eqref{eq:ForM}}&\ge \sum_{(C,D)\in
\mathcal M} \big(\deg_{\Gcapt}(x,(C\cup
D)\setminus \ghost(U))-2(\frac\gamma{\Omega^*})^3 |C|\big)\\
\JUSTIFY{by Claim~\ref{cl:Megdes} and
Fact~\ref{fact:boundMatchingClusters}} &\ge \deg_{\Gcapt}(x,V(\mathcal
M)\setminus \ghost(U))-2(\frac\gamma{\Omega^*})^3 \cdot \frac{2(\Omega^*)^2}{\gamma^2}k\\ 
\JUSTIFY{as $x\not\in\shadow(\ghost(U))$}&\ge
\deg_{\Gcapt}(x,V(\mathcal M))- \frac{\eta k}{1000}\\
\JUSTIFY{by~\eqref{eq:trickySum}}&\ge\sum_{S\in \mathcal C_M}v(S)\;,
\end{align*}
a contradiction.

\def\LfCD{\PARAMETERPASSING{L}{lem:fillingCD}}
We use Lemma~\ref{lem:fillingCD} to
embed the components of $\mathcal C_{CD}$ in $(C\cup D)\setminus \ghost(U)$ with the
following setting: $C_\LfCD:=C$, $D_\LfCD:=D$, $U_\LfCD:=\ghost(U)$, $X^*_\LfCD:=(\neighbor_{\Gcapt}(x)\cap (C\cup D))\setminus U_\LfCD$,
 and $(T_i, r_i)$ are the rooted trees from $\mathcal C_{CD}$
with the roots being the neighbours of $r$. The constants in Lemma~\ref{lem:fillingCD} are
$\epsilon_\LfCD:=\epsilon'$, $\beta_\LfCD:=\sqrt{\epsilon '}$, and
$\ell_\LfCD:=|C|\ge \nu\pi k$. The rooted trees in $\mathcal C_{CD}$ are smaller than $\epsilon_\LfCD\ell_\LfCD$ by~\eqref{eq:KONST}. Condition~\eqref{eq:conFill} is satisfied by~\eqref{eq:hezkamistnost}, and since $(\gamma/{\Omega^*})^3\geq 50\sqrt{\eps '}$.

\medskip

It remains to deal with the components $\tilde{\mathcal C}\setminus\mathcal
C_M$.
In the sequel we shall assume that $\tilde{\mathcal C}\setminus\mathcal
C_M\neq \emptyset$ (otherwise skip this step and go directly to the definition of $T'$ and $\mathcal C_1$, with $p=0$). Thus, by our choice of $\mathcal C_M$, we have
\begin{equation}\label{eq:maxT1'}
\sum_{S\in\mathcal C_M}v(S)\ge
\deg_{\Gcapt}(x,V(\mathcal
M))-\frac{\eta k}{900}\;. \end{equation}

Let $T_1,T_2,\ldots,T_p$ be the trees of $\tilde{\mathcal
C}\setminus\mathcal C_M$ rooted at the vertices $r_i\in\children(r)\cap
V(T_i)$. We shall sequentially extend our embedding of $\mathcal C_M$ to subtrees $T'_i\subset T_i$. Let $U_i\subset V(G)$ be the union of the images of $\bigcup\mathcal C_M\cup\{r\}$ and of $T'_1,\ldots,T'_i$ under this embedding.

Suppose that we have embedded the trees $T'_1,\ldots,T'_i$ for some
$i=0,1,\ldots,p-1$. We claim that at least one of the following holds.
\begin{enumerate}
  \item[{\bf (V1)}]
  $\deg_{\Gcapt}(x,V(\Gexp)\setminus
  (U\cup U_i))\ge \frac{\eta k}{1000}$,
  \item[{\bf
  (V2)}]$\deg_{\Gcapt}(x,\smallatoms\setminus
  (U\cup U_i))\ge \frac{\eta k}{1000}$, or
  \item[{\bf
  (V3)}]$\deg_{\Gcapt}(x,L'\setminus
  (V(\Gexp)\cup\smallatoms\cup U\cup U_i\cup \shadow(\ghost(U),\frac{\eta k}{1000})))\ge
  \frac{\eta k}{1000}$.
\end{enumerate}
Indeed, suppose that none of {\bf(V1)}--{\bf (V3)} holds. Then, first note that since $U\subseteq \ghost(U)$ and since $x\notin \shadow(\ghost(U),\eta k/1000)$, we have
\begin{equation}\label{whatxsendstoU}
\deg_{\Gcapt}(x,U)\leq \eta k/1000.
\end{equation}

Also,
\begin{equation}\label{MAMBMatoms}
 \deg_{\GD}(x,V(\M_A\cup \M_B)) \leq\deg_{\GD}(x,V(\M)\cup\smallatoms).
\end{equation}

Thus,
\begin{align*}
\deg_{\Gcapt}&\left(x,\Vgood\setminus\shadow(\ghost(U), \frac{\eta k}{1000})\right)\\
\JUSTIFY{by \eqref{whatxsendstoU} and \eqref{MAMBMatoms}, def of $\Vgood$}&\le 
\deg_{\Gcapt}\left(x,\left(V(\M)\cup V(\Gexp)\cup \smallatoms\cup L'\right)\setminus
(U\cup \shadow(\ghost(U), \frac{\eta k}{1000})\right)\\
&~~~~+\deg_{\Gcapt}\big(x,\largevertices{\frac9{10}\eta}{k}{\Gcapt}\setminus(\HugeVertices\cup
L')\big)+\frac{\eta k}{1000}\\ 
\JUSTIFY{by~\eqref{eq:clubsuitCOND3}}&\le
\deg_{\Gcapt}\left(x,\left( V(\Gexp)\cup
\smallatoms\cup L'\right)\setminus
(V(\M)\cup U\cup \shadow(\ghost(U),\frac{\eta k}{1000}))\right)
\\
&~~+
\deg_{\Gcapt}\left(x,V(\M)\right)+\frac{\eta k}{100}+\frac{\eta k}{1000}
\\
\JUSTIFY{by $\neg{\bf(V1)}$, $\neg{\bf(V2)}$, $\neg{\bf(V3)}$,
by~\eqref{eq:maxT1'}} &\le
3\cdot\frac{\eta
k}{1000}+\sum_{j=1}^iv(T'_j)
+
\sum_{S\in\mathcal C_M}v(S)+\frac{\eta k}{900}
+\frac{\eta k}{100}+\frac{\eta k}{1000}
\\
&< \sum_{S\in\tilde{\mathcal
C}}v(S)+\frac{\eta k}{40}\;,
\end{align*}
a contradiction to~\eqref{eq:HNY}.

In cases {\bf(V1)}--{\bf(V2)} we shall embed the entire tree
$T'_{i+1}:=T_{i+1}$. In case {\bf(V3)} we either embed the entire
tree $T'_{i+1}:=T_{i+1}$, or embed only one vertex $T'_{i+1}:=r_{i+1}$ (that will only happen in case  {\bf (V3c)}). In the latter case,  we keep track of the components of $T_{i+1}-r_{i+1}$ in the set $\mathcal C_{1,i+1}$  (we tacitly assume we set $\mathcal C_{1,i+1}:=\emptyset$ in all cases other than {\bf (V3c)}). The union of the sets $\mathcal C_{1,i}$ will later form the set $\mathcal C_1$. Let us go through
our three cases in detail.

\smallskip

In case {\bf(V1)} we embed $T_{i+1}$ rooted at $r_{i+1}$
using Lemma~\ref{lem:embed:greyFOREST}
\def\Leg{\PARAMETERPASSING{L}{lem:embed:greyFOREST}}
 for one tree (i.e.~$\ell_\Leg:= 1$) with the following sets/parameters:
$H_\Leg:=\Gexp$,
$U_\Leg:=U\cup U_i$, $U^*_\Leg:=\neighbor_{\Gcapt}(x)\cap
(V(\Gexp)\setminus(U\cup U_i))$, $V_1=V_2:=V(\Gexp)$, $Q_\Leg:=1$, $\zeta_\Leg:=\rho$, and 
$\gamma_\Leg:=\gamma$. Note that $|U\cup
U_i|< k$, that $|\neighbor_{\Gcapt}(x)\cap
(V(\Gexp)\setminus (U\cup U_i))|\ge \eta k/1000>32\gamma k/\rho+1$,  
that $v(T_{i+1})\le
\tau k<\rho k/4$ and that $128\gamma<\rho^2$.

\smallskip

In case {\bf(V2)} we embed $T_{i+1}$ rooted at $r_{i+1}$
using Lemma~\ref{lem:embed:avoidingFOREST} 
\def\Lavoid{\PARAMETERPASSING{L}{lem:embed:avoidingFOREST}} for one tree (i.e.~$\ell_\Lavoid:= 1$) with the following setting:
$H_\Lavoid:= G-\HugeVertices$, $\smallatoms_\Lavoid:=\smallatoms$,
$U_\Lavoid:=U\cup U_i$, $U^*_\Lavoid:=\neighbor_{\Gcapt}(x)\cap (\smallatoms\setminus(U\cup U_i))$, $\Lambda_\Lavoid:=\Lambda$, $\gamma_\Lavoid:=\gamma$,
$\epsilon_\Lavoid:=\epsilon'$. Note that $|U\cup U_i|\le k<\Lambda
k$, that $|\neighbor_{\Gcapt}(x)\cap (\smallatoms\setminus (U\cup U_i))|\ge \eta
k/1000 >2\eps' k$, and that $v(T_{i+1})\le \tau k<\gamma k/2$. 

\smallskip

We commence case~{\bf(V3)} with an auxiliary claim.
\begin{claim}\label{cl:TEC}
There exists $C_0\in\clusters$ such that $$\deg_{\GD}\big(x,(C_0\cap L')\setminus (V(\Gexp)\cup U\cup U_i\cup
  \shadow(\ghost(U),\frac{\eta k}{1000}))\big)\ge \frac{\eps'}{\gamma^2} \clustersize\;.$$
\end{claim} 
\begin{proof}[Proof of Claim~\ref{cl:TEC}]
Observe that
$L'\setminus
(V(\Gexp)\cup\smallatoms\cup \HugeVertices\cup U\cup U_i)\subset \bigcup
\clusters$ and that (since $x\in\bigcup\clusters$) \[E_{\Gcapt}[x,L'\setminus (V(\Gexp)\cup\smallatoms\cup
U\cup U_i\cup \shadow(\ghost(U),\frac{\eta k}{1000}))]\subset E(\GD)\;.\] By
Fact~\ref{fact:clustersSeenByAvertex}, there are at most $\frac{2(\Omega^*)^2k}{\gamma^2\clustersize}$ clusters $C\in\clusters$ such that
$\deg_{\GD}(x,C)>0$. Using the assumption~{\bf(V3)}, there exists a
cluster $C_0\in\clusters$ such that 
\begin{align*}
\deg_{\GD}\left(x,(C_0\cap L')\setminus
(V(\Gexp)\cup U\cup U_i\cup \shadow(\ghost(U),\frac{\eta k}{1000}))\right)&\ge
\frac{\eta
k}{1000}\cdot\frac{\gamma^2 \clustersize}{2(\Omega^*)^2k}\\
&\overset{\eqref{eq:KONST}}\ge \frac{\eps'}{\gamma^2}\clustersize\;,
\end{align*} 
as desired.
\end{proof}
Let us take a cluster $C_0$ from Claim~\ref{cl:TEC}. We embed the root $r_{i+1}$ 
of $T_{i+1}$ in an arbitrary neighbour $y$ of $x$ in $(C_0\cap L')\setminus
(V(\Gexp)\cup U\cup U_i\cup \shadow(\ghost(U),\frac{\eta k}{1000}))$.

Let $H\subset G$ be the subgraph of $G$ consisting of all edges in dense spots
$\DenseSpots$, and all edges incident with~$\HugeVertices'$. 
As  by~\eqref{eq:clubsuitCOND1}, $y$ has at most $\eta k/100$ neighbours in $\HugeVertices\setminus\HugeVertices'$, and since $y\in L'\subseteq
 \largevertices{9\eta/10}{k}{\Gcapt}$ and $y\notin\shadow(U,\frac{\eta
k}{100})$, we find that
\begin{align*}
\deg_{H}\left(y,V(G)\setminus
((U\cup U_i)\cup(\HugeVertices\setminus\HugeVertices'))\right)
& \ge\left(1+\frac{9\eta}{10}\right)k-\frac{\eta k}{1000}-|U_i|-\frac{\eta
k}{100}\\
& > k-|U_i|+\frac{\eta k}{2}\;.
\end{align*}
Therefore, one of the three following subcases must occur. (Recall that $y\not\in\smallatoms$ as $y\in C_0\in\clusters$.)
\begin{enumerate}
  \item[{\bf (V3a)}] $\deg_{\Gcapt}(y,\smallatoms\setminus (U\cup U_i))\ge \frac{\eta
  k}{6}$, 
  \item[{\bf (V3b)}] $\deg_{\Gblack}(y,\bigcup\clusters
 \setminus (U\cup U_{i}))\ge\frac{\eta k}{6}$, or
  \item[{\bf (V3c)}] $\deg_{\Gcapt}(y,\HugeVertices')\ge k-|U_i|+\frac{\eta
  k}{6}$.
\end{enumerate}
In case~{\bf (V3a)} we embed the components  of $T_{i+1}-r_{i+1}$ (as trees
rooted at the children of $r_{i+1}$) using the same technique as in
case~{\bf (V2)}, with Lemma~\ref{lem:embed:avoidingFOREST}.

\smallskip

\def\LER{\PARAMETERPASSING{L}{lem:embed:regular}}
In~{\bf (V3b)} we embed the components  of $T_{i+1}-r_{i+1}$ (as trees
rooted at the children of $r_{i+1}$). By
Fact~\ref{fact:clustersSeenByAvertex} there exists a cluster $D\in \clusters$ such that 
\begin{equation}\label{V3bsizeX*}
\deg_{\Gblack}(y,D\setminus (U\cup U_i))\ge
\frac{\eta k}{6}\cdot
\frac{\gamma^2\clustersize}{2(\Omega^*)^2k}>\frac{\gamma^2}2\clustersize.
\end{equation}
We use Lemma~\ref{lem:embed:regular} with input $\epsilon_\LER:=\epsilon'$, 
$\beta_\LER:=\gamma^2$, $C_\LER:=D$, $D_\LER:=C_0$, $X^*_\LER=X_\LER:=D\setminus (U\cup U_i)$ and $Y_\LER:=C_0\setminus (U\cup U_i\cup\{y\})$ 
to embed the tree $T_{i+1}$ into
the pair $(C_0,D)$, by embedding the components of $T_{i+1}-r_{i+1}$ one after
the other. The numerical conditions of Lemma~\ref{lem:embed:regular} hold because of Claim~\eqref{cl:TEC} and because of~\eqref{V3bsizeX*}.

\smallskip

In case~{\bf (V3c)} we set $T'_{i+1}:=r_{i+1}$ and define $\mathcal C_{1,i+1}$
as  set of all components of $T_{i+1}-r_{i+1}$. Then $\phi(\parent(\bigcup
\mathcal C_{1,i+1} )\cap V(T'_{i+1}))=\{y\}$ and
\begin{equation}\label{uiuiuiui}
 \deg_{\Gcapt}(y,\HugeVertices')\geq  k-|U_i|+\frac{\eta k}{6}\;.
\end{equation}

When all the trees $T_1,\ldots,T_p$ are processed, we define $T':=\{r\}\cup \bigcup\mathcal C_M\cup \bigcup_{i=1}^pT'_i$, and set $\mathcal C_1:=\bigcup_{i=1}^p\mathcal C_{1,i}$.
Thus also~\eqref{eq:boty} is satisfied by~\eqref{uiuiuiui} for $i=p$, since $|T'|=|U_p|$.
This finishes the proof of the lemma.
\end{proof}

It turns out that our techniques for embedding a tree $T\in\treeclass{k}$ for 
Configurations~$\mathbf{(\diamond2)}$--$\mathbf{(\diamond5)}$ are very similar.
In Lemma~\ref{lem:conf2-5} below we resolve these tasks at once. The proof of
Lemma~\ref{lem:conf2-5} follows the same basic strategy for each of the
configurations~$\mathbf{(\diamond2)}$--$\mathbf{(\diamond5)}$ and deviates only
in the elementary procedures of embedding shrubs of $T$.

\begin{lemma}\label{lem:conf2-5}
Suppose that we are in Setting~\ref{commonsetting}, and one of the following
configurations can be found in $G$:
\begin{enumerate}[a)]
  \item Configuration~$\mathbf{(\diamond2)}\left((\Omega^*)^2,
  5(\Omega^*)^9, \rho^3 \right)$,
  \item Configuration~$\mathbf{(\diamond3)}\left((\Omega^*)^2,
  5(\Omega^*)^9, \gamma/2, \gamma^3/100\right)$,
  \item Configuration~$\mathbf{(\diamond4)}\left((\Omega^*)^2,
  5(\Omega^*)^9, \gamma/2, \gamma^4/100\right)$, or
  \item Configuration~$\mathbf{(\diamond5)}\left((\Omega^*)^2,
  5(\Omega^*)^9,\epsilon', 2/(\Omega^*)^3,\frac{1}{(\Omega^*)^5}\right)$,
\end{enumerate}
Let $(T,r)$ be a rooted tree of order $k$ with a $(\tau k)$-fine partition
$(W_A,W_B,\shrubA,\shrubB)$. Then $T\subset G$.
\end{lemma}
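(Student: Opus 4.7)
The plan is to carry out the two-stage embedding strategy sketched in Figure~\ref{fig:DIAMOND25overview}. First I invoke Lemma~\ref{lem:orderedskeleton} to obtain an ordered skeleton $(X_0, X_1, \ldots, X_m)$ of the fine partition $(W_A, W_B, \shrubA, \shrubB)$. Throughout the construction I maintain a growing forbidden set $U \subset V(G)$ consisting of the vertices already used as images, together with a small ``reservation set'' (in $\HugeVertices'$) of vertices that will serve as roots for the suspended $\shrubB$-shrubs in Stage~2. The choice of parameters $\Omega^\star = (\Omega^*)^2$ and $\tilde\Omega = 5(\Omega^*)^9$ in the configurations is tuned so that $\HugeVertices''$ keeps its abundant degree into $V_1$ even after subtracting the (bounded) knag images, that $V_1$ keeps its degree $\beta k$ back into $\HugeVertices''$, and that Preconfiguration~$\mathbf{(\clubsuit)}$ holds with the stronger parameter $\tfrac{10^5\Omega^*}{\eta}$ demanded by Lemma~\ref{lem:embedC'endshrub}.

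In Stage~1, I process $X_0, X_1, \ldots, X_m$ in order. Whenever $X_i$ is a knag, I extend the embedding vertex by vertex so that the $W_A$-vertices of $X_i$ are mapped into $\HugeVertices''$ and the $W_B$-vertices into $V_1$. This is a greedy step: the $\tilde\Omega k$-minimum degree from $\HugeVertices''$ to $V_1$ (and the $\beta k$-minimum degree from $V_1$ back to $\HugeVertices''$) absorbs the crude cost of the previous images, because the total number of knag/shrub-root images accumulated so far is at most $|W_A|+|W_B| \le 336k/\tau + v(T) \le 2k$, which is negligible against $\tilde\Omega k$. Whenever $X_i$ is a shrub in $\shrubA$ attached to some already-embedded seed $\phi(w) \in \HugeVertices''$, I apply the configuration-specific embedding lemma: Lemma~\ref{lem:embed:greyFOREST} in $(V_1, V_2) \subset V(\Gexp)$ for $\mathbf{(\diamond2)}$; Lemma~\ref{lem:HE3} in $(V_1, V_2)$ with $V_1 \subset \smallatoms$ for $\mathbf{(\diamond3)}$; Lemma~\ref{lem:HE4} via a fruit in $V_1$ followed by the $(\smallatoms', V_2)$ pair for $\mathbf{(\diamond4)}$; and Lemmas~\ref{lem:embed:regular}--\ref{lem:embed:superregular} inside super-regular pairs between $V_1$ and clusters reached by $\Gblack$-edges for $\mathbf{(\diamond5)}$. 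In each case the parameters of the configuration ($\beta = \rho^3$ or $\gamma^3/100$, etc.) exactly match the hypotheses of the corresponding lemma, and $V_1 \subset \YB \cap L''$ provides the anchor in the neighbourhood of $\phi(w)$.

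For shrubs in $\shrubB$ attached to an already-embedded $w \in W_B$ with $\phi(w) \in V_1 \subset L'' \cap \YB$, I apply Lemma~\ref{lem:blueShrubSuspend} to the rooted tree consisting of $w$ together with all $\shrubB$-shrubs attached to it (each of order at most $\tau k$, as required). The lemma partially embeds this tree, producing a subtree $T'$ together with a partition of the remainder into collections $\mathcal C_1, \mathcal C_2$ whose parents $\phi(\parent(\bigcup\mathcal C_j))$ have many neighbours in $\HugeVertices'$ (roughly $k-v(T')$ or $k/2-v(T' \cup \bigcup\mathcal C_1)$ respectively). I must verify that the chosen image $\phi(w)$ avoids the iterated shadow $\bigcup_{i=0}^{2}\shadow_{\Gcapt}^{(i)}(\ghost(U),\eta k/1000)$; this holds because the knag part of $W_B$ was embedded into $V_1$ while controlling the shadow footprint, and the pool $V_1$ is large enough to circumvent the polynomially-small exception set thanks to $\tilde\Omega k \gg |U|$ and Fact~\ref{fact:shadowbound}.

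In Stage~2, I finalise the suspended components. For each $T^* \in \mathcal C_1 \cup \mathcal C_2$ produced in Stage~1, its parent image $\phi(\parent(T^*))$ sees $\ge k - |U| + \eta k/100$ (resp.\ $\ge k/2 - |U| + \eta k/100$) vertices in $\HugeVertices'$ still unused, and I embed the root of $T^*$ in one such vertex $v \in \HugeVertices'$. Then the tree $T^*$ (of order at most $\tau k \ll \gamma^2 \nu k/6$) is embedded by Lemma~\ref{lem:embedC'endshrub} using the current $U$, which by construction satisfies $|U|+v(T^*) \le k$. The main obstacle, and the one to be checked carefully, is the bookkeeping: the combined budget of images used in Stages~1 and~2 must stay below what Lemmas~\ref{lem:blueShrubSuspend} and~\ref{lem:embedC'endshrub} allow, and the suspension accounting must respect the distinction between $\mathcal C_1$ (whose parents need near-full degree into $\HugeVertices'$) and $\mathcal C_2$ (whose parent is the root itself). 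The hierarchy of parameters in Setting~\ref{commonsetting} ($\eta \gg 1/\Omega^* \gg \rho \gg \gamma \gg \ldots \gg \tau$) together with the fact that $|W_A \cup W_B|$ is bounded by an absolute constant provides the slack needed to close the induction.
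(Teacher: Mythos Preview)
Your proposal follows essentially the same two-stage plan as the paper, invoking the same auxiliary lemmas in the same roles. One organizational point, however, creates a genuine gap. By processing the tree strictly along the ordered skeleton you introduce a timing mismatch for Lemma~\ref{lem:blueShrubSuspend}: that lemma requires $\phi(w)\notin\bigcup_{i=0}^{2}\shadow^{(i)}_{\Gcapt}(\ghost(U),\eta k/1000)$ for the \emph{current} set $U$, but in your scheme $w\in W_B$ was already placed when its knag $X_j$ was handled, and $U$ has since grown by all the intervening $\shrubA$-shrubs. Since shadows are monotone in $U$ and you cannot know the future $U$ when choosing $\phi(w)$, the shadow condition may simply fail at the moment you need it. The paper avoids this by abandoning the skeleton order at exactly this point: for each already-embedded $x\in W_A$ it places each child $y\in W_B\cap\children(x)$ into $V_1\setminus F$ (with $F$ the iterated shadow of the \emph{present} $\ghost(U)$) and \emph{immediately} applies Lemma~\ref{lem:blueShrubSuspend} to $T(y)$ before embedding anything else. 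Your argument is repaired by the same device.

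Two minor slips: $|W_A|+|W_B|\le 672/\tau$ is an absolute constant, not $336k/\tau$; and the shadow exception set is linear in $k$, not ``polynomially small'' --- what actually saves the knag step is that its coefficient $\approx(\Omega^*)^2/\eta^2$ is negligible against $\tilde\Omega=5(\Omega^*)^9$.
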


\begin{proof} First observe that
each of the configurations given by a)--d) contains two sets $\HugeVertices''\subseteq \HugeVertices$ and $V_1\subseteq V(G)\sm\HugeVertices$ with
\begin{align}
\label{eq:sumC1}
\mindeg_{\Gcapt}(\HugeVertices'', V_1)&\ge 5(\Omega^*)^9k\; ,\\
\label{eq:sumC2}
\mindeg_{\Gcapt}(V_1,\HugeVertices'')&\ge \epsilon' k\; .
\end{align}

For any vertex $z\in W_A\cup W_B$ we
define $T(z)$ as the forest consisting of all components of $T-(W_A\cup
W_B)$ that contain children of $z$. 
Throughout the proof, we write $\phi$ for the current partial embedding of $T$ into $G$.

\paragraph{Overview of the embedding procedure.} As outlined in Section~\ref{ssec:EmbedOverview25} the embedding scheme is the same for Configurations~$\mathbf{(\diamond 2)}$--$\mathbf{(\diamond 5)}$. The embedding $\phi$ is
defined in two stages. In Stage~1, we embed $W_A\cup
W_B$, all the internal shrubs, all the end shrubs of $\shrubA$, and a part\footnote{in the sense that individual shrubs $\shrubB$ may be embedded only in part} of the end shrubs of $\shrubB$. In Stage~2 we embed the rest of $\shrubB$. Which part of $\shrubB$ are embedded in Stage~1 and which part in Stage~2 will be determined during Stage~1. We first give a rough outline of  both stages listing some conditions which
we require to be met, and then we describe each of the stages in detail.

Stage~1 is defined in $|W_A\cup\{r\}|$ steps. First we map $r$ to any vertex
in $\HugeVertices''$. Then in each step we pick a vertex $x\in
W_A$ for which the embedding $\phi$ has already been defined but such that $\phi$ is
not yet defined for any of the children of $x$. In this step we embed 
$T(x)$, together with all the children and grandchildren of $x$ in the knag which contains $x$. For each
$y\in W_B\cap \children(x)$, Lemma~\ref{lem:blueShrubSuspend} determines a subforest $T'(y)\subset T(y)$ 
which is embedded in Stage~1, and sets $\mathcal C_1 (y)$ and $\mathcal C_2 (y)$, which will be embedded in Stage~2. 

The
embedding in each step of Stage~1 will be defined so that the following properties hold. 
\begin{enumerate}[(*1)]
\item All vertices from $W_A$ are mapped to $\HugeVertices''$.
\item All vertices except for $W_A$ are mapped to
$V(G)\setminus \HugeVertices$.
\item For each $y\in W_B$, for each $v\in\parent (V(\bigcup \mathcal C_1 (y)))$   it holds that $$\deg_G(\phi(v),\HugeVertices')\ge
k + \tfrac {\eta k}{100} - v(T'(y))\;.$$
  \item For each $y\in W_B$, for each $v\in\parent (V(\bigcup \mathcal C_2 (y)))$  it holds that $$\deg_G(\phi(v),\HugeVertices')\ge
\tfrac k2 + \tfrac {\eta k}{100} - v(T'(y)\cup \bigcup\mathcal C_1(y))\;.$$
\end{enumerate}

In Stage~2, we shall utilize properties~(*3) and~(*4) to embed 
$T_B^*:=\bigcup\shrubB-\bigcup_{y\in W_B} T'(y)$. Stage~2 is substantially simpler than Stage~1; this is due to the fact that $T_B^*$ consists only of end shrubs. 

\paragraph{The embedding step of Stage~1.} The embedding
step is the same for
Configurations~$\mathbf{(\diamond2)}$--$\mathbf{(\diamond5)}$, except for
the embedding of internal shrubs. The order of the embedding steps is illustrated in Figure~\ref{fig:L825}.
\begin{figure}[t]
\centering 
\includegraphics{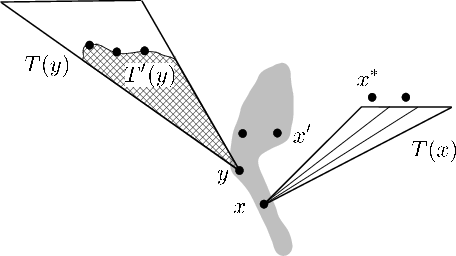}
\caption[Stage~1 of embedding in proof of Lemma~\ref{lem:conf2-5}]{Stage~1 of the embedding in the proof of Lemma~\ref{lem:conf2-5}. Starting from an already embedded vertex $x\in W_A$ we extend the embedding to (in this order)\\
(1) all the children $y\in W_B$ of $x$ in the same knag (in grey),\\
(2) a part $T'(y)$ of the forest $T(y)$,\\
(3) all the grandchildren $x'\in W_A$ of $x$ in the same knag,\\
(4) the forest $T(x)$ together with the bordering cut-vertices $x^*\in W_A$.
}
\label{fig:L825}
\end{figure}

In each step we have picked $x\in W_A$ already embedded in $G$ but such that none of $\children(x)$ are embedded. By (*1), or by the
choice of $\phi(r)$, we have $\phi(x)\in \HugeVertices''$. So by~\eqref{eq:sumC1} we have
\begin{equation}\label{elventarron}
 \deg_{\Gcapt}(\phi(x), V_1\sm U)\ge 5(\Omega^*)^9 k -k.
\end{equation}

First, we embed successively in $|W_B\cap \children (x)|$ steps the vertices
$y\in W_B\cap \children (x)$ together with  components $T'(y)\subset T(y)$ which will be determined on the way.
Suppose that in a certain step we are to embed $y\in W_B\cap \children (x)$
and the (to be determined) tree $T'(y)$. Let
$F:=\bigcup_{i=0}^2\shadow^{(i)}_{\Gcapt-\HugeVertices}(\ghost(U), \frac{\eta
k}{10^5})$, where $U$ is the set of vertices used by the embedding $\phi$ in previous steps, so $|U|\le k$. By Fact~\ref{fact:shadowbound}, $|F|\le \frac {10^{10}(\Omega^*)^2}{\eta^2}k$. We embed $y$
anywhere in $(\neighbor_G(\phi(x))\cap V_1)\setminus F$, cf.~\eqref{eq:sumC1}. Note that then (*2) holds for $y$. We use Lemma~\ref{lem:blueShrubSuspend} in
order to embed $T'(y)\subset T(y)$ (the subtree $T'(y)$ is determined by
Lemma~\ref{lem:blueShrubSuspend}). Lemma~\ref{lem:blueShrubSuspend} ensures
that~(*3) and~(*4) hold and that we have $\phi(V(T'(y)))\subseteq V(G)\setminus
\HugeVertices$. 

Also, we map the vertices $x'\in W_A\cap \children(y)$ to $\HugeVertices''\setminus U$. To justify this step, employing~(*2), it is enough to prove that 
\begin{equation}\label{eq:WNTP}
\deg(\phi(y),\HugeVertices'')\ge |W_A|\;.
\end{equation}
Indeed, on one hand, we have $|W_A|\le 336/\tau$ by Definition~\ref{ellfine}\eqref{few}. On
the other hand, we have that $\phi(y)\in V_1$, and thus~\eqref{eq:sumC2} applies.
 We can thus embed $x'$ as planned, ensuring (*1), and finishing the step for $y$.

Next, we sequentially embed the components $\tilde T$ of $T(x)$. In
the following, we describe such an embedding procedure only for an internal shrub
$\tilde T$, with $x^*$ denoting the other neighbour of
$\tilde T$ in $W_A$ (cf.~(*1)). The case when $\tilde T$ is an end shrub is analoguous: actually it is even easier as we do not 
have to worry about placing $x^*$ well.
The actual embedding of $\tilde T$ together with $x^*$ depends on the configuration we are in. We shall slightly abuse notation by letting $U$ now denote everything embedded before the tree $\tilde T$.

\smallskip

For Configuration~$\mathbf{(\diamond2)}$, we use
Lemma~\ref{lem:embed:greyFOREST} for one tree, namely $\tilde T-x^*$, using the following setting:
$Q_\PARAMETERPASSING{L}{lem:embed:greyFOREST}:= 1,
\gamma_\PARAMETERPASSING{L}{lem:embed:greyFOREST}:= \gamma,
\zeta_\PARAMETERPASSING{L}{lem:embed:greyFOREST}:= \rho^3,
H_\PARAMETERPASSING{L}{lem:embed:greyFOREST}:= \Gexp$,  $U_\PARAMETERPASSING{L}{lem:embed:greyFOREST}:= U$, and 
$U^*_\PARAMETERPASSING{L}{lem:embed:greyFOREST}:=(\neighbor_{\Gcapt}(\phi(x))\cap
V_1)\setminus U$ (this last set is large enough by~\eqref{elventarron}). The child of
$x$ gets embedded in $(\neighbor_{\Gcapt}(\phi(x))\cap V_1)\setminus U$, the vertices at odd distance from $x$ get embedded in $V_1$, and the vertices at even distance from $x$ get embedded in $V_2$. In particular, $\parent_T(x^*)$ gets embedded in $V_1$. After this, we accomodate $x^*$ in a vertex in $\HugeVertices''\setminus U$ which is adjacent to $\phi(\parent_T(x^*))$. This is possible by the same reasoning as in~\eqref{eq:WNTP}.

\smallskip

For Configuration~$\mathbf{(\diamond3)}$, we use
Lemma~\ref{lem:HE3} to embed $\tilde T$ with the setting
$\gamma_\PARAMETERPASSING{L}{lem:HE3}:= \gamma,
\delta_\PARAMETERPASSING{L}{lem:HE3}:= \gamma^3/100,
U_\PARAMETERPASSING{L}{lem:HE3}:= U$ and
$U^*_\PARAMETERPASSING{L}{lem:HE3}:=(\neighbor_{\Gcapt}(\phi(x))\cap V_1)\setminus U$ (this last set is large enough by~\eqref{elventarron}). 
Then the child of $x$ gets embedded in $(\neighbor_{\Gcapt}(\phi(x))\cap V_1)\setminus U$, vertices of $\tilde T$ of odd distance to $x$ (i.e.~of even distance to the root of $\tilde T$) get embedded in $V_1\setminus U$, and vertices of even distance get embedded in $V_2\setminus U$. We extend the embedding by mapping $x^*$ to a suitable vertex in $\HugeVertices''\setminus U$ adjacent to $\phi(\parent_T(x^*))$ in the same way as above.

\smallskip

For Configuration~$\mathbf{(\diamond4)}$, we use
Lemma~\ref{lem:HE4} to embed $\tilde T$ with the setting
$\gamma_\PARAMETERPASSING{L}{lem:HE4}:= \gamma,
\delta_\PARAMETERPASSING{L}{lem:HE4}:= \gamma^4/100,
U_\PARAMETERPASSING{L}{lem:HE4}:= U$ and
$U^*_\PARAMETERPASSING{L}{lem:HE4}:=(\neighbor_{\Gcapt}(\phi(x))\cap V_1)\setminus U$  (this last set is large enough by~\eqref{elventarron}). 
The fruit  $r'_\PARAMETERPASSING{L}{lem:HE4}$ in the lemma is chosen as $\parent_T(x^*)$, note that this is indeed a fruit (in $\tilde T$) because of Definition~\ref{ellfine}~\eqref{short}.
Then the child of $x$ gets embedded in
$(\neighbor_{\Gcapt}(\phi(x))\cap V_1)\setminus U$, the vertex $r'_\PARAMETERPASSING{L}{lem:HE4}=\parent_T(x^*)$ gets embedded in $V_1\setminus U$, and the rest of $\tilde T$ gets embedded in $(\smallatoms'\cup V_2)\setminus U$. This allows us to extend the embedding to $x^*$ as above.

\smallskip

In Configuration~$\mathbf{(\diamond5)}$, let $\mathbf W\subset \clusters$  denote the set of those clusters, which have at least an $\frac{1}{2(\Omega^*)^5}$-fraction of their vertices contained in the set $U':=U\cup \shadow_{\Gblack}(U,k/(\Omega^*)^3)$. We get from Fact~\ref{fact:shadowbound} that $|U'|\le 2(\Omega^*)^4 k$, and consequently $|U'\cup \bigcup \mathbf W|\le 4(\Omega^*)^9 k$. By~\eqref{elventarron} we can find a vertex $v\in (\neighbor_G(\phi(x))\cap V_1)\setminus (U'\cup \bigcup \mathbf W)$. 

We use the fact that $v\not \in \shadow_{\Gblack}(U,k/(\Omega^*)^3)$ together with inequality~\eqref{confi5last}
to see that $\deg_{\Gblack}(v,V(\Gblack)\sm U)\geq k/(\Omega^*)^3$.
Now, since there are only boundedly many clusters seen from $v$ (cf. Fact~\ref{fact:clustersSeenByAvertex}),  there must be a cluster $D\in \clusters$ such that 
\begin{equation}\label{eq:extendfromx}
 \deg_{\Gblack}(v,D\setminus U)\ge \frac{\gamma^2}{2\cdot(\Omega^*)^5}|D|\ge\gamma^3 |D|\;.
\end{equation}
 Let $C$ be the cluster containing $v$. We have $|(C\cap V_1)\setminus U|\ge \frac{1}{2(\Omega^*)^5}|C|\ge\gamma^3 |C|$ because of~\eqref{eq:diamond5P4} and since $C\notin\mathbf W$. Thus, by Fact~\ref{fact:BigSubpairsInRegularPairs}, $\big((C\cap V_1)\setminus U,D\setminus U\big)$ is an $2\epsilon'/\gamma^3$-regular pair of density at least $\gamma^2/2$. We can therefore embed $\tilde T$ in this pair using the regularity method. Moreover, by~\eqref{eq:extendfromx}, we can do so by mapping the child $z$ of $x$ to $v$. Thus the parent of $x^*$ (lying at even distance to $z$) will be embedded in $(C\cap V_1)\setminus U$. We can then extend our embedding to $x^*$ as above.

\smallskip

This finishes our embedding of $T(x)$. Note that
in all cases we have $\phi(x^*)\in \HugeVertices''$ and $\phi(V(\tilde T))\subseteq V(G)\setminus \HugeVertices$, as required by~(*1) and~(*2).

\paragraph{The embedding steps of Stage~2.}
For $i=1,2$, set $Z_i:=\bigcup_{y\in W_B} \children(T'(y))\cap \bigcup\mathcal C_i(y)$.

First, we embed all the vertices $z\in Z_2$ in $\HugeVertices'$. By~(*2), until now, only vertices of $W_A\cup Z_2$ are mapped to $\HugeVertices'$, and using~(*4) and the properties~\eqref{few}, \eqref{Bend} and~\eqref{Bsmall} of Definiton~\ref{ellfine}, we see that
\begin{align*}
\deg_G(\phi(\parent(z)),\HugeVertices')
&\geq
\frac{\eta k}{100} + (\frac k2 - \bigcup_{y\in W_B} (T'(y)\cup \bigcup\mathcal C_1(y))\\
&>|W_A|+ |Z_2|\;.
\end{align*}

So there is space for the vertex $z$ in $\HugeVertices'\cap
\phi(\neighbor_G(\parent(z)))$. 

Next, we embed all the vertices $z\in Z_1$ in $\HugeVertices'$. By~(*2), until now, only vertices of $W_A\cup Z_2\cup Z_1$ are mapped to $\HugeVertices'$, and by~(*3) we have, similarly as above,
 $$\deg_G(\phi(\parent(z)),
\HugeVertices')>|W_A|+|Z_2|+|Z_1|\;.$$ So $z$ can be embedded in $\HugeVertices'\cap
\neighbor_G(\phi(\parent(z)))$ as planned.

Finally,
 for $z\in Z_1\cup Z_2$, denote by $T_z$ the component of $\mathcal C_1\cup\mathcal C_2$
that contains $z$.  We use
Lemma~\ref{lem:embedC'endshrub} to embed the rest of the
rooted tree $(T_z, z)$. (Note that our parameters work because of~\eqref{eq:KONST}.) 
Once all rooted trees $(T_z, z)$, $z\in Z_1\cup Z_2$ have been processed, we have finished Stage~2 and thus the proof of the lemma.
\end{proof} 

\subsubsection{Embedding in Configurations $\mathbf{(\diamond6)}$--$\mathbf{(\diamond10)}$}\label{sssec:OrderedSkeleton}
We follow the schemes outlined in Sections~\ref{ssec:EmbedOverview67}, \ref{ssec:EmbedOverview8}, \ref{ssec:EmbedOverview9}, and~\ref{ssec:EmbedOverview10}.

Embedding a tree $T_\PARAMETERPASSING{T}{thm:main}\in\treeclass{k}$ using Configurations~$\mathbf{(\diamond6)}$, $\mathbf{(\diamond7)}$,
$\mathbf{(\diamond8)}$ has two parts: first the internal part of
$T_\PARAMETERPASSING{T}{thm:main}$ is embedded, and then this partial embedding is extended to end shrubs of $T_\PARAMETERPASSING{T}{thm:main}$ as well. Lemma~\ref{lem:embed:skeleton67} (for configurations $\mathbf{(\diamond6)}$ and $\mathbf{(\diamond7)}$) and Lemma~\ref{lem:embed:skeleton8} (for configuration $\mathbf{(\diamond8)}$) are used for the former part, and Lemmas~\ref{lem:embed:heart1} and~\ref{lem:embed:heart2} (depending on whether we have $\mathbf{(\heartsuit1)}$ or $\mathbf{(\heartsuit2)}$) for the latter. Lemma~\ref{lem:embed:total68} then puts these two pieces together.

Embedding using Configurations~$\mathbf{(\diamond9)}$ and
$\mathbf{(\diamond10)}$ is resolved in
Lemmas~\ref{lem:embed9} and~\ref{lem:embed10}, respectively.

\begin{lemma}\label{lem:embed:skeleton67}
Suppose we are in Setting~\ref{commonsetting} and~\ref{settingsplitting}, and we
have one of the following two configurations:
\begin{itemize}
 \item Configuration~$\mathbf{(\diamond6)}(\delta_6, \tilde\epsilon,d',\mu,1,0)$, or
 \item
 Configuration~$\mathbf{(\diamond7)}(\delta_7,\frac{\eta \gamma}{400},\tilde\epsilon,d',\mu,1,0)$,
\end{itemize}
with $10^5\sqrt\gamma(\Omega^*)^2\le \delta_6^4 \le 1$, 
$10^2\sqrt{\gamma}(\Omega^*)^3/\Lambda\le \delta_7^3<\eta^3\gamma^3/10^6$, $d'>10\tilde \epsilon>0$, and $d'\mu\tau k\ge 4\cdot 10^3$.
Both configurations contain distinguished sets $V_0,V_1\subseteq  \colouringp{0}$ and $V_2,V_3\subseteq  \colouringp{1}$.

Suppose that $(W_A,W_B,\shrubA,\shrubB)$ is a $(\tau k)$-fine partition of a rooted tree
$(T,r)$ of order at most~$k$ such that $|W_A\cup W_B|\leq k^{0.1}$. Let $T'$ be the tree induced by all the cut-vertices
$W_A\cup W_B$ and all the internal shrubs.

Then there exists an embedding $\phi$ of $T'$ such that $\phi(W_A)\subset V_1$,
$\phi(W_B)\subset V_0$, and $\phi(T'-(W_A\cup W_B))\subset \colouringp{1}$.
\end{lemma}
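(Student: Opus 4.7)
The plan is to process the tree $T'$ according to an ordered skeleton $(X_0, X_1, \ldots, X_m)$ provided by Lemma~\ref{lem:orderedskeleton}, embedding knag-elements into $V_0 \cup V_1 \subset \colouringp{0}$ and internal-shrub-elements into $V_2 \cup V_3 \subset \colouringp{1}$. Because these two regions lie in distinct parts of the partition $\colouringpartition$, they are vertex-disjoint, and the only interactions happen at two interfaces: a cut-vertex $x \in W_A$ embedded in $V_1$ whose shrub-child must land in $V_2 \cap N_G(\phi(x))$, and a shrub-fruit $y^* \in V_2$ whose successor cut-vertex $x^* \in W_A$ must land in $V_1 \cap N_G(\phi(y^*))$. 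The parity condition~\eqref{parity} of Definition~\ref{ellfine} makes the consistent assignment $W_A \to V_1$, $W_B \to V_0$ legitimate throughout.

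First I would dispose of the knags by a straightforward greedy extension. Each knag has order at most $|W_A \cup W_B| \leq k^{0.1}$ and is bipartite with parts in $W_A$ and $W_B$. Starting from the entry vertex (the image of $r$ for $X_0$, or the vertex of the knag adjacent in $T$ to an already-embedded shrub-fruit), I extend one vertex at a time, alternating sides. Under Preconfiguration~$\mathbf{(exp)}$ the extension uses edges of $\Gexp$ restricted to $V_0 \cup V_1$ via Lemma~\ref{lem:embed:greyFOREST}; under Preconfiguration~$\mathbf{(reg)}$ it stays within the super-regular pair $(Q_0^{(j)}, Q_1^{(j)})$ containing the entry vertex and proceeds by Lemma~\ref{lem:embed:superregular}. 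In both cases only $O(k^{0.1})$ vertices are ever placed in $V_0 \cup V_1$, while the relevant degree lower bounds ($\delta_6 k$, resp.\ $d'\mu k$) far exceed that, so the knag embedding is never stuck.

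For the internal shrubs I would apply Lemma~\ref{lem:embedStoch:DIAMOND6} (case $\mathbf{(\diamond 6)}$) or Lemma~\ref{lem:embedStoch:DIAMOND7} (case $\mathbf{(\diamond 7)}$) sequentially, one shrub per skeleton step. When an internal shrub $T^* \in \shrubA$ with seed $x \in W_A$ (already placed at $\phi(x) \in V_1$) is reached, I set $U^* := V_2 \cap N_G(\phi(x)) \setminus U$, where $U$ is the current used set augmented by every shadow $C$ returned by previous stochastic calls. The auxiliary sets $P_j$ fed to the lemma are $V_2 \cap N_G(\phi(z))$ for each $z \in W_A$ whose internal shrubs are not yet fully embedded --- at most $|W_A| \leq k^{0.1}$ such sets. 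The lemma outputs an $(r^* \hookrightarrow U^*, \Veven \hookrightarrow V_2, \Vodd \hookrightarrow V_3)$-embedding of $T^*$ together with a disjoint shadow $C \subset (V_2 \cup V_3) \setminus (U \cup \phi(T^*))$ with $|C| = v(T^*)$ and $|P_j \cap \phi(T^*)| \leq |P_j \cap C| + k^{3/4}$ for every $j$. The fruit $y^*$ (if present) lands in $V_2$ by the parity of the alternating embedding, so the transition to the successor knag is enabled by~\eqref{COND:D6:2} (resp.~\eqref{COND:D7:2}): $\phi(y^*)$ has $\geq \delta k$ neighbours in $V_1$, of which at most $O(k^{0.1})$ are used.

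The main obstacle is the joint bookkeeping that guarantees $|U^*| \geq \delta_6 k/4$ at every call. Since all $\phi(T^*_i)$'s and all $C_i$'s are pairwise disjoint subsets of $V_2 \cup V_3$, for a fixed auxiliary set $P_j$ the quantities $A := \sum_i |P_j \cap \phi(T^*_i)|$ and $B := \sum_i |P_j \cap C_i|$ satisfy $A + B \leq |P_j|$. Summing the per-shrub stochastic bounds gives $A \leq B + (\text{number of shrubs}) \cdot k^{3/4} \leq B + k^{0.85}$, whence $2A \leq |P_j| + k^{0.85}$ and therefore $A \leq |P_j|/2 + k^{0.85}/2$. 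Combined with the analogous control on $B$, the total occupied portion of $N_G(\phi(z)) \cap V_2$ never exceeds $|P_j| - \delta_6 k/2$, so $|U^*| \geq \delta_6 k/4$ is maintained throughout. This bookkeeping succeeds precisely because the hypothesis $|W_A \cup W_B| \leq k^{0.1}$ makes the accumulated Chernoff error $|W_A| \cdot k^{3/4} \leq k^{0.85}$ negligible against the degree budget $\delta_6 k$; without it, the argument would fail.
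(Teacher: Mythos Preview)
Your overall approach matches the paper's: process an ordered skeleton, embed knags via Lemma~\ref{lem:embed:greyFOREST} or~\ref{lem:embed:superregular}, embed internal shrubs via Lemma~\ref{lem:embedStoch:DIAMOND6} or~\ref{lem:embedStoch:DIAMOND7}, and use the reservation sets $C_i$ to control the occupation of the $P_j$'s. However, the execution has two interlocking gaps.

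First, your knag embedding only avoids the $O(k^{0.1})$ used vertices inside $V_0\cup V_1$. But a knag vertex $z\in W_A$ embedded at step $j_0$ may have its entire $V_2$-neighbourhood already swallowed by shrubs embedded in steps $<j_0$: those shrubs were placed before $\phi(z)$ existed, so your stochastic bookkeeping $|P_z\cap\phi(T^*_i)|\le |P_z\cap C_i|+k^{3/4}$ simply does not apply to them. The paper fixes this by forcing $\phi(W_{A,i})\subset V_1\setminus F_i$, where $F_i$ is the shadow of $Z_{<i}$ (images \emph{and} reservations); this guarantees $|S_y|\ge\tfrac34\delta_h k$ at the moment $y$ is embedded, after which the stochastic control takes over. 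A second layer of shadow ($W_i$) is needed so that the fruit $f_i$ lands where it has many $V_1\setminus U_i$-neighbours, otherwise you cannot find a next knag-root outside $F_{i+1}$.

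Second, even granting the above, your definition $U^*:=P_j\setminus U$ with $U$ containing all previous $C$'s breaks the bound. Your inequality $A\le |P_j|/2+k^{0.85}$ is correct, but $|U^*|=|P_j|-A-B$, and nothing prevents $A+B=|P_j|$; ``analogous control on $B$'' yields only $B\le |P_j|/2+o(k)$, not $A+B\le|P_j|-\delta_6 k/2$. The paper lets the shrub root land in previous $D_j$'s (so $U^*=S_y\setminus\text{images}$, giving $|U^*|\ge|S_y|/2-o(k)$) while the shrub \emph{body} avoids the larger set $W_i\supset Z_{<i}$. This asymmetry between root and body is exactly what makes the $A+B\le|P_j|$ argument produce a useful lower bound on $|U^*|$.
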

\begin{proof}
For simplicity, let us assume that $r\in W_A$. The case when $r\in W_B$ is similar. 
The $(\tau k)$-fine partition
$(W_A,W_B,\shrubA,\shrubB)$ induces a $(\tau k)$-fine partition in $T'$.
By
Lemma~\ref{lem:orderedskeleton}, the tree $T'$ has an ordered skeleton $(X_0, X_1,\ldots, X_m)$ where the $X_i$ are either shrubs or knags ($X_0$ being a knag).

Our strategy is as follows. We sequentially
embed the knags and the internal shrubs in the order given by the ordered
skeleton.  For embedding the knags we use Lemma~\ref{lem:embed:greyFOREST} in Preconfiguration~$\mathbf{\mathbf{(exp)}}$, and
Lemma~\ref{lem:embed:superregular} in Preconfiguration~$\mathbf{\mathbf{(reg)}}$. For embedding the internal shrubs, we use
Lemmas~\ref{lem:embedStoch:DIAMOND6} and~\ref{lem:embedStoch:DIAMOND7}
if we have Configurations~$\mathbf{\mathbf{(\diamond6)}}$, and~$\mathbf{\mathbf{(\diamond7)}}$, respectively.

Throughout, $\phi$
denotes the current (partial) embedding of
$(X_0,X_1\ldots,X_m)$.
In consecutive steps, we extend $\phi$.  We  define  auxiliary sets $D_i\subset V(G)$  which will serve for reserving space for the roots of the shrubs $X_i$. So the set $Z_{<i}:=\bigcup_{j<i} (\phi(X_{j})\cup D_{j})$ contains what is already used and what should (mainly) be avoided. 

Let $W_{A,i}:=W_A\cap V(X_i)$, and $W_{B,i}:=W_B\cap V(X_i)$. For each $y\in
W_{A,{j}}$ with $j\leq i$ let $$S_y:=(V_2\cap \neighbor_G(\phi(y)))\setminus
Z_{<i},$$ except if the latter set has size $>k$, in that case we choose a
subset of size $k$. This is a target set for the roots of shrubs adjacent to $y$.

Also, in the case $X_i$ is a shrub, we write $r_i$ for its root, and $f_i$ for the only other vertex neighbouring $W_A\cup W_B$.  Note that $f_i$ is a fruit of $(X_i,r_i)$.

The value $h=6$ or $h=7$ indicates whether we have configuration $\mathbf{\mathbf{(\diamond6)}}$ or $\mathbf{\mathbf{(\diamond7)}}$.
Define 
\begin{align}\label{eq:defFF}
 F_i:=\shadow_{G-\HugeVertices}\left(Z_{<i},\frac{\delta_h k}4\right)\cup Z_{<i}\;.
\end{align}

Define $U_i:=F_i$ if we have Preconfiguration $\mathbf{\mathbf{(exp)}}$ (note that in that case we have Configuration~$\mathbf{\mathbf{(\diamond6)}}$). To define $U_i$ in case of Preconfiguration $\mathbf{\mathbf{(reg)}}$ we make use of the super-regular pairs $(Q^{(j)}_0,Q^{(j)}_1)$ ($j\in\mathcal Y$). Set
\begin{equation}\label{eq:defUU}
U_i:=F_i\cup\bigcup\left\{Q^{(j)}_1\::\: j\in\mathcal Y, |Q^{(j)}_1\cap F_i|\ge \frac{|Q^{(j)}_1|}2\right\}\;.
\end{equation}
In either case, we have $|U_i|\le 2 |F_i|$.

Finally, set 
\begin{align}
 \label{eq:defWW}
W_i&:=\shadow_{G-\HugeVertices}\left(U_i,\frac{\delta_h k}2\right)\cup Z_{<i}\;.
\end{align}

We will now show how to embed successively all $X_i$. At each step $i$, our embedding $\phi$ will have the following properties:
\begin{enumerate}[(a)]
\item $\phi(W_{A,i})\subset V_1\sm F_i$ and $\phi(W_{B,i})\subset V_0$,\label{eatmoreseeds}
\item for each $y\in W_{A,{j}}$ with $j\leq i$ we have   $
|S_y\cap
  \phi(X_i)|\le |S_y\cap D_{i}|+k^{3/4}$,  \label{thereservationfortheroots}
\item $|Z_{< i+1}|\le 2k$,\label{sizeCiDi} 
\item\label{Didisjunkt} $D_i\subseteq \colouringp{1}\sm (\phi (X_i)\cup Z_{<i})$,
\item\label{Xifastdisjunkt} $\phi(X_i-r_i)$ is disjoint from $ \bigcup_{j<i}\cup D_j$,
\item $\phi(f_i)\in V_2\sm W_i$ if $X_i$ is a shrub,\label{eatmorefruit}
\item $\phi (X_i)\subset \colouringp{1}$ if $X_i$ is a shrub.\label{putitintocolour1}
\end{enumerate}

(We remark that since $r_i$ is not defined for knags $X_i$, condition~\eqref{Xifastdisjunkt} means that  $\phi(X_i)$ is disjoint from $ \bigcup_{j<i}\cup D_j$ for knags $X_i$.)

It is clear that the first together with the last condition ensures that in step $m$ we have found the desired embedding for $T'$.

Before we show how to embed each $X_i$ fulfilling the properties above, let us quickly calculate a useful bound.
By Fact~\ref{fact:shadowbound} and~\eqref{sizeCiDi}, we have that $|F_i|\le \frac{9\Omega^*}{\delta_h}k$
 for all $i\le m$. Thus,
using  $|U_i|\le 2 |F_i|$ and again Fact~\ref{fact:shadowbound} and~\eqref{sizeCiDi}, this shows
\begin{equation}\label{eq:Usmallish}
|W_i|\le \frac{38(\Omega^*)^2}{\delta_h^2}k\;.
\end{equation}

\smallskip

Now suppose we are at step $i$ with $0\leq i\leq m$. That is, we have already embedded all $X_j$ with $j<i$, and are about to embed $X_i$. 

\medskip

First assume that $X_i$ is a knag. Note that if $i\neq 0$, then there is exactly one fruit $f_\ell$ with $\ell<i$ which neighbours $X_i$. Set $N_i:=\neighbor_G(\phi(f_\ell))$ in this case, and let $N_i:=V(G)$ for $i=0$. 
We distinguish between the two preconfigurations we might be in. 

\smallskip

Suppose first we are in 
Preconfiguration~$\mathbf{\mathbf{(exp)}}$. Recall that then we are in Configuration~$\mathbf{(\diamond6)}$. 

We use Lemma~\ref{lem:embed:greyFOREST} to embed the single tree $X_i$  with the following setting: $\ell_\PARAMETERPASSING{L}{lem:embed:greyFOREST}:=1$,  $V_{1,\PARAMETERPASSING{L}{lem:embed:greyFOREST}}:=V_1$, $V_{2,\PARAMETERPASSING{L}{lem:embed:greyFOREST}}:=V_0$, $U^*_\PARAMETERPASSING{L}{lem:embed:greyFOREST}:=(N_i\cap V_1)\setminus U_i= (N_i\cap V_1)\setminus F_i$, $U_\PARAMETERPASSING{L}{lem:embed:greyFOREST}:=U_i=F_i$, $Q_\PARAMETERPASSING{L}{lem:embed:greyFOREST}:=\frac{18\Omega^*}{\delta_6}$, $\zeta_\PARAMETERPASSING{L}{lem:embed:greyFOREST}:=\delta_6$, and $\gamma_\PARAMETERPASSING{L}{lem:embed:greyFOREST}:=\gamma$. 
Note that $U^*_\PARAMETERPASSING{L}{lem:embed:greyFOREST}$ is large enough by~\eqref{eatmorefruit} for $\ell$ and by~\eqref{COND:D6:2} and~\eqref{COND:D7:2}, respectively.
Lemma~\ref{lem:embed:greyFOREST} gives an embedding of the tree $X_i$ such that $\phi(\Veven(X_i))\subset V_1\setminus F_i$ and $\phi(\Vodd(X_i))\subset V_0\setminus F_i$ , which maps the root of $X_i$ to the neighbourhood of its parent's image. Note that this ensures~\eqref{eatmoreseeds} and~\eqref{Xifastdisjunkt} for step $i$, and setting $D_i:=\emptyset$ we also ensure~\eqref{sizeCiDi} and~\eqref{Didisjunkt}. Property~\eqref{thereservationfortheroots} holds since $V_2\cap \phi (X_i)=\emptyset$. Since $X_i$ is a knag,~\eqref{eatmorefruit} and~\eqref{putitintocolour1} are empty.

\smallskip

Suppose now we are in 
Preconfiguration~$\mathbf{\mathbf{(reg)}}$. Then
let $j\in \mathcal Y$ be such that $(N_i\cap Q_1^{(j)})\setminus U_i\neq \emptyset$. Such an index $j$ exists by~\eqref{eatmorefruit} for $\ell$ and by~\eqref{COND:D6:2} and~\eqref{COND:D7:2}, respectively, if $i\neq 0$, and trivially if $i=0$. We shall use Lemma~\ref{lem:embed:superregular} to embed $X_i$ in $(Q^{(j)}_0,Q^{(j)}_1)$. More precisely, we use Lemma~\ref{lem:embed:superregular}
with $A_\PARAMETERPASSING{L}{lem:embed:superregular}:=Q^{(j)}_1$, $B_\PARAMETERPASSING{L}{lem:embed:superregular}:=Q^{(j)}_0$, $\epsilon_\PARAMETERPASSING{L}{lem:embed:superregular}:=\tilde \epsilon$, $d_\PARAMETERPASSING{L}{lem:embed:superregular}:=d'$, $\ell_\PARAMETERPASSING{L}{lem:embed:superregular}:=\mu k$, $U_A:=U_i\cap A$, $U_B:=\phi(W_{B,<i})\cap B$ (then $|U_A|\leq |A|/2$ by the definition of $U_i$ and the choice of $j$).

Lemma~\ref{lem:embed:superregular} yields a $(\Veven(X_i)\hookrightarrow
V_1\setminus F_i, \Vodd(X_i)\hookrightarrow V_0)$-embedding of $X_i$, which maps the root of $X_i$ to the neighbourhood of its parent's image. Setting $D_i:=\emptyset$, we have~\eqref{eatmoreseeds}--\eqref{putitintocolour1}.

\medskip

So let us now assume that $X_i$ is a shrub. The  parent $y$ of
the root $r_{i}$ of $X_{i}$ lies in $W_{A,\ell}$ for some $\ell<i$.  By~\eqref{eatmoreseeds} for $\ell$, we mapped $y$ to a vertex $\phi(y)\in V_1\sm F_\ell$.
 As $\deg_{G}(\phi(y),V_2)\ge \delta_h k$ (by~\eqref{COND:D6:1} and \eqref{COND:D7:1}, respectively), and since $\phi (y)\notin F_\ell$, we have 
 \begin{equation}\label{cerropochoco}
 |S_y|\ge \frac{3\delta_h k}4.
 \end{equation} 
 
 Using~\eqref{thereservationfortheroots} for all $j$ with $\ell\leq j<i$, and using that the sets $D_j$ are pairwise disjoint by~\eqref{Didisjunkt}, we see that
  $$|S_y\cap \phi(X_0\cup\ldots \cup X_{i-1})|=|S_y\cap \phi(X_\ell \cup\ldots
  \cup X_{i-1})|\le |S_y\cap \bigcup_{\ell\le j<i}D_j|+m\cdot k^{3/4}\le 
  |S_y\cap \bigcup_{0\le j<i}D_j| +m\cdot k^{3/4}. $$ Therefore, and as
  by~\eqref{Didisjunkt} and~\eqref{Xifastdisjunkt}, the sets $\phi(X_0\cup\ldots
  X_{i-1})$ and $\bigcup_{0\le j<i}D_j$ are disjoint except for the at most
  $m\le |W_A\cup W_B|\le k^{0.1}$ roots $r_j$ of shrubs $X_j$, and since
  $k\gg1$, we have $$|S_y|\geq |S_y\cap \phi(X_0\cup\ldots\cup X_{i-1})| + 
  |S_y\cap \bigcup_{0\le j<i}D_j| -m\ge 2|S_y\cap \phi(X_0\cup\ldots\cup
  X_{i-1})|-k^{0.9}. $$ Thus,
\begin{equation*}
|S_y\setminus \phi (X_0\cup\ldots\cup X_{i-1})|\ge \frac{|S_y|- k^{0.9}}{2}\overset{\eqref{cerropochoco}}\geq \frac{3\delta_h k}{8}-\frac{k^{0.9}}{2}>\frac{\delta_h k}{3}\;.
\end{equation*}

So for $U^*:=S_{y}\setminus \phi (X_0\cup\ldots\cup X_{i-1})$ we have that $|U^*|\ge \frac{\delta_h
k}{3}$.
 If we have Configuration~$\mathbf{(\diamond6)}$ or $\mathbf{(\diamond7)}$ we use 
Lemma~\ref{lem:embedStoch:DIAMOND6} or~\ref{lem:embedStoch:DIAMOND7},
respectively, with input 
$U_\PARAMETERPASSINGR{L}{lem:embedStoch:DIAMOND6}{lem:embedStoch:DIAMOND7}:=W_{i}$,
$U^*_\PARAMETERPASSINGR{L}{lem:embedStoch:DIAMOND6}{lem:embedStoch:DIAMOND7}:=U^*$,
$L_\PARAMETERPASSINGR{L}{lem:embedStoch:DIAMOND6}{lem:embedStoch:DIAMOND7}:=|W_{A,{i}}|$,
$\gamma_\PARAMETERPASSINGR{L}{lem:embedStoch:DIAMOND6}{lem:embedStoch:DIAMOND7}:=\gamma$,
the  family $\{P_t\}_\PARAMETERPASSINGR{L}{lem:embedStoch:DIAMOND6}{lem:embedStoch:DIAMOND7}:=\{S_y\}_{y\in
W_{A,{j}}, j<i}$, and the rooted tree $(X_{i},r_{i})$ with fruit $f_{i}$. Further,
for Configuration~$\mathbf{(\diamond6)}$, set $\delta_\PARAMETERPASSING{L}{lem:embedStoch:DIAMOND6}:=\delta_6$,  $V_{2,\PARAMETERPASSING{L}{lem:embedStoch:DIAMOND6}}:= V_2$ and $V_{3,\PARAMETERPASSING{L}{lem:embedStoch:DIAMOND6}}:=
V_3$ and for Configuration~$\mathbf{(\diamond7)}$, set $\delta_\PARAMETERPASSING{L}{lem:embedStoch:DIAMOND7}:=\delta_7$, $\ell_{\PARAMETERPASSING{L}{lem:embedStoch:DIAMOND7}}:=1$, $Y_{1,\PARAMETERPASSING{L}{lem:embedStoch:DIAMOND7}}:= V_2$ and $Y_{2,\PARAMETERPASSING{L}{lem:embedStoch:DIAMOND7}}:=
V_3$.
The  output of Lemma~\ref{lem:embedStoch:DIAMOND6} or~\ref{lem:embedStoch:DIAMOND7},
respectively, is the extension of our embedding $\phi$ to $X_{i}$, and
a set
$D_{i}:=C_\PARAMETERPASSINGR{L}{lem:embedStoch:DIAMOND6}{lem:embedStoch:DIAMOND7}\subseteq (V_2\cup V_3)\sm (W_i \cup \phi(X_i))$
for which properties~\eqref{eatmoreseeds} (which is empty) and  properties~\eqref{thereservationfortheroots}--\eqref{putitintocolour1} hold. 

\end{proof}
 
\begin{lemma}\label{lem:embed:skeleton8}
Suppose we are in Setting~\ref{commonsetting} and~\ref{settingsplitting} and suppose further we
have Configuration~$\mathbf{(\diamond8)}(\delta,
\frac{\eta\gamma}{400},\epsilon_1,\epsilon_2,d_1,d_2,\mu_1,\mu_2,h_1,0)$, with
$2\cdot 10^5(\Omega^*)^6/\Lambda\le\delta^6$,
$\delta<\gamma^2\eta^4/(10^{16}(\Omega^*)^2)$, $d_2>10\epsilon_2>0$,
$d_2\mu_2\tau k\ge 4\cdot 10^3$, and $\max\{\epsilon_1, \tau/\mu_1\}\le \eta^2\gamma^2d_1/(10^{10}(\Omega^*)^3)$. Recall that we have
distinguished sets $V_0, \ldots, V_4$ and a semiregular matching $\mathcal N$.

Let $(W_A,W_B,\shrubA,\shrubB)$ be a $(\tau k)$-fine partition of a rooted tree
$(T,r)$ of order at most~$k$. Let $T'$ be the tree induced by all the cut-vertices
$W_A\cup W_B$ and all the internal shrubs. Suppose that 
\begin{equation}\label{eq:takeaway}
v(T')<h_1-\frac{\eta^2 k}{10^5}\;.
\end{equation}

Then there exists an embedding $\phi$ of $T'$ such that $\phi(W_A)\subset V_1$,
$\phi(W_B)\subset V_0$, and $\phi(T')\subset \colouringp{0}\cup\colouringp{1}$.
\end{lemma}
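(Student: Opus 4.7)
The overall scheme follows that of Lemma~\ref{lem:embed:skeleton67}: fix an ordered skeleton $(X_0,X_1,\ldots,X_m)$ of $T'$ given by Lemma~\ref{lem:orderedskeleton}, and process $X_0,X_1,\ldots$ in order. Knags are embedded in the super-regular pair $(V_0,V_1)$ supplied by Preconfiguration~$\mathbf{(reg)}(\epsilon_2,d_2,\mu_2)$ exactly as in the $\mathbf{(reg)}$ branch of Lemma~\ref{lem:embed:skeleton67}, via Lemma~\ref{lem:embed:superregular}: images of $W_A$ land in $V_1$ and images of $W_B$ in $V_0$, and we avoid the appropriate shadow set $W_i$ on the $V_1$-side so that the fruit neighbouring the next knag is well-placed. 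The new work concerns embedding an internal shrub $X_i$ with root $r_i$ whose parent $y\in W_{A,\ell}$ was mapped to some $\phi(y)\in V_1$ with $\phi(y)\notin F_\ell$.

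For each such internal shrub we use that, by \eqref{COND:D8:1}, $\phi(y)$ has at least $\delta k$ neighbours in $V_2$, so after subtracting the already-used or forbidden vertices we have a set $U^*\subseteq V_2\cap \neighbor_G(\phi(y))$ of size at least $\delta k/3$ (the same reservation-vs-use accounting with sets $S_y$ and $D_j$ as in Lemma~\ref{lem:embed:skeleton67} works verbatim, using $k^{0.1}$ as the bound on $|W_A\cup W_B|$). We embed $r_i$ somewhere in $U^*$. Now recall that $X_i$ decomposes into a principal subshrub (the one containing the cut-vertex neighbouring $W_A\cup W_B$) and peripheral subshrubs, each of size at most $\tau k$. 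The principal subshrub is embedded between $V_3\subseteq\smallatoms\setminus\exceptVertSplit$ and $V_4$ using Lemma~\ref{lem:embedStoch:DIAMOND7} with $Y_1:=V_3$, $Y_2:=V_4$ (valid by \eqref{COND:D8:5}--\eqref{COND:D8:6}); this is crucial because the $V_3\cup V_4$-expansion guarantees the following knag can be placed avoiding the shadow $W_{i+1}$ of all previously used vertices, exactly as in the proof of Lemma~\ref{lem:embed:skeleton67}. The peripheral subshrubs are distributed between two habitats: some go into $V_3\cup V_4$ (again by Lemma~\ref{lem:embedStoch:DIAMOND7}), others into the semiregular matching $\mathcal N$ using the regularity-based Lemmas~\ref{lem:embed:BALANCED} and~\ref{lem:embed:regular}.

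The choice of which peripheral subshrubs go where is dictated by condition \eqref{COND:D8:7}, which says that for every $v\in V_2$, $\deg_{\GD}(v,V_3)+\deg_{\Gblack}(v,V(\mathcal N))\ge h_1$. Given $\phi(r_i)\in V_2$, at least one of the two quantities is at least $h_1/2$, and we direct as many peripheral subshrubs as possible into the more generous habitat, falling back on the other as the current one saturates. Here the reservation mechanism of Lemma~\ref{lem:embedStoch:DIAMOND7} (producing the companion set $D_i\subseteq V_3\cup V_4$ used to track $S_y$-space) handles the $V_3\cup V_4$ side, and for $\mathcal N$ we use Lemma~\ref{lem:embed:ONESIDE} with the $(\M_A\cup\M_B)$-cover to absorb balanced portions of the peripheral forest into $\mathcal N$-edges while controlling leftover discrepancies. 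Since by \eqref{eq:takeaway} the \emph{total} order $v(T')$ used up over \emph{all} shrubs rooted at $V_2$-vertices leaves a slack of at least $\eta^2 k/10^5$ below $h_1$, and since any single shrub has order at most $\tau k\ll \eta^2 k/10^5$, the per-vertex budget of $h_1$ suffices across the whole run.

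The main obstacle is the interaction between the two habitats for peripheral subshrubs: the $\mathcal N$-embedding needs clusters to be used in approximately balanced fashion (otherwise $\M$-edges become unusable), while the $V_3\cup V_4$-embedding needs to avoid the cumulative shadow in $V_4$ so that the next knag can be placed. Both are handled by the same bookkeeping already used in Lemma~\ref{lem:embed:skeleton67}: maintain $F_i, U_i, W_i$ as in \eqref{eq:defFF}--\eqref{eq:defWW} for the $\Gexp$/avoiding side, and in parallel maintain discrepancies $f_{CD}$ for $\mathcal N$ so that Lemma~\ref{lem:embed:BALANCED} may be invoked; the numerical hypotheses $\epsilon_1,\tau/\mu_1\ll \eta^2\gamma^2 d_1/(\Omega^*)^3$ are exactly what is required to keep both bookkeepings self-consistent. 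Verifying properties (a)--(g) analogous to those in Lemma~\ref{lem:embed:skeleton67} at each step is then routine.
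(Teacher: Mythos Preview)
Your high-level plan is right—knags in $(V_0,V_1)$ via Preconfiguration~$\mathbf{(reg)}$, shrub roots in $V_2$, principal subshrubs in $V_3\cup V_4$, peripheral subshrubs split between $V_3\cup V_4$ and $\mathcal N$ using \eqref{COND:D8:7}—but the reservation mechanism is misplaced. You set $S_y\subset V_2$ for $y\in W_A$, invoking Lemma~\ref{lem:embed:skeleton67} verbatim. But in Configuration~$\mathbf{(\diamond8)}$, unlike in $\mathbf{(\diamond6)}$--$\mathbf{(\diamond7)}$, we have $V_2\subset\colouringp{0}$: the only vertices ever embedded there are shrub roots and fruits, at most $O(1/\tau)\ll\delta k$ in total, so \eqref{COND:D8:1} is never in danger and your reservation is vacuous. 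The genuine bottleneck is one level deeper: the root of each principal subshrub must land in $\neighbor_G(\phi(r_i))\cap V_3$, where $r_i$ is the shrub root already in $V_2$. Condition~\eqref{COND:D8:3} gives only $\delta k$ degree here, while $V_3\subset\colouringp{1}$ may absorb close to $h_1$ vertices of subshrubs over the run. The paper therefore introduces $W_C$ (the shrub roots, treated as part of ``extended knags'') and indexes the reservation by these, taking $S_y\subset V_3$ for $y\in W_C$; the sets $D_i$ from Lemma~\ref{lem:embedStoch:DIAMOND7} then track the right targets. Your $D_i\subset V_3\cup V_4$ is produced against target sets in $V_2$, so the invariant $|S_y\cap\phi(X_i)|\le|S_y\cap D_i|+k^{3/4}$ is meaningless as stated, and nothing prevents $\neighbor_G(\phi(r_i))\cap V_3$ from being exhausted.

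There is also a real difference in the peripheral-subshrub strategy. The paper does not decide per root via an $h_1/2$ threshold; it first pushes all peripherals into $\mathcal N$ (a balanced stage via Lemma~\ref{lem:embed:BALANCED}, then an unbalanced stage via Lemma~\ref{lem:embed:regular} using a per-root cover constructed during the balanced stage), and for each failure records that by \eqref{COND:D8:7} the corresponding root has $V_3$-degree at least $h_1-|\phi(X_i)\cap V(\mathcal N)|-\eta^2 k/10^6$. Only \emph{after} the full inductive sweep are the leftover peripherals sent to $V_3\cup V_4$, where the global amortization $|\mathrm{Im}(\phi)\cap V_3|+|\mathrm{Im}(\phi)\cap V(\mathcal N)|\le v(T')<h_1-\eta^2 k/10^5$ from \eqref{eq:takeaway} applies at once. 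Your inline greedy fallback would have to interleave this amortization with the principal-subshrub reservation during the sweep; that may be workable, but it is not routine and you have not argued it. (Also, Lemma~\ref{lem:embed:ONESIDE} needs a specific cover; the paper builds one on the fly from the balanced-stage leftovers rather than using a pre-existing $(\M_A\cup\M_B)$-cover.)
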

\begin{proof}
We assume that $r\in W_A$. The case when $r\in W_B$ is similar. 

Let $\mathcal K$ be the set of all  knags of the  $(\tau k)$-fine partition  $(W_A,W_B,\shrubA,\shrubB)$ of $T$. For each such knag $K\in\mathcal K$ set  $Y_K:=K\cup \children_{T'}(K)$.
 We call the subgraphs $Y_K$ {\em extended knags}.
 Set $\mathcal Y:=\{Y_K:K\in\mathcal K\}$ and $W_C:=V(\bigcup\mathcal Y\sm \bigcup \mathcal K)$. Since $W_C\subseteq V(T')$, we clearly have that $|W_C|\leq|W_A\cup W_B|$.

Note that the forest $T'-\bigcup\mathcal Y$ consists of the set $\mathcal P$ of peripheral subshrubs of internal  shrubs of  $(W_A,W_B,\shrubA,\shrubB)$, and the set $\mathcal S$ of principal subshrubs of internal shrubs of  $(W_A,W_B,\shrubA,\shrubB)$.
It is not difficult to observe that there is a sequence $(X_0, X_1,\ldots, X_m)$ such that $X_i=(M_i, Y_i,\mathcal P_i)$ such that $M_i\in\mathcal S$ and $\mathcal P_i\subseteq\mathcal P$  for each $i\leq m$, and such that the following holds. 

\begin{enumerate}[(I)]
\item\label{startstartstart}  $M_0=\emptyset$ and $Y_0$ contains $r$.
\item\label{peugeot505goodbye}
 $\mathcal P_i$ are exactly those peripheral subshrubs whose parents lie in $Y_i$.
\item\label{esmeralda} The parent $f_i$ of $Y_i$ lies in $M_i$ (unless $i=0$).
\item\label{prince} The parent $r_i$ of $M_i$ lies in some $Y_j$ with $j<i$ (unless $i=0$),
\item $\bigcup_{i\leq m}V(M_i\cup Y_i\cup\bigcup \mathcal P_i)=V(T')$.
\end{enumerate}
See Figure~\ref{fig:subshrubs} for an illustration.
\begin{figure}[ht]
\centering 
\includegraphics{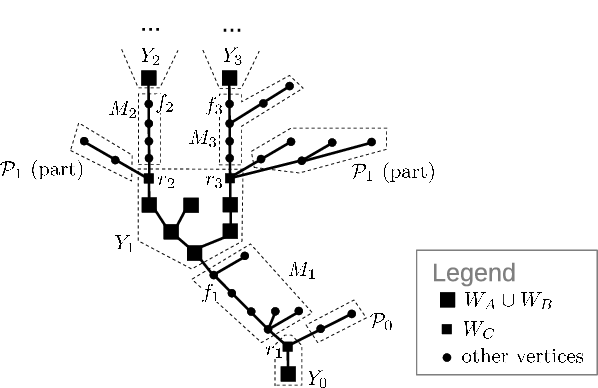}
\caption[Ordering the principal and peripheral subshrubs in
Lemma~\ref{lem:embed:skeleton8}]{An example of a sequence $(X_0,X_1,X_2,X_3,\ldots)$ in Lemma~\ref{lem:embed:skeleton8}.}
\label{fig:subshrubs}
\end{figure}

We now successively embed the elements of $X_i$, except possibly for a part of
the subshrubs in $\mathcal P_i$. The omitted peripheral subshrubs will be
embedded at the very end, after having completed the inductive procedure we are about to describe now.

We shall make use of the following lemmas:
 Lemma~\ref{lem:embed:superregular} (for embedding knags),
 Lemmas~\ref{lem:embed:BALANCED}  and~\ref{lem:embed:regular} (for embedding
 peripheral subshrubs in $\mathcal N$),  Lemma~\ref{lem:embedStoch:DIAMOND7}
 (for embedding principal subshrubs in $V_3\cup V_4$).

Throughout, $\phi$
denotes the current (partial) embedding of
$T'$. In each step $i$ we embed $M_i\cup Y_i$ and a subset of $\mathcal P_i$, and  denote by $\phi (X_i)$ the image of these sets (as far as it is defined). 
We also define an auxiliary set $D_i\subset V(G)$  which will serve to
ensure there is enough space for the roots of the subshrubs $M_\ell$ with
$\ell >i$.
Set $$Z_{<i}:=\bigcup_{j<i} (\phi(X_{j})\cup D_{j}).$$ 
Our plan for embedding the various parts of $X_i$ is depicted in Figure~\ref{fig:Embed8detailed}, which is
a refined version of Figure~\ref{fig:DIAMOND8}.
\begin{figure}[ht]
\centering 
\includegraphics{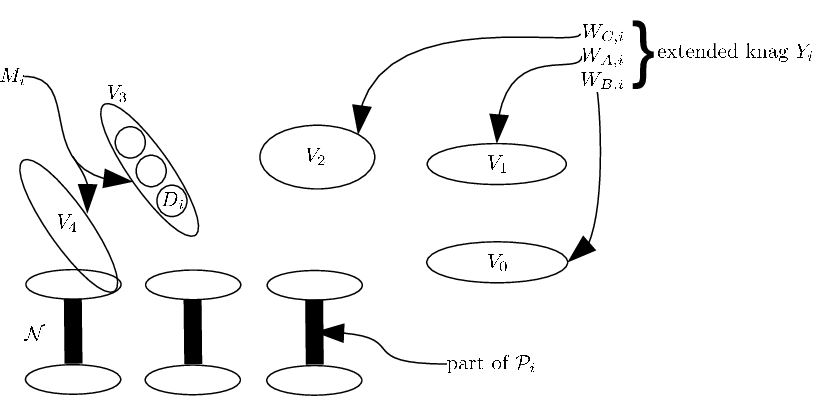}
\caption[Embedding the internal part of the tree in
Configuration~$\mathbf{(\diamond8)}$]{Embedding a part of the internal tree in
 Lemma~\ref{lem:embed:skeleton8}.}
\label{fig:Embed8detailed}
\end{figure} 

Let $W_{O,i}:=W_O\cap V(Y_i)$ for $O=A,B,C$. For each $y\in W_{C,{i}}$  let
$$S_y:=(V_3\cap \neighbor_G(\phi(y)))\setminus Z_{<i},$$ except if this set has
size more than $k$, in which case we choose any subset of size $k$. Similar as
in the preceding lemma, this is a target set for the roots of the
principal subshrub adjacent to $y$.

Fix a matching involution $\mathfrak{d}$ for $\mathcal N$, and 
 for $\ell=1,2$
define
\begin{align}\label{eq:defFF8}
 F^{(\ell)}_i:=Z_{<i}\cup \shadow^{(\ell)}_{G-\HugeVertices}\left(\ghost_{\mathfrak{d}}(Z_{<i}),\frac{\delta k}8\right)\;.
\end{align}

We  use the super-regular pairs $(Q^{(j)}_0,Q^{(j)}_1)$ ($j\in\mathcal Y$) to
define
\begin{equation}\label{eq:defUU8}
U_i:=F^{(2)}_i\cup\bigcup\left\{Q^{(j)}_1\::\: j\in\mathcal Y, |Q^{(j)}_1\cap F^{(2)}_i|\ge \frac{|Q^{(j)}_1|}2\right\}\;.
\end{equation}
We have 
\begin{equation}\label{Uileq2Fi}
|U_i|\le 2 |F^{(2)}_i|.
\end{equation}
Finally, for $\ell=1,2$ set 
\begin{align}
 \label{eq:defWW8}
W^{(\ell)}_i&:=\shadow^{(\ell)}_{G-\HugeVertices}\left(U_i,\frac{\delta k}2\right)\;.
\end{align}

We will now show how to define successively our embedding. At each step $i$, the embedding $\phi$ will be defined for $M_i\cup Y_i$ and a subset of $\mathcal P_i$, and it will have the following properties:
\begin{enumerate}[(a)]
\item $\phi(W_{A,i})\subset V_1\sm F^{(2)}_i$ and $\phi(W_{B,i})\subset V_0$,\label{eatmoreseeds8}
\item $\phi(W_{C,i})\subseteq V_2\sm F^{(1)}_i$,\label{wheretherootsgo}
\item $\phi(f_i)\in V_2\sm (F^{(1)}_i\cup W^{(1)}_i)$,\label{wherethefruitgoes}
\item for each $y\in W_{C,{j}}$ with $j\leq i$ we have   $
|S_y\cap  \phi(X_i)|\le |S_y\cap D_{i}|+ 2k^{3/4}$, 
\label{thereservationfortheroots8}
\item $|Z_{< i+1}|\le 2k$,\label{sizeCiDi8} 
\item\label{Didisjunkt8} $D_i\subseteq V_3\sm (\phi (X_i)\cup Z_{<i})$,
\item\label{Xifastdisjunkt8} $\phi(X_i\sm (V(M_i)\cap \children(W_C)))$ is
disjoint from $
\bigcup_{j<i} D_j $,\footnote{Note that $V(M_i)\cap \children(W_C)$ contains a
single vertex, the root of $M_i$.}
\item $\phi (X_i)\subset \colouringp{1} \cup \phi(Y_i \cup f_i)$,\label{putitintocolour18}
\item if $P\in \mathcal P_i$ is not embedded in step $i$ then for its parent $w\in W_C$ we have that $\deg_{\GD}(\phi(w),V_3)\ge h_1-|\phi(X_i)\cap V(\mathcal N)|-\frac{\eta^2
k}{10^6}$.\label{youknowi'mbad}
\end{enumerate}

Note that for~\eqref{putitintocolour18}, since $f_0$ is not defined, we assume $\phi (f_0)=\emptyset$.

Before we go on let us remark that~\eqref{putitintocolour18} together with~\eqref{Didisjunkt8} implies that at each step $i$ we have
\begin{equation}\label{eq:littlein0}
\left|Z_{<i}\cap\colouringp{0}\right|\le 3\cdot (|W_A|+|W_B|)\leBy{D\ref{ellfine}\eqref{few}} \frac{2016}\tau< \frac{\delta k}8\;.
\end{equation}

Also note that 
by Fact~\ref{fact:shadowbound} and by~\eqref{sizeCiDi8}, we have
\begin{equation}\label{eq:Fsmallish8}
|F^{(2)}_i|\le \frac{65(\Omega^*)^2}{\delta^2}k\;,
\end{equation}
and
\begin{equation}\label{eq:Usmallish8}
|W^{(2)}_i|\le \frac{520(\Omega^*)^4}{\delta^4}k\;.
\end{equation}

By~\eqref{wheretherootsgo} and by~\eqref{COND:D8:3} we have that $|S_y|\ge
\frac{7\delta k}8$. Now, using~\eqref{thereservationfortheroots8},~\eqref{Didisjunkt8} and~\eqref{Xifastdisjunkt8},  we can calculate similarly as in the previous lemma that at each step $i$ we have
\begin{equation}\label{eq:zindD8}
|S_y\setminus \bigcup_{\ell\le i}\phi(X_\ell)|\ge \frac{3\delta k}8\;.
\end{equation}

\medskip

Now assume we are at step $i$ of the inductive procedure, that is, we have already dealt with $X_0,\ldots , X_{i-1}$ and wish to embed (parts of) $X_i$.

\bigskip

 We start with embedding $M_i$, except if $i=0$, in that case we go directly to embedding $Y_0$. We shall embed $M_i$ in
$V_3\cup V_4$, except for the fruit $f_i$, which will be mapped to $V_2$.
The embedding has three stages. First we embed
$M_i-M_i(\uparrow f_i)$, then we embed $f_i$, and finally we embed the
forest $M_i(\uparrow f_i)-f_i$. The embedding of $M_i-M_i(\uparrow f_i)$ is an
application of Lemma~\ref{lem:embedStoch:DIAMOND7} analogous to the case of
Configuration~$\mathbf{(\diamond7)}$ in the previous
Lemma~\ref{lem:embed:skeleton67}. That is, set
$Y_{1,\PARAMETERPASSING{L}{lem:embedStoch:DIAMOND7}}:=V_3$,
$Y_{2,\PARAMETERPASSING{L}{lem:embedStoch:DIAMOND7}}:=V_4$, let
$$U_\PARAMETERPASSING{L}{lem:embedStoch:DIAMOND7}^*:=S_{r_i}\setminus
\bigcup_{\ell< i}\phi(X_i),$$ where $r_i$ 
 lies in $W_C$ by~\eqref{prince}, and
 $$U_\PARAMETERPASSING{L}{lem:embedStoch:DIAMOND7}:=F^{(2)}_i\cup W^{(2)}_i\;.$$ Note that
\[
|U_\PARAMETERPASSING{L}{lem:embedStoch:DIAMOND7}|\le
\frac{10^3(\Omega^*)^4}{\delta^4}k\le \frac{\delta\Lambda}{2\Omega^*}k,
\]
and by~\eqref{eq:zindD8} (which we use for $i-1$), also
\[
|U^*_\PARAMETERPASSING{L}{lem:embedStoch:DIAMOND7}|\ge
\frac{3\delta k}8.
\]
 The family
$\{P_1,\ldots,P_L\}_\PARAMETERPASSING{L}{lem:embedStoch:DIAMOND7}$ is $\{S_y\}_{y\in \bigcup_{j<i}W_{C,j}}$. There is only one tree to be embedded, namely 
$M_i-M_i(\uparrow f_i)$. It is not difficult to check that all the
conditions of Lemma~\ref{lem:embedStoch:DIAMOND7} are fulfilled.
Lemma~\ref{lem:embedStoch:DIAMOND7} gives an embedding of $M_i-M_i(\uparrow
f_i)$ in $V_3\cup V_4\subset \colouringp{1}$ with the property that
$\parent(f_i)$ is mapped to $V_3\setminus ( F^{(2)}_i \cup W^{(2)}_i )$. The lemma further gives a set
$D':=C_\PARAMETERPASSING{L}{lem:embedStoch:DIAMOND7}$ of size
$v(M_i-M_i(\uparrow f_i))$ such that $$|S_y\cap \phi(M_i-M_i(\uparrow f_i))|\le|S_y\cap
D'|+k^{0.75}$$ for each $y\in \bigcup_{j<i}W_{C,j}$.

Using the degree condition~\eqref{COND:D8:4} we can embed $f_i$
 to $$V_2\setminus ( F^{(1)}_i \cup W^{(1)}_i )$$
(recall that~\eqref{eq:littlein0} asserts that only very little space in $V_2$
is occupied). 
This ensures~\eqref{wherethefruitgoes} for $i$.

To embed $M_i(\uparrow f_i)-f_i$ we use again
Lemma~\ref{lem:embedStoch:DIAMOND7}. The parameters are this time
$Y_{1,\PARAMETERPASSING{L}{lem:embedStoch:DIAMOND7}}:=V_3$,
$Y_{2,,\PARAMETERPASSING{L}{lem:embedStoch:DIAMOND7}}:=V_4$,
\begin{align*}
U_\PARAMETERPASSING{L}{lem:embedStoch:DIAMOND7}^*&:=(\neighbor_G(\phi(f_i))\cap
V_3)\setminus (Z_{<i}\cup\phi(M_i-M_i(\uparrow f_i)) )\;\mbox{, and}\\
U_\PARAMETERPASSING{L}{lem:embedStoch:DIAMOND7}&:=Z_{<i}\cup
\phi(M_i-M_i(\uparrow f_i))\cup D'\;.
\end{align*}
Note that $|U_\PARAMETERPASSING{L}{lem:embedStoch:DIAMOND7}^*|\ge \frac{\delta k}4$
by~\eqref{COND:D8:3}, by the fact that $\phi(f_i)\not\in W^{(1)}_i$, and as $v(T_i)+i<\delta k/8$.
The family $\{P_1,\ldots,P_L\}_\PARAMETERPASSING{L}{lem:embedStoch:DIAMOND7}$ is $\{S_y\}_{y\in \bigcup_{j<i}W_{C,j}}$. The trees to be embedded are
the components of $M_i(\uparrow f_i)-f_i$ rooted at the children of
$f_i$.
All the conditions of Lemma~\ref{lem:embedStoch:DIAMOND7} are fulfilled. The
lemma provides an embedding in $V_3\cup V_4\subset \colouringp{1}$. It further
gives a set $D'':=C_\PARAMETERPASSING{L}{lem:embedStoch:DIAMOND7}$ of size
$v(M_i(\uparrow f_i))-1$ such that $$|S_y\cap \phi(M_i(\uparrow
f_i)-f_i)|\le|S_y\cap D''|+k^{0.75}$$ for each $y\in \bigcup_{j<i}W_{C,j}$.
Then $D_i:=V_3\cap(D'\cup D'')$ is such that for each $y\in \bigcup_{j<i}W_{C,j}$,
\begin{equation}\label{eq:greatlyduplicated}
|S_y\cap \phi(M_i)|\le  |S_y\cap D_i|+2k^{0.75}\;,
\end{equation}
as $S_y\subset V_3$ and $\phi(f_i)\notin V_3$. Note that this choice of $D_i$
also ensures~\eqref{sizeCiDi8} for $i$, and we have by the choices of
$U_\PARAMETERPASSING{L}{lem:embedStoch:DIAMOND7}^*$ and
$U_\PARAMETERPASSING{L}{lem:embedStoch:DIAMOND7}$ in both applications of
Lemma~\ref{lem:embedStoch:DIAMOND7} that
\begin{equation}\label{camundongo}
D_i\subseteq V_3\sm (\phi(M_i)\cup Z_{<i})\quad\text{ and }\quad
\phi(X_i\sm (V(M_i)\cap \children(W_C)))\cap
\bigcup_{j<i}D_j=\emptyset.\,
\end{equation}

\bigskip

We now turn to embedding $Y_i$.
 Our plan is to use first Lemma~\ref{lem:embed:superregular} to embed
$Y_i\sm W_C$ in $(Q^{(j)}_0,Q^{(j)}_1)$ for an appropriate index $j$. After
that, we shall show how to embed $W_{C,i}$.

If $i=0$ then take an arbitrary $j\in\mathcal Y$. Otherwise note that by~\eqref{esmeralda}, the parent $f_i$ of the root of $Y_i$ lies in $M_i$. Note that $f_i$ is a fruit in $M_i$. 
Let $j\in \mathcal Y$ be such that $(\neighbor_G(\phi(f_{i}))\cap
Q_1^{(j)})\setminus U_i\neq \emptyset$. 
 Such an index $j$ exists by~\eqref{COND:D8:2}
 and the fact that $
 \phi(f_{i})\not\in W^{(1)}_{i}$ by~\eqref{wherethefruitgoes} for $i$.
 
 We  use
Lemma~\ref{lem:embed:superregular} with
$A_\PARAMETERPASSING{L}{lem:embed:superregular}:=Q^{(j)}_1$,
$B_\PARAMETERPASSING{L}{lem:embed:superregular}:=Q^{(j)}_0$,
$\epsilon_\PARAMETERPASSING{L}{lem:embed:superregular}:= \epsilon_2$,
$d_\PARAMETERPASSING{L}{lem:embed:superregular}:=d_2$,
$\ell_\PARAMETERPASSING{L}{lem:embed:superregular}:=\mu_2 k$, $U_A:=U_i\cap
A_\PARAMETERPASSING{L}{lem:embed:superregular}$, $U_B:=Z_{<i}\cap
B_\PARAMETERPASSING{L}{lem:embed:superregular}$.  By the choice of $j$ and the
definition of $U_i$, we find that $U_A$ is small enough, and
using~\eqref{eq:littlein0} we see that $U_B$ is also small enough.
Lemma~\ref{lem:embed:superregular} yields a $(\Veven(Y_i-
W_C)\hookrightarrow V_1\setminus F^{(2)}_i, \Vodd(Y_i-
W_C)\hookrightarrow V_0)$-embedding of $Y_i-
W_C$. We clearly see
condition~\eqref{eatmoreseeds8} satisfied for $i$.

We now embed successively the vertices of the set
$W_{C,i}=\{w_\ell:\ell=1,\ldots ,|W_{C,i}|\}$.
By the definition of the set $W_C$, we know that the parent $x$ of~$w_\ell$ lies in $W_{A,i}$.
Combining~\eqref{COND:D8:1} with the fact that $\phi(x)\in V_1\sm F^{(2)}_i$ by~\eqref{eatmoreseeds8} for $i$, we have
that 
$$\left|\neighbor_G\left(\phi(x),V_2\setminus (F^{(1)}_i \sm Z_{<i})\right)\right|\ge \frac{7\delta k}8\;.$$ Thus
by~\eqref{eq:littlein0} and since $V_2\subseteq \colouringp{0}$, we can
accommodate $w_\ell$ in $V_2\setminus F^{(1)}_i$. This is as
desired for~\eqref{wheretherootsgo} in step $i$.

\bigskip

We now turn to $\mathcal P_i$. We will embed a subset of these peripheral subshrubs in
 $\mathcal N$. This procedure is  divided into two stages. First we shall aim to embed as many
subshrubs as possible in $\mathcal N$ in a balanced way, with the help of
Lemma~\ref{lem:embed:BALANCED}. When it is no longer
possible to embed any subshrub in a balanced way in $\mathcal N$, we  embed  in $\mathcal N$ as many of the leftover
subshrubs as possible, in an unbalanced way. For this part of the embedding we use
Lemma~\ref{lem:embed:regular}. 

 By~\eqref{peugeot505goodbye} all the parents of the subshrubs in $\mathcal
 P_i$ lie in $W_{C,i}$. For $w_\ell\in W_{C,i}$, let $\mathcal P_{i,\ell}$
 denote the set of all subshrubs in $\mathcal P_i$ adjacent to $w_\ell$. In the
 first stage, we shall embed, successively for $j=1, \ldots ,|W_{C,i}|$, either
 all or none of $\mathcal P_{i,j}$ in a balanced way in $\mathcal N$. Assume inductively that
\begin{equation}\label{eq:INDbalanced}
\phi\Big(\bigcup_{p<j}\mathcal P_{i,p}\Big) \mbox{ is $(\tau k)$-balanced with respect to $\mathcal N$.} 
\end{equation}

Construct a
semiregular matching $\mathcal N_j$ absorbed by $\mathcal N$ as follows. Let 
$\mathcal N_j:=\{(X_1',X_2')\::\: (X_1,X_2)\in\mathcal N\}$, where for
$(X_1,X_2)\in\mathcal N$ we define $(X'_1,X_2')$ as the maximal balanced
unoccupied subpair seen from $\phi(w_{j})$, i.e., for $b=1,2$, we take
$$X'_b\subset \left(X_b\cap \neighbor_{\Gblack}(\phi(w_{j})\right)\setminus
\left(\phi(\bigcup_{p<j}\mathcal P_{i,p})\cup \bigcup_{\ell <i}\phi (X_\ell)\right)$$ maximal subject to $|X'_1|=|X'_2|$.
If $|V(\mathcal N_j)|\ge \frac{\eta^2 k}{10^7\Omega^*}$ then we shall embed $\mathcal P_{i,j}$, otherwise we do not embed  $\mathcal P_{i,j}$ in this step. 
So assume we decided to embed $\mathcal P_{i,j}$. Recall that the total order of the subshrubs in this set
 is at most $\tau k$.
Using the same argument as for Claim~\ref{cl:Megdes} we have 
$$\left|\bigcup\{X\cup Y\::\:(X,Y)\in\mathcal
N,\deg_{\GD}(\phi(w_{j}),X\cup
Y)>0\right|\le\frac{4(\Omega^*)^2}{\gamma^2}k\;.$$ Thus, there exists a subpair
$(X_1',X_2')\in \mathcal N_j$ of some $(X_1,X_2)\in \mathcal N$ with
\begin{equation}\label{eq:SIMcalc}
\frac{|X_1'|}{|X_1|}\ge
\frac{\tfrac{\eta^2}{10^7\Omega^*}k}{\tfrac{4(\Omega^*)^2}{\gamma^2}k}\ge \frac{\gamma^2 \eta^2}{10^8 (\Omega^*)^3}\;.
\end{equation}
In particular, $(X_1',X_2')$ forms a
$\frac{2\cdot 10^8\epsilon_1(\Omega^*)^3}{\gamma^2\eta^2}$-regular pair of
density at least $d_1/2$ by Fact~\ref{fact:BigSubpairsInRegularPairs}.
We use Lemma~\ref{lem:embed:BALANCED} to embed $\mathcal P_{i,j}$ in $\M_\PARAMETERPASSING{L}{lem:embed:BALANCED}:=\{(X_1',X_2')\}$. The family
$\{f_{CD}\}_\PARAMETERPASSING{L}{lem:embed:BALANCED}$ comprises of a single
number $f_{(X_1',X_2')}$ which is the discrepancy of $\bigcup_{p<j}\phi (\mathcal P_{i,p})$ with respect to $(X_1,X_2)$. This guarantees
that~\eqref{eq:INDbalanced} is preserved. This finishes the $j$-th step.
We repeat this step until $j=|W_{C,i}|$, then we go to the next stage.

Denote by $\mathcal Q_i$ the set of all $P\in \mathcal P_{i}$ that have not been embedded in the first stage. Note that for each $Q\in \mathcal Q_i$, with $Q\in \mathcal P_{i,j}$, say, and for each $(X_1,X_2)\in\mathcal N$ there is a $b_{(X_1,X_2)}\in\{1,2\}$ such that for
\[
O_j:=\bigcup_{(X_1,X_2)\in\mathcal N}\left(X_{b_{(X_1,X_2)}}\cap \neighbor_{\Gblack}(\phi(w_{j})\right)\setminus
\left(\phi(\bigcup_{p<j}\mathcal P_{i,p})\cup \bigcup_{\ell <i}\phi (X_\ell)\right)
\]
we have that
\begin{equation}\label{boli}
|O_j|<\frac{\eta^2 k}{10^7\Omega^*}.
\end{equation}
The fact that $O_j$ is small implies that there an $\mathcal N$-cover such that
the $\Gblack$-neighborhood of $w_j$ restricted to this cover is essentially
exhausted by the image of $T'$.

\medskip
In the second stage, we shall embed some of the peripheral subshrubs of
$\mathcal Q_i$. They will be mapped in an unbalanced way to $\mathcal
N$. We will do this in steps $j=1, \ldots ,|W_{C,i}|$, and denote by
$\mathcal R_j$ the set of those $\mathcal P\subseteq \mathcal Q_i$ embedded
until step $j$. At step $j$, we decide to embed $\mathcal P_{i,j}$ if $\mathcal
P_{i,j}\subseteq \mathcal Q_i$ and

\begin{equation}\label{lacumparsita}
\deg_{\Gblack}\Big(\phi(w_{j}),V(\mathcal N)\setminus \phi(\bigcup\mathcal P_{i}\sm \mathcal Q_i)\Big) - |\bigcup\mathcal R_{ j-1}|\ge\frac{\eta^2 k}{10^6}\;.
\end{equation}

Let $$\tilde{\mathcal N}:=\left\{(X,Y)\in\mathcal N\::\: |(X\cup Y)\cap
Z_{<i}|<\frac{\gamma^2\eta^2}{10^9(\Omega^*)^2}|X|\right\}\;.$$ 

As by~\eqref{wheretherootsgo} we know that $w_{j}$
was embedded in $V_2\sm F^{(1)}_i$, we have
\begin{equation}\label{wasverlorengeht}
\deg_{\Gblack}(\phi(w_{j}),V(\mathcal N\sm \tilde{\mathcal N}))
\le
\frac{2\cdot 10^9 (\Omega^*)^2}{\gamma^2\eta^2} \cdot\frac{\delta k}{8}
\le 
\frac{\eta^2}{10^7} k\;.
\end{equation}

Using~\eqref{boli},~\eqref{lacumparsita} and~\eqref{wasverlorengeht}, similar calculations as in~\eqref{eq:SIMcalc} show the existence of a pair
$(X,Y)\in\tilde{\mathcal N}$  with
$$\deg_{\Gblack}(\phi(w_{j}),(X\cup Y)\setminus ( O_j\cup \phi(\bigcup\mathcal P_{i}\sm \mathcal Q_i))) - |(X\cup Y)\cap \phi(\bigcup\mathcal R_{ j-1})|\ge
\frac{\gamma^2\eta^2}{10^8(\Omega^*)^2}|X\cup Y|\;.$$

Then by the definition
of $\tilde{\mathcal N}$, and setting $Z_{<i}^+:=\ghost_{\mathfrak d}(Z_{<i})$ we get that
 \begin{align*}
 \deg_{\Gblack} & (\phi(w_{j}), (X\cup Y)\setminus (Z_{<i}^+\cup O_j \cup \phi(\bigcup\mathcal P_{i}\sm \mathcal Q_i))) - |(X\cup Y)\cap \phi(\bigcup\mathcal R_{ j-1})|\\ &
 \ge \frac{\gamma^2\eta^2}{10^9(\Omega^*)^2}|X\cup Y|. 
\end{align*}
 By the definition of $O_j$, all of the degree counted here goes to one side of the matching edge $(X,Y)$, say to $X$. So
 \begin{align}
 \deg_{\Gblack}(\phi(w_{j}),X\setminus (Z_{<i}^+\cup \phi(\bigcup\mathcal
 P_{i}\sm \mathcal Q_i\cup \bigcup\mathcal R_{j-1}))) - | Y\cap \phi(\bigcup\mathcal R_{ j-1})| & \ge 
 \frac{\gamma^2\eta^2}{10^9(\Omega^*)^2}|X|\label{aha1}
 \\ & \ge 
12\frac{\eps_1}{d_1}|X|+\tau k.\label{aha2}
\end{align}
We claim that furthermore,
\begin{equation}
|Y\setminus (Z_{<i}^+\cup\phi(\bigcup\mathcal P_{i}\sm \mathcal Q_i\cup \bigcup\mathcal R_{j-1}))|
\ge
\frac{\gamma^2\eta^2}{ 10^{10}(\Omega^*)^2}|Y|
\ge
12\frac{\eps_1}{d_1}|Y|+\tau k\;.\label{jajaja}
\end{equation}
Indeed, otherwise we get by~\eqref{aha1} that
\[
|X\setminus (Z^+_{<i}\cup\phi(\bigcup\mathcal P_{i}\sm \mathcal Q_i))|
>
 |Y\setminus (Z^+_{<i}\cup\phi(\bigcup\mathcal P_{i}\sm \mathcal Q_i))| + \frac{\gamma^2\eta^2}{ 10^{10}(\Omega^*)^2}|X|,
\]
which  is impossible by~\eqref{eq:INDbalanced} and since $|X|\geq \mu_1 k$.

Hence, by~\eqref{aha2} and~\eqref{jajaja}, we can  embed $\mathcal P_{i,j}$ into the unoccupied part $(X,Y)$ using
Lemma~\ref{lem:embed:regular} repeatedly.\footnote{Recall that the
total order of $\mathcal P_{i,j}$ is at most $\tau k$.}

\medskip

Note that  if some $\mathcal
P_{i,j}$ has not been embedded in either of the two stages,  then the vertex
$w_{j}$ must have a somewhat insufficient degree in $\mathcal N$. More precisely, employing~\eqref{lacumparsita} we see that
$\deg_{\Gblack}(\phi(w_{j}),V(\mathcal N))-|\phi(X_i)\cap V(\mathcal
N)|<\frac{\eta^2 k}{10^6}$. Combined with~\eqref{COND:D8:7}, we find that $$\deg_{\GD}(\phi(w_j),V_3)\ge h_1-|\phi(X_i)\cap V(\mathcal N)|-\frac{\eta^2
k}{10^6}\;,$$
in other words, \eqref{youknowi'mbad} holds for $i$. 

This finishes step $i$ of the embedding procedure. 
Recall that the sets $V_3$ and $V(\mathcal N)$ are
disjoint. Hence, by~\eqref{eatmoreseeds8} and~\eqref{wheretherootsgo}, the
principal subshrubs are the only parts of $T'$ that were embedded in $V_3$ (and
possibly elsewhere).
Thus, using~\eqref{camundongo}, we see that~\eqref{Didisjunkt8},~\eqref{Xifastdisjunkt8}  and~\eqref{putitintocolour18} are satisfied for $i$. Also, by~\eqref{eq:greatlyduplicated},~\eqref{thereservationfortheroots8} holds for $i$.

\bigskip

After having completed the inductive procedure, we still have to embed some
peripheral subshrubs.
Let us take sequentially those $P\in\mathcal P$  which  were not embedded. Say $w$ is the parent of $P$.
By~\eqref{youknowi'mbad} we have 
$$\deg_{\GD}(\phi(w),V_3\setminus \textrm{Im}(\phi))\ge
h_1-| \textrm{Im}(\phi)\cap V(\mathcal N)|-| \textrm{Im}(\phi)\cap V_3|-\frac{\eta^2
k}{10^6}\geByRef{eq:takeaway} \frac{\eta^2 k}{10^6}\;.$$
An application of Lemma~\ref{lem:embedStoch:DIAMOND7} in which
$Y_{1,\PARAMETERPASSING{L}{lem:embedStoch:DIAMOND7}}:=V_3$,
$Y_{2,\PARAMETERPASSING{L}{lem:embedStoch:DIAMOND7}}:=V_4$,
$U_\PARAMETERPASSING{L}{lem:embedStoch:DIAMOND7}:=\textrm{Im}(\phi)$, $U^*_\PARAMETERPASSING{L}{lem:embedStoch:DIAMOND7}:=\neighbor_{\GD}(\phi(w))\cap
V_3\setminus \textrm{Im}(\phi)$, and
$\{P_1,\ldots,P_L\}_\PARAMETERPASSING{L}{lem:embedStoch:DIAMOND7}:=\emptyset$
gives an embedding of $P$ in $V_3\cup V_4\subset \colouringp{1}$.

By conditions~\eqref{eatmoreseeds8},~\eqref{wheretherootsgo},~\eqref{wherethefruitgoes} and~\eqref{putitintocolour18}  we have thus found the desired embedding for $T'$.
\end{proof}

\begin{lemma}\label{lem:embed:heart1}\HAPPY{D}
Suppose we are in Setting~\ref{commonsetting} and~\ref{settingsplitting}, and
that the sets $V_0$ and $V_1$ witness
Preconfiguration~$\mathbf{(\heartsuit1)}(2\eta^3 k/10^3,h)$. Suppose that
$U\subset \colouringp{0}\cup\colouringp{1}$. Suppose that
$\{x_j\}_{j=1}^\ell\subset V_0$ and $\{y_j\}_{j=1}^{\ell'}\subset V_1$ are sets of 
mutually distinct vertices. Let $\{(T_j,r_j)\}_{j=1}^\ell$ and
$\{(T'_j,r'_j)\}_{j=1}^{\ell'}$ be families of rooted trees such that each
component of $T_j-r_j$ and of $T'_j-r'_j$  has order at most $\tau k$.

If 
\begin{align}\label{eq:MALYSTROM1}
\sum_j v(T_j)&\le \frac h2-\frac{\eta^2 k}{1000}\;,\\
\label{eq:MALYSTROM2}
\sum_j v(T_j)+\sum_j v(T'_j)&\le h-\frac{\eta^2 k}{1000}\;,\mbox{and}\\
\label{eq:MALYSTROM3}
|U|+\sum_j v(T_j)+\sum_j v(T'_j)&\le k\;,
\end{align}
 then there exist  $(r_j\hookrightarrow x_j, V(T_j)\setminus\{r_j\}\hookrightarrow V(G)\setminus U)$-embeddings of $T_j$ and $(r'_j\hookrightarrow y_j, V(T'_j)\setminus\{r'_j\}\hookrightarrow V(G)\setminus U)$-embeddings of $T'_j$ in $G$, all mutually disjoint.
\end{lemma}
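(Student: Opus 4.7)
The plan is to embed the trees sequentially by repeated applications of Lemma~\ref{lem:embed:ONESIDE} to the restricted matching $\M^{(2)}:=(\M_A\cup\M_B)\colouringpI{2}$, which by Lemma~\ref{lem:RestrictionSemiregularMatching} is a semiregular matching sitting inside $\colouringp{2}$, and to the induced $\M^{(2)}$-cover $\mathcal F^{(2)}:=\{X\colouringpI{2}:X\in\mathcal F\}$. Throughout, I maintain a forbidden set $U^\star$, initialised to $U\cup\{x_j\}_j\cup\{y_j\}_j$ and augmented by the image of each tree once embedded; the assumption $U\subset\colouringp{0}\cup\colouringp{1}$ ensures $U\cap V(\M^{(2)})=\emptyset$, so $U$ itself never interferes inside the target region, and only the images of previously embedded trees have to be tracked there.

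I would process the $V_1$-rooted trees $(T'_j,r'_j)$ first. Preconfiguration~$\mathbf{(\heartsuit1)}$ gives $\deg_{\Gcapt}(y_j,\bigcup\mathcal F)\le\gamma' k$, and the shadow avoidance $y_j\notin\shadowsplit\cup\shadow_{\GD}(\WantiC,\eta^2 k/10^5)$, together with Lemma~\ref{lem:RestrictionSemiregularMatching}, controls the $\Gcapt$-degree of $y_j$ into $\Vgood\colouringpI{2}\setminus V(\M^{(2)})$, so that
\[
\deg_{\Gcapt}\bigl(y_j,V(\M^{(2)})\setminus\bigcup\mathcal F^{(2)}\bigr)\ge h-\gamma' k-O\bigl(\tfrac{\eta^2 k}{10^5}\bigr).
\]
Combined with~\eqref{eq:MALYSTROM2} and $|U^\star\cap V(\M^{(2)})|\le\sum_{i<j}v(T'_i)$, this verifies the hypothesis of Lemma~\ref{lem:embed:ONESIDE}: the slack $\eta^2 k/1000$ in~\eqref{eq:MALYSTROM2} strictly dominates both the cover loss $\gamma' k=2\eta^3 k/10^3$ and the shadow losses. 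The lemma then embeds $(T'_j,r'_j)$ inside $V(\M^{(2)})\setminus U^\star\subset\colouringp{2}\setminus U^\star$.

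After all $V_1$-trees are embedded (all of them lying inside $V(\M^{(2)})\setminus\bigcup\mathcal F^{(2)}$), I process the $V_0$-rooted trees $(T_j,r_j)$ using the complementary $\M^{(2)}$-cover $\bar{\mathcal F}:=\V(\M^{(2)})\setminus\mathcal F^{(2)}$; this forces each $T_j$ into $\bigcup\mathcal F^{(2)}$ and therefore keeps its image disjoint from the previously embedded $V_1$-tree images. The resulting bookkeeping reduces the hypothesis of Lemma~\ref{lem:embed:ONESIDE} to a lower bound of the form $\deg_{\Gcapt}(x_j,\bigcup\mathcal F^{(2)})\ge h/2-O(\eta^2 k/10^5)$, combined with $|U^\star\cap\bigcup\mathcal F^{(2)}|\le\sum_{i<j}v(T_i)$ and the slack in~\eqref{eq:MALYSTROM1}.

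The main obstacle is precisely this last $V_0$-degree estimate, for which Preconfiguration~$\mathbf{(\heartsuit1)}$ gives no direct guarantee: only $\deg_{\Gcapt}(x_j,\Vgood\colouringpI{2})\ge h/2$ is asserted, with no distinction between cover side and non-cover side. I expect this to be dischargeable using the concrete form of $\mathcal F$ in~\eqref{def:Fcover} --- namely, that $\mathcal F$ contains $\V_1(\M_B)$ (the $S^0$-side of every $\M_B$-edge) and the $\XA$-side of every $\M_A$-edge whose both endpoints lie in $\XA$ --- together with the splitting-level bookkeeping for $x_j\in V_0\subset\colouringp{0}\setminus\shadowsplit$, which forces the $\Gcapt$-neighbourhood of $x_j$ inside $V(\M^{(2)})$ to distribute sufficiently across both sides of each $\M^{(2)}$-edge. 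All numerical inequalities then close because the slack $\eta^2 k/1000$ in~\eqref{eq:MALYSTROM1}--\eqref{eq:MALYSTROM2} strictly dominates each individual loss (cover, shadow, random-splitting exception), but carrying out this estimate cleanly is where most of the technical work lies.
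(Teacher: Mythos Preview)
Your approach has a genuine gap at the very first step. You claim that the shadow-avoidance conditions on $y_j$ ``control the $\Gcapt$-degree of $y_j$ into $\Vgood\colouringpI{2}\setminus V(\M^{(2)})$'', so that almost all of the guaranteed degree $h$ lands in the restricted matching. But $\Vgood$ is not close to $V(\M_A\cup\M_B)$: by definition
\[
\Vgood=\bigl(\largevertices{\eta}{k}{G}\cup V(\Gexp)\cup\smallatoms\cup V(\M_A\cup\M_B)\bigr)\setminus(\HugeVertices\cup L_\#)\,,
\]
so the degree of $y_j$ into $\Vgood\colouringpI{2}$ may be almost entirely into $V(\Gexp)\colouringpI{2}$, or into $\smallatoms\colouringpI{2}$, or into large-vertex clusters outside the matching. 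The shadow conditions $y_j\notin\shadowsplit\cup\shadow_{\GD}(\WantiC,\eta^2 k/10^5)$ only bound degrees into the small exceptional sets coming from the random split and from $\WantiC$; they say nothing about $V(\Gexp)$ or $\smallatoms$ or $\bigcup\BL$. Consequently the hypothesis of Lemma~\ref{lem:embed:ONESIDE} cannot be verified, and the same obstruction applies a fortiori to the $V_0$-trees, where you already flag a difficulty.

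This is precisely why the paper's proof is organised around a four-way case split. For each component to be embedded, one checks (Claims~\ref{cl:OneInFour} and~\ref{cl:OneInFour'}) which of four target regions absorbs a substantial piece of the remaining degree: the matching (case $\mathbf{(U1)}$), the avoiding set $\smallatoms$ (case $\mathbf{(U2)}$), the expander $\Gexp$ (case $\mathbf{(U3)}$), or the large-vertex clusters (case $\mathbf{(U4)}$). Each case then invokes its own embedding primitive (Lemma~\ref{lem:embed:regular}, \ref{lem:embed:avoidingFOREST}, \ref{lem:embed:greyFOREST}, respectively). Moreover, for the $V_0$-trees there is a preliminary Stage~I in which one first packs components into the matching in a \emph{balanced} way via Lemma~\ref{lem:embed:BALANCED}; the residual one-sided matching $\mathcal N^*$ and the vertex-specific covers $\mathcal X_j$ produced there are what make the $\mathbf{(U1)}$ bookkeeping go through in Stage~II. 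Your single-primitive plan via Lemma~\ref{lem:embed:ONESIDE} cannot substitute for this, because the lemma's degree hypothesis simply need not hold.
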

\begin{proof}
The embedding has three stages. In Stage~I we embed some components of $T_j-r_j$
(for all $j=1,\ldots, \ell$) in the parts of $(\M_A\cup\M_B)$-edges which are
``seen in a balanced way from $x_j$''. In Stage~II we embed the remaining
components of $T_j-r_j$. Last, in Stage~III we embed all the components
$T'_j-r'_j$ (for all $j=1,\ldots, \ell'$).

Let us first give a bound on the total size of $(\M_A\cup\M_B)$-vertices $C\in\V(\M_A\cup\M_B)$, $C\subset \bigcup \clusters$ seen from a given vertex via edges of $\GD$. This bound will be used repeatedly.
\begin{claim}\label{cl:PROCH}
Let $v\in V(G)$. Then for  $\mathcal U:=\{C\in\V(\M_A\cup \M_B)\::\: C\subset\bigcup \clusters, \deg_{\GD}(x, C)>0\}$ we have 
\begin{align}\label{eq:PROCH1}
 |\bigcup \mathcal U|&\le  \frac{2(\Omega^*)^2 k}{\gamma^2}\;,\mbox{and}\\
\label{eq:PROCH2}
|\mathcal U|&\le  \frac{2(\Omega^*)^2k}{\gamma^2\pi\clustersize}\;.
\end{align}
\end{claim}
\begin{proof}[Proof of Claim~\ref{cl:PROCH}]
Let $\mathbf U\subset \clusters$ be the set of  those clusters which intersect $\neighbor_{\GD}(x_j)$. Using the same argument as in the proof of Claim~\ref{cl:Megdes} we get that $|\bigcup\mathbf U|\le \frac{2(\Omega^*)^2 k}{\gamma^2}$, i.e.~\eqref{eq:PROCH1} holds. Also,~\eqref{eq:PROCH2} follows since $\M_A\cup\M_B$ is $(\epsilon,d,\pi\clustersize)$-semiregular.
\end{proof}

\medskip\noindent\underline{Stage I:} We proceed inductively for $j=1,\ldots, \ell$. Suppose that we embedded some components $\mathcal F_1,\ldots, \mathcal F_{j-1}$ of the forests $T_1-r_1,\ldots,T_{j-1}-r_{j-1}$. We write $F_{j-1}$ for the partial images of this embedding. We inductively assume that
\begin{equation}\label{eq:Fjbal}
\mbox{$F_{j-1}$ is $\tau k$-balanced w.r.t.\ $\M_A\cup\M_B$.} 
\end{equation}

For each $(A,B)\in\M_A\cup \M_B$ with $\deg_{\GD}(x_j,(A\cup B)\setminus \smallatoms)>0$ take a subpair $(A',B')$,
$$A'\subset(A\cap\neighbor_{\GD\cup\Gcapt}(x_j))\colouringpI{2}\setminus F_{j-1} \quad\mbox{and}\quad B'\subset(B\cap\neighbor_{\GD\cup\Gcapt}(x_j))\colouringpI{2}\setminus F_{j-1}\;,$$
such that 
$$|A'|=|B'|=\min\big\{ |(A\cap\neighbor_{\GD\cup\Gcapt}(x_j))\colouringpI{2}\setminus F_{j-1}|, |(B\cap\neighbor_{\GD\cup\Gcapt}(x_j))\colouringpI{2}\setminus F_{j-1}|\big\}\;.$$
These pairs comprise a semiregular matching $\mathcal N_j$. (Pairs
$(A,B)\in\M_A\cup \M_B$ with $\deg_{\GD}(x_j,(A\cup B)\setminus
\smallatoms)=0$ are not considered for the construction of $\mathcal N_j$.) 

Let
$\M_j:=\{(A',B')\in\mathcal N_j\::\: |A'|>\alpha |A|\}$, for 
 $$\alpha:=\frac{\eta^3\gamma^2}{10^{10}(\Omega^*)^2}.$$
  By
Fact~\ref{fact:BigSubpairsInRegularPairs} $\M_j$ is a $(2\epsilon/\alpha, d/2,
\alpha\pi \clustersize)$-semiregular matching.
\begin{claim}\label{cl:MjVSNj}
We have $|V(\mathcal M_j)|\ge |V(\mathcal N_j)|-\frac{\eta^3 k}{10^9}$. 
\end{claim}
\begin{proof}[Proof of Claim~\ref{cl:MjVSNj}]
By~\eqref{eq:PROCH1}, and by Property~\ref{commonsetting3} of
Setting~\ref{commonsetting}, we have $|V(\mathcal M_j)|\ge |V(\mathcal N_j)|-\alpha\cdot 2\cdot \frac{2(\Omega^*)^2 k}{\gamma^2}$.
\end{proof}
Let $\mathcal F_j$ be a maximal set of components of $T_j-r_j$ such that
\begin{equation}\label{eq:maxFj}
v(\mathcal F_j)\le |V(\M_j)|-\frac{\eta^3 k}{10^9}\;.
\end{equation}
Observe that if $\mathcal F_j$ does not contain all the components of $T_j-r_j$ then
\begin{equation}\label{eq:natrave}
v(\mathcal F_j)\ge |V(\M_j)|-\frac{\eta^3 k}{10^9}-\tau k\ge |V(\M_j)|-\frac{2\eta^3 k}{10^9}\;.
\end{equation}

Lemma~\ref{lem:embed:BALANCED} yields an embedding of $\mathcal F_j$ in $\M_j$. Further the lemma together with the induction hypothesis~\eqref{eq:Fjbal} guarantees that the embedding can be chosen so that the new image set $F_{j}$ is $\tau k$-balanced w.r.t.\ $\M_A\cup\M_B$. We fix this embedding, thus ensuring~\eqref{eq:Fjbal} for step $i$. If $\mathcal F_j$ does not contain all the components of $T_j-r_j$ then~\eqref{eq:natrave} gives
\begin{equation}\label{eq:OnLawn}
|V(\M_j)\setminus F_j| \le \frac{2\eta^3 k}{10^9}\;.
\end{equation}

\medskip\noindent\underline{After Stage I:} Let $\mathcal N^*$ be a maximal
semiregular matching contained in $(\M_A\cup\M_B)\colouringpI{2}$ which avoids
$F_\ell$. We need two auxiliary claims.
\begin{claim}\label{cl:ztratimeMaloW}
We have 
$$\maxdeg_{\GD}\left(V_0\cup V_1,  V(\M_A\cup\M_B)\colouringpI{2}\setminus(V(\mathcal N^*)\cup F_{\ell}\cup\smallatoms)\right)<\frac{\eta^3 k}{10^9}\;.$$
\end{claim}
\begin{proof}[Proof of Claim~\ref{cl:ztratimeMaloW}]
Let us consider an arbitrary vertex $x\in V_0\cup V_1$. By~\eqref{eq:PROCH2} the
number of $(\M_A\cup\M_B)$-vertices $C\subset \bigcup\clusters$ such that
$\deg_{\GD}(x,C)>0$ is at most $\frac{2(\Omega^*)^2k}{\gamma^2\pi\clustersize}$.

Due to~\eqref{eq:Fjbal}, we have
 for each $(\M_A\cup\M_B)$-edge $(A,B)$  that 
\begin{equation}\label{eq:sumicek}
\left|(A\cup B)\colouringpI{2}\setminus (V(\mathcal N^*)\cup F_\ell)\right|\le \tau k\;.
\end{equation}
                                                                                                                                                                                     
Thus summing~\eqref{eq:sumicek} over all $(\M_A\cup\M_B)$-edges $(A,B)$ with $\deg_{\GD}(x,(A\cup B)\setminus \smallatoms)>0$ we get
$$\deg_{\GD}\left(x,  V(\M_A\cup\M_B)\colouringpI{2}\setminus(V(\mathcal
N^*)\cup F_{\ell}\cup\smallatoms)\right)\le
\frac{4(\Omega^*)^2k}{\gamma^2\pi\clustersize} \cdot \tau k\;.$$ The claim now
follows by~\eqref{eq:KONST}.
\end{proof}
\begin{claim}\label{cl:OneSideSee}
Let $j\in[\ell]$ be such that $\mathcal F_j$ does not consist of all the components of $T_j-r_j$. Then there exists an $\mathcal N^*$-cover $\mathcal X_j$ such that $\deg_{\GD}\left(x_j,\bigcup \mathcal X_j\right)\le \frac{3 \eta^3k}{10^9}$.
\end{claim}
\begin{proof}[Proof of Claim~\ref{cl:OneSideSee}]
First, we define an $(\M_A\cup\M_B)$-cover $\mathcal R_j$ as follows. For an $(\M_A\cup\M_B)$-edge $(A,B)$ let $\mathcal R_j$ contain $A$ if
$$|(A\cap\neighbor_{\GD\cup\Gcapt}(x_j))\colouringpI{2}\setminus F_{j-1}|\le |(B\cap\neighbor_{\GD\cup\Gcapt}(x_j))\colouringpI{2}\setminus F_{j-1}|\;,$$ and $B$ otherwise. Observe that by the definition of $\mathcal N_j$, we have
\begin{equation}\label{eq:nulanatrave}
\deg_{\GD}\left(x_j,\bigcup \mathcal R_j\setminus V(\mathcal N_j)\right)=0\;.
\end{equation}
Also, we have $V(\mathcal N^*)\cap \bigcup \mathcal R_j\cap V(\M_j)\subset V(\mathcal N^*)\cap V(\M_j)\subset V(\M_j)\setminus F_j$. In particular,~\eqref{eq:OnLawn} gives that
\begin{equation}\label{eq:wac}
\left|V(\mathcal N^*)\cap \bigcup \mathcal R_j\cap V(\M_j)\right| \le \frac{2\eta^3 k}{10^9}\;.
\end{equation}

Let $\mathcal X_j$ be the restriction of $\mathcal R_j$ to $\mathcal N^*$. We then have 
\begin{align*}
\deg_{\GD}\left(x_j,\bigcup \mathcal X_j\right)&=\deg_{\GD}\left(x_j,V(\mathcal N^*)\cap \bigcup \mathcal R_j\right)\\
\JUSTIFY{by~\eqref{eq:nulanatrave}}&\le \deg_{\GD}\left(x_j, V(\mathcal N^*)\cap\bigcup \mathcal R_j\cap V(\mathcal M_j)\right)+\deg_{\GD}\left(x_j, V(\mathcal N_j)\setminus V(\M_j)\right)\\
\JUSTIFY{by~\eqref{eq:wac}, Claim~\ref{cl:MjVSNj}}&\le \frac{3\eta^3 k}{10^9}\;.
\end{align*}
\end{proof}

For every $j\in[\ell]$ we define $\mathcal N^*_j\subset \mathcal N^*$ as those $\mathcal N^*$-edges $(A,B)$ for which we have $$\big((A\cup B)\setminus\bigcup \mathcal X_j\big)\cap\smallatoms=\emptyset\;.$$

\medskip\noindent\underline{Stage II:} We shall inductively for $j=1,\ldots,
\ell$ embed those components of $T_j-r_j$ that are not included in $\mathcal
F_j$; let us denote the set of these components by $\mathcal K_j$.
There is nothing to do when $\mathcal K_j=\emptyset$, so let us assume otherwise.

We write  $\BL:=\{C\in\clusters\::\:C\subset\largevertices{\eta}{k}{G}\}$. Let
$K\in \mathcal K_j$ be a component that has not been embedded yet. We write $U'$
for the total image of what has been embedded (in Stage~I, and Stage~II so far), combined with $U$. We claim that $x_j$ has a substantial degree into one of four specific vertex sets.
\begin{claim}\label{cl:OneInFour}
At least one of the following four cases occurs.
\begin{itemize}
 \item[$\mathbf{(U1)}$] $\deg_{\GD}\left(x_j, V(\mathcal
 N^*_j)\setminus \bigcup \mathcal X_j\right)-|U'\cap
  V(\mathcal N^*_j)|\ge \frac{\eta^2
 k}{10^4}$,
 \item[$\mathbf{(U2)}$] $\deg_{\GD}\left(x_j, \smallatoms\setminus
 U'\right)\ge \frac{\eta^2 k}{10^4}$,
 \item[$\mathbf{(U3)}$] $\deg_{\Gcapt}\left(x_j, V(\Gexp)\setminus
 U'\right)\ge \frac{\eta^2 k}{10^4}$,
 \item[$\mathbf{(U4)}$] $\deg_{\GD}\left(x_j, \bigcup \BL\setminus (L_\#\cup
 V(\Gexp)\cup U')\right)\ge \frac{\eta^2 k}{10^4}$.
\end{itemize}
\end{claim}
\begin{proof}
Write $U'':=(U')\colouringpI{2}=U'\sm U$. By~\eqref{COND:P1:3}, we have 
\begin{align*}
\frac h2&\le  \deg_{\Gcapt}(x_j,\Vgood\colouringpI{2})\\
&\le 
\deg_{\GD}\left(x_j,V(\mathcal N^*_j)\colouringpI{2}  \setminus  \bigcup
\mathcal X_j\right)
+ \deg_{\GD}\left(x_j,\smallatoms\colouringpI{2}\setminus
(V(\mathcal N^*_j)\cup V(\Gexp) \cup \bigcup
\mathcal X_j)\right)\\
&~~~+
\deg_{\Gcapt}\left(x_j,V(\Gexp)\colouringpI{2}\right)+
\deg_{\GD}\left(x_j,\bigcup\BL\colouringpI{2}\setminus (L_\#\cup V(\Gexp)\cup
V(\mathcal N^*_j))\right)\\
&~~~+\deg_{\GD}\left(x_j,V(\M_A\cup \M_B)\colouringpI{2}\setminus (V(\mathcal
N^*)\cup\smallatoms)\right)+ \deg_{\GD}\left(x_j,\bigcup\mathcal X_j\right)\\
\JUSTIFY{by C\ref{cl:ztratimeMaloW}, C\ref{cl:OneSideSee}}
&\le \deg_{\GD}\left(x_j,V(\mathcal N^*_j)\setminus
 \bigcup \mathcal X_j\right)-\left|U'\cap V(\mathcal N^*_j)\right|\\
&~~~+\deg_{\GD}\left(x_j,\smallatoms\colouringpI{2}\setminus (V(\mathcal
N^*_j)\cup \bigcup \mathcal X_j\cup U'')\right)+
\deg_{\Gcapt}\left(x_j,V(\Gexp)\colouringpI{2}\setminus  U''\right)\\
&~~~+\deg_{\GD}\left(x_j,\bigcup\BL\colouringpI{2}\setminus (L_\#\cup
V(\Gexp)\cup  V(\mathcal N^*_j)\cup U'')\right)\\
&~~~+\frac{4\eta^3 k}{10^9}+|U''|\;.
\end{align*}
The claim follows since $|U''|\le \frac h2-\frac{\eta^2 k}{1000}$ by~\eqref{eq:MALYSTROM1}.
\end{proof}

We now now briefly describe how to embed $K$ in each of the cases
$\mathbf{(U1)}$--$\mathbf{(U4)}$.

\begin{itemize}
 \item In case~$\mathbf{(U1)}$ 
recall that each $(\M_A\cup \M_B)$-edge contains at most one $\mathcal
N^*_j$-edge. Thus by~\eqref{eq:PROCH1} we get that there is an $(\M_A\cup \M_B)$-edge $(A,B)$ with
\begin{equation}\label{eq:smethod}
 \deg_{\GD}\left(x_j, (V(\mathcal N^*_j)\cap (A\cup B))\setminus \bigcup
 \mathcal X_j\right)-|V(\mathcal N^*_j)\cap U'\cap (A\cup B)|\ge \frac{\eta^2
 k}{10^4}\cdot \frac{\gamma^2}{2(\Omega^*)^2 k}\cdot |A|. \ \ \ \ 
\end{equation}
Let us fix this edge $(A,B)$, and let $(A',B')$ be the corresponding edge in
$\mathcal N^*_j$. Suppose without loss of generality that $B\in\mathcal X_j$. We
can now embed $K$ in $(A',B')$ using Lemma~\ref{lem:embed:regular}
with the following input: $C_\PARAMETERPASSING{L}{lem:embed:regular}:=A'
,D_\PARAMETERPASSING{L}{lem:embed:regular}:=B',
X_\PARAMETERPASSING{L}{lem:embed:regular}:=A'\setminus U',
X^*_\PARAMETERPASSING{L}{lem:embed:regular}:=\neighbor_{\GD}(x_j, A'\setminus
U'), 
Y_\PARAMETERPASSING{L}{lem:embed:regular}:=B'\setminus U',
\epsilon_\PARAMETERPASSING{L}{lem:embed:regular}:=\frac{8\cdot
10^4(\Omega^*)^2\epsilon}{\gamma^2\eta^2},
\beta_\PARAMETERPASSING{L}{lem:embed:regular}:=d/6$.
 With the help of~\eqref{eq:smethod}, we calculate that $\min\{X_\PARAMETERPASSING{L}{lem:embed:regular},Y_\PARAMETERPASSING{L}{lem:embed:regular}\}\ge
|X^*_\PARAMETERPASSING{L}{lem:embed:regular}|\ge \frac
{\gamma^2\eta^2|A|}{4\cdot 10^4(\Omega^*)^2} \ge 4\frac{\eps_\PARAMETERPASSING{L}{lem:embed:regular}}{\beta_\PARAMETERPASSING{L}{lem:embed:regular}}|A'|$.

 \item In Case~$\mathbf{(U2)}$ we embed $K$ using
 Lemma~\ref{lem:embed:avoidingFOREST} with the following input:
 $\epsilon_\PARAMETERPASSING{L}{lem:embed:avoidingFOREST}:= \epsilon',
 U_\PARAMETERPASSING{L}{lem:embed:avoidingFOREST}:= U',
 U^*_\PARAMETERPASSING{L}{lem:embed:avoidingFOREST}:= \neighbor_{\GD}(x_j,
 \smallatoms\setminus U'), \ell:=1$.

 \item In Case~$\mathbf{(U3)}$ we embed $K$ using Lemma~\ref{lem:embed:greyFOREST}
 with the following input: $H_\PARAMETERPASSING{L}{lem:embed:greyFOREST}:=\Gexp, 
 V_{1,\PARAMETERPASSING{L}{lem:embed:greyFOREST}}:=V_{2,\PARAMETERPASSING{L}{lem:embed:greyFOREST}}:=V(\Gexp),
 U_\PARAMETERPASSING{L}{lem:embed:greyFOREST}:=U',
 U^*_\PARAMETERPASSING{L}{lem:embed:greyFOREST}:= \neighbor_{\Gexp}(x_j,
 V(\Gexp)\setminus U'), Q_\PARAMETERPASSING{L}{lem:embed:greyFOREST}:=1,
 \zeta_\PARAMETERPASSING{L}{lem:embed:greyFOREST}:= \rho,
 \ell_\PARAMETERPASSING{L}{lem:embed:greyFOREST}:=1$.

 \item In Case~$\mathbf{(U4)}$ we proceed as follows. As
 $\deg_{\GD}(x_j,\WantiC)<\frac{\eta^2k}{10^5}$ (cf.
 Definition~\ref{def:heart1}), we have $$\deg_{\GD}\left(x_j, \bigcup
 \BL\setminus (L_\#\cup V(\Gexp)\cup \WantiC\cup U')\right)\ge \frac{2
 \eta^2 k}{10^5}\;.
 $$ 
As for~\eqref{eq:smethod}, we use~\eqref{eq:PROCH1} to find a cluster $A\in\BL$ with
\begin{equation}\label{barrioyungay}
 \deg_{\GD}\left(x_j, A\setminus (L_\#\cup V(\Gexp)\cup \WantiC \cup
 U')\right) \ge \frac{2\eta^2 k}{10^5}\cdot \frac{\gamma^2}{2(\Omega^*)^2
 k}\cdot |A|= \frac{\eta^2 \gamma^2}{10^5(\Omega^*)^2
 }\cdot |A|.
\end{equation}
Recall that by the definition of $L_\#$ and $ \WantiC$, we have that $\mindeg_{\Gcapt}(A\setminus (L_\#\cup \WantiC), V(G)\setminus
\HugeVertices)\ge (1+\frac{4\eta}{5})k$. Thus at least one of the
following subcases must occur for the set $A^*:=(\neighbor_{\GD}(x_j)\cap
A)\setminus (L_\#\cup V(\Gexp)\cup \WantiC \cup U')$:
\begin{enumerate}
 \item[$\mathbf{(U4a)}$] For at least $\frac12|A^*|$ vertices $v\in A^*$ we have
 $\deg_{\Gcapt}(v,\smallatoms\setminus U')\ge \frac{2\eta k}5$.
 \item[$\mathbf{(U4b)}$] For at least $\frac12|A^*|$ vertices $v\in A^*$ we have
 $\deg_{\Gblack}(v,\bigcup\clusters\setminus U')\ge \frac{2\eta k}5$.
\end{enumerate}
In case $\mathbf{(U4a)}$ we embed $K$ using
Lemma~\ref{lem:embed:avoidingFOREST}.  Details are very similar to
$\mathbf{(U2)}$. As for case $\mathbf{(U2b)}$,
let as take an arbitrary vertex $v\in A^*$ with
$\deg_{\Gblack}(v,\bigcup\clusters\setminus U')\ge \frac{2\eta k}5$. In
particular, using~\eqref{eq:PROCH1}, we find a cluster $B\in \clusters$ with 
\begin{equation}\label{stantonmoore}
\deg_{\Gblack}(v,B\setminus
U')\ge \frac{\gamma^2\eta}{10(\Omega^*)^2}|B|\;.
\end{equation}
 Map the root $r_K$ of $K$ to
$v$ and embed $K-r_K$ in $(A,B)$ using
Lemma~\ref{lem:embed:regular}\footnote{Lemma~\ref{lem:embed:regular} deals with
embedding a single tree in a regular pair, whereas $K-r_K$ has several
components. We therefore apply the lemma repeatedly for each component.} with
the following input:
$C_\PARAMETERPASSING{L}{lem:embed:regular}:= B,
D_\PARAMETERPASSING{L}{lem:embed:regular}:= A,
X_\PARAMETERPASSING{L}{lem:embed:regular}:= B\setminus U',
Y_\PARAMETERPASSING{L}{lem:embed:regular}:= A\setminus U',
X^*_\PARAMETERPASSING{L}{lem:embed:regular}:= \neighbor_{\Gblack}(v, B\setminus
U'),
\beta_\PARAMETERPASSING{L}{lem:embed:regular}:= \gamma^2\eta/(10(\Omega^*)^2),
\epsilon_\PARAMETERPASSING{L}{lem:embed:regular}:= \epsilon'$. By~\eqref{barrioyungay} and~\eqref{stantonmoore} we see that $X_\PARAMETERPASSING{L}{lem:embed:regular},
Y_\PARAMETERPASSING{L}{lem:embed:regular}$ and $
X^*_\PARAMETERPASSING{L}{lem:embed:regular}$ are large enough.
\end{itemize}

\medskip\noindent\underline{Stage III:} In this stage we embed the trees
$\{T'_j\}_{j=1}^{\ell'}$. The embedding techniques are as in Stage~II. The cover
$\mathcal F'$ from Definition~\ref{def:heart1} plays the same role as the covers
$\mathcal X_j$ in Stage~II. Observe that $\mathcal F'$ is universal whereas the
covers $\mathcal X_j$ are specific for each vertex $x_j$. A second simplification is that in Stage~III we use
the semiregular matching $(\M_A\cup \M_B)\colouringpI{2}$ for embedding (in a
counterpart of $\mathbf{(U1)}$) instead of $\mathcal N^*_j$.

Again we proceed inductively for $j=1,\ldots,
\ell$ with embedding the components of $T'_j-r'_j$, which we denote by $\mathcal
K'_j$.
 Let
$K\in \mathcal K'_j$ be a component that has not been embedded yet. We write
$U'$ for the total image of what has been embedded (in Stage~I,~II, and
Stage~III so far), combined with $U$ and let $U''=U'\cap \colouringp{2}$. We
claim that $y_j$ has a substantial degree into one of four specific vertex sets.
\begin{claim}\label{cl:OneInFour'}
At least one of the following four cases occurs.
\begin{itemize}
 \item[$\mathbf{(U1')}$] $\deg_{\GD}\left(y_j, V((\M_A\cup
 \M_B)\colouringpI{2})\setminus (\smallatoms\cup \bigcup \mathcal
 F')\right)\\-\left|U''\cap\left(\bigcup \mathcal F'\cup ( V((\M_A\cup
 \M_B)\colouringpI{2})\setminus \smallatoms\right)\right|\ge \frac{\eta^2 k}{10^4}$,
 \item[$\mathbf{(U2')}$] $\deg_{\GD}\left(y_j, \smallatoms\setminus
 U'\right)\ge \frac{\eta^2 k}{10^4}$,
 \item[$\mathbf{(U3')}$] $\deg_{\Gcapt}\left(y_j, V(\Gexp)\setminus
 U'\right)\ge \frac{\eta^2 k}{10^4}$,
 \item[$\mathbf{(U4')}$] $\deg_{\GD}\left(y_j, \bigcup \BL\setminus (L_\#\cup
 V(\Gexp)\cup U')\right)\ge \frac{\eta^2 k}{10^4}$.
\end{itemize}
\end{claim}
\begin{proof}
 As $y_j\in V_1$, we have that
\begin{align*}
h&\le  \deg_{\Gcapt}(y_j,\Vgood\colouringpI{2})\\
&\le 
\deg_{\GD}\left(y_j,V((\M_A\cup \M_B)\colouringpI{2})\setminus (\smallatoms\cup
V(\Gexp)\cup \bigcup \mathcal F')\right)+
\deg_{\GD}\left(y_j,\smallatoms\colouringpI{2}\setminus (V(\Gexp)\cup  \bigcup
\mathcal F'\right)\\ &~~~+\deg_{\GD}(y_j,\bigcup \mathcal F') +
\deg_{\GD}\left(y_j,\bigcup\BL\colouringpI{2}\setminus (L_\#\cup V(\Gexp)\cup 
V(\M_A\cup \M_B)\right)\\
&~~~+\deg_{\Gcapt}\left(y_j, V(\Gexp)\colouringpI{2}\right)+
\deg_{\GD}\left(y_j, V(\M_A\cup\M_B)\colouringpI{2}\setminus
V((\M_A\cup \M_B)\colouringpI{2})\right)\\ 
\JUSTIFY{by L~\ref{lem:RestrictionSemiregularMatching}}&\le
\deg_{\GD}\left(y_j,V((\M_A\cup \M_B)\colouringpI{2})\setminus (\smallatoms\cup V(\Gexp)\cup \bigcup \mathcal
F')\right)\\ &~~~-\left|U''\cap \left(\bigcup \mathcal F'\cup  (V((\M_A\cup
\M_B)\colouringpI{2})\setminus \smallatoms)\right)\setminus V(\Gexp)\right|\\
&~~~+\deg_{\GD}\left(y_j,\smallatoms\colouringpI{2}\setminus (U''\cup
V(\Gexp)\cup \bigcup \mathcal F')\right)
+\deg_{\Gcapt}\left(y_j,V(\Gexp)\colouringpI{2}\setminus U''\right)\\
&~~~+\deg_{\GD}\left(y_j,\bigcup\BL\colouringpI{2}\setminus (L_\#\cup
V(\Gexp)\cup V(\M_A\cup \M_B)\cup U'')\right)+\frac{2\eta^3
k}{10^3}+\frac {\eta^2k}{10^5}+|U''|\;.
\end{align*}
The claim follows since $|U''|\le \sum_{j}T_j+\sum_{j}T_j'\le h-\frac
{\eta^2k}{1000}$.

\end{proof}

Cases~$\mathbf{(U1')}$--$\mathbf{(U4')}$ are treated analogously as
Cases~$\mathbf{(U1)}$--$\mathbf{(U4)}$.

\end{proof}

\begin{lemma}\label{lem:embed:heart2}\HAPPY{D}
Suppose we are in Setting~\ref{commonsetting} and~\ref{settingsplitting}, and that the sets $V_0$ and $V_1$ witness Preconfiguration~$\mathbf{(\heartsuit2)}(h)$. Suppose that $U\subset \colouringp{0}\cup\colouringp{1}$, such that $|U|\le k$. Suppose that $\{x_j\}_{j=1}^\ell\subset V_0\cup V_1$ are distinct vertices. Let $\{(T_j,r_j)\}_{j=1}^\ell$ be a family of rooted trees such that each component of $T_j-r_j$ has order at most $\tau k$.

If $\sum_j v(T_j)\le h-\eta^2 k/1000$ and $|U|+\sum_j v(T_j)\le k$ then there exist disjoint $(r_j\hookrightarrow x_j, V(T_j)\setminus\{r_j\}\hookrightarrow V(G)\setminus U)$-embeddings of $T_j$ in $G$.
\end{lemma}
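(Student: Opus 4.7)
The plan is to mimic the proof of Lemma~\ref{lem:embed:heart1}, but to skip Stage~I (the balanced embedding stage) entirely. The balanced stage was needed there to ensure that when we later drop to embedding into a single side of each $(\M_A\cup\M_B)$-edge (via a cover $\mathcal F'$), the two sides have comparable occupation. Since Preconfiguration~$\mathbf{(\heartsuit 2)}$ gives us the symmetric degree condition $\mindeg_{\Gcapt}(V_0\cup V_1,\Vgood\colouringpI{2})\ge h$ for both $V_0$ and $V_1$, and since nothing analogous to the cover $\mathcal F'$ appears here, we can treat every source vertex $x_j$ uniformly and use both sides of each $\M_A\cup\M_B$-edge freely. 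This should make the present proof essentially the "Stage~II only" version of Lemma~\ref{lem:embed:heart1}.

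Concretely, I will process the trees $T_1,\ldots,T_\ell$ sequentially. For each $T_j$, first map $r_j$ to $x_j$, and then embed the components of $T_j-r_j$ (each of order at most $\tau k$) one after another. At each step let $\phi$ denote the current partial embedding and write $U':=U\cup\mathrm{Im}(\phi)$. A key observation is that, since $U\subseteq \colouringp{0}\cup\colouringp{1}$ while $\Vgood\colouringpI{2}\subseteq\colouringp{2}$, we have $U\cap\Vgood\colouringpI{2}=\emptyset$, so $|U'\cap\Vgood\colouringpI{2}|\le \sum_iv(T_i)\le h-\eta^2k/1000$. Hence for the current source $x_j$ (whose component $K$ we want to embed next),
\[
\deg_{\Gcapt}\big(x_j,\Vgood\colouringpI{2}\setminus U'\big)\;\ge\; h-|U'\cap \Vgood\colouringpI{2}|\;\ge\;\frac{\eta^2k}{1000}\;.
\]
Decomposing $\Vgood\colouringpI{2}$ as in Claim~\ref{cl:OneInFour'} and using Lemma~\ref{lem:RestrictionSemiregularMatching} (to control $V(\M_A\cup\M_B)\colouringpI{2}\setminus V((\M_A\cup\M_B)\colouringpI{2})$) together with the definition of $\WantiC$ (to bound $\deg_{\GD}(x_j,\WantiC)$ since $x_j\notin\shadow_{\GD}(\WantiC,\eta^2k/10^5)$), at least one of the following four cases occurs, each with surplus $\ge \eta^2k/10^4$: the degree of $x_j$ into the unused part of $V((\M_A\cup\M_B)\colouringpI{2})\setminus\smallatoms$, of $\smallatoms\colouringpI{2}$, of $V(\Gexp)\colouringpI{2}$, or of $\bigcup\BL\colouringpI{2}\setminus(L_\#\cup V(\Gexp))$ is large.

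In each of the four cases the embedding of $K$ is carried out by an appropriate tool already used in Stage~II/III of Lemma~\ref{lem:embed:heart1}: the regularity-based Lemma~\ref{lem:embed:regular} applied inside a single $(\M_A\cup\M_B)$-edge located via Claim~\ref{cl:PROCH} (case $\mathbf{(U1')}$-analogue); Lemma~\ref{lem:embed:avoidingFOREST} in the avoiding set $\smallatoms$ (case $\mathbf{(U2')}$-analogue); Lemma~\ref{lem:embed:greyFOREST} inside the nowhere-dense graph $\Gexp$ (case $\mathbf{(U3')}$-analogue); and for case $\mathbf{(U4')}$, first locate a cluster $A\in\BL$ carrying a $\gamma^2\Omega^{*-2}$-fraction of $x_j$'s degree via~\eqref{eq:PROCH1}, pick a vertex $v\in A^*$ outside the shadow/forbidden sets, and then either proceed into $\smallatoms$ as in $\mathbf{(U2')}$ or find via~\eqref{eq:PROCH1} another cluster $B$ with $\deg_{\Gblack}(v,B\setminus U')\gtrsim\eta\gamma^2(\Omega^*)^{-2}|B|$ and embed $K-r_K$ inside the regular pair $(A,B)$ using Lemma~\ref{lem:embed:regular}. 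The choice of constants in~\eqref{eq:KONST} keeps every regularity parameter admissible throughout, since only a $\tau k$-sized tree is placed per step.

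The only technical obstacle I anticipate is bookkeeping the `four cases' bound, i.e.\ making sure that after summing the losses coming from $|U'|$, from the $V(\M_A\cup\M_B)\colouringpI{2}\setminus V((\M_A\cup\M_B)\colouringpI{2})$-slack given by Lemma~\ref{lem:RestrictionSemiregularMatching}, and from $\deg_{\GD}(x_j,\WantiC)$, a surplus of $\eta^2k/10^4$ still remains in at least one of the four categories. This is exactly analogous to the computation in Claim~\ref{cl:OneInFour'}, with the simplification that no cover term $\deg(x_j,\bigcup\mathcal F')$ appears. Everything else is a direct, and actually cleaner, reuse of the Stage~II/III machinery of Lemma~\ref{lem:embed:heart1}.
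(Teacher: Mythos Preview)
Your plan to skip Stage~I and run a Stage~II--only argument has a genuine gap in the $\mathbf{(U1)}$-analogue. The formulation you use --- ``$\deg_{\GD}(x_j,\cdot)$ into the \emph{unused} part of $V((\M_A\cup\M_B)\colouringpI{2})\setminus\smallatoms$ is large'' --- does not ensure that \emph{both} sides of the target $(\M_A\cup\M_B)$-pair have enough unused space for Lemma~\ref{lem:embed:regular}. Concretely: suppose every $x_j$ has its entire $\GD$-degree into the $A$-side of some pair $(A,B)$, and each component of $T_j-r_j$ is a broom (one root, many leaves). Then each embedded component places one vertex in $A$ and many in $B$; after enough components, $B\setminus U'$ is essentially empty while $A\setminus U'$ --- and hence $\deg_{\GD}(x_j,(A\cup B)\setminus U')$ --- is still large. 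Your case~$\mathbf{(U1')}$ triggers, but Lemma~\ref{lem:embed:regular} fails because $|Y|=|B\setminus U'|$ is too small.

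In the paper's Stage~II this cannot happen precisely because Stage~I was run first: whenever $\mathcal K_j\neq\emptyset$, the failed balanced embedding produces (Claim~\ref{cl:OneSideSee}) a cover $\mathcal X_j$ with $\deg_{\GD}(x_j,\bigcup\mathcal X_j)$ tiny, so that in~$\mathbf{(U1)}$ one bounds $\deg_{\GD}(x_j,\text{non-cover side})-|U'\cap(\text{both sides})|$ from below. It is exactly this ``degree into one side minus used in both sides'' shape that forces $|B'\setminus U'|\ge c|A|$ in~\eqref{eq:smethod}. Without Stage~I there is no cover $\mathcal X_j$, and the absence of $\mathcal F'$ in $\mathbf{(\heartsuit2)}$ does not help --- the cover is produced per vertex $x_j$ from the Stage~I attempt, not taken from the preconfiguration. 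The paper's proof accordingly reuses Stages~I \emph{and}~II verbatim (with $h_{\PARAMETERPASSING{L}{lem:embed:heart1}}=2h$, so that \eqref{COND:P1:3} matches \eqref{COND:P2:4} and \eqref{eq:MALYSTROM1} matches the hypothesis here), treating every $x_j\in V_0\cup V_1$ as a ``$V_0$-type'' vertex.
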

\begin{proof}
The proof is contained in the proof of Lemma~\ref{lem:embed:heart1}. It suffices to repeat the first two stages of the embedding process in the proof. In that setting, we use $h_\PARAMETERPASSING{L}{lem:embed:heart1}=2h$. Note that the condition $\{x_j\}\subset V_0$ in the setting of Lemma~\ref{lem:embed:heart1} gives us the same possibilities for embedding as the condition $\{x_j\}\subset V_0\cup V_1$ in the current setting (cf.~\eqref{COND:P1:3} and~\eqref{COND:P2:4}).
\end{proof}

\begin{lemma}\label{lem:embed:total68}\HAPPY{D}
Suppose we are in Setting~\ref{commonsetting} and~\ref{settingsplitting}, and at least one of the following configurations occurs:
\begin{itemize}
 \item
 Configuration~$\mathbf{(\diamond6)}(\frac{\eta^3\rho^4}{10^{14}(\Omega^*)^4)},4\epsilonD,
 \frac {\gamma^3\rho}{32\Omega^*}, \frac{\eta^2\nu}{2\cdot
 10^4},\frac {3\eta^3}{2\cdot 10^3}, h)$,
 \item
 Configuration~$\mathbf{(\diamond7)}(\frac{\eta^3\gamma^3\rho}{10^{12}(\Omega^*)^4)},
 \frac {\eta\gamma}{400},4\epsilonD, \frac {\gamma^3\rho}{32\Omega^*},
 \frac{\eta^2\nu}{2\cdot 10^4},\frac {3\eta^3}{2\cdot 10^3}, h)$, or
 \item
 Configuration~$\mathbf{(\diamond8)}(\frac{\eta^4\gamma^4\rho}{10^{15}(\Omega^*)^5)},
 \frac {\eta\gamma}{400},\frac {4\epsilon}{\proporce{1}},4\epsilonD ,\frac
 d2,\frac {\gamma^3\rho}{32\Omega^*}
 ,\frac{\proporce{1}\pi\clustersize}{2k},\frac {\eta^2\nu}{2\cdot
 10^4} ,h_1,h)$.
\end{itemize}Suppose
that $(W_A,W_B,\shrubA,\shrubB)$ is a $(\tau k)$-fine partition of a rooted tree
$(T,r)$ of order $k$.
If the total order of the end shrubs is at most $h-2\frac{\eta^2 k}{10^3}$ and
the total order of the internal shrubs is at most $h_1-2\frac {\eta^2k}{10^5}$,
then $T\subset G$.
\end{lemma}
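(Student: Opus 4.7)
My plan is to embed $T$ in two stages by combining Section~\ref{sec:embed}'s skeleton and end-shrub embedding lemmas. Let $T'$ be the subtree of $T$ induced by $W_A \cup W_B$ together with all internal shrubs (which by Definition~\ref{ellfine}\eqref{Bend} all lie in $\shrubA$). In Stage~1, I will embed $T'$ using Lemma~\ref{lem:embed:skeleton67} if we have Configuration~$\mathbf{(\diamond6)}$ or $\mathbf{(\diamond7)}$, and Lemma~\ref{lem:embed:skeleton8} if we have Configuration~$\mathbf{(\diamond8)}$. This produces an embedding $\phi_{T'}$ with $\phi_{T'}(W_A) \subset V_1$, $\phi_{T'}(W_B) \subset V_0$, and $\phi_{T'}(T' - (W_A \cup W_B)) \subset \colouringp{1}$. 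The numerical hypotheses of these skeleton lemmas follow from~\eqref{eq:KONST}; the size hypothesis $v(T') < h_1 - \eta^2 k/10^5$ required by Lemma~\ref{lem:embed:skeleton8} is implied by the assumed bound on total internal-shrub order together with $|W_A \cup W_B| \le 336/\tau$, a constant absorbed by~\eqref{eq:KONST}; and $|W_A \cup W_B| \le k^{0.1}$ needed for Lemma~\ref{lem:embed:skeleton67} is immediate.

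In Stage~2, I will extend $\phi_{T'}$ to all end shrubs via Lemma~\ref{lem:embed:heart1} (if Preconfiguration~$\mathbf{(\heartsuit1)}$ holds) or Lemma~\ref{lem:embed:heart2} (if Preconfiguration~$\mathbf{(\heartsuit2)}$ holds). By Definition~\ref{ellfine}\eqref{nice}, each end shrub $T^* \in \shrubA$ has its seed in $W_A$, and each $T^* \in \shrubB$ has its seed in $W_B$. Thus, for every $y \in W_A$ I form the rooted tree $(T'_y, y)$ consisting of $y$ together with all $\shrubA$-end-shrubs attached to $y$; analogously, for $y \in W_B$ I form $(T_y, y)$ from $\shrubB$-shrubs. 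Each component of $T'_y - y$ or $T_y - y$ is an end shrub and hence has order at most $\tau k$ by Definition~\ref{ellfine}\eqref{small}. I then apply Lemma~\ref{lem:embed:heart1} (or~\ref{lem:embed:heart2}) with $\{x_j\} := \phi_{T'}(W_B) \subset V_0$, $\{y_j\} := \phi_{T'}(W_A) \subset V_1$, the families of rooted trees $\{(T_y, y)\}_{y \in W_B}$ and $\{(T'_y, y)\}_{y \in W_A}$, and $U := \phi_{T'}(T' - (W_A \cup W_B)) \subset \colouringp{1}$.

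The size conditions reduce easily: by Definition~\ref{ellfine}\eqref{Bsmall}, $v(\shrubB) \le \tfrac12 v(\text{end shrubs})$, so $\sum v(T_y) \le |W_B| + (h - 2\eta^2 k/10^3)/2 < h/2 - \eta^2 k/1000$; likewise $\sum v(T_y) + \sum v(T'_y) \le |W_A \cup W_B| + (h - 2\eta^2 k/10^3) < h - \eta^2 k/1000$; the global budget $|U| + \sum v(T_y) + \sum v(T'_y) \le v(T) = k$ is trivial; and for Lemma~\ref{lem:embed:heart2} only the single bound $\sum v \le h - \eta^2 k/1000$ is required and is verified the same way. The numerical parameters $\gamma'$ and $h_2$ from $\mathbf{(\heartsuit1)}$/$\mathbf{(\heartsuit2)}$ supplied by the configuration are at least as strong as those demanded by Lemmas~\ref{lem:embed:heart1},~\ref{lem:embed:heart2}.

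The main technical obstacle is the interface between Stages~1 and~2: the skeleton embedding must leave $\phi_{T'}(W_A \cup W_B)$ precisely inside the sets $V_0, V_1$ that are used by $\mathbf{(\heartsuit1)}$/$\mathbf{(\heartsuit2)}$. This is resolved by the deliberate fact, built into Definitions~\ref{def:CONF6}--\ref{def:CONF8}, that a single pair $V_0, V_1$ simultaneously witnesses the skeleton preconfigurations $\mathbf{(exp)}$/$\mathbf{(reg)}$ and the end-shrub preconfigurations, and that the $\M_A \cup \M_B$-cover $\mathcal F'$ demanded by $\mathbf{(\heartsuit1)}$ is supplied directly by the configuration. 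Moreover $U \subset \colouringp{1}$ whereas the end-shrub embedding primarily exhausts $\colouringp{2}$, so the two stages do not compete for vertices. Once these alignments are verified, the union $\phi_{T'} \cup \phi_{\mathrm{end}}$ is the desired embedding of $T$ into $G$.
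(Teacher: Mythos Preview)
Your proposal is correct and follows essentially the same approach as the paper: embed the internal part $T'$ via Lemma~\ref{lem:embed:skeleton67} or~\ref{lem:embed:skeleton8}, then extend to end shrubs via Lemma~\ref{lem:embed:heart1} or~\ref{lem:embed:heart2}, using Definition~\ref{ellfine}\eqref{Bsmall} to bound the $\shrubB$-contribution. Your write-up is in fact more detailed than the paper's own proof in verifying the numerical hypotheses and the $V_0,V_1$ interface between the two stages.
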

\begin{proof}
Let $T'$ be the tree induced by all the cut-vertices $W_A\cup W_B$ and all the
internal shrubs. Summing up the order of the internal shrub and the
cut-vertices, we get that $v(T')<h_1-\frac {\eta^2k}{10^5}$.  Fix an embedding
of $T'$ as in Lemma~\ref{lem:embed:skeleton67} (in configurations~$\mathbf{(\diamond6)}$ and $\mathbf{(\diamond7)}$), or as in Lemma~\ref{lem:embed:skeleton8} (in configuration~$\mathbf{(\diamond8)}$). This embedding now extends to external shrubs by Lemma~\ref{lem:embed:heart1} (in Preconfiguration~$\mathbf{(\heartsuit1)}$, which can only occur in Configuration~$\mathbf{(\diamond6)}$ and $\mathbf{(\diamond7)}$), or by Lemma~\ref{lem:embed:heart2} (in Preconfiguration~$\mathbf{(\heartsuit2)}$).  It is important to remember here that by Definition~\ref{ellfine}\eqref{Bsmall}, the total order of end shrubs in $\shrubB$ is at most half the size of the total order of end shrubs.
\end{proof}

The next lemma completely resolves Theorem~\ref{thm:main} in the presence of Configuration~$\mathbf{(\diamond9)}$.

\begin{lemma}\label{lem:embed9}
Suppose we are in Setting~\ref{commonsetting} and~\ref{settingsplitting}, and assume we
have Configuration~$\mathbf{(\diamond9)}(\delta,
\frac{2\eta^3}{10^3},$ $h_1,h_2,\epsilon_1, d_1, \mu_1,\epsilon_2, d_2, \mu_2)$
with $d_2>10\epsilon_2>0$, $4\cdot 10^3\le d_2\mu_2\tau k$,
$\max\{d_1,\tau/\mu_1\}\le \gamma^2\eta^2/(4\cdot 10^7(\Omega^*)^2)$,
$d_1^2/6>\epsilon_1\ge \tau/\mu_1$ and $\delta k>10^3/\tau$.

Suppose that $(W_A,W_B,\shrubA,\shrubB)$ is a $(\tau k)$-fine partition of a rooted tree $(T,r)$ of order $k$.
If the total order of the internal shrubs of $(W_A,W_B,\shrubA,\shrubB)$ is at most $h_1-\frac{\eta^2 k}{10^5}$, and the total order of the end shrubs is at most $h_2-2\frac{\eta^2 k}{10^3}$
then $T\subset G$.
\end{lemma}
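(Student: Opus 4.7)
The plan is to execute the three-stage embedding scheme of Section~\ref{ssec:EmbedOverview9}. As in Lemmas~\ref{lem:embed:skeleton67} and~\ref{lem:embed:skeleton8}, let $T'$ be the subtree of $T$ induced by $W_A\cup W_B$ together with all internal shrubs of the given $(\tau k)$-fine partition. I will first construct an embedding $\phi$ of $T'$ with $\phi(W_A)\subseteq V_1$, $\phi(W_B)\subseteq V_0$ and $\phi(T'-(W_A\cup W_B))\subseteq V(\mathcal N)\subseteq \colouringp{1}$; then I extend $\phi$ to the end shrubs by applying Lemma~\ref{lem:embed:heart1} to the Preconfiguration~$\mathbf{(\heartsuit1)}$ witnessed by $(V_0, V_1, \mathcal F')$.

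For the skeleton, I process an ordered skeleton $(X_0, X_1,\ldots, X_m)$ provided by Lemma~\ref{lem:orderedskeleton} in order, maintaining invariants on shadows of ghost sets exactly as in the proof of Lemma~\ref{lem:embed:skeleton67} (cf.~\eqref{eq:defFF}, \eqref{eq:defUU}, \eqref{eq:defWW}). Each knag $X_i$ is embedded via Lemma~\ref{lem:embed:superregular} into one of the super-regular pairs $(Q_0^{(j)}, Q_1^{(j)})$ coming from Preconfiguration~$\mathbf{(reg)}(\epsilon_2, d_2, \mu_2)$, mapping $W_A\cap X_i\hookrightarrow V_1$ and $W_B\cap X_i\hookrightarrow V_0$; the index $j$ is chosen so that the incoming fruit from the preceding step has a neighbour in $Q_1^{(j)}$ outside the current shadow set. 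For each internal shrub $X_i$ with root $r_i$ and fruit $f_i$, the parent $y$ of $r_i$ lies in $W_A$ and is already mapped to some $\phi(y)\in V_1$; the condition $\deg_{\GD}(\phi(y), V_2)\ge h_1$ together with the hypothesis on the total order of internal shrubs ($\le h_1-\eta^2k/10^5$) and the shadow-avoidance bookkeeping yields a cluster $C$ of an $\mathcal N$-edge $(C, D)$ such that $\phi(y)$ has many unused neighbours in $V_2\cap C$. Since $r_i$ and $f_i$ are at even distance in $X_i$ (as $f_i$ is a fruit) and the cut-vertices in $W_A$ all share a colour class of $T$, I embed $X_i$ in the subpair $(V_2\cap C, D)$, with the parity choice that places $r_i$ and $f_i$ in $V_2\cap C$, using the balanced/unbalanced two-phase procedure already used in the proof of Lemma~\ref{lem:embed:skeleton8} (Lemma~\ref{lem:embed:BALANCED} in the balanced stage, Lemma~\ref{lem:embed:regular} in the unbalanced stage, and Fact~\ref{fact:BigSubpairsInRegularPairs} to transfer regularity from $(C,D)$ to $(V_2\cap C, D)$). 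This keeps the image $\tau k$-balanced across $\mathcal N$-edges. After $X_i$ is placed, $\phi(f_i)\in V_2$ satisfies $\deg_{\GD}(\phi(f_i), V_1)\ge \delta k$ by the definition of Configuration~$\mathbf{(\diamond9)}$, so the next knag can be reached via the super-regular pair structure as before.

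Once $T'$ has been embedded, the end shrubs are handled by Lemma~\ref{lem:embed:heart1} applied with $U:=\phi(T')$: Definition~\ref{ellfine}\eqref{Bsmall} together with the bound $h_2-2\eta^2k/10^3$ on the total order of end shrubs yields the sum conditions~\eqref{eq:MALYSTROM1}--\eqref{eq:MALYSTROM3}, and the parents of the end shrubs lie in $\phi(W_A\cup W_B)\subseteq V_0\cup V_1$ as required. The hard part will be the simultaneous bookkeeping in the internal-shrub phase: one has to keep the discrepancies across $\mathcal N$-edges below $\tau k$ so that Lemma~\ref{lem:embed:BALANCED} stays applicable, keep the shadow of the current image small enough that each subsequent knag still finds a super-regular pair $(Q_0^{(j)}, Q_1^{(j)})$ with free vertices adjacent to the incoming fruit, and ensure that the roots and fruits of internal shrubs always land on the $V_2$-side of $\mathcal N$-edges so that the inductive link from one knag to the next via $\mindeg_{\GD}(V_2,V_1)\ge \delta k$ is preserved. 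All three constraints can be met in parallel by adapting the arguments from Lemma~\ref{lem:embed:skeleton8}; the numerical hypotheses $\max\{d_1,\tau/\mu_1\}\le \gamma^2\eta^2/(4\cdot 10^7(\Omega^*)^2)$ and $d_1^2/6>\epsilon_1\ge \tau/\mu_1$ are exactly what is needed for the two-phase regularity embedding in $\mathcal N$, while $d_2>10\epsilon_2$ and $d_2\mu_2\tau k\ge 4\cdot 10^3$ match the hypotheses of Lemma~\ref{lem:embed:superregular} for the knags.
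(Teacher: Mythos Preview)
Your two-stage outline is correct and matches the paper: embed knags in $V_0\cup V_1$ via the super-regular pairs, internal shrubs in $V(\mathcal N)$, then end shrubs via Lemma~\ref{lem:embed:heart1}. However, the internal-shrub phase you describe is considerably heavier than what the paper does, and some of the machinery you import does not fit cleanly.

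The paper's key simplification is that no shadow/ghost bookkeeping in the style of Lemmas~\ref{lem:embed:skeleton67}--\ref{lem:embed:skeleton8} is needed here at all. Since $V_0\cup V_1\subset\colouringp{0}$ and $V(\mathcal N)\subset\colouringp{1}$, the knags and the internal shrubs occupy disjoint regions. Only the cut-vertices (at most $336/\tau$ many) ever land in $\colouringp{0}$, so every knag embeds greedily in some $(Q_0^{(j)},Q_1^{(j)})$ by Lemma~\ref{lem:embed:superregular}; the link from a fruit $\phi(f_i)\in V_2$ back to $V_1$ uses only $\mindeg_{\GD}(V_2,V_1)\ge\delta k>10^3/\tau$.

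For each internal shrub $T^*$ with parent image $x\in V_1$, the paper does not use the balanced/unbalanced two-phase scheme. Instead it proves a short counting claim: if $U$ is the current image then some $\mathcal N$-edge $(A,B)$ satisfies
\[
\min\big\{|\neighbor_{\GD}(x)\cap V_2\cap(A\setminus U)|,\ |B\setminus U|\big\}\ \ge\ 100d_1|A|+\tau k\,.
\]
This is argued by contradiction, summing over all $\mathcal N$-edges meeting $\neighbor_{\GD}(x)$ (there are at most $|S|/(\mu_1 k)$ of them, where $|S|\le 2(\Omega^*)^2k/\gamma^2$ by Facts~\ref{fact:sizedensespot} and~\ref{fact:boundedlymanyspots}) and comparing against $\deg_{\GD}(x,V_2)\ge h_1$ and $|U\cap V(\mathcal N)|\le h_1-\eta^2k/10^5$. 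One then embeds $T^*$ in this single edge by a direct application of Lemma~\ref{lem:embed:regular} with $X=X^*:=\neighbor_{\GD}(x)\cap V_2\cap(A\setminus U)$ and $Y:=B\setminus U$; since the fruit is at even distance it automatically lands in $V_2$. No global balancing across $\mathcal N$-edges is maintained or needed: the claim re-finds a suitable edge each time.

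Your invocation of the two-phase procedure from Lemma~\ref{lem:embed:skeleton8} does not transplant directly: there it handles many peripheral subshrubs hanging off a vertex already in $V_2$, with degree into $V(\mathcal N)$ via $\Gblack$, whereas here you are placing whole internal shrubs from a vertex in $V_1$ whose $\GD$-degree goes only to $V_2$, one side of each $\mathcal N$-edge. In particular, Lemma~\ref{lem:embed:BALANCED} requires $V(\M)\subset\neighbor_G(v)$, which is not available for $\phi(y)$. These mismatches can be patched, but the paper's route avoids them entirely.
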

\begin{proof}
Let $V_0,V_1, V_2,\mathcal N, \{Q_0^{(j)},Q_1^{(j)}\}_{j\in \mathcal Y}$ and
$\mathcal F'$ witness $\mathbf{(\diamond9)}$.
The embedding process has two stages. In the first stage we embed the knags and
the internal shrubs of $T$. In the second stage we embed the end shrubs. The
knags will be embedded in $V_0\cup V_1$, and the internal shrubs will be
embedded in $V(\mathcal N)$. Lemma~\ref{lem:embed:heart1} will be used to embed
the end shrubs.

The knags of $(W_A,W_B,\shrubA,\shrubB)$ are embedded in such a way that $W_A$
is embedded in $V_1$ and $W_B$ is embedded in $V_0$. Since no other part of $T$
is embedded in $V_0\cup V_1$ in the first stage, each knag can be embedded
greedily using the minimum degree condition arising
from the super-regularity of the pairs
$\{(Q_0^{(j)},Q_1^{(j)})\}_{j\in\mathcal Y}$ using the bound on the total order
of knags coming from Definition~\ref{ellfine}\eqref{few}, and using  Lemma~\ref{lem:embed:superregular} with the following input:
$\epsilon_\PARAMETERPASSING{L}{lem:embed:superregular}:= \epsilon_2$,
$d_\PARAMETERPASSING{L}{lem:embed:superregular}:= d_2$,
$\ell_\PARAMETERPASSING{L}{lem:embed:superregular}:= \mu_2k$, $U_A\cup U_B$ is
the image of $W_A\cup W_B$ embedded so far and 
$\{A_\PARAMETERPASSING{L}{lem:embed:superregular},B_\PARAMETERPASSING{L}{lem:embed:superregular}\}:=
\{Q_0^{(j)},Q_1^{(j)}\}$, where $j\in \mathcal Y$ is arbitrary for the first
knag, and for all other knags $P$ has the property that  
\[
\neighbor_{\GD}(\phi(\parent(P)))\cap Q_1^{(j)}\setminus
U_A\neq \emptyset.
\]
The existence of such an index $j$ follows from the fact that 
\begin{equation}\label{wiesoweshalbwarum}
\phi(\parent(P))\in V_2,
\end{equation}
together with condition~\eqref{conf:D9-VtoX}. We shall
 ensure~\eqref{wiesoweshalbwarum} during our embedding of the internal shrubs, see below.

We now describe how to embed an internal shrub $T^*\in\shrubA$ whose parent
$u\in W_A$ is embedded in a vertex $x\in V_1$. Let $w\in V(T^*)$ be the unique
neighbor of a vertex from $W_A\setminus\{u\}$ (cf.\
Definition~\ref{ellfine}\eqref{2seeds}). Let $U$ be the image of the part of $T$
embedded so far. The next claim will be useful for finding a suitable $\mathcal N$-edge for
accommodating $T^*$.
\begin{claim}\label{claim:DTn}
There exists an $\mathcal N$-edge $(A,B)$, or an $\mathcal N$-edge $(B,A)$ such
that $$\min\big\{|\neighbor_{\GD}(x)\cap V_2\cap (A\setminus U)|, |B\setminus
U|\big\} \ge 100 d_1|A|+\tau k\;.$$
\end{claim}
\begin{proof}[Proof of Claim~\ref{claim:DTn}]
For the purpose of this claim we reorient $\mathcal N$ so that $V_2(\mathcal
N)\subset \bigcup \mathcal F'$.

Suppose the claim fails to be true. Then for each $(A,B)\in \mathcal N$ we have
$|\neighbor_{\GD}(x)\cap V_2\cap (A\setminus U)|<100d_1|A|+\tau k$ or $|B\setminus U|<100d_1|A|+\tau k$. In
either case we get
\begin{equation}\label{eq:HvHv}
|\neighbor_{\GD}(x)\cap V_2\cap A|-|U\cap (A\cup B)| <
100d_1|A|+\tau k\;.
\end{equation}
We write $S:=\bigcup \{V(D):D\in\DenseSpots, x\in V(D)\}$. Combining Fact~\ref{fact:sizedensespot} and Fact~\ref{fact:boundedlymanyspots} we get that 
\begin{equation}\label{eq:girls}
|S|\le \frac{2(\Omega^*)^2k}{\gamma^2}\;.
\end{equation}
Let us look at the number
\begin{equation}\label{eq:dlambda}
\lambda:= \sum_{(A,B)\in\mathcal N} \big(|\neighbor_{\GD}(x)\cap V_2\cap A|-|U\cap (A\cup B)|\big)\;.
\end{equation}
For a lower bound on $\lambda$, we write $\lambda=|\neighbor_{\GD}(x)\cap V_2|-|U\cap V(\mathcal N)|$. 
(Note that $V_2\subseteq V(\mathcal N)$ as we are in Configuration~$\mathbf{(\diamond9)}$.)
The first term is at least $h_1$ by~\eqref{conf:D9-XtoV}, while the second term is at most $h_1-\frac{\eta^2 k}{10^5}$ by the assumptions of the lemma. Thus $\lambda\ge \frac{\eta^2 k}{10^5}$.

For an upper bound on $\lambda$ we only consider those $\mathcal N$-edges
$(A,B)$ for which $\neighbor_{\GD}(x)\cap A\neq\emptyset$. In that case
$A\subset S$ (cf. \ref{commonsetting2} of Setting~\ref{commonsetting}). Thus,
since $\mathcal N$ is
$(\epsilon_1,d_1,\mu_1 k)$-semiregular we get that
\begin{equation}\label{eq:girls2}
\left|\{(A,B)\in\mathcal N\::\: \neighbor_{\GD}\cap A\neq \emptyset\}\right|\le
\frac{|S|}{\mu_1k}\;.
\end{equation}
Thus, 
 \begin{align*}
 \lambda&\le  \sum_{(A,B)\in\mathcal N, \neighbor_{\GD}(x)\cap A\neq\emptyset} \big(|\neighbor_{\GD}(x)\cap V_2\cap A|
 -|U\cap (A\cup B) |\big)\\
\JUSTIFY{by~\eqref{eq:HvHv},~\eqref{eq:girls2}}&\le
100d_1|S|+
\frac{|S|}{\mu_1k}\tau k\\
\JUSTIFY{by~\eqref{eq:girls}}&<\frac{\eta^2 k}{10^5}\;,
\end{align*}
a contradiction. This finishes the proof of the claim.
\end{proof}
By symmetry we suppose that Claim~\ref{claim:DTn} gives an $\mathcal N$-edge
$(A,B)$ such that $\min\big\{|\neighbor_{\GD}(x)\cap V_2\cap (A\setminus U)|,
|B\setminus U|\big\} \ge 100d_1|A|+\tau k$. We apply
Lemma~\ref{lem:embed:regular} with input
$C_\PARAMETERPASSING{L}{lem:embed:regular}:= A$,
$D_\PARAMETERPASSING{L}{lem:embed:regular}:= B$
$X_\PARAMETERPASSING{L}{lem:embed:regular}=X^*_\PARAMETERPASSING{L}{lem:embed:regular}:=\neighbor_{\GD}(x)\cap
V_2\cap (A\setminus U)$, $Y_\PARAMETERPASSING{L}{lem:embed:regular}:=B\setminus
U$ , $\epsilon_\PARAMETERPASSING{L}{lem:embed:regular}:= \epsilon_1$,
$\beta_\PARAMETERPASSING{L}{lem:embed:regular}:= d_1/3$. Then  there exists an
embedding of $T^*$ in $V(\mathcal N)\setminus U$ such that $w$ is embedded in $V_2$. This ensures~\eqref{wiesoweshalbwarum}.

We remark that there may be several internal shrubs extending from $u\in W_A$. However Claim~\ref{claim:DTn} and the subsequent application of Lemma~\ref{lem:embed:regular} allows a sequential embedding of these shrubs.
This finishes the first stage of the embedding process. 

For the second stage, i.e., the embedding of the end shrubs of
$(W_A,W_B,\shrubA,\shrubB)$, we first recall that the total order of end shrubs
in $\shrubA$ is at most $h_2-2\frac{\eta^2 k}{10^3}$, and the total order of end
shrubs in $\shrubB$ is at most $\frac12\big(h_2-2\frac{\eta^2 k}{10^3}\big)$ by
Definition~\ref{ellfine}\eqref{Bsmall}. 
The embedding is a straightforward
application of Lemma~\ref{lem:embed:heart1}.
\end{proof}

The next lemma  resolves Theorem~\ref{thm:main} in the presence of Configuration~$\mathbf{(\diamond10)}$.
\begin{lemma}\label{lem:embed10}
Suppose we are in Setting~\ref{commonsetting}.
For every $\eta',d',\Omega>0$ there exists $\tilde\epsilon>0$ such that for
every $\nu'>0$ with the property that 
\begin{equation}\label{eq:plm}
\frac{\eta'\nu'}{200 \Omega}>\tau
\end{equation} there exists a number $k_0$ such that the following holds for each $k>k_0$.

If $G$ is a graph with Configuration~$\mathbf{(\diamond10)}(\tilde\epsilon,d',\nu'k, \Omega k,\eta')$
then $\treeclass{k}\subset G$.
\end{lemma}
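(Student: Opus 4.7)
The plan is to follow the Piguet--Stein embedding strategy for the dense case~\cite{PS07+}, which applies here because Configuration~$\mathbf{(\diamond10)}$ is essentially the direct analog of their structure~(H1') (as already pointed out in Section~\ref{ssec:EmbedOverview10}). The $(\tilde\epsilon,d',\ell_1,\ell_2)$-regularized graph $(\tilde G,\V)$ plays the role of a Szemer\'edi regularity partition, with the minor difference that cluster sizes are only bounded from below; this does not affect the argument since the embedding lemmas we rely on (Lemmas~\ref{lem:embed:regular}, \ref{lem:fillingCD}, \ref{lem:embed:superregular}, \ref{lem:embed:BALANCED}, \ref{lem:embed:ONESIDE}) all tolerate this.

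First, I would choose the output parameters so that $\tilde\epsilon \ll d',\eta',1/\Omega$ and so that $k_0$ is large enough for all invoked lemmas. Then, given $T\in\treeclass{k}$, apply Lemma~\ref{lem:TreePartition} with $\ell:=\tau k$ to obtain a $(\tau k)$-fine partition $(W_A,W_B,\shrubA,\shrubB)$, where the root $r$ lies in $W_A\cup W_B$ and $|W_A\cup W_B|\le 336/\tau$ is constant. Assume WLOG that the color class of $T$ containing $r$ is the one we send to $A$, so $W_A$ will be mapped to $A$ and $W_B$ to $B$.

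Second, I would embed the skeleton $W_A\cup W_B$ into $(A,B)$ vertex by vertex, using the regularity of $(A,B)$ together with Definition~\ref{def:CONF10}\eqref{diamond10cond2}: at each step we need to find an image in $A$ (respectively $B$) that both sees the previous skeleton image across the regular pair and has degree at least $(1+\eta')k$ into $V(\M)\cup\bigcup\LargeTen$. Since $|W_A\cup W_B|$ is constant while $\tilde\epsilon|A|,\tilde\epsilon|B|\gg |W_A\cup W_B|$ by~\eqref{eq:plm}, avoiding the exceptional sets of~\eqref{diamond10cond2} and the previously used vertices is trivial and the regularity of $(A,B)$ supplies a suitable common neighbor at each step.

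Third, I would embed all the shrubs so that shrubs hanging off a given cut-vertex $w\in W_A\cup W_B$ land in the neighbourhood of $\phi(w)$ inside $V(\M)\cup\bigcup\LargeTen$. Each individual shrub $T^*$ is sent either into a single matching edge $(C,D)\in\M$ consistent with $(\tilde G,\V)$, using Lemma~\ref{lem:embed:BALANCED} for the bulk (to keep occupied sides balanced across $\M$) and Lemma~\ref{lem:embed:ONESIDE} or Lemma~\ref{lem:fillingCD} for the remainder, or into a cluster $X\in\LargeTen$ together with the regular neighborhoods of $X$ in $(\tilde G,\V)$, using the fact that almost all vertices of $X$ have degree at least $(1+\eta')k$ in $\tilde G$ (Definition~\ref{def:CONF10}\eqref{diamond10cond3}) together with Lemma~\ref{lem:embed:regular}. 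The capacity bookkeeping closes because each $\phi(w)$ can reach at least $(1+\eta')k$ vertices of $V(\M)\cup\bigcup\LargeTen$ while the total size of the shrubs hanging off $w$ summed over $w$ is at most $k$, leaving an $\eta' k$ slack that absorbs the regularity error terms and the atypical vertices.

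The main obstacle, as in all applications of the regularity method to tree embedding, is the balancing step: in each $\M$-edge $(C,D)$ the numbers of vertices used in $C$ and in $D$ must stay close to equal throughout, otherwise later shrubs cannot be placed. This is handled exactly as in the proof of Lemma~\ref{lem:embed:skeleton8} (which in turn mirrors the dense argument of~\cite{PS07+}): split the embedding of shrubs attached to each $\phi(w)$ into a balanced phase via Lemma~\ref{lem:embed:BALANCED} and a subsequent unbalanced phase via Lemma~\ref{lem:embed:ONESIDE}, exploiting that each individual shrub has size at most $\tau k\ll d'\nu' k$ so that the discrepancy introduced per step is negligible. The verification is routine given the embedding toolkit already developed in Section~\ref{ssec:EmbeddingShrubs}, and no genuinely new idea beyond the dense argument is needed.
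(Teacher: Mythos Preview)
Your plan is the same Piguet--Stein strategy the paper follows, and the capacity heuristic in your third step is the right one. There is, however, a real gap in your second step: $T[W_A\cup W_B]$ is \emph{not} connected --- the knags are separated by internal shrubs --- so you cannot embed all of $W_A\cup W_B$ ``vertex by vertex, seeing the previous skeleton image across the regular pair $(A,B)$'' before touching any shrub. Two cut-vertices in different knags are not adjacent in $T$; their link goes through a shrub that does not live in $A\cup B$. The paper avoids this by embedding along a topological order on $T$, interleaving knags and shrubs: when a shrub $F$ is processed only its parent $u\in W_A$ is already placed, $\phi(x_F)$ and $\phi(y_F)$ are fixed during the embedding of $F$, and only afterwards is the next cut-vertex $u'\in W_A$ placed as a neighbour of $\phi(y_F)$ in $A$. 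Your skeleton-first variant can be repaired (place each knag independently in $(A,B)$ and then treat internal shrubs with both endpoints $\phi(u),\phi(u')\in A$ already pinned, which is feasible since typical vertices of $A$ see the same partner clusters), but you do not address this.

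Two further organizational devices in the paper's sketch that you omit: (i) it first splits $\M\cup\LargeTen$ into an $A$-part and a $B$-part in the ratio $v(\shrubA):v(\shrubB)$, so that shrubs from the two sides do not compete for capacity; your global ``total $\le k$ versus degree $\ge(1+\eta')k$'' argument can substitute for this, but needs to be said. (ii) When routing a shrub through a cluster $X\in\LargeTen$, the paper does not embed immediately but only \emph{reserves} a set $U_F\subset X$ of size $|F\cap\text{(colour class of }x_F)|$, fixing $\phi(x_F),\phi(y_F)\in U_F$; the partner cluster $Y$ is chosen only at the end, using Definition~\ref{def:CONF10}\eqref{diamond10cond3} against both the current image and the outstanding reservations. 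This is exactly what makes the choice of $Y$ and the subsequent embedding of $F$ (including the path of length $\ge4$ from $\phi(x_F)$ to $\phi(y_F)$, cf.\ Definition~\ref{ellfine}\eqref{short}) go through cleanly.
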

\begin{proof}
We give a sketch of a proof, following~\cite{PS07+}. The main difference is indicated in Section~\ref{ssec:EmbedOverview10}.

Suppose we have Configuration~$\mathbf{(\diamond10)}(\tilde\epsilon,d',\nu'k, \Omega k,\eta')$, and are given a rooted tree $(T,r)$ of order $k$ with a $(\tau k)$-fine partition $(W_A,W_B,\shrubA,\shrubB)$ given by Lemma~\ref{lem:TreePartition}.  
By replacing $\mathcal L^*$ by $\mathcal L^*\setminus \V(\M)$,\footnote{This does not change validity of the conditions in Definition~\ref{def:CONF10}.} we can assume that $\mathcal L^*$ and $\V(\M)$ are disjoint.

For each shrub $F\in\mathcal S_A\cup\mathcal S_B$, let $x_F\in V(F)$ be its root, i.e., its minimal element in the topological order. If $F$ is internal then we also define $y_F$ as its (unique) maximal element that neighbours $W_A$.
We can partition the semiregular matching $\mathcal M$ and the set $\mathcal L^*$ into two parts: $\mathcal M_A\cup \mathcal L^*_A$ and $\mathcal M_B\cup \mathcal L^*_B$ so that the partition satisfies
\begin{align}
\label{eq:splitSA}
\deg_{\tilde G}\big(v,V(\M_A)\cup \bigcup \LargeTen_A\big)&\ge v(\mathcal S_A)+\frac{\eta' k}4\quad\mbox{and}\\
\label{eq:splitSB}
\deg_{\tilde G}\big(w,V(\M_B)\cup \bigcup \LargeTen_B\big)&\ge v(\mathcal S_B)+\frac{\eta' k}4\;,
\end{align}
for all but at most $2\tilde\epsilon |A|$ vertices $v\in A$ and for all but at most $2\tilde\epsilon|B|$ vertices $w\in B$. To see this, observe that the nature of the regularized graph allows us to
treat\footnote{up to a small error} 
conditions~\eqref{eq:splitSA}, \eqref{eq:splitSB}, or that of Definition~\ref{def:CONF10}\eqref{diamond10cond2} in terms of average degrees of vertices in $A$ and $B$, rather than in terms of individual degrees.\footnote{This property is also key in the classical dense setting of the Regularity Lemma.} If $A$ and $B$ were connected to each cluster $X\in \mathcal L^*\cup \mathcal V(\mathcal M)$ by regular pairs of the same density, say $d_X$, it would suffice to split $\mathcal L^*$ and $\mathcal M$ in the ratio $v(\mathcal S_A):v(\mathcal S_B)$. In the general setting, this can also be achieved, as was done in~\cite[Lemma~9]{PS07+}. 
Let $h_{A,\mathcal L^*}$, $h_{B,\mathcal L^*}$, $h_{A,\M}$, $h_{B,\M}$ be the average degrees of vertices of $A$ and $B$ into $\mathcal L^*_A,\mathcal L^*_B,\M_A,\M_B$.

We will now use the regularity to embed the shrubs and the seeds in $\tilde G$. We start with mapping $r$ to $A$ or $B$ (depending whether $r\in W_A$ or $r\in W_B$), and proceed along a topological order on $T$. We denote the partial embedding of $T$ at any particular stage as $\phi$. The vertices of $W_A$ are mapped to $A$, the vertices of $W_B$ are mapped to $B$. As for embedding the shrubs, initially we start with embedding the shrubs of $\mathcal S_A$ to $\M_A$ (we say that \emph{$A$ is in  the $\M$-mode}), and embedding the shrubs of $\mathcal S_B$ to $M_B$ (\emph{$B$ is in  the $\M$-mode}). 
By filling up the $\M$-edges with the shrubs as balanced as possible we can guarantee that we do not run out of space in $\M_A$ before embedding $\mathcal S_A$-shrubs of total order at least $h_{A,\M}-\eta'k/100$. An analogous property holds for embedding $\mathcal S_B$-shrubs. We omit details and instead refer to a very similar procedure in Lemma~\ref{lem:embed9}.\footnote{In Lemma~\ref{lem:embed9} it was shown how to utilize~\eqref{conf:D9-XtoV} for embedding shrubs of order up to $\approx h_1$ in regular pairs.} 

At some moment we may run out of space in $\M_A$, or in $\M_B$. Say that this happens first with the matching $\M_A$. Let $\mathcal S_A^*\subset \mathcal S_A$ be the set of shrubs not embedded so far. We now describe how to proceed when \emph{$A$ is in the $\mathcal L^*$-mode}. In this mode, we will not embed an upcoming shrub $F\in\mathcal S^*_A$, but only reserve a set $U_F$, $|U_F|\le v(F)$ which serves a reminder that we want to accommodate $F$ later on. Suppose that the parent $\parent(F)\in W_A$ of $F$ has been mapped to a typical\footnote{in the sense of Definition~\ref{def:CONF10}\eqref{diamond10cond2}} vertex $z\in A$ already. We have 
\begin{equation*}
\deg_{\tilde G}(z,\bigcup \mathcal L^*_A)\ge v(\mathcal S_A^*)+\frac{\eta'k}{100}\ge \sum_{F'} |U_F'|+\frac{\eta'k}{100}\;,
\end{equation*}
where the sum ranges over the already processed $\mathcal S_A^*$-shrubs $F'$. Consequently, there is a cluster $X\in \V$ such that 
\begin{equation}\label{eq:thresholdFilling}
\deg_{\tilde G}\Big(z,X\setminus \bigcup_{F'}U_{F'}\Big)>\frac{\eta'|X|}{100\Omega}\;.
\end{equation} Let us view $F$ as a bipartite graph, and let $a_F$ be the size of its color class that contains $x_F$. Let $U_F$ be an arbitrary set of $(\neighbor_{\tilde G}(z)\cap X)\setminus \bigcup_{F'}U_{F'}$ of size $a_F$, and also let us fix an image $\phi(x_F)\in U_F$ arbitrarily. If $F$ is an internal shrub, we further define $\phi(y_F)\in U_F\setminus\{\phi(x_B)\}$ arbitrarily. At this stage we consider $F$ as processed. 

Later, of course, also $B$ can switch in the $\mathcal L^*$-mode as well. At that moment, we define $\mathcal S^*_B$, and start to only make reservations $U_K$ in clusters of $\mathcal L^*_B$ instead of embedding shrubs $K\in\mathcal S^*_B$.

After all shrubs of $\mathcal S^*_A\cup\mathcal S^*_B$ have been processed we finalize the embedding. Consider a shrub $F\in\mathcal S^*_A\cup\mathcal S^*_B$. Suppose that $U_F\subset X$ for some $X\in\V$. We use Definition~\ref{def:CONF10}\eqref{diamond10cond3} to find a cluster $Y$ such that $$\density(X,Y)\ge \frac{\left|Y\cap(\text{im}\big(\phi)\cup\bigcup_{F'\text{ yet unembedded}}U_{F'}\big)\right|}{|Y|}+\frac{\eta'}{100\Omega}\;.$$ 
As $\phi(x_F)$ and $\phi(y_F)$ are typical\footnote{in the sense of Definition~\ref{def:CONF10}\eqref{diamond10cond3}}, we can additionally require that 
$$\deg_{\tilde G}(\phi(x_F),Y),\deg_{\tilde G}(\phi(y_F),Y)\ge (\density(X,Y)-\sqrt{\tilde\epsilon})|Y|\;.$$
Therefore, the regularity method allows us to embed $F$ in the pair $(X,Y)$ avoiding the already defined image of $\phi$, and the sets $U_{F'}$ corresponding to yet unembedded shrubs $F'$. The fact that the threshold in~\eqref{eq:thresholdFilling} was taken quite high (compared to the size of the shrubs, see~\eqref{eq:plm}) allows us to avoid atypical vertices. We also need this embedding to be compatible with the existing placements $\phi(x_F)$ and $\phi(y_F)$. In particular, we need to find a path of length $\dist_F(x_F,y_F)$ from $\phi(x_F)$ to $\phi(y_F)$. Here, it is crucial that $\dist_F(x_F,y_F)\ge 4$ (cf.~Definition~\ref{ellfine}\eqref{short}).\footnote{Indeed, it could be that $\neighbor(\phi(x_F))\cap\neighbor(\phi(y_F))=\emptyset$, which would make it impossible to find a path of length~2 from $\phi(x_F)$ to $\phi(y_F)$. If, on the other hand $\dist_F(x_F,y_F)\ge 4$, then we can always find such a path using a look-ahead embedding in the regular pair $(X,Y)$.}
We remark, that in general we cannot guarantee that $X\cap \phi(F)=U_F$. So the set $U_F$ should be regarded merely as a measure of future occupation of $X$, rather than an indication of exact future placement.
\end{proof}

\section{Proof of Theorem~\ref{thm:main}}\label{sec:proof}\HAPPY{D}
Let $\alpha>0$ be given. We set $$\eta:=\min\{\frac1{30},\frac{\alpha}2\}.$$ We wish to fix further constants as in~\eqref{eq:KONST}. A~trouble is that we do not know the right choice of $\Omega^*$ and $\Omega^{**}$ yet. Therefore we take $g:=\lfloor\frac{100}{\eta^2}\rfloor+1$ and fix suitable constants
\begin{align*}
\eta\gg\frac1{\Omega_1}&\gg \frac1{\Omega_2}\gg \ldots\gg \frac1{\Omega_{g+1}}
\gg\rho\gg\gamma\gg d\ge \frac1{\Lambda}\ge \epsilon\ge
\pi\ge  \alphaD \ge
\epsilon'\ge
\nu\gg \tau \gg \frac{1}{k_0}>0\;,
\end{align*}
where the relations between the parameters are more exactly as follows:
\begin{align*}
\frac 1{\Omega_1}&\le \frac {\eta^9}{10^{25}}\;,\\
\frac 1{\Omega_{j+1}}&\le \frac
{\eta^{27}}{10^{67}\Omega_j^{36}}\quad\mbox{for each $j=1,\ldots,g$\;,}\\
\rho&\le \frac {\eta^9}{10^{25}\Omega_{g+1}^5}\;,\\
\gamma&\le \frac {\eta^{18}\rho^{24}}{10^{90}\Omega_{g+1}^{28}}\;,\\
d&\le \min\left\{\frac {\gamma^2\eta^2}{10^8\Omega_{g+1}^2},
\beta_\PARAMETERPASSING{L}{prop:LKSstruct}(\eta_\PARAMETERPASSING{L}{prop:LKSstruct}:=
\eta, \Omega_\PARAMETERPASSING{L}{prop:LKSstruct}:= \Omega_{g+1},
\gamma_\PARAMETERPASSING{L}{prop:LKSstruct}:= \gamma)\right\}\;,\\
\frac 1\Lambda&\le \min\left\{d,\frac
{\eta^{24}\gamma^{24}\rho}{10^{96}\Omega_{g+1}^{36}}\right\}
\;,
\\
\epsilon&\le \min\left\{\frac 1\Lambda, \frac
{\gamma^2\eta^3d\rho}{10^{13}\Omega_{g+1}^4},
\tilde\eps_\PARAMETERPASSING{L}{lem:embed10}(\eta'_\PARAMETERPASSING{L}{lem:embed10}:=\eta/40,
d'_\PARAMETERPASSING{L}{lem:embed10}:=\gamma^2d/2,
\Omega_\PARAMETERPASSING{L}{lem:embed10}:= \frac {(\Omega_{g+1})^2}{\gamma^2})
\right\}\;,
\\
\pi&\le \min\left\{\epsilon,
\pi_\PARAMETERPASSING{L}{prop:LKSstruct}(\eta_\PARAMETERPASSING{L}{prop:LKSstruct}:=
\eta, \Omega_\PARAMETERPASSING{L}{prop:LKSstruct}:= \Omega_{g+1},
\gamma_\PARAMETERPASSING{L}{prop:LKSstruct}:= \gamma,
\epsilon_\PARAMETERPASSING{L}{prop:LKSstruct}:= \epsilon)\right\}\;,\\
\alphaD&\le \min\left\{\epsilonD,
\alpha_\PARAMETERPASSING{L}{lem:edgesEmanatingFromDensePairsIII}\left(
\Omega_\PARAMETERPASSING{L}{lem:edgesEmanatingFromDensePairsIII}:=
\Omega_{g+1},
\rho_\PARAMETERPASSING{L}{lem:edgesEmanatingFromDensePairsIII}:= \frac
{\gamma^2}4,
\epsilon_\PARAMETERPASSING{L}{lem:edgesEmanatingFromDensePairsIII}:=
\epsilonD,
\tau_\PARAMETERPASSING{L}{lem:edgesEmanatingFromDensePairsIII}:=
2\rho\right)\right\}\;,\\
\epsilon'&\le \min\left\{\frac
{\alphaD^2\gamma^6\rho^2}{10^3\Omega_{g+1}^4},
\epsilon'_\PARAMETERPASSING{L}{prop:LKSstruct}(\eta_\PARAMETERPASSING{L}{prop:LKSstruct}:=
\eta, \Omega_\PARAMETERPASSING{L}{prop:LKSstruct}:= \Omega_{g+1}, \gamma_\PARAMETERPASSING{L}{prop:LKSstruct}:= \gamma,
\epsilon_\PARAMETERPASSING{L}{prop:LKSstruct}:= \epsilon)\right\}\;,\\
\nu&\le \min\left\{\frac {\alphaD\rho}{\Omega_{g+1}}, \epsilon',
\nu_\PARAMETERPASSING{L}{lem:LKSsparseClass}(\eta_\PARAMETERPASSING{L}{lem:LKSsparseClass}:=\alpha,
\Lambda_\PARAMETERPASSING{L}{lem:LKSsparseClass}:=\Lambda,
\gamma_\PARAMETERPASSING{L}{lem:LKSsparseClass}:= \gamma,
\epsilon_\PARAMETERPASSING{L}{lem:LKSsparseClass}:= \epsilon',
\rho_\PARAMETERPASSING{L}{lem:LKSsparseClass}:= \rho)\right\}\;,\\
\tau&\le 2\epsilon\pi\nu\;,\\
\frac 1{k_0}&\le \min\left\{\frac
{\gamma^3\rho\eta^8\tau\nu}{10^3\Omega_{g+1}^3}, \frac 1{k_0^*}\right\}\;,
\end{align*}
with
 \label{pageref:PAR}
 \begin{align*}
k_0^*:=\max\Big\{ & k_{0,\PARAMETERPASSING{L}{lem:LKSsparseClass}}(\eta_\PARAMETERPASSING{L}{lem:LKSsparseClass}:=\alpha,
\Lambda_\PARAMETERPASSING{L}{lem:LKSsparseClass}:=\Lambda, \gamma_\PARAMETERPASSING{L}{lem:LKSsparseClass}:= \gamma,
\epsilon_\PARAMETERPASSING{L}{lem:LKSsparseClass}:= \epsilon',
\rho_\PARAMETERPASSING{L}{lem:LKSsparseClass}:= \rho),\\
 & k_{0,\PARAMETERPASSING{L}{lem:edgesEmanatingFromDensePairsIII}}(
\Omega_\PARAMETERPASSING{L}{lem:edgesEmanatingFromDensePairsIII}:=
\Omega_{g+1},
\rho_\PARAMETERPASSING{L}{lem:edgesEmanatingFromDensePairsIII}:= \frac
{\gamma^2}4,
\epsilon_\PARAMETERPASSING{L}{lem:edgesEmanatingFromDensePairsIII}:=
\epsilonD,
\tau_\PARAMETERPASSING{L}{lem:edgesEmanatingFromDensePairsIII}:=
2\rho,
\alpha_\PARAMETERPASSING{L}{lem:edgesEmanatingFromDensePairsIII}:=
\alphaD,
\nu_\PARAMETERPASSING{L}{lem:edgesEmanatingFromDensePairsIII}:=
\frac{2\rho}{\Omega_{g+1}}
),
\\
 & k_{0,\PARAMETERPASSING{L}{prop:LKSstruct}}(\eta_\PARAMETERPASSING{L}{prop:LKSstruct}:=
\eta, \Omega_\PARAMETERPASSING{L}{prop:LKSstruct}:= \Omega_{g+1},
\gamma_\PARAMETERPASSING{L}{prop:LKSstruct}:= \gamma,
\epsilon_\PARAMETERPASSING{L}{prop:LKSstruct}:= \epsilon,
\nu_\PARAMETERPASSING{L}{prop:LKSstruct}:= \nu),
\\
 & k_{0,\PARAMETERPASSING{L}{lem:randomSplit}}(p_\PARAMETERPASSING{L}{lem:randomSplit}:=
10, \alpha_\PARAMETERPASSING{L}{lem:randomSplit}:= \eta/100),
\\
 & k_{0,\PARAMETERPASSING{L}{lem:embed10}}(\eta'_\PARAMETERPASSING{L}{lem:embed10}:=\eta/40,
d'_\PARAMETERPASSING{L}{lem:embed10}:=\gamma^2d/2,\tilde
\epsilon_\PARAMETERPASSING{L}{lem:embed10}:= \epsilon,
\Omega_\PARAMETERPASSING{L}{lem:embed10}:= \frac {(\Omega_{g+1})^2}{\gamma^2},
\nu'_\PARAMETERPASSING{L}{lem:embed10}:=\pi\sqrt{\epsilon'}\nu)\Big\}.
\end{align*}
In particular, this gives us a relation between between $\alpha$ and $k_0$. 
\medskip

Suppose now that $k>k_0$, and $G\in\LKSgraphs{n}{k}{\alpha}$ is a graph, and $T\in\treeclass{k}$ is a tree. It is our goal to show that $T\subset G$.
\medskip

We follow the plan outlined in Figure~\ref{fig:proofstructure}. First, we
process the tree $T$ by considering any $(\tau k)$-fine partition
$(W_A,W_B,\shrubA,\shrubB)$ of $T$ rooted at an arbitrary root $r$. Such a
partition exists by Lemma~\ref{lem:TreePartition}. Let $m_1$ and $m_2$ be the
total order of internal shrubs and the end shrubs, respectively. For $i=1,2$ set
$$\proporce{i}:=\frac{\eta}{100}+\frac{m_i}{(1+\frac{\eta}{30})k},$$ and
$$\proporce{0}:=\frac {\eta}{100}.$$ In particular we have
$\proporce{i}\in[\frac{\eta}{100},1]$ for $i=1,2,3$.

To find a suitable structure in the graph $G$ we proceed as follows. We apply
Lemma~\ref{lem:LKSsparseClass} with input graph
$G_\PARAMETERPASSING{L}{lem:LKSsparseClass}:=G$ and parameters
$\eta_\PARAMETERPASSING{L}{lem:LKSsparseClass}:=\alpha$,
$\Lambda_\PARAMETERPASSING{L}{lem:LKSsparseClass}:=\Lambda$,
$\gamma_\PARAMETERPASSING{L}{lem:LKSsparseClass}:=\gamma$,
$\epsilon_\PARAMETERPASSING{L}{lem:LKSsparseClass}:=\epsilon'$,
$\rho_\PARAMETERPASSING{L}{lem:LKSsparseClass}:=\rho$,  the sequence
$(\Omega_j)_{j=1}^{g+1}$, $k_\PARAMETERPASSING{L}{lem:LKSsparseClass}:=k$ and  $b_\PARAMETERPASSING{L}{lem:LKSsparseClass}:=\frac{\rho k}{100\Omega^*}$. The~lemma gives a graph
$G'_\PARAMETERPASSING{L}{lem:LKSsparseClass}\in\LKSsmallgraphs{n}{k}{\eta}$, and an index $i\in[g]$.  Slightly abusing  notation, we call this graph still~$G$. Set
 $\Omega^*:=\Omega_i$ and $\Omega^{**}:=\Omega_{i+1}$.
Now, item~\eqref{LKSclassif:prepart} of Lemma~\ref{lem:LKSsparseClass}
yields a $(k,\Omega^{**},\Omega^*,\Lambda,\gamma,\epsilon',\nu,\rho)$-sparse
decomposition $\class=(\HugeVertices, \clusters, \DenseSpots, \Gblack,
\Gexp,\smallatoms)$. Let $\clustersize$ be the size of any cluster in
$\clusters$.
 
We now apply Lemma~\ref{prop:LKSstruct} with parameters
$\eta_\PARAMETERPASSING{L}{prop:LKSstruct}:=\eta$,
$\Omega_\PARAMETERPASSING{L}{prop:LKSstruct}:=\Omega_{g+1}$,
$\gamma_\PARAMETERPASSING{L}{prop:LKSstruct}:=\gamma$,
$\epsilon_\PARAMETERPASSING{L}{prop:LKSstruct}:=\epsilon$, 
$k_\PARAMETERPASSING{L}{prop:LKSstruct}:=k$,  and
$\Omega^*_\PARAMETERPASSING{L}{prop:LKSstruct}:=\Omega^*$. Given the graph $G$ with its sparse decomposition $\class$ the lemma gives three
$(\epsilon,d,\pi\clustersize)$-semiregular matchings $\M_A$, $\M_B$, and
$\Mgood\subset \M_A$ which fulfill the assertion either of case {\bf(K1)}, or
of {\bf(K2)}. The matchings $\M_A$ and $\M_B$ also define the sets $\XA$ and $\XB$.

The additional features provided by Lemma~\ref{lem:LKSsparseClass} and Lemma~\ref{prop:LKSstruct} guarantee that we are in the situation described in Setting~\ref{commonsetting}. We apply Lemma~\ref{lem:randomSplit} as described in Definition~\ref{def:proportionalsplitting}; the numbers $\proporce{0},\proporce{1},\proporce{2}$ are as defined above. This puts us in the setting described in Setting~\ref{settingsplitting}. We now use Lemma~\ref{outerlemma} to obtain one of the following configurations.
\begin{itemize}
\item$\mathbf{(\diamond1)}$,
\item$\mathbf{(\diamond2)}\left(\frac{\eta^{27}\Omega^{**}}{4\cdot
10^{66}(\Omega^*)^{11}},\frac{\sqrt[4]{\Omega^{**}}}2,\frac{\eta^9\rho^2}{128\cdot
10^{22}\cdot (\Omega^*)^5}\right)$,
\item$\mathbf{(\diamond3)}\left(\frac{\eta^{27}\Omega^{**}}{4\cdot
10^{66}(\Omega^*)^{11}},\frac{\sqrt[4]{\Omega^{**}}}2,\frac\gamma2,\frac{\eta^9\gamma^2}{128\cdot
10^{22}\cdot(\Omega^*)^5}\right)$,
\item$\mathbf{(\diamond4)}\left(\frac{\eta^{27}\Omega^{**}}{4\cdot
10^{66}(\Omega^*)^{11}},\frac{\sqrt[4]{\Omega^{**}}}2,\frac\gamma2,\frac{\eta^9\gamma^3}{384\cdot
10^{22}(\Omega^*)^5}\right)$, 
\item$\mathbf{(\diamond5)}\left(\frac{\eta^{27}\Omega^{**}}{4\cdot
10^{66}(\Omega^*)^{11}},\frac{
\sqrt[4]{\Omega^{**}}}2,\frac{\eta^9}{128\cdot
10^{22}\cdot
(\Omega^*)^3},\frac{\eta}2,\frac{\eta^9}{128\cdot
10^{22}\cdot (\Omega^*)^4}\right)$,
  \item
  $\mathbf{(\diamond6)}\big(\frac{\eta^3\rho^4}{10^{14}(\Omega^*)^4},4\epsilonD,
  \frac{\gamma^3\rho}{32\Omega^*},\frac{\eta^2\nu}{2\cdot10^4
},\frac{3\eta^3}{2000},\proporce{2}(1+\frac\eta{20})k\big)$,
  \item $\mathbf{(\diamond7)}\big(\frac
{\eta^3\gamma^3\rho}{10^{12}(\Omega^*)^4},\frac
{\eta\gamma}{400},4\epsilonD,\frac{\gamma^3\rho}{32\Omega^*},\frac{\eta^2\nu}{2\cdot10^4
}, \frac{3\eta^3}{2\cdot 10^3}, \proporce{2}(1+\frac\eta{20})k\big)$,
  \item
$\mathbf{(\diamond8)}\big(\frac{\eta^4\gamma^4\rho}{10^{15}
(\Omega^*)^5},\frac{\eta\gamma}{400},\frac{400\epsilon}{\eta},4\epsilonD,\frac
d2,\frac{\gamma^3\rho}{32\Omega^*},\frac{\eta\pi\clustersize}{200k},\frac{\eta^2\nu}{2\cdot10^4}
, \proporce{1}(1+\frac\eta{20})k,\proporce{2}(1+\frac\eta{20})k\big)$,
 \item 
 $\mathbf{(\diamond9)}\big(\frac{\rho
\eta^8}{10^{27}(\Omega^*)^3},\frac
{2\eta^3}{10^3}, \proporce{1}(1+\frac{\eta}{40})k,
\proporce{2}(1+\frac{\eta}{20})k, \frac{400\varepsilon}{\eta},
\frac{d}2,
\frac{\eta\pi\clustersize}{200k},4\epsilonD,\frac{\gamma^3\rho}{32\Omega^*},
\frac{\eta^2\nu}{2\cdot10^4 }\big)$,
  \item $\mathbf{(\diamond10)}\big( \epsilon, \frac{\gamma^2
d}2,\pi\sqrt{\epsilon'}\nu k, \frac
{(\Omega^*)^2k}{\gamma^2},\frac\eta{40} \big)$
\end{itemize}

Depending on the actual configuration Lemma~\ref{lem:embed:greedy}, Lemma~\ref{lem:conf2-5}, Lemma~\ref{lem:embed:total68}, Lemma~\ref{lem:embed9}, or Lemma~\ref{lem:embed10} guarantee that $T\subset G$. This finishes the proof of the theorem.

\section{Theorem~\ref{thm:main} algorithmically}\label{ssec:algorithmic}\label{sec:conclremarks}
We now discuss the algorithmic aspects of our proof. That is, we would
like to find an algorithm which finds a copy of a given tree
$T\in\treeclass{k}$ in any given graph $G\in\LKSgraphs{n}{k}{\alpha}$ in time $O(n^C)$. Here the
degree $C$ of the polynomial is allowed to depend on $\alpha$, but not on $k$. It can be verified that each of
the steps of our proof --- except the extraction of dense spots (cf. Section~\ref{sssec:DecomposeAlgorithmically}) --- can be turned into a polynomial time algorithm. The two randomized steps --- random splitting in Section~\ref{ssec:RandomSplittins} and the use of the stochastic process $\Duplicate$ in Section~\ref{sec:embed} --- can be also efficiently derandomized using a standard technique for derandomizing the Chernoff bound. Let us sketch how to deal with extracting dense spots.

The idea is as follows. Initially, we pretend that $\Gexp$ consists of the entire bounded-degree part $G-\HugeVertices$ (cleaned for minimum degree $\rho k$ as in~\eqref{eq:almostalldashed}). With such a supposed sparse classification $\class_1$ we go through Lemma~\ref{prop:LKSstruct} and Lemma~\ref{outerlemma} (which builds on Lemmas~\ref{lem:ConfWhenCXAXB},~\ref{lem:ConfWhenNOTCXAXB},
and~\ref{lem:ConfWhenMatching}) to obtain a configuration. We now start embedding $T$ as in Section~\ref{sec:embed}. Note that $\Gblack$ and $\smallatoms$ are absent, and so, the only embedding techniques are those involving $\HugeVertices$ and $\Gexp$. Now, either we embed $T$, or we fail. The only possible reason for the failure is that we were unable to perform the one-step look-ahead strategy described in Section~\ref{sssec:whyGexp} because $\Gexp$ was not really nowhere-dense. But then we actually localized a dense spot $D_1$. We get an updated supposed sparse classification $\class_2$ in which $D_1$ is removed from $\Gexp$ and put in $\DenseSpots$ (which of course can give rise to $\Gblack$ or $\smallatoms$). We keep iterating. Since in each step we extract at least $O(k^2)$ edges we iterate the above at most $e(G)/\Theta(k^2)=O(\frac nk)$ times. We are certain to succeed eventually, since after $\Theta(\frac nk)$ iterations we get an honest sparse classification.

\medskip
It seems that this iterative method is generally applicable for problems which employ a sparse classification.

%
%

\section{Acknowledgments} 
The work on this project lasted from the beginning of 2008 until the end of 2012
and we are very grateful to the following institutions and funding bodies for
their support. 

\smallskip

During the work on this paper JH was also affiliated with Zentrum
Mathematik, TU Munich and Department of Computer Science, University of Warwick. JH was funded by a BAYHOST fellowship, a DAAD fellowship, 
 Charles University grant GAUK~202-10/258009, EPSRC award EP/D063191/1, and by an EPSRC Postdoctoral Fellowship during the work on the project. 
JK and ESz acknowledge the support of NSF grant
DMS-0902241.

DP was also affiliated with the Institute of Theoretical Computer Science, Charles University in Prague, Zentrum
Mathematik, TU Munich, and the Department of Computer Science and DIMAP,
University of Warwick. DP acknowledges the support of the Marie Curie fellowship FIST,
 DFG grant TA 309/2-1, a DAAD fellowship,
 Czech Ministry of
Education project 1M0545,  EPSRC award EP/D063191/1, and grant  PIEF-GA-2009-253925 of the European Union's Seventh Framework
Programme (FP7/2007-2013).
DP would also like to acknowledge the exceptional support of the EPSRC
Additional Sponsorship, with a grant reference of EP/J501414/1 which facilitated her to
travel with her young child and so she could continue to collaborate closely
with her coauthors on this project. This grant was also used to host MS in
Birmingham.

MS was affiliated with the Institute of Mathematics and Statistics, University of S\~ao Paulo, and the Centre for Mathematical Modeling, University of Chile. She was
supported by a FAPESP fellowship, and by FAPESP travel grant  PQ-EX 2008/50338-0, also
CMM-Basal, and  FONDECYT grant 11090141. She also received funding by EPSRC Additional Sponsorship EP/J501414/1.

We enjoyed the hospitality of the School of Mathematics of University of Birmingham, Center for Mathematical Modeling, University of Chile, Alfr\'ed R\'enyi Institute of Mathematics of the Hungarian Academy of Sciences and Charles University, Prague, during our long term visits.

We are very grateful to Mikl\'os Ajtai, J\'anos Koml\'os, Mikl\'os Simonovits, and Endre Szemer\'edi. Their yet unpublished work on the Erd\H{o}s-S\'os Conjecture was the starting point for our project, and our solution crucially relies on the methods developed for the Erd\H{o}s-S\'os Conjecture.

JH would like to thank Maxim Sviridenko for discussion on the algorithmic aspects of the problem.

\medskip
A doctoral thesis entitled \emph{Structural graph theory} submitted by Jan Hladk\'y in September 2012 under the supervision of Daniel Kr\'al at~Charles University in~Prague is based on this paper. The texts of the two works are almost identical. We are grateful to PhD committee members Peter Keevash and Michael Krivelevich. Their valuable comments are reflected in this paper.
\newpage
\printindex{mathsymbols}{Symbol index}
\printindex{general}{General index}

\newpage
\addcontentsline{toc}{section}{Bibliography}
\bibliographystyle{alpha}
\bibliography{bibl}
\end{document}